\documentclass[11pt]{amsart}
\usepackage{dynkin-diagrams}
\usepackage[utf8]{inputenc}
\usepackage{comment}
\usepackage{booktabs}
\usepackage{multirow}
\usepackage{multicol}

\usepackage[style=alphabetic,firstinits,backend=bibtex, uniquename=init,
url=true,hyperref]{biblatex}
\DeclareFieldFormat{labelalpha}{\thefield{entrykey}}
\DeclareFieldFormat{extraalpha}{}
\DeclareFieldFormat{postnote}{#1}
\DeclareFieldFormat{multipostnote}{#1}
\bibliography{grob-v12}
\usepackage{tcolorbox}
\tcbuselibrary{skins, breakable, listings}
\usepackage{algorithm}
\usepackage{algpseudocode}
\usepackage{caption}
\usepackage{bbold}
\usepackage[bbgreekl]{mathbbol}
\usepackage{bbm}
\usepackage{tikz}
\usetikzlibrary{external}
\usepackage{subcaption}
\usepackage{tkz-graph}
\usepackage{tikz-cd}
\usepackage{cancel}
\usetikzlibrary{decorations.text,calc,tikzmark,arrows.meta,positioning}
\usepackage[table]{xcolor}
\usepackage{stmaryrd}
\usepackage{array}
\usepackage{nicematrix}
\usepackage{varwidth}
\usepackage{filecontents}
\usepackage[titletoc]{appendix}
\usepackage{mathtools}
\usepackage{amsthm}
\usepackage{amsmath}
\usepackage{mathabx}
\usepackage{amsfonts}
\usepackage{amssymb}
\usepackage{mathrsfs}
\usepackage{multirow}
\usepackage{xypic}
\usepackage{color}
\usepackage{xcolor}
\definecolor{revisionpurple}{RGB}{112,48,160}
\definecolor{revisionbrown}{RGB}{122,73,44}
\definecolor{revisionsilver}{RGB}{128,128,128}
\definecolor{revisiongreen}{RGB}{0,100,60}
\definecolor{revisionvthirteen}{RGB}{112,48,160}
\definecolor{revisionvthirteenold}{RGB}{122,73,44}
\newcommand{\MINOREDITED}[1]{#1}
\newcommand{\MAJOREDIT}[1]{{\color{revisiongreen}#1}}
\usepackage{graphicx}
\usepackage{extarrows}
\usepackage{titletoc}
\usepackage[colorlinks]{hyperref}
\definecolor{cclr}{rgb}{25,25,112}
\hypersetup{
citecolor=[rgb]{0.15, 0.15, 0.68},
linkcolor=gray,
urlcolor=magenta}

\newtheorem{thm}{Theorem}[section]
\newtheorem{lem}[thm]{Lemma}
\newtheorem{sublem}{Sublemma}[thm]
\theoremstyle{definition}
\newtheorem{prop}[thm]{Proposition}
\newtheorem{cor}[thm]{Corollary}

\newtheorem{defn}[thm]{Definition}

\newtheorem{notation}[thm]{Notation}
\newtheorem{example}[thm]{Example}

\newtheorem{remark}[thm]{Remark}
\newtheorem{setup}[thm]{Setup}

\newcommand{\doublestroke}[1]{\pdfliteral{1 Tr .3 w}#1\pdfliteral{0 Tr 0 w}}
\newcommand{\BPhi}{\doublestroke{\Phi}}
\newcommand{\BPsi}{\doublestroke{\Psi}}
\newcommand{\BK}{\doublestroke{K}}
\newcommand{\Balpha}{\doublestroke{\alpha}}
\newcommand{\Bbeta}{\doublestroke{\beta}}
\newcommand{\Bdelta}{\doublestroke{\delta}}

\newcommand{\BDelta}{%
    \scalebox{.8}{$\Delta$}%
    \llap{%
            \scalebox{.9}{$\Delta$}%
    }%
}

\newcommand{\Z}{\mathbb{Z}}
\newcommand{\Q}{\mathbb{Q}}
\newcommand{\G}{\mathbb{G}}

\newcommand{\fG}{\mathfrak{G}}

\newcommand{\fB}{\mathfrak{B}}
\newcommand{\fC}{\mathfrak{C}}

\newcommand{\fD}{\mathfrak{D}}
\newcommand{\cU}{\mathcal{U}}

\newcommand{\cC}{\mathcal{C}}
\newcommand{\sC}{\mathscr{C}}

\newcommand{\F}{\mathbb{F}}
\newcommand{\bF}{\mathbb{F}}
\newcommand{\A}{\mathbb{A}}
\newcommand{\bfE}{\mathbf{E}}
\newcommand{\bfA}{\mathbf{A}}
\newcommand{\wt}[1]{\widetilde{#1}}

\newcommand{\lK}{\underset{K}{<}}
\renewcommand{\SS}{{\operatorname{ss}}}

\newcommand{\Ext}{{\operatorname{Ext}}}
\newcommand{\abs}{{\operatorname{abs}}}
\newcommand{\rot}{{\operatorname{rot}}}
\newcommand{\ad}{{\operatorname{ad}}}
\newcommand{\pcrys}{{\operatorname{pcrys}}}
\newcommand{\pst}{{\operatorname{pst}}}

\newcommand{\leK}{\underset{K}{\le}}

\newcommand{\bFp}{\bar{\mathbb{F}}_p}
\newcommand{\Fp}{\mathbb{F}_p}

\newcommand{\cO}{\mathcal{O}}
\newcommand{\cX}{\mathcal{X}}
\newcommand{\fX}{\mathfrak{X}}
\newcommand{\cY}{\mathcal{Y}}

\newcommand{\cW}{\mathcal{W}}
\newcommand{\sW}{\mathscr{W}}
\newcommand{\fW}{\mathfrak{W}}

\newcommand{\bG}{\breve{G}}
\newcommand{\bM}{\breve{M}}
\newcommand{\bB}{\breve{B}}
\newcommand{\bT}{\breve{T}}
\newcommand{\bZ}{\breve{Z}}

\newcommand{\univ}{{\operatorname{univ}}}
\newcommand{\LT}{{\operatorname{LT}}}

\newcommand{\cone}{{\operatorname{cone}}}
\newcommand{\CH}{{\operatorname{CH}}}
\newcommand{\Top}{{\operatorname{top}}}
\newcommand{\cyc}{{\operatorname{cyc}}}
\newcommand{\dkF}{[k_{F_{\cyc}}:\F_p]}
\newcommand{\dF}{[F:\Q_p]}

\DeclareMathOperator{\src}{src}
\DeclareMathOperator{\std}{std}
\DeclareMathOperator{\tar}{tar}
\DeclareMathOperator{\LHS}{LHS}
\DeclareMathOperator{\Span}{span}

\DeclareMathOperator{\PGL}{PGL}
\DeclareMathOperator{\Mat}{Mat}
\DeclareMathOperator{\Sp}{Sp}

\DeclareMathOperator{\Frob}{Frob}

\DeclareMathOperator{\EG}{EG}

\DeclareMathOperator{\triv}{triv}

\DeclareMathOperator{\un}{un}
\DeclareMathOperator{\Ht}{ht}
\DeclareMathOperator{\GL}{GL}
\DeclareMathOperator{\Lie}{Lie}
\DeclareMathOperator{\tr}{tr}
\DeclareMathOperator{\WD}{WD}
\DeclareMathOperator{\red}{red}
\DeclareMathOperator{\Spec}{Spec}
\DeclareMathOperator{\Spf}{Spf}

\DeclareMathOperator{\Tor}{Tor}

\DeclareMathOperator{\Hom}{Hom}

\DeclareMathOperator{\Aut}{Aut}
\DeclareMathOperator{\Gal}{Gal}
\DeclareMathOperator{\tame}{tame}

\DeclareMathOperator{\Res}{Res}

\DeclareMathOperator{\st}{st}

\newcommand{\Herr}{\mathrm{Herr}}

\begin{document}
\title{Algorithmic universal mod $p$ \'etale $(\varphi, \Gamma)$-modules}

\author{Zhongyipan Lin}
\date{\today}

\newcommand{\tikzGtwo}{
\begin{tikzpicture}[
    vertex/.style={circle, draw, minimum size=0.6cm, fill=white, inner sep=1pt, font=\sffamily}, scale=0.97,
    edge/.style={->, >=stealth, thick}
]
    \node[vertex] (v0) at (0.60, 1.50) {01};
    \node[vertex] (v1) at (-0.60, 1.50) {10};
    \node[vertex] (v2) at (0.00, 3.00) {11};
    \node[vertex] (v3) at (0.00, 4.50) {21};
    \node[vertex] (v4) at (0.00, 6.00) {31};
    \node[vertex] (v5) at (0.00, 7.50) {32};

    \draw[edge] (v0) -- (v2);
    \draw[edge] (v1) -- (v2);
    \draw[edge] (v2) -- (v3);
    \draw[edge] (v3) -- (v4);
    \draw[edge] (v4) -- (v5);
\end{tikzpicture}
}

\newcommand{\tikzBfour}{
\begin{tikzpicture}[
    vertex/.style={circle, draw, minimum size=0.6cm, fill=white, inner sep=1pt, font=\sffamily\tiny}, scale=0.65,
    edge/.style={->, >=stealth, thick}
]
    \node[vertex] (v0) at (1.80, 1.50) {0001};
    \node[vertex] (v1) at (0.60, 1.50) {0010};
    \node[vertex] (v2) at (1.20, 3.00) {0011};
    \node[vertex] (v3) at (1.20, 4.50) {0012};
    \node[vertex] (v4) at (-0.60, 1.50) {0100};
    \node[vertex] (v5) at (0.00, 3.00) {0110};
    \node[vertex] (v6) at (0.00, 4.50) {0111};
    \node[vertex] (v7) at (0.60, 6.00) {0112};
    \node[vertex] (v8) at (0.60, 7.50) {0122};
    \node[vertex] (v9) at (-1.80, 1.50) {1000};
    \node[vertex] (v10) at (-1.20, 3.00) {1100};
    \node[vertex] (v11) at (-1.20, 4.50) {1110};
    \node[vertex] (v12) at (-0.60, 6.00) {1111};
    \node[vertex] (v13) at (-0.60, 7.50) {1112};
    \node[vertex] (v14) at (0.00, 9.00) {1122};
    \node[vertex] (v15) at (0.00, 10.50) {1222};

    \draw[edge] (v0) -- (v2);
    \draw[edge] (v1) -- (v2);
    \draw[edge] (v1) -- (v5);
    \draw[edge] (v2) -- (v3);
    \draw[edge] (v2) -- (v6);
    \draw[edge] (v3) -- (v7);
    \draw[edge] (v4) -- (v5);
    \draw[edge] (v4) -- (v10);
    \draw[edge] (v5) -- (v6);
    \draw[edge] (v5) -- (v11);
    \draw[edge] (v6) -- (v7);
    \draw[edge] (v6) -- (v12);
    \draw[edge] (v7) -- (v8);
    \draw[edge] (v7) -- (v13);
    \draw[edge] (v8) -- (v14);
    \draw[edge] (v9) -- (v10);
    \draw[edge] (v10) -- (v11);
    \draw[edge] (v11) -- (v12);
    \draw[edge] (v12) -- (v13);
    \draw[edge] (v13) -- (v14);
    \draw[edge] (v14) -- (v15);
\end{tikzpicture}
}

\newcommand{\tikzDfour}{
\begin{tikzpicture}[
    vertex/.style={circle, draw, minimum size=0.6cm, inner sep=1pt, font=\sffamily},
    edge/.style={->, >=stealth, thick},
    box/.style={rounded corners, dashed, thick, draw=gray, fill=gray!10}
]
    \draw[box] (-1.85, 2.55) rectangle (1.85, 4.95);
    \node[vertex, fill=white] (v0) at (1.80, 1.50) {0001};
    \node[vertex, fill=white] (v1) at (0.60, 1.50) {0010};
    \node[vertex, fill=white] (v2) at (-0.60, 1.50) {0100};
    \node[vertex, fill=orange!40] (v3) at (0.00, 3.00) {0110};
    \node[vertex, fill=cyan!40] (v4) at (1.20, 3.00) {0101};
    \node[vertex, fill=orange!40] (v5) at (1.20, 4.50) {0111};
    \node[vertex, fill=white] (v6) at (-1.80, 1.50) {1000};
    \node[vertex, fill=white] (v7) at (-1.20, 3.00) {1100};
    \node[vertex, fill=white] (v8) at (0.00, 4.50) {1110};
    \node[vertex, fill=cyan!40] (v9) at (-1.20, 4.50) {1101};
    \node[vertex, fill=white] (v10) at (0.00, 6.00) {1111};
    \node[vertex, fill=white] (v11) at (0.00, 7.50) {1211};

    \draw[edge] (v0) -- (v4);
    \draw[edge] (v1) -- (v3);
    \draw[edge] (v2) -- (v3);
    \draw[edge] (v2) -- (v4);
    \draw[edge] (v2) -- (v7);
    \draw[edge] (v3) -- (v5);
    \draw[edge] (v3) -- (v8);
    \draw[edge] (v4) -- (v5);
    \draw[edge] (v4) -- (v9);
    \draw[edge] (v5) -- (v10);
    \draw[edge] (v6) -- (v7);
    \draw[edge] (v7) -- (v8);
    \draw[edge] (v7) -- (v9);
    \draw[edge] (v8) -- (v10);
    \draw[edge] (v9) -- (v10);
    \draw[edge] (v10) -- (v11);
\end{tikzpicture}
}

\newcommand{\tikzFfour}{
\begin{tikzpicture}[
    vertex/.style={circle, draw, minimum size=0.6cm, inner sep=1pt, font=\sffamily\tiny}, scale=0.6,
    edge/.style={->, >=stealth, thick},
    box/.style={rounded corners, dashed, thick, draw=gray, fill=gray!10}
]
    \draw[box] (-1.85, 3.95) rectangle (1.85, 6.55);
    \node[vertex, fill=white] (v0) at (-1.80, 1.50) {0001};
    \node[vertex, fill=white] (v1) at (1.80, 1.50) {0010};
    \node[vertex, fill=white] (v2) at (-1.20, 3.00) {0011};
    \node[vertex, fill=white] (v3) at (0.60, 1.50) {0100};
    \node[vertex, fill=white] (v4) at (1.20, 3.00) {0110};
    \node[vertex, fill=white] (v5) at (-1.20, 4.50) {0111};
    \node[vertex, fill=cyan!40] (v6) at (1.20, 4.50) {0120};
    \node[vertex, fill=cyan!40] (v7) at (-1.20, 6.00) {0121};
    \node[vertex, fill=white] (v8) at (1.20, 7.50) {0122};
    \node[vertex, fill=white] (v9) at (-0.60, 1.50) {1000};
    \node[vertex, fill=white] (v10) at (0.00, 3.00) {1100};
    \node[vertex, fill=orange!40] (v11) at (0.00, 4.50) {1110};
    \node[vertex, fill=white] (v12) at (0.00, 6.00) {1111};
    \node[vertex, fill=orange!40] (v13) at (1.20, 6.00) {1120};
    \node[vertex, fill=white] (v14) at (-1.20, 7.50) {1121};
    \node[vertex, fill=white] (v15) at (0.60, 9.00) {1122};
    \node[vertex, fill=white] (v16) at (0.00, 7.50) {1220};
    \node[vertex, fill=white] (v17) at (-0.60, 9.00) {1221};
    \node[vertex, fill=white] (v18) at (0.60, 10.50) {1222};
    \node[vertex, fill=white] (v19) at (-0.60, 10.50) {1231};
    \node[vertex, fill=white] (v20) at (0.00, 12.00) {1232};
    \node[vertex, fill=white] (v21) at (1.30, 12.50) {1242};
    \node[vertex, fill=white] (v22) at (2.60, 13.00) {1342};
    \node[vertex, fill=white] (v23) at (3.90, 13.50) {2342};

    \draw[edge] (v0) -- (v2);
    \draw[edge] (v1) -- (v2);
    \draw[edge] (v1) -- (v4);
    \draw[edge] (v2) -- (v5);
    \draw[edge] (v3) -- (v4);
    \draw[edge] (v3) -- (v10);
    \draw[edge] (v4) -- (v5);
    \draw[edge] (v4) -- (v6);
    \draw[edge] (v4) -- (v11);
    \draw[edge] (v5) -- (v7);
    \draw[edge] (v5) -- (v12);
    \draw[edge] (v6) -- (v7);
    \draw[edge] (v6) -- (v13);
    \draw[edge] (v7) -- (v8);
    \draw[edge] (v7) -- (v14);
    \draw[edge] (v8) -- (v15);
    \draw[edge] (v9) -- (v10);
    \draw[edge] (v10) -- (v11);
    \draw[edge] (v11) -- (v12);
    \draw[edge] (v11) -- (v13);
    \draw[edge] (v12) -- (v14);
    \draw[edge] (v13) -- (v14);
    \draw[edge] (v13) -- (v16);
    \draw[edge] (v14) -- (v15);
    \draw[edge] (v14) -- (v17);
    \draw[edge] (v15) -- (v18);
    \draw[edge] (v16) -- (v17);
    \draw[edge] (v17) -- (v18);
    \draw[edge] (v17) -- (v19);
    \draw[edge] (v18) -- (v20);
    \draw[edge] (v19) -- (v20);
    \draw[edge] (v20) -- (v21);
    \draw[edge] (v21) -- (v22);
    \draw[edge] (v22) -- (v23);
\end{tikzpicture}
}

\newcommand{\tikzESixTwo}{
\begin{tikzpicture}[
    vertex/.style={circle, draw, minimum size=0.6cm, inner sep=1pt, font=\sffamily\tiny},
    edge/.style={->, >=stealth, thick},
    box/.style={rounded corners, dashed, thick, draw=gray, fill=gray!10}
]
    \draw[box] (-0.45, 2.55) rectangle (2.85, 4.95);

    \node[vertex, fill=white] (v2) at (-2.40, 3.00) {000011};
    \node[vertex, fill=white] (v4) at (0.00, 3.00) {000110};
    \node[vertex, fill=white] (v5) at (-2.40, 4.50) {000111};
    \node[vertex, fill=orange!40] (v7) at (1.20, 3.00) {001100};
    \node[vertex, fill=white] (v8) at (1.20, 4.50) {001110};
    \node[vertex, fill=cyan!40] (v11) at (2.40, 3.00) {010100};
    \node[vertex, fill=orange!40] (v12) at (2.40, 4.50) {011100};
    \node[vertex, fill=cyan!40] (v13) at (0.00, 4.50) {010110};
    \node[vertex, fill=white] (v21) at (-1.20, 3.00) {101000};
    \node[vertex, fill=white] (v22) at (-1.20, 4.50) {101100};

    \draw[edge] (v2) -- (v5);
    \draw[edge] (v4) -- (v5);
    \draw[edge] (v4) -- (v8);
    \draw[edge] (v4) -- (v13);
    \draw[edge] (v7) -- (v8);
    \draw[edge] (v7) -- (v12);
    \draw[edge] (v7) -- (v22);
    \draw[edge] (v11) -- (v12);
    \draw[edge] (v11) -- (v13);
    \draw[edge] (v21) -- (v22);
\end{tikzpicture}
}

\newcommand{\tikzESixThree}{
\begin{tikzpicture}[
    vertex/.style={circle, draw, minimum size=0.6cm, inner sep=1pt, font=\sffamily\tiny},
    edge/.style={->, >=stealth, thick},
    box/.style={rounded corners, dashed, thick, draw=gray, fill=gray!10}
]

    \node[vertex, fill=red!40] (v5) at (-2.40, 4.50) {000111};
    \node[vertex, fill=green!40] (v12) at (1.20, 4.50) {011100};
    \node[vertex, fill=cyan!40] (v13) at (2.40, 4.50) {010110};
    \node[vertex, fill=teal!40] (v8) at (0.00, 4.50) {001110};
    \node[vertex, fill=red!40] (v9) at (0.00, 6.00) {001111};
    \node[vertex, fill=teal!40] (v14) at (2.40, 6.00) {011110};
    \node[vertex, fill=cyan!40] (v16) at (-2.40, 6.00) {010111};
    \node[vertex, fill=orange!40] (v22) at (-1.20, 4.50) {101100};
    \node[vertex, fill=green!40] (v23) at (-1.20, 6.00) {111100};
    \node[vertex, fill=orange!40] (v24) at (1.20, 6.00) {101110};

    \draw[edge] (v5) -- (v9);
    \draw[edge] (v5) -- (v16);
    \draw[edge] (v8) -- (v9);
    \draw[edge] (v8) -- (v14);
    \draw[edge] (v8) -- (v24);
    \draw[edge] (v12) -- (v14);
    \draw[edge] (v12) -- (v23);
    \draw[edge] (v13) -- (v14);
    \draw[edge] (v13) -- (v16);
    \draw[edge] (v22) -- (v23);
    \draw[edge] (v22) -- (v24);
\end{tikzpicture}
}

\newcommand{\tikzEEightFive}{
\begin{tikzpicture}[
    vertex/.style={circle, draw, minimum size=0.6cm, inner sep=1pt, font=\sffamily\tiny},
    edge/.style={->, >=stealth, thick}, scale=1.2,
    box/.style={rounded corners, dashed, thick, draw=gray, fill=gray!10}
]

    \node[vertex, fill=pink!40] (v28) at (3.60, 7.50) {01111100};
    \node[vertex, fill=green!40] (v31) at (2.40, 7.50) {01011110};
    \node[vertex, fill=teal!40] (v19) at (1.20, 7.50) {00111110};
    \node[vertex, fill=orange!40] (v14) at (0.00, 7.50) {00011111};
    \node[vertex, fill=orange!40] (v20) at (2.40, 9.00) {00111111};
    \node[vertex, fill=cyan!40] (v26) at (-2.40, 7.50) {01121000};
    \node[vertex, fill=pink!40] (v29) at (-1.20, 9.00) {01121100};
    \node[vertex, fill=teal!40] (v32) at (3.60, 9.00) {01111110};
    \node[vertex, fill=green!40] (v36) at (0.00, 9.00) {01011111};
    \node[vertex, fill=purple!40] (v47) at (-3.60, 7.50) {11111000};
    \node[vertex, fill=cyan!40] (v48) at (-3.60, 9.00) {11121000};
    \node[vertex, fill=red!40] (v50) at (-1.20, 7.50) {10111100};
    \node[vertex, fill=purple!40] (v51) at (-2.40, 9.00) {11111100};
    \node[vertex, fill=red!40] (v58) at (1.20, 9.00) {10111110};

    \draw[edge] (v14) -- (v20);
    \draw[edge] (v14) -- (v36);
    \draw[edge] (v19) -- (v20);
    \draw[edge] (v19) -- (v32);
    \draw[edge] (v19) -- (v58);
    \draw[edge] (v26) -- (v29);
    \draw[edge] (v26) -- (v48);
    \draw[edge] (v28) -- (v29);
    \draw[edge] (v28) -- (v32);
    \draw[edge] (v28) -- (v51);
    \draw[edge] (v31) -- (v32);
    \draw[edge] (v31) -- (v36);
    \draw[edge] (v47) -- (v48);
    \draw[edge] (v47) -- (v51);
    \draw[edge] (v50) -- (v51);
    \draw[edge] (v50) -- (v58);
\end{tikzpicture}
}

\newcommand{\FwPosetBCBC}{
\begin{tikzpicture}[
    >=stealth, scale=0.8,
    every node/.style={
        draw,
        rounded corners=2pt,
        inner sep=2pt,
        font=\small
    }
]
\node[fill=blue!20] (0001-0121) at (-6.00,-8.00) {0001-0121};
\node[fill=blue!20] (0011-0111) at (-3.00,-8.00) {0011-0111};
\node[fill=blue!20] (1000-1220) at (0.00,-8.00) {1000-1220};
\node[fill=blue!20] (1100-1120) at (3.00,-8.00) {1100-1120};
\node[fill=blue!20] (1110) at (6.00,-8.00) {1110};
\node[fill=blue!20] (0122) at (-3.00,-6.00) {0122};
\node[fill=blue!20] (1111-1231) at (0.00,-6.00) {1111-1231};
\node[fill=blue!20] (1121-1221) at (3.00,-6.00) {1121-1221};
\node[fill=blue!20] (1122-1342) at (-3.00,-4.00) {1122-1342};
\node[fill=blue!20] (1222-1242) at (0.00,-4.00) {1222-1242};
\node[fill=blue!20] (1232) at (3.00,-4.00) {1232};
\node[fill=blue!20] (2342) at (0.00,-2.00) {2342};

\draw[->] (0001-0121) -- (0122);
\draw[->] (0001-0121) -- (1111-1231);
\draw[->] (0001-0121) -- (1121-1221);
\draw[->] (0011-0111) -- (0122);
\draw[->] (0011-0111) -- (1111-1231);
\draw[->] (0011-0111) -- (1121-1221);
\draw[->] (0122) -- (1122-1342);
\draw[->] (0122) -- (1222-1242);
\draw[->] (0122) -- (1232);
\draw[->] (1000-1220) -- (1111-1231);
\draw[->] (1000-1220) -- (1121-1221);
\draw[->] (1100-1120) -- (1111-1231);
\draw[->] (1100-1120) -- (1121-1221);
\draw[->] (1110) -- (1111-1231);
\draw[->] (1110) -- (1121-1221);
\draw[->] (1111-1231) -- (1122-1342);
\draw[->] (1111-1231) -- (1222-1242);
\draw[->] (1111-1231) -- (1232);
\draw[->] (1121-1221) -- (1122-1342);
\draw[->] (1121-1221) -- (1222-1242);
\draw[->] (1121-1221) -- (1232);
\draw[->] (1122-1342) -- (2342);
\draw[->] (1222-1242) -- (2342);
\draw[->] (1232) -- (2342);
\end{tikzpicture}
}

\newcommand{\FwPosetBCD}{
\begin{tikzpicture}[
    >=stealth, scale=1.7,
    every node/.style={
        draw,
        rounded corners=2pt,
        inner sep=2pt,
        font=\small
    }
]
\node[fill=blue!20] (1000-1100-1122-1220-1242-1342) at (-3.00,-3.00) {1000-1100-1122-1220-1242-1342};
\node[fill=blue!20] (1110-1111-1121-1221-1231-1232) at (0.00,-3.00) {1110-1111-1121-1221-1231-1232};
\node[fill=blue!20] (1120-1222) at (3.00,-3.00) {1120-1222};
\node[fill=blue!20] (2342) at (0.00,-2.00) {2342};

\draw[->] (1000-1100-1122-1220-1242-1342) -- (2342);
\draw[->] (1110-1111-1121-1221-1231-1232) -- (2342);
\draw[->] (1120-1222) -- (2342);
\end{tikzpicture}
}
\newcommand{\FwPosetBCDB}{
\begin{tikzpicture}[
    >=stealth, scale=1,
    every node/.style={
        draw,
        rounded corners=2pt,
        inner sep=2pt,
        font=\small
    }
]
\node[fill=blue!20] (1000-1220-1222) at (-7.50,-4.00) {1000-1220-1222};
\node[fill=blue!20] (1100) at (-4.50,-4.00) {1100};
\node[fill=blue!20] (1110-1111-1221) at (-1.50,-4.00) {1110-1111-1221};
\node[fill=blue!20] (1120-1122-1342) at (1.50,-4.00) {1120-1122-1342};
\node[fill=blue!20] (1121-1231-1232) at (4.50,-4.00) {1121-1231-1232};
\node[fill=blue!20] (1242) at (7.50,-4.00) {1242};
\node[fill=blue!20] (2342) at (0.00,-2.00) {2342};

\draw[->] (1000-1220-1222) -- (2342);
\draw[->] (1100) -- (2342);
\draw[->] (1110-1111-1221) -- (2342);
\draw[->] (1120-1122-1342) -- (2342);
\draw[->] (1121-1231-1232) -- (2342);
\draw[->] (1242) -- (2342);
\end{tikzpicture}
}
\newcommand{\FwPosetBCDBCD}{
\begin{tikzpicture}[
    >=stealth, scale=0.8,
    every node/.style={
        draw,
        rounded corners=2pt,
        inner sep=2pt,
        font=\small
    }
]
\node[fill=blue!20] (1000-1342) at (-9.00,-4.00) {1000-1342};
\node[fill=blue!20] (1100-1242) at (-6.00,-4.00) {1100-1242};
\node[fill=blue!20] (1110-1232) at (-3.00,-4.00) {1110-1232};
\node[fill=blue!20] (1111-1231) at (0.00,-4.00) {1111-1231};
\node[fill=blue!20] (1120-1222) at (3.00,-4.00) {1120-1222};
\node[fill=blue!20] (1121-1221) at (6.00,-4.00) {1121-1221};
\node[fill=blue!20] (1122-1220) at (9.00,-4.00) {1122-1220};
\node[fill=blue!20] (2342) at (0.00,-2.00) {2342};

\draw[->] (1000-1342) -- (2342);
\draw[->] (1100-1242) -- (2342);
\draw[->] (1110-1232) -- (2342);
\draw[->] (1111-1231) -- (2342);
\draw[->] (1120-1222) -- (2342);
\draw[->] (1121-1221) -- (2342);
\draw[->] (1122-1220) -- (2342);
\end{tikzpicture}
}

\newcommand{\FwPosetABC}{
\begin{tikzpicture}[
    >=stealth, scale=1.2,
    every node/.style={
        draw,
        rounded corners=2pt,
        inner sep=2pt,
        font=\small
    }
]
\node[fill=blue!20] (0001-0011-0121-1111-1221-1231) at (-1.50,-4.00) {0001-0011-0121-1111-1221-1231};
\node[fill=blue!20] (0111-1121) at (1.50,-4.00) {0111-1121};
\node[fill=blue!20] (0122-1122-1222-1242-1342-2342) at (-1.50,-2.00) {0122-1122-1222-1242-1342-2342};
\node[fill=blue!20] (1232) at (1.50,-2.00) {1232};

\draw[->] (0001-0011-0121-1111-1221-1231) -- (0122-1122-1222-1242-1342-2342);
\draw[->] (0001-0011-0121-1111-1221-1231) -- (1232);
\draw[->] (0111-1121) -- (0122-1122-1222-1242-1342-2342);
\draw[->] (0111-1121) -- (1232);
\end{tikzpicture}
}
\newcommand{\FwPosetABCA}{
\begin{tikzpicture}[
    >=stealth, scale=1.2,
    every node/.style={
        draw,
        rounded corners=2pt,
        inner sep=2pt,
        font=\small
    }
]
\node[fill=blue!20] (0001-0011-1111-1121) at (-1.50,-4.00) {0001-0011-1111-1121};
\node[fill=blue!20] (0111-0121-1221-1231) at (1.50,-4.00) {0111-0121-1221-1231};
\node[fill=blue!20] (0122-1222-1242-2342) at (-4.50,-2.00) {0122-1222-1242-2342};
\node[fill=blue!20] (1122) at (-1.50,-2.00) {1122};
\node[fill=blue!20] (1232) at (1.50,-2.00) {1232};
\node[fill=blue!20] (1342) at (4.50,-2.00) {1342};

\draw[->] (0001-0011-1111-1121) -- (0122-1222-1242-2342);
\draw[->] (0001-0011-1111-1121) -- (1122);
\draw[->] (0001-0011-1111-1121) -- (1232);
\draw[->] (0111-0121-1221-1231) -- (0122-1222-1242-2342);
\draw[->] (0111-0121-1221-1231) -- (1232);
\draw[->] (0111-0121-1221-1231) -- (1342);
\end{tikzpicture}
}
\newcommand{\FwPosetABCABC}{
\begin{tikzpicture}[
    >=stealth, scale=0.8,
    every node/.style={
        draw,
        rounded corners=2pt,
        inner sep=2pt,
        font=\small
    }
]
\node[fill=blue!20] (0001-1231) at (-4.50,-4.00) {0001-1231};
\node[fill=blue!20] (0011-1221) at (-1.50,-4.00) {0011-1221};
\node[fill=blue!20] (0111-1121) at (1.50,-4.00) {0111-1121};
\node[fill=blue!20] (0121-1111) at (4.50,-4.00) {0121-1111};
\node[fill=blue!20] (0122-2342) at (-4.50,-2.00) {0122-2342};
\node[fill=blue!20] (1122-1342) at (-1.50,-2.00) {1122-1342};
\node[fill=blue!20] (1222-1242) at (1.50,-2.00) {1222-1242};
\node[fill=blue!20] (1232) at (4.50,-2.00) {1232};

\draw[->] (0001-1231) -- (0122-2342);
\draw[->] (0001-1231) -- (1122-1342);
\draw[->] (0001-1231) -- (1222-1242);
\draw[->] (0001-1231) -- (1232);
\draw[->] (0011-1221) -- (0122-2342);
\draw[->] (0011-1221) -- (1122-1342);
\draw[->] (0011-1221) -- (1222-1242);
\draw[->] (0011-1221) -- (1232);
\draw[->] (0111-1121) -- (0122-2342);
\draw[->] (0111-1121) -- (1122-1342);
\draw[->] (0111-1121) -- (1222-1242);
\draw[->] (0111-1121) -- (1232);
\draw[->] (0121-1111) -- (0122-2342);
\draw[->] (0121-1111) -- (1122-1342);
\draw[->] (0121-1111) -- (1222-1242);
\draw[->] (0121-1111) -- (1232);
\end{tikzpicture}
}

\newcommand{\StratumA}{

\begin{tikzpicture}[
    >=stealth, scale=0.8,
    every node/.style={draw,circle,minimum size=8mm,font=\small}
]
\node[fill=blue!20] (0001) at (-4.50,-4.00) {0001};
\node[fill=red!20] (0010) at (-1.50,-4.00) {0010};
\node[fill=green!20] (0100) at (1.50,-4.00) {0100};
\node[fill=red!20] (1000) at (4.50,-4.00) {1000};
\node[fill=blue!20] (0011) at (-3.00,-2.00) {0011};
\node[fill=red!20] (0110) at (0.00,-2.00) {0110};
\node[fill=red!20] (1100) at (3.00,-2.00) {1100};

\draw[->] (0100) -- (0110);
\draw[->] (0100) -- (1100);
\draw[->] (0010) -- (0110);
\draw[->] (0010) -- (0011);
\draw[->] (0001) -- (0011);
\draw[->] (1000) -- (1100);
\end{tikzpicture}
}

\newcommand{\StratumB}{
\begin{tikzpicture}[
    >=stealth, scale=0.8,
    every node/.style={draw,circle,minimum size=8mm,font=\small}
]
\node[fill=red!20] (0001) at (-4.50,-4.00) {0001};
\node[fill=red!20] (0010) at (-1.50,-4.00) {0010};
\node[fill=green!20] (0100) at (1.50,-4.00) {0100};
\node[fill=red!20] (1000) at (4.50,-4.00) {1000};
\node[fill=blue!20] (0011) at (-3.00,-2.00) {0011};
\node[fill=red!20] (0110) at (0.00,-2.00) {0110};
\node[fill=red!20] (1100) at (3.00,-2.00) {1100};

\draw[->] (0100) -- (0110);
\draw[->] (0100) -- (1100);
\draw[->] (0010) -- (0110);
\draw[->] (0010) -- (0011);
\draw[->] (0001) -- (0011);
\draw[->] (1000) -- (1100);
\end{tikzpicture}
}

\newcommand{\StratumG}{
\begin{tikzpicture}[
    >=stealth, scale=0.8,
    every node/.style={draw,circle,minimum size=8mm,font=\small}
]
\node[fill=blue!20] (0001) at (-4.50,-6.00) {0001};
\node[fill=green!20] (0010) at (-1.50,-6.00) {0010};
\node[fill=red!20] (0100) at (1.50,-6.00) {0100};
\node[fill=red!20] (1000) at (4.50,-6.00) {1000};
\node[fill=blue!20] (0011) at (-3.00,-4.00) {0011};
\node[fill=red!20] (0110) at (0.00,-4.00) {0110};
\node[fill=blue!20] (1100) at (3.00,-4.00) {1100};
\node[fill=blue!20] (0111) at (-3.00,-2.00) {0111};
\node[fill=red!20] (0120) at (0.00,-2.00) {0120};
\node[fill=blue!20] (1110) at (3.00,-2.00) {1110};

\draw[->] (0110) -- (0120);
\draw[->] (0110) -- (0111);
\draw[->] (0110) -- (1110);
\draw[->] (0100) -- (0110);
\draw[->] (0100) -- (1100);
\draw[->] (0011) -- (0111);
\draw[->] (1100) -- (1110);
\draw[->] (0010) -- (0110);
\draw[->] (0010) -- (0011);
\draw[->] (0001) -- (0011);
\draw[->] (1000) -- (1100);
\end{tikzpicture}
}

\newcommand{\StratumL}{
\begin{tikzpicture}[
    >=stealth, scale=0.8,
    every node/.style={draw,circle,minimum size=8mm,font=\small}
]
\node[fill=blue!20] (0001) at (-4.50,-6.00) {0001};
\node[fill=green!20] (0010) at (-1.50,-6.00) {0010};
\node[fill=red!20] (0100) at (1.50,-6.00) {0100};
\node[fill=blue!20] (1000) at (4.50,-6.00) {1000};
\node[fill=blue!20] (0011) at (-3.00,-4.00) {0011};
\node[fill=red!20] (0110) at (0.00,-4.00) {0110};
\node[fill=blue!20] (1100) at (3.00,-4.00) {1100};
\node[fill=blue!20] (0111) at (-3.00,-2.00) {0111};
\node[fill=red!20] (0120) at (0.00,-2.00) {0120};
\node[fill=blue!20] (1110) at (3.00,-2.00) {1110};

\draw[->] (0100) -- (1100);
\draw[->] (0100) -- (0110);
\draw[->] (0001) -- (0011);
\draw[->] (0110) -- (0111);
\draw[->] (0110) -- (1110);
\draw[->] (0110) -- (0120);
\draw[->] (0011) -- (0111);
\draw[->] (1000) -- (1100);
\draw[->] (0010) -- (0110);
\draw[->] (0010) -- (0011);
\draw[->] (1100) -- (1110);
\end{tikzpicture}
}

\newcommand{\StratumP}{
\begin{tikzpicture}[
    >=stealth, scale=0.8,
    every node/.style={draw,circle,minimum size=8mm,font=\small}
]
\node[fill=blue!20] (0001) at (-4.50,-8.00) {0001};
\node[fill=green!20] (0010) at (-1.50,-8.00) {0010};
\node[fill=red!20] (0100) at (1.50,-8.00) {0100};
\node[fill=green!20] (1000) at (4.50,-8.00) {1000};
\node[fill=blue!20] (0011) at (-3.00,-6.00) {0011};
\node[fill=red!20] (0110) at (0.00,-6.00) {0110};
\node[fill=red!20] (1100) at (3.00,-6.00) {1100};
\node[fill=blue!20] (0111) at (-3.00,-4.00) {0111};
\node[fill=red!20] (0120) at (0.00,-4.00) {0120};
\node[fill=red!20] (1110) at (3.00,-4.00) {1110};
\node[fill=blue!20] (0121) at (-3.00,-2.00) {0121};
\node[fill=blue!20] (1111) at (0.00,-2.00) {1111};
\node[fill=red!20] (1120) at (3.00,-2.00) {1120};

\draw[->] (0100) -- (1100);
\draw[->] (0100) -- (0110);
\draw[->] (1110) -- (1111);
\draw[->] (1110) -- (1120);
\draw[->] (0001) -- (0011);
\draw[->] (0110) -- (0111);
\draw[->] (0110) -- (1110);
\draw[->] (0110) -- (0120);
\draw[->] (0011) -- (0111);
\draw[->] (0111) -- (1111);
\draw[->] (0111) -- (0121);
\draw[->] (1000) -- (1100);
\draw[->] (0010) -- (0110);
\draw[->] (0010) -- (0011);
\draw[->] (1100) -- (1110);
\draw[->] (0120) -- (0121);
\draw[->] (0120) -- (1120);
\end{tikzpicture}
}

\newcommand{\StratumN}{
\begin{tikzpicture}[
    >=stealth, scale=0.8,
    every node/.style={draw,circle,minimum size=8mm,font=\small}
]
\node[fill=red!20] (0001) at (-4.50,-8.00) {0001};
\node[fill=green!20] (0010) at (-1.50,-8.00) {0010};
\node[fill=blue!20] (0100) at (1.50,-8.00) {0100};
\node[fill=blue!20] (1000) at (4.50,-8.00) {1000};
\node[fill=red!20] (0011) at (-3.00,-6.00) {0011};
\node[fill=blue!20] (0110) at (0.00,-6.00) {0110};
\node[fill=red!20] (1100) at (3.00,-6.00) {1100};
\node[fill=blue!20] (0111) at (-3.00,-4.00) {0111};
\node[fill=blue!20] (0120) at (0.00,-4.00) {0120};
\node[fill=red!20] (1110) at (3.00,-4.00) {1110};
\node[fill=blue!20] (0121) at (-3.00,-2.00) {0121};
\node[fill=blue!20] (1111) at (0.00,-2.00) {1111};
\node[fill=red!20] (1120) at (3.00,-2.00) {1120};

\draw[->] (0100) -- (1100);
\draw[->] (0100) -- (0110);
\draw[->] (1110) -- (1111);
\draw[->] (1110) -- (1120);
\draw[->] (0001) -- (0011);
\draw[->] (0110) -- (0111);
\draw[->] (0110) -- (1110);
\draw[->] (0110) -- (0120);
\draw[->] (0011) -- (0111);
\draw[->] (0111) -- (1111);
\draw[->] (0111) -- (0121);
\draw[->] (1000) -- (1100);
\draw[->] (0010) -- (0110);
\draw[->] (0010) -- (0011);
\draw[->] (1100) -- (1110);
\draw[->] (0120) -- (0121);
\draw[->] (0120) -- (1120);
\end{tikzpicture}
}

\newcommand{\StratumQ}{
\begin{tikzpicture}[
    >=stealth, scale=0.8,
    every node/.style={draw,circle,minimum size=8mm,font=\small}
]
\node[fill=red!20] (0001) at (-4.50,-6.00) {0001};
\node[fill=green!20] (0010) at (-1.50,-6.00) {0010};
\node[fill=red!20] (0100) at (1.50,-6.00) {0100};
\node[fill=blue!20] (1000) at (4.50,-6.00) {1000};
\node[fill=red!20] (0011) at (-3.00,-4.00) {0011};
\node[fill=red!20] (0110) at (0.00,-4.00) {0110};
\node[fill=blue!20] (1100) at (3.00,-4.00) {1100};
\node[fill=blue!20] (0111) at (-3.00,-2.00) {0111};
\node[fill=red!20] (0120) at (0.00,-2.00) {0120};
\node[fill=blue!20] (1110) at (3.00,-2.00) {1110};

\draw[->] (0100) -- (1100);
\draw[->] (0100) -- (0110);
\draw[->] (0001) -- (0011);
\draw[->] (0110) -- (0111);
\draw[->] (0110) -- (1110);
\draw[->] (0110) -- (0120);
\draw[->] (0011) -- (0111);
\draw[->] (1000) -- (1100);
\draw[->] (0010) -- (0110);
\draw[->] (0010) -- (0011);
\draw[->] (1100) -- (1110);
\end{tikzpicture}
}

\newcommand{\StratumK}{
    \begin{tikzpicture}[
    >=stealth, scale=0.8,
    every node/.style={draw,circle,minimum size=8mm,font=\small}
]
\node[fill=red!20] (0001) at (-4.50,-6.00) {0001};
\node[fill=green!20] (0010) at (-1.50,-6.00) {0010};
\node[fill=red!20] (0100) at (1.50,-6.00) {0100};
\node[fill=blue!20] (1000) at (4.50,-6.00) {1000};
\node[fill=red!20] (0011) at (-3.00,-4.00) {0011};
\node[fill=red!20] (0110) at (0.00,-4.00) {0110};
\node[fill=blue!20] (1100) at (3.00,-4.00) {1100};
\node[fill=blue!20] (0111) at (-3.00,-2.00) {0111};
\node[fill=red!20] (0120) at (0.00,-2.00) {0120};
\node[fill=blue!20] (1110) at (3.00,-2.00) {1110};

\draw[->] (0001) -- (0011);
\draw[->] (0110) -- (0111);
\draw[->] (0110) -- (1110);
\draw[->] (0110) -- (0120);
\draw[->] (1000) -- (1100);
\draw[->] (0010) -- (0011);
\draw[->] (0010) -- (0110);
\draw[->] (0100) -- (1100);
\draw[->] (0100) -- (0110);
\draw[->] (1100) -- (1110);
\draw[->] (0011) -- (0111);
\end{tikzpicture}
}

\newcommand{\StratumKGrob}{
\left\{ \begin{aligned} &\mu^2 - 2\mu + \frac{-2x_{0001}s_{0100}s_{0120} - x_{0001}s_{0110}^2 + \cdots + x_{0110}s_{0001}s_{0110} + x_{0110}s_{0011}s_{0100}}{2x_{0001}s_{0100}s_{0120} - x_{0001}s_{0110}^2 - \cdots + x_{0110}s_{0001}s_{0110} - x_{0110}s_{0011}s_{0100}}, \\ &b_{0010r} + \frac{-4x_{0001}s_{0001}s_{0100}s_{0120}^2 + 2x_{0001}s_{0001}s_{0110}^2s_{0120} + \cdots + x_{0110}s_{0001}s_{0011}s_{0110}^2 - x_{0110}s_{0011}^2s_{0100}s_{0110}}{8x_{0001}^2s_{0100}s_{0110}s_{0120} - 8x_{0001}x_{0100}s_{0001}s_{0110}s_{0120} + \cdots - 4x_{0100}x_{0110}s_{0001}s_{0011}s_{0110} + 4x_{0110}^2s_{0001}s_{0011}s_{0100}}\mu \\ &\quad + \frac{4x_{0001}s_{0001}s_{0100}s_{0120}^2 - 2x_{0001}s_{0001}s_{0110}^2s_{0120} - \cdots - x_{0110}s_{0001}s_{0011}s_{0110}^2 + x_{0110}s_{0011}^2s_{0100}s_{0110}}{8x_{0001}^2s_{0100}s_{0110}s_{0120} - 8x_{0001}x_{0100}s_{0001}s_{0110}s_{0120} + \cdots - 4x_{0100}x_{0110}s_{0001}s_{0011}s_{0110} + 4x_{0110}^2s_{0001}s_{0011}s_{0100}}, \\ &b_{0010u} + \frac{4x_{0001}^2s_{0100}s_{0120}^2 - 2x_{0001}^2s_{0110}^2s_{0120} - \cdots - x_{0110}^2s_{0001}s_{0011}s_{0110} + x_{0110}^2s_{0011}^2s_{0100}}{8x_{0001}^2s_{0100}s_{0110}s_{0120} - 8x_{0001}x_{0100}s_{0001}s_{0110}s_{0120} + \cdots - 4x_{0100}x_{0110}s_{0001}s_{0011}s_{0110} + 4x_{0110}^2s_{0001}s_{0011}s_{0100}}\mu 
\\ &\quad + \frac{-4x_{0001}^2s_{0100}s_{0120}^2 + 2x_{0001}^2s_{0110}^2s_{0120} + \cdots + x_{0110}^2s_{0001}s_{0011}s_{0110} - x_{0110}^2s_{0011}^2s_{0100}}{8x_{0001}^2s_{0100}s_{0110}s_{0120} - 8x_{0001}x_{0100}s_{0001}s_{0110}s_{0120} + \cdots - 4x_{0100}x_{0110}s_{0001}s_{0011}s_{0110} + 4x_{0110}^2s_{0001}s_{0011}s_{0100}}, \\ &\lambda_{0010} - 1 \end{aligned} \right\}
}

\newcommand{\StratumKEqns}{
\left\{ \begin{aligned} 
(-2x_{0001})b_{0010r}\mu + (-2s_{0001})b_{0010u}\mu + (2x_{0001})b_{0010r} + (2s_{0001})b_{0010u} + (-s_{0011})
\\
(2x_{0100})b_{0010r}\mu^2 + (2s_{0100})b_{0010u}\mu^2 + (-4x_{0100})b_{0010r}\mu + (-4s_{0100})b_{0010u}\mu + (-2x_{0100})b_{0010r} + (-2s_{0100})b_{0010u} + s_{0110}\mu + (-s_{0110})
\\
(-2x_{0110})b_{0010r}\mu + (-2s_{0110})b_{0010u}\mu + (2x_{0110})b_{0010r} + (2s_{0110})b_{0010u} + (-2s_{0120})
\\
\lambda_{0010}-1
\end{aligned}
\right\}
}

\newcommand{\StratumR}{
    \begin{tikzpicture}[
    >=stealth, scale=0.8,
    every node/.style={draw,circle,minimum size=8mm,font=\small}
]
\node[fill=green!20] (0001) at (-4.50,-12.00) {0001};
\node[fill=green!20] (0010) at (-1.50,-12.00) {0010};
\node[fill=red!20] (0100) at (1.50,-12.00) {0100};
\node[fill=green!20] (1000) at (4.50,-12.00) {1000};
\node[fill=green!20] (0011) at (-3.00,-10.00) {0011};
\node[fill=red!20] (0110) at (0.00,-10.00) {0110};
\node[fill=red!20] (1100) at (3.00,-10.00) {1100};
\node[fill=red!20] (0111) at (-3.00,-8.00) {0111};
\node[fill=red!20] (0120) at (0.00,-8.00) {0120};
\node[fill=red!20] (1110) at (3.00,-8.00) {1110};
\node[fill=red!20] (0121) at (-3.00,-6.00) {0121};
\node[fill=red!20] (1111) at (0.00,-6.00) {1111};
\node[fill=red!20] (1120) at (3.00,-6.00) {1120};
\node[fill=red!20] (0122) at (-3.00,-4.00) {0122};
\node[fill=red!20] (1121) at (0.00,-4.00) {1121};
\node[fill=blue!20] (1220) at (3.00,-4.00) {1220};
\node[fill=red!20] (1122) at (-1.50,-2.00) {1122};
\node[fill=blue!20] (1221) at (1.50,-2.00) {1221};

\draw[->] (1100) -- (1110);
\draw[->] (1111) -- (1121);
\draw[->] (0121) -- (1121);
\draw[->] (0121) -- (0122);
\draw[->] (0111) -- (0121);
\draw[->] (0111) -- (1111);
\draw[->] (0100) -- (0110);
\draw[->] (0100) -- (1100);
\draw[->] (1220) -- (1221);
\draw[->] (1110) -- (1120);
\draw[->] (1110) -- (1111);
\draw[->] (0122) -- (1122);
\draw[->] (0011) -- (0111);
\draw[->] (0010) -- (0110);
\draw[->] (0010) -- (0011);
\draw[->] (0110) -- (0120);
\draw[->] (0110) -- (1110);
\draw[->] (0110) -- (0111);
\draw[->] (1000) -- (1100);
\draw[->] (1121) -- (1221);
\draw[->] (1121) -- (1122);
\draw[->] (1120) -- (1220);
\draw[->] (1120) -- (1121);
\draw[->] (0120) -- (1120);
\draw[->] (0120) -- (0121);
\draw[->] (0001) -- (0011);
\end{tikzpicture}

}

\newcommand{\FwPosetA}{
\begin{tikzpicture}[
    >=stealth, scale=0.65,
    every node/.style={
        draw,
        rounded corners=2pt,
        inner sep=2pt,
        font=\small
    }
]
\node[fill=blue!20] (0001) at (-3.00,-18.00) {0001};
\node[fill=blue!20] (0010) at (0.00,-18.00) {0010};
\node[fill=gray] (0100-1100) at (3.00,-18.00) {0100-1100};
\node[fill=blue!20] (0011) at (-1.50,-16.00) {0011};
\node[fill=gray] (0110-1110) at (1.50,-16.00) {0110-1110};
\node[fill=gray] (0111-1111) at (-1.50,-14.00) {0111-1111};
\node[fill=gray] (0120-1120) at (1.50,-14.00) {0120-1120};
\node[fill=gray] (0121-1121) at (-1.50,-12.00) {0121-1121};
\node[fill=blue!20] (1220) at (1.50,-12.00) {1220};
\node[fill=gray] (0122-1122) at (-1.50,-10.00) {0122-1122};
\node[fill=blue!20] (1221) at (1.50,-10.00) {1221};
\node[fill=blue!20] (1222) at (-1.50,-8.00) {1222};
\node[fill=blue!20] (1231) at (1.50,-8.00) {1231};
\node[fill=blue!20] (1232) at (0.00,-6.00) {1232};
\node[fill=blue!20] (1242) at (0.00,-4.00) {1242};
\node[fill=gray] (1342-2342) at (0.00,-2.00) {1342-2342};

\draw[->] (0001) -- (0011);
\draw[->] (0010) -- (0011);
\draw[->] (0010) -- (0110-1110);
\draw[->] (0011) -- (0111-1111);
\draw[->] (0100-1100) -- (0110-1110);
\draw[->] (0110-1110) -- (0111-1111);
\draw[->] (0110-1110) -- (0120-1120);
\draw[->] (0111-1111) -- (0121-1121);
\draw[->] (0120-1120) -- (0121-1121);
\draw[->] (0120-1120) -- (1220);
\draw[->] (0121-1121) -- (0122-1122);
\draw[->] (0121-1121) -- (1221);
\draw[->] (0122-1122) -- (1222);
\draw[->] (1220) -- (1221);
\draw[->] (1221) -- (1222);
\draw[->] (1221) -- (1231);
\draw[->] (1222) -- (1232);
\draw[->] (1231) -- (1232);
\draw[->] (1232) -- (1242);
\draw[->] (1242) -- (1342-2342);
\end{tikzpicture}
}
\newcommand{\FwPosetB}{
\begin{tikzpicture}[
    >=stealth, scale=0.65,
    every node/.style={
        draw,
        rounded corners=2pt,
        inner sep=2pt,
        font=\small
    }
]
\node[fill=blue!20] (0001) at (-3.00,-16.00) {0001};
\node[fill=gray] (0010-0110) at (0.00,-16.00) {0010-0110};
\node[fill=gray] (1000-1100) at (3.00,-16.00) {1000-1100};
\node[fill=gray] (0011-0111) at (-3.00,-14.00) {0011-0111};
\node[fill=blue!20] (0120) at (0.00,-14.00) {0120};
\node[fill=blue!20] (1110) at (3.00,-14.00) {1110};
\node[fill=blue!20] (0121) at (-3.00,-12.00) {0121};
\node[fill=blue!20] (1111) at (0.00,-12.00) {1111};
\node[fill=gray] (1120-1220) at (3.00,-12.00) {1120-1220};
\node[fill=blue!20] (0122) at (-1.50,-10.00) {0122};
\node[fill=gray] (1121-1221) at (1.50,-10.00) {1121-1221};
\node[fill=gray] (1122-1222) at (-1.50,-8.00) {1122-1222};
\node[fill=blue!20] (1231) at (1.50,-8.00) {1231};
\node[fill=blue!20] (1232) at (0.00,-6.00) {1232};
\node[fill=gray] (1242-1342) at (0.00,-4.00) {1242-1342};
\node[fill=blue!20] (2342) at (0.00,-2.00) {2342};

\draw[->] (0001) -- (0011-0111);
\draw[->] (0010-0110) -- (0011-0111);
\draw[->] (0010-0110) -- (0120);
\draw[->] (0010-0110) -- (1110);
\draw[->] (0011-0111) -- (0121);
\draw[->] (0011-0111) -- (1111);
\draw[->] (0120) -- (0121);
\draw[->] (0120) -- (1120-1220);
\draw[->] (0121) -- (0122);
\draw[->] (0121) -- (1121-1221);
\draw[->] (0122) -- (1122-1222);
\draw[->] (1000-1100) -- (1110);
\draw[->] (1110) -- (1111);
\draw[->] (1110) -- (1120-1220);
\draw[->] (1111) -- (1121-1221);
\draw[->] (1120-1220) -- (1121-1221);
\draw[->] (1121-1221) -- (1122-1222);
\draw[->] (1121-1221) -- (1231);
\draw[->] (1122-1222) -- (1232);
\draw[->] (1231) -- (1232);
\draw[->] (1232) -- (1242-1342);
\draw[->] (1242-1342) -- (2342);
\end{tikzpicture}
}
\newcommand{\FwPosetC}{
\begin{tikzpicture}[
    >=stealth, scale=0.65,
    every node/.style={
        draw,
        rounded corners=2pt,
        inner sep=2pt,
        font=\small
    }
]
\node[fill=gray] (0001-0011) at (-4.50,-14.00) {0001-0011};
\node[fill=blue!20] (0100-0120) at (-1.50,-14.00) {0100-0120};
\node[fill=blue!20] (0110) at (1.50,-14.00) {0110};
\node[fill=blue!20] (1000) at (3.00,-14.00) {1000};
\node[fill=gray] (0111-0121) at (-3.00,-12.00) {0111-0121};
\node[fill=blue!20] (1100-1120) at (0.00,-12.00) {1100-1120};
\node[fill=blue!20] (1110) at (3.00,-12.00) {1110};
\node[fill=blue!20] (0122) at (-3.00,-10.00) {0122};
\node[fill=gray] (1111-1121) at (0.00,-10.00) {1111-1121};
\node[fill=blue!20] (1220) at (3.00,-10.00) {1220};
\node[fill=blue!20] (1122) at (-1.50,-8.00) {1122};
\node[fill=gray] (1221-1231) at (1.50,-8.00) {1221-1231};
\node[fill=blue!20] (1222-1242) at (-1.50,-6.00) {1222-1242};
\node[fill=blue!20] (1232) at (1.50,-6.00) {1232};
\node[fill=blue!20] (1342) at (0.00,-4.00) {1342};
\node[fill=blue!20] (2342) at (0.00,-2.00) {2342};

\draw[->] (0001-0011) -- (0111-0121);
\draw[->] (0001-0011) -- (0111-0121);
\draw[->] (0100-0120) -- (0111-0121);
\draw[->] (0100-0120) -- (1100-1120);
\draw[->] (0110) -- (0111-0121);
\draw[->] (0110) -- (1110);
\draw[->] (0111-0121) -- (0122);
\draw[->] (0111-0121) -- (1111-1121);
\draw[->] (0122) -- (1122);
\draw[->] (1000) -- (1100-1120);
\draw[->] (1000) -- (1110);
\draw[->] (1100-1120) -- (1111-1121);
\draw[->] (1100-1120) -- (1220);
\draw[->] (1110) -- (1111-1121);
\draw[->] (1110) -- (1220);
\draw[->] (1111-1121) -- (1122);
\draw[->] (1111-1121) -- (1221-1231);
\draw[->] (1111-1121) -- (1221-1231);
\draw[->] (1122) -- (1222-1242);
\draw[->] (1122) -- (1232);
\draw[->] (1220) -- (1221-1231);
\draw[->] (1221-1231) -- (1222-1242);
\draw[->] (1221-1231) -- (1232);
\draw[->] (1222-1242) -- (1342);
\draw[->] (1232) -- (1342);
\draw[->] (1342) -- (2342);
\end{tikzpicture}
}
\newcommand{\FwPosetD}{
\begin{tikzpicture}[
    >=stealth, scale=0.65,
    every node/.style={
        draw,
        rounded corners=2pt,
        inner sep=2pt,
        font=\small
    }
]
\node[fill=gray] (0010-0011) at (-3.00,-18.00) {0010-0011};
\node[fill=blue!20] (0100) at (0.00,-18.00) {0100};
\node[fill=blue!20] (1000) at (3.00,-18.00) {1000};
\node[fill=gray] (0110-0111) at (-1.50,-16.00) {0110-0111};
\node[fill=blue!20] (1100) at (1.50,-16.00) {1100};
\node[fill=blue!20] (0120-0122) at (-3.00,-14.00) {0120-0122};
\node[fill=blue!20] (0121) at (0.00,-14.00) {0121};
\node[fill=gray] (1110-1111) at (3.00,-14.00) {1110-1111};
\node[fill=blue!20] (1120-1122) at (-1.50,-12.00) {1120-1122};
\node[fill=blue!20] (1121) at (1.50,-12.00) {1121};
\node[fill=blue!20] (1220-1222) at (-1.50,-10.00) {1220-1222};
\node[fill=blue!20] (1221) at (1.50,-10.00) {1221};
\node[fill=gray] (1231-1232) at (0.00,-8.00) {1231-1232};
\node[fill=blue!20] (1242) at (0.00,-6.00) {1242};
\node[fill=blue!20] (1342) at (0.00,-4.00) {1342};
\node[fill=blue!20] (2342) at (0.00,-2.00) {2342};

\draw[->] (0010-0011) -- (0110-0111);
\draw[->] (0100) -- (0110-0111);
\draw[->] (0100) -- (1100);
\draw[->] (0110-0111) -- (0120-0122);
\draw[->] (0110-0111) -- (0121);
\draw[->] (0110-0111) -- (1110-1111);
\draw[->] (0120-0122) -- (1120-1122);
\draw[->] (0121) -- (1121);
\draw[->] (1000) -- (1100);
\draw[->] (1100) -- (1110-1111);
\draw[->] (1110-1111) -- (1120-1122);
\draw[->] (1110-1111) -- (1121);
\draw[->] (1120-1122) -- (1220-1222);
\draw[->] (1121) -- (1221);
\draw[->] (1220-1222) -- (1231-1232);
\draw[->] (1221) -- (1231-1232);
\draw[->] (1231-1232) -- (1242);
\draw[->] (1242) -- (1342);
\draw[->] (1342) -- (2342);
\end{tikzpicture}
}
\newcommand{\FwPosetAB}{
\begin{tikzpicture}[
    >=stealth, scale=0.65,
    every node/.style={
        draw,
        rounded corners=2pt,
        inner sep=2pt,
        font=\small
    }
]
\node[fill=blue!20] (0001) at (-1.50,-12.00) {0001};
\node[fill=gray] (0010-1110-0110) at (1.50,-12.00) {0010-1110-0110};
\node[fill=gray] (0011-1111-0111) at (-1.50,-10.00) {0011-1111-0111};
\node[fill=gray] (0120-1120-1220) at (1.50,-10.00) {0120-1120-1220};
\node[fill=gray] (0121-1121-1221) at (0.00,-8.00) {0121-1121-1221};
\node[fill=gray] (0122-1122-1222) at (-1.50,-6.00) {0122-1122-1222};
\node[fill=blue!20] (1231) at (1.50,-6.00) {1231};
\node[fill=blue!20] (1232) at (0.00,-4.00) {1232};
\node[fill=gray] (1242-2342-1342) at (0.00,-2.00) {1242-2342-1342};

\draw[->] (0001) -- (0011-1111-0111);
\draw[->] (0010-1110-0110) -- (0011-1111-0111);
\draw[->] (0010-1110-0110) -- (0120-1120-1220);
\draw[->] (0011-1111-0111) -- (0121-1121-1221);
\draw[->] (0120-1120-1220) -- (0121-1121-1221);
\draw[->] (0121-1121-1221) -- (0122-1122-1222);
\draw[->] (0121-1121-1221) -- (1231);
\draw[->] (0122-1122-1222) -- (1232);
\draw[->] (1231) -- (1232);
\draw[->] (1232) -- (1242-2342-1342);
\end{tikzpicture}
}
\newcommand{\FwPosetAC}{
\begin{tikzpicture}[
    >=stealth, scale=0.65,
    every node/.style={
        draw,
        rounded corners=2pt,
        inner sep=2pt,
        font=\small
    }
]
\node[fill=gray] (0001-0011) at (-4.50,-10.00) {0001-0011};
\node[fill=blue!20] (0100-1120) at (-1.50,-10.00) {0100-1120};
\node[fill=gray] (0110-1110) at (1.50,-10.00) {0110-1110};
\node[fill=blue!20] (0120-1100) at (4.50,-10.00) {0120-1100};
\node[fill=blue!20] (0111-1121) at (-3.00,-8.00) {0111-1121};
\node[fill=blue!20] (0121-1111) at (0.00,-8.00) {0121-1111};
\node[fill=blue!20] (1220) at (3.00,-8.00) {1220};
\node[fill=gray] (0122-1122) at (-1.50,-6.00) {0122-1122};
\node[fill=gray] (1221-1231) at (1.50,-6.00) {1221-1231};
\node[fill=blue!20] (1222-1242) at (-1.50,-4.00) {1222-1242};
\node[fill=blue!20] (1232) at (1.50,-4.00) {1232};
\node[fill=gray] (1342-2342) at (0.00,-2.00) {1342-2342};

\draw[->] (0001-0011) -- (0111-1121);
\draw[->] (0001-0011) -- (0111-1121);
\draw[->] (0001-0011) -- (0121-1111);
\draw[->] (0001-0011) -- (0121-1111);
\draw[->] (0100-1120) -- (0111-1121);
\draw[->] (0100-1120) -- (1220);
\draw[->] (0110-1110) -- (0111-1121);
\draw[->] (0110-1110) -- (0121-1111);
\draw[->] (0110-1110) -- (1220);
\draw[->] (0111-1121) -- (0122-1122);
\draw[->] (0111-1121) -- (1221-1231);
\draw[->] (0111-1121) -- (1221-1231);
\draw[->] (0120-1100) -- (0121-1111);
\draw[->] (0120-1100) -- (1220);
\draw[->] (0121-1111) -- (0122-1122);
\draw[->] (0121-1111) -- (1221-1231);
\draw[->] (0121-1111) -- (1221-1231);
\draw[->] (0122-1122) -- (1222-1242);
\draw[->] (0122-1122) -- (1232);
\draw[->] (0122-1122) -- (1222-1242);
\draw[->] (1220) -- (1221-1231);
\draw[->] (1221-1231) -- (1222-1242);
\draw[->] (1221-1231) -- (1232);
\draw[->] (1222-1242) -- (1342-2342);
\draw[->] (1222-1242) -- (1342-2342);
\draw[->] (1232) -- (1342-2342);
\end{tikzpicture}
}
\newcommand{\FwPosetAD}{
\begin{tikzpicture}[
    >=stealth, scale=0.65,
    every node/.style={
        draw,
        rounded corners=2pt,
        inner sep=2pt,
        font=\small
    }
]
\node[fill=gray] (0010-0011) at (-1.50,-14.00) {0010-0011};
\node[fill=gray] (0100-1100) at (1.50,-14.00) {0100-1100};
\node[fill=blue!20] (0110-1111) at (-1.50,-12.00) {0110-1111};
\node[fill=blue!20] (0111-1110) at (1.50,-12.00) {0111-1110};
\node[fill=blue!20] (0120-1122) at (-3.00,-10.00) {0120-1122};
\node[fill=gray] (0121-1121) at (0.00,-10.00) {0121-1121};
\node[fill=blue!20] (0122-1120) at (3.00,-10.00) {0122-1120};
\node[fill=blue!20] (1220-1222) at (-1.50,-8.00) {1220-1222};
\node[fill=blue!20] (1221) at (1.50,-8.00) {1221};
\node[fill=gray] (1231-1232) at (0.00,-6.00) {1231-1232};
\node[fill=blue!20] (1242) at (0.00,-4.00) {1242};
\node[fill=gray] (1342-2342) at (0.00,-2.00) {1342-2342};

\draw[->] (0010-0011) -- (0110-1111);
\draw[->] (0010-0011) -- (0111-1110);
\draw[->] (0100-1100) -- (0110-1111);
\draw[->] (0100-1100) -- (0111-1110);
\draw[->] (0110-1111) -- (0120-1122);
\draw[->] (0110-1111) -- (0121-1121);
\draw[->] (0111-1110) -- (0121-1121);
\draw[->] (0111-1110) -- (0122-1120);
\draw[->] (0120-1122) -- (1220-1222);
\draw[->] (0121-1121) -- (1221);
\draw[->] (0122-1120) -- (1220-1222);
\draw[->] (1220-1222) -- (1231-1232);
\draw[->] (1221) -- (1231-1232);
\draw[->] (1231-1232) -- (1242);
\draw[->] (1242) -- (1342-2342);
\end{tikzpicture}
}
\newcommand{\FwPosetBC}{
\begin{tikzpicture}[
    >=stealth, scale=0.65,
    every node/.style={
        draw,
        rounded corners=2pt,
        inner sep=2pt,
        font=\small
    }
]
\node[fill=gray] (0001-0111-0121-0011) at (-3.00,-8.00) {0001-0111-0121-0011};
\node[fill=gray] (1000-1100-1220-1120) at (0.00,-8.00) {1000-1100-1220-1120};
\node[fill=blue!20] (1110) at (3.00,-8.00) {1110};
\node[fill=blue!20] (0122) at (-1.50,-6.00) {0122};
\node[fill=gray] (1111-1221-1231-1121) at (1.50,-6.00) {1111-1221-1231-1121};
\node[fill=gray] (1122-1222-1342-1242) at (-1.50,-4.00) {1122-1222-1342-1242};
\node[fill=blue!20] (1232) at (1.50,-4.00) {1232};
\node[fill=blue!20] (2342) at (0.00,-2.00) {2342};

\draw[->] (0001-0111-0121-0011) -- (0122);
\draw[->] (0001-0111-0121-0011) -- (1111-1221-1231-1121);
\draw[->] (0001-0111-0121-0011) -- (1111-1221-1231-1121);
\draw[->] (0122) -- (1122-1222-1342-1242);
\draw[->] (0122) -- (1232);
\draw[->] (1000-1100-1220-1120) -- (1111-1221-1231-1121);
\draw[->] (1110) -- (1111-1221-1231-1121);
\draw[->] (1111-1221-1231-1121) -- (1122-1222-1342-1242);
\draw[->] (1111-1221-1231-1121) -- (1232);
\draw[->] (1122-1222-1342-1242) -- (2342);
\draw[->] (1232) -- (2342);
\end{tikzpicture}
}
\newcommand{\FwPosetCBCB}{
\begin{tikzpicture}[
    >=stealth, scale=0.65,
    every node/.style={
        draw,
        rounded corners=2pt,
        inner sep=2pt,
        font=\small
    }
]
\node[fill=gray] (0001-0121) at (-6.00,-8.00) {0001-0121};
\node[fill=gray] (0011-0111) at (-3.00,-8.00) {0011-0111};
\node[fill=blue!20] (1000-1220) at (0.00,-8.00) {1000-1220};
\node[fill=blue!20] (1100-1120) at (3.00,-8.00) {1100-1120};
\node[fill=blue!20] (1110) at (6.00,-8.00) {1110};
\node[fill=blue!20] (0122) at (-3.00,-6.00) {0122};
\node[fill=gray] (1111-1231) at (0.00,-6.00) {1111-1231};
\node[fill=gray] (1121-1221) at (3.00,-6.00) {1121-1221};
\node[fill=blue!20] (1122-1342) at (-3.00,-4.00) {1122-1342};
\node[fill=blue!20] (1222-1242) at (0.00,-4.00) {1222-1242};
\node[fill=blue!20] (1232) at (3.00,-4.00) {1232};
\node[fill=blue!20] (2342) at (0.00,-2.00) {2342};

\draw[->] (0001-0121) -- (0122);
\draw[->] (0001-0121) -- (1121-1221);
\draw[->] (0001-0121) -- (1121-1221);
\draw[->] (0001-0121) -- (1111-1231);
\draw[->] (0011-0111) -- (0122);
\draw[->] (0011-0111) -- (1111-1231);
\draw[->] (0011-0111) -- (1111-1231);
\draw[->] (0011-0111) -- (1121-1221);
\draw[->] (0122) -- (1122-1342);
\draw[->] (0122) -- (1222-1242);
\draw[->] (0122) -- (1232);
\draw[->] (1000-1220) -- (1121-1221);
\draw[->] (1000-1220) -- (1111-1231);
\draw[->] (1100-1120) -- (1121-1221);
\draw[->] (1100-1120) -- (1111-1231);
\draw[->] (1110) -- (1111-1231);
\draw[->] (1110) -- (1121-1221);
\draw[->] (1111-1231) -- (1232);
\draw[->] (1111-1231) -- (1122-1342);
\draw[->] (1111-1231) -- (1222-1242);
\draw[->] (1121-1221) -- (1122-1342);
\draw[->] (1121-1221) -- (1222-1242);
\draw[->] (1121-1221) -- (1232);
\draw[->] (1122-1342) -- (2342);
\draw[->] (1222-1242) -- (2342);
\draw[->] (1232) -- (2342);
\end{tikzpicture}
}
\newcommand{\FwPosetBD}{
\begin{tikzpicture}[
    >=stealth, scale=0.65,
    every node/.style={
        draw,
        rounded corners=2pt,
        inner sep=2pt,
        font=\small
    }
]
\node[fill=blue!20] (0010-0111) at (-3.00,-12.00) {0010-0111};
\node[fill=blue!20] (0011-0110) at (0.00,-12.00) {0011-0110};
\node[fill=gray] (1000-1100) at (3.00,-12.00) {1000-1100};
\node[fill=blue!20] (0120-0122) at (-3.00,-10.00) {0120-0122};
\node[fill=blue!20] (0121) at (0.00,-10.00) {0121};
\node[fill=gray] (1110-1111) at (3.00,-10.00) {1110-1111};
\node[fill=blue!20] (1120-1222) at (-3.00,-8.00) {1120-1222};
\node[fill=gray] (1121-1221) at (0.00,-8.00) {1121-1221};
\node[fill=blue!20] (1122-1220) at (3.00,-8.00) {1122-1220};
\node[fill=gray] (1231-1232) at (0.00,-6.00) {1231-1232};
\node[fill=gray] (1242-1342) at (0.00,-4.00) {1242-1342};
\node[fill=blue!20] (2342) at (0.00,-2.00) {2342};

\draw[->] (0010-0111) -- (0121);
\draw[->] (0010-0111) -- (0120-0122);
\draw[->] (0010-0111) -- (1110-1111);
\draw[->] (0011-0110) -- (0120-0122);
\draw[->] (0011-0110) -- (0121);
\draw[->] (0011-0110) -- (1110-1111);
\draw[->] (0120-0122) -- (1122-1220);
\draw[->] (0120-0122) -- (1120-1222);
\draw[->] (0121) -- (1121-1221);
\draw[->] (1000-1100) -- (1110-1111);
\draw[->] (1000-1100) -- (1110-1111);
\draw[->] (1110-1111) -- (1120-1222);
\draw[->] (1110-1111) -- (1121-1221);
\draw[->] (1110-1111) -- (1122-1220);
\draw[->] (1110-1111) -- (1121-1221);
\draw[->] (1120-1222) -- (1231-1232);
\draw[->] (1121-1221) -- (1231-1232);
\draw[->] (1121-1221) -- (1231-1232);
\draw[->] (1122-1220) -- (1231-1232);
\draw[->] (1231-1232) -- (1242-1342);
\draw[->] (1231-1232) -- (1242-1342);
\draw[->] (1242-1342) -- (2342);
\end{tikzpicture}
}
\newcommand{\FwPosetCD}{
\begin{tikzpicture}[
    >=stealth, scale=0.65,
    every node/.style={
        draw,
        rounded corners=2pt,
        inner sep=2pt,
        font=\small
    }
]
\node[fill=blue!20] (0100-0120-0122) at (-3.00,-10.00) {0100-0120-0122};
\node[fill=gray] (0110-0121-0111) at (0.00,-10.00) {0110-0121-0111};
\node[fill=blue!20] (1000) at (3.00,-10.00) {1000};
\node[fill=blue!20] (1100-1120-1122) at (-1.50,-8.00) {1100-1120-1122};
\node[fill=gray] (1110-1121-1111) at (1.50,-8.00) {1110-1121-1111};
\node[fill=blue!20] (1220-1242-1222) at (-1.50,-6.00) {1220-1242-1222};
\node[fill=gray] (1221-1231-1232) at (1.50,-6.00) {1221-1231-1232};
\node[fill=blue!20] (1342) at (0.00,-4.00) {1342};
\node[fill=blue!20] (2342) at (0.00,-2.00) {2342};

\draw[->] (0100-0120-0122) -- (1100-1120-1122);
\draw[->] (0110-0121-0111) -- (1110-1121-1111);
\draw[->] (1000) -- (1100-1120-1122);
\draw[->] (1000) -- (1110-1121-1111);
\draw[->] (1100-1120-1122) -- (1220-1242-1222);
\draw[->] (1100-1120-1122) -- (1221-1231-1232);
\draw[->] (1110-1121-1111) -- (1221-1231-1232);
\draw[->] (1110-1121-1111) -- (1220-1242-1222);
\draw[->] (1110-1121-1111) -- (1221-1231-1232);
\draw[->] (1220-1242-1222) -- (1342);
\draw[->] (1221-1231-1232) -- (1342);
\draw[->] (1342) -- (2342);
\end{tikzpicture}
}
\newcommand{\FwPosetBCABCBA}{
\begin{tikzpicture}[
    >=stealth, scale=0.65,
    every node/.style={
        draw,
        rounded corners=2pt,
        inner sep=2pt,
        font=\small
    }
]
\node[fill=gray] (0001-1221-0121-1121) at (-1.50,-4.00) {0001-1221-0121-1121};
\node[fill=gray] (0011-1111-0111-1231) at (1.50,-4.00) {0011-1111-0111-1231};
\node[fill=blue!20] (0122-2342) at (-3.00,-2.00) {0122-2342};
\node[fill=gray] (1122-1222-1342-1242) at (0.00,-2.00) {1122-1222-1342-1242};
\node[fill=blue!20] (1232) at (3.00,-2.00) {1232};

\draw[->] (0001-1221-0121-1121) -- (0122-2342);
\draw[->] (0001-1221-0121-1121) -- (1122-1222-1342-1242);
\draw[->] (0001-1221-0121-1121) -- (1232);
\draw[->] (0011-1111-0111-1231) -- (1232);
\draw[->] (0011-1111-0111-1231) -- (0122-2342);
\draw[->] (0011-1111-0111-1231) -- (1122-1222-1342-1242);
\end{tikzpicture}
}
\newcommand{\FwPosetCBCABCB}{
\begin{tikzpicture}[
    >=stealth, scale=0.65,
    every node/.style={
        draw,
        rounded corners=2pt,
        inner sep=2pt,
        font=\small
    }
]
\node[fill=blue!20] (0001-1231) at (-1.50,-4.00) {0001-1231};
\node[fill=gray] (0011-1121-1111-1221-0111-0121) at (1.50,-4.00) {0011-1121-1111-1221-0111-0121};
\node[fill=gray] (0122-1242-1122-2342-1222-1342) at (-1.50,-2.00) {0122-1242-1122-2342-1222-1342};
\node[fill=blue!20] (1232) at (1.50,-2.00) {1232};

\draw[->] (0001-1231) -- (1232);
\draw[->] (0001-1231) -- (0122-1242-1122-2342-1222-1342);
\draw[->] (0011-1121-1111-1221-0111-0121) -- (0122-1242-1122-2342-1222-1342);
\draw[->] (0011-1121-1111-1221-0111-0121) -- (0122-1242-1122-2342-1222-1342);
\draw[->] (0011-1121-1111-1221-0111-0121) -- (1232);
\end{tikzpicture}
}
\newcommand{\FwPosetCBCABCABA}{
\begin{tikzpicture}[
    >=stealth, scale=0.65,
    every node/.style={
        draw,
        rounded corners=2pt,
        inner sep=2pt,
        font=\small
    }
]
\node[fill=blue!20] (0001-1231) at (-4.50,-4.00) {0001-1231};
\node[fill=blue!20] (0011-1221) at (-1.50,-4.00) {0011-1221};
\node[fill=blue!20] (0111-1121) at (1.50,-4.00) {0111-1121};
\node[fill=blue!20] (0121-1111) at (4.50,-4.00) {0121-1111};
\node[fill=blue!20] (0122-2342) at (-4.50,-2.00) {0122-2342};
\node[fill=blue!20] (1122-1342) at (-1.50,-2.00) {1122-1342};
\node[fill=blue!20] (1222-1242) at (1.50,-2.00) {1222-1242};
\node[fill=blue!20] (1232) at (4.50,-2.00) {1232};

\draw[->] (0001-1231) -- (1232);
\draw[->] (0001-1231) -- (1222-1242);
\draw[->] (0001-1231) -- (1122-1342);
\draw[->] (0001-1231) -- (0122-2342);
\draw[->] (0011-1221) -- (1222-1242);
\draw[->] (0011-1221) -- (1232);
\draw[->] (0011-1221) -- (0122-2342);
\draw[->] (0011-1221) -- (1122-1342);
\draw[->] (0111-1121) -- (1122-1342);
\draw[->] (0111-1121) -- (0122-2342);
\draw[->] (0111-1121) -- (1232);
\draw[->] (0111-1121) -- (1222-1242);
\draw[->] (0121-1111) -- (0122-2342);
\draw[->] (0121-1111) -- (1122-1342);
\draw[->] (0121-1111) -- (1222-1242);
\draw[->] (0121-1111) -- (1232);
\end{tikzpicture}
}
\newcommand{\FwPosetABD}{
\begin{tikzpicture}[
    >=stealth, scale=0.65,
    every node/.style={
        draw,
        rounded corners=2pt,
        inner sep=2pt,
        font=\small
    }
]
\node[fill=gray] (0010-1111-0110-0011-1110-0111) at (0.00,-8.00) {0010-1111-0110-0011-1110-0111};
\node[fill=gray] (0120-1122-1220-0122-1120-1222) at (-1.50,-6.00) {0120-1122-1220-0122-1120-1222};
\node[fill=gray] (0121-1121-1221) at (1.50,-6.00) {0121-1121-1221};
\node[fill=gray] (1231-1232) at (0.00,-4.00) {1231-1232};
\node[fill=gray] (1242-2342-1342) at (0.00,-2.00) {1242-2342-1342};

\draw[->] (0010-1111-0110-0011-1110-0111) -- (0120-1122-1220-0122-1120-1222);
\draw[->] (0010-1111-0110-0011-1110-0111) -- (0121-1121-1221);
\draw[->] (0120-1122-1220-0122-1120-1222) -- (1231-1232);
\draw[->] (0121-1121-1221) -- (1231-1232);
\draw[->] (1231-1232) -- (1242-2342-1342);
\end{tikzpicture}
}
\newcommand{\FwPosetACD}{
\begin{tikzpicture}[
    >=stealth, scale=0.65,
    every node/.style={
        draw,
        rounded corners=2pt,
        inner sep=2pt,
        font=\small
    }
]
\node[fill=gray] (0100-1120-0122-1100-0120-1122) at (-1.50,-6.00) {0100-1120-0122-1100-0120-1122};
\node[fill=gray] (0110-1121-0111-1110-0121-1111) at (1.50,-6.00) {0110-1121-0111-1110-0121-1111};
\node[fill=blue!20] (1220-1242-1222) at (-1.50,-4.00) {1220-1242-1222};
\node[fill=gray] (1221-1231-1232) at (1.50,-4.00) {1221-1231-1232};
\node[fill=gray] (1342-2342) at (0.00,-2.00) {1342-2342};

\draw[->] (0100-1120-0122-1100-0120-1122) -- (1220-1242-1222);
\draw[->] (0100-1120-0122-1100-0120-1122) -- (1221-1231-1232);
\draw[->] (0110-1121-0111-1110-0121-1111) -- (1221-1231-1232);
\draw[->] (0110-1121-0111-1110-0121-1111) -- (1220-1242-1222);
\draw[->] (0110-1121-0111-1110-0121-1111) -- (1221-1231-1232);
\draw[->] (1220-1242-1222) -- (1342-2342);
\draw[->] (1221-1231-1232) -- (1342-2342);
\end{tikzpicture}
}
\newcommand{\FwPosetCBDCBCD}{
\begin{tikzpicture}[
    >=stealth, scale=0.65,
    every node/.style={
        draw,
        rounded corners=2pt,
        inner sep=2pt,
        font=\small
    }
]
\node[fill=blue!20] (1000-1242-1220-1222) at (-4.50,-4.00) {1000-1242-1220-1222};
\node[fill=blue!20] (1100-1122-1120-1342) at (-1.50,-4.00) {1100-1122-1120-1342};
\node[fill=gray] (1110-1232) at (1.50,-4.00) {1110-1232};
\node[fill=gray] (1111-1121-1231-1221) at (4.50,-4.00) {1111-1121-1231-1221};
\node[fill=blue!20] (2342) at (0.00,-2.00) {2342};

\draw[->] (1000-1242-1220-1222) -- (2342);
\draw[->] (1100-1122-1120-1342) -- (2342);
\draw[->] (1110-1232) -- (2342);
\draw[->] (1111-1121-1231-1221) -- (2342);
\end{tikzpicture}
}
\newcommand{\FwPosetBCBDCBC}{
\begin{tikzpicture}[
    >=stealth, scale=0.65,
    every node/.style={
        draw,
        rounded corners=2pt,
        inner sep=2pt,
        font=\small
    }
]
\node[fill=blue!20] (1000-1342) at (-3.00,-4.00) {1000-1342};
\node[fill=gray] (1100-1222-1122-1242-1120-1220) at (0.00,-4.00) {1100-1222-1122-1242-1120-1220};
\node[fill=gray] (1110-1221-1111-1232-1121-1231) at (3.00,-4.00) {1110-1221-1111-1232-1121-1231};
\node[fill=blue!20] (2342) at (0.00,-2.00) {2342};

\draw[->] (1000-1342) -- (2342);
\draw[->] (1100-1222-1122-1242-1120-1220) -- (2342);
\draw[->] (1110-1221-1111-1232-1121-1231) -- (2342);
\end{tikzpicture}
}
\newcommand{\FwPosetBCBDCBDCD}{
\begin{tikzpicture}[
    >=stealth, scale=0.65,
    every node/.style={
        draw,
        rounded corners=2pt,
        inner sep=2pt,
        font=\small
    }
]
\node[fill=blue!20] (1000-1342) at (-9.00,-4.00) {1000-1342};
\node[fill=blue!20] (1100-1242) at (-6.00,-4.00) {1100-1242};
\node[fill=gray] (1110-1232) at (-3.00,-4.00) {1110-1232};
\node[fill=gray] (1111-1231) at (0.00,-4.00) {1111-1231};
\node[fill=blue!20] (1120-1222) at (3.00,-4.00) {1120-1222};
\node[fill=gray] (1121-1221) at (6.00,-4.00) {1121-1221};
\node[fill=blue!20] (1122-1220) at (9.00,-4.00) {1122-1220};
\node[fill=blue!20] (2342) at (0.00,-2.00) {2342};

\draw[->] (1000-1342) -- (2342);
\draw[->] (1100-1242) -- (2342);
\draw[->] (1110-1232) -- (2342);
\draw[->] (1111-1231) -- (2342);
\draw[->] (1120-1222) -- (2342);
\draw[->] (1121-1221) -- (2342);
\draw[->] (1122-1220) -- (2342);
\end{tikzpicture}
}

\newcommand{\blockFive}{

\begin{tikzpicture}[
    x=8mm,y=8mm,
    >=Stealth,
    every node/.style={font=\scriptsize}
]

\draw[step=1] (0,0) grid (5,-5);

\fill[gray!55] (0,0) rectangle (2,-2);
\fill[gray!55] (2,-2) rectangle (5,-5);

\foreach \i in {1,...,5}{
  \node[left]  at (0,-\i+0.5) {$\i$};
  \node[above] at (\i-0.5,0) {$\i$};
}

\node[text=red]  (r13) at (2.5,-0.5) {1100};
\node[text=red]  (r24) at (3.5,-1.5) {0110};
\node[text=red]  (r15) at (4.5,-0.5) {1111};
\node[text=red] (b14) at (3.5,-0.5) {1110};
\node[text=red] (b25) at (4.5,-1.5) {0111};
\node[text=red] (b23) at (2.5,-1.5) {0100};

\draw[->,red,thick,bend left=18] (r13) to (r24);
\draw[->,red,thick,bend left=18] (r24) to (r15);
\draw[->,red,thick,bend left=18] (r15) to (b23);
\draw[->,red,thick,bend left=18] (b23) to (b14);
\draw[->,red,thick,bend left=18] (b14) to (b25);
\draw[->,red,thick,bend left=18] (b25) to (r13);

\end{tikzpicture}
}

\newcommand{\GraphCSix}{
\begin{tikzpicture}[
    >={Stealth[scale=1.2]}, scale=.5,
    vertex/.style={circle, draw=black, thick, minimum size=10mm, inner sep=0pt, font=\Large},
    edge/.style={->, thick}
]

    \def\Rbeta{4.0}  
    \def\Ralpha{2.6} 

    \node[vertex] (b1) at (90:\Rbeta)  {$\beta_1$};
    \node[vertex] (b2) at (210:\Rbeta) {$\beta_2$};
    \node[vertex] (b3) at (330:\Rbeta) {$\beta_3$};

    \node[vertex] (a3) at (150:\Ralpha) {$\alpha_3$};

    \node[vertex] (a1) at (270:\Ralpha) {$\alpha_1$};

    \node[vertex] (a2) at (30:\Ralpha)  {$\alpha_2$};

    \draw[edge] (b1) -- (a3);
    \draw[edge] (b2) -- (a3);

    \draw[edge] (b2) -- (a1);
    \draw[edge] (b3) -- (a1);

    \draw[edge] (b1) -- (a2);
    \draw[edge] (b3) -- (a2);
\end{tikzpicture}
}

\newcommand{\FKfull}{
\begin{tikzpicture}[
    >=stealth, scale=0.6,
    every node/.style={draw,circle,minimum size=8mm,font=\small}
]
\node[fill=blue!20] (0001) at (-10.50,-10.00) {0001};
\node[fill=blue!20] (0010) at (-7.50,-10.00) {0010};
\node[fill=blue!20] (0100) at (-4.50,-10.00) {0100};
\node[fill=blue!20] (1000) at (-1.50,-10.00) {1000};
\node[fill=blue!20] (1100) at (1.50,-10.00) {1100};
\node[fill=blue!20] (1220) at (4.50,-10.00) {1220};
\node[fill=blue!20] (1342) at (7.50,-10.00) {1342};
\node[fill=blue!20] (2342) at (10.50,-10.00) {2342};
\node[fill=blue!20] (0011) at (-4.50,-8.00) {0011};
\node[fill=blue!20] (0110) at (-1.50,-8.00) {0110};
\node[fill=blue!20] (1110) at (1.50,-8.00) {1110};
\node[fill=blue!20] (1221) at (4.50,-8.00) {1221};
\node[fill=blue!20] (0111) at (-7.50,-6.00) {0111};
\node[fill=blue!20] (0120) at (-4.50,-6.00) {0120};
\node[fill=blue!20] (1111) at (-1.50,-6.00) {1111};
\node[fill=blue!20] (1120) at (1.50,-6.00) {1120};
\node[fill=blue!20] (1222) at (4.50,-6.00) {1222};
\node[fill=blue!20] (1231) at (7.50,-6.00) {1231};
\node[fill=blue!20] (0121) at (-3.00,-4.00) {0121};
\node[fill=blue!20] (1121) at (0.00,-4.00) {1121};
\node[fill=blue!20] (1232) at (3.00,-4.00) {1232};
\node[fill=blue!20] (0122) at (-3.00,-2.00) {0122};
\node[fill=blue!20] (1122) at (0.00,-2.00) {1122};
\node[fill=blue!20] (1242) at (3.00,-2.00) {1242};

\draw[->] (1222) -- (1232);
\draw[->] (1111) -- (1121);
\draw[->] (1111) -- (1122);
\draw[->] (0120) -- (0121);
\draw[->] (1121) -- (1122);
\draw[->] (1110) -- (1120);
\draw[->] (1110) -- (1111);
\draw[->] (1110) -- (1121);
\draw[->] (1232) -- (1242);
\draw[->] (1120) -- (1121);
\draw[->] (0111) -- (0121);
\draw[->] (0111) -- (0122);
\draw[->] (1231) -- (1232);
\draw[->] (1231) -- (1242);
\draw[->] (0010) -- (0011);
\draw[->] (1100) -- (1110);
\draw[->] (1100) -- (1111);
\draw[->] (0121) -- (0122);
\draw[->] (0100) -- (0110);
\draw[->] (0100) -- (0111);
\draw[->] (1220) -- (1221);
\draw[->] (1220) -- (1231);
\draw[->] (0001) -- (0011);
\draw[->] (0110) -- (0120);
\draw[->] (0110) -- (0111);
\draw[->] (0110) -- (0121);
\draw[->] (1221) -- (1231);
\draw[->] (1221) -- (1222);
\draw[->] (1221) -- (1232);
\end{tikzpicture}
}

\newcommand{\FKfour}{
\begin{tikzpicture}[
    >=stealth, scale=0.7,
    every node/.style={draw,circle,minimum size=8mm,font=\small}
]
\node[fill=blue!20] (0121) at (-3.00,-4.00) {0121};
\node[fill=blue!20] (1121) at (0.00,-4.00) {1121};
\node[fill=blue!20] (1232) at (3.00,-4.00) {1232};
\node[fill=blue!20] (0122) at (-3.00,-2.00) {0122};
\node[fill=blue!20] (1122) at (0.00,-2.00) {1122};
\node[fill=blue!20] (1242) at (3.00,-2.00) {1242};

\draw[->] (0121) -- (0122);
\draw[->] (1232) -- (1242);
\draw[->] (1121) -- (1122);
\end{tikzpicture}
}

\newcommand{\FKthree}{
\begin{tikzpicture}[
    >=stealth, scale=0.7,
    every node/.style={draw,circle,minimum size=8mm,font=\small}
]
\node[fill=blue!20] (0111) at (-7.50,-6.00) {0111};
\node[fill=blue!20] (0120) at (-4.50,-6.00) {0120};
\node[fill=blue!20] (1111) at (-1.50,-6.00) {1111};
\node[fill=blue!20] (1120) at (1.50,-6.00) {1120};
\node[fill=blue!20] (1222) at (4.50,-6.00) {1222};
\node[fill=blue!20] (1231) at (7.50,-6.00) {1231};
\node[fill=blue!20] (0121) at (-3.00,-4.00) {0121};
\node[fill=blue!20] (1121) at (0.00,-4.00) {1121};
\node[fill=blue!20] (1232) at (3.00,-4.00) {1232};

\draw[->] (0111) -- (0121);
\draw[->] (1222) -- (1232);
\draw[->] (1120) -- (1121);
\draw[->] (1231) -- (1232);
\draw[->] (1111) -- (1121);
\draw[->] (0120) -- (0121);
\end{tikzpicture}
}

\newcommand{\FKtwo}{
\begin{tikzpicture}[
    >=stealth, scale=0.7,
    every node/.style={draw,circle,minimum size=8mm,font=\small}
]
\node[fill=blue!20] (0011) at (-4.50,-8.00) {0011};
\node[fill=blue!20] (0110) at (-1.50,-8.00) {0110};
\node[fill=blue!20] (1110) at (1.50,-8.00) {1110};
\node[fill=blue!20] (1221) at (4.50,-8.00) {1221};
\node[fill=blue!20] (0111) at (-7.50,-6.00) {0111};
\node[fill=blue!20] (0120) at (-4.50,-6.00) {0120};
\node[fill=blue!20] (1111) at (-1.50,-6.00) {1111};
\node[fill=blue!20] (1120) at (1.50,-6.00) {1120};
\node[fill=blue!20] (1222) at (4.50,-6.00) {1222};
\node[fill=blue!20] (1231) at (7.50,-6.00) {1231};

\draw[->] (0110) -- (0120);
\draw[->] (0110) -- (0111);
\draw[->] (1110) -- (1120);
\draw[->] (1110) -- (1111);
\draw[->] (1221) -- (1231);
\draw[->] (1221) -- (1222);
\end{tikzpicture}
}

\newcommand{\FKone}{
\begin{tikzpicture}[
    >=stealth, scale=.7,
    every node/.style={draw,circle,minimum size=8mm,font=\small}
]
\node[fill=blue!20] (0011) at (-4.50,-8.00) {0011};
\node[fill=blue!20] (0110) at (-1.50,-8.00) {0110};
\node[fill=blue!20] (1110) at (1.50,-8.00) {1110};
\node[fill=blue!20] (1221) at (4.50,-8.00) {1221};
\node[fill=blue!20] (0001) at (-10.50,-10.00) {0001};
\node[fill=blue!20] (0010) at (-7.50,-10.00) {0010};
\node[fill=blue!20] (0100) at (-4.50,-10.00) {0100};
\node[fill=blue!20] (1000) at (-1.50,-10.00) {1000};
\node[fill=blue!20] (1100) at (1.50,-10.00) {1100};
\node[fill=blue!20] (1220) at (4.50,-10.00) {1220};
\node[fill=blue!20] (1342) at (7.50,-10.00) {1342};
\node[fill=blue!20] (2342) at (10.50,-10.00) {2342};

\draw[->] (1100) -- (1110);
\draw[->] (1220) -- (1221);
\draw[->] (0001) -- (0011);
\draw[->] (0010) -- (0011);
\draw[->] (0100) -- (0110);
\end{tikzpicture}
}

  \newcommand{\orbitSix}[7][]{
    \begin{scope}[#1]
      \node (n1) at (0, 1.2) {$#2$};
      \node (n2) at (1.8, 1.2) {$#3$};
      \node (n3) at (3.6, 1.2) {$#4$};
      \node (n4) at (3.6, 0) {$#5$};
      \node (n5) at (1.8, 0) {$#6$};
      \node (n6) at (0, 0) {$#7$};
      
      \draw[thick, -] (n1) to[bend left=15] (n2);
      \draw[thick, -] (n2) to[bend left=15] (n3);
      \draw[thick, -] (n3) to[bend left=15] (n4);
      \draw[thick, -] (n4) to[bend left=15] (n5);
      \draw[thick, -] (n5) to[bend left=15] (n6);
      \draw[thick, -] (n6) to[bend left=15] (n1);
    \end{scope}
  }

\newcommand{\BranchingA}{
\begin{tikzpicture}

  \orbitSix[shift={(0,0)}]{0001}{0011}{0121}{1111}{1221}{1231}

  \draw[double, ->, thick] (4.4, 0.6) -- (6.0, 0.6);

  \orbitSix[shift={(4.57, 0.95)}, scale=0.35, every node/.append style={transform shape}]{0001}{0011}{0121}{1111}{1221}{1231}

  \node at (6.8, 0.6) {$1232$};
  \node at (7.8, 0.6) {$\oplus$};
  \orbitSix[shift={(8.7,0)}]{0122}{1122}{1222}{1242}{1342}{2342}

\end{tikzpicture}
}

\newcommand{\orbitThree}[7][]{
    \begin{scope}[#1]
      \node (n1) at (0, 1.2) {$#2$};
      \node (n2) at (1.8, 1.2) {$#3$};
      \node (n3) at (0.9, 0) {$#4$};
      \draw[thick, -] (n1) to[bend left=15] (n2);
      \draw[thick, -] (n2) to[bend left=15] (n3);
      \draw[thick, -] (n3) to[bend left=15] (n1);
    \end{scope}
}

\newcommand{\BranchingB}{
\begin{tikzpicture}
  \orbitSix[shift={(0,0)}]{0010}{0011}{0110}{0111}{1110}{1111}

  \draw[double, ->, thick] (4.4, 0.6) -- (6.0, 0.6);

  \orbitSix[shift={(4.57, 0.95)}, scale=0.35, every node/.append style={transform shape}]{0010}{0011}{0110}{0111}{1110}{1111}

  \node at (9.5, 0.6) {$\oplus$};
  \orbitThree[shift={(7.2, 0)}]{0121}{1121}{1221}{}{}{}
  \orbitSix[shift={(10.2,0)}]{0120}{0122}{1120}{1122}{1220}{1222}
\end{tikzpicture}
}

\newcommand{\BranchingC}{
\begin{tikzpicture}
  \orbitSix[shift={(0,0)}]{0110}{0111}{0121}{1110}{1111}{1121}

  \draw[double, ->, thick] (4.4, 0.6) -- (6.0, 0.6);


  \orbitSix[shift={(4.57, 0.95)}, scale=0.35, every node/.append style={transform shape}]{0110}{0111}{0121}{1110}{1111}{1121}

    \orbitThree[shift={(7.2,0)}]{1221}{1231}{1232}{}{}{}

  \node at (9.5,0.6) {$\oplus$};

    \orbitThree[shift={(10.2,0)}]{1220}{1222}{1242}{}{}{}
\end{tikzpicture}
}

\newcommand{\CaseN}{
\begin{tikzpicture}[
    >=stealth, scale=0.8,
    every node/.style={draw,circle,minimum size=8mm,font=\small}
]
\node[fill=red!20] (0001) at (-4.50,-22.00) {0001};
\node[fill=gray] (0010) at (-1.50,-22.00) {0010};
\node[fill=cyan!20] (0100) at (1.50,-22.00) {0100};
\node[fill=gray] (1000) at (4.50,-22.00) {1000};
\node[fill=red!20] (0011) at (-3.00,-20.00) {0011};
\node[fill=cyan!20] (0110) at (0.00,-20.00) {0110};
\node[fill=red!20] (1100) at (3.00,-20.00) {1100};
\node[fill=blue!20] (0111) at (-3.00,-18.00) {0111};
\node[fill=cyan!20] (0120) at (0.00,-18.00) {0120};
\node[fill=red!20] (1110) at (3.00,-18.00) {1110};
\node[fill=blue!20] (0121) at (-3.00,-16.00) {0121};
\node[fill=blue!20] (1111) at (0.00,-16.00) {1111};
\node[fill=red!20] (1120) at (3.00,-16.00) {1120};
\node[fill=blue!20] (0122) at (-3.00,-14.00) {0122};
\node[fill=blue!20] (1121) at (0.00,-14.00) {1121};
\node[fill=blue!20] (1220) at (3.00,-14.00) {1220};
\node[fill=blue!20] (1122) at (-1.50,-12.00) {1122};
\node[fill=blue!20] (1221) at (1.50,-12.00) {1221};
\node[fill=blue!20] (1222) at (-1.50,-10.00) {1222};
\node[fill=blue!20] (1231) at (1.50,-10.00) {1231};
\node[fill=blue!20] (1232) at (0.00,-8.00) {1232};
\node[fill=blue!20] (1242) at (0.00,-6.00) {1242};
\node[fill=blue!20] (1342) at (0.00,-4.00) {1342};
\node[fill=blue!20] (2342) at (0.00,-2.00) {2342};

\draw[->] (1110) -- (1120);
\draw[->] (1110) -- (1111);
\draw[->] (1122) -- (1222);
\draw[->] (1231) -- (1232);
\draw[->] (1342) -- (2342);
\draw[->] (1121) -- (1221);
\draw[->] (1121) -- (1122);
\draw[->] (1242) -- (1342);
\draw[->] (1221) -- (1231);
\draw[->] (1221) -- (1222);
\draw[->] (0121) -- (1121);
\draw[->] (0121) -- (0122);
\draw[->] (1000) -- (1100);
\draw[->] (0100) -- (1100);
\draw[->] (0100) -- (0110);
\draw[->] (1111) -- (1121);
\draw[->] (1120) -- (1220);
\draw[->] (1120) -- (1121);
\draw[->] (0010) -- (0110);
\draw[->] (0010) -- (0011);
\draw[->] (0011) -- (0111);
\draw[->] (1220) -- (1221);
\draw[->] (1232) -- (1242);
\draw[->] (0122) -- (1122);
\draw[->] (0111) -- (1111);
\draw[->] (0111) -- (0121);
\draw[->] (0001) -- (0011);
\draw[->] (0120) -- (1120);
\draw[->] (0120) -- (0121);
\draw[->] (0110) -- (1110);
\draw[->] (0110) -- (0120);
\draw[->] (0110) -- (0111);
\draw[->] (1222) -- (1232);
\draw[->] (1100) -- (1110);
\end{tikzpicture}
}

\newcommand{\CaseNF}{
\begin{tikzpicture}[
    >=stealth, scale=0.8,
    every node/.style={draw,circle,minimum size=8mm,font=\small}
]
\node[fill=red!20] (0001) at (-6.50,-10.00) {0001};
\node[fill=red!20] (0011) at (-5.00,-10.00) {0011};
\node[fill=cyan!20] (0100) at (-3.50,-10.00) {0100};
\node[fill=cyan!20] (0110) at (-1.50,-10.00) {0110};
\node[fill=cyan!20] (0120) at (1.50,-10.00) {0120};
\node[fill=red!20] (1100) at (3.50,-10.00) {1100};
\node[fill=red!20] (1110) at (5.00,-10.00) {1110};
\node[fill=red!20] (1120) at (6.50,-10.00) {1120};
\node[fill=blue!20] (0111) at (-6.00,-8.00) {0111};
\node[fill=blue!20] (0121) at (-3.00,-8.00) {0121};
\node[fill=blue!20] (1111) at (0.00,-8.00) {1111};
\node[fill=blue!20] (1121) at (3.00,-8.00) {1121};
\node[fill=blue!20] (1220) at (6.00,-8.00) {1220};
\node[fill=blue!20] (0122) at (-4.50,-6.00) {0122};
\node[fill=blue!20] (1122) at (-1.50,-6.00) {1122};
\node[fill=blue!20] (1221) at (1.50,-6.00) {1221};
\node[fill=blue!20] (1231) at (4.50,-6.00) {1231};
\node[fill=blue!20] (1222) at (-3.00,-4.00) {1222};
\node[fill=blue!20] (1232) at (0.00,-4.00) {1232};
\node[fill=blue!20] (1242) at (3.00,-4.00) {1242};
\node[fill=blue!20] (1342) at (-1.50,-2.00) {1342};
\node[fill=blue!20] (2342) at (1.50,-2.00) {2342};

\draw[->] (0011) -- (1121);
\draw[->] (0011) -- (0111);
\draw[->] (0011) -- (0121);
\draw[->] (0011) -- (1111);
\draw[->] (1110) -- (1121);
\draw[->] (1110) -- (1111);
\draw[->] (1110) -- (1220);
\draw[->] (1122) -- (1222);
\draw[->] (1122) -- (1242);
\draw[->] (1122) -- (1232);
\draw[->] (1231) -- (1242);
\draw[->] (1231) -- (1232);
\draw[->] (1220) -- (1231);
\draw[->] (1220) -- (1221);
\draw[->] (1232) -- (2342);
\draw[->] (1232) -- (1342);
\draw[->] (0122) -- (1232);
\draw[->] (0122) -- (1242);
\draw[->] (0122) -- (1222);
\draw[->] (1121) -- (1221);
\draw[->] (1121) -- (1122);
\draw[->] (1121) -- (1231);
\draw[->] (1242) -- (1342);
\draw[->] (1242) -- (2342);
\draw[->] (1221) -- (1232);
\draw[->] (1221) -- (1222);
\draw[->] (0121) -- (1231);
\draw[->] (0121) -- (0122);
\draw[->] (0121) -- (1221);
\draw[->] (0111) -- (0122);
\draw[->] (0111) -- (1221);
\draw[->] (0111) -- (1231);
\draw[->] (0100) -- (0111);
\draw[->] (0100) -- (1220);
\draw[->] (0001) -- (1111);
\draw[->] (0001) -- (0121);
\draw[->] (0001) -- (0111);
\draw[->] (0001) -- (1121);
\draw[->] (0120) -- (0121);
\draw[->] (0120) -- (1220);
\draw[->] (0110) -- (0121);
\draw[->] (0110) -- (1220);
\draw[->] (0110) -- (0111);
\draw[->] (1111) -- (1122);
\draw[->] (1111) -- (1231);
\draw[->] (1111) -- (1221);
\draw[->] (1120) -- (1220);
\draw[->] (1120) -- (1121);
\draw[->] (1222) -- (1342);
\draw[->] (1222) -- (2342);
\draw[->] (1100) -- (1111);
\draw[->] (1100) -- (1220);
\end{tikzpicture}
}

\newcommand{\KRPOIOO}{
    \begin{tikzpicture}[
    >=stealth, scale=0.8,
    every node/.style={draw,circle,minimum size=8mm,font=\small}
]
\node[fill=blue!20] (0001) at (-6.00,-16.00) {0001};
\node[fill=blue!20] (0010) at (-3.00,-16.00) {0010};
\node[fill=blue!20] (0110) at (0.00,-16.00) {0110};
\node[fill=blue!20] (1000) at (3.00,-16.00) {1000};
\node[fill=blue!20] (1100) at (6.00,-16.00) {1100};
\node[fill=blue!20] (0011) at (-4.50,-14.00) {0011};
\node[fill=blue!20] (0111) at (-1.50,-14.00) {0111};
\node[fill=blue!20] (0120) at (1.50,-14.00) {0120};
\node[fill=blue!20] (1110) at (4.50,-14.00) {1110};
\node[fill=blue!20] (0121) at (-4.50,-12.00) {0121};
\node[fill=blue!20] (1111) at (-1.50,-12.00) {1111};
\node[fill=blue!20] (1120) at (1.50,-12.00) {1120};
\node[fill=blue!20] (1220) at (4.50,-12.00) {1220};
\node[fill=blue!20] (0122) at (-3.00,-10.00) {0122};
\node[fill=blue!20] (1121) at (0.00,-10.00) {1121};
\node[fill=blue!20] (1221) at (3.00,-10.00) {1221};
\node[fill=blue!20] (1122) at (-3.00,-8.00) {1122};
\node[fill=blue!20] (1222) at (0.00,-8.00) {1222};
\node[fill=blue!20] (1231) at (3.00,-8.00) {1231};
\node[fill=blue!20] (1232) at (0.00,-6.00) {1232};
\node[fill=blue!20] (1242) at (-1.50,-4.00) {1242};
\node[fill=blue!20] (1342) at (1.50,-4.00) {1342};
\node[fill=blue!20] (2342) at (0.00,-2.00) {2342};

\draw[->] (1342) -- (2342);
\draw[->] (1121) -- (1231);
\draw[->] (1121) -- (1122);
\draw[->] (0121) -- (1221);
\draw[->] (0121) -- (0122);
\draw[->] (0121) -- (1121);
\draw[->] (0120) -- (1220);
\draw[->] (0120) -- (0121);
\draw[->] (0120) -- (1120);
\draw[->] (0110) -- (0120);
\draw[->] (0110) -- (0111);
\draw[->] (0110) -- (1110);
\draw[->] (0011) -- (0121);
\draw[->] (0011) -- (1111);
\draw[->] (1220) -- (1221);
\draw[->] (1111) -- (1221);
\draw[->] (1111) -- (1121);
\draw[->] (1221) -- (1231);
\draw[->] (1221) -- (1222);
\draw[->] (1231) -- (1232);
\draw[->] (1120) -- (1121);
\draw[->] (1242) -- (2342);
\draw[->] (1222) -- (1232);
\draw[->] (1110) -- (1220);
\draw[->] (1110) -- (1120);
\draw[->] (1110) -- (1111);
\draw[->] (0111) -- (0121);
\draw[->] (0111) -- (1111);
\draw[->] (0122) -- (1222);
\draw[->] (0122) -- (1122);
\draw[->] (1122) -- (1232);
\draw[->] (1100) -- (1110);
\draw[->] (0010) -- (0120);
\draw[->] (0010) -- (1110);
\draw[->] (0010) -- (0011);
\draw[->] (1232) -- (1342);
\draw[->] (1232) -- (1242);
\draw[->] (0001) -- (0111);
\draw[->] (0001) -- (0011);
\draw[->] (1000) -- (1110);
\end{tikzpicture}
}

\newcommand{\KRPOOIO}{
\begin{tikzpicture}[
    >=stealth, scale=0.8,
    every node/.style={draw,circle,minimum size=8mm,font=\small}
]
\node[fill=blue!20] (0001) at (-7.50,-14.00) {0001};
\node[fill=blue!20] (0011) at (-4.50,-14.00) {0011};
\node[fill=blue!20] (0100) at (-1.50,-14.00) {0100};
\node[fill=blue!20] (0110) at (1.50,-14.00) {0110};
\node[fill=blue!20] (0120) at (4.50,-14.00) {0120};
\node[fill=blue!20] (1000) at (7.50,-14.00) {1000};
\node[fill=blue!20] (0111) at (-6.00,-12.00) {0111};
\node[fill=blue!20] (0121) at (-3.00,-12.00) {0121};
\node[fill=blue!20] (1100) at (0.00,-12.00) {1100};
\node[fill=blue!20] (1110) at (3.00,-12.00) {1110};
\node[fill=blue!20] (1120) at (6.00,-12.00) {1120};
\node[fill=blue!20] (0122) at (-4.50,-10.00) {0122};
\node[fill=blue!20] (1111) at (-1.50,-10.00) {1111};
\node[fill=blue!20] (1121) at (1.50,-10.00) {1121};
\node[fill=blue!20] (1220) at (4.50,-10.00) {1220};
\node[fill=blue!20] (1122) at (-3.00,-8.00) {1122};
\node[fill=blue!20] (1221) at (0.00,-8.00) {1221};
\node[fill=blue!20] (1231) at (3.00,-8.00) {1231};
\node[fill=blue!20] (1222) at (-3.00,-6.00) {1222};
\node[fill=blue!20] (1232) at (0.00,-6.00) {1232};
\node[fill=blue!20] (1242) at (3.00,-6.00) {1242};
\node[fill=blue!20] (1342) at (0.00,-4.00) {1342};
\node[fill=blue!20] (2342) at (0.00,-2.00) {2342};

\draw[->] (1342) -- (2342);
\draw[->] (1121) -- (1231);
\draw[->] (1121) -- (1221);
\draw[->] (1121) -- (1122);
\draw[->] (0121) -- (0122);
\draw[->] (0121) -- (1121);
\draw[->] (0120) -- (0121);
\draw[->] (0120) -- (1120);
\draw[->] (0110) -- (0121);
\draw[->] (0110) -- (0111);
\draw[->] (0110) -- (1110);
\draw[->] (0011) -- (0121);
\draw[->] (0011) -- (0111);
\draw[->] (1220) -- (1231);
\draw[->] (1220) -- (1221);
\draw[->] (1111) -- (1231);
\draw[->] (1111) -- (1221);
\draw[->] (1111) -- (1122);
\draw[->] (1221) -- (1232);
\draw[->] (1221) -- (1222);
\draw[->] (1231) -- (1242);
\draw[->] (1231) -- (1232);
\draw[->] (1120) -- (1220);
\draw[->] (1120) -- (1121);
\draw[->] (1242) -- (1342);
\draw[->] (1222) -- (1342);
\draw[->] (1110) -- (1220);
\draw[->] (1110) -- (1121);
\draw[->] (1110) -- (1111);
\draw[->] (0111) -- (0122);
\draw[->] (0111) -- (1111);
\draw[->] (0122) -- (1122);
\draw[->] (0100) -- (0111);
\draw[->] (0100) -- (1100);
\draw[->] (1122) -- (1242);
\draw[->] (1122) -- (1232);
\draw[->] (1122) -- (1222);
\draw[->] (1100) -- (1220);
\draw[->] (1100) -- (1111);
\draw[->] (1232) -- (1342);
\draw[->] (0001) -- (0121);
\draw[->] (0001) -- (0111);
\draw[->] (1000) -- (1120);
\draw[->] (1000) -- (1110);
\draw[->] (1000) -- (1100);
\end{tikzpicture}
}

\newcommand{\KRPIOIO}{
    \begin{tikzpicture}[
    >=stealth, scale=0.8,
    every node/.style={draw,circle,minimum size=8mm,font=\small}
]
\node[fill=blue!20] (0001) at (-10.50,-10.00) {0001};
\node[fill=blue!20] (0011) at (-7.50,-10.00) {0011};
\node[fill=blue!20] (0100) at (-4.50,-10.00) {0100};
\node[fill=blue!20] (0110) at (-1.50,-10.00) {0110};
\node[fill=blue!20] (0120) at (1.50,-10.00) {0120};
\node[fill=blue!20] (1100) at (4.50,-10.00) {1100};
\node[fill=blue!20] (1110) at (7.50,-10.00) {1110};
\node[fill=blue!20] (1120) at (10.50,-10.00) {1120};
\node[fill=blue!20] (0111) at (-6.00,-8.00) {0111};
\node[fill=blue!20] (0121) at (-3.00,-8.00) {0121};
\node[fill=blue!20] (1111) at (0.00,-8.00) {1111};
\node[fill=blue!20] (1121) at (3.00,-8.00) {1121};
\node[fill=blue!20] (1220) at (6.00,-8.00) {1220};
\node[fill=blue!20] (0122) at (-4.50,-6.00) {0122};
\node[fill=blue!20] (1122) at (-1.50,-6.00) {1122};
\node[fill=blue!20] (1221) at (1.50,-6.00) {1221};
\node[fill=blue!20] (1231) at (4.50,-6.00) {1231};
\node[fill=blue!20] (1222) at (-3.00,-4.00) {1222};
\node[fill=blue!20] (1232) at (0.00,-4.00) {1232};
\node[fill=blue!20] (1242) at (3.00,-4.00) {1242};
\node[fill=blue!20] (1342) at (-1.50,-2.00) {1342};
\node[fill=blue!20] (2342) at (1.50,-2.00) {2342};

\draw[->] (1121) -- (1231);
\draw[->] (1121) -- (1221);
\draw[->] (1121) -- (1122);
\draw[->] (0121) -- (1231);
\draw[->] (0121) -- (1221);
\draw[->] (0121) -- (0122);
\draw[->] (0120) -- (1220);
\draw[->] (0120) -- (0121);
\draw[->] (0110) -- (0121);
\draw[->] (0110) -- (1220);
\draw[->] (0110) -- (0111);
\draw[->] (0011) -- (0121);
\draw[->] (0011) -- (1121);
\draw[->] (0011) -- (0111);
\draw[->] (0011) -- (1111);
\draw[->] (1220) -- (1231);
\draw[->] (1220) -- (1221);
\draw[->] (1111) -- (1231);
\draw[->] (1111) -- (1221);
\draw[->] (1111) -- (1122);
\draw[->] (1221) -- (1232);
\draw[->] (1221) -- (1222);
\draw[->] (1231) -- (1242);
\draw[->] (1231) -- (1232);
\draw[->] (1120) -- (1220);
\draw[->] (1120) -- (1121);
\draw[->] (1242) -- (1342);
\draw[->] (1242) -- (2342);
\draw[->] (1222) -- (1342);
\draw[->] (1222) -- (2342);
\draw[->] (1110) -- (1220);
\draw[->] (1110) -- (1121);
\draw[->] (1110) -- (1111);
\draw[->] (0111) -- (0122);
\draw[->] (0111) -- (1231);
\draw[->] (0111) -- (1221);
\draw[->] (0122) -- (1242);
\draw[->] (0122) -- (1232);
\draw[->] (0122) -- (1222);
\draw[->] (0100) -- (0111);
\draw[->] (0100) -- (1220);
\draw[->] (1122) -- (1242);
\draw[->] (1122) -- (1232);
\draw[->] (1122) -- (1222);
\draw[->] (1100) -- (1220);
\draw[->] (1100) -- (1111);
\draw[->] (1232) -- (1342);
\draw[->] (1232) -- (2342);
\draw[->] (0001) -- (0121);
\draw[->] (0001) -- (0111);
\draw[->] (0001) -- (1121);
\draw[->] (0001) -- (1111);
\end{tikzpicture}
}

\newcommand{\KRPIIII}{
    \begin{tikzpicture}[
    >=stealth, scale=0.6,
    every node/.style={draw,circle,minimum size=8mm,font=\small}
]
\node[fill=blue!20] (0100) at (-16.50,-6.00) {0100};
\node[fill=blue!20] (0110) at (-13.50,-6.00) {0110};
\node[fill=blue!20] (0111) at (-10.50,-6.00) {0111};
\node[fill=blue!20] (0120) at (-7.50,-6.00) {0120};
\node[fill=blue!20] (0121) at (-4.50,-6.00) {0121};
\node[fill=blue!20] (0122) at (-1.50,-6.00) {0122};
\node[fill=blue!20] (1100) at (1.50,-6.00) {1100};
\node[fill=blue!20] (1110) at (4.50,-6.00) {1110};
\node[fill=blue!20] (1111) at (7.50,-6.00) {1111};
\node[fill=blue!20] (1120) at (10.50,-6.00) {1120};
\node[fill=blue!20] (1121) at (13.50,-6.00) {1121};
\node[fill=blue!20] (1122) at (16.50,-6.00) {1122};
\node[fill=blue!20] (1220) at (-7.50,-4.00) {1220};
\node[fill=blue!20] (1221) at (-4.50,-4.00) {1221};
\node[fill=blue!20] (1222) at (-1.50,-4.00) {1222};
\node[fill=blue!20] (1231) at (1.50,-4.00) {1231};
\node[fill=blue!20] (1232) at (4.50,-4.00) {1232};
\node[fill=blue!20] (1242) at (7.50,-4.00) {1242};
\node[fill=blue!20] (1342) at (-1.50,-2.00) {1342};
\node[fill=blue!20] (2342) at (1.50,-2.00) {2342};

\draw[->] (1121) -- (1242);
\draw[->] (1121) -- (1231);
\draw[->] (1121) -- (1232);
\draw[->] (1121) -- (1221);
\draw[->] (0121) -- (1242);
\draw[->] (0121) -- (1232);
\draw[->] (0121) -- (1231);
\draw[->] (0121) -- (1221);
\draw[->] (0120) -- (1231);
\draw[->] (0120) -- (1242);
\draw[->] (0120) -- (1220);
\draw[->] (0110) -- (1231);
\draw[->] (0110) -- (1221);
\draw[->] (0110) -- (1220);
\draw[->] (0110) -- (1232);
\draw[->] (1220) -- (1342);
\draw[->] (1220) -- (2342);
\draw[->] (1111) -- (1232);
\draw[->] (1111) -- (1231);
\draw[->] (1111) -- (1221);
\draw[->] (1111) -- (1222);
\draw[->] (1221) -- (2342);
\draw[->] (1221) -- (1342);
\draw[->] (1231) -- (2342);
\draw[->] (1231) -- (1342);
\draw[->] (1120) -- (1231);
\draw[->] (1120) -- (1242);
\draw[->] (1120) -- (1220);
\draw[->] (1242) -- (1342);
\draw[->] (1242) -- (2342);
\draw[->] (1222) -- (1342);
\draw[->] (1222) -- (2342);
\draw[->] (1110) -- (1231);
\draw[->] (1110) -- (1220);
\draw[->] (1110) -- (1221);
\draw[->] (1110) -- (1232);
\draw[->] (0111) -- (1232);
\draw[->] (0111) -- (1222);
\draw[->] (0111) -- (1231);
\draw[->] (0111) -- (1221);
\draw[->] (0122) -- (1242);
\draw[->] (0122) -- (1232);
\draw[->] (0122) -- (1222);
\draw[->] (0100) -- (1221);
\draw[->] (0100) -- (1220);
\draw[->] (0100) -- (1222);
\draw[->] (1122) -- (1242);
\draw[->] (1122) -- (1232);
\draw[->] (1122) -- (1222);
\draw[->] (1100) -- (1221);
\draw[->] (1100) -- (1220);
\draw[->] (1100) -- (1222);
\draw[->] (1232) -- (1342);
\draw[->] (1232) -- (2342);
\end{tikzpicture}
}

\begin{abstract}
Let $F/\Q_p$ be a finite extension and
let $\bG$ be a connected reductive group
with connected center
satisfying $p>h_{\bG}$.
We write down explicit polynomial equations
for the reduced Emerton-Gee stacks
(the Borel and the twisted Borel versions
that jointly cover the entire $\cX_{\bG,\red}^{\EG}$),
by introducing the
{\it mod $p$ Weil-Deligne stacks} capturing
derived structures of
$(\varphi, \Gamma)$-modules in reduced families.

This allows us to algorithmically
compute the set of irreducible components
of $\cX_{\bG,\red}^{\EG}$,
thereby establishing its equidimensionality,
and the existence of crystalline lifts
by showing the total number of irreducible components
equals the number of (mod $p$) crystalline (or
potentially semistable) components.
The last step of the arguments is to interpolate the
{\it rigid analytic Weil-Deligne stacks}
and the
mod $p$ Weil-Deligne stacks to deduce 
the number of potentially semistable components.

An initial implementation of the algorithms
based on Gr\"obner basis
establishes the existence of crystalline lifts
in the $\bG=\mathrm{F}_4$ case
for all $F/\Q_p$,
and
in the remaining  $\bG=\mathrm{E}_6, \mathrm{E}_7$,
and $\mathrm{E}_8$ cases
for all but finitely many $F/\Q_p$.
\end{abstract}

\maketitle

\tableofcontents

\section{Introduction}

Let $F/\Q_p$ be a finite extension and
let $\bG$ be a connected reductive group
with connected center.
The assumption that $p>h_{\bG}$ is in force
throughout the paper
to enable the Baker-Campbell-Hausdorff formula
and the truncated $\log_{\le h_{\bG}}$
and $\exp_{\le h_{\bG}}$.

We make major progress
towards the open problem of
the existence of crystalline lifts:

\begin{thm} (Theorem \ref{thm:maindup})
\label{thm:main}
If $\bG=\mathrm{F}_4$, then all $\Gal_F\to \bG(\bFp)$
have a crystalline lift.

If $\bG$ is an arbitrary reductive group
and $[F:\Q_p]>\frac{\dim \bG}{2}$, then all $\Gal_F\to \bG(\bFp)$
have a crystalline lift.
\end{thm}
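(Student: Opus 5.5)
The plan follows the strategy laid out in the abstract: count irreducible components of the reduced Emerton--Gee stack $\cX^{\EG}_{\bG,\red}$ and compare this count with the number of crystalline components. By the Emerton--Gee formalism, for each regular Hodge type $\lambda$ the crystalline substack $\cX^{\lambda,\mathrm{crys}}_{\red}$ is a closed substack. If every irreducible component of $\cX^{\EG}_{\bG,\red}$ lies in some such crystalline closed substack, then every $\bFp$-point lies on a crystalline component and admits a crystalline lift. Since both sides are finite sets of top-dimensional components, the argument reduces to three steps: (i) show $\cX^{\EG}_{\bG,\red}$ is equidimensional of dimension $[F:\Q_p]\dim(\bG/\bB)$; (ii) show each crystalline component of that dimension is a full irreducible component; (iii) show the number of irreducible components equals the number of distinct (Serre-weight-labelled) crystalline components.

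Step (i) uses the covering of $\cX^{\EG}_{\bG,\red}$ by the Borel and twisted Borel versions, presented by explicit polynomial equations through the mod $p$ Weil--Deligne stacks. Each such version is stratified by the data of a semisimplification (a character of $\bT$ or a twisted torus) together with an iterated extension. Over each stratum the fibre is cut out by equations in the extension classes. One computes its dimension as $\dim H^1-\dim H^0$, a difference given by an Euler characteristic, except along loci where $H^2$ jumps. For the excess loci, the Gr\"obner-basis description of the equations (for example the $\mathrm{F}_4$ strata pictured above) should show that the jumps are compensated, so that no stratum has dimension exceeding $[F:\Q_p]\dim(\bG/\bB)$. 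The components are then exactly the closures of generic strata, labelled by characters together with the combinatorics of root posets. This gives an explicit count.

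Step (iii) is carried out by interpolating the rigid analytic Weil--Deligne stacks with the mod $p$ Weil--Deligne stacks. The number of potentially semistable components of fixed tame type is computed in characteristic zero, and reduction mod $p$ matches each with a component of $\cX^{\EG}_{\bG,\red}$. If the two counts agree, every component is crystalline or potentially crystalline; a standard change-of-weight argument then upgrades potentially crystalline components to crystalline ones.

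For the second assertion, assume $[F:\Q_p]>\frac{\dim\bG}{2}$. Here the local Euler characteristic $[F:\Q_p]\dim\bG$ dominates the bounded contribution of $H^0$ and $H^2$ (each at most $\dim\bG$). A uniform bound should then show that every non-generic stratum has strictly smaller dimension, with no case analysis needed. For $\mathrm{F}_4$ with small $[F:\Q_p]$ this inequality fails, and the finitely many problematic strata are handled by the Gr\"obner-basis computation. I expect this explicit verification of the exceptional strata, together with checking equality of counts in step (iii), to be the main obstacle.
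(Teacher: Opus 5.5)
There are genuine gaps, and the most serious one sits exactly at $\mathrm{F}_4$ over $F=\Q_p$. In step (i) you expect the Gr\"obner computation to show the $H^2$-jumps are compensated, so that the components are the closures of the generic strata. The cone-model computation does bound every fine stratum by $\dF\#\Phi^+$, but the conclusion is false on the Borel cover. Four strata with $K\neq U$ (Configurations I--IV, where $\Psi_2\subset\Delta_K$) attain the bound, so $\cX_{\bB}$ has four more irreducible components than $\cW_{\bB}$ (Theorem~\ref{thm:comp}). A count on the Borel cover therefore cannot match the lower bound supplied by the potentially semistable stack, and your step (iii) breaks there.

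The missing idea is to show these components are absorbed after pushing forward to $\cX_{\bG}$. First, they are stable under the negative root groups $U_{(-\beta)^\vee}$, $\beta\in\Psi_0$, so their images lose dimension (Lemma~\ref{lem:rot-dim}). Second, a rotation--deformation argument (Proposition~\ref{prop:rotate}) places these images in the closure of the image of $\dot\cX_{\bB}$. One conjugates by $\exp(\mu e_{-\delta_{\rot}})$ for a well-chosen $\delta_{\rot}\in\Psi_0$, deforms over $\bFp[\![\varepsilon]\!]$ by switching on the $\Psi_0$-coordinates and moving $t_{\delta_{\rot}}$ off $1$, and solves the first-order equations. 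The Jacobian is a unit at a generic point, so Hensel's lemma applies.

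More generally, upper bounds on strata of the Borel and twisted Borel covers never give purity of $\cX^{\EG}_{\bG,\red}$ by themselves. Krull's Hauptidealsatz gives the lower bound only on $\cX_{\bB}$, and images in $\cX_{\bG}$ can drop dimension. The paper obtains purity on $\cX_{\bG}$ by showing every point has a potentially semistable lift of regular Hodge type (Theorem~\ref{thm:lift0}, using the rigid Weil--Deligne lower bound on components). Then $\cX^{\EG}_{\bG,\red}$ lies in a flat, hence equidimensional, special fibre. It also uses Lemma~\ref{lem:maximal-ns} to see that each $\dot\cX_{\bB}(C)$ maps onto a top-dimensional component.

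Second, there is no ``standard change-of-weight argument'' that upgrades potentially crystalline components to crystalline ones, and the paper uses none. The potentially semistable count yields only equidimensionality and the bijection between components of $\cX^{\EG}_{\bG,\red}$ and those of $\cW_{\bB}$; crystallinity is a separate input. For each component $C\subset\cW_{\bB}$ one produces a regular Hodge type with trivial inertial type whose crystalline special fibre contains $\overline{\dot\cX_{\bB}(C)}$. This is bootstrapped from Emerton--Gee's $\PGL_2$ theorem on the simple roots (Corollary~\ref{cor:crys}), and properness of the crystalline map gives the pointwise lift criterion. Since every component of $\cX^{\EG}_{\bG,\red}$ is the image of some $\overline{\dot\cX_{\bB}(C)}$, every point gets a crystalline lift; the twisted Borel strata never need to be crystalline themselves.

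Finally, in the large-degree case, domination by $\dF$ only disposes of strata with $K\neq U$, via the crude bound $(\dF+\dkF+1)\#\Phi_K+\dim\bT$. This is where $\dF\ge\#\Phi^++2$ enters, which follows from $\dF>\dim\bG/2$ when the semisimple rank is at least two. For $K=U$ the factor $\dF$ cancels against the generic stratum. There one genuinely needs the height-by-height Jacobians of the $r_\alpha$ to have full rank: admissibility of the bipartite root graphs (Theorem~\ref{thm:C6}) and its twisted analogue, which are verified by computation for the exceptional types. So ``no case analysis needed'' is too optimistic.
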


An algorithm
(c.f. \url{github.com/mocham/AlgEG} for the implementation)
is presented
in Section \ref{sec:rotate}
to resolve the remaining small field
cases.
Experiments show that a computationally efficient
cone model suffices for our purposes,
and thus the bottleneck is the
very large number of parallelizable tasks
rather than the computational cost of a single task.
A complete resolution of this project seems
realistic given abundant computing resources.

Our method is based on explicit polynomial equations
for the reduced Emerton-Gee stacks.
The structure of the polynomial equations yields the following
purity result that is worth recording:

\begin{thm} (Corollary \ref{cor:XB-dim})
\label{thm:purity}
Write $\bB$ for the Borel of $\bG$.
If $\dim \cX_{\bB,\red}^{\EG}\le [F:\Q_p]\dim \bG/\bB$,
then
$\cX_{\bB,\red}^{\EG}$
is equidimensional of dimension $[F:\Q_p]\dim \bG/\bB$.
\end{thm}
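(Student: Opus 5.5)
The plan is to show that every irreducible component of $\cX_{\bB,\red}^{\EG}$ has dimension \emph{at least} $[F:\Q_p]\dim \bG/\bB$. The assumed upper bound then forces equality on every component.

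The key input will be the explicit presentation of $\cX_{\bB,\red}^{\EG}$ as a quotient stack of a scheme cut out by explicit polynomial equations. This presentation comes from the mod $p$ Weil--Deligne stacks, built inductively along a filtration of the unipotent radical $\bB_u$ by root-height (or a refinement ordering the positive roots). At each step one adjoins a root group $U_\alpha$. The extension data is parametrized by an affine bundle whose fibre is an $H^1$ of a rank-one $(\varphi,\Gamma)$-module twisted by a character of the torus. That fibre is cut out inside a space of cocycles $Z^1$ by equations coming from coboundaries $B^1$, and the whole construction is quotiented by $\bT$ and the appropriate unipotent groups acting by conjugation. The Herr complex of a rank-one \'etale $(\varphi,\Gamma)$-module $D$ has Euler characteristic
\[
-\dim H^0(D)+\dim H^1(D)-\dim H^2(D)=[F:\Q_p].
\]
So, summed over the positive roots, the expected dimension of the full stack is
\[
\sum_{\alpha>0}[F:\Q_p] \;+\; \dim(\text{torus part})-\dim \bT \;=\; [F:\Q_p]\dim \bG/\bB .
\]
Here the torus part, namely the stack of characters $\Gal_F\to \bT(\bFp)$, is $0$-dimensional after quotienting by $\bT$.

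The argument then splits into two steps.

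\textbf{Step 1 (present the stack as a quotient of a local complete intersection).} Write $\cX_{\bB,\red}^{\EG}=[Y/H]$, where $Y$ is cut out of an affine space of dimension $N$ by $r$ polynomial equations, with $N-r-\dim H=[F:\Q_p]\dim\bG/\bB$. The equations will be the cocycle conditions, i.e.\ the $H^2$-type obstructions of the successive extension steps. The derived structure captured by the Weil--Deligne stack is exactly what lets us write $Y$ as the zero locus of $r$ equations, rather than as a more complicated locus.

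\textbf{Step 2 (Krull bound).} By Krull's principal ideal theorem, every irreducible component of $Y$ has dimension at least $N-r$. Hence every component of the quotient stack has dimension at least $N-r-\dim H$. Combined with the assumed upper bound, this gives equidimensionality. If the presentation has to be done stratum by stratum (for instance over the locus where the characters $\alpha\circ\chi$ are trivial or cyclotomic and $H^0,H^2$ jump), the same count will be applied on each piece. The key point is that the number of equations is the \emph{generic} one, so each stratum still carries the lower bound.

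\textbf{Main obstacle.} The hard part is verifying in Step 1 that the equations are genuinely $r$ in number \emph{uniformly}, including at points where $H^2(\alpha\circ\chi)\neq0$ or $H^0\neq0$. This uniformity is precisely what the Weil--Deligne presentation must supply, by keeping $Z^1$ and the relations as a fixed-size system rather than passing to cohomology. There I would first try to use the explicit complexes with fixed ranks: a fixed number of variables and a fixed number of equations per root, valid in all reduced families. This gives a derived lci presentation whose virtual dimension equals the Euler-characteristic count above.
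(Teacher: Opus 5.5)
Your overall strategy is the paper's. The lower bound $\dim\ge[F:\Q_p]\#\Phi^+$ on every irreducible component comes from Krull's Hauptidealsatz applied root by root, and the hypothesis then forces equality.

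The uniformity you flag as the main obstacle is exactly what the paper proves before this corollary. It works in two stages:
\begin{itemize}
\item It destackifies by $\Gamma$-stable sections: the modified complex $C^\bullet_{\Herr,+}$ has $H^0_{\Herr,+}=0$, and its $H^1$-part is free of rank $\dF+\dkF$.
\item On each closed piece $X_{\bB}(C)$ lying over an irreducible component $C$ of $\cW_{\bB}$, one has $X_{\le\alpha}(C)\cong X_{<\alpha}(C)[s_\alpha]/(r_\alpha)\times\A^{\dF+\dkF}$ or $X_{<\alpha}(C)\times\A^{\dF+\dkF}$, with $r_\alpha=s_\alpha(t_\alpha-1)+(\text{polynomial in earlier coordinates})$.
\end{itemize}
The single relation reflects the resolution $[R\xrightarrow{t_\alpha-1}R]$ of $H^2$ over the PID $R=\bFp[t_\alpha^{\pm1}]$. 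The gauge cochains make the non-abelian obstruction $D_\alpha$ a function of finitely many coordinates.

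Your alternative, a fixed-rank perfect model of the Herr complex, would give the same net count locally. But you would still need a cochain-level comparison with the actual Herr complex, compatible with base change, to transport $D_\alpha$ into the model. $D_\alpha$ is a higher cup product of genuine Herr cochains, and that transport is the work the gauge cochains do.

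The step that would genuinely fail is your fallback of running the count stratum by stratum over the loci where $H^0$ or $H^2$ jumps, on the grounds that ``each stratum still carries the lower bound''. It does not. Take $\bG=\PGL_2$ and $F=\Q_p$. The locally closed substack where the character is trivial has dimension $\dim H^1(\Gal_{\Q_p},\bFp)-\dim H^0(\Gal_{\Q_p},\bFp)-\dim\G_m=2-1-1=0$. This is strictly below $[F:\Q_p]\#\Phi^+=1$. In the destackified chart it is $\{t=1\}\subset\Spec\bFp[t^{\pm1}]\times\A^{\dF+\dkF}$, which loses one dimension.

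Since strata can genuinely be smaller than expected, the lower bound must come from a presentation valid across the jump loci, on an open or closed cover. That is why the paper introduces $s_\alpha$ and $r_\alpha$ globally on $X_{\bB}(C)$ instead of stratifying. Note also that the invariant is not ``the generic number of equations''. On pieces meeting the Steinberg locus there is one more variable and one more equation than generically, and only their difference is constant.
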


As opposed to our earlier expectation,
$\cX_{\bB,\red}^{\EG}$
can have 
\MINOREDITED{more irreducible components than}
$\cX_{\bG,\red}^{\EG}$:

\begin{thm} (Proposition \ref{prop:TBM}, Corollary \ref{cor:F4-equidim})
\label{thm:comp}
Let $\bG=\mathrm{F}_4$.
\begin{itemize}
\item 
If $F\neq \Q_p$, then
$\cX_{\bB,\red}^{\EG}\to \cX_{\bG,\red}^{\EG}$
induces a bijection of irreducible components.

\item
If $F= \Q_p$, then
$\cX_{\bB,\red}^{\EG}$
is still equidimensional but
has $4$ extra irreducible components
compared to that of
$\cX_{\bG,\red}^{\EG}$.
\end{itemize}

On the other hand,
if $\bG$ is an arbitrary reductive group
and $[F:\Q_p]>\frac{\dim \bG}{2}$, then 
$\cX_{\bB,\red}^{\EG}\to \cX_{\bG,\red}^{\EG}$
induces a bijection of irreducible components.
\end{thm}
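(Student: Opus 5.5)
The plan is to compare irreducible components of $\cX_{\bB,\red}^{\EG}$ and $\cX_{\bG,\red}^{\EG}$ through the explicit polynomial presentations promised in the abstract. The first step is to use the Borel model to write $\cX_{\bB,\red}^{\EG}$ as a finite union of locally closed strata. These strata are indexed by the combinatorial data appearing in the figures: a partition of the positive roots into Frobenius–Weyl orbits, together with the vanishing and nonvanishing pattern of the extension classes. Each stratum is a vector-bundle-like fibration, with fibres given by the mod $p$ Weil–Deligne stack, over a torus of characters. From this description we compute its dimension as $[F:\Q_p]\dim \bG/\bB$ minus a defect, where the defect counts roots $\alpha$ whose $H^2$ (equivalently $H^0$ of the dual twist) jumps, i.e. where $\chi_\alpha$ equals the cyclotomic or trivial character on the stratum. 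The irreducible components of $\cX_{\bB,\red}^{\EG}$ are then exactly the closures of the strata with zero defect. Once $\dim\le[F:\Q_p]\dim\bG/\bB$ is checked, Theorem \ref{thm:purity} (Corollary \ref{cor:XB-dim}) supplies equidimensionality.

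Next we push forward along $\cX_{\bB,\red}^{\EG}\to\cX_{\bG,\red}^{\EG}$. Both stacks have dimension $[F:\Q_p]\dim\bG/\bB$, and the map is surjective on components: the Borel and twisted Borel versions jointly cover $\cX_{\bG,\red}^{\EG}$, and the twisted ones are handled in the same way. The question is therefore which top-dimensional strata have image of dimension strictly smaller than $[F:\Q_p]\dim\bG/\bB$. Equivalently, we ask where the generic fibre is positive dimensional, i.e. where a generic $\bG$-representation in the image admits a positive-dimensional family of $\bB$-reductions. The generic fibre dimension is governed by $H^0(\Gal_F,\mathfrak g/\mathfrak b)$ twisted by the generic characters. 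It can be nonzero only if some root character $\chi_\alpha$ is forced to be trivial on the whole stratum. The plan is to show that on a zero-defect stratum this forced triviality can occur only when the stratum's Frobenius–Weyl data makes a character of the torus degenerate; such a condition costs $[F:\Q_p]$ in the torus directions but only a bounded amount, at most of order $\dim\bG/2$ coming from the $H^1$ corrections, in the extension directions. When $[F:\Q_p]>\dim\bG/2$ this exchange is always unfavourable, so every top-dimensional stratum maps generically finitely. Distinct strata cannot collapse to the same component, because generic reductions are unique up to the Weyl-twisted data that already indexes the strata. This proves the general bijection statement.

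For $\bG=\mathrm{F}_4$ the bound is not available for small $F$, so we run the algorithm of Section \ref{sec:rotate}. We enumerate the Frobenius–Weyl posets (the FwPoset diagrams), compute Gröbner bases for the strata equations (as in \StratumKEqns-type systems), and list the zero-defect strata together with the fibre dimensions of their images. For $F\ne\Q_p$ the computation shows that all fibres are generically finite; this is Proposition \ref{prop:TBM}. For $F=\Q_p$ exactly four zero-defect strata have positive-dimensional generic fibre. These are the strata whose orbit structures collapse, as in the branching diagrams. Their images lie inside closures of other components, which gives $4$ extra components on the Borel side; this is Corollary \ref{cor:F4-equidim}.

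I expect the main obstacle to be the generic-fibre analysis: making precise, uniformly in the strata, that the fibre of $\cX_{\bB}\to\cX_{\bG}$ over a generic point has dimension controlled by $H^0$ of root characters. This requires the derived (Weil–Deligne) description to commute with the change of group. I would first try to prove it by a tangent-space computation at a generic point, using the Herr complex for $\mathfrak g/\mathfrak b$.
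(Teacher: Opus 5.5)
Your Borel-side outline (stratify $\cX_{\bB}$, bound every stratum by $[F:\Q_p]\#\Phi^+$, conclude with Corollary~\ref{cor:XB-dim}) is the paper's. The comparison with $\cX_{\bG,\red}^{\EG}$, however, has a genuine gap.

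You take as input that $\cX_{\bG,\red}^{\EG}$ is equidimensional of dimension $[F:\Q_p]\#\Phi^+$, and that covering by Borel and twisted Borel stacks makes the map surjective on components. Neither is available a priori; the paper says explicitly, after Corollary~\ref{cor:baby-F4}, that equidimensionality of $\cX_{\mathrm{F}_4,\red}^{\EG}$ is not yet known at that point. Non-Borel-valued points come from the twisted stacks $\cX_{U^{\bM}\rtimes\bT\langle w\rangle}$. Their images have dimension at most $[F:\Q_p]\#\Phi^{\bM+}<[F:\Q_p]\#\Phi^+$, and no dimension count prevents their closures from being extra lower-dimensional components of $\cX_{\bG,\red}^{\EG}$, which would break the bijection. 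Saying ``the twisted ones are handled in the same way'' does not address this.

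The paper closes this with a characteristic-zero input. Through the integral Weil--Deligne stack $\sW$ and Theorem~\ref{thm:rigid}, Corollary~\ref{cor:Ch-lower-bd} shows that the potentially semistable stack of regular Hodge type $\underline{\lambda}_w$ has at least as many special-fibre components as $\cW_{U^{\bM}\rtimes\bT\langle w\rangle}$. Comparing this with the mod $p$ component counts (Theorems~\ref{thm:reg}, \ref{thm:twist-reg}, \ref{thm:F4-table}) forces every point to have such a lift (Theorem~\ref{thm:lift0}). That is what makes $\cX_{\bG,\red}^{\EG}$ equidimensional in Proposition~\ref{prop:TBM}. After that, the generic-fibre step you single out as the main obstacle is easy: by Lemma~\ref{lem:maximal-ns}, fibres over maximally non-split points are trivial, because the centralizer of a regular unipotent element lies in $\bB$.

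For $F=\Q_p$ the gap is sharper. You correctly expect Configurations I--IV to have lower-dimensional images (Lemma~\ref{lem:rot-dim}, since $\Psi_0\neq\emptyset$). But that is exactly the situation in which they could become extra components of $\cX_{\bG,\red}^{\EG}$. Moreover, Theorem~\ref{thm:lift0} is unavailable here, because the component count fails for $w=1$.

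Your sentence ``their images lie inside closures of other components'' is the actual content of Section~\ref{sec:rotate}, and that section is not a fibre-dimension enumeration. One conjugates a generic point by $\exp(\mu e_{-\delta_{\rot}})$ with $\delta_{\rot}\in\Psi_0$ and deforms over $\bFp[\![\varepsilon]\!]$. One then checks by Gr\"obner bases that the divided equations have a solution mod $\varepsilon$ with invertible Jacobian (Theorem~\ref{thm:ht1-codim0}). Hence the point, up to $\bG$-conjugacy, specializes from $\dot\cX_{\bB}$.

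Separately, your stratification and defect count are not right:
\begin{itemize}
\item In the Borel case there is no Weyl twist. The strata are indexed by $(K,\Psi_0,\Psi_2)$, with $K$ the minimal $\bT$-stable normal subgroup, and the relations $r_\alpha$ are nonlinear.
\item An $H^2$-jump contributes a Steinberg parameter $s_\alpha$, not only a loss. Under your accounting, neither $\overline{\dot\cX_{\bB}\langle\Psi_0(C),\Psi_2(C)\rangle}$ for Steinberg $C$ (Corollary~\ref{cor:reg}) nor Configurations I--IV would be top-dimensional, yet both are.
\item The real loss comes from roots outside $\Phi_K$, each costing $[F:\Q_p]$. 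That, and not forcing $t_\alpha=1$ (which costs one dimension), is the source of the threshold in Theorem~\ref{thm:reg}. So your $\dim\bG/2$ exchange heuristic has the two costs reversed.
\item For $K=U$ you still need the admissibility result, Theorem~\ref{thm:C6}.
\end{itemize}
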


Analysis of the moduli of Weil-Deligne representations
in characteristic $0$
(that dates back to Kisin \cite{Kis08})
shows that
the (reduced) special fiber of a crystalline stack
of regular Hodge type
is necessarily equidimensional of dimension
$[F:\Q_p]\dim \bG/\bB$.
Combining this with
the purity result (c.f. Theorem \ref{thm:purity}),
proving and disproving the existence
of crystalline lifts
both
amount to counting the number
of irreducible components.

The starting point of this paper
is to mimic Kisin's approach
of analyzing the generic fiber
of the potentially semistable deformation rings
using the Weil-Deligne moduli space,
and to study the geometry of the
reduced Emerton-Gee stacks
by constructing a mod $p$
version of Weil-Deligne stacks
that parametrize
pairs $(\bar r, \bar N)$
where $\bar r$ is a semisimple Galois representation
and $\bar N$ is a monodromy operator.
These ideas are implicit in our previous work
\cite{Lin25},
where we geometrize the Serre weights
(a mod $p$ notion)
as irreducible components of the rigid analytic
Borel-valued Weil-Deligne stacks.

Before we explain our methods,
we briefly review the past works.

\subsection{Past works}

\subsubsection{}

\MINOREDITED{\cite{EG23} initiated the use}
of the reduced moduli stack
of \'etale $(\varphi, \Gamma)$-modules
to study the existence of crystalline lifts.
They solved the
$\GL_n$-case by induction on $n$.
Let $\bar\rho:\Gal_F\to \GL_n(\bFp)$
be a mod $p$ Galois representation
and let $\bar\alpha:\Gal_F\to \GL_m(\bFp)$
be an irreducible mod $p$ Galois representation
together with a fixed lift $\alpha$,
they proved the following criterion
via ingenious Galois cohomology techniques:

\begin{quotation}
($*$)
If the locus where
$\dim \Ext^2(\bar\alpha,-)\ge n$
has codimension at least $n$ in
$\cX^{\EG}_{\GL_n,\red}$,
then each extension class
$[\bar c]\in \Ext^1(\bar\alpha,\bar\rho)$
is in the image of 
$\Ext^1(\alpha,\rho)\to \Ext^1(\bar\alpha,\bar\rho)$
for a suitable crystalline lift $\rho$ of $\bar\rho$.
\end{quotation}

\noindent
The existence of crystalline lifts for $\GL_n$
is thus reduced to dimension counting:
cut $\cX^{\EG}_{\GL_n, \red}$
into many strata, and show
the obstruction dimension
is smaller than the codimension
over each stratum.

For general reductive groups,
we can try to adapt the strategy
to an induction on Levi subgroups:
For a parabolic subgroup $P=U^{\bM}\rtimes\bM$
where $U^{\bM}$ is the unipotent radical,
we can try to lift extension classes
$H^1(\Gal_F, U^{\bM}(\bFp))$.
The issue is that $U^{\bM}$ is a non-abelian group
for other reductive groups,
and $H^1(\Gal_F, U^{\bM}(\bFp))$
lacks meaningful algebraic or geometric structures
for a similar analysis.
The even more serious issue
is that this approach
implicitly assumes
that a $P$-valued mod $p$ Galois representation
always admits a $P$-valued crystalline lift.
Even though we don't know of a counterexample yet,
we see
Theorem \ref{thm:comp}
as \MINOREDITED{strong negative evidence for this} ---
a generic $\bFp$-point on the $4$ extra
irreducible components likely does not
have a Borel-valued lift.

\begin{remark}
We did manage to extend the methods
of \cite{EG23} to the case of classical groups (c.f. \cite{Lin25B}).
The key is that $U^{\bM}$ is very close to being
an abelian group
when $\bM$ is a maximal proper Levi.
The dimension counting process is much more complicated
compared to the $\GL_n$-case
as \MINOREDITED{each stratum is} no longer an affine space
but is rather an affine cone.
\end{remark}

\subsubsection{Reflections on \cite{EG23}}
The main application
of the existence of crystalline lifts in their work
is the so-called {\it qualitative Breuil-M\'ezard}
conjecture,
\MINOREDITED{which asserts that}
$\CH_{\Top}(\cX_{n,\red}^{\EG})$
is \MINOREDITED{a} free abelian group generated by the Serre weights.

In their work,
$H^1(\Gal_F, U^{\bM}(\bFp))$
is very structured while
$\CH_{\Top}(\cX_{n,\red}^{\EG})$
seems to be a structure-less object.
This explains why they 
exploit $H^1(\Gal_F, U^{\bM}(\bFp))$
to calculate 
$\CH_{\Top}(\cX_{n,\red}^{\EG})$.

The situation changes for general reductive groups:
$H^1(\Gal_F, U^{\bM}(\bFp))$
is now completely structure-less.
However, 
$\CH_{\Top}(\cX_{\bG,\red}^{\EG})$
is a structured object,
through its connection to the (rigid analytic)
Weil-Deligne stacks.

\subsubsection{}
In his PhD thesis \cite{Mu13},
Muller constructs crystalline lifts for
$\GL_2$ and $\GL_3$
using explicit unramified twists.

\MINOREDITED{His method} relies on strict genericity conditions
on the relative positions of the Galois cohomology classes,
which are not satisfied in full generality for $\GL_4$.
Nevertheless, it
is powerful enough to show {\it sufficiently generic}
Galois representations have a crystalline lift,
which is, surprisingly, 
sufficient from the perspective of
Theorem \ref{thm:purity}
---
if $\cX_{\bB, \red}^{\EG}$
is already known to be equidimensional
and generic points of
$\cX_{\bB, \red}^{\EG}$
have crystalline lifts,
then all points of
$\cX_{\bB, \red}^{\EG}$
have crystalline lifts.
Indeed, in an early draft of this paper,
we didn't know that
an explicit presentation of the reduced Emerton-Gee stacks
would
exist,
and we still managed to prove
Theorem \ref{thm:purity}
using very indirect
methods,
and proceeded
to establish the existence of crystalline lifts for
$\mathrm{F}_4$ using Muller's methods.

\subsection{The strategy}
\newcommand{\lsup}[2]{{^{#1}\!#2}}

\subsubsection{Insights from rigid analytic geometry}

The story begins with our previous work
on the rigid analytic Weil-Deligne stacks.
We recall the following theorem:

\begin{thm} (\cite[Theorem 6]{Lin25})
Let $\rho: \Gal_{F}\to \bM(\bar \Q_p)$ be a
potentially semistable
representation whose Hodge type is $P^-$-dominant.
Endow $\Lie U^{\bM}(\bar\Q_p)$ with $\Gal_F$-action
via $\ad\rho$.

Any $\rho':\Gal_F\to P(\bar\Q_p)$
such that $\rho'\times^P\bM= \rho$
is automatically potentially semistable
of the same inertial type,
with its possible Weil-Deligne
types parameterized by $H^2(\Gal_F, U^{\bM}(\bar\Q_p))$.
\end{thm}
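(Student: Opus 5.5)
The plan is to reduce to the abelian case by filtering $U^{\bM}$, and to prove the abelian case with Bloch--Kato dimension counts.

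\emph{Filtration.} Choose the filtration of $U^{\bM}$ by height of roots relative to $\bM$, $U^{\bM}=U_1\supset U_2\supset\cdots\supset U_{k+1}=1$. Each $U_i/U_{i+1}$ is a vector group, central in $U^{\bM}/U_{i+1}$ and stable under $\bM$. Since $p>h_{\bG}$, the truncated $\log$/$\exp$ identify $U^{\bM}$ with $\Lie U^{\bM}$ equivariantly. Write $V_i=\Lie(U_i/U_{i+1})$, a $\Gal_F$-representation via $\ad\rho$.

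\emph{Induction on $i$.} Assume $\rho'_i:=\rho'\times^P (P/U_{i+1})$ is potentially semistable. Then $\rho'_{i+1}$ lifts $\rho'_i$ through a central extension by $V_i$. Fix one lift; every other lift differs from it by a cocycle in $H^1(\Gal_F,V_i)$, where the coefficients are twisted by $\rho'_i$ but have the same semisimplification as $\ad\rho|_{V_i}$. So it suffices to show two things:
\begin{enumerate}
\item every class in $H^1(\Gal_F,W)$ is de Rham when $W$ is de Rham with Hodge--Tate weights of the appropriate sign;
\item some lift is potentially semistable. This reduces to the same statement via the obstruction/torsor formalism: the unobstructed lifts form an $H^1$-torsor, and one lift can be taken de Rham by choosing a de Rham splitting at the level of filtered modules.
\end{enumerate}

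\emph{Abelian key step.} $P^-$-dominance of the Hodge type means every Hodge--Tate weight of $\ad\rho$ on $\Lie U^{\bM}$ has the strict sign for which $\mathrm{Fil}^0 D_{\mathrm{dR}}(W^*(1))=D_{\mathrm{dR}}(W^*(1))$. The Bloch--Kato formula then gives $\dim H^1_f(W^*(1))=\dim H^0(W^*(1))$, so $H^1_e(W^*(1))=0$. By local Tate duality, $H^1_g(W)$ is the annihilator of $H^1_e(W^*(1))$, hence $H^1_g(W)=H^1(W)$. Thus every extension is de Rham, and by the $p$-adic monodromy theorem it is potentially semistable.

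\emph{Same inertial type.} Restrict to $\Gal_L$, where $\rho$ becomes semistable. There $H^1_g(\Gal_L,W)=H^1_{\mathrm{st}}(\Gal_L,W)$ by Hyodo/Berger, so $\rho'|_{\Gal_L}$ is already semistable. Hence the inertial type does not grow.

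\emph{Parametrizing Weil--Deligne types.} Apply $D_{\mathrm{pst}}$ to obtain a $P$-valued Weil--Deligne representation $(r',N')$ lifting $(r,N)$. The lifts with fixed $(r,N)$, taken modulo $U^{\bM}$-conjugacy, are computed by the two-term complex $\Lie U^{\bM}\xrightarrow{(\varphi-1,N)}\cdots$ of the $(\varphi,N,\Gal_{L/F})$-module $D_{\mathrm{pst}}(\Lie U^{\bM})$, filtered as above. Its cokernel in degree $2$ equals $H^0$ of the $(\varphi,N)$-cohomology of the dual twist, and I would match this with $H^2(\Gal_F,U^{\bM})$ via local Tate duality: $H^2(\Gal_F,V_i)\cong H^0(\Gal_F,V_i^*(1))^\vee$. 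For the nonabelian $U^{\bM}$, both sides are built inductively along the filtration and compared piece by piece.

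\emph{Main obstacle.} I expect the hardest part to be this last identification: showing that the nonabelian "$H^2(\Gal_F,U^{\bM})$" and the set of Weil--Deligne types are compatible across the successive central extensions. Concretely, the twisting of coefficients at each stage must not change the relevant $H^0(V_i^*(1))$. I would try first to prove this by a semicontinuity/Euler-characteristic argument using that all twisted coefficient modules share a semisimplification.
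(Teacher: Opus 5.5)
The paper does not prove this statement. It quotes it from \cite[Theorem 6]{Lin25} as motivation, so your argument has to stand on its own. Its engine is right: if $\mathrm{Fil}^0D_{\mathrm{dR}}(W^*(1))=D_{\mathrm{dR}}(W^*(1))$, then $H^1_e(W^*(1))=0$. This follows most directly from the exact sequence $D_{\mathrm{cris}}^{\varphi=1}\to D_{\mathrm{dR}}/\mathrm{Fil}^0\to H^1_e\to 0$. Hence $H^1_g(W)=H^1(W)$, and a de Rham lift twisted by a class in $H^1_g$ stays de Rham. One correction to your setup: there is no twisting of coefficients. $U^{\bM}$ acts trivially on $U_k/U_{k+1}$, and $\ad\rho$ factors through $\bM$, which preserves $\Ht^{\bM}$, so $\Lie U^{\bM}=\bigoplus_k V_k$ as $\Gal_F$-modules. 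Beyond that, two steps have genuine gaps.

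\emph{Step (2) of the induction.} For $i\ge1$ you need some de Rham lift of $\rho'_i$, and ``a de Rham splitting at the level of filtered modules'' is not automatic. Lifting the $(\varphi,N,\Gal_{L/F})$-module of $\rho'_i$ from $P/U_{i+1}$ to $P/U_{i+2}$ is obstructed by $H^2$ of the $(\varphi,N)$-complex of $D_{\mathrm{pst}}(V_{i+1})$. By the duality you use later, that group is $H^2(\Gal_F,V_{i+1})$. The existence of the Galois lift $\rho'_{i+1}$ does not transfer to the filtered-module side.

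The missing input is a weight bound. Every root with $\Ht^{\bM}\ge2$ is a sum of two roots of $U^{\bM}$. Normalize so that $\bar\Q_p(1)$ has weight $1$, so your strict sign says all weights on $\Lie U^{\bM}$ are $\ge1$. Then all Hodge--Tate weights of $V_k$ for $k\ge2$ are $\ge2$. Hence $V_k^*(1)$ has no weight $0$, and $H^2(\Gal_F,V_k)\cong H^0(\Gal_F,V_k^*(1))^\vee=0$.

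So at $i=0$ use $\rho$ itself via $\bM\subset P$. For $i\ge1$ the obstruction vanishes, and you can finish as follows. Lift the Hodge cocharacter. Use that weak admissibility is stable under extensions, checked on a faithful representation with a $P$-stable flag. Then apply Colmez--Fontaine.

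Alternatively, bypass $H^1_g$ altogether. Conjugate the $U^{\bM}$-valued cocycle of $\rho'$ by a $B_{\mathrm{dR}}$-trivialization of $\rho$ adapted to the Hodge cocharacter. The strict sign puts it in $\exp(\Lie U^{\bM}\otimes tB_{\mathrm{dR}}^+)$, and successive approximation with $H^1(\Gal_F,\mathbb{C}_p(j))=0$ for $j\ge1$ makes it a coboundary.

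\emph{Weil--Deligne types.} Lifts of $(r,N)$ modulo $U^{\bM}$-conjugacy are computed layer by layer by $H^1$, not by the degree-$2$ cokernel. The relevant complex is the three-term complex $D\to D\oplus D\to D$ with $D=D_{\mathrm{pst}}(V_k)$, followed by $\Gal_{L/F}$-invariants. Your duality identifies its $H^2$ with $H^2(\Gal_F,V_k)$. Passing from $H^1$ to $H^2$ needs two further facts: Euler characteristic zero, and $H^0=(D^{\varphi=1,N=0})^{\Gal_{L/F}}=0$.

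The vanishing of $H^0$ is not the Galois $H^0(\Gal_F,V_k)$, which vanishes for the trivial reason $\mathrm{Fil}^0D=0$. It comes from weak admissibility. In the covariant normalization $N$ lowers slopes by one, so the slope-$<0$ part is a sub-object. The quotient $Q$ of slopes $\ge0$ then has $t_N(Q)\ge0>-\dim Q\ge t_H(Q)$, which forces $Q=0$.

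Combined with the weight bound, only $V_1$ contributes and the higher layers lift uniquely. So the comparison you flag as the main obstacle never arises, and the lifts form a space of dimension $\dim H^2(\Gal_F,\Lie U^{\bM})$.

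You must also show every such lift is the Weil--Deligne type of some $\rho'$. This uses the same filtration-lift, extension-stability and Colmez--Fontaine argument; without it you only get an injection. Finally, for the inertial type, add two facts. It factors through the finite group $I(L/F)$. Homomorphisms from a finite group into $P(\bar\Q_p)$ lifting a given one into $\bM$ are $U^{\bM}(\bar\Q_p)$-conjugate.
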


This theorem suggests that
if we write
$\fX_{\bM}=\fX_{\bM}^{\pcrys,\underline{\lambda}, \tau}$
for the rigid analytic moduli stack
of potentially crystalline
Galois representations of Hodge type
$\underline{\lambda}$
and inertial type $\tau$,
and write
$\fW_{P}\to \fX_{\bM}$
for the morphism
relatively representing
the groupoid
\[
(\Sp A \to \fX_{\bM})\mapsto
\{\text{Weil-Deligne representations~}(r_A, N_A)
\text{~whose $\bM$-semisimplification is $D_{\text{pst}}(V_A)$}\}
\]
where $D_{\text{pst}}$ is
Fontaine's functor 
that sends a potentially semistable Galois representation
(or a filtered $(\varphi, N)$-module)
to
a Weil-Deligne representation
and $V_A$ is the universal family over $\Sp A$,
then fibers of
$\fW_{P}\to \fX_{\bM}$
are precisely the $2$-cohomology 
$
H^2(\Gal_F, U^{\bM}).
$

Moreover, if we write
$\fX_{P}=\fX_{P}^{\pst,\underline{\lambda}, \tau}$
for the moduli of potentially semistable
$P$-valued Galois representations,
then there is a canonical commutative diagram
\[
\xymatrix{
\fX_{P}\ar[rr]^{\WD}\ar[rd] & & \fW_P \ar[ld]
\\
& \fX_{\bM}
}
\]
where $\WD$ is the geometrization of the 
$D_{\text{pst}}(V_A)$ functor.
The morphism $\WD$ induces a bijection between irreducible components of $\fX_P$ and $\fW_P$.

\begin{example}
We consider the case where $\bG=\PGL_2$
and $F=\Q_p$.
Then $\bM=\bT=\G_m$.
Suppose $\tau$ is the trivial inertial type
and that $\underline{\lambda}=-1$
(the Barsotti-Tate case).
Then
$\fX_{\bM}=[\fG_m/\fG_m]$
where $\fG_m
=\Sp \Q_p\langle T^{\pm1}\rangle$.

The morphism $\fW_P\to \cX_{\bM}$
is relatively representable by the $\Tor$ presheaf
\[
\Sp A\mapsto
\Tor^{\Q_p\langle T^{\pm1}\rangle}_1
(H^2(\Gal_F, U(\Q_p\langle T^{\pm1}\rangle)), A)
\]
where $H^2$ should really be interpreted as Herr complex cohomology.
We have
\[
H^2(\Gal_F, U(\Q_p\langle T^{\pm1}\rangle))
=\Q_p\langle T^{\pm1}\rangle/(T-1)
\]
and thus
$\fW_P\to\fX_{\bM}$ is relatively representable by 
\[
[\Sp \Q_p\langle T^{\pm1}, Y\rangle/(Y(T-1))/\fG_m]
\to
[\Sp \Q_p\langle T^{\pm1}\rangle/\fG_m].
\]
Note that $\fW_P$ has two irreducible components:
\begin{itemize}
\item (the Steinberg component)
$[\Sp \Q_p\langle Y\rangle/\fG_m]$, and
\item (the non-Steinberg component)
$[\Sp \Q_p\langle T^{\pm1}\rangle/\fG_m]$.
\end{itemize}
On the other hand,
the morphism
\[
\fX_P\to \fW_P
\]
relatively represents the connecting homomorphism
$\delta$
of the long exact sequence:
\[
H^1(\Gal_F, U(\Q_p\langle T^{\pm1}\rangle))
\otimes_{\Q_p\langle T^{\pm1}\rangle}A
\to
H^1(\Gal_F, U(A))
\xrightarrow{\delta} \Tor_1^{\Q_p\langle T^{\pm1}\rangle}(H^2(M,\Q_p\langle T^{\pm1}\rangle),A).
\]
It is not hard to see
$\delta=D_{\pst}$
coincides with \MINOREDITED{Fontaine's} functor.
\end{example}

The example above suggests that
the Weil-Deligne construction
has an alternative interpretation
as inherent derived structures.
This suggests that
a mod $p$ Weil-Deligne functor
is feasible,
at least for Borel-valued
mod $p$ Galois representations.

\begin{notation}
Write
$\cX_{\bG}^{\EG}$ for the Emerton-Gee stacks.

Write
\begin{align*}
\cX_{\bT} & := \cX_{\bT, \red, \bFp}^{\EG}
\end{align*}
for the reduced torus-valued
Emerton-Gee stack.
Also write
\begin{align*}
\cX_{\bB} & := \cX_{\bB,\bFp}^{\EG}\underset{\cX_{\bT,\bFp}^{\EG}}{\times}
\cX_{\bT}
\end{align*}
for the partially reduced Borel-valued Emerton-Gee stack.
\end{notation}

\subsubsection{Gauge cochains}
Once we have obtained a mod $p$ version
of the Weil-Deligne stacks
$\cW_{\bB}$,
we are able to 
exploit the morphism
$\cX_{\bB}\to \cW_{\bB}$
to write down explicit polynomial
presentations of
$\cX_{\bB}$ for the small-rank groups $\PGL_2$
and $\PGL_3$.

It is crucial that we are able to write down
global equations for the small-rank groups,
rather than just local equations
--- we do need to understand how
various strata are glued in a precise manner.

After analysis of the explicit universal families
for $\PGL_2$ and $\PGL_3$,
we are able to recursively define a minimal set
of $1$-cochains in Herr complexes
called the {\it gauge cochains}
for $\PGL_n$
such that
the universal family for $\cX_{\bB_{\PGL_n}}$
can be expressed
as a linear combination of the gauge cochains
where the coefficients
are valued in a multivariable Laurent polynomial ring
$\bFp[t_1^{\pm1}, \dots]$.
Such a free linear combination is not automatically
a valid \'etale $(\varphi, \Gamma)$-module.
Nevertheless, the structure of the gauge cochains
makes it easy to write down the polynomial constraints
imposed on the coefficients.

The purity theorem (c.f. Theorem \ref{thm:purity})
follows from
Krull's Hauptidealsatz,
once the polynomial equations are written down.

These $\PGL_n$-constructions carry over \MINOREDITED{to} general reductive groups
through the Baker-Campbell-Hausdorff formula
and truncated logarithm maps.

\subsubsection{The cone models}
Directly using the polynomial equations to
obtain the irreducible components of $\cX_{\bB}$
is not computationally viable.
We make two optimizations:
\begin{itemize}
\item 
First,
we can now divide $\cX_{\bB}$
into many small strata
and only compute the irreducible components
of these strata ---
thanks to the purity theorem.
\item
Second, over each stratum,
we use the theory of Gr\"obner \MINOREDITED{bases}
to prove that the dimension of the stratum
will not drop after removing the cubic and higher degree terms.
\end{itemize}

\noindent
It is a bit surprising that Gr\"obner basis,
which is mostly used for concrete computations,
can be utilized to prove abstract theorems.

The algebraic varieties (or stacks) defined by the truncated
polynomial equations are called the {\it cone models}.

It turns out that the cone model
of each stratum is
of dimension strictly smaller than
the expected dimension of $\cX_{\bB}$,
unless the cone model is smooth
and
is {\it strictly isomorphic to the stratum itself},
for the cases covered by Theorem \ref{thm:main}.
We are thus able to compute the irreducible components
of $\cX_{\bB}$.

\subsubsection{Unramified descent and the non-Borel valued
mod $p$ Galois representations}

We reduce the non-Borel-valued case
to the Borel-valued case by unramified descent.

Indeed, if we want to write down global equations
for open charts near a non-Borel-valued \MINOREDITED{point},
then tame descent is required.
Fortunately,
we only need equations for (non-open)
strata covering the non-Borel-valued points,
for which the unramified descent suffices.

\subsection{Final thoughts}

Many technical innovations
are left out in the previous subsection,
mainly because they are too specialized
to be placed in this introduction.

During the preparation of this paper,
we feel the notion of ``mod $p$ Weil-Deligne stacks''
should have broader applications
to the study of Galois deformation theory,
integral $p$-adic Hodge theory,
and mod $p$ (derived) Hecke algebra.

Our ad hoc constructions can probably be explained and
vastly generalized by the
theory of prismatic $F$-crystals/gauges.
We hope to come back to this in a follow-up project.

This paper is divided into three parts:

\begin{itemize}
\item {\bf Part I} is the study of the geometry of mod $p$ 
Borel-valued Emerton-Gee stacks,

\item {\bf Part II} is the study of the geometry of mod $p$ 
twisted
Borel-valued Emerton-Gee stacks, and

\item {\bf Part III} is the study of an integral version
of Weil-Deligne stacks and the existence of crystalline lifts.
\end{itemize}

\subsection{\MINOREDITED{Acknowledgements}}
We thank Yu Min and Yupeng Wang for helpful discussions,
especially Yu for explaining derived algebraic geometry
to the author.

\subsection{Use of AI}
The original manuscript and the original 
\url{github.com/mocham/AlgEG} implementation
were written entirely by human.

Later, the author used 
DeepSeek Pro V4 and
ChatGPT 5.6 Sol to fix typos
and refractor the source code;
the refractoring is done mainly by a DeepSeek agent
and the verification/testing is done by a ChatGPT agent;
the appendices are generated by a ChatGPT agent from the refractored
source code.

We have instructed the AI to highlight all major changes colored as green,
and we retain the coloring in the arXiv version.

\newpage
\phantomsection
\addcontentsline{toc}{part}{Part I: 
The mod $p$ Weil-Deligne stacks}

\noindent
{\large Part I: The mod $p$ Weil-Deligne stacks}

\section{Geometrization of the mod $p$ local Euler characteristic}

We have $\Gal(F_{\mu_{p^\infty}}/F)\cong \Delta_F
\times \Gamma$, where $\Delta_F$ is the torsion factor
and $\Gamma\cong \Z_p$.
Set $F_{\cyc} = F(\mu_{p^\infty})^{\Delta_F}$.
We have
$\mathbf{E}_F = k_{F_{\cyc}}(\!(T_F)\!)$,
where $k_{F_{\cyc}}$ is the residue field of $F_{\cyc}$.

\subsection{\'Etale $(\varphi, \Gamma)$-modules with coefficients}
Let $R$ be a commutative ring of finite type over $\bFp$
and let $F/\Q_p$ be a finite extension. Following the construction of Emerton and Gee, we consider the period ring 
\[ \bfE_{F,R} \coloneqq \left( \varprojlim_n (\bfE_F^+/T_F^n) \otimes_{\Fp} R \right) [1/T_F]. \]
An \'etale $(\varphi, \Gamma)$-module over $\bfE_{F,R}$ is defined as a finite projective $\bfE_{F,R}$-module $M$ equipped with a semilinear $\Gamma$-action and a semilinear $\varphi$-action such that the linearized map $\varphi^* M \to M$ is an isomorphism.

\subsection{Herr complexes}
For an \'etale $(\varphi, \Gamma)$-module $M$ over $\bfE_{F,R}$, the Herr complex ${C}_{\Herr}^\bullet(M)$
is $0 \to M \xrightarrow{d^0_\Herr} M \oplus M \xrightarrow{d^1_\Herr} M \to 0$, where the maps are given by:
\begin{align*}
d^0_\Herr(x) &= ((\varphi_M-1)x, (\gamma_M-1)x) \\
d^1_\Herr(x, y) &= (\gamma_M-1)x - (\varphi_M-1)y
\end{align*}

For any algebra $S$ of finite type over $R$, let 
$M_{S} = M \otimes_{\bfE_{F,R}} \bfE_{F,S}$. We define $Z^i$, $B^i$, and $H^i$ as presheaves on the category of finite type $R$-algebras. Emphasizing the dependence on $M$, we write
\begin{align*}
Z^i(M, S) \coloneqq Z^i({C}_{\Herr}^\bullet(M_S))
=:Z^i_{\Herr}(M_S),
\\
\quad B^i(M, S) \coloneqq B^i({C}_{\Herr}^\bullet(M_S))
=:B^i_{\Herr}(M_S),
\\
\quad H^i(M, S) \coloneqq H^i({C}_{\Herr}^\bullet(M_S))
=:H^i_{\Herr}(M_S).
\end{align*}
We note that while $Z^i(M,-)$, $B^i(M,-)$, and $H^i(M,-)$ are generally only 
presheaves of $\cO_{\Spec R}$-modules, with the exception of $H^2(M, -)$ being a coherent sheaf over $\Spec R$.

\begin{remark}
For the technical developments of this paper,
all presheaves are understood as presheaves
over {\it the big affine site}.
We warn the reader that the quasi-coherent sheaves
over the big affine site
over $\Spec R$ do {\it not} form an abelian category,
despite that they do form an abelian category over
the small Zariski site over $\Spec R$.
Kernels, cokernels and images are always
taken over the big affine site.
\end{remark}

We have the following observation:

\begin{lem}
\rm
Suppose that $R$ is a PID,
and that $M$ is an \'etale $(\varphi, \Gamma)$-module
over $\Spec R$.
There is a {\it surjective} homomorphism
of presheaves of $\cO_{\Spec R}$-modules:
\[
H^1(M,S)\to \Tor_1^R(H^2(M,R), S).
\]
\label{lem:PID-Tor}
\end{lem}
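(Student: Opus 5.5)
The plan is to realize the map as the connecting homomorphism of a short exact sequence of complexes and to read off surjectivity from the long exact sequence in cohomology. Since $R$ is a PID and $\bfE_{F,R}$ is flat over $R$, I would first resolve $S$ by free $R$-modules, $0\to F_1\to F_0\to S\to 0$. Here $F_0=R^{\oplus I}$ and $F_1$ is free because submodules of free modules over a PID are free.

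Tensoring the Herr complex $C^\bullet=C^\bullet_{\Herr}(M)$ over $R$ with this resolution gives a sequence of complexes
\[
0\to C^\bullet\otimes_R F_1\to C^\bullet\otimes_R F_0\to C^\bullet\otimes_R S\to 0 .
\]
This sequence is exact because each term $C^i$ is a finite projective $\bfE_{F,R}$-module, hence $R$-flat. I also need $C^\bullet\otimes_R S$ to compute $H^\bullet(M,S)$, i.e.\ that $C^\bullet(M_S)$ agrees with $C^\bullet(M)\otimes_R S$. This is where care is needed, since $\bfE_{F,S}$ is a completed tensor product. I would handle it as Emerton--Gee do: $M$ is finite projective, $\varphi$ and $\gamma$ act continuously, and the Herr complex is perfect with formation commuting with base change in the derived sense. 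Granting this, the map $C^\bullet\otimes^{\mathbb L}_R S\simeq C^\bullet(M_S)$ is the input, and I regard this step as the main obstacle. If a direct comparison fails, the fallback is to use the perfectness of $C^\bullet_{\Herr}(M)$ to replace it by a bounded complex of finite projective $R$-modules and run the same argument there.

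The long exact sequence then contains the segment
\[
H^1(C\otimes S)\xrightarrow{\delta} H^2(C\otimes F_1)\to H^2(C\otimes F_0)\to H^2(C\otimes S)\to 0 .
\]
Because $C^\bullet$ has amplitude $[0,2]$, top cohomology commutes with tensor products: $H^2(C\otimes F_j)=H^2(M,R)\otimes_R F_j$. So the kernel of $H^2(M,R)\otimes F_1\to H^2(M,R)\otimes F_0$ is exactly $\Tor_1^R(H^2(M,R),S)$, computed from the free resolution. Exactness at $H^2(C\otimes F_1)$ shows that $\delta$ lands in this kernel and surjects onto it. This gives $H^1(M,S)\to \Tor_1^R(H^2(M,R),S)$ surjective for each $S$, which is the universal coefficient / Künneth spectral sequence specialized to global dimension one.

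Finally, I must check that $\delta$ is independent of the resolution and natural in $S$, so that it defines a morphism of presheaves of $\cO_{\Spec R}$-modules on the big affine site. I would do this by comparing any two resolutions via a map of resolutions (lifting $S\to S'$), which induces a compatible map of short exact sequences of complexes, and then invoking naturality of connecting homomorphisms. The resulting maps agree after passing to $\Tor_1$, since $\Tor_1$ is independent of the resolution up to the canonical isomorphism. Surjectivity holds objectwise, which is the sense intended for presheaves.
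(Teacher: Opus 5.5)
Your proposal is correct and is essentially the paper's argument. Both rest on the perfectness and derived base change of Herr complexes from \cite[Theorem 5.1.22]{EG23}, followed by the universal coefficient theorem over the PID $R$; your fallback, replacing $C^\bullet_{\Herr}(M)$ by a perfect complex $P^\bullet$ of finite projective $R$-modules, is exactly the paper's route. The only difference is that you derive the universal-coefficient surjection by hand from a free resolution of $S$, whereas the paper quotes the degenerate universal-coefficient spectral sequence. Your route is legitimate because $\bfE_{F,R}$ is $R$-flat for Noetherian $R$, so $C^\bullet_{\Herr}(M)\otimes_R S$ already computes $C^\bullet_{\Herr}(M)\otimes^{\mathbf L}_R S\simeq C^\bullet_{\Herr}(M_S)$.
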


\begin{proof}
\MAJOREDIT{By \cite[Theorem 5.1.22]{EG23}, Herr cohomology is represented by a
perfect complex $P^\bullet$ of finite projective
$R$-modules representing Herr cohomology and satisfying, functorially in
$R\to S$,
\[
P^\bullet\otimes_R^{\mathbf L}S\simeq C^\bullet_{\Herr}(M_S).
\]
Since $R$ is a PID, the universal-coefficient spectral sequence degenerates
to the short exact sequence
\[
0\to H^1(P^\bullet)\otimes_R S\to H^1(P^\bullet\otimes_R S)
\to \Tor_1^R(H^2(P^\bullet),S)\to 0.
\]
The base-change identification gives
$H^i(P^\bullet)=H^i(M,R)$ and
$H^i(P^\bullet\otimes_R S)=H^i(M,S)$, and hence the claimed surjection.}
\end{proof}

The presheaf $\Tor^R_1(H^2(M,R),-)$
is often representable by a finite type
affine scheme over $\Spec R$:

\begin{lem}
\rm
Let $X=\Spec R$ be an affine scheme,
and 
let $F$ be a coherent sheaf over $X$
of projective dimension $\le 1$.
Then $\Tor^R_1(F,-)$
is representable by a finite type scheme
over $\Spec R$.
\label{lem:pd-1-representable}
\end{lem}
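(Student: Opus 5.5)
Since $R$ is noetherian (finite type over $\bFp$) and $F$ is coherent of projective dimension $\le 1$, we may choose a resolution
\[
0\to P_1\xrightarrow{\ d\ } P_0\to F\to 0
\]
with $P_0,P_1$ finite projective $R$-modules. After passing to a finite Zariski cover of $\Spec R$ we may assume $P_0$ and $P_1$ are free. For the big affine site this is harmless, because the presheaf $\Tor^R_1(F,-)$ is compatible with localization and the representing schemes will glue. We can also avoid the cover altogether: write $P_i$ as a direct summand of a free module and use the idempotent to cut out a closed subscheme, as below. For any $R$-algebra $S$, tensoring the resolution with $S$ and using the flatness of the $P_i$ gives
\[
\Tor^R_1(F,S)=\ker\bigl(d\otimes 1_S:P_1\otimes_R S\to P_0\otimes_R S\bigr),
\]
functorially in $S$. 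This identification is the whole point: the $\Tor$ presheaf becomes the kernel of a morphism of vector bundles.

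Next we show such a kernel is representable. Consider the geometric vector bundle
\[
\mathbb{V}(P_1^\vee)=\Spec \operatorname{Sym}_R(P_1^\vee),
\]
whose $S$-points are $P_1\otimes_R S$. When $P_1=R^m$, $P_0=R^n$, and $d$ is given by a matrix $(a_{ij})$, an $S$-point is a vector $(s_1,\dots,s_m)\in S^m$, and it lies in the kernel exactly when $\sum_j a_{ij}s_j=0$ for all $i$. Hence the functor is represented by
\[
\Spec R[x_1,\dots,x_m]/\Bigl(\textstyle\sum_j a_{ij}x_j\Bigr)_{i=1,\dots,n},
\]
a finite type affine scheme over $\Spec R$; in fact it is a closed subscheme of $\mathbb{A}^m_R$ cut out by linear equations. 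In the projective case, the kernel is the closed subscheme of $\mathbb{V}(P_1^\vee)$ cut out by the image of $d^\vee:P_0^\vee\to P_1^\vee\subset \operatorname{Sym}(P_1^\vee)$. Put intrinsically, it is $\Spec \operatorname{Sym}_R(\operatorname{coker} d^\vee)$, and
\[
\operatorname{coker} d^\vee=\Ext^1_R(F,R).
\]
So $\Tor_1^R(F,-)$ is represented by $\mathbb{V}(\Ext^1_R(F,R))$, and this form of the answer makes the gluing trivial.

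The remaining point to verify is the functorial identification of $\Tor^R_1(F,S)$ with the kernel for every $S$, not just flat ones. This holds because $P_\bullet$ is a projective resolution, so $\Tor$ is computed by $P_\bullet\otimes_R S$, and with two terms $H_1$ is precisely the kernel. Naturality in $S$, and independence from the choice of resolution up to canonical isomorphism, follow from the standard comparison of resolutions. This is the only step that requires care, and it is routine; I do not expect a real obstacle, beyond checking that the projective dimension hypothesis is exactly what makes the resolution have length one, so that $\Tor_1$ is a kernel rather than a subquotient.
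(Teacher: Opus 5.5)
Your proof is correct and follows the same route as the paper. Both use a two-term resolution by finite projectives, identify $\Tor^R_1(F,S)$ with $\ker(P_1\otimes_R S\to P_0\otimes_R S)$, and represent that kernel inside $\mathbb{V}(P_1^\vee)$; the paper writes this as the fiber product $W_1\times_{W_0}\{0\}$ over the zero section, which unwinds exactly to your $\Spec\operatorname{Sym}_R(\operatorname{coker} d^\vee)$, and your further identification $\operatorname{coker} d^\vee\cong\Ext^1_R(F,R)$ is a correct, choice-free repackaging that recovers $\Spec \F_p[t,s]/(ts)$ in the paper's example with $F=R/(t)$.
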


\begin{proof}
Since $\operatorname{pd}_R(F) \le 1$, we have a projective resolution:
$$ 0 \to P_1 \xrightarrow{\phi} P_0 \to F \to 0. $$

For any $R$-algebra $S$, we tensor this sequence over $R$ with $S$. Since $P_0$ is projective, $\operatorname{Tor}_1^R(P_0, S) = 0$, which yields the exact sequence:
$$ 0 \to \operatorname{Tor}_1^R(F, S) \to P_1 \otimes_R S \xrightarrow{\phi \otimes \operatorname{id}_S} P_0 \otimes_R S \to F \otimes_R S \to 0 $$
From this, we can identify our functor exactly as the kernel of a map of projective modules:
$$ \mathcal{F}(S) = \operatorname{Tor}_1^R(F, S) \cong \ker(P_1 \otimes_R S \to P_0 \otimes_R S). $$

We now show this kernel is representable. For any finitely generated projective $R$-module $P$, the functor $S \mapsto P \otimes_R S$ is represented by the affine group scheme $\mathbb{V}(P^*) = \operatorname{Spec}(\operatorname{Sym}_R(P^*))$, where $P^* = \operatorname{Hom}_R(P, R)$ is the dual module.
Let $W_i = \operatorname{Spec}(\operatorname{Sym}_R(P_i^*))$ for $i = 0, 1$. The $R$-module homomorphism $\phi: P_1 \to P_0$ induces a natural transformation of functors $W_1 \to W_0$.

The presheaf $\mathcal{F}$ is the kernel of the map $W_1 \to W_0$ in the category of abelian group-valued functors. Categorically, the kernel of a morphism to a group object is the fiber product over the zero-section:
$$ \mathcal{F} \cong W_1 \times_{W_0} \{0\} $$
where $\{0\} \cong \operatorname{Spec}(R)$ is the zero-section of the group scheme $W_0$. 
\end{proof}

\begin{example}
Consider $X=\Spec \F_p[t]$,
and let $F$ be the skyscraper sheaf
$R/(t)$ supported as $t=0$.

Then $F$ itself is {\it not} representable
by finite type schemes (or algebraic spaces)
over $X$.
On the other hand, $\Tor_1(F,-)$
is representable by
the union of two axis lines:
$\Spec \F_p[t,s]/(ts)$.
Indeed, it is the fiber at the zero section
of the vector space scheme morphism
\[
\Spec \F_p[t,s] \to \Spec \F_p[t,s],
(t,s)\mapsto \MINOREDITED{(t,ts)}.
\]

Even though the coherent sheaf $F$ is not representable
by algebraic spaces.
The {\it stacky version} of $F$
is indeed representable
by finite type algebraic stacks.
Consider the perfect complex
\[
[C^{-1}\to C^0]:=[R\xrightarrow{t} R],
\]
supported in degrees $-1,0$.
The {\it strict Picard stack}
associated to the perfect complex $[C^{-1}\to C^0]$
is representable by the algebraic stack
\[
[\Spec \F_p[t,s]/\Spec \F_p[s]]
\]
where the action is given by $s\cdot(t, s')=(t,s'+ts)$.
The two stacks $[\Spec \F_p[t,s]/\Spec \F_p[s]]$
and $\Spec \F_p[t,s]/(ts)$
are Koszul dual to each other.
\label{ex:strict-picard}
\end{example}

\begin{lem}
\rm
Suppose $R=\bar\F_p[t,t^{-1}]$
and let $M$ be a rank $1$ \'etale $(\varphi, \Gamma)$-module
over $R$.

If $H^2_\Herr(M)\cong \prod_{i=1}^k R/(t-t_i)$
for $t_i\in \bFp$, then
$\Tor^R_1(H^2(M,R),-)$ is representable by
a scheme obtained by gluing
\[
\Spec \bar\F_p[t,t^{-1},s_i]/(s_i(t-t_i))
\]
over $\Spec \bar\F_p[t,t^{-1}]$.
\label{lem:structure-of-Tor}
\end{lem}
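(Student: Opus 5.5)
The plan is to reduce to the one-factor case by additivity of $\Tor$, and then assemble the factors by a fiber product over $\Spec R$. Write $R=\bFp[t,t^{-1}]$ and $N=H^2(M,R)\cong\prod_{i=1}^k R/(t-t_i)$; since the product is finite, it is also a direct sum. Each $t-t_i$ with $t_i\in\bFp$ is a non-zero-divisor in $R$; it is a unit when $t_i=0$, in which case the factor vanishes and can be dropped.

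For each factor there is the free resolution
\[
0\to R\xrightarrow{\,t-t_i\,}R\to R/(t-t_i)\to 0,
\]
so each factor, and hence $N$, has projective dimension $\le 1$. Lemma \ref{lem:pd-1-representable} therefore already gives representability of $\Tor_1^R(N,-)$. To get the explicit form, I will run the proof of that lemma one factor at a time, exactly as in Example \ref{ex:strict-picard}. For any $R$-algebra $S$,
\[
\Tor_1^R(R/(t-t_i),S)=\ker\bigl(S\xrightarrow{t-t_i}S\bigr)=\{s\in S:(t-t_i)s=0\}.
\]
This is the fiber over the zero section of the morphism $\Spec R[s_i]\to\Spec R[s_i]$ given by $s_i\mapsto(t-t_i)s_i$. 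So it is represented by $Y_i:=\Spec\bFp[t,t^{-1},s_i]/(s_i(t-t_i))$ over $\Spec R$.

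Next, $\Tor_1^R(-,S)$ is additive, so functorially in $S$ we get
\[
\Tor_1^R(N,S)\cong\prod_i\Tor_1^R(R/(t-t_i),S)=\prod_i Y_i(S),
\]
where the product of $S$-points is taken over the common structure map to $\Spec R$. Hence $\Tor_1^R(N,-)$ is represented by the fiber product
\[
Y_1\times_{\Spec R}\cdots\times_{\Spec R}Y_k=\Spec\bFp[t,t^{-1},s_1,\dots,s_k]/\bigl(s_i(t-t_i)\bigr)_i .
\]
This fiber product is what ``gluing over $\Spec\bFp[t,t^{-1}]$'' means in the statement. I will also make the gluing description concrete, assuming the $t_i$ are distinct as in the intended use. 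On the open set $t\ne t_i$, the $i$-th factor $Y_i$ reduces to the zero section $\Spec R$. So over a neighbourhood of $t=t_j$ the scheme is just $Y_j$, and the whole space is the $Y_i$ glued along the common zero section $\Spec R$. If some $t_i$ coincide, the fiber-product description still holds, with several $s$-coordinates over the same point.

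The only step needing care is the functoriality of the isomorphism $\Tor_1^R(N,S)\cong\prod_i\ker(t-t_i\mid S)$ in $S$. It must be an isomorphism of presheaves on the big affine site, not merely a pointwise one, given the paper's warning about the big site. This holds because the resolution of $N$ is the direct sum of the one-factor resolutions. Tensoring that resolution with $S$ commutes with base change $S\to S'$, and we simply take the kernel of the resulting diagonal matrix $\mathrm{diag}(t-t_1,\dots,t-t_k)$ on $S^k$.
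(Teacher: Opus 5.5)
Your proof is correct and is essentially the paper's argument. Both reduce to a single factor $R/(t-t_i)$ and then use the computation of Example~\ref{ex:strict-picard}, namely that $\Tor_1^R(R/(t-t_i),S)=\ker(t-t_i\mid S)$ is represented by $\Spec \bFp[t,t^{-1},s_i]/(s_i(t-t_i))$. The one difference is in how the reduction to $k=1$ is done. The paper does it by Zariski localization, while you assemble the factors globally through additivity of $\Tor_1$ and the fiber product over $\Spec R$. Your version yields the explicit ring $\bFp[t,t^{-1},s_1,\dots,s_k]/(s_i(t-t_i))_i$ and makes visible that the ``gluing'' description uses distinctness of the $t_i$. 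That distinctness is automatic here: the fibers of $H^2$ are $H^2$ of mod $p$ characters, hence at most one-dimensional.
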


\begin{figure}[H]
\centering
\begin{tikzpicture}[scale=1.5, >=stealth]
  \draw[thick, ->] (-3,0) -- (4.5,0) node[right] {$\operatorname{Spec} \bar{\mathbb{F}}_p[t, t^{-1}]$};

  \fill[white] (0,0) circle (2.5pt);
  \draw[thick] (0,0) circle (2.5pt) node[below=5pt] {$t=0$};

  \draw[thick, blue] (-1.5, -1.5) -- (-1.5, 1.5) node[above] {$t=t_1$};
  \filldraw (-1.5,0) circle (1.5pt) node[below left=1pt] {$t_1$};
  \node[blue, left] at (-1.5, -1.2) {$\operatorname{Spec} \bar{\mathbb{F}}_p[s_1]$};

  \draw[thick, red] (1.5, -1.5) -- (1.5, 1.5) node[above] {$t=t_2$};
  \filldraw (1.5,0) circle (1.5pt) node[below right=1pt] {$t_2$};
  \node[red, right] at (1.5, -1.2) {$\operatorname{Spec} \bar{\mathbb{F}}_p[s_2]$};

  \draw[thick, green!60!black] (2.8, -1.5) -- (2.8, 1.5) node[above] {$t=t_3$};
  \filldraw (2.8,0) circle (1.5pt) node[below right=1pt] {$t_3$};
  \node[green!60!black, right] at (2.8, -1.2) {$\operatorname{Spec} \bar{\mathbb{F}}_p[s_3]$};

\end{tikzpicture}
\caption{Illustration of $\Tor_1(H^2,-)$} 
\end{figure}

\begin{proof}
This is Zariski local and, after reducing to $k=1$,
follows from
Example~\ref{ex:strict-picard}.
The resulting scheme is precisely the union of the
axis lines depicted above.
\end{proof}

\subsection{Unramified extensions,
and destackification}
\label{subsec:unramified}
By the local Euler characteristics,
for Galois characters,
we have $\dim H^1=\dim H^0+\dim H^2+\dF$.
Lemma \ref{lem:PID-Tor} and Lemma \ref{lem:structure-of-Tor}
can be interpreted as a geometrization
of the intuition that ``$H^1$ contains $H^2$''.
In this subsection, we make precise
the ``$H^1$ contains $H^0$'' counterpart.

Suppose $R=\bFp[t,t^{-1}]$, and let
$M$ be a rank $1$ \'etale $(\varphi, \Gamma)$-module
over $R$ that parametrizes {\it unramified}
Galois characters.
We have $\dim H^1_{\Herr}(\bar\chi_t, \bFp)
=
\begin{cases}
\dF+1 & \bar\chi_t=1 \\
\dF & \bar\chi_t\neq 1,
\end{cases}
$
where $\bar\chi_t$ is
a $\bFp$-fiber of $M$.

From the formula
\[
d^0_\Herr(x) = ((\varphi_M-1)x, (\gamma_M-1)x),
\]
we see that
$B_{\Herr}^1(\bar\chi_t,\bFp)$
contains the constant matrices
$(\bFp,0)$
if and only if $\bar\chi_t\neq 1$.
The upshot is that
the constant matrices $(\bFp,0)\subset Z^1_{\Herr}$
are responsible for
the dimension jump in $H^0$ 
\MINOREDITED{and $H^1$, via the dimension drop in $B^1$,}
when $\bar\chi_t=1$:
\[
\begin{cases}
(\bFp,0) = d^0_\Herr(\bFp), & \bar\chi_t\neq 1 \\
(\bFp,0)\cap B^1_\Herr = 0, & \bar\chi_t= 1.
\end{cases}
\]

\begin{defn}
\label{def:extra-Herr}
\rm
Let $M$ be a rank $1$ \'etale $(\varphi, \Gamma)$-module over
$\bFp[t,t^{-1}]$. We define the modified complex $C^\bullet_{\Herr,+}(M,S)$ by setting:
\begin{align*}
C^0_{\Herr,+}(M,S) &:= C^0_\Herr(M,S), \\
C^1_{\Herr,+}(M,S) &:= (S \otimes_{\F_p} k_{F_{\cyc}}) \oplus C^1_\Herr(M,S), \\
C^2_{\Herr,+}(M,S) &:= C^2_\Herr(M,S).
\end{align*}
The differentials are defined as follows. For degree $0$, we set:
\[
d_{\Herr,+}^0(f) :=
(\text{constant term of } f, d^0_\Herr(f))
\]
where the constant term is evaluated in $S \otimes_{\F_p} k_{F_{\cyc}}$.
For degree $1$, the differential is given by $d^1_{\Herr,+}(s, x) := d^1_\Herr(x)$.

Here, the ``constant term of $f$''
depends on the choice of a basis element
$v\in M_S^{\Gamma}\cong S\otimes_{\MINOREDITED{\F_p}}k_{F_\cyc}$.
\end{defn}

\begin{lem}
\rm
Let $M$ be any rank $1$ \'etale $(\varphi, \Gamma)$-module
over $R=\bFp[t^{\pm1}]$ (not necessarily unramified).
We have
$M^{\Gamma}=\bFp[t^{\pm1}]\otimes_{\F_p}k_{F_{\cyc}}$.
\end{lem}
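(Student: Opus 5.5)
We want $M^{\Gamma}=\bFp[t^{\pm1}]\otimes_{\F_p}k_{F_{\cyc}}$. Here $M^\Gamma$ means the $\Gamma$-invariants of $M$ as a module, with no $\varphi$-condition imposed. The plan is to reduce to a standard normal form of rank $1$ modules and then compute invariants of $\bfE_{F,R}$ under $\Gamma$ directly.

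First, I would twist away the $\Gamma$-action. A rank $1$ étale $(\varphi,\Gamma)$-module over $R=\bFp[t^{\pm1}]$ is, Zariski locally on $\Spec R$ (and globally since $R$ is a PID, hence projective rank $1$ modules over $\bfE_{F,R}$ free after a harmless localization; I expect freeness to hold directly), of the form $\bfE_{F,R}\cdot e$. The standard description of rank $1$ families, as in Emerton--Gee's classification of characters, is that $M$ is an unramified twist of a fixed character: $M\cong M_0\otimes_{\bFp} R$-twist by an unramified character $\mathrm{ur}_t$. An unramified twist only changes $\varphi$, not $\Gamma$. A fixed mod $p$ character $\bar\chi$ of $\Gal_F$ restricted to $\Gal_{F_{\cyc}}$ corresponds to a $(\varphi,\Gamma)$-module over $\bfE_F$. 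Its $\Gamma$-action on a basis is given by a unit $g_\gamma\in\bfE_F^\times$. By the standard normalization, the tamely ramified characters of $\Gal_F$ factor through something whose $\Gamma$-part is $\gamma(e)=\omega(\gamma)^{a}\,u_\gamma e$ with $u_\gamma$ a principal unit. Since $\Gamma\cong\Z_p$ is pro-$p$ and acts trivially on $\bFp$-constants, I would show one can choose a basis $e$ with $\gamma e=e$. Concretely, the cocycle $\gamma\mapsto g_\gamma$ into $(\bfE_F^{+})^{\times}$, after rescaling by powers of $T_F$, lands in the principal units $1+T_F\bfE_F^+$. Then the cocycle is trivialized by a successive-approximation argument using that $\gamma-1$ acts topologically nilpotently on $T_F^n\bfE_F^+$ modulo higher terms, with $H^1(\Gamma,\cdot)$ vanishing on the graded pieces. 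The graded pieces have the form $k\cdot T_F^n$ with $\gamma$ acting by $\chi_{\cyc}(\gamma)^n\equiv1$ mod $p$, which is where care is needed.

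This trivialization step is the main obstacle. My first attempt would avoid it by the cleaner route: compare to Galois invariants. The invariant $(M_S)^{\Gamma}$ is the module $\bigl(M\otimes \bfE^{\mathrm{sep}}\bigr)^{\Gal_{F_{\cyc}}}$-descended object's $\Gamma$-fixed part. For rank $1$ it is detected after restricting to $\Gal_{F(\mu_{p^\infty})}$, where the character becomes $\varphi$-data only.

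Assuming a $\Gamma$-fixed basis $e$ exists, we get $M^\Gamma=(\bfE_{F,R})^\Gamma e$. It then remains to compute $(\bfE_{F,R})^{\Gamma}$. I would write elements as Laurent series $\sum a_nT_F^n$ with $a_n\in R\otimes k_{F_\cyc}$. The action satisfies $\gamma(T_F)=\chi(\gamma)T_F+O(T_F^2)$, or for $F=\Q_p$, $(1+T)^{\chi(\gamma)}-1$. Take a non-constant invariant $f$, and look at its lowest nonconstant term $a_nT_F^n$. The leading coefficient of $\gamma f-f$ is then $a_n\cdot(\text{first nonzero coefficient of } \gamma(T_F^n)-T_F^n)$. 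I would pick $\gamma$ a topological generator. The leading term of $(1+T)^{\chi(\gamma)}-1$ minus $T$ is $c\,T^{m}$ with $c\in\F_p^\times$, a universal nonzero element. This forces $a_n=0$, since $R\otimes k_{F_{\cyc}}$ is a product of domains, hence reduced with non-zero-divisor constants. Hence the invariants are exactly the constants $R\otimes_{\F_p}k_{F_{\cyc}}$. For general $F$, I would take this leading-term computation from the explicit form of $\gamma$ on $T_F$ used by Emerton--Gee.
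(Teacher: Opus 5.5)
\textbf{The gap is in your first step, and for ramified $M$ it cannot be closed.} You try to produce a $\Gamma$-fixed basis $e$ of $M$ by trivializing $\gamma\mapsto g_\gamma$ in the principal units. When the underlying character $\bar\chi$ is ramified, which is exactly the case the statement advertises, this is impossible rather than merely delicate.

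Suppose $e$ were $\Gamma$-fixed. Then $\gamma(\varphi(e))=\varphi(e)$ forces $\varphi(e)=ae$ with $a\in(\bfE_{F,R})^{\Gamma}$. By your own second step, $a$ lies in $R\otimes_{\F_p}k_{F_{\cyc}}$, so every fiber of $M$ would be the module of an unramified character. The successive approximation breaks exactly where you flagged it: on the graded pieces $\gamma$ acts by $\chi_{\cyc}(\gamma)^n\equiv 1$, so their $H^1$ does not vanish, and the tame inertial part of $\bar\chi$ gives a class that survives.

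Here is a concrete counterexample. Let $F=\Q_p$ and let $M$ be the family of unramified twists of $\omega^a$ with $0<a<p-1$. Specialize $t$ to a point of $\Spec R$. Every element of the fiber has the form $fe_0$, where $e_0$ is the $\varphi$-fixed basis over $\bFp(\!(T)\!)$ on which $1+p\Z_p$ acts trivially. The $\Delta$-invariance forces $v_T(f)\equiv \pm a\not\equiv 0\pmod{p-1}$ unless $f=0$. A $\Gamma$-invariant element would give $f\in\bFp(\!(T)\!)^{1+p\Z_p}=\bFp$, hence $f=0$. Since $R$ is reduced and Jacobson, $M^\Gamma=0$. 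So for ramified $M$ the displayed identity is false, and no refinement of your Step 1 can rescue it.

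\textbf{Comparison with the paper's route.} The paper does not try to trivialize the cocycle over $\bfE_F$. It base-changes to a tame extension $F'/F$ over which $\bar\chi$ becomes unramified, so that $M\otimes\bfE_{F'}$ has a $\Gamma$-fixed basis $e'$. It then computes $(M\otimes\bfE_{F'})^{\Gamma}\cong R\otimes_{\F_p}k_{F'_{\cyc}}$ fiberwise using Swan's theorem, and descends along $\Gal(F'/F)$. That is the right substitute for your Step 1, but carrying out the descent meets the same obstruction. The inertia subgroup of $\Gal(F'/F)$ acts trivially on $R\otimes_{\F_p}k_{F'_{\cyc}}$, and it acts on the line spanned by $e'$ through the (nontrivial) tame inertial character of $\bar\chi^{\pm1}$. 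So the invariants are $0$, in agreement with the example. What either route actually establishes is the unramified case, where Step 1 is automatic and everything rests on your Step 2.

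\textbf{Your Step 2 also needs repair.} The lowest term of $\gamma(T^m)-T^m$ sits in degree $m+(p-1)p^{v_p(m)}$ (for $F=\Q_p$ and $\chi_{\cyc}(\gamma)=1+p$), which is not monotone in $m$. For example, in $f=T^p+T^{p+1}$ the summand $T^{p+1}$ contributes in degree $2p$, while $T^p$ only contributes in degree $p^2$. So the lowest term of $\gamma f-f$ need not involve the lowest coefficient of $f$.

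A cleaner argument goes as follows. Specialize to a closed point of $\Spec R$ and to one factor of $k_{F_{\cyc}}\otimes_{\F_p}\bFp$. Replace a nonconstant invariant by an invariant $g$ of positive valuation, by subtracting the constant term or inverting. Then $\bFp(\!(T_F)\!)$ is finite over $\bFp(\!(g)\!)$, so only finitely many automorphisms fix $\bFp(\!(g)\!)$. This contradicts the faithful action of the infinite group $\Gamma$, which fixes $\bFp(\!(g)\!)$.
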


\begin{proof}
Let $F'/F$ be a tame extension with ramification index
dividing $(p^{k_F}-1)$.
After base change of $\bfE_{F',R}$,
$M$ is unramified and thus
$M^{\Gamma}$ is a quasi-coherent sheaf
whose $\bFp$-fibers are all isomorphic
to $\bFp\otimes_{\F_p}k_{F'_{\cyc}}$.
So, $M^{\Gamma}\cong R\otimes_{\F_p}k_{F'_{\cyc}}$ is finite free by Swan's theorem.
The lemma follows from tame descent.
\end{proof}

\begin{lem}
\label{lem:Herr+}
\rm
Then $[C^\bullet_{\Herr,+}(M,S)]$ is a perfect complex
with $H^0_{\Herr,+}(M,S)=0$ and $H^2_{\Herr,+}(M,S)=H^2_\Herr(M,S)$.

Moreover, $\ker(H^1_{\Herr,+}(M,S)\to \Tor^R_1(H^2_\Herr(M,R),S))$
is a finite free coherent sheaf of constant rank 
$(\dF+\dkF)$.
\end{lem}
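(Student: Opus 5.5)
The plan is to compare $C^\bullet_{\Herr,+}(M,S)$ with the ordinary Herr complex through the evident short exact sequence of complexes
\[
0\to (S\otimes_{\F_p}k_{F_{\cyc}})[-1]\to C^\bullet_{\Herr,+}(M,S)\to C^\bullet_{\Herr}(M,S)\to 0 .
\]
Here the first term sits in degree $1$. The map $d^0_{\Herr,+}$ is the graph of $d^0_\Herr$ together with the constant-term map, so this is indeed a sequence of complexes. The first term is finite free over $S$ of rank $\dkF$, and $C^\bullet_\Herr$ is perfect by \cite[Theorem 5.1.22]{EG23}. Perfectness of $C^\bullet_{\Herr,+}$ follows because it is a cone of a morphism between perfect complexes, and the construction is compatible with base change $R\to S$.

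\emph{Cohomology in degrees $0$ and $2$.} The long exact sequence reads
\[
0\to H^0_{\Herr,+}\to H^0_\Herr(M,S)\xrightarrow{\ \partial\ } S\otimes k_{F_\cyc}\to H^1_{\Herr,+}\to H^1_\Herr(M,S)\to 0,
\]
and it gives $H^2_{\Herr,+}=H^2_\Herr$ immediately. The connecting map $\partial$ sends $f\in H^0_\Herr(M,S)\subset M_S^\Gamma$ to its constant term. By the preceding lemma, $M^\Gamma_S=S\otimes k_{F_\cyc}$ with basis $v$, so $\partial$ is injective: an invariant $f=a v$ has constant term $a$. Hence $H^0_{\Herr,+}=0$. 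This argument needs care that $H^0_\Herr(M,S)\subset M_S^\Gamma$ and that the constant term is read off in the chosen basis.

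\emph{Rank of the kernel.} Define $K(S):=\ker(H^1_{\Herr,+}(M,S)\to\Tor_1^R(H^2_\Herr(M,R),S))$, where the map is the composite of $H^1_{\Herr,+}\to H^1_\Herr$ with the surjection of Lemma~\ref{lem:PID-Tor}. Since $R$ is a PID, I would represent $C^\bullet_{\Herr,+}$ by a perfect complex $P_+^\bullet$ of finite projective $R$-modules. As $H^0(P_+)=0$, we may take $P_+$ in degrees $[1,2]$. Universal coefficients then give
\[
0\to H^1(P_+)\otimes_R S\to H^1_{\Herr,+}(M,S)\to\Tor_1^R(H^2,S)\to0 .
\]
This identifies $K(S)=H^1_{\Herr,+}(M,R)\otimes_R S$, which is compatible with the map to $\Tor$ via the morphism $P_+\to P$. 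It remains to show that $H^1_{\Herr,+}(M,R)$ is finite free of rank $\dF+\dkF$.

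Over a PID, $H^1(P_+)$ is the kernel of a map between projectives and hence projective, so it is finite free by Swan/Quillen--Suslin. Its rank can be computed at the generic point, where $H^2$ vanishes generically; for example, at a $\bFp$-point $t$ away from the finitely many $t_i$, $H^1_{\Herr,+}$ has dimension equal to $-\chi(C_{\Herr,+})$. The Euler characteristic gives $\chi(C_{\Herr,+})=\chi(C_\Herr)-\dkF$. The mod $p$ local Euler characteristic formula, in its $\bFp$-dimension form for the $k_{F_\cyc}$-linear structure used in the paper, gives $\chi(C_\Herr)=-\dF$. Therefore the rank is $\dF+\dkF$.

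The main obstacle I expect is the normalization of this count: $M$ carries $k_{F_\cyc}$-structure, so the Herr complex is naturally an $R\otimes k_{F_\cyc}$-object. I would have to check that the Euler characteristic is $-\dF$ over $R$ rather than $-\dF\cdot\dkF$, matching the rank of the added summand. If that check fails, the count must be rescaled consistently.
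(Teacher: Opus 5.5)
Your proposal is correct and follows the paper's argument, written out in full. The paper's proof cites perfectness of $C^\bullet_{\Herr}$, reads off $H^0_{\Herr,+}=0$ and $H^2_{\Herr,+}=H^2_{\Herr}$ from the construction (which is your constant-term injectivity on $M_S^\Gamma$), and then invokes the local Euler characteristic, as you do through universal coefficients over the PID $R$.

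The normalization you flag is not a problem. Herr cohomology computes $\Gal_F$-cohomology of a rank-one coefficient representation, so $\chi(C^\bullet_{\Herr})=-\dF$ over $R$, which is the count already used in Subsection~\ref{subsec:unramified}. The $k_{F_{\cyc}}$-structure enters only through $M_S^\Gamma$ and the added summand. One small precision: to conclude that $H^1_{\Herr,+}(M,R)$ is torsion-free (equivalently, to put $P_+$ in degrees $[1,2]$), you need $H^0_{\Herr,+}(M,S)=0$ for all $S$, in particular residue fields, not just for $S=R$. Your argument does prove this.
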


\begin{proof}
The perfectness of $C_{\Herr,+}^\bullet$
follows from 
the perfectness of $C_{\Herr}^\bullet$.
It follows from the construction that
$H^0_{\Herr,+}(M,S)=0$
and $H^2_{\Herr,+}(M,S)=H^2_\Herr(M,S)$.
The rest of the lemma follows from local Euler characteristic.
\end{proof}

The stackiness of the Borel-valued Emerton-Gee stacks originates from the $H^0$ terms. By using the modified complex $[C_{\Herr,+}^\bullet]$, we can uniformly eliminate this stackiness.

\section{Destackification and coordinates}

\label{sec:destack}
Let $(G,B,T,\{X_{\alpha}\}_{\alpha\in \Delta})$ be a split
pinned reductive group over $F$,
and let $(\bG, \bB, \bT, \{Y_{\alpha}\}_{\alpha\in \Delta})$ be the dual pinned group of $G$.
Assume $p> h_{\bG}$, where
$h_{\bG}$ is the Coxeter number of $\bG$.
In particular,
the truncated log induces an isomorphism
\[
\log:=\log_{\le h_{\bG}}: U\xrightarrow{\cong} \Lie(U)
\]
of schemes over $\bFp$.

Let $U\subset \bB$ be the unipotent radical
and consider the {\it  descending central series}
\[
\Lie U_k = 
\begin{cases}
\Lie U & k=1 \\
[\Lie U,\Lie U_{k-1}] & k>1.
\end{cases}
\]
\MINOREDITED{and let $U_k\subset U$ be the corresponding connected subgroup.}
Set $\bar U_k:= U_{k}/U_{k+1}$.
Because $p$ is a good prime for $\bG$,
the descending central filtration coincides with the height filtration, and we have
\[
\bar U_k= \prod_{\alpha\in \Phi(B,T), \Ht(\alpha)=k} U_{\alpha^{\!\!\vee}},
\]
where $U_{\alpha^{\!\!\vee}} \subset \bG$
is the root group of root $\alpha^{\!\!\vee}
\in 
\MINOREDITED{\Phi(\bB,\bT)}$.
We remind the reader that $\Ht(\sum_{\alpha\in \Delta}n_\alpha \alpha)
:=\sum n_\alpha\in \Z$,
and that any positive root can be written as a sum
$\alpha_1+\dots+\alpha_s$ with each $\alpha_i\in \Delta$
such that any partial sum 
\MINOREDITED{$\alpha_1+\dots+\alpha_k$}
is still a positive root in $\Phi(B,T)$
(c.f. \cite[Lemma 10.2.B]{Hum72}).
A consequence of the partial sum property is that
\begin{equation}
\label{eq:PSP}
[\Lie \bar U_1,\Lie \bar U_k] = \Lie\bar U_{k+1}.
\end{equation}

\begin{defn}
The $k$-th truncated Borel-valued Emerton-Gee
stack for $G$ is, by definition,
\[
\tr_k\cX_{\bB}:=\cX_{U/U_{k+1} \rtimes \bT}
:=
\cX_{U/U_{k+1} \rtimes \bT}^{\EG}
\times_{\cX_{\bT}^{\EG}} \cX_{\bT,\red}^{\EG}
.
\]
\end{defn}

\subsection{Destackification}
\label{subsec:Borel-destack}
For many technical applications, working with schemes or algebraic spaces is preferable to working with algebraic stacks. 
To facilitate this, we recursively construct a destackification 
$X_{\bB}\to \cX_{\bB}$. 

First, we fix a total ordering ``$\le$'' of all positive roots $\Phi(\bB, \bT)^+$ that is compatible with heights in the sense that
\[
\alpha\le\beta \Rightarrow \Ht(\alpha)\le \Ht(\beta).
\footnote{We apologize for the non-standard notation
as $\alpha< \beta$ usually means
$(\beta-\alpha)$ is dominant in the literature.}
\]
For each $\alpha \in \Phi(\bB, \bT)^+$ of height $h$, the notation below is
understood in the quotient $U/U_{h+1}$. More precisely, we define the
successive subquotients
\[
U_{\le \alpha}:=
\frac{U/U_{h+1}}
{\prod_{\substack{\beta>\alpha\\ \Ht(\beta)=h}}U_{\beta^\vee}},
\qquad
U_{<\alpha}:=
\frac{U/U_{h+1}}
{\prod_{\substack{\beta\ge\alpha\\ \Ht(\beta)=h}}U_{\beta^\vee}}.
\]
\MAJOREDIT{The height-$h$ root groups are central in $U/U_{h+1}$; hence these
quotients are well-defined and fit into an exact sequence
\[
1\to U_{\alpha^\vee}\to U_{\le\alpha}\to U_{<\alpha}\to1.
\]
Thus all root products appearing below are regarded as the corresponding
subquotients.}
For ease of notation, set
$\cX_{\le \alpha} := \cX_{U_{\le \alpha}\rtimes \bT}
:=
\cX_{U_{\le \alpha}\rtimes \bT}^{\EG}
\times_{\cX_{\bT}^{\EG}}\cX_{\bT,\red}
.$

Our method of destackification is a natural geometric extension of Definition \ref{def:extra-Herr}. For each $\alpha \in \Phi(\bB, \bT)^+$, we construct a $\G_a$-torsor 
\[
\cX^+_{\le \alpha} \to \cX_{\le \alpha}
\]
as follows: Consider a morphism $\Spec A \to \cX_{\le \alpha}$ corresponding to an \'etale $(\varphi, \Gamma)$-structure on
a $(U_{\le\alpha}\rtimes \bT)$-torsor $P_A'$ over
$\Spec \bfE_{F,A}$, and set
$P_A:=P_A'\times^{U_{\le\alpha}\rtimes\bT}(U_{<\alpha}\rtimes\bT)$.
The objects of the fiber product $\cX^+_{\le \alpha}\times_{\cX_{\le \alpha}} \Spec A$ are pairs $(P_A', \xi)$, where:
\begin{itemize}
    \item $P_A'$ is a $(U_{\le\alpha}\rtimes \bT)$-torsor over $\Spec \bfE_{F,A}$ equipped with an identification $P_A'\times^{U_{\le \alpha} \rtimes \bT} (U_{<\alpha}\rtimes \bT) = P_A$.
    \item $\xi: P_A \to P_A'$ is a $\Gamma$-stable section of the natural projection $P_A' \to P_A$ .
    Here, a $\Gamma$-stable section is the geometrization
    of the ``constant term coefficient''
    in Definition \ref{def:extra-Herr}.
\end{itemize}

\noindent
{\bf Relation to the modified Herr complex.}
For a rank-$1$ module $M$ over $\bFp[t^{\pm1}]$
that is the universal unramified twist of a Galois character,
an element $x\in M$ expands as $x=\sum_{n\in\Z}a_nT_F^n$,
after the choice of a $\Gamma$-stable generator.
The constant term $\mathrm{ct}(x)=a_0$ is the coefficient
of $T_F^0$. A section $\xi$ as above corresponds to
choosing a $\Gamma$-stable element whose constant term
is prescribed. 

\begin{lem}
We have $[\cX_{\le \alpha}^+/\G_a^{\oplus [k_{F_{\cyc}}:\F_p]}]
\cong \cX_{\le \alpha}$.
\end{lem}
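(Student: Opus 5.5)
The plan is to show that the forgetful morphism $\cX^+_{\le \alpha}\to \cX_{\le \alpha}$ is a torsor under $\G_a^{\oplus [k_{F_{\cyc}}:\F_p]}$, where this group is viewed as the vector group $\Res_{k_{F_{\cyc}}/\F_p}\G_a$. Once the torsor structure is in place, the quotient identification is formal: a torsor $\cY\to\cX$ under a smooth affine group $H$ satisfies $[\cY/H]\cong \cX$, because both sides classify the same groupoids fppf-locally.

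The first step is to identify the set of $\Gamma$-stable sections $\xi$ of $P'_A\to P_A$. Since $1\to U_{\alpha^\vee}\to U_{\le\alpha}\to U_{<\alpha}\to 1$ is a central extension, any two sections differ by a $\Gamma$-invariant section of the twisted line bundle $U_{\alpha^\vee}\times^{\bT}P_A$ over $\bfE_{F,A}$. This line bundle is the rank-one étale $(\varphi,\Gamma)$-module $M_A$ attached to the character $\alpha^\vee\circ(\text{$\bT$-part})$. By the preceding lemma, which computes $M^{\Gamma}=R\otimes_{\F_p}k_{F_{\cyc}}$ via tame descent and applies equally after the base change to the universal reduced torus family, the group of $\Gamma$-invariants is $A\otimes_{\F_p}k_{F_{\cyc}}$. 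This group is exactly $\G_a^{\oplus[k_{F_{\cyc}}:\F_p]}(A)$, and it formation commutes with base change in $A$. Hence the difference of sections gives a free and transitive action on each nonempty fiber.

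The second step, which I expect to be the main obstacle, is local existence of a $\Gamma$-stable section. A section $\xi$ only needs to be $\Gamma$-equivariant, not $\varphi$-equivariant, so $\varphi$ plays no role here.

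- The obstruction to finding a $\Gamma$-stable section is a class in $H^1(\Gamma,\, M_A)$ taken in the continuous sense.
- I would first split $P'_A\to P_A$ as a torsor. This is possible Zariski locally on $A$ after trivializing, because $U_{\alpha^\vee}\cong\G_a$ and $\bfE_{F,A}$ is affine.
- It then remains to correct the $\Gamma$-action. Writing elements of $M_A$ as Laurent series in $T_F$ with respect to a $\Gamma$-stable generator, one has $\gamma-1$ surjective onto the part without constant term. This is the same computation that underlies Definition \ref{def:extra-Herr}.
- Since $\Gamma\cong\Z_p$ is procyclic, $H^1(\Gamma,M_A)=M_A/(\gamma-1)$, and the obstruction can be killed by modifying by a suitable element.

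Concretely, this gives a section whose $\Gamma$-defect vanishes after adjusting the nonconstant coefficients. I would verify the surjectivity of $\gamma-1$ on the positive- and negative-degree parts first in the unramified rank-one case, where $\gamma$ acts on $T_F$ by $T_F\mapsto(1+T_F)^{\chi(\gamma)}-1$, and then pass to the general case by tame descent as in the lemma on $M^\Gamma$.

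Finally, I would check that the action is compatible with the identification $P'_A\times^{U_{\le\alpha}\rtimes\bT}(U_{<\alpha}\rtimes\bT)=P_A$ and with isomorphisms in $\cX_{\le\alpha}$. Together with the two steps above, this shows that $\cX^+_{\le\alpha}\to\cX_{\le\alpha}$ is an fppf (indeed Zariski-locally trivial) $\G_a^{\oplus[k_{F_{\cyc}}:\F_p]}$-torsor, which yields the claimed isomorphism.
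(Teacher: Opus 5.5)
Your outline matches the paper's: the structure group is $M_A^{\Gamma}\cong A\otimes_{\F_p}k_{F_{\cyc}}$, a plain splitting of $P'_A\to P_A$ exists locally (the paper gets this from the vanishing of coherent $H^1$ on the affine $P_A$), and what remains is $\Gamma$-stability. Your mechanism for that last step is wrong: $\gamma-1$ does not map $M$ onto the elements with zero constant term. To see this, let $M$ be a rank-one fiber and put $Q_M:=M/(\gamma-1)M$. The Herr complex is the fiber of $\varphi-1$ acting on $[M\xrightarrow{\gamma-1}M]$, so there is an exact sequence
\[
0\to H^0_{\Herr}(M)\to M^{\Gamma}\xrightarrow{\varphi-1}M^{\Gamma}\to H^1_{\Herr}(M)\to Q_M\xrightarrow{\varphi-1}Q_M\to H^2_{\Herr}(M)\to 0 .
\]
Since $M^{\Gamma}$ is finite-dimensional and $\varphi$ is coefficient-linear, $\varphi-1$ has index $0$ on $M^{\Gamma}$. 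The Euler characteristic formula then gives $\dim\ker(\varphi-1\mid Q_M)-\dim\operatorname{coker}(\varphi-1\mid Q_M)=[F:\Q_p]>0$. Hence $Q_M$ is infinite-dimensional, whereas your claim would make it a quotient of the constants. Even granting your claim, you never remove the constant-term part of the $\Gamma$-defect.

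More seriously, Step 2 cannot be repaired as long as $\xi$ is read literally as a $\Gamma$-equivariant section. Already at the first step ($U_{<\alpha}=1$, e.g.\ for $\PGL_2$), such a section exists if and only if $c_{g,\alpha}\in(\gamma_g-1)M$. Equivalently, the class of $(c_{f,\alpha},c_{g,\alpha})$ must die under $H^1_{\Herr}(M)\to Q_M$. By the sequence above, this map surjects onto $\ker(\varphi-1\mid Q_M)$, which has dimension $[F:\Q_p]+\dim H^2_{\Herr}(M)\ge1$; its kernel consists of the unramified classes coming from $M^{\Gamma}$. So every non-unramified extension gives a geometric point with no $\Gamma$-stable section. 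Because $H^1_{\Herr}$ and $M^{\Gamma}$ commute with field extension, this obstruction persists after enlarging the residue field, so no section appears fppf locally either. With the literal reading, $\cX^+_{\le\alpha}\to\cX_{\le\alpha}$ is not even surjective.

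The lemma holds only if $\xi$ is read as the constant-term datum of Definition~\ref{def:extra-Herr}. Over a fixed object of $\cX_{<\alpha}$, the extra datum is an element $s\in A\otimes_{\F_p}k_{F_{\cyc}}$, and a gauge change $m\in M_A$ of the $\alpha$-coordinate in Theorem~\ref{thm:coordinate} acts by $s\mapsto s+\mathrm{ct}(m)$. With this reading the $\G_a^{\oplus[k_{F_{\cyc}}:\F_p]}$-torsor property is formal, and local existence needs only the plain splitting, with no $\Gamma$-cohomology. The paper's closing appeal to a $\Gamma$-invariant generator of $M_0$ only identifies the structure group; it does not by itself produce a $\Gamma$-fixed section either, so it should be read the same way.
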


\begin{proof}
By construction $\cX_{\le \MINOREDITED{\alpha}}^+$
is a pseudo $\G_a^{\oplus [k_{F_{\cyc}}:\F_p]}$-torsor over
$\cX_{\le \MINOREDITED{\alpha}}$.
We still need to show that it is a genuine $\G_a^{\oplus [k_{F_{\cyc}}:\F_p]}$-torsor.
This is equivalent to the statement that
all $P_A'\to P_A$
admits a $\Gamma$-stable section fppf locally
in $\Spec A$.
Since $P_A$ is affine
and $\Res_{\bfE_{F,A}/A}\G_a$ is quasi-coherent, the coherent cohomology
$H^1(P_A, \Res_{\bfE_{F,A}/A}\G_a)=0$,
and $P_A'$ is indeed a trivial $\Res_{\bfE_{F,A}/A}\G_a$-torsor
over $P_A$.

Fix a section $\xi_0$,
and consider the set of all sections $\{\xi\}$.
The set $\{\xi-\xi_0\}$ of all sections is an
\'etale $(\varphi,\Gamma)$-module $M_0$ over $\bfE_{F,A}$,
and reducing to the universal case $A=\bFp[t^{\pm1}]$,
$M_0$ is the unramified twist of a rank-$1$
$(\varphi,\Gamma)$-module, which always possesses a
$\Gamma$-invariant generator.
\end{proof}

\begin{defn}
We recursively define
destackifications
$X_{\le \alpha}$ for $\alpha\in \Phi^+$.

Since $\cX_{\bT}=\coprod[\bT/\bT]$,
set
$X_{\MINOREDITED{<} \alpha_{\min}}=X_{\bT}:=\coprod \bT$,
where the disconnected union is indexed over
the set of Serre weights for $\GL_1(\kappa_F)$.

Assuming we have already constructed $X_{<\alpha}$, we define the next stage as:
\[
X_{\le \alpha} := X_{< \alpha} \times_{\cX_{< \alpha}} \cX_{\le \alpha}^+.
\]
Finally, setting $X_{\bB} := X_{\le \alpha_{\max}}$.
\end{defn}

\begin{prop}
$X_{\bB}\to \cX_{\bB}$ is smooth, surjective,
affine, and \MINOREDITED{all its fibers}
are isomorphic to $\Res_{k_{F_{\cyc}}/\F_p}U\rtimes \bT$.

Moreover, the formation of $X_{U_{\le \alpha}\rtimes \bT}$
tautological and does not depend
on the choice of the total ordering on $\Phi(\bB, \bT)$.
\label{prop:Borel-destack}
\end{prop}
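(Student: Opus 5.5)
We argue by induction along the chosen total ordering of $\Phi(\bB,\bT)^+$. The claim is that every $X_{\le\alpha}\to\cX_{\le\alpha}$ is smooth, surjective and affine, with fibers isomorphic to $\Res_{k_{F_{\cyc}}/\F_p}U_{\le\alpha}\rtimes\bT$; the case $\alpha=\alpha_{\max}$ is the proposition.

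For the base case, $X_{\bT}=\coprod\bT\to\cX_{\bT}=\coprod[\bT/\bT]$ is the quotient map by a (trivial) $\bT$-action. It is therefore a $\bT$-torsor, hence smooth, surjective and affine with fiber $\bT$.

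For the inductive step we use the cartesian square defining $X_{\le\alpha}$. It shows that
\[
X_{\le\alpha}\to\cX^+_{\le\alpha}
\]
is the base change of $X_{<\alpha}\to\cX_{<\alpha}$ along the composite $\cX^+_{\le\alpha}\to\cX_{\le\alpha}\to\cX_{<\alpha}$. By the inductive hypothesis it is smooth, surjective and affine, with fibers $\Res U_{<\alpha}\rtimes\bT$. The lemma just proved shows that $\cX^+_{\le\alpha}\to\cX_{\le\alpha}$ is a $\G_a^{\oplus[k_{F_{\cyc}}:\F_p]}=\Res_{k_{F_{\cyc}}/\F_p}\G_a$-torsor. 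Such a torsor is smooth, surjective and affine, since these properties are fppf local on the target and $\G_a$ is affine. Composing the two maps, $X_{\le\alpha}\to\cX_{\le\alpha}$ is smooth, surjective and affine.

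The fiber over a geometric point is an extension of $\Res U_{<\alpha}\rtimes\bT$ by $\Res U_{\alpha^\vee}\cong\Res\G_a$. To identify it with $\Res U_{\le\alpha}\rtimes\bT$ we use the central extension $1\to U_{\alpha^\vee}\to U_{\le\alpha}\to U_{<\alpha}\to1$. The group of automorphisms of the rigidified data acts simply transitively on the fiber, and we check that this group is exactly the automorphism group of the trivial torsor with compatible $\Gamma$-stable sections, namely $\Res U_{\le\alpha}\rtimes\bT$. \textbf{This identification of the extension, rather than merely a variety with the right dimension and filtration, is the step I expect to be the main obstacle.} The key input is that the $\Gamma$-stable sections $\xi$ at each stage are exactly the ``constant term'' coordinates of the modified Herr complex, so the rigidifications interact with $U_{<\alpha}$ through the group law of $U_{\le\alpha}$. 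I would verify this using the truncated $\log$, which identifies $U_{\le\alpha}$ with its Lie algebra and makes the extension explicit via the Baker--Campbell--Hausdorff formula.

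For independence of the ordering, we note that $X_{\bB}$ has a tautological description. It classifies a point of $\cX_{\bB}$ together with a trivialization of the $\bT$-part and, for every positive root $\beta$, a $\Gamma$-stable section of the $U_{\beta^\vee}$-graded piece of the torsor. This data does not refer to the ordering. Moreover, two orderings that are compatible with height differ only by permuting roots of the same height. Those root groups are central in $U/U_{h+1}$ and commute, so the corresponding iterated fiber products of torsors are canonically isomorphic (one swaps adjacent factors).
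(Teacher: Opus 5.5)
Your argument is essentially the paper's. You induct along the recursive construction, write $X_{\le\alpha}\to\cX_{\le\alpha}$ as a base change of $X_{<\alpha}\to\cX_{<\alpha}$ followed by the $\G_a^{\oplus[k_{F_{\cyc}}:\F_p]}$-torsor $\cX^+_{\le\alpha}\to\cX_{\le\alpha}$, and deduce independence of the ordering from the fact that root groups of equal height commute. Your bookkeeping for smoothness, surjectivity and affineness is more explicit than the paper's one-line proof.

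One small correction on the last part. The intermediate quotients $U_{\le\beta}$ do depend on the chosen ordering. So independence is proved by swapping adjacent equal-height factors, not by the ``tautological description''.

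The step you single out as the main obstacle is not needed, and the route you sketch for it should be dropped. The proposition asserts an isomorphism of fibers as schemes, and the paper's proof only counts torsors; this is immediate. Take a geometric point $x\colon\Spec k\to\cX_{\le\alpha}$ with image $\bar x$ in $\cX_{<\alpha}$. The torsor fiber $T_x:=\cX^+_{\le\alpha}\times_{\cX_{\le\alpha}}\Spec k$ is trivial, and its map to $\cX_{<\alpha}$ factors through $\bar x$. Hence
\[
X_{\le\alpha}\times_{\cX_{\le\alpha}}\Spec k
\cong X_{<\alpha}\times_{\cX_{<\alpha}}T_x
\cong \bigl(X_{<\alpha}\times_{\cX_{<\alpha}}\Spec k\bigr)\times_k\G_a^{[k_{F_{\cyc}}:\F_p]}.
\]
Inductively the fiber is $\bT\times\A^{[k_{F_{\cyc}}:\F_p]\#\Phi^+}$. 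Via the truncated logarithm this is isomorphic to $\Res_{k_{F_{\cyc}}/\F_p}U\rtimes\bT$ as a scheme, and only this (in fact only the fiber dimension) is used later.

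The product decomposition also shows that the fiber is naturally a torsor under the commutative group $\bT\times\G_a^{[k_{F_{\cyc}}:\F_p]\#\Phi^+}$. The $\bT$-factor comes from trivializations in $X_{\bT}\to\cX_{\bT}$. Each additive factor comes from $\Gamma$-stable sections, i.e.\ $\Gamma$-invariants of a twisted rank-one module. There is no evident simply transitive action of the non-commutative group $\Res U_{\le\alpha}\rtimes\bT$. Your ``automorphisms of the rigidified data'' does not supply one, since the purpose of the rigidification is precisely to kill automorphisms.
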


\begin{proof}
The fiber description follows inductively from the
recursive construction: at each step we add a
$\G_a^{\oplus [k_{F_{\cyc}}:\F_p]}$-torsor for the current
root, and root groups at the same height commute,
so the product is a torsor for the direct sum,
giving the claimed fiber $\Res_{k_{F_{\cyc}}/\F_p}U\rtimes\bT$.
The ``moreover'' part follows because if $\alpha_1$ and $\alpha_2$
have the same height, then $U_{\alpha_1^\vee}$ commutes
with $U_{\alpha_2^\vee}$.
\end{proof}

\subsection{$\bT$-stable normal subgroups}

If \MINOREDITED{an} algebraic subgroup $K$ of $U$
is stable under the $\bT$-adjoint action,
then $K=\prod_{\alpha\in \Phi_K\subset \Phi^+}U_{\alpha^\vee}$
is a product of root groups.
The group $K$
is normal if and only if 
\[
\alpha\in\Phi_K,\ \beta\in\Phi^+,\ \alpha+\beta\in \Phi^+
\Rightarrow\alpha+\beta\in \Phi_K.
\]

\begin{defn}
Consider the descending central series
$\Lie K_1:=\Lie K$,
$\Lie K_h:=[\Lie K, \Lie K_{h-1}]$.
Set $\bar K_h:=K_h/K_{h+1}$.
Define the $K$-height so that
$\Ht_K(\alpha)=h
\Leftrightarrow
U_{\alpha^{\vee}}\subset \bar K_h$:
\[
\bar K_h = \MINOREDITED{\prod}_{\Ht_K(\alpha)=h} U_{\alpha^\vee}.
\]
Write 
\[
\Delta_K := \{\alpha\in \Phi_K|\Ht_K(\alpha)=1\}
\]
and call them the {\it $K$-simple roots}.
Note that $\Ht_K$ \MINOREDITED{need not be} additive,
unless $K$ is the unipotent radical of a parabolic.
\end{defn}

\subsection{Coordinates}

\label{subsec:nonab}

\begin{defn}
If $S\subset \Phi^+$ is a subset,
write
\[
U_{S}:=\prod_{\alpha\in S}U_{\alpha^\vee},
\]
for some ordering on $S$,
and is treated as a subquotient of $U$ when possible.

Fix a $\bT$-stable normal subgroup $K=U_{\Phi_K}\subset U$.
We fix a total ordering ``$\leK$'' on $\Phi_K$
compatible with the $K$-height.
For $\alpha, \beta\in \Phi_K$,
we write
\[
\alpha \underset{K}{>} \beta
\]
to indicate $\alpha$ is greater
than $\beta$ in the fixed total ordering.
Write
\begin{align*}
\Phi_{K, \le \alpha} &:=\{\beta\in\Phi_{K}:\beta \leK \alpha\},
\\
K_{\le \alpha}&:=U_{\Phi_{K, \le \alpha}},
\\
\tr_{h}K &:= U_{\Phi_{K, \Ht_K\le h}}.
\end{align*}
\end{defn}

\begin{defn}
We fix a standard Chevalley basis 
$\{e_\eta\}_{\eta \in \Phi}$ for $\mathrm{Lie}(\bG)$.
The Lie bracket satisfies 
\begin{equation}
\label{eq:Nab}
[e_\beta, e_\delta] = N_{\beta, \delta} e_\alpha
\end{equation}
whenever $\beta + \delta = \alpha \in \Phi$, where $N_{\beta, \delta} \in \mathbb{Z}$ are the structure constants of the root system.
\end{defn}

\begin{defn}
\label{defn:coordinate}
Let $A$ be an $\bFp$-algebra.
Fix an \'etale $(\varphi, \Gamma)$-module
\[
f_{< \alpha}, g_{< \alpha} \in (K_{< \alpha}\rtimes\bT)
(\bfE_{F, A})
\]
satisfying
\[
f_{< \alpha} \varphi(g_{< \alpha})
=g_{< \alpha} \gamma(f_{< \alpha}).
\]
Write
\begin{align*}
f_{ <\alpha} = c_{f,< \alpha} f^\SS, \qquad
g_{< \alpha} = c_{g,< \alpha} g^\SS
\end{align*}
where
\begin{align*}
c_{f, < \alpha}, c_{g, < \alpha}
\in K_{< \alpha}(\bfE_{F, A}),\qquad
f^{\SS}, g^\SS
\in \bT(\bfE_{F, A}).
\end{align*}
Write
\begin{align*}
\log_{\le h_{\bG}}(c_{f, < \alpha})
=
\sum_{\beta\lK \alpha} c_{f, \beta},\qquad
\log_{\le h_{\bG}}(c_{g, < \alpha})
=
\sum_{\beta\lK \alpha} c_{g, \beta}
\end{align*}
where
$c_{f, \beta}, c_{g,\beta}\in U_{\beta^\vee}(\bfE_{F, A})$.
If $\alpha$ is the smallest root of height $(h+1)$,
we also write
\[
(\tr_h c_f, \tr_h c_g)
=(c_{f, < \alpha}, c_{g, <\alpha}).
\]
\end{defn}

\begin{thm}
\rm
Choose elements
$c_{f, \alpha}, c_{g, \alpha}\in U_{\alpha^\vee}(\bfE_{F, A})$.
Set
\[
c_{f,\le \alpha} := \exp_{\le h_{\bG}}(c_{f,<\alpha}+c_{f,\alpha})
,\qquad
c_{g,\le \alpha} := \exp_{\le h_{\bG}}(c_{g,<\alpha}+c_{g,\alpha})
\]
Then
\[
(f_{\le \alpha}, g_{\le \alpha})
:=(c_{f,\le \alpha}f^\SS, c_{g,\le\alpha}g^\SS)
\]
is an \'etale $(\varphi, \Gamma)$-module
if and only if
\[
(c_{f, \alpha} - g^\SS \gamma(c_{f, \alpha}) g^{\SS-1})
-
(c_{g, \alpha} - f^\SS \varphi(c_{g, \alpha}) f^{\SS-1})
=
D_\alpha(c_{f, <\alpha}, c_{g, <\alpha}),
\]
where
\begin{align}
\label{eq:nonab-herr}
D_\alpha(c_{f, <\alpha}, c_{g, <\alpha})
=& 
\sum_{n=1}^{h_{\bG}}
\sum_{\substack{\beta_1,\ldots,\beta_n\in\Phi_K\\
\beta_i<\alpha,\ \beta_1+\cdots+\beta_n=\alpha}}
\frac{1}{n!}[c_{g, \beta_1}, [c_{g, \beta_2},
\dots[c_{g, \beta_{n-1}}, g^\SS\gamma(c_{f,\beta_n})g^{\SS-1}]
]]
\\
&
- 
\sum_{n=1}^{h_{\bG}}
\sum_{\substack{\beta_1,\ldots,\beta_n\in\Phi_K\\
\beta_i<\alpha,\ \beta_1+\cdots+\beta_n=\alpha}}
\frac{1}{n!}[c_{f, \beta_1}, [c_{f, \beta_2},
\dots[c_{f, \beta_{n-1}}, f^\SS\varphi(c_{g,\beta_n})f^{\SS-1}]]]
\nonumber
\end{align}
where the sums range over ordered tuples of positive roots, repetitions are
allowed, and for $n=1$ the nested bracket denotes its innermost term.
\label{thm:coordinate}
\end{thm}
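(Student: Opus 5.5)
We take as given that $(f_{<\alpha},g_{<\alpha})$ already satisfies $f_{<\alpha}\varphi(g_{<\alpha})=g_{<\alpha}\gamma(f_{<\alpha})$ in $K_{<\alpha}\rtimes\bT$. The cocycle condition for $(f_{\le\alpha},g_{\le\alpha})$ therefore holds modulo the central root group $U_{\alpha^\vee}$; we will show that the defect lies in $U_{\alpha^\vee}(\bfE_{F,A})\cong\Lie U_{\alpha^\vee}(\bfE_{F,A})$ and identify it. Since $U_{\alpha^\vee}$ is central in $K_{\le\alpha}$ (by the height-compatibility of $\leK$, exactly as in the exact sequence $1\to U_{\alpha^\vee}\to U_{\le\alpha}\to U_{<\alpha}\to 1$ of \S\ref{subsec:Borel-destack}), we have $c_{f,\le\alpha}=\exp(c_{f,<\alpha})\exp(c_{f,\alpha})$, and the same for $g$. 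Substituting and moving $f^\SS,g^\SS$ through, the condition $f_{\le\alpha}\varphi(g_{\le\alpha})=g_{\le\alpha}\gamma(f_{\le\alpha})$ becomes
\[
\exp(c_{f,<\alpha})\,\mathrm{Ad}_{f^\SS}\varphi(\exp c_{g,<\alpha})\cdot \exp\bigl(c_{f,\alpha}+\mathrm{Ad}_{f^\SS}\varphi(c_{g,\alpha})\bigr)
=\exp(c_{g,<\alpha})\,\mathrm{Ad}_{g^\SS}\gamma(\exp c_{f,<\alpha})\cdot\exp\bigl(c_{g,\alpha}+\mathrm{Ad}_{g^\SS}\gamma(c_{f,\alpha})\bigr),
\]
where we use the commutation of $f^\SS,g^\SS$ coming from the torus part of the $<\alpha$ cocycle. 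Rearranging the central terms, this is exactly the displayed equation, with $D_\alpha$ equal to the $U_{\alpha^\vee}$-component of $\log$ of $\bigl(\exp(c_{g,<\alpha})\,\mathrm{Ad}_{g^\SS}\gamma(\exp c_{f,<\alpha})\bigr)\bigl(\exp(c_{f,<\alpha})\,\mathrm{Ad}_{f^\SS}\varphi(\exp c_{g,<\alpha})\bigr)^{-1}$. That element lies in $U_{\alpha^\vee}$ by the inductive hypothesis.

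To compute $D_\alpha$, we expand the truncated Baker–Campbell–Hausdorff formula in $\Lie K_{\le\alpha}$, which is legitimate since $p>h_{\bG}$. Because the bracket is graded by roots, $[\Lie U_{\beta^\vee},\Lie U_{\delta^\vee}]\subset\Lie U_{(\beta+\delta)^\vee}$, we only need to collect the brackets of homogeneous pieces $c_{\ast,\beta_i}$ whose roots sum to $\alpha$. Two observations organize this. First, $\mathrm{Ad}_{\exp(X)}Y=\exp(\mathrm{ad}X)Y$; this produces the nested brackets $\frac1{n!}[c_{g,\beta_1},[\dots,g^\SS\gamma(c_{f,\beta_n})g^{\SS-1}]]$ when $\exp(c_{g,<\alpha})$ is conjugated past $\gamma$-twisted terms, and the analogous $f$/$\varphi$ terms. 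Second, we rewrite $x\,y\,x'^{-1}$-type products so that the $\alpha$-component of $\log$ becomes a difference of such adjoint expansions, whose $n=1$ terms are the linear pieces at $\alpha$ (which vanish on $<\alpha$ data, together with further linear terms as appropriate).

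The main obstacle is the bookkeeping. We must verify that all genuine BCH cross terms, meaning the mixed brackets between the $f$-side and $g$-side products, either cancel by the lower-degree cocycle relations (the components $<\alpha$ of the identity, used recursively) or are precisely absorbed into the stated $\mathrm{Ad}$-expansion form. We would first try to avoid raw BCH altogether by writing the cocycle condition as $\mathrm{Ad}$-twisted products: set $u=\exp(c_{g,<\alpha})$, use $u\,\gamma'(\exp c_f)\,u^{-1}=\exp(\mathrm{Ad}_u\gamma' c_f)$, and then use the $<\alpha$ identity to match the leftover factors. If that does not close up, we fall back on induction on $K$-height. In that induction, only terms with $\sum\beta_i=\alpha$ survive modulo $K_{>\alpha}$, and the coefficients $1/n!$ come from $\exp(\mathrm{ad})$ alone.
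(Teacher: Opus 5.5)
\paragraph{Verdict: the first step is right, the matching step fails.}
Your opening reduction is correct, and it is also how the paper's sketch begins. Since $U_{\alpha^\vee}$ is central in $K_{\le\alpha}$ and $f^{\SS}\varphi(g^{\SS})=g^{\SS}\gamma(f^{\SS})$, the cocycle condition for $(f_{\le\alpha},g_{\le\alpha})$ is equivalent to
\[
(c_{f,\alpha}-\gamma_g(c_{f,\alpha}))-(c_{g,\alpha}-\varphi_f(c_{g,\alpha}))=\log W .
\]
Here $\log W$ is the $\alpha$-component of $\log\bigl(e^{X_g}e^{\gamma_g(X_f)}\bigr)-\log\bigl(e^{X_f}e^{\varphi_f(X_g)}\bigr)$, with $X_f=c_{f,<\alpha}$ and $X_g=c_{g,<\alpha}$.

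The gap is the step you call ``bookkeeping'': identifying $\log W$ with the displayed $D_\alpha$. It cannot be completed, because that identity is false as soon as $n=3$ terms occur. Two things go wrong.
\begin{enumerate}
\item The coefficients do not ``come from $\exp(\operatorname{ad})$ alone''. Hadamard's lemma attaches $1/(n-1)!$ to $n-1$ nested brackets, so it gives coefficient $1$, not $\tfrac12$, at $n=2$. The $\tfrac12$ at $n=2$ is really the BCH coefficient.
\item At third order, BCH contributes $\tfrac1{12}[X,[X,Y]]+\tfrac1{12}[Y,[Y,X]]$ on each side, not $\tfrac16[X,[X,Y]]$, and the lower relations do not absorb the difference. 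Take $K=U$ and $\alpha$ of height three. Let $s_1:=c_{g,1}+\gamma_g(c_{f,1})=c_{f,1}+\varphi_f(c_{g,1})$ be the common height-one part, and let $L_2$ be the height-two component of the left-hand side. Then $\log W-D_\alpha=-\tfrac16[s_1,L_2]$. Since $L_2$ is the height-two cup-product defect, this is nonzero in general.
\end{enumerate}

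\paragraph{An explicit counterexample.}
Take $\bG=\GL_4$, $F=\Q_p$ with $p\ge5$, $A=\bFp$, $K=U$ and $f^{\SS}=g^{\SS}=1$. Set
\[
X_f=E_{23}+x_{13}E_{13}+x_{24}E_{24},\qquad X_g=E_{12}+E_{34}+y_{13}E_{13}+y_{24}E_{24},
\]
where $(1-\gamma)x_{13}-(1-\varphi)y_{13}=1$ and $(1-\gamma)x_{24}-(1-\varphi)y_{24}=-1$. These are solvable because $H^2(\Gal_{\Q_p},\bFp)=0$. They are exactly the cocycle relations below $\alpha=\delta_1+\delta_2+\delta_3$.
\begin{itemize}
\item Comparing $(1,4)$-entries of $c_f\varphi(c_g)=c_g\gamma(c_f)$ gives $\log W=\bigl(\tfrac12(x_{24}+\gamma x_{24})-\tfrac12(x_{13}+\gamma x_{13})\bigr)E_{14}$.
\item The $n=2$ part of the displayed $D_\alpha$ equals exactly this.
\item The $n=3$ part adds $\tfrac16[E_{12},[E_{34},E_{23}]]+\tfrac16[E_{34},[E_{12},E_{23}]]=-\tfrac13E_{14}\neq0$.
\end{itemize}
So the ``if and only if'' fails for $p>3$.

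The paper's proof is the same sketch (centrality, logarithmic coordinates, Hadamard's lemma, root grading) and asserts the same expansion, so it has the same defect. Neither your $\operatorname{Ad}$-rewriting nor your fallback induction on $K$-height can repair it.

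\paragraph{What survives.}
Your first paragraph does prove the statement with $D_\alpha$ replaced by $\log W$.
\begin{itemize}
\item Its quadratic part is exactly the displayed $n=2$ part.
\item Its higher-order part involves no coordinates of $K$-height $\Ht_K(\alpha)-1$. A cubic bracket containing such a factor lands in $K$-height at least $\Ht_K(\alpha)+1$.
\end{itemize}
This is what the later uses actually need: Corollary~\ref{cor:coord-h}, Theorem~\ref{thm:eq}(2), and the cone models.
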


\begin{proof}
\MAJOREDIT{Note that $U_{\alpha^\vee}$ is contained in the center of
$K_{\le\alpha}$. Write the cocycle equation in logarithmic coordinates and
move the two terms involving $c_{f,\alpha}$ and $c_{g,\alpha}$ to the left.
For the remaining terms, use
\[
\operatorname{Ad}(\exp x)(y)=\exp(\ad x)(y)
=\sum_{m\ge0}\frac{1}{m!}(\ad x)^m(y),
\]
which is often called the Hadamard's Lemma.
The root grading implies that the projection to $U_{\alpha^\vee}$ receives a
contribution precisely from ordered tuples of positive roots whose sum is
$\alpha$; all longer terms vanish because every root occurring here is
positive and has height at most $h_{\bG}$. Expanding the two conjugations gives
the two nested-commutator sums in Equation~\ref{eq:nonab-herr}, with opposite
signs. The remaining linear term is
$-d^1_{\Herr}(c_{f,\alpha},c_{g,\alpha})$, which proves the equivalence.}
\end{proof}

\begin{remark}
For ease of notation,
write $\gamma_g(u):= g^\SS\gamma(u) g^{\SS-1}$
and $\varphi_f(u):= f^\SS\varphi(u) f^{\SS-1}$.

Here are some basic observations:
\[
(c_{f, \alpha} - \gamma_g(c_{f, \alpha}))
-
(c_{g, \alpha} - \varphi_f(c_{g, \alpha}))
=-d^1_\Herr(c_{f, \alpha}, c_{g, \alpha})
\]
is the usual differential in Herr complex;
the usual cup product in Herr complexes is given by
\[
(\frac{c_{f, \beta_1}}{e_{\beta_1}},
\frac{c_{g, \beta_1}}{e_{\beta_1}})
\cup
(\frac{c_{f, \beta_2}}{e_{\beta_2}}, \frac{c_{g, \beta_2}}{e_{\beta_2}})
=
\frac{c_{f, \beta_1}}{e_{\beta_1}}\varphi_f(\frac{c_{g, \beta_2}}{e_{\beta_2}})
-
\frac{c_{g, \beta_1}}{e_{\beta_1}}\gamma_g(\frac{c_{f, \beta_2}}{e_{\beta_2}})
\]
and thus
\[
[c_{g, \beta_1}, \gamma_g(c_{f, \beta_2})]
-
[c_{f, \beta_1}, \varphi_f(c_{g, \beta_2})]
=
-N_{\beta_1, \beta_2} 
(\frac{c_{f, \beta_1}}{e_{\beta_1}}, \frac{c_{g, \beta_1}}{e_{\beta_1}})
\cup
(\frac{c_{f, \beta_2}}{e_{\beta_2}}, \frac{c_{g, \beta_2}}{e_{\beta_2}})e_{\beta_1+\beta_2}
\]
where $N_{\beta_1, \beta_2}$
is the structural constant.
\end{remark}

\begin{defn}
Write
\[
c_{\beta}=(c_{f, \beta}, c_{g, \beta})
\in C^1_{\Herr}(U_{\beta^\vee}(\bfE_{F})),
\]
and
\[
c_{\beta_1}\cup c_{\beta_2}
:=
(\frac{c_{f, \beta_1}}{e_{\beta_1}}, \frac{c_{g, \beta_1}}{e_{\beta_1}})
\cup
(\frac{c_{f, \beta_2}}{e_{\beta_2}}, \frac{c_{g, \beta_2}}{e_{\beta_2}})e_{\beta_1+\beta_2}.
\]
We also define the {\it higher cup product}
\begin{align}
[c_{\beta_1}, c_{\beta_2},\dots, c_{\beta_n}]
:=&
[c_{g, \beta_1}, [c_{g, \beta_2},
\dots[c_{g, \beta_{n-1}}, g^\SS\gamma(c_{f,\beta_n})g^{\SS-1}]
]]
- 
[c_{f, \beta_1}, [c_{f, \beta_2},
\dots[c_{f, \beta_{n-1}}, f^\SS\varphi(c_{g,\beta_n})f^{\SS-1}]]]
\\
&
\in C^2_\Herr([U_{\beta_1^\vee},[U_{\beta_2^\vee}, \dots,
[U_{\beta_{n-1}^\vee}, U_{\beta_n^\vee}]]](\bfE_{F, A})).\nonumber
\end{align}
So,
$
c_{\beta_1}\cup c_{\beta_2}
=-\frac{1}{N_{\beta_1, \beta_2}}
[c_{\beta_1}, c_{\beta_2}].
$
\label{defn:higher-cup}
\end{defn}

\begin{cor}
If $(\tr_h c_f, \tr_h c_g)$
is an \'etale $(\varphi, \Gamma)$-module
valued in $\tr_h K\rtimes \bT$,
then
it can be extended to $\tr_{h+1}K\rtimes \bT$
if and only if
\[
\frac{1}{2}\sum_{\delta\in \Delta_K, \delta+\beta=\alpha,
\Ht_K(\beta)=h, \Ht_K(\alpha)=h+1}
N_{\delta, \beta} c_\delta\cup c_\beta
\in \sum_{\Ht_K(\alpha)=h+1}D_\alpha(\tr_{h-1}c_f, \tr_{h-1}c_g) + B^2_{\Herr}(\bar K_{h+1}(\bfE_{F, A})).
\]
\label{cor:coord-h}
\end{cor}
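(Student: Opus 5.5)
We derive the corollary from Theorem~\ref{thm:coordinate} by applying it simultaneously to all roots $\alpha\in\Phi_K$ with $\Ht_K(\alpha)=h+1$. Since $\bar K_{h+1}=\prod_{\Ht_K(\alpha)=h+1}U_{\alpha^\vee}$ is central in $\tr_{h+1}K$, the roots of $K$-height $h+1$ can be added one at a time in the fixed order $\leK$. The conditions imposed at each stage only involve the already-fixed data $(\tr_h c_f,\tr_h c_g)$ and the new pair $(c_{f,\alpha},c_{g,\alpha})$. Hence the extension to $\tr_{h+1}K\rtimes\bT$ exists if and only if, for every such $\alpha$, there is $(c_{f,\alpha},c_{g,\alpha})$ with
\[
-d^1_\Herr(c_{f,\alpha},c_{g,\alpha})=D_\alpha(c_{f,<\alpha},c_{g,<\alpha}).
\]
Summing over $\alpha$ (the target is a direct sum of the $U_{\alpha^\vee}$), this says exactly that $\sum_\alpha D_\alpha(c_{f,<\alpha},c_{g,<\alpha})\in B^2_\Herr(\bar K_{h+1}(\bfE_{F,A}))$.

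Next we split $D_\alpha$ according to whether the tuple $(\beta_1,\dots,\beta_n)$ involves a root of $K$-height $h$. A decomposition $\beta_1+\dots+\beta_n=\alpha$ with some $\beta_i$ of $K$-height $h$ forces $n=2$ with the other summand in $\Delta_K$. I would justify this using that $\Ht_K$ is at least subadditive on commutators, since $[\bar K_i,\bar K_j]\subset K_{i+j}$. The remaining tuples only involve roots of $K$-height $\le h-1$ and produce $D_\alpha(\tr_{h-1}c_f,\tr_{h-1}c_g)$, in the sense intended by the statement. The $n=2$ terms with one height-$h$ entry come in pairs $(\delta,\beta)$ and $(\beta,\delta)$, each with coefficient $\tfrac12$. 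By Definition~\ref{defn:higher-cup},
\[
[c_\delta,c_\beta]=-N_{\delta,\beta}\,c_\delta\cup c_\beta,\qquad [c_\beta,c_\delta]=-N_{\beta,\delta}\,c_\beta\cup c_\delta .
\]
Using $N_{\beta,\delta}=-N_{\delta,\beta}$, the pair combines to $-\tfrac12 N_{\delta,\beta}\,(c_\delta\cup c_\beta-c_\beta\cup c_\delta)$. The graded commutativity of the Herr cup product on cohomology gives $c_\beta\cup c_\delta\equiv -c_\delta\cup c_\beta$ modulo $B^2_\Herr$, using the cocycle conditions on $c_\delta$ and $c_\beta$ from lower stages. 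The pair therefore contributes $-N_{\delta,\beta}\,c_\delta\cup c_\beta$ modulo coboundaries.

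Moving this quadratic piece to the other side, and absorbing signs and the factor $\tfrac12$ into the convention of the statement, yields the asserted membership criterion.

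The main obstacle is this last step. Graded commutativity only holds up to a homotopy, and one must check that both the homotopy term and the asymmetric conjugation by $\gamma_g,\varphi_f$ land in $B^2_\Herr(\bar K_{h+1})$. This requires $c_\delta$ and $c_\beta$ to be genuine cocycles in their respective graded pieces, which is where the hypothesis that $(\tr_h c_f,\tr_h c_g)$ is already étale enters. A second, smaller point is verifying the height bookkeeping when $\Ht_K$ is not additive. If needed, I would fall back on applying the partial sum property \eqref{eq:PSP} to $K$ with respect to its $K$-simple roots.
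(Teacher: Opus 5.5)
Your reduction is the same as the paper's, whose entire proof is ``clear from the definitions''. You apply Theorem~\ref{thm:coordinate} to each $\alpha$ of $K$-height $h+1$ and note that these conditions are independent. You then use $[\Lie K_i,\Lie K_j]\subset\Lie K_{i+j}$ to see that the only terms of $D_\alpha$ involving a root of $K$-height $h$ are the $n=2$ terms pairing it with a $K$-simple root. That bookkeeping is correct and enough; no appeal to \eqref{eq:PSP} is needed.

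The gap is the symmetrization step. For $h\ge2$, a cochain $c_\beta$ with $\Ht_K(\beta)=h$ is \emph{not} a cocycle. The hypothesis that $(\tr_h c_f,\tr_h c_g)$ is étale gives, by Theorem~\ref{thm:coordinate},
\[
-d^1_{\Herr}(c_{f,\beta},c_{g,\beta})=D_\beta(c_{f,<\beta},c_{g,<\beta}),
\]
which is generally nonzero. Graded commutativity of the Herr cup product holds at cochain level only up to terms involving $d^1_{\Herr}$ of the two factors. So $c_\delta\cup c_\beta+c_\beta\cup c_\delta$ has no reason to lie in $B^2_{\Herr}$, and the correction is not part of $D_\alpha(\tr_{h-1}c_f,\tr_{h-1}c_g)$ as defined.

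Separately, ``absorbing the factor $\tfrac12$'' is not a matter of convention. The right-hand side is the coset $D_\alpha(\tr_{h-1}c_f,\tr_{h-1}c_g)+B^2_{\Herr}$, not a submodule. Hence $X$ and $2X$ lying in it are different conditions unless $D_\alpha(\tr_{h-1}c_f,\tr_{h-1}c_g)\in B^2_{\Herr}$.

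The repair is not to symmetrize at all. The identity $[c_{\beta_1},c_{\beta_2}]=-N_{\beta_1,\beta_2}\,c_{\beta_1}\cup c_{\beta_2}$ from Definition~\ref{defn:higher-cup} holds for arbitrary cochains, so
\[
D_\alpha(c_{f,<\alpha},c_{g,<\alpha})=D_\alpha(\tr_{h-1}c_f,\tr_{h-1}c_g)-\frac12\sum_{(\beta_1,\beta_2)}N_{\beta_1,\beta_2}\,c_{\beta_1}\cup c_{\beta_2}.
\]
Here the sum runs over ordered pairs with $\beta_1+\beta_2=\alpha$ and $\{\Ht_K(\beta_1),\Ht_K(\beta_2)\}=\{1,h\}$. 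The criterion $D_\alpha\in B^2_{\Herr}$ is then literally the displayed condition, signs included.

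For $h=1$ the statement's index set already contains both orderings of each pair of $K$-simple roots. For $h\ge2$ the sum in the statement has to be read as also containing the pairs $(\beta,\delta)$. Under that reading the statement, with its coefficient $\tfrac12$, is exactly the unwinding of Theorem~\ref{thm:coordinate}, and no homotopy argument is needed.
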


\begin{proof}
Clear from the definitions.
\end{proof}

\begin{cor}
Suppose $\Ht_K(\alpha)=h+1$.
If $(c_{f, <\alpha}, c_{g, <\alpha})$
is an \'etale $(\varphi, \Gamma)$-module
valued in $K_{<\alpha}\rtimes \bT$,
then
it can be extended to $K_{\le \alpha}\rtimes \bT$
if and only if
\[
\frac{1}{2}\sum_{\substack{\delta\in \Delta_K,\ \beta\in \Phi_{K, \Ht_K=h}\\
\delta+\beta=\alpha}}
N_{\delta, \beta} c_\delta\cup c_\beta
\in D_\alpha(\tr_{h-1}c_f, \tr_{h-1}c_g) + B^2_{\Herr}(U_{\alpha^\vee}(\bfE_{F, A})).
\]
\label{cor:nonab-herr-2}
\end{cor}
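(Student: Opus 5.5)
The plan is to deduce the corollary from Theorem~\ref{thm:coordinate}, applied to the single root $\alpha$, after splitting $D_\alpha$ into its quadratic part and the rest.

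By Theorem~\ref{thm:coordinate}, $(c_{f,<\alpha},c_{g,<\alpha})$ extends to $K_{\le\alpha}\rtimes\bT$ exactly when there are $c_{f,\alpha},c_{g,\alpha}\in U_{\alpha^\vee}(\bfE_{F,A})$ with
\[
-d^1_\Herr(c_{f,\alpha},c_{g,\alpha})=D_\alpha(c_{f,<\alpha},c_{g,<\alpha}).
\]
Equivalently, the extension exists if and only if $D_\alpha(c_{f,<\alpha},c_{g,<\alpha})\in B^2_\Herr(U_{\alpha^\vee}(\bfE_{F,A}))$. It then remains to rewrite $D_\alpha$ in the form stated in the corollary.

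To do this, I will sort the terms of \eqref{eq:nonab-herr} by the length $n$ of the tuple $(\beta_1,\dots,\beta_n)$ with $\sum\beta_i=\alpha$.

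The $n=1$ term would force $\beta_1=\alpha$, which is excluded by $\beta_i<\alpha$, so it does not contribute.

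For $n=2$, the contributions are exactly the higher cup products $\frac12[c_{\beta_1},c_{\beta_2}]$ with $\beta_1+\beta_2=\alpha$. By Definition~\ref{defn:higher-cup}, these equal $-\frac12 N_{\beta_1,\beta_2}\,c_{\beta_1}\cup c_{\beta_2}$. Among these pairs, the ones with one root in $\Delta_K$ and the other of $K$-height $h$ give precisely the sum displayed on the left-hand side of the corollary, up to the overall sign convention, which I will absorb into the normalization of $D_\alpha$.

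Every remaining contribution, whether an $n=2$ pair with both $K$-heights different from $(1,h)$ or a tuple with $n\ge3$, involves only roots of $K$-height at most $h-1$. I will argue this by counting $K$-height: since $\Ht_K(\alpha)=h+1$, a decomposition $\alpha=\sum\beta_i$ into at least two pieces from $\Phi_K$ that avoids the pattern (simple root, height-$h$ root) cannot use any root of height $h$. The remaining terms therefore depend only on $\tr_{h-1}c_f$ and $\tr_{h-1}c_g$, and I denote their sum $D_\alpha(\tr_{h-1}c_f,\tr_{h-1}c_g)$, as in Corollary~\ref{cor:coord-h}. Moving this sum to the other side of the relation turns the membership condition above into the one stated in the corollary.

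The main obstacle is the $K$-height bookkeeping in the previous step. Because $\Ht_K$ need not be additive, a height-$h$ root might a priori pair with a non-simple root of $K$-height greater than $1$, or appear inside a longer nested bracket. To rule this out I would use the filtration property $[\Lie\bar K_i,\Lie\bar K_j]\subset\Lie K_{i+j}$. A bracket of a height-$h$ root with anything other than a $K$-simple root lands in $K_{h+2}$, so it cannot produce $\alpha$, which lies in $\bar K_{h+1}$. Similarly, a nested bracket of length $n\ge3$ containing a height-$h$ root lies in $K_{h+2}$. Hence the terms involving height-$h$ roots are exactly the $(\delta,\beta)$ pairs with $\delta\in\Delta_K$. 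Since the statement is a direct restatement of Theorem~\ref{thm:coordinate} and Corollary~\ref{cor:coord-h}, once this bookkeeping is checked the proof is immediate from the definitions.
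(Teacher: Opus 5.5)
Your overall route is the one the paper intends; its proof is just ``clear from the definitions''. Theorem~\ref{thm:coordinate} reduces extendability to $D_\alpha(c_{f,<\alpha},c_{g,<\alpha})\in B^2_{\Herr}(U_{\alpha^\vee}(\bfE_{F,A}))$, and one then sorts $D_\alpha$ by $K$-height. Your superadditivity argument via $[\Lie K_i,\Lie K_j]\subset\Lie K_{i+j}$ is correct, and it is exactly the bookkeeping the paper suppresses. No sign needs to be absorbed. Since $-\tfrac12[c_{\beta_1},c_{\beta_2}]=\tfrac12N_{\beta_1,\beta_2}\,c_{\beta_1}\cup c_{\beta_2}$, moving the quadratic part to the other side gives the displayed sign exactly. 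In any case $D_\alpha$ is already fixed by Theorem~\ref{thm:coordinate}, so you could not renormalize it.

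The step that fails is your claim that the pairs with one root in $\Delta_K$ and the other of $K$-height $h$ give \emph{precisely} the left-hand side. The sum defining $D_\alpha$ runs over \emph{ordered} tuples, so each such pair enters through both $(\delta,\beta)$ and $(\beta,\delta)$. For $h=1$ this is harmless: $\beta$ then also ranges over $\Delta_K$, so the displayed sum already contains both orders, and your argument gives the statement verbatim. For $h\ge2$, what your argument actually proves is
\[
\frac12\sum_{\substack{\delta\in\Delta_K,\ \Ht_K(\beta)=h\\ \delta+\beta=\alpha}}\bigl(N_{\delta,\beta}\,c_\delta\cup c_\beta+N_{\beta,\delta}\,c_\beta\cup c_\delta\bigr)\in D_\alpha(\tr_{h-1}c_f,\tr_{h-1}c_g)+B^2_{\Herr}(U_{\alpha^\vee}(\bfE_{F,A})).
\]
The extra term $N_{\beta,\delta}\,c_\beta\cup c_\delta$ involves the height-$h$ cochain $c_\beta$. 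It therefore cannot be moved into $D_\alpha(\tr_{h-1}c_f,\tr_{h-1}c_g)$, and it is not a coboundary in general.

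Reaching a one-sided sum needs an input you have not supplied: graded commutativity of the Herr cup product at the cochain level. Concretely, $c_\beta\cup c_\delta+c_\delta\cup c_\beta$ equals $d^1_{\Herr}$ of the componentwise product cochain plus a term bilinear in $c_\delta$ and $d^1_{\Herr}c_\beta$. This correction appears because $c_\beta$ is not a cocycle. It depends only on $\tr_{h-1}$, since $d^1_{\Herr}c_\beta=-D_\beta(c_{f,<\beta},c_{g,<\beta})$. Combined with $N_{\beta,\delta}=-N_{\delta,\beta}$, this turns the left side into $\sum N_{\delta,\beta}\,c_\delta\cup c_\beta$, with coefficient $1$ rather than $\tfrac12$, and it adds a correction to the lower-order term. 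So for $h\ge2$ your argument does not yield the displayed form, with coefficient $\tfrac12$ and the literal $D_\alpha(\tr_{h-1}c_f,\tr_{h-1}c_g)$. Your write-up should either keep the symmetrized left-hand side or state this renormalization explicitly, rather than assert an exact match.
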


\begin{proof}
Clear from the definitions.
\end{proof}

\section{Example: $\PGL_2$}

\subsection{The Weil-Deligne stack
and the equations}

The Borel subgroup $B=B_{\PGL_2}$ of $\PGL_2$
is isomorphic to $\G_a\rtimes \G_m
=\{
\begin{bmatrix}
* & * \\
0 & 1
\end{bmatrix}
\}
\subset \GL_2$.

\begin{lem}
\rm
The morphism $\cX_B\to \cX_{\G_m}$
is relatively representable by the groupoid
$[Z_\Herr^1/C_\Herr^0]$.
Here, $[C_\Herr^0\to C_\Herr^1\to C_\Herr^2]$ 
is the Herr complex attached to the universal family over
$\cX_{\G_m}$.

\end{lem}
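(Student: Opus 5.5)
The plan is to compute the fiber groupoid of $\cX_B\to\cX_{\G_m}$ over an arbitrary test object $\Spec A\to\cX_{\G_m}$. Such an object is a rank-one \'etale $(\varphi,\Gamma)$-module, i.e.\ a pair $(f^\SS,g^\SS)\in\G_m(\bfE_{F,A})$ with $\varphi$- and $\Gamma$-matrices, after fppf-local trivialization. The underlying torsor is trivial: by the destackification argument in the proof of Proposition~\ref{prop:Borel-destack}, $\G_a$-torsors over $\Spec\bfE_{F,A}$ are trivial because $\Res_{\bfE_{F,A}/A}\G_a$ is quasi-coherent. Hence a lift to $B=\G_a\rtimes\G_m$ is given, after fppf localization on $A$, by a pair $(f,g)=(c_f f^\SS, c_g g^\SS)$ with $c_f,c_g\in\G_a(\bfE_{F,A})$. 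This pair must satisfy the commutation relation $f\varphi(g)=g\gamma(f)$, which is exactly the condition for $(\varphi,\gamma)$ to define an \'etale $(\varphi,\Gamma)$-structure. Since $U=\G_a$ is abelian, Theorem~\ref{thm:coordinate} applies with $K=U$, a single root $\alpha$, and $c_{<\alpha}$ trivial. All the bracket terms $D_\alpha$ vanish, so the condition becomes $d^1_\Herr(c_f,c_g)=0$ for the Herr complex of the rank-one module $M=\G_a$ twisted by $(f^\SS,g^\SS)$, which is the adjoint action of $\G_m$ on $\Lie U$. Thus the objects over $\Spec A$ are exactly $Z^1_\Herr(M,A)$.

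Next I would determine the morphisms over the identity of the $\G_m$-object. An isomorphism between two lifts lying over the identity of $(f^\SS,g^\SS)$ is a change of trivialization by an element $u\in U(\bfE_{F,A})=C^0_\Herr(M,A)$. I will conjugate $(f,g)$ by $u$ and write the result out explicitly. This gives $(u\, c_f f^\SS\varphi(u)^{-1}, u\, c_g g^\SS\gamma(u)^{-1})$. Rewriting additively, and using that the torus $f^\SS$ acts on $\varphi(u)$ via $\varphi_f$, the new pair is $(c_f+(1-\varphi_f)u,\ c_g+(1-\gamma_g)u)$. This is the original cocycle translated by $\pm d^0_\Herr(u)$. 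The fiber groupoid is therefore the action groupoid of $C^0_\Herr$ acting on $Z^1_\Herr$ through $d^0$. This description is functorial in $A$ because the formation of $C^\bullet_\Herr(M_S)$ commutes with base change $A\to S$, so we obtain the quotient prestack $[Z^1_\Herr/C^0_\Herr]$. Stackifying on both sides shows that it agrees with the fiber product $\cX_B\times_{\cX_{\G_m}}\Spec A$.

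The main obstacle is the bookkeeping of conventions, and I would handle it in two places. First, the identification of $d^1$ with the commutation relation must match the sign conventions of Theorem~\ref{thm:coordinate}, including which of $\varphi_f,\gamma_g$ acts. Second, one must check that automorphisms of the $\G_m$-object do not interfere. This is automatic, because relative representability only concerns fibers over a fixed object. I would also remark that $Z^1_\Herr$ and $C^0_\Herr$ are presheaves on the big affine site rather than schemes, so the statement is an identification of prestacks, consistent with the remark on presheaves made earlier.
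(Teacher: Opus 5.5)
Your proposal is correct and takes the same route as the paper, whose proof is just ``unravel the definitions.'' Your computation is that unraveling. Lifts over a fixed $\G_m$-object are Herr $1$-cocycles, since the bracket terms of Theorem~\ref{thm:coordinate} vanish for the abelian $U=\G_a$. Isomorphisms over the identity are translations by $d^0_\Herr(u)$ for $u\in U(\bfE_{F,A})=C^0_\Herr$.
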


\begin{proof}
Unravel the definitions.
\end{proof}

We remark that $\cX_B\to \cX_{\G_m}$
is {\it not} relatively representable by a strict Picard stack
as $Z_\Herr^1$ is not a coherent sheaf.

\begin{defn}
\label{def:WD-PGL2}
Denote by $\cW_B$
for the algebraic stack over $\cX_{\G_m}$
that relatively represents the presheaf 
$\Tor_1^{\cO_{X_{\G_m}}}(H^2_\Herr(\bfE_{F, \cO_{X_{\G_m}}}), R)$
for each morphism $\Spec R\to\cX_{\G_m}$.
Set $W_B:=\cW_B\times_{\cX_{\G_m}}X_{\G_m}$.
\end{defn}

\begin{defn}
\rm
Let $C\subset \cW_B$ be an irreducible component.
We say $C$ is a {\it Steinberg} component
if its image $\bar C$ in $\cX_{\G_m}$
is not an irreducible component.
Set 
\begin{align*}
\cX_{B}(C)&:=C\times_{\cW_B}\cX_B,
\\
X_{B}(C)&:=C\times_{\cW_B}X_B.
\end{align*}
\end{defn}

\begin{remark}
Equivalently, $C$ is Steinberg if
$C\cong [\Spec \bFp[s]/\G_m]$
and $C$ is non-Steinberg if
$C\cong [\Spec \bFp[t^{\pm1}]/\G_m]$.
\end{remark}

\begin{prop}
$\cW_B$ is equidimensional of dimension $0$,
and
there is a canonical morphism $\cX_B\to\cW_B$.
For each irreducible component $C\subset \cW_B$,
$C\times_{\cW_B}X_B\to C$
is a genuine $\G_a^{\dF+\dkF}$-torsor.
\label{prop:PGL2}
\end{prop}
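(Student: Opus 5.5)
We work Zariski locally on $X_{\G_m}=\coprod \bT$, one component $\Spec R$ with $R=\bFp[t^{\pm1}]$ at a time, and write $M$ for the universal rank-$1$ \'etale $(\varphi,\Gamma)$-module over $R$. The plan has three steps: describe $\cW_B$ explicitly, construct the morphism $\cX_B\to\cW_B$ from a connecting map, and identify the fibers over each component using Lemma \ref{lem:Herr+}.

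\textbf{Equidimensionality.} Since $R$ is a PID, $H^2_\Herr(M,R)$ is a finitely generated torsion $R$-module. It is torsion because its fibers are generically zero, by local Tate duality. So it is a finite direct sum of cyclic modules $R/(t-t_i)^{m_i}$ and has projective dimension $\le1$. By Lemma \ref{lem:pd-1-representable}, $\cW_B$ is therefore representable over $\cX_{\G_m}$.

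For rank-$1$ characters, $H^2(\bar\chi_t)$ is at most one-dimensional, so each $m_i=1$. We are then in the situation of Lemma \ref{lem:structure-of-Tor}, and $W_B$ is the union of the base line $\Spec R$ with finitely many vertical lines $\Spec\bFp[s_i]$. Quotienting by the $\G_m=\bT$ action, each component is either $[\Spec R/\G_m]$ or $[\Spec\bFp[s_i]/\G_m]$. Both are one-dimensional schemes modulo a one-dimensional group, so both have dimension $0$. This gives equidimensionality of dimension $0$.

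\textbf{The morphism.} By the previous lemma, $\cX_B\cong[Z^1_\Herr/C^0_\Herr]$ over $\cX_{\G_m}$, so an $S$-point of $\cX_B$ is a cocycle $c\in Z^1(M,S)$ up to coboundaries. Composing the projection $Z^1\to H^1(M,S)$ with the surjection of Lemma \ref{lem:PID-Tor},
\[
H^1(M,S)\to\Tor_1^R(H^2(M,R),S),
\]
gives a map of presheaves that is $C^0$-invariant, because it factors through $H^1$. It is also $\bT$-equivariant, since the connecting map is functorial. Hence it descends to $\cX_B\to\cW_B$.

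\textbf{Torsor structure.} Base changing to $X_B$ replaces $[Z^1/C^0]$ by $Z^1_{\Herr,+}/d^0_{\Herr,+}(C^0)$, that is, by $H^1_{\Herr,+}$ after destackification. Lemma \ref{lem:Herr+} says that
\[
\ker\bigl(H^1_{\Herr,+}(M,S)\to\Tor_1^R(H^2_\Herr(M,R),S)\bigr)
\]
is finite free of constant rank $\dF+\dkF$. So over each component $C$, the map $C\times_{\cW_B}X_B\to C$ is a pseudo-torsor under $\G_a^{\dF+\dkF}$.

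The main obstacle is showing this pseudo-torsor is a genuine torsor, i.e.\ that sections exist fppf locally over $C$. The map $H^1_{\Herr,+}\to\Tor_1$ is surjective on $S$-points for every $S$, by the argument of Lemma \ref{lem:PID-Tor} applied to the perfect complex $C^\bullet_{\Herr,+}$, which has the same $H^2$. One subtlety needs checking: on a Steinberg component, $S=\bFp[s_i]$ is not flat over $R$. Here I would rely on the universal-coefficient sequence, which holds for arbitrary $S$ because $R$ is a PID and the complex is perfect.

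Given surjectivity, the identity point of $C$ lifts to $X_B$, which gives a global section. A pseudo-torsor with a section is trivial, hence a genuine torsor.
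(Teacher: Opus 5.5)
Your treatment of equidimensionality and of the morphism $\cX_B\to\cW_B$ matches the paper: you use Lemma~\ref{lem:structure-of-Tor} and the surjection of Lemma~\ref{lem:PID-Tor}, as it does.

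One inference there is wrong, though harmless. The bound $\dim H^2(\bar\chi_t)\le 1$ does not force $m_i=1$, because $R/(t-t_i)^m\otimes_R R/(t-t_i)$ is one-dimensional for every $m$. The fiber dimension bounds only the number of cyclic summands supported at $t_i$, not their length. You do not need the claim. By Lemma~\ref{lem:pd-1-representable}, $\Tor_1^R(R/(u^m),-)$ is represented by $\Spec R[s]/(su^m)$. Its irreducible components are still $\{s=0\}\cong\Spec R$ and $\{u=0\}_{\red}\cong\Spec\bFp[s]$, both of dimension one. The rest of your argument works component by component, so nothing else changes. (The paper also takes $H^2\cong R/(u)$ without proof.)

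For the torsor statement you take a genuinely different route.

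The paper splits into cases. On a non-Steinberg component the zero extension class is a section. On the Steinberg component it scales a tr\`es ramifi\'e cocycle by $s$ and notes that the composite $\Spec\bFp[s]\to X_B(C_{\st})\to C_{\st}$ is a nonzero additive endomorphism of $\G_a$. That map is finite flat, so the paper obtains sections only fppf-locally.

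You instead observe that the universal-coefficient sequence for the perfect complex $C^\bullet_{\Herr,+}$ over the PID $R$ is exact for arbitrary, possibly non-flat, $S$. Hence $H^1_{\Herr,+}(M,S)\to\Tor_1^R(H^2_\Herr(M,R),S)$ is surjective on $S$-points. Lifting the tautological $\cO_C$-point then gives a global section on every component at once.

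What your route buys:
\begin{itemize}
\item It is uniform and needs no case split.
\item It removes the paper's hedge about whether the composite is an isomorphism. The paper's composite is $s\mapsto as$ with $a\in\bFp$, and your surjectivity at $S=\bFp$ is exactly what lets one take $a=1$.
\item It proves the stronger fact that the torsor is trivial, anticipating Theorem~\ref{thm:PGL2-eqn}.
\end{itemize}
What the paper's route buys: it exhibits the section explicitly as the tr\`es ramifi\'e class, which is the form reused later for $c_{\std}$.

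Both arguments rely equally on identifying $X_B(S)$ with $H^1_{\Herr,+}(M,S)$. Both also rely on the base-change property of $C^\bullet_{\Herr,+}$ asserted in Lemma~\ref{lem:Herr+}.
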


\begin{proof}
Note that $\cX_{\G_m}$
is a disjoint union of
$[\G_m/\G_m]$
where $\G_m$ acts trivially on $\G_m$, indexed by Serre weights for $\GL_1$.
Irreducible components
of $X_{\G_m}$
are isomorphic to $\G_m$
and the universal family over each $\G_m$
is the family of unramified twists of a Galois character
$\Gal_F\to \bFp^\times$.

As a consequence, $W_B$ is equidimensional of dimension $1$ by
Lemma \ref{lem:structure-of-Tor}.
So, $\cW_B=[W_B/\G_m]$ is equidimensional of dimension $0$.
The morphism $X_B\to W_B$
is defined and shown to be surjective
by Lemma \ref{lem:PID-Tor},
and clearly descends to $\cX_B\to \cW_B$.

Let $C\subset \cW_B$ be an irreducible component.
By Lemma \ref{lem:Herr+} and Lemma \ref{lem:PID-Tor},
$X_B(C)\to C$
is surjective and is a pseudo $\G_a^{\dF+\dkF}$-torsor:
we can define a group action
$\G_a^{\dF+\dkF}\times_C X_B(C)
\cong X_B(C)\times_C X_B(C)$.
In general, a pseudo $\G_a$-torsor does not have to be a \MINOREDITED{genuine}
$\G_a$-torsor.
If we can construct a section
$C\to X_B(C)$ fpqc locally, then
$X_B(C)\to C$
is a \MINOREDITED{genuine} $\G_a^{\dF+\dkF}$-torsor.
If $C\subset \cW_B$ is not a Steinberg component,
then there exists the trivial section
$C\to X_B(C)$ that corresponds to the zero extension classes
$
\begin{bmatrix}
* & 0 \\ & *
\end{bmatrix}
$.

It remains to consider the Steinberg component 
$[\Spec \bFp[s]/\G_m]\cong C_{\st}\subset \cW_B$.
We complete the proof by explicitly constructing
a local section in flat topology.
$C_{\st}$ maps to the cyclotomic character point 
in $\cX_{\G_m}$;
write $M_{\cyc}$ for the \'etale $(\varphi, \Gamma)$-module
that corresponds to the mod $p$ cyclotomic Galois character.
Let $(c_f, c_g)\in Z^1(M_{\cyc}, \bFp)$
be a tr\'es ramifi\'e cocycle.
So, we do get a section
$\Spec \bFp[s] \to X_B(C_{\st})$
corresponding to the $1$-cocycle
$(s~c_f, s~c_g) \in Z^1(M_{\cyc}\otimes_{\bFp}\bFp[s], \bFp[s])$.
We don't know if
the composition
$[\Spec \bar\F_p[s]/\G_m] \to
X_B(C_{\st})
\to C_{\st}
\cong [\Spec \bar\F_p[s]/\G_m]$
is an isomorphism.
Nevertheless, it is clear that
it induces
a surjective group scheme homomorphism
$\Spec \bar\F_p[s]\to \Spec \bar\F_p[s]$.
Group scheme homomorphisms
$\Spec \bar\F_p[s]\to \Spec \bar\F_p[s]$
are classified by polynomials
$s\mapsto \sum_{i=0}^N a_i s^{p^i}$
for $a_0,\dots, a_N\in \bFp$,
and they are all finite flat morphisms
(except for the zero morphism).
Thus $X_B(C_{\st})\to C_{\st}$ is indeed an fppf $\G_a^{\dF+\dkF}$
-torsor, and we are done.
\end{proof}

\begin{thm}
\rm
We have
\[
X_B\cong W_B\times \A^{\dF+\dkF},
\]
and thus
the morphism
\[
\WD:\cX_{B}\to \cW_{B}
\]
is smooth of relative dimension $\dF$,
and induces a bijection of irreducible components.
\label{thm:PGL2-eqn}
\end{thm}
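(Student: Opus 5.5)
The plan is to upgrade the torsor statement of Proposition \ref{prop:PGL2} to a global product decomposition, and then read off the geometric consequences.

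\textbf{Step 1: reduce to one component.} Since $X_{\G_m}=\coprod\G_m$ is indexed by Serre weights for $\GL_1$, it suffices to treat a single copy $\Spec\bFp[t^{\pm1}]$. On it $W_B$ is the union of the axis lines $\Spec\bFp[t^{\pm1},s_i]/(s_i(t-t_i))$ by Lemma \ref{lem:structure-of-Tor}. The morphism $X_B\to W_B$ exists by Lemma \ref{lem:PID-Tor}; it is defined through the modified complex $C^\bullet_{\Herr,+}$, which is what the destackification $X_B$ parametrizes.

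\textbf{Step 2: trivialize the torsor.} By Lemma \ref{lem:Herr+}, the kernel of $H^1_{\Herr,+}(M,S)\to\Tor_1^R(H^2_\Herr(M,R),S)$ is a finite free sheaf of rank $\dF+\dkF$ over $R=\bFp[t^{\pm1}]$. It is free over $R$ itself, not merely over each component. So $X_B\to W_B$ is a torsor under the constant vector group $\A^{\dF+\dkF}_{W_B}$, with the action given by translation by kernel elements. To trivialize it, I would build a global section $W_B\to X_B$.
\begin{itemize}
\item On the non-Steinberg component $\Spec R$, take the zero cocycle.
\item On each Steinberg line $\Spec\bFp[s_i]$, take the section $s\mapsto(s\,c_f,s\,c_g)$ from the proof of Proposition \ref{prop:PGL2}, with $(c_f,c_g)$ a très ramifié cocycle.
\end{itemize}
These agree at the node $t=t_i,\ s_i=0$, where both equal the zero cocycle. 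They therefore glue to a section over the nodal union.

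\textbf{Step 3: the main obstacle.} The difficulty is showing that this glued section is genuinely a section over $W_B$. The proof of Proposition \ref{prop:PGL2} only gave a composite $\Spec\bFp[s]\to\Spec\bFp[s]$ that is an additive polynomial $s\mapsto\sum a_is^{p^i}$. I would fix this by normalizing $(c_f,c_g)$ so that its image under the connecting map $\delta$ is the generator of $\Tor_1$. Concretely, choose $(c_f,c_g)$ as a lift of the generator via the explicit connecting map for the resolution $R\xrightarrow{t-t_i}R$. Then $\delta(s\,c)=s$, so the composite is the identity $s\mapsto s$ rather than an arbitrary additive polynomial, by linearity of $\delta$ in $s$. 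With a section in hand, the torsor is trivial and $X_B\cong W_B\times\A^{\dF+\dkF}$. Because the fibre group is a vector group over an affine base, any genuine torsor is trivial anyway, since $H^1(W_B,\cO)=0$. This gives a fallback if the normalization is awkward, provided the torsor is Zariski-locally trivial, which the section argument supplies.

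\textbf{Step 4: consequences.} Quotient by $\bT=\G_m$, which acts compatibly. Then $\cX_B=[X_B/\Res_{k_{F_\cyc}/\F_p}U\rtimes\bT]$ by Proposition \ref{prop:Borel-destack}, and $\cW_B=[W_B/\G_m]$. The map $\WD$ is thus a quotient of the projection by the extra group $\Res U\cong\G_a^{\dkF}$. So it is smooth of relative dimension $(\dF+\dkF)-\dkF=\dF$. Its fibres are affine spaces modulo a unipotent group, hence irreducible, so irreducible components of $\cX_B$ and $\cW_B$ correspond bijectively.
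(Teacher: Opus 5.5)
Your proposal is correct. It reaches the product decomposition by a different mechanism from the paper's.

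The paper proceeds in two steps. It first trivializes $X_B$ over the non-Steinberg component $\Spec R$ using Swan's theorem. It then writes down the explicit universal cochain $\sum_i x_i c_i + s\,c_{\std}$. In Lemma \ref{lem:PGL2-local} it uses the splitting of the perfect modified Herr complex over the PID $R$ (Smith normal form) to show that the induced map from $\Spec R[x_1,\dots,x_{\dF+\dkF},s]/(s(t-1)^{n_0})$ to $X_B$ is an isomorphism with $n_0=1$.

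You instead use the universal-coefficient sequence to make $X_B\to W_B$ a pseudo-torsor over all of $W_B$. The structure group is the constant vector group $H^1_{\Herr,+}(M,R)\otimes -\cong\A^{\dF+\dkF}$. You then trivialize it with a glued section.

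Your route never needs $n_0$. If $\tilde c$ is any lift of $c$ over $R$, then $d^1_{\Herr}\tilde c$ is divisible by $t-1$. So $s\tilde c$ is already a cocycle over $R[s]/(s(t-1))$, and it is exactly your glued section. The gluing is legitimate because $(s)\cap(t-1)=(s(t-1))$ in $R[s]$ and $W_B$ is reduced. Hence a self-map of $W_B$ that is the identity on both components is the identity.

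Your linearity observation, $\delta(sc)=s\,\delta(c)$, is cleaner than the detour through additive polynomials $\sum a_i s^{p^i}$ in Proposition \ref{prop:PGL2}. In fact Lemma \ref{lem:PID-Tor} gives surjectivity for every $S$. Applying it to $S=\cO(W_B)$ lifts the tautological point directly, so a section exists even before any normalization.

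What the paper's route buys is explicit coordinates. The cochain $c_{\std}$ with $d^1_{\Herr}c_{\std}=(t-1)b_{\std}$ is reused in the gauge cochains and in the relations $r_\alpha=s_\alpha(t_\alpha-1)+\cdots$ of Lemma \ref{lem:eq}. Your normalization $\delta(c)=1$ recovers the same data: it says precisely that $(t-1)^{-1}d^1_{\Herr}\tilde c$ restricts at $t=1$ to a generator of $H^2$.

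Your Step 4 deduction is sound. Smoothness follows by descending along the quotient, with the gerbe $[W_B/\bT]\to[W_B/\G_m]$ banded by $\G_a^{\dkF}$ accounting for the drop in relative dimension to $\dF$. Irreducible fibres then give the bijection on components.
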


\begin{proof}
Note that all irreducible components of $\cW_{B}$
that do not intersect with the Steinberg component
are connected components.
So, by Proposition \ref{prop:PGL2}, it remains to
restrict to the connected component containing the Steinberg
component, which is isomorphic to
$[\Spec \bFp[t, t^{-1}, s]/(s(t-1))/\G_m]$
and is the union of two irreducible components
$[\Spec \bFp[t,t^{-1}]/\G_m]$
and
$[\Spec \bFp[s]/\G_m]$.
The morphism
\[
X_{B}\times_{W_B}\Spec \bFp[t,t^{-1}]
\to \Spec \bFp[t,t^{-1}]
\]
is relatively representable by a vector bundle of
rank $(\dF+\dkF)$,
and is thus a relatively representable by
a trivial vector bundle by Swan's theorem
(that any finite projective module over $\G_m$ is finite free).
So, \[
X_{B}\times_{W_B}\Spec \bFp[t,t^{-1}]
\cong \Spec \bFp[t,t^{-1}]\times \A^{\dF+\dkF}
=:\cY,
\]
which induces a natural morphism
\[
\cY\times_{\Spec \bFp[t,t^{-1}]}
\Spec \bFp[t, t^{-1}, s]/(s(t-1))
\to 
X_{B}\times_{W_B}\Spec \bFp[t, t^{-1}, s]/(s(t-1))
\]
which is an isomorphism by Lemma~\ref{lem:PGL2-local} below.
\end{proof}

Indeed, we have explicitly written down
coordinate rings for $X_B$.
For each irreducible component
$C=[\Spec \cO_C/\G_m]\subset W_B$,
Swan's theorem implies
$X_B(C)\cong \Spec \cO_C \times \A^{\dF+\dkF}$.
It remains to understand how the Steinberg
component $X_B(C_{\st})
\cong \Spec \bFp[s]\times \A^{\dF+\dkF}$
is glued to a non-Steinberg component
$X_B(C')
\cong \Spec \bFp[t,t^{-1}]\times \A^{\dF+\dkF}$.
From the proof of Theorem \ref{thm:PGL2-eqn},
the glued scheme is simply given by
\[
X_B(C_{\st}\cup C')\cong
\Spec \bFp[t,t^{-1},s]/(s(t-1))\times \A^{\dF+\dkF}.
\]

\subsection{The universal family
over $X_B(C_{\st}\cup C')$}
Next, we write down
the universal family
for $X_B(C_{\st}\cup C')$
in explicit cocycle systems.

Choose a $2$-cochain
$b_{\std}\in C^2_{\Herr}(U_{\gamma^\vee}(\bfE_{F,
\bFp[t^{\pm1}]/(t-1)
}))$
such that
\[
[b_{\std}]_{t=1}=1\in H^2(\Gal_F, \bFp(1))\cong \bFp.
\]
Using the embedding
\[
\bFp[t^{\pm1}]/(t-1)
=\bFp
\hookrightarrow
\bFp[t^{\pm1}],
\]
we treat $b_{\std}$ as a cocycle
in 
$C^2_{\Herr}(\bfE_{F,
\bFp[t^{\pm1}]})$.
Note that $b_{\std}$
is a $2$-coboundary away from
the cyclotomic point
$(t-1)$:
\begin{align}
\label{eq:cstd}
b_{\std}=\frac{1}{(t-1)^{n_0}}d^1_{\Herr}(c_{\std})
\end{align}
for some $c_{\std}\in C^1_{\Herr}(U_{\gamma^\vee}(\bfE_{F,
\bFp[t^{\pm1}]}))$.

\begin{lem}
If $n_0$ is the smallest possible integer,
then
$[c_{\std}]_{t=1}\neq 0\in H^1(\Gal_F, \bFp(1))$.
\label{lem:bstd}
\end{lem}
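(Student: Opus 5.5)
Write $R=\bFp[t^{\pm1}]$ and let $M$ be the rank $1$ universal family (the unramified twists of the mod $p$ cyclotomic character), so that $C^\bullet=C^\bullet_{\Herr}(M)$ is a complex of $R$-modules. The plan is to argue by contradiction, using minimality of $n_0$ to divide out one more factor of $(t-1)$.

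Suppose $[c_{\std}]_{t=1}=0$ in $H^1(\Gal_F,\bFp(1))$. The first step is to make sure $n_0\ge 1$. If $n_0=0$, then $b_{\std}=d^1_{\Herr}(c_{\std})$ over $R$. Specializing at $t=1$ would give $[b_{\std}]_{t=1}=0$, which contradicts $[b_{\std}]_{t=1}=1$. So $n_0\ge1$.

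Next, reduce Equation~\ref{eq:cstd} modulo $(t-1)$. Since $(t-1)^{n_0}b_{\std}=d^1_{\Herr}(c_{\std})$ and $n_0\ge1$, we get $d^1_{\Herr}(c_{\std}\bmod (t-1))=0$. So the reduction $\bar c$ is a genuine cocycle, and the hypothesis says it is a coboundary: $\bar c=d^0_{\Herr}(\bar x)$ for some $\bar x\in C^0_{\Herr}(M_{\bFp})$. Lift $\bar x$ to $x\in C^0_{\Herr}(M)$; this is possible because $C^\bullet_{\Herr}(M_{R/(t-1)})=C^\bullet\otimes_R R/(t-1)$ termwise. Then
\[
c_{\std}-d^0_{\Herr}(x)\in (t-1)C^1_{\Herr}(M).
\]

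The main step is dividing by $t-1$. The terms of the Herr complex are $\bfE_{F,R}$-modules, and $\bfE_{F,R}$ is flat over $R$ (it is a localization of $\varprojlim(\bfE^+_F/T_F^n)\otimes R = R[\![T_F]\!]$-type completed tensor products). In particular $t-1$ is a nonzerodivisor on each $C^i$. So we can write $c_{\std}-d^0_{\Herr}(x)=(t-1)c'$ with $c'$ unique. Then
\[
(t-1)d^1_{\Herr}(c')=d^1_{\Herr}(c_{\std})=(t-1)^{n_0}b_{\std},
\]
and cancelling the nonzerodivisor gives $b_{\std}=(t-1)^{-(n_0-1)}d^1_{\Herr}(c')$. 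This is Equation~\ref{eq:cstd} with exponent $n_0-1$, contradicting the minimality of $n_0$. Hence $[c_{\std}]_{t=1}\neq0$.

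The obstacle I expect is making the torsion-freeness and the termwise base change rigorous for the completed ring $\bfE_{F,R}$. I would check both directly in coordinates: writing $\bfE_{F,R}=k_{F_{\cyc}}\otimes R(\!(T_F)\!)$, multiplication by $t-1$ acts coefficientwise on Laurent series with coefficients in a domain, and reduction mod $(t-1)$ commutes with the inverse limit because each $\bfE^+_F/T_F^n$ is finite over $\F_p$. Alternatively, one could invoke the perfect complex of \cite[Theorem 5.1.22]{EG23} used in Lemma~\ref{lem:PID-Tor}.
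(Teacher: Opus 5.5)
Your proof is correct and follows the paper's argument. Assuming the reduction at $t=1$ is a coboundary, you modify $c_{\std}$ by a coboundary so that it vanishes modulo $(t-1)$, use $\bfE_{F,\bFp[t^{\pm1}]}/(t-1)=\bfE_{F,\bFp}$ to write it as $(t-1)c'$, and contradict the minimality of $n_0$. Your version is more careful than the paper's in three places: the paper speaks of ``conjugation by an element of $U(\bFp)$'' where one really needs to subtract $d^0_{\Herr}(x)$ for a lift $x$ over $\bfE_{F,\bFp[t^{\pm1}]}$, and it leaves implicit both $n_0\ge 1$ and the cancellation of the nonzerodivisor $t-1$ in $C^2_{\Herr}$, all of which you make explicit.
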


\begin{proof}
If $[c_{\std}]_{t=1}=0$,
then $c_{\std}$ is a $1$-coboundary  at $t=1$.
After conjugation by an element of
$U(\bFp)$,
we can assume $c_{\std}$ vanishes (as a cochain) at $t=1$.
Since
$\bfE_{F, \bFp[t^{\pm1}]}/(t-1)
=\bfE_{F, \bFp[t^{\pm1}]/(t-1)}$,
we conclude that
$c_{\std}=(t-1)c'$
for some $c'\in C^1_{\Herr}$,
contradicting the minimality of $n_0$.
\end{proof}

From the isomorphism
$X_B(C')\cong C'\times \A^{\dF+\dkF}$,
we obtain global sections
\[
\{c_1,\dots,c_{\dF+\dkF}\}\subset
Z^1_{\Herr,+}(\bfE_{F,\bFp[t,t^{-1}]})
\]
generating $H^1_{\Herr,+}(\bfE_{F,\bFp[t,t^{-1}]})$.
We thus
consider the universal cochain
\[
c_{\univ}:=
x_1 c_1 + x_2c_2+\dots+x_{\dF+\dkF}c_{\dF+\dkF}
+ s c_{\std},
\]
which is a cocycle if and only if
$s(t-1)^{n_0}=0$.
We have thus defined a morphism
\[
\Spec \bFp[t^{\pm1},x_1,\dots,x_{\dF+\dkF},s]/(s(t-1)^{n_0})
\to X_B(C'\cup C_{\st}),
\]
which induces a bijection of $\bFp$-points
by Lemma \ref{lem:bstd},
and is an isomorphism away from the $t=1$ locus.

\begin{lem}
We must have $n_0=1$
and 
$\Spec \bFp[t^{\pm1},x_1,\dots,x_{\dF+\dkF},s]/(s(t-1)^{n_0})
\cong X_B(C'\cup C_{\st})$.
\label{lem:PGL2-local}
\end{lem}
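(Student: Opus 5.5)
We want to show that $n_0=1$ and that the morphism
\[
\Phi:\Spec \bFp[t^{\pm1},x_1,\dots,x_{\dF+\dkF},s]/(s(t-1)^{n_0})\to X_B(C'\cup C_{\st})
\]
is an isomorphism. The plan is to compare both sides with the known model from Theorem \ref{thm:PGL2-eqn}. There, the target is a $\G_a^{\dF+\dkF}$-torsor over $\Spec\bFp[t^{\pm1},s]/(s(t-1))$, and we will show this torsor is trivial.

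\textbf{Step 1: both sides are affine spaces over a base.}
\begin{itemize}
\item The source is visibly $\A^{\dF+\dkF}\times\Spec\bFp[t^{\pm1},s]/(s(t-1)^{n_0})$.
\item By Proposition \ref{prop:PGL2} and Lemma \ref{lem:PID-Tor}, the target fibers over $W_B$. Near the cyclotomic point, $W_B$ is $\Spec\bFp[t^{\pm1},s']/(s'(t-1))$ by Lemma \ref{lem:structure-of-Tor}.
\item $\Phi$ is compatible with the two projections to $\Spec\bFp[t^{\pm1}]$ and with the $x$-translation action by the cocycles $c_i$.
\end{itemize}
So $\Phi$ is induced by a morphism of bases
\[
\psi:\Spec\bFp[t^{\pm1},s]/(s(t-1)^{n_0})\to W_B,
\]
which sends $s$ to the class $\delta(s\,c_{\std})$. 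Here $\delta$ is the surjection of Lemma \ref{lem:PID-Tor}.

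\textbf{Step 2: compute $\psi$ and deduce $n_0=1$.}
\begin{itemize}
\item By \eqref{eq:cstd}, $d^1_{\Herr}(s\,c_{\std})=s(t-1)^{n_0}b_{\std}$. Under the identification $\Tor_1(R/(t-1),S)=\{s'\in S: s'(t-1)=0\}$ via the resolution $R\xrightarrow{t-1}R$ (Example \ref{ex:strict-picard}), the connecting map $\delta$ sends the cochain $s\,c_{\std}$ to $s'=s(t-1)^{n_0-1}$, up to a unit. This is because $b_{\std}$ represents the generator of $H^2$ at $t=1$.
\item Now use that $\Phi$ is bijective on $\bFp$-points, and in particular surjective onto the Steinberg line. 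A point with $t=1$ and $s\neq0$ must therefore map to $s'\neq0$.
\item If $n_0\ge2$, then $s'=s(t-1)^{n_0-1}$ vanishes at $t=1$, so the whole Steinberg line would be missed. This contradicts surjectivity. Hence $n_0=1$.
\item With $n_0=1$, $\psi$ is $s\mapsto us$ for a unit $u$, which is an isomorphism of bases.
\end{itemize}
Alternatively, Lemma \ref{lem:bstd} says the class $[c_{\std}]_{t=1}$ is nonzero, which also forces $\delta$ to be nonzero on the Steinberg fiber.

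\textbf{Step 3: trivialize the torsor.} We now have a morphism of $\G_a^{\dF+\dkF}$-torsors over the isomorphism $\psi$; the source is the trivial torsor. A morphism of torsors for the same group over an isomorphism of bases is an isomorphism. So $\Phi$ is an isomorphism, provided the group actions match.

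\textbf{Main obstacle.} The hardest point is checking that the actions match on the Steinberg component. The torsor structure there comes from $\ker(H^1_{\Herr,+}\to\Tor_1)$ in Lemma \ref{lem:Herr+}. We need the restrictions of $c_1,\dots,c_{\dF+\dkF}$ to the non-reduced locus $\bFp[t^{\pm1}]/(t-1)$ to still form a basis of this kernel, fiberwise at $t=1$.

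For this we will use base change of the perfect complex $P^\bullet$ from Lemma \ref{lem:PID-Tor}. Its universal coefficient sequence shows that $H^1(P^\bullet)\otimes S$ injects into $H^1(P^\bullet\otimes S)$ with image exactly the kernel of $\delta$, for every $S$. The constant rank statement in Lemma \ref{lem:Herr+} then gives fiberwise freeness.
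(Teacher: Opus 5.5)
Your argument is correct in substance, but it is organized differently from the paper's. Write $R=\bFp[t^{\pm1}]$, $u=t-1$, $d=\dF+\dkF$.

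\textbf{The paper's route.} It computes the representing object outright. By Lemma~\ref{lem:Herr+}, $H^2=R/(u)$, and Smith normal form over the PID $R$, the perfect modified Herr complex splits as a free rank-$d$ summand in degree one plus $[R\xrightarrow{u}R]$. Hence $X_B(C'\cup C_{\st})$ has coordinate ring $R[x_1,\dots,x_d,s]/(su)$. The universal cocycle is then asserted to realize this identification.

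\textbf{Your route.} You work relative to the Weil--Deligne base. The universal-coefficient sequence $0\to H^1(P^\bullet)\otimes_R S\to H^1(P^\bullet\otimes_R S)\xrightarrow{\delta}\Tor_1^R(H^2,S)\to 0$, with $H^1(P^\bullet)$ free on the $[c_i]$, exhibits $X_B\to W_B$ as a $\G_a^{d}$-torsor, and $\Phi$ is equivariant. A five-lemma comparison on $S$-points then reduces everything to the induced map on $\Tor_1$, which you compute as $s\mapsto u^{n_0-1}s$.

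\textbf{Comparison.} Your route makes explicit the step the paper only asserts, namely that the cochains $c_i$ and $c_{\std}$ really match the splitting, at the cost of a longer argument. The paper's route gives the coordinate ring in one stroke.

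\textbf{Three things to fix.}
\begin{enumerate}
\item Do not invoke Theorem~\ref{thm:PGL2-eqn} as the \emph{known model}: its proof ends by appealing to this very lemma. Proposition~\ref{prop:PGL2} alone will not replace it, since it only gives torsors over each irreducible component separately, not over the union through the crossing point. The torsor structure over $C'\cup C_{\st}$ should come from the exact sequence above together with the surjectivity of $\delta$ for every $S$ (Lemma~\ref{lem:PID-Tor}). Your last paragraph essentially supplies this, so cite that instead.
\item $n_0=1$ needs no point count. $H^2(M,R)=R/(u)$ is killed by $u$, so $u\,b_{\std}$ is already a coboundary, and minimality forces $n_0=1$. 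Your identification of $\Tor_1$ with $\{s'\in S: us'=0\}$ compatibly with $b_{\std}$ implicitly uses exactly this. Moreover, the bijectivity on $\bFp$-points that you invoke is, by your own formula, equivalent to $n_0=1$, so leaning on it is circular in spirit.
\item The \emph{alternatively} remark is a non sequitur as stated. $[c_{\std}]_{t=1}\neq 0$ does not place this class outside $\ker\delta$, which is spanned by the $[c_i]_{t=1}$. To extract that from Lemma~\ref{lem:bstd}, you must apply it to every $c_{\std}-\sum a_i c_i$ with $a_i\in\bFp$; these satisfy the same equation with the same $n_0$.
\end{enumerate}
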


\begin{proof}
Put $R={\bFp}[t^{\pm1}]$, $u=t-1$, and
$d=\dF+\dkF$.  
\MAJOREDIT{On the connected component under consideration the perfect
modified Herr complex splits, in the derived category of $R$-modules, as a
free summand of rank $d$ in degree one and the two-term resolution
\[
 [R\xrightarrow{u}R]
\]
of its degree-two torsion.  This follows from Lemma~\ref{lem:Herr+}, the fact
that $H^2=R/(u)$, and Smith normal form over the PID $R$.  The relative
derived vector space represented by this complex therefore has classical
coordinate ring
\[
 R[x_1,\ldots,x_d,s]/(su).
\]
The universal cocycle constructed above induces this identification: the free
summand gives the $x_i$, while $c_{\std}$ is the generator dual to the
resolution and gives $s$.  Thus its differential is exactly $su$, not
$su^{n_0}$; in particular $n_0=1$.  This computes the completed local ring at
the crossing and the ordinary coordinate ring simultaneously, proving the
claimed isomorphism without inferring a singular local ring from tangent
    spaces alone.}
\end{proof}

\subsection{The universal family
over $X_B(C_{\triv})$}
Now, let $C_{\triv}$ be the (non-Steinberg) irreducible component
of trivial inertial type.

We have
$X_B(C_{\triv})\cong \Spec \bFp[t^{\pm1}, x_1,\dots, x_{\dF+\dkF}]$.
Let $c_{\un}\in
Z^1_{\Herr}(\bfE_{F, \bFp[t^{\pm1}]})$
be an unramified cocycle
with $[c_{\un}]$ being nontrivial cohomology class
in $H^1_{\Herr}$.
Then $c_{\un}$ is a coboundary
away from $t=1$.
So,
$H^1_{\Herr}(\bfE_{F, \bFp[t^{\pm1}]})/
\bFp c_{\un}$
is a finite free $\bFp[t^{\pm1}]$-module
of rank $\dF$.
We can choose global sections
\[
\{c_1,\dots, c_{\dF+\dkF}\}
\subset 
Z^1_{\Herr,+}(\bfE_{F,\bFp[t,t^{-1}]})
\]
such that
\begin{itemize}
\item 
$\{[c_1], \dots, [c_{\dF}]\}$
form a basis for 
$H^1_{\Herr}(\bfE_{F, \bFp[t^{\pm1}]})/
\bFp[c_{\un}]$
\item
$\{[c_1], \dots, [c_{\dF+\dkF}]\}$
form a basis for 
$H^1_{\Herr,+}(\bfE_{F, \bFp[t^{\pm1}]})$
and that 
\item
$c_{\dF+1}$
lifts $c_{\un}$.
\end{itemize}
The universal family is thus of the form
\[
c^\univ=x_1 c_1 +\cdots + x_{\dF+\dkF}c_{\dF+\dkF}
\in Z^1_{\Herr}(\bfE_{F, \cO_{C_{\triv}}}).
\]

\section{Example: $\PGL_3$}

Write $B=U\rtimes \bT=B_{\PGL_3}$ for the Borel
of $\PGL_3$.

Let $\delta_1$, $\delta_2$ be the simple roots
and $\gamma=\delta_1+\delta_2$ be the highest root.
So, $U=U_{\delta_1^\vee}\times U_{\delta_2^\vee}\times U_{\gamma^\vee}$
and $\bT\cong \G_{m,1}\times \G_{m,2}$
where $\G_{m,i}\cong \G_m$
acts on $U_{\delta_i^\vee}$
by the weight $1$ character
for $i=1,2$.
Write $\G_{m, \gamma}$
for the image of
$\G_m\xrightarrow{t\mapsto(t,t)}\G_{m,1}\times \G_{m,2}\cong \bT$.

Consider the morphism
\[
X_{B}\to X_{U_{\delta_1}\rtimes \G_{m,1}}
\times X_{U_{\delta_2}\rtimes \G_{m,2}}
\xrightarrow{\WD\times\WD}
W_{U_{\delta_1}\rtimes \G_{m,1}}
\times W_{U_{\delta_2}\rtimes \G_{m,2}},
\]
which we denote by $\wt\WD$.
Let $C_i\subset \cW_{U_{\delta_i^\vee}\rtimes \G_{m,i}}$
be an irreducible component, $i=1,2$.

Set $C=C_1\times C_2$
and $X_B(C):=\wt\WD^{-1}(C)$.
For ease of notation, we assume
$F=\Q_p$.

\subsection{The case where both $C_1$ and $C_2$ are non-Steinberg}
To simplify discussion,
we assume neither $C_1$ or $C_2$ intersects with
the Steinberg component.

We have
\begin{align*}
X_{U_{\delta_i}\rtimes \G_{m,i}}(C_i)
\cong
\Spec \bFp[t_i, t_{i}^{-1}][x_{\delta_i,1}, x_{\delta_i,2}].
\end{align*}

The parameters $x_{i1}, x_{i2}$ correspond to
the parametric $1$-cocycles
\[
c_{\delta_i,1}, c_{\delta_i,2}\in Z^1_{\Herr}(\bfE_{F, \bFp[t_i^{\pm1}]}).
\]
If $H^2_{\Herr}(U_{\gamma^\vee}(\bfE_{F, \bFp[t_1^{\pm1},t_2^{\pm1}]}))=0$,
then we have
\begin{align*}
X_B(C)
&\cong
(X_{U_{\delta_1}\rtimes \G_{m,1}}(C_1)
\times
X_{U_{\delta_2}\rtimes \G_{m,2}}(C_2))
\times_{X_{\G_{m,\gamma}}}X_{U_{\gamma}\rtimes \G_{m,\gamma}}(C_{12})
\\
&\cong
\Spec \bFp[t_1, t_{1}^{-1}][x_{\delta_1,1}, x_{\delta_1,2}]\times
\Spec \bFp[t_2, t_{2}^{-1}][x_{\delta_2,1}, x_{\delta_2,2}]
\times \A^2,
\end{align*}
where $C_{12}$ is the image
of
\[
C_1\times C_2\hookrightarrow
X_{\G_{m,1}}\times X_{\G_{m,2}}
\to X_{\G_{m,\gamma}}.
\]
So, we assume $H^2_{\Herr}(U_{\gamma^\vee})\neq 0$
at $t_1=t_2=1$.
We write
\[
b_{\std}\in C^2_{\Herr}(U_{\gamma^\vee}(\bfE_{F,\bFp[t_1^{\pm1},t_2^{\pm1}]}))
\qquad
c_{\std}\in C^1_{\Herr}(U_{\gamma^\vee}(\bfE_{F,\bFp[t_1^{\pm1},t_2^{\pm1}]}))
\]
for suitable base change of the cochains defined in
Eq. (\ref{eq:cstd}).

We fix $1$-\MINOREDITED{cochains}
$c_{\gamma,[11]}, c_{\gamma,[12]},c_{\gamma,[21]}, c_{\gamma,[22]}$
such that
\begin{align}
\label{eq:coord3}
c_{\delta_1,i_1}\cup c_{\delta_2,i_2}
-a_{i_1,i_2}b_{\std} = d^1_{\Herr}(c_{\gamma,[i_1i_2]})
\end{align}
where $a_{i_1,i_2}\in\bFp[t_1^{\pm1}, t_2^{\pm1}]$
are unique scalars, which we can normalize
to ensure $a_{*,*}\in\{0,1\}\subset \bFp$:

\begin{lem}
\rm
(1)
We can normalize $c_{\delta_i,j}$
to ensure $a_{11}=1, a_{12}=a_{21}=a_{22}=0$.

(2)
The left hand side of Eq. (\ref{eq:coord3})
is a coboundary.
\end{lem}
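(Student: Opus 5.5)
Both parts come from one observation. Over the locus $t_1=t_2=1$, the space $H^2_{\Herr}(U_{\gamma^\vee})$ is one-dimensional, spanned by $[b_{\std}]$. Away from that locus, $H^2_{\Herr}(U_{\gamma^\vee}(\bfE_{F,\bFp[t_1^{\pm1},t_2^{\pm1}]}))$ vanishes, because $H^2$ of a rank-one family is supported where the character is cyclotomic. So $H^2$ is the skyscraper $R/(t_1-1,t_2-1)$ with $R=\bFp[t_1^{\pm1},t_2^{\pm1}]$.

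I would prove (2) before (1), since (1) only rearranges data produced in (2). For (2), look at the cup product classes $[c_{\delta_1,i_1}\cup c_{\delta_2,i_2}]\in H^2_{\Herr}(U_{\gamma^\vee}(\bfE_{F,R}))$; they are well defined because each $c_{\delta_i,j}$ is a cocycle. Since $H^2$ is a cyclic skyscraper generated by $[b_{\std}]$, each such class equals $a_{i_1,i_2}[b_{\std}]$ for a unique $a_{i_1,i_2}\in R/(t_1-1,t_2-1)=\bFp$. Lifting these constants to $\bFp\subset R$, the difference $c_{\delta_1,i_1}\cup c_{\delta_2,i_2}-a_{i_1,i_2}b_{\std}$ has zero class in $H^2_{\Herr}$ over $R$, so it is a coboundary. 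This requires Herr cohomology of the family to be computed by the Herr complex over $R$ itself, with no sheafification over the big site. That holds because $\Spec R$ is affine and $H^2$ is coherent with base-change-compatible formation, as in Lemma~\ref{lem:PID-Tor} and \cite[Theorem 5.1.22]{EG23}. Choosing a preimage under $d^1_{\Herr}$ then gives $c_{\gamma,[i_1i_2]}$, so the equation (\ref{eq:coord3}) holds.

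For (1), specialize to $t_1=t_2=1$. The cup product pairing
\[
H^1(\Gal_F,\bFp(\chi_1))\times H^1(\Gal_F,\bFp(\chi_2))\to H^2(\Gal_F,\bFp(1))\cong\bFp
\]
is local Tate duality when $\chi_1\chi_2$ is cyclotomic, so it is perfect. With $F=\Q_p$ and the non-Steinberg assumption, each $H^1$ is two-dimensional over the point, and the classes $[c_{\delta_i,1}],[c_{\delta_i,2}]$ restrict to a basis there. The matrix $(a_{i_1,i_2})$ is therefore the Gram matrix of a perfect pairing of rank $2$, and after a $\GL_2(\bFp)$ change of basis on each side it becomes the identity. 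This gives $a_{11}=a_{22}=1$ rather than the claimed $a_{22}=0$. The main obstacle is to reconcile this with the statement. I expect the resolution is that one of the four generators, the lift of the unramified class $c_{\un}$ or the extra $C^1_{\Herr,+}$ coordinate, is not a genuine Herr cocycle at the cyclotomic point, or pairs trivially there. In that case the effective pairing has rank $1$, and normalizing means rescaling one generator on each side so that $a_{11}=1$, then subtracting multiples of $c_{\delta_1,1}$ from $c_{\delta_1,2}$ and of $c_{\delta_2,1}$ from $c_{\delta_2,2}$ to kill $a_{21}$ and $a_{12}$. These changes are by constant $\bFp$-matrices, so they remain global bases and do not disturb the parametrization by the $x_{\delta_i,j}$. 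I would first check which generator degenerates by comparing dimensions with Lemma~\ref{lem:Herr+}.
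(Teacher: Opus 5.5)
\paragraph{Verdict.} There is a genuine gap, and it lies in part (1).

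\paragraph{The support of $H^2$.} The character on $U_{\gamma^\vee}$ is $\chi_1^{\univ}\chi_2^{\univ}$, whose unramified-twist coordinate is $t_\gamma=t_1t_2$. It is therefore cyclotomic along the whole curve $t_1t_2=1$, not just at the point $t_1=t_2=1$. This is why the relation obtained afterwards is $s_\gamma(t_1t_2-1)+x_{\delta_1,1}x_{\delta_2,1}=0$. Hence $H^2_{\Herr}(U_{\gamma^\vee}(\bfE_{F,R}))\cong R/(t_1t_2-1)\cong\bFp[t^{\pm1}]$, generated by $[b_{\std}]$. The $a_{i_1,i_2}$ are a priori Laurent polynomials on this curve, not constants in $\bFp$.

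Consequently $a_{11}=1$ cannot be reached by a constant $\GL_2(\bFp)$ change of basis. The missing step is to show that $a_{11}$ is a unit of $R/(t_1t_2-1)$. The paper obtains this by applying local Tate duality at every point of the curve. So $a_{11}$ vanishes nowhere, hence equals $c\,t_1^{e_1}t_2^{e_2}$ with $c\in\bFp^\times$. This factor is absorbed by rescaling $c_{\delta_1,1}$ and $c_{\delta_2,1}$ by units of $\bFp[t_1^{\pm1}]$ and $\bFp[t_2^{\pm1}]$.

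Your argument for (2) survives this correction in form. Since $[b_{\std}]$ generates $R/(t_1t_2-1)$ (it is the base change along $t\mapsto t_1t_2$ of a generator at the cyclotomic point), each cup-product class equals $a[b_{\std}]$ with $a\in R/(t_1t_2-1)$. Any lift of $a$ to $R$ then makes the difference a coboundary. This is slightly more direct than the paper's argument (fiberwise vanishing plus reducedness of $\bFp[t^{\pm1}]$), but it must be justified by this generator property, not by a skyscraper picture.

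\paragraph{The count of $H^1$.} At any point of the curve, neither $\chi_i$ is trivial or cyclotomic. If $\chi_i$ were cyclotomic there, $C_i$ would meet the Steinberg component. If $\chi_i$ were trivial there, $\chi_{3-i}$ would be cyclotomic and $C_{3-i}$ would meet it. Both are excluded. So $H^1(\Gal_{\Q_p},\chi_i)$ is one-dimensional, not two-dimensional.

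The second parameter $x_{\delta_i,2}$ is the $\dkF=1$ framing coordinate of $C^1_{\Herr,+}$. The cocycle $c_{\delta_i,2}$ is chosen with $[c_{\delta_i,2}]=0$ in $H^1_{\Herr}$. A coboundary cupped with a cocycle is a coboundary, so $a_{12}=a_{21}=a_{22}=0$ hold identically, with no subtraction step needed. Only the rank-one pairing of $[c_{\delta_1,1}]$ with $[c_{\delta_2,1}]$ matters.

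Your rank-two Gram matrix with $a_{22}=1$ rests on a false premise. You mention the framing coordinate as one candidate explanation but never establish it. As written, part (1) is not proved.
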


\begin{proof}
(1)
We can assume $[c_{\delta_i,1}]\neq 0$
are non-trivial Herr complex cohomology classes
and $[c_{\delta_i,2}]=0$
are trivial  Herr complex cohomology classes.
By local Tate duality, $a_{11}$
is pointwise a unit over
$\Spec \bFp[t_1^{\pm1}, t_2^{\pm1}]$.
Thus $a_{11}\in \bFp[t_1^{\pm1}, t_2^{\pm1}]^\times$
and is of the form $t_1^{e_1}t_2^{e_2}\bFp^\times$.
We can renormalize $c_{\delta_i,j}$
to make $a_{11}=1$.

(2) Since $H_{\Herr}^2\cong \bFp[t_1^{\pm1}, t_2^{\pm1}]/(t_1t_2-1)\cong \bFp[t^{\pm1}]$
and $B_{\Herr}^2$ 
\MINOREDITED{is the image of $d^1_{\Herr}$},
it suffices to show
the left hand side of Eq. (\ref{eq:coord3})
(which we denote by $c_{\LHS}$)
is a coboundary
mod $(t_1t_2-1)$.
We have 
\[
[c_{\LHS}] \mod (t_1t_2-1) \in H^2_{\Herr}(\bfE_{F, \bFp[t_1^{\pm1}]})\cong \bFp[t^{\pm1}]
\]
is fiberwise $0$
over $\Spec \bFp[t_1^{\pm1}]$.
So, we are done as $\bFp[t^{\pm1}]$ is reduced.
\end{proof}

\vspace{3mm}

\noindent
{\bf The universal family}
we set
\begin{align*}
c_{\delta_1}^\univ&= x_{\delta_1,1} c_{\delta_1,1}+x_{\delta_1,2}c_{\delta_1,2}\\
c_{\delta_2}^\univ&= x_{\delta_2,1} c_{\delta_2,1}+x_{\delta_2,2}c_{\delta_2,2}\\
c_{\gamma}^\univ&= s_\gamma c_{\std} + x_{\delta_1,1}x_{\delta_2,1} c_{\gamma,[11]}
+ x_{\delta_1,1}x_{\delta_2,2}c_{\gamma,[12]}
+x_{\delta_1,2}x_{\delta_2,1}c_{\gamma,[21]}+x_{\delta_1,2}
x_{\delta_2,2}c_{\gamma,[22]}.
\end{align*}
where $s_\gamma,x_{\delta_1,1},x_{\delta_1,2},x_{\delta_2,1},x_{\delta_2,2}$ are free parameters.
Write $\chi_i^\univ\in X_{\G_{m,i}}(\cO_{X_{\G_{m,i}}})$
for the universal rank $1$ \'etale $(\varphi, \Gamma)$-modules.
Then
\[
\begin{bmatrix}
\chi_1^\univ\chi_2^\univ & \chi_2^\univ c_{\delta_1}^\univ & c_{\gamma}^\univ \\
& \chi_2^\univ & c_{\delta_2}^\univ\\
& & 1
\end{bmatrix}
\]
is a valid \'etale $(\varphi, \Gamma)$-module
if and only if
\[
s_\gamma(t_1t_2-1)
+ x_{\delta_1,1}x_{\delta_2,1}=0.
\]

\begin{lem}
We have \[
\Spec \bFp[t_1^{\pm1},t_2^{\pm1}, 
x_{\delta_1,1},x_{\delta_1,2},x_{\delta_2,1},x_{\delta_2,2},s_\gamma]
/(s_\gamma(t_1t_2-1)
+ x_{\delta_1,1}x_{\delta_2,1})\times \A^2
\xrightarrow{\cong}
X_{B}(C)
\]
\end{lem}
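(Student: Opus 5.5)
We construct the morphism explicitly from the universal family above and then show it is an isomorphism by working in stages, using the fibration $X_B(C)\to X_{U_{\delta_1}\rtimes\G_{m,1}}(C_1)\times X_{U_{\delta_2}\rtimes\G_{m,2}}(C_2)$. The $\A^2$ factor records the two $\G_a$-directions $\G_a^{\dkF}$ attached to the $\Gamma$-stable sections. There is one such direction for each simple root, and we absorb the $\gamma$-one into the modified complex, following Definition~\ref{def:extra-Herr} and Proposition~\ref{prop:Borel-destack}.

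\textbf{Step 1: the map is well defined.} The upper triangular matrix above is a valid \'etale $(\varphi,\Gamma)$-module exactly when Corollary~\ref{cor:nonab-herr-2} holds for $\alpha=\gamma$. That condition reads
\[
c^\univ_{\delta_1}\cup c^\univ_{\delta_2}\in B^2_\Herr(U_{\gamma^\vee}).
\]
Expand the left side bilinearly and substitute Eq.~(\ref{eq:coord3}) with the normalization $a_{11}=1$ and all other $a_{ij}=0$. Then $d^1_\Herr(c_\gamma^\univ)$ minus the cup product is equal to
\[
\bigl(s_\gamma(t_1t_2-1)^{n_0}+x_{\delta_1,1}x_{\delta_2,1}\bigr)b_{\std}.
\]
By the $\PGL_2$ analysis (Lemma~\ref{lem:PGL2-local}, base changed along $\G_{m,1}\times\G_{m,2}\to\G_{m,\gamma}$) we have $n_0=1$. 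So the cocycle condition is precisely $s_\gamma(t_1t_2-1)+x_{\delta_1,1}x_{\delta_2,1}=0$, and this gives the morphism.

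\textbf{Step 2: the map is an isomorphism fiberwise over the base.} Fix a point of the base $\Spec\bFp[t_i^{\pm1},x_{\delta_i,j}]$. Its fiber in $X_B(C)$ is a torsor, in the Herr-complex sense, for the $\gamma$-extensions. That is, it is the fiber of $H^1_{\Herr,+}(U_{\gamma^\vee})\to\Tor_1(H^2,-)$, shifted by the cup-product obstruction. On the source side this fiber is $\{s_\gamma: s_\gamma(t_1t_2-1)=-x_{\delta_1,1}x_{\delta_2,1}\}\times\A^2$, where the $\A^2=\A^{\dF+\dkF}$ factor is the free part from Lemma~\ref{lem:Herr+}. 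We get the rank $2$ here because $F=\Q_p$.

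\textbf{Step 3: the global comparison.} Here we would imitate the proof of Lemma~\ref{lem:PGL2-local}. Over $R=\bFp[t_1^{\pm1},t_2^{\pm1}]$, the modified Herr complex for $U_{\gamma^\vee}$ splits in the derived category as a free rank-$2$ summand plus $[R\xrightarrow{t_1t_2-1}R]$. This uses $H^2=R/(t_1t_2-1)$ and Swan's theorem on the free part. The relative derived vector space over the simple-root data therefore has classical coordinate ring
\[
R[x_{\delta_i,j}][y_1,y_2,s_\gamma]\big/\bigl(s_\gamma(t_1t_2-1)+\mathrm{ob}\bigr),
\]
where $\mathrm{ob}$ is the class of the obstruction in $H^2$, namely $x_{\delta_1,1}x_{\delta_2,1}$. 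It then remains to check that the map of Step 1 realizes this identification. The free cocycles give the $y_i$, and $c_{\std}$ is dual to the resolution and gives $s_\gamma$.

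The main obstacle is Step 3 at the singular locus $t_1t_2=1$, $x_{\delta_1,1}x_{\delta_2,1}=0$. A bijection on points is not enough there; we need a scheme-theoretic isomorphism. The delicate point is that the obstruction now varies with $x$, so the torsor for the $\gamma$-extensions is an affine family rather than a linear one. The plan is to verify flatness of both sides over the base away from the hypersurface. Then we check agreement of the defining equation via the derived (Smith normal form) description, and conclude by reducedness of the hypersurface $s_\gamma(t_1t_2-1)+x_{\delta_1,1}x_{\delta_2,1}=0$.
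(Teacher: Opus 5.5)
Your framework is the right one. It matches the paper, whose own proof is only the assertion that the morphism, defined by the explicit universal family, is clearly an isomorphism. Step~1 is fine. The trouble is at the point you flag yourself, and the way you propose to close it does not work.

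An isomorphism must be checked on $S$-points for arbitrary, possibly non-reduced, $S$, or else you need reducedness or normality of the \emph{target}. Flatness over $t_1t_2\neq 1$ together with reducedness of the hypersurface only controls the source. $X_B(C)$ is built from $\cX_{\bB}$, which is reduced only in the torus direction. So nothing a priori excludes extra nilpotent structure on $X_B(C)$ along $t_1t_2=1$, $x_{\delta_1,1}x_{\delta_2,1}=0$. A bijection on points plus an isomorphism over a dense open cannot detect such structure, and your fiberwise Step~2 fails for the same reason. Moreover, at $t_1t_2=1$ the source is not flat over the base, so flatness cannot be propagated there.

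Likewise, the claim in Step~3 that the ``relative derived vector space'' has the displayed coordinate ring is exactly what needs proof. The splitting in Lemma~\ref{lem:PGL2-local} concerns the linear problem $d^1_{\Herr,+}c=0$, not the affine problem $d^1_{\Herr,+}c=o$ with varying obstruction $o$.

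The missing idea is to linearize by making the obstruction coefficient a free variable. By Eq.~(\ref{eq:coord3}) with $a_{11}=1$, subtract $\sum x_{\delta_1,i}x_{\delta_2,j}c_{\gamma,[ij]}$. Then the fiber of $X_B(C)\to X_{<\gamma}(C)$ over an $S$-point is $\{c\in C^1_{\Herr,+}(S): d^1_{\Herr,+}c=\lambda b_{\std}\}/d^0_{\Herr,+}C^0(S)$, with $\lambda=-x_{\delta_1,1}x_{\delta_2,1}$ up to sign conventions.

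Over $R_0=\bFp[t^{\pm1}]$, let $D^\bullet$ be the cocone of $b_{\std}\colon R_0[-2]\to C^\bullet_{\Herr,+}$. Concretely, $D^1=C^1_{\Herr,+}\oplus R_0$ with $d(c,\lambda)=d^1_{\Herr,+}c-\lambda b_{\std}$. Then:
\begin{itemize}
\item $H^0(D)=0$;
\item $H^2(D)=H^2_{\Herr}/R_0\cdot[b_{\std}]=0$;
\item $0\to H^1_{\Herr,+}\to H^1(D)\to (t-1)R_0\to 0$ is exact.
\end{itemize}
Hence $H^1(D)$ is free of rank $\dF+\dkF+1$, with basis $(c_{\gamma,i},0)$ and $(c_{\std},t-1)$. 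This also gives $n_0=1$ at once, since $(t-1)[b_{\std}]=0$.

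Since $C^\bullet_{\Herr,+}$ is represented by a perfect complex compatibly with base change, so is $D$. Therefore $D\simeq H^1(D)[-1]$, and $H^1(D_S)=H^1(D)\otimes_{R_0}S$ with the same basis, for every $S$ over $\bFp[t_1^{\pm1},t_2^{\pm1},x_{\delta_i,j}]$ via $t\mapsto t_1t_2$. Reading off the $\lambda$-coordinate, the fiber over $\lambda$ is exactly $\{(y_1,y_2,s_\gamma)\in S^3: s_\gamma(t_1t_2-1)=\lambda\}$. This holds functorially in $S$, singular locus included, with no flatness or reducedness argument.

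Two smaller corrections:
\begin{itemize}
\item The $\A^2$ is the free rank-$(\dF+\dkF)$ part of $H^1_{\Herr,+}(U_{\gamma^\vee})$, i.e.\ the omitted $x_{\gamma,1},x_{\gamma,2}$. It is not the $\Gamma$-section directions of the simple roots, which are already the $x_{\delta_i,2}$.
\item ``ob'' must be the cochain-level coefficient of $b_{\std}$, not its class in $H^2$. The class only determines it modulo $t_1t_2-1$; this is harmless, because changing the lift amounts to translating $s_\gamma$.
\end{itemize}
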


\begin{proof}
The morphism is defined through the explicit universal
family,
and is clearly an isomorphism.
\end{proof}

\subsection{The case where $C_1$ is Steinberg and
$C_2$ is non-Steinberg}

We omit the completely similar details.
The answer is 
\[
X_{B}(C)
\cong
\Spec \bFp[s,t_2^{\pm1}, x_{\delta_1,1},x_{\delta_1,2},x_{\delta_2,1},x_{\delta_2,2},s_\gamma]
/(s_\gamma(t_2-1)
+ x_{\delta_1,1}x_{\delta_2,1}+x_{\delta_1,2}x_{\delta_2,2})\times \A^2.
\]

We also leave it as an exercise to the reader
how the explicit presentations of various
irreducible components
$X_{B}(C)$ are glued to each other.

\section{Weil-Deligne stacks
and gauge cochains}

We return to the setup of general reductive groups
and keep the notation of Section \ref{sec:destack}.

\subsection{Weil-Deligne stacks}
Consider the morphism
\[
\prod_{\delta\in \Delta}\delta: \bT
\to \prod_{\delta\in \Delta}\G_{m,\delta}
\]
where
$\G_{m}\cong \G_{m,\delta}$
is the multiplicative group
that acts trivially on $U_{\delta^{\prime\vee}}$
for $\delta\neq \delta'\in \Delta$
and acts by the root $\delta$
on $U_{\MINOREDITED{\delta^\vee}}$.

We have the following change of group morphism
\[
\cX_{\bar U_1\rtimes \bT}
\to
\prod_{\delta\in \Delta}\cX_{U_{\delta^\vee}\rtimes \G_{m,\delta}},
\]
which we also denote by $\prod_{\alpha\in \Delta} \alpha$
by abuse of notation.
By Definition \ref{def:WD-PGL2},
we have a canonical Weil-Deligne stack
$\cW_{U_{\delta^\vee}\rtimes \G_{m,\delta}}$
fitting in the diagram:
\[
\xymatrix{
\cX_{U_{\delta^\vee}\rtimes \G_{m,\delta}},
 \ar[rd]\ar[rr] &&
\cW_{U_{\delta^\vee}\rtimes \G_{m,\delta}},
\ar[ld]
\\
& \cX_{\G_{m,\delta}}
}
\]

\begin{defn}
Define
\[
\cW_{\bB} := \cX_{\bT}
\underset{\underset{\delta\in \Delta}{\prod}\cX_{\G_{m,\delta}}}{\times}
\underset{\delta\in \Delta}{\prod}\cW_{U_{\delta^\vee}\rtimes
\G_{m,\delta}}
\]
\end{defn}

\begin{lem}
$\cW_{\bB}$
is an algebraic stack of finite type over $\bFp$,
and is equidimensional of dimension $0$.
Moreover,
there is canonical morphism
\[
\tr_1\WD:
\tr_1\cX_{\bB}\to \cW_{\bB}
\]
whose fibers of irreducible algebraic stacks of 
constant dimension $\dF\#\Delta$.
In particular,
$\tr_1\WD$
induces a bijection of irreducible components.
\end{lem}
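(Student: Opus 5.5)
The plan is to reduce everything to the $\PGL_2$ case treated in Proposition \ref{prop:PGL2} and Theorem \ref{thm:PGL2-eqn}, using that $\bar U_1=\prod_{\delta\in\Delta}U_{\delta^\vee}$ is abelian. Then $\tr_1\cX_{\bB}=\cX_{\bar U_1\rtimes\bT}$ decomposes root-by-root.

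\textbf{Algebraicity and dimension of $\cW_{\bB}$.} Each $\cW_{U_{\delta^\vee}\rtimes\G_{m,\delta}}$ is algebraic of finite type over $\cX_{\G_{m,\delta}}$. Indeed, it is $[W/\G_m]$ with $W$ relatively representing $\Tor_1(H^2,-)$, which is a finite type scheme by Lemma \ref{lem:pd-1-representable} and Lemma \ref{lem:structure-of-Tor}. Moreover $\cX_{\bT}$ is a disjoint union of copies of $[\bT/\bT]$, and the fiber product of finite type algebraic stacks is again finite type.

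For the dimension count I will pass to the atlas $X_{\bT}=\coprod\bT$. There $\cW_{\bB}\times_{\cX_{\bT}}X_{\bT}$ is the fiber product of $\bT$ with $\prod_\delta W_\delta$ over $\prod_\delta\G_{m,\delta}$. Locally, $W_\delta$ is the union of the base line $\G_{m,\delta}$ with vertical lines $\Spec\bFp[s_i]$ over finitely many points. So each irreducible component of the fiber product is obtained by choosing, for each $\delta$, either the base or one vertical line. Such a component is the preimage in $\bT$ of a closed subset, cut out by the conditions $\delta(t)=t_i$ for the chosen $\delta$, times an affine line $\A^1$ for each such $\delta$. Because the simple roots $\delta$ are independent characters of $\bT$, each condition $\delta(t)=t_i$ drops the dimension by exactly one while its vertical line adds one. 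Hence every component has dimension $\dim\bT$, and after quotienting by $\bT$ the stack $\cW_{\bB}$ is equidimensional of dimension $0$. Surjectivity of $\prod\delta$ onto $\prod\G_{m,\delta}$ is where the connected-center hypothesis enters: it makes the simple coroots/roots behave well, so the preimages are nonempty of the right codimension.

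\textbf{The morphism $\tr_1\WD$.} The change-of-group map $\cX_{\bar U_1\rtimes\bT}\to\prod_\delta\cX_{U_{\delta^\vee}\rtimes\G_{m,\delta}}$, composed with $\prod\WD_\delta$ and together with the projection to $\cX_{\bT}$, gives a map to $\cW_{\bB}$ by the universal property of the fiber product.

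\textbf{Fibers.} Since $\bar U_1$ is abelian with $\bT$ acting diagonally, a $\bar U_1\rtimes\bT$-module over a $\bT$-point is the same as a tuple of extension cocycles $(c_\delta)_\delta$ with $c_\delta\in Z^1_{\Herr}(U_{\delta^\vee})$ of the pushed-forward character. After destackification, $X_{\bar U_1\rtimes\bT}$ is therefore the fiber product over $X_{\bT}$ of the base-changed $X_{U_{\delta^\vee}\rtimes\G_{m,\delta}}$. By Theorem \ref{thm:PGL2-eqn}, each factor is $W_\delta\times\A^{\dF+\dkF}$, so
\[
X_{\bar U_1\rtimes\bT}\cong \bigl(\cW_{\bB}\times_{\cX_{\bT}}X_{\bT}\bigr)\times\A^{(\dF+\dkF)\#\Delta}.
\]
Quotienting by the $\Res_{k_{F_{\cyc}}/\F_p}\bar U_1\rtimes\bT$-action of Proposition \ref{prop:Borel-destack} removes the factor $\A^{\dkF\#\Delta}$. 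Thus the fibers of $\tr_1\WD$ are quotients of affine spaces of dimension $\dF\#\Delta$, hence irreducible of that constant dimension. Irreducible components of the source are then exactly the preimages of irreducible components of $\cW_{\bB}$, which gives the bijection.

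\textbf{Main obstacle.} I expect the hardest step to be this product decomposition at the level of stacks rather than on points: the $\G_a$-torsor trivializations from Theorem \ref{thm:PGL2-eqn} must base-change compatibly along $\bT\to\G_{m,\delta}$. I would handle this with Lemma \ref{lem:PGL2-local} and flat base change of the perfect complex, which commutes with $\Tor_1$ presentations by the two-term resolution.
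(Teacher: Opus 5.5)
Your proposal is correct and follows essentially the paper's route. The paper likewise uses that $\cX_{\bT}$ is equidimensional of dimension $0$, with fibers of $\cX_{\bT}\to\prod_{\delta\in\Delta}\cX_{\G_{m,\delta}}$ irreducible of dimension $0$, and then applies the $\PGL_2$ results (Proposition~\ref{prop:PGL2}) root by root; you simply make the dimension count and the product decomposition over $\cW_{\bB}\times_{\cX_{\bT}}X_{\bT}$ explicit.

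One small correction: surjectivity of $\prod_\delta\delta\colon\bT\to\prod_\delta\G_{m,\delta}$ holds for every reductive group, because the simple roots are linearly independent. The connected-center hypothesis is instead what makes its fibers, which are cosets of $Z(\bG)$, irreducible, and that is the form in which the paper's proof needs it.
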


\begin{proof}
Note that $\cX_{\bT}\cong \cX_{\G_m}^{\times \dim \bT}$ is equidimensional of dimension $0$,
and that fibers of $\cX_{\bT}\to \prod_{\alpha\in \Delta} 
\cX_{\G_{m,\delta}}$
are irreducible of constant dimension $0$.
The lemma now follows from Proposition \ref{prop:PGL2}.
\end{proof}

\subsection{Gauge cochains}

Let $C_{\delta}$ be an irreducible
component
of the \MINOREDITED{destackification} $W_{U_{\delta^\vee}\rtimes \G_{m,\delta}}$
for each $\delta\in \Delta$.
Write
\[
C_{\delta}=
\begin{cases}
\Spec \bFp[t_\delta^{\pm1}]
&\text{non-Steinberg}
\\
\Spec \bFp[s_\delta]
&\text{Steinberg}
\end{cases}.
\]
Write
$\Spec R:=\Spec \bFp[t_\delta^{\pm1}:C_\delta\text{~non-Steinberg}]$.
If $\alpha=\sum_{\delta\in \Delta} m_\delta \delta$
for $m_\delta\in \Z$,
then set
$t_\alpha:=\prod t_{\delta}^{m_\delta}$
and $\G_{m,\alpha}:=\Spec \bFp[t_\alpha^{\pm1}]$.

For each $\alpha\in \Phi^+$,
let
$C_\alpha'$
be the unique non-Steinberg component
of $W_{U_{\alpha^\vee}\rtimes \G_{m, \alpha}}$
that intersects with the image
of $\prod_{\delta\in \Delta}C_\delta$.
We fix an isomorphism
\[
X_{U_{\alpha^\vee}\rtimes \G_{m, \alpha}}(C_\alpha')
\cong C_\alpha'\times \Spec
\bFp[x_{\alpha, 1}, \dots, x_{\alpha, \dF+\dkF}],
\]
where the free parameter
$x_{\alpha, i}$
corresponds to a fixed parametric cocycle
\[
c_{\alpha, i}\in Z^1_{\Herr,+}(U_{\alpha^\vee}(\bfE_{F, R})).
\]

We make the convention that
\[
[c_{\alpha,\dF+1}] =0 \in H^1_{\Herr}(U_{\alpha^\vee})
\]
unless
$C_\alpha$ has trivial inertial type,
in which case we arrange it so that
\begin{equation}
\label{eq:clast}
[c_{\alpha,\dF+1}] =0 \in H^1_{\Herr}(U_{\alpha^\vee})
\end{equation}
 away from the locus $t_\alpha=1$
and is a nontrivial unramified class at $t_\alpha=1$.
We also assume
$[c_{\alpha, \dF+j}]=0\in H^1_{\Herr}(U_{\alpha^\vee})$
for $j>1$.

If $\alpha=\gamma+\beta$ and $C_\alpha$ intersects with
the Steinberg component,
then
the matrix
\[
J=J_{\beta,\gamma}=([c_{\beta,i}]\cup [c_{\gamma,j}])_{i,j\le \dF}\in
\Mat_{\dF\times \dF}(R/(t_\alpha-1))
\]
which is pointwise a unit.
So, $\det(J)\in t_\beta^{\Z}\bFp^\times.$

\begin{remark}[Torus coordinates]
For two roots $\alpha$ and $\beta$, consider the tautological isomorphism
\[
\bar\F_p[t_\alpha^{\pm1}]
\longrightarrow
\bar\F_p[t_\beta^{\pm1}],
\qquad
t_\alpha\longmapsto t_\beta.
\]
We group the roots according to whether this identifies their universal
rank-$1$ modules, and pair two groups when their universal modules are dual
under local Tate duality.

The coordinates are normalized so that $t_\alpha=1$ is the trivial or
cyclotomic point. If two universal modules are dual, then their
specializations at $t_\alpha=t_\beta=1$ are also dual.
\end{remark}

\begin{lem}[Normalization of cup products]
\label{lem:gs-cup}
The cocycles may be chosen so that
\[
    J_{\beta,\gamma}=I_{\dF}\in\Mat_{\dF\times\dF}(\bFp).
\]
\end{lem}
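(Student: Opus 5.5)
We will prove the lemma by a change of basis in the two $[F:\Q_p]$-dimensional spans, on the $\beta$-side only. The cocycles $c_{\gamma,j}$ are left untouched.

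\textbf{Step 1: reduce to constant matrices.} Work over $R/(t_\alpha-1)$. Here $t_\alpha=t_\beta t_\gamma$, so this ring is a Laurent polynomial ring in one fewer variable; it is a UFD whose units are $\bFp^\times t^{\Z}$. We already know $J$ is pointwise a unit, because the cup product $H^1(U_{\beta^\vee})\times H^1(U_{\gamma^\vee})\to H^2(U_{\alpha^\vee})\cong\bFp$ is perfect by local Tate duality. It is perfect on the $\dF$-dimensional complements of the unramified and $H^0$-type classes, thanks to the conventions on $c_{\alpha,\dF+j}$ and Lemma \ref{lem:Herr+}. Hence $J\in\GL_{\dF}(R/(t_\alpha-1))$.

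\textbf{Step 2: absorb $J$ by re-choosing the $c_{\beta,i}$.} Replace $(c_{\beta,1},\dots,c_{\beta,\dF})$ by $(c_{\beta,i})\cdot (J^{-1})^{T}$. For this to be legitimate, the new cocycles must be global sections over $R$ still generating $H^1_{\Herr,+}$ modulo the last $\dkF$ classes. So we need to lift $J^{-1}$ from $R/(t_\alpha-1)$ to an invertible matrix over the ring of $t_\beta$-coordinates.

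The point is that $J$ lives on the torus coordinate of $\beta$ alone. Indeed, $R/(t_\alpha-1)\cong\bFp[t_\beta^{\pm1},\dots]$ via $t_\gamma=t_\beta^{-1}$, and the entries of $J$ only involve the universal characters of $\beta$ and $\gamma$, which on this locus are dual via the Torus coordinates remark. Thus $J\in\GL_{\dF}(\bFp[t_\beta^{\pm1}])$, and replacing $c_{\beta,i}$ by $\sum_k (J^{-1})_{ki}c_{\beta,k}$ is an automorphism of the free module $H^1_{\Herr,+}(U_{\beta^\vee}(\bfE_{F,\bFp[t_\beta^{\pm1}]}))$, again by Swan's theorem as in Theorem \ref{thm:PGL2-eqn}. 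The new cup matrix is $I_{\dF}$ by bilinearity of the cup product, which is linear over the coefficient ring.

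\textbf{Step 3: consistency across all decompositions $\alpha=\beta+\gamma$.} The main obstacle is that a given $\beta$ can pair with several $\gamma$ (for different $\alpha$), and the normalizations must be compatible. The plan is to group the roots into classes with identical universal characters, and pairs of dual classes, as in the Torus coordinates remark.

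Choose the basis once per class on one side, say the class containing $\gamma$. Then normalize the dual class by the dual basis with respect to the perfect pairing. Since the pairing $J_{\beta,\gamma}$ depends only on the pair of classes (the cocycles for identified roots are identified through $t_\alpha\mapsto t_\beta$), this single choice works for all $(\beta,\gamma)$ in the paired classes simultaneously.

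If a class is self-dual (e.g.\ $\beta,\gamma$ both trivial-type), the pairing is alternating or symmetric. In that case we would instead need a symplectic or orthonormal-type basis, and we expect to restrict the normalization to the chosen representative ordering $\beta<\gamma$ and check that $J$ is then only required for ordered pairs. We expect this compatibility bookkeeping, rather than any single computation, to be the delicate point.
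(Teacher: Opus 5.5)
For pairs of distinct dual classes your argument is the paper's: group the roots by tautologically identified universal modules, fix a basis on one class and the dual basis on the dual class, and transport. Your Step 2 absorbs $J^{-1}\in\GL_{\dF}(\bFp[t_\beta^{\pm1}])$ into the $\beta$-side. That is a more explicit justification of the paper's bare assertion that the resulting pairing matrices are constant. There is one index slip. The substitution that works is your row-vector one, $c'_{\beta,i}=\sum_k (J^{-1})_{ik}c_{\beta,k}$; the formula $\sum_k (J^{-1})_{ki}c_{\beta,k}$ yields $(J^{-1})^{T}J$ instead of $I_{\dF}$.

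The genuine gap is the self-dual class, and the remedies you suggest cannot work. Suppose all roots of a self-dual class carry the same transported family $c_1(t),\dots,c_{\dF}(t)$. On the locus $t_\beta=t$, $t_\gamma=t^{-1}$, graded commutativity of the cup product on $H^1$ gives $J_{\beta,\gamma}(t)^{T}=-J_{\beta,\gamma}(t^{-1})$. So $J_{\beta,\gamma}\equiv I_{\dF}$ would force $I_{\dF}=-I_{\dF}$.

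This obstruction holds for every common basis, symplectic or not; an ``orthonormal'' basis does not exist for an alternating form anyway. Restricting to ordered pairs $\beta<\gamma$ does not help either, since the obstruction is already present for a single ordered pair.

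The missing idea, which is how the paper finishes, is to abandon uniform transport inside the self-dual class:
\begin{enumerate}
\item Normalize the alternating pairing to a constant antisymmetric antidiagonal matrix.
\item $2$-color the roots of the class so that whenever $\beta+\gamma=\alpha$ with $H^2_{\Herr}(U_{\alpha^\vee})\neq 0$, the two summands receive opposite colors.
\item Transport the chosen cocycles to the white roots and the dual cocycles to the black roots.
\end{enumerate}

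You would then still have to show the coloring is well defined: no root may be forced to be both colors, i.e.\ there is no odd cycle of such pairings inside the class. The paper attributes this to the alternating nature of the duality pairing. You must also order each pair white-first, since by the same antisymmetry a black--white pair gives $-I_{\dF}$.
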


\begin{proof}
If the two groups are distinct, choose bases independently
so that the perfect pairing has the prescribed matrix, and transport these
choices through the tautological isomorphisms. The resulting pairing matrices
have entries in $\bFp$.

In the self-dual case, the pairing is alternating and nondegenerate.
We can thus normalize it so that
$(c_{\beta,i}\cup c_{\beta, j})$
is a aniti-diagonal anti-symmetric constant matrix.

We apply coloring to the set of roots whose universal modules
are tautologically identified with that of $\beta$.
Color $\beta$ by white;
color $\gamma$ by black (or white) if
$\alpha=\beta'+\gamma$ satisfies
$H^2_{\Herr}(U_{\alpha^\vee})\neq 0$
and that $\beta'$ is white (or black).

No root can be both white or black, as it violates the alternating property of local Tate duality.
We transport to the white roots the chosen cocycles
and transport to the black roots the dual cocycles.
\end{proof}

We denote by $c_{\alpha,\std}$ and $b_{\alpha,\std}$
the cochains in
Eq. (\ref{eq:cstd})
after \MINOREDITED{substitution} $t\mapsto t_\alpha$.

\begin{defn}
\label{defn:gauge}
We recursively define finitely many
$1$-cochains
$\fC_{\Xi_{\alpha,k}}=\{c_{\alpha, \xi}\}
\subset \MINOREDITED{C^1_{\Herr,+}}(U_{\alpha^\vee}(\bfE_{F,R}))$,
$\xi\in \Xi_{\alpha,k}$
for $k=1,\dots,h_{\bG}$.

We set \[
\Xi_{\alpha,1} =
\begin{cases}
\{1,2,\dots, \dF+\dkF,\std\} & 
\text{$C_\alpha$ intersects with the Steinberg component}
\\
\{1,2,\dots, \dF+\dkF\} & 
\text{otherwise}
\end{cases}
\]
and $\fC_{\Xi_{\alpha,1}}=\{c_{\alpha,\xi}|\xi\in \Xi_{\alpha,1}\}$.

Suppose $\fC_{\Xi_{\alpha,k}}$ is already defined.
For each sequence of roots $\beta_1+\dots+\beta_s=\alpha$
and elements $c_{\beta_i, \xi_i}\in \MINOREDITED{\fC_{\Xi_{\beta_i,k}}}$,
we choose
an element $c_{\alpha, [\xi_1,\xi_2,\dots,\xi_s]}
\in C^1_{\Herr,+}(U_{\alpha^\vee}(\bfE_{F,R}))$
such that
\[
[c_{\beta_1,\xi_1},\dots, c_{\beta_s,\xi_s}]
=
\begin{cases}
a_{\alpha, [\xi_1,\dots,\xi_s]} b_{\alpha,\std} - d_{\Herr}^1(c_{\alpha,[\xi_1,\xi_2,\dots,\xi_s]})
& \text{$C_\alpha$ intersects with the Steinberg component}
\\
-d_{\Herr}^1(c_{\alpha,[\xi_1,\xi_2,\dots,\xi_s]})
& \text{otherwise}
\end{cases}
\]
where $a_{\alpha,*}\in R$ is the unique scalar
that makes the equation hold,
and $[*,\dots,*]$
is the higher cup product defined in
Definition \ref{defn:higher-cup}.
We set
\[
\Xi_{\alpha, k+1} = 
\Xi_{\alpha,k}\cup\{[\xi_1,\dots,\xi_k],\xi_i\in 
\fC_{\Xi_{\beta_i,k}}\}
\]
and
\[
\fC_{\Xi_{\alpha,k+1}}
=\{c_{\alpha, \xi}: \xi\in \Xi_{\alpha, k+1}\}.
\]

We call elements of
$\fC_{\Xi_{*,*}}$
the {\it gauge cochains}.
\end{defn}

\subsection{Universal families}
We recursively define universal families
over
\[
X_{\le \alpha}(C):=X_{\le \alpha}\times_{X_B}X_B(C),
\]
where $C=(C_\delta:\delta\in \Delta)$
is the fixed irreducible component
of $W_B$.

Suppose
\[
c_\beta^\univ \in C^1_{\Herr,+}(U_{\beta^\vee}(
\bfE_{F,\cO_{X_{<\alpha}(C)}}))
\]
are the universal coordinates such that
$\exp_{\le h_{\bG}}(\sum_{\beta<\alpha}c_\beta)$
corresponds to the universal family
for $X_{<\alpha}(C)$ using the notation of Theorem \ref{thm:coordinate}.
Let $\mu_{\beta, \xi}\in \cO_{X_{<\alpha}(C)}$
denote the coefficient
$c_{\beta,\xi}$
in $c_\beta^\univ$.

Recall that the higher cup product
$[*,\dots,*]$
is defined as in Definition \ref{defn:higher-cup}.
We set
\begin{align*}
c_{\alpha}^{\univ}
=&(\sum_{\beta_1+\dots+\beta_m=\alpha}\frac{1}{m!}
\{c_{\beta_1}^\univ,\dots, c_{\beta_m}^\univ\})
+
\sum_{i=1}^{\dF+\dkF}
x_{\alpha,i}c_{\alpha,i}
\\
&
+\begin{cases}
s_\alpha
c_{\alpha,\std}
& \text{$C_\alpha$ intersects with the Steinberg component}
\\
0 & \text{otherwise}
\end{cases}
\end{align*}
where
$\{s_\alpha, x_{\alpha,1},\dots,x_{\alpha,\dF+\dkF}\}$
are the new free parameters and
$\{*,\dots,*\}$ is the multilinear map that expands
\[
\{c_{\beta_1,\xi_1},\dots,c_{\beta_m,\xi_m}\}
:=
c_{\alpha, [\xi_1,\dots,\xi_m]}.
\]

\begin{lem}
\rm
\label{lem:eq}
If $C_\alpha$ intersects with the Steinberg component,
then
$\exp_{\le h_{\bG}}(\sum_{\beta<\alpha}c_\beta)$
corresponds to the universal family
for $X_{<\alpha}(C)$ (in the notation of Theorem \ref{thm:coordinate}) if and only if
\[
r_\alpha:=s_\alpha(t_\alpha-1)
+\sum_{\beta_1+\dots+\beta_m=\alpha, \xi_1,\dots,\xi_m}
\frac{1}{m!}
a_{\alpha,[\xi_1,\dots,\xi_m]}
\prod_{j=1}^m\mu_{\beta_j,\xi_j}=0.
\]
\end{lem}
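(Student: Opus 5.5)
The plan is to reduce Lemma \ref{lem:eq} to the extension criterion of Theorem \ref{thm:coordinate} and Corollary \ref{cor:nonab-herr-2}, and then expand everything multilinearly using the defining relations of the gauge cochains. The statement should be read as follows: given the universal family over $X_{<\alpha}(C)$, the extended cochain $\exp_{\le h_{\bG}}(\sum_{\beta\le\alpha}c^\univ_\beta)$ defines an \'etale $(\varphi,\Gamma)$-module over $X_{<\alpha}(C)[s_\alpha,x_{\alpha,i}]$ if and only if $r_\alpha=0$.

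By Theorem \ref{thm:coordinate}, the extension is valid if and only if
\[
-d^1_\Herr(c^\univ_\alpha)=D_\alpha(c^\univ_{<\alpha}),
\]
where $D_\alpha$ is the sum over ordered tuples $\beta_1+\dots+\beta_m=\alpha$ of $\frac{1}{m!}[c^\univ_{\beta_1},\dots,c^\univ_{\beta_m}]$, taken in the higher cup product notation of Definition \ref{defn:higher-cup}.

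The first step is to expand $D_\alpha$. The higher cup product is multilinear over the coefficient ring, and each $c^\univ_{\beta}$ is an $\cO_{X_{<\alpha}(C)}$-linear combination $\sum_\xi \mu_{\beta,\xi}c_{\beta,\xi}$ of gauge cochains. We use induction on the ordering, with the inductive shape built into the recursive formula defining $c^\univ_\beta$. Expanding gives
\[
D_\alpha=\sum \frac{1}{m!}\prod_j\mu_{\beta_j,\xi_j}\,[c_{\beta_1,\xi_1},\dots,c_{\beta_m,\xi_m}].
\]
Definition \ref{defn:gauge} then replaces each bracket by $a_{\alpha,[\xi]}b_{\alpha,\std}-d^1_\Herr(c_{\alpha,[\xi]})$. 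One point needs care here. The tuples $(\xi_j)$ must range over gauge indices at a level $k$ large enough to contain all indices that actually occur in the $c^\univ_\beta$. This holds because the recursion stabilizes at $k=h_{\bG}$, since tuple lengths are bounded by heights.

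The second step is to compute $d^1_\Herr(c^\univ_\alpha)$. The cross term $\sum\frac{1}{m!}\{\cdots\}$ contributes exactly $-\sum\frac{1}{m!}\prod\mu\,d^1_\Herr(c_{\alpha,[\xi]})$, and this cancels the coboundary part of $D_\alpha$ (after fixing the sign convention). The terms $x_{\alpha,i}c_{\alpha,i}$ are cocycles, since they lie in $Z^1_{\Herr,+}$, so they contribute nothing. By Eq. (\ref{eq:cstd}) with $n_0=1$ (Lemma \ref{lem:PGL2-local}), we have $d^1_\Herr(c_{\alpha,\std})=(t_\alpha-1)b_{\alpha,\std}$. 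The cocycle condition therefore becomes
\[
r_\alpha\, b_{\alpha,\std}=0\quad\text{in } C^2_\Herr(U_{\alpha^\vee}(\bfE_{F,A})).
\]
This is exactly where the $+$-modification matters: the constant-term component of $C^1_{\Herr,+}$ is killed by $d^1_{\Herr,+}$, so it does not disturb the computation.

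The last step, which I expect to be the main obstacle, is to show that $r_\alpha b_{\alpha,\std}=0$ forces $r_\alpha=0$ for every base $A$. This amounts to showing that $b_{\alpha,\std}$ is universally torsion-free as a cochain. Since $[b_{\alpha,\std}]_{t_\alpha=1}$ generates $H^2\cong\bFp$, $b_{\alpha,\std}$ is a nonzero cochain with constant coefficients pulled back from $\bFp$. Its image in $C^2_\Herr(\bfE_{F,A})=\bfE_{F,A}$ is then $b\otimes 1$ with $b\neq0$ in $\bfE_F\otimes k$. Choosing a nonzero coefficient of some $T_F^n$, the equation $r_\alpha\,b=0$ in $\bfE_{F,A}$ gives $r_\alpha\cdot(\text{unit})=0$ in $A$ coefficientwise, hence $r_\alpha=0$.

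If a nontrivial twist by $g^\SS$ obstructs this argument, I would instead argue through the identification $H^2_\Herr\cong R/(t_\alpha-1)$ together with the Tor description of Lemma \ref{lem:pd-1-representable}.
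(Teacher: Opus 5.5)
Your proposal is correct and is essentially the paper's argument. The paper's proof is the one-line instruction to combine Theorem~\ref{thm:coordinate} with the properties of the standard cochains (Lemma~\ref{lem:bstd}). Your expansion of $D_\alpha$ through the gauge-cochain relations, the cancellation of the $d^1_\Herr(c_{\alpha,[\xi]})$ terms, and the reduction to $r_\alpha b_{\alpha,\std}=0$ are exactly what that citation unpacks to. Your explicit use of $n_0=1$ (Lemma~\ref{lem:PGL2-local}) to obtain the linear factor $(t_\alpha-1)$, and your coefficientwise argument that the nonzero constant cochain $b_{\alpha,\std}$ remains torsion-free after any base change, fill in steps the paper leaves implicit.
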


\begin{proof}
Combine Theorem \ref{thm:coordinate}
and Lemma \ref{lem:bstd}.
\end{proof}

\begin{cor}
There is an isomorphism
\[
X_{\le \alpha}(C)
\cong 
\begin{cases}
X_{< \alpha}(C)[s_\alpha]/(r_\alpha) \times \A^{\dF+\dkF}
&
\text{$C_\alpha$ intersects with the Steinberg component}
\\
X_{< \alpha}(C)\times \A^{\dF+\dkF}
&
\text{otherwise}.
\end{cases}
\]
\end{cor}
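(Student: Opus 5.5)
The plan is to induct along the fixed total ordering of $\Phi^+$, showing that the universal cochain $c_\alpha^\univ$ built from the gauge cochains parametrizes all extensions of the universal family over $X_{<\alpha}(C)$ to $U_{\le\alpha}\rtimes\bT$. Concretely, I would construct a morphism from the right-hand side to $X_{\le\alpha}(C)$ via the explicit family $\exp_{\le h_{\bG}}(\sum_{\beta\le\alpha}c^\univ_\beta)$ and then prove it is an isomorphism. This is modelled on the $\PGL_2$ case (Theorem \ref{thm:PGL2-eqn} and Lemma \ref{lem:PGL2-local}) applied to the central root group $U_{\alpha^\vee}$ of $U_{\le\alpha}$. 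By Theorem \ref{thm:coordinate}, a choice of $c_{\alpha}=(c_{f,\alpha},c_{g,\alpha})$ extends the family if and only if $-d^1_\Herr(c_\alpha)=D_\alpha(c_{f,<\alpha},c_{g,<\alpha})$. Moreover, $D_\alpha$ is exactly the sum of higher cup products $\frac{1}{m!}[c^\univ_{\beta_1},\dots,c^\univ_{\beta_m}]$ over decompositions $\beta_1+\dots+\beta_m=\alpha$. Expanding multilinearly in the coefficients $\mu_{\beta,\xi}$ and using the defining relation of the gauge cochains in Definition \ref{defn:gauge}, I would rewrite $D_\alpha$ as $-d^1_\Herr$ of the bracket term in $c^\univ_\alpha$ plus $\big(\sum \frac{1}{m!}a_{\alpha,[\xi]}\prod\mu_{\beta_j,\xi_j}\big)b_{\alpha,\std}$, the second summand appearing only in the Steinberg case.

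The cocycle condition then reduces to a linear problem in the difference $c_\alpha-(\text{bracket term})$. We must find $c'\in C^1_{\Herr,+}(U_{\alpha^\vee})$ with $-d^1_\Herr(c')$ equal to a prescribed multiple $\rho\,b_{\alpha,\std}$, where $\rho=\sum\frac{1}{m!}a\prod\mu$ (in the non-Steinberg case $\rho=0$). In the non-Steinberg case, the solutions form $Z^1_{\Herr,+}$ of the rank-one module over $X_{<\alpha}(C)$. By Lemma \ref{lem:Herr+} and the choice of the $c_{\alpha,i}$, these are freely parametrized by $x_{\alpha,1},\dots,x_{\alpha,\dF+\dkF}$, since $H^2$ vanishes on $C'_\alpha$ and the $\Gamma$-stable section from the destackification kills the $H^0$/coboundary ambiguity. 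This gives the product with $\A^{\dF+\dkF}$. In the Steinberg case I would write $c'=\sum x_{\alpha,i}c_{\alpha,i}+s_\alpha c_{\alpha,\std}$ and use $d^1_\Herr(c_{\alpha,\std})=(t_\alpha-1)b_{\alpha,\std}$, where $n_0=1$ by Lemma \ref{lem:PGL2-local}. The resulting condition is precisely $r_\alpha=0$, as in Lemma \ref{lem:eq}, which yields the closed subscheme $X_{<\alpha}(C)[s_\alpha]/(r_\alpha)\times\A^{\dF+\dkF}$.

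The remaining task is to show that this morphism is an isomorphism, and not merely a bijection on points. I would argue via base change: pull back the perfect complex $C^\bullet_{\Herr,+}(U_{\alpha^\vee})$ from the torus coordinate ring $\bFp[t_\alpha^{\pm1}]$ to $X_{<\alpha}(C)$. On the relevant component, Lemma \ref{lem:PGL2-local} splits it as a free rank $\dF+\dkF$ summand plus $[R\xrightarrow{t_\alpha-1}R]$. Then the functor of solutions $c'$ of $-d^1(c')=\rho\, b_{\alpha,\std}$ on $S$-points is represented by $\{(x,s): s(t_\alpha-1)=\rho\}$ (suitably signed), since the derived-split description commutes with arbitrary base change. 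The torsor structure from Proposition \ref{prop:Borel-destack} identifies $X_{\le\alpha}(C)\to X_{<\alpha}(C)$ with this solution functor, again because the $\Gamma$-stable section fixes constant terms.

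The main obstacle is ensuring this identification is compatible with arbitrary (non-flat) base change, in particular along the possibly non-reduced $X_{<\alpha}(C)$. I would handle it by working with the split perfect complex over the PID $\bFp[t_\alpha^{\pm1}]$ before pulling back, rather than with cohomology sheaves over $X_{<\alpha}(C)$.
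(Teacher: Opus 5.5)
Your proposal is correct and follows the paper's route. The paper derives the corollary in one line from Lemma~\ref{lem:eq}, and that lemma is exactly your combination of Theorem~\ref{thm:coordinate}, the defining relations of the gauge cochains, and $d^1_{\Herr}(c_{\alpha,\std})=(t_\alpha-1)b_{\alpha,\std}$ (i.e.\ $n_0=1$). Your extra step goes further than the paper: you realize the split modified Herr complex of Lemma~\ref{lem:PGL2-local} by the explicit cochains $c_{\alpha,i},c_{\alpha,\std},b_{\alpha,\std}$ and pull it back to $X_{<\alpha}(C)$ via the perfect-complex base change used in Lemma~\ref{lem:PID-Tor}, which spells out the scheme-theoretic identification of the fibres (rather than a pointwise one) that the paper leaves implicit.
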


\begin{proof}
Follows immediately from Lemma \ref{lem:eq}.
\end{proof}

\begin{cor}
If all irreducible components of $X_{<\alpha}(C)$
have dimension $\ge d$,
then
all irreducible components of
$X_{\le\alpha}(C)$
\MINOREDITED{have dimension}
$\ge d+\dF+\dkF$.
\label{cor:least-dim}
\end{cor}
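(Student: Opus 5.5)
The plan is to read the claim off the previous corollary, which presents $X_{\le\alpha}(C)$ explicitly, and to split into the two cases appearing there.

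\emph{Non-Steinberg case.} If $C_\alpha$ does not meet the Steinberg component, then $X_{\le\alpha}(C)\cong X_{<\alpha}(C)\times\A^{\dF+\dkF}$. Every irreducible component of a product with an affine space over $\bFp$ has the form $Z\times\A^{\dF+\dkF}$ with $Z$ an irreducible component of $X_{<\alpha}(C)$. Its dimension is therefore $\dim Z+\dF+\dkF\ge d+\dF+\dkF$.

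\emph{Steinberg case.} If $C_\alpha$ meets the Steinberg component, then $X_{\le\alpha}(C)\cong Y\times\A^{\dF+\dkF}$ with $Y:=X_{<\alpha}(C)[s_\alpha]/(r_\alpha)$. By the product argument just given, it suffices to show that every irreducible component of $Y$ has dimension $\ge d$. Now $Y$ is the zero locus of the single function $r_\alpha$ on $X_{<\alpha}(C)\times\A^1_{s_\alpha}$. Every irreducible component of that space has dimension $\ge d+1$, since its components are $Z\times\A^1$. I would apply Krull's Hauptidealsatz, in its form for finite type schemes over a field: every irreducible component of the vanishing locus of one function inside an irreducible variety $W$ has codimension $\le1$ in $W$, provided it is nonempty. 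Each irreducible component $V$ of $Y$ lies in some $W=Z\times\A^1$. Then $V$ is an irreducible component of $V(r_\alpha|_W)$, because $Y\cap W=V(r_\alpha|_W)$ and $V$ is maximal among irreducible closed subsets of $Y$. Hence $\dim V\ge\dim W-1\ge d$. If $r_\alpha|_W\equiv0$, then $V=W$ and the bound holds trivially.

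\emph{Main obstacle.} The only delicate step is the dimension-theoretic bookkeeping. We need $X_{<\alpha}(C)$ to be of finite type over $\bFp$, so that dimension is additive for products with $\A^n$ and Krull's theorem gives codimension $\le1$ in the stated form. This finiteness follows inductively from the preceding corollary and the finite-type description of $W_B$. Note also that one must work component by component, passing to each irreducible $W$ containing $V$, rather than globally: $X_{<\alpha}(C)$ need not be equidimensional, and $r_\alpha$ could vanish identically on some components.
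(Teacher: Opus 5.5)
Your proposal is correct and follows the paper's approach. The paper's proof is a one-line appeal to Krull's Hauptidealsatz applied to the presentation $X_{\le\alpha}(C)\cong X_{<\alpha}(C)[s_\alpha]/(r_\alpha)\times\A^{\dF+\dkF}$ (resp.\ $X_{<\alpha}(C)\times\A^{\dF+\dkF}$) from the preceding corollary; your component-by-component bookkeeping, passing to an irreducible $W=Z\times\A^1$ containing each component of $V(r_\alpha)$, spells out exactly what that citation leaves implicit.
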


\begin{proof}
It follows from 
Krull's Hauptidealsatz
(cf. \cite[Tag 0EPZ]{Stacks}).
\end{proof}

\begin{cor}
All irreducible components of
$\cX_{\bB}$
have dimension $\ge \dF\#\Phi^+$.

If there is a stratification of $\cX_{\bB}$
by locally closed substacks
such that each stratum has dimension $\le \dF\#\Phi^+$,
then the irreducible components of $\cX_{\bB}$
are precisely the closure of the top-dimensional
strata.
\label{cor:XB-dim}
\end{cor}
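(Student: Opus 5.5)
The plan is to work on the destackification $X_{\bB}$ and then descend. By Proposition~\ref{prop:Borel-destack}, $X_{\bB}\to\cX_{\bB}$ is smooth, surjective and affine, and all its fibers are isomorphic to $\Res_{k_{F_{\cyc}}/\F_p}U\rtimes\bT$. These fibers are irreducible of dimension $\dkF\#\Phi^+ + \dim\bT$. So irreducible components of $\cX_{\bB}$ correspond bijectively to those of $X_{\bB}$, and dimensions shift by exactly this constant. It therefore suffices to show that every irreducible component of $X_{\bB}$ has dimension at least $\dF\#\Phi^+ + \dkF\#\Phi^+ + \dim\bT$.

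Since $\cW_{\bB}$ has finitely many irreducible components and the $X_{\bB}(C)$ cover $X_{\bB}$, every irreducible component of $X_{\bB}$ is an irreducible component of some $X_{\bB}(C)$. Here $C=(C_\delta)$ ranges over tuples of components of the $W_{U_{\delta^\vee}\rtimes\G_{m,\delta}}$, and we take $X_{\bB}(C)$ closed (or locally closed with the same generic points). This point needs a brief check, using that the $C$ are closed.

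Fix $C$ and induct along the total order on $\Phi^+$. For the base case, $X_{<\alpha_{\min}}(C)$ is the part of $X_{\bT}$ lying over $\prod_\delta C_\delta$. Via $\bT\to\prod\G_{m,\delta}$, with fibers irreducible of dimension $\dim\bT-\#\Delta$ and $\dim C_\delta=1$, this piece is equidimensional of dimension $\dim\bT$. Each step $\alpha$ then adds at least $\dF+\dkF$ to the minimal dimension of components, by Corollary~\ref{cor:least-dim}. If $C_\alpha$ is non-Steinberg this is just a product with $\A^{\dF+\dkF}$. If $C_\alpha$ meets the Steinberg component, we adjoin one variable $s_\alpha$ and cut by the single equation $r_\alpha$ of Lemma~\ref{lem:eq}, and Krull's Hauptidealsatz applies. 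After $\#\Phi^+$ steps, every component of $X_{\bB}(C)$ has dimension at least $\dim\bT+(\dF+\dkF)\#\Phi^+$, which proves the first claim.

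For the second claim, take a stratification by locally closed substacks $S_i$ with $\dim S_i\le\dF\#\Phi^+$. Each irreducible component $Z$ of $\cX_{\bB}$ meets some stratum $S_i$ in a dense open subset, using finite type and that there are finitely many strata. Then $Z\cap S_i$ is an irreducible closed subset of $S_i$ of dimension $\dim Z\ge\dF\#\Phi^+\ge\dim S_i$. Hence equality holds throughout, and $Z$ is the closure of a top-dimensional irreducible component of $S_i$. Conversely, the closure of a top-dimensional component of a stratum is irreducible of dimension $\dF\#\Phi^+$. It lies in some component $Z$ of dimension $\ge\dF\#\Phi^+$, and $\dim Z\le\max_i\dim S_i=\dF\#\Phi^+$, so the two coincide.

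The main obstacle is the dimension bookkeeping for algebraic stacks. Dimension must be additive along the smooth atlas, and the components must correspond bijectively under it. This requires irreducible fibers and a surjective, open map. I would verify it using the fiber description in Proposition~\ref{prop:Borel-destack} together with the stack dimension conventions in \cite{Stacks}.
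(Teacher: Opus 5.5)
Your proposal is correct and is the paper's argument. The paper's proof is only the instruction to induct using Corollary~\ref{cor:least-dim}, and you carry out that induction on the pieces $X_{\bB}(C)$, supplying the base case, the transfer of dimensions and irreducible components along the smooth atlas of Proposition~\ref{prop:Borel-destack}, and the routine generic-point argument for the stratification claim, all of which the paper leaves implicit. One harmless bookkeeping point: your base $X_{\bT}\times_{\prod_\delta\G_{m,\delta}}\prod_\delta C_\delta$ already contains $s_\delta$ for Steinberg $C_\delta$, so the step at a simple root is just a product with $\A^{\dF+\dkF}$ (Theorem~\ref{thm:PGL2-eqn}) rather than a second adjunction of $s_\delta$, and the count is unchanged.
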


\begin{proof}
Argue inductively using Corollary
\ref{cor:least-dim}.
\end{proof}

\section{Fine strata and cone models}

\subsection{Minimal normal subgroups}

\begin{lem}
Let $\Gamma\subset \bB(\bFp)$ be a subgroup.
Let $K_1, K_2\subset U$ be $\bT$-stable normal subgroups.
If there exists an element $b_i\in \bB(\bFp)$
($i=1,2$)
such that $b_i\Gamma b_i^{-1}\subset (K_i\rtimes \bT)(\bFp)$.
Then there exists $b\in \bB(\bFp)$
such that
$b \Gamma b^{-1}\subset (K_1\cap K_2)\rtimes \bT(\bFp)$.
\end{lem}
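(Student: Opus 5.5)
The plan is to reduce to a statement about a single element of $U$ and then argue one height at a time.

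\textbf{Reduction.} Replace $\Gamma$ by $\Gamma':=b_1\Gamma b_1^{-1}\subset (K_1\rtimes\bT)(\bFp)$ and put $b':=b_2b_1^{-1}$, so that $b'\Gamma'b'^{-1}\subset (K_2\rtimes\bT)(\bFp)$. Write $b'=ut$ with $u\in U(\bFp)$ and $t\in\bT(\bFp)$. Since $\bT$ normalizes $K_2\rtimes\bT$, we may drop $t$ and assume $u\Gamma'u^{-1}\subset K_2\rtimes\bT$.

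\textbf{Goal.} Produce $v\in U(\bFp)$ such that
\[
v\Gamma'v^{-1}\subset K_1\rtimes\bT \quad\text{and}\quad v\Gamma'v^{-1}\subset K_2\rtimes\bT .
\]
Any such $v$ puts $v\Gamma'v^{-1}$ in $(K_1\cap K_2)\rtimes\bT$, and then $b:=vb_1$ works. Elements of $K_1$ preserve $K_1\rtimes\bT$, since $K_1$ is normal in $U$ and $\bT$-stable. So the natural move is to strip off a $K_1$-part from $u$. We may write $u=wk$ with $k\in K_1$ and $w\in U_{\Phi^+\setminus\Phi_{K_1}}$, the latter being an ordered product of root groups taken as a set. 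Replacing $\Gamma'$ by $k\Gamma'k^{-1}$ reduces us to the case $u=w$ with all root coordinates of $w$ supported outside $\Phi_{K_1}$.

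\textbf{Height induction.} Use the height filtration $U_h$ of Section~\ref{sec:destack}, on which the height-$h$ root groups are central in $U/U_{h+1}$. The inductive hypothesis $(\mathrm{H}_h)$ is: after a further conjugation by an element of $K_1$ and a modification of $w$, every $\gamma=k_\gamma t_\gamma\in\Gamma'$ has $k_\gamma\in (K_1\cap K_2)\,U_{h+1}$, while $w$ still conjugates $\Gamma'$ into $K_2\rtimes\bT$.

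For $h=1$ the computation is additive. Modulo $U_2$ we have
\[
w\,k_\gamma t_\gamma\, w^{-1}=\bigl(k_\gamma+(w-\operatorname{Ad}_{t_\gamma}w)\bigr)t_\gamma ,
\]
read root-by-root. Take a simple root $\alpha\in\Phi_{K_1}\setminus\Phi_{K_2}$. Then $w_\alpha=0$, and the $K_2$-condition forces $(k_\gamma)_\alpha=0$. Roots outside $\Phi_{K_1}$ contribute nothing to $k_\gamma$. Hence $k_\gamma\in (K_1\cap K_2)U_2$.

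For the step from $h$ to $h+1$, work in $U/U_{h+2}$, where height-$(h+1)$ root groups are central. The discrepancy at height $h+1$ is then again additive in the root coordinates. Lower-height commutator contributions, of the BCH or Hadamard type in Theorem~\ref{thm:coordinate}, only involve roots already controlled by $(\mathrm{H}_h)$. Any height-$(h+1)$ component of $k_\gamma$ lying in $\Phi_{K_1}\setminus\Phi_{K_2}$ must be cancelled by the $w$-contribution. That contribution can be absorbed by a conjugation by a central height-$(h+1)$ element of $K_1$, which does not disturb the lower heights. Iterating up to $h=h_{\bG}$ finishes the argument.

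\textbf{Main obstacle.} The step I expect to be hardest is checking that the lower-height commutator terms indeed land in roots of $K_1\cap K_2$, or in roots outside $K_1$ that can be moved into $w$. For this I would use the normality criterion for $\bT$-stable subgroups: if $\alpha\in\Phi_{K_i}$, $\beta\in\Phi^+$ and $\alpha+\beta\in\Phi^+$, then $\alpha+\beta\in\Phi_{K_i}$. Applied to both $K_1$ and $K_2$, this should show that brackets involving a root of $K_1\cap K_2$ stay in $K_1\cap K_2$.
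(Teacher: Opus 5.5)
Your reduction to a single conjugator $w$ is fine, and so is the base case $h=1$: the normalization of $w$ off $\Phi_{K_1}$ does kill the simple coordinates of $k_\gamma$ on $\Phi_{K_1}\setminus\Phi_{K_2}$. The gap is in the inductive step, at the claim that the height-$(h+1)$ discrepancy ``can be absorbed by a conjugation by a central height-$(h+1)$ element of $K_1$''.

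Conjugating by such a $y=\exp(\sum_\alpha y_\alpha e_\alpha)$ shifts the $\alpha$-coordinate of $k_\gamma$ by $(1-\alpha(t_\gamma))y_\alpha$. So you need the $\alpha$-coordinate of $k_\gamma$, for each $\alpha\in\Phi_{K_1}\setminus\Phi_{K_2}$ of height $h+1$, to equal $(\alpha(t_\gamma)-1)y_\alpha$ with a single $y_\alpha$ \emph{independent of} $\gamma$. The $K_2$-condition only says that this coordinate is minus the $\alpha$-coordinate of $w\,{}^{t_\gamma}w^{-1}$ modulo $K_2U_{h+2}$, where ${}^{t}w:=twt^{-1}$.

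Because $w$ has lower-height coordinates, the BCH expansion of $w\,{}^{t_\gamma}w^{-1}$ contains brackets such as $[X_{\beta_1},\operatorname{Ad}(t_\gamma)X_{\beta_2}]$. Here $\beta_1,\beta_2\notin\Phi_{K_1}\cup\Phi_{K_2}$ and $\beta_1+\beta_2\in\Phi_{K_1}\setminus\Phi_{K_2}$, which does occur since $\Phi_{K_1}$ is only closed upwards. These terms are at roots neither of $K_1\cap K_2$ nor outside $K_1$. They are not controlled by $(\mathrm{H}_h)$, which constrains $k_\gamma$ and says nothing about $w$. The normality criterion you propose only handles brackets with the $(K_1\cap K_2)$-part of $k_\gamma$, and those already vanish modulo $K_2$. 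As written, the step assumes its conclusion.

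The missing idea is that the $K_2$-condition forces $w$ to be $t_\gamma$-invariant below height $h+1$. Set $Q:=U/K_2U_{h+2}$; the image of $U_{h+1}$ is central in $Q$. Write $k_\gamma=k'_\gamma z_\gamma$ with $k'_\gamma\in K_1\cap K_2$ and $z_\gamma\in K_1\cap U_{h+1}$. Then $wk_\gamma\,{}^{t_\gamma}w^{-1}\in K_2$ reads $\bar w\,{}^{t_\gamma}\bar w^{-1}=\bar z_\gamma^{-1}$ in $Q$. This element is central, so the image of $w$ in $U/K_2U_{h+1}$ is fixed by every $t_\gamma$. Writing $\log\bar w=X+Y$ with $X$ of height $\le h$ and $Y$ of height $h+1$, this says $\operatorname{Ad}(t_\gamma)X=X$. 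Since $Y$ is central, $\bar w=\exp(X)\exp(Y)$, and therefore
\[
\bar z_\gamma=\exp\bigl(\operatorname{Ad}(t_\gamma)Y-Y\bigr).
\]
Take $y:=\exp(\sum_\alpha Y_\alpha e_\alpha)$, summed over $\alpha\in\Phi_{K_1}\setminus\Phi_{K_2}$ of height $h+1$, and replace $w$ by $wy^{-1}$; this completes the step. It destroys your support normalization of $w$, but that normalization is only used at $h=1$.

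The paper's own induction meets the same point. It combines the coordinates of $b_1,b_2$ root by root in the central layer, taking their images to lie in that layer, and this invariance argument is what justifies that.

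For finite $\Gamma$, which is the case needed for $\bar\rho$, there is a much shorter route. Since each $K_i$ is normal in $\bB$, the hypothesis already gives $\Gamma\cap U\subset K_1\cap K_2$. A Schur--Zassenhaus complement to the $p$-group $\Gamma\cap U$ is a $p'$-group of semisimple elements of the connected solvable group $\bB$, hence $U$-conjugate into $\bT$. That conjugation puts $\Gamma$ into $(K_1\cap K_2)\rtimes\bT$.
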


\begin{proof}
We argue by induction on the descending central series of $U$.

Recall the filtration $U_k \subset U$, where $\Lie U_1 = \Lie U$ and $\Lie U_k = [\Lie U, \Lie U_{k-1}]$ for $k > 1$. Let
\[
\bar{U}_k := \frac{U_k}{(K_1\cap K_2\cap U_k)U_{k+1}}.
\]

We prove by induction on $k$ that there exists $b_k\in \bB(\bF_p)$ such that $b_k\Gamma b_k^{-1}\subset ((K_1\cap K_2)U_k)\rtimes \bT(\bF_p)$. For $k=1$, this is trivial since $U_1 = U$.

Assume this holds for some $k$. Replacing $\Gamma$ by $b_k\Gamma b_k^{-1}$, we may assume $\Gamma\subset ((K_1\cap K_2)U_k)\rtimes \bT(\bF_p)$.

Pass to the quotient $\bB_k := \bB/(K_1\cap K_2)U_{k+1}$. The image of $\Gamma$ in $\bB_k$ takes the form $\gamma=(c(\gamma),t(\gamma)) \in \bar{U}_k\rtimes \bT$, where $c:\Gamma\to \bar{U}_k$.

Write the image of $b_i$ in $\bB_k$ as $b_i = u_i t_i \in \bB_k(\bF_p)$, with $u_i \in \bar{U}_k(\bF_p)$ and $t_i \in \bT(\bF_p)$. Since $K_i\rtimes \bT$ is $\bT$-stable, replacing $b_i$ by $u_i$ preserves the hypothesis, so we may assume $b_i = u_i \in \bar{U}_k(\bF_p)$.

Because $\bar{U}_k$ is central in $U/(K_1\cap K_2)U_{k+1}$, conjugation by an element $u\in \bar{U}_k(\bF_p)$ acts by:
\[
(u,1)(c,t)(u^{-1},1) = (c+(1-t)u,t).
\]
Define the index sets of missing roots for each subgroup at height $k$:
\[
\Psi_i := \{\alpha \in \Phi^+ \mid \operatorname{ht}(\alpha)=k, \, \alpha \notin \Phi_{K_i}\}.
\]
The quotient decomposes as $\bar{U}_k = \bigoplus_{\alpha \in \Psi_1 \cup \Psi_2} U_{\alpha^\vee}$. Expanding $c = \sum c_\alpha$ and $u_i = \sum u_{i,\alpha}$, the conjugation action changes coordinates independently via $c_\alpha \mapsto c_\alpha + (1-\alpha(t))u_\alpha$.

Because conjugating $\gamma$ by $u_i$ maps it into $K_i \rtimes \bT$, the resulting $\alpha$-coordinates must vanish for all roots not in $K_i$. Therefore:
\[
c_\alpha(\gamma) + (1-\alpha(t(\gamma)))u_{i,\alpha} = 0 \quad \text{for all } \alpha \in \Psi_i.
\]

We define our conjugating element $u \in \bar{U}_k(\bF_p)$ explicitly:
\[
u := \sum_{\alpha \in \Psi_2} u_{2,\alpha} + \sum_{\alpha \in \Psi_1 \setminus \Psi_2} u_{1,\alpha}.
\]

Conjugating $\gamma$ by this $u$ zeroes out the $\alpha$-coordinates across all of $\Psi_1 \cup \Psi_2$:
\begin{itemize}
    \item For $\alpha \in \Psi_2$, the new coordinate is $c_\alpha(\gamma) + (1-\alpha(t(\gamma)))u_{2,\alpha} = 0$.
    \item For $\alpha \in \Psi_1 \setminus \Psi_2$, the new coordinate is $c_\alpha(\gamma) + (1-\alpha(t(\gamma)))u_{1,\alpha} = 0$.
\end{itemize}

Since all coordinates in the support of $\bar{U}_k$ vanish, $u \Gamma u^{-1}$ lies in the trivial subgroup of $\bar{U}_k$, meaning it is contained in $((K_1\cap K_2)U_{k+1}) \rtimes \bT(\bF_p)$. This completes the induction step.
\end{proof}

In particular,
for each Galois representation,
$\bar\rho:\Gal_F\to \bB(\bFp)$,
it makes sense to define
{\it the minimal $\bT$-stable normal subgroup
$K\subset U$}
such that a $\bB$-conjugate of $\bar\rho$
factors through
$K\rtimes \bT$.

\begin{lem}
\rm
Suppose $K$ is the minimal normal subgroup
of $U$ such that
$\bar\rho:\Gal_F\to \bB(\bFp)$
factors through
$K\rtimes \bT$,
up to $\bB$-conjugate.

Write the composite
$\Gal_F \to (K\rtimes \bT)(\bFp)
\to (\bar K_1 \rtimes \bT)(\bFp)$
as $c \bar\rho^{\SS}$
where $\bar\rho^{\SS}:\Gal_F\to \bT(\bFp)$
is the semisimplification,
and that
$c=\sum_{\Ht_K(\alpha)=1} c_\alpha
\in Z^1(\Gal_F, \bar K_1(\bFp))$
where each $[c_\alpha]\in H^1(\Gal_F, U_{\alpha^\vee}(\bFp))$.
We must have
$[c_\alpha]\neq 0$
for each $\alpha$.
\label{lem:K-nonzero}
\end{lem}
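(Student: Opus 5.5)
We argue by contradiction: assume $[c_{\alpha_0}]=0$ for some $K$-simple root $\alpha_0$, and produce a strictly smaller $\bT$-stable normal subgroup through which a $\bB$-conjugate of $\bar\rho$ factors. This contradicts the minimality of $K$ guaranteed by the preceding lemma.

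The candidate subgroup is $K':=U_{\Phi_K\setminus\{\alpha_0\}}$. First we check that $K'$ is a $\bT$-stable normal subgroup of $U$. It is $\bT$-stable because it is a product of root groups. For normality, take $\alpha\in\Phi_{K'}$ and $\beta\in\Phi^+$ with $\alpha+\beta\in\Phi^+$. By normality of $K$ we have $\alpha+\beta\in\Phi_K$. Moreover $\alpha+\beta\neq\alpha_0$. Indeed, $\alpha_0$ has $\Ht_K=1$, so it does not lie in $\Lie K_2=[\Lie K,\Lie K]$. Since $\alpha\in\Phi_K$ and $\beta\in\Phi^+$, the root $\alpha+\beta$ lies in $[\Lie U,\Lie K]$. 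We need this bracket to avoid $\alpha_0$. Here the definition of $\Ht_K$ via the descending central series of $K$ alone is not quite enough. The cleanest route is to use that $\alpha_0$ is a minimal element of $\Phi_K$ for the partial order ``differ by a positive root'': if $\alpha_0=\alpha+\beta$ with $\alpha\in\Phi_K$, then $\alpha$ is strictly smaller. If the paper's notion of $K$-simple does not already guarantee this minimality, we would instead choose $\alpha_0$ to be minimal among the $K$-simple roots with $[c_{\alpha_0}]=0$ in this partial order, and argue that the coboundary adjustment below still works. \textbf{This normality check is the step I expect to be the main obstacle.}

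Next, conjugate $\bar\rho$ so that its $\alpha_0$-component vanishes. Since $[c_{\alpha_0}]=0$, write $c_{\alpha_0}(\sigma)=(1-\alpha_0(\bar\rho^{\SS}(\sigma)))u$ for some $u\in U_{\alpha_0^\vee}(\bFp)$. Conjugate $\bar\rho$ by $u^{-1}$, exactly as in the proof of the previous lemma. Because $U_{\alpha_0^\vee}$ maps isomorphically into the abelian quotient $\bar K_1$, this kills the $\alpha_0$-coordinate of the image in $(\bar K_1\rtimes\bT)(\bFp)$. The conjugation also changes higher coordinates, but all of those stay inside $K_2\subset K'$, and conjugation by $u\in K$ preserves $K\rtimes\bT$.

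It then remains to see that the conjugated representation lands in $(K'\rtimes\bT)(\bFp)$. Its image lies in $K\rtimes\bT$. Its projection to $K/K'\cong U_{\alpha_0^\vee}$ is the $\alpha_0$-coordinate, which now vanishes as a cocycle. Since $K'$ is normal in $K$ with quotient $U_{\alpha_0^\vee}$, the image lies in $K'\rtimes\bT$. This contradicts the minimality of $K$, and so $[c_\alpha]\neq0$ for every $K$-simple root $\alpha$.
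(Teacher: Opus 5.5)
Your argument is the paper's argument. The paper's proof is one sentence: a coboundary $c_\alpha$ can be conjugated away, so $\bar\rho$ factors through $\prod_{\beta\neq\alpha,\,\beta\in\Phi_K}U_{\beta^\vee}\rtimes\bT$, contradicting minimality. Your conjugation step and your final factorization step are correct; normality of $K':=U_{\Phi_K\setminus\{\alpha_0\}}$ in $K$ is automatic, since $K'\supseteq K_2$.

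The step you flagged, however, is a genuine gap, and the paper's proof passes over it silently. Minimality of $K$ is among subgroups normal in $U$. $K$-simplicity is defined through $[\Lie K,\Lie K]$, so it only says $\alpha_0$ is not a sum of two elements of $\Phi_K$. It does not exclude $\alpha_0=\beta+\gamma$ with $\beta\in\Phi_K$ and $\gamma\in\Phi^+$, so $\alpha_0$ need not be minimal in $\Phi_K$. Your primary route is a complete proof exactly for those $\alpha$ that are minimal in $\Phi_K$ for the root order, equivalently $\alpha\notin[\Lie U,\Lie K]$; such roots are automatically $K$-simple. Your fallback cannot be made to work, because for the other $K$-simple roots the statement is false.

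Here is a counterexample. Take $\bG=\GL_4$ with positive roots $100,010,001,110,011,111$, and let $K=[U,U]=U_{\{110,011,111\}}$. This $K$ is $\bT$-stable, normal, and abelian ($110+011$ is not a root), so $\Delta_K=\Phi_K$ and $\bar K_1=K$. Note that $111=110+001$.

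Choose $\bar\rho^{\SS}$ with $\delta\circ\bar\rho^{\SS}\neq 1$ for every $\delta\in\Delta$. Choose cocycles $c_{110},c_{011}$ with nonzero classes, which is possible since $H^1$ of any character is nonzero. Set $\bar\rho:=(c_{110}+c_{011})\bar\rho^{\SS}$, so that $c_{111}=0$.

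A proper $\bT$-stable normal subgroup of $U$ inside $K$ misses $110$ or $011$. Hence it lies in $U_{\{110,111\}}$ or in $U_{\{011,111\}}$. Suppose $u\bar\rho u^{-1}$ lay in $U_{\{110,111\}}\rtimes\bT$ for some $u\in U$; the torus part of a conjugator in $\bB$ only rescales the cocycles, so this is no loss. In $\bB/U_{\{110,111\}}$ the simple-root components of $u$ would satisfy $(1-\delta(\bar\rho^{\SS}(\sigma)))u_\delta=0$ for all $\sigma$, forcing $u\in K$. Since $K$ maps onto the central subgroup $U_{011^\vee}$ of that quotient, $c_{011}$ would then be a coboundary, a contradiction. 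The same argument applies symmetrically to $U_{\{011,111\}}$.

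So $K$ is the minimal normal subgroup for $\bar\rho$, yet $[c_{111}]=0$. The group $U_{\Phi_K\setminus\{111\}}$ through which $\bar\rho$ factors is precisely the non-normal $K'$ of your argument. The lemma, and the proof that you and the paper give, are correct only after restricting to $\alpha$ minimal in $\Phi_K$.
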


\begin{proof}
If $c_\alpha$ is a coboundary,
then we can conjugate $\bar\rho$
such that $\bar\rho$
factors through $\prod_{\beta\neq \alpha, \beta\in \Phi_K}U_{\beta^\vee}\rtimes \bT$,
which contradicts the minimality of $K$.
\end{proof}

\subsection{The fine stratification}

By Corollary 
\ref{cor:XB-dim},
we don't need to compute the full $\cX_{\bB}$
--- the computation of a suitable stratification suffices.

\begin{defn}
A fine configuration is a tuple $(K,\Psi_0,\Psi_2)$
where
$K$ is a $\bT$-stable normal subgroup of $U$ and
$\Psi_0, \Psi_2\subset \Phi^+$
are subsets
satisfying
\begin{itemize}
\item $\alpha, \beta\in \MINOREDITED{\Psi_2}, \alpha-\beta\in \Phi^+
\Rightarrow \alpha-\beta\in\MINOREDITED{\Psi_0}$, and
\item $\alpha\in \MINOREDITED{\Psi_2}, \beta\in \MINOREDITED{\Psi_0}, \alpha+\beta\in \Phi^+
\Rightarrow \alpha+\beta\in \MINOREDITED{\Psi_2}$.
\end{itemize}

The minimal degree of $(\Psi_0,\Psi_2)$
is
\[
\deg(\Psi_0,\Psi_2)
=
\min\{\sum n_i\mid n_i\in\Z_{\ge 0},
\alpha=\sum n_i \alpha_i,~
\alpha\in\Psi_0,~ \alpha_i\in \Psi_2,
\sum n_i>0
\}.
\]
\end{defn}

\begin{lem}
Let $\bar\rho^\SS:\Gal_F\to \bT(\bFp)$
be a Galois representation such that
\begin{align*}
H^2(\Gal_F, U_{\alpha^\vee}(\bFp))\neq 0
&\Leftrightarrow
\alpha\in \Psi_2
\text{~and~}
\\
H^0(\Gal_F, U_{\alpha^\vee}(\bFp))\neq 0
&\Leftrightarrow
\alpha\in \Psi_0,
\end{align*}
then
$\dF\ge \deg(\Psi_0,\Psi_2)$.
\label{lem:min-deg}
\end{lem}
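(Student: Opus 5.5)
The plan is to translate the cohomological conditions into identities between characters, and then read off a constraint on $F$ from the order of the mod $p$ cyclotomic character $\omega$.

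\textbf{Step 1: characters.} For $\alpha\in\Phi^+$, let $\chi_\alpha:=\alpha\circ\bar\rho^\SS:\Gal_F\to\bFp^\times$ be the character through which $\Gal_F$ acts on $U_{\alpha^\vee}(\bFp)$. The assignment $\alpha\mapsto\chi_\alpha$ is multiplicative, so $\chi_{\sum n_i\alpha_i}=\prod\chi_{\alpha_i}^{n_i}$.

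\textbf{Step 2: the two hypotheses.} We have $H^0(\Gal_F,U_{\alpha^\vee}(\bFp))\neq0$ if and only if $\chi_\alpha=1$. By local Tate duality, $H^2(\Gal_F,U_{\alpha^\vee}(\bFp))\neq0$ if and only if $\chi_\alpha=\omega$. Hence $\alpha\in\Psi_0\Leftrightarrow\chi_\alpha=1$ and $\alpha\in\Psi_2\Leftrightarrow\chi_\alpha=\omega$.

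\textbf{Step 3: the cyclotomic constraint.} Take a decomposition realizing the minimum in $\deg(\Psi_0,\Psi_2)$: $\alpha=\sum n_i\alpha_i$ with $\alpha\in\Psi_0$, $\alpha_i\in\Psi_2$, and $N:=\sum n_i>0$. If no such decomposition exists, the minimum is vacuous; I would read the convention as making the inequality trivial in that case and check this against the later uses of the lemma. Multiplicativity gives
\[
1=\chi_\alpha=\prod\chi_{\alpha_i}^{n_i}=\omega^{N}.
\]
So the order of $\omega$ on $\Gal_F$ divides $N$. That order is $[F(\mu_p):F]=(p-1)/[F\cap\Q_p(\mu_p):\Q_p]$. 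Therefore
\[
[F\cap\Q_p(\mu_p):\Q_p]\ \ge\ (p-1)/N .
\]
Since $F\cap\Q_p(\mu_p)\subset F$, we get $\dF\ge (p-1)/N$, or equivalently, via the inertia computation $\omega|_{I_F}=\omega_{\mathrm{fund}}^{e_F}$, $e_F\ge (p-1)/N$.

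\textbf{Step 4: bounding $N$ and the main obstacle.} Comparing heights, $N\le\sum n_i\Ht(\alpha_i)=\Ht(\alpha)\le h_{\bG}-1<p-1$. So $N$ is a genuine positive integer smaller than $p-1$, and Step 3 gives the key inequality $\dF\cdot\deg(\Psi_0,\Psi_2)\ge p-1\ge h_{\bG}$. The main obstacle is upgrading this to $\dF\ge\deg(\Psi_0,\Psi_2)$ as stated. From Step 3 that upgrade is immediate once $N^2\le p-1$, and I do not yet see how to get it from $p>h_{\bG}$ alone.

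To close the gap I would first try to exploit the minimality of $\deg$. The closure properties of a fine configuration show that $\Psi_2+\Psi_0\subset\Psi_2$ and $\Psi_2-\Psi_2\subset\Psi_0$. One can then attempt to shorten a minimal decomposition, so that a minimal one has all partial sums outside $\Psi_0\cup\Psi_2$. Combined with the structure of root strings, this should bound $N$ by the number of distinct values of $\chi$ along the chain, which are distinct powers of $\omega$, giving $N\le\mathrm{ord}(\omega)$. I would then compare $\mathrm{ord}(\omega)$ with $e_F\le\dF$ through the same index computation.

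If that fails, the fallback is to record the weaker inequality $\dF\ge (p-1)/\deg(\Psi_0,\Psi_2)$. I would then check that this is all that the later stratum-dimension estimates actually use.
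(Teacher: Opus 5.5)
Your Steps 1--4 are correct. They extract everything the cyclotomic argument can give. For any admissible decomposition $\alpha=\sum n_i\alpha_i$ with $N=\sum n_i$, the order $m:=[F(\mu_p):F]$ of $\omega|_{\Gal_F}$ divides $N$. Hence
$\dF\cdot\deg(\Psi_0,\Psi_2)\ge \dF\cdot m=[F(\mu_p):\Q_p]\ge p-1$.

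The step you could not supply, the upgrade to $\dF\ge\deg(\Psi_0,\Psi_2)$, cannot be supplied: the statement is false as written. The paper's one-line proof has exactly this gap. It calls $[F(\mu_p):F]\le (p-1)/\deg(\Psi_0,\Psi_2)$ clear, but your Step 3 shows the constraint runs the other way, $[F(\mu_p):F]\le\deg(\Psi_0,\Psi_2)$.

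In fact equality holds whenever an admissible decomposition exists. List the $\alpha_i$ with multiplicity as $\beta_1,\dots,\beta_N$ and reorder them so that every partial sum $\gamma_j=\beta_1+\dots+\beta_j$ is a root. This is standard for positive roots: $(\alpha,\alpha)=\sum_k(\alpha,\beta_k)>0$ gives some $(\alpha,\beta_k)>0$, so $\alpha-\beta_k=\sum_{l\ne k}\beta_l$ is a root, and one inducts. Then $\chi_{\gamma_j}=\omega^j$, so $\gamma_m\in\Psi_0$ has a decomposition of length $m$. Consequently your planned repair would end with $N=\operatorname{ord}(\omega|_{\Gal_F})$ for a minimal decomposition. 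The final comparison $\operatorname{ord}(\omega|_{\Gal_F})\le e_F\le\dF$ is precisely what fails; only the product $\dF\cdot[F(\mu_p):F]$ is controlled.

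Here is a concrete counterexample. Let $\bG=\PGL_6$ (so $h_{\bG}=6$) and $p=7$. Let $F=\Q_7(\sqrt{-7})$, the quadratic subfield of $\Q_7(\mu_7)$, so $\dF=2$ and $F(\mu_7)/F$ is totally ramified of degree $3$. The simple roots form a basis of $X^*(\bT)$, so you may choose $\bar\rho^\SS$ with:
$\chi_{\delta_1}=\chi_{\delta_2}=\chi_{\delta_3}=\omega$, and $\chi_{\delta_4},\chi_{\delta_5}$ unramified with Frobenius eigenvalues $x,y$ satisfying $x,y,xy\ne1$.
Then $\Psi_2=\{\delta_1,\delta_2,\delta_3\}$ and $\Psi_0=\{\delta_1+\delta_2+\delta_3\}$. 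The fine-configuration axioms hold automatically for sets cut out by characters. So $\deg(\Psi_0,\Psi_2)=3>2=\dF$.

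The same happens for adjoint $\mathrm{E}_6$ with $p=13$, $F$ the cubic subfield of $\Q_{13}(\mu_{13})$, and all simple-root characters equal to $\omega$: there $\deg=4>3=\dF$.

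So the correct statement is your fallback, $\dF\ge (p-1)/\deg(\Psi_0,\Psi_2)$, with the empty case read as vacuous as you propose. With $\min\emptyset=+\infty$ the stated lemma would already fail for generic $\bar\rho^\SS$. Since $\deg\le h_{\bG}-1<p-1$, the corrected bound still shows that no admissible decomposition exists when $\dF=1$. That is the degree at which the paper's $\mathrm{F}_4$ enumeration is carried out, but the inequality as stated should not be relied on at larger degrees.
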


\begin{proof}
It is clear that
$[F(\mu_p):F]\le \frac{p-1}{\deg(\Psi_0,\Psi_2)}$.
So, $\dF \ge [F(\mu_p):\Q_p]/[F(\mu_p):F]\ge \deg(\Psi_0,\Psi_2)$.
\end{proof}

\begin{defn}
Denote by
\[
X_{K\rtimes \bT}\langle\Psi\rangle
\subset
X_{K\rtimes \bT}
\]
for the locally closed substack
whose $\bFp$-points
are described in Lemma \ref{lem:min-deg}.
\end{defn}

\begin{thm}
\rm
\label{thm:eq}
(1) 
We have
\[
X_{K\rtimes \bT}\langle\Psi\rangle
=\Spec \bFp
\left[
\begin{matrix}
x_{\alpha,1},\dots,x_{\alpha,\dF+\dkF},
s_\alpha&:&\alpha\in \Phi_K
\\
t_\delta^{\pm 1}&:&
\delta\in \Delta
\end{matrix}
\right]/
I
\]
where $I$ is generated by
\[
\left\{
\begin{matrix}
s_\alpha &:& \alpha \not\in \Psi_2 \\
r_\alpha &:& \alpha \in \Psi_2, \Ht_K(\alpha)>1 \\
t_\alpha-1 &:& \alpha\in \Psi_0\cup\Psi_2
\end{matrix}
\right\}.
\]
Here, $r_\alpha$ is defined in Lemma \ref{lem:eq}.

(2)
Write
\begin{align*}
\vec x_\beta&:=(x_{\beta,1},\dots, x_{\beta,[F:\Q_p]+1}, s_\beta)
\\
\vec x_\beta\cdot \vec x_\gamma&:=
\sum_{i=1}^{[F:\Q_p]}
x_{\beta,i}x_{\gamma,i}
+x_{\beta,[F:\Q_p]+1}s_\gamma
+x_{\gamma,[F:\Q_p]+1}s_\beta.
\end{align*}
We can write $r_\alpha$
as
\[
r_\alpha=s_\alpha(t_\alpha-1)+
\sum_{\beta\in \Phi_K,
\delta\in \Delta_K, \delta+\beta=\alpha}
\frac{1}{2} 
N_{\beta, \delta}\vec x_\beta\cdot \vec x_\delta
+Q_\alpha
\]
where $Q_\alpha$
has monomials \MINOREDITED{divisible by}
$x_{\gamma_1,i_1}x_{\gamma_2,i_2}$
for $1<\Ht(\gamma_i)<\Ht_K(\alpha)-1$,
and $Q_\alpha$
does not depend on
$x_{\beta,*}, s_\beta$
for $\Ht_K(\beta)=\Ht_K(\alpha)-1$.
\end{thm}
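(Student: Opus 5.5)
We will prove (1) by running the recursive construction of the universal families over $X_{\le\alpha}(C)$ along a total ordering of $\Phi_K$ compatible with $\Ht_K$, with $K$ in place of $U$. The only change is that Steinberg phenomena are imposed pointwise by the stratum. On $X_{K\rtimes\bT}\langle\Psi\rangle$ the torus coordinates satisfy $t_\alpha=1$ exactly for $\alpha\in\Psi_0\cup\Psi_2$. These are the loci where $H^0$ or $H^2$ of $U_{\alpha^\vee}$ jumps, and they are the trivial and cyclotomic points in the normalization of the torus-coordinates remark. So the first step is to restrict the $t_\delta$ to the closed subscheme cut out by $t_\alpha-1$ for $\alpha\in\Psi_0\cup\Psi_2$; the open conditions $t_\alpha\ne1$ for the other roots are automatic on the stratum as a locally closed piece.

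Next we add the roots one at a time and apply the Corollary following Lemma \ref{lem:eq}. If $\alpha\notin\Psi_2$, then $H^2_{\Herr}(U_{\alpha^\vee})$ vanishes on the stratum, $b_{\alpha,\std}$ is a coboundary, and the gauge cochain $c_{\alpha,\std}$ does not appear. This is encoded by $s_\alpha=0$ and gives a free factor $\A^{\dF+\dkF}$.

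If $\alpha\in\Psi_2$ and $\Ht_K(\alpha)>1$, Theorem \ref{thm:coordinate} together with Lemma \ref{lem:eq} shows that extending is equivalent to $r_\alpha=0$, with $s_\alpha$ free otherwise.

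If $\Ht_K(\alpha)=1$, then $D_\alpha=0$ because no sum of smaller roots of $\Phi_K$ equals a $K$-simple root. Hence there is no relation, and $s_\alpha$ stays free, parametrizing the Steinberg component exactly as in Lemma \ref{lem:PGL2-local}.

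Identifying the iterated fibre product with the stated quotient ring uses the ``moreover'' part of Proposition \ref{prop:Borel-destack}, namely independence of the ordering. It also requires checking that the universal cocycles are genuinely universal on the stratum. I expect this step to be the main obstacle, because it needs the base change of Lemma \ref{lem:Herr+} and the PID argument of Lemma \ref{lem:PID-Tor} over the restricted torus. I would handle it by reducing, via Swan's theorem, to the rank-one computation of Lemma \ref{lem:PGL2-local} for each root.

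For (2), expand $r_\alpha$ from Lemma \ref{lem:eq} as the sum $\sum\frac1{m!}a_{\alpha,[\xi_1,\ldots,\xi_m]}\prod\mu_{\beta_j,\xi_j}$ and sort the terms by $m$ and by the $K$-heights of the $\beta_j$.

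The quadratic terms with one factor $K$-simple and the other of $K$-height $\Ht_K(\alpha)-1$ are exactly the cup products appearing in Corollary \ref{cor:nonab-herr-2}. Their coefficients $a_{\alpha,[i,j]}$ are the entries of the cup-product matrix $J_{\beta,\delta}$, scaled by $-1/N$ as in Definition \ref{defn:higher-cup}, and Lemma \ref{lem:gs-cup} normalizes this matrix to the identity. The unramified index $\dF+1$ pairs against $\std$ by local Tate duality, as in the $\PGL_2$ construction of $c_{\un}$ and $c_{\std}$. This gives the pairing $\vec x_\beta\cdot\vec x_\delta$ with the factor $\frac12N_{\beta,\delta}$. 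The remaining indices $\dF+j$ with $j>1$ are coboundaries by convention (\ref{eq:clast}), so they contribute nothing.

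Everything else is placed in $Q_\alpha$. Any other term involves either a higher cup product with $m\ge3$, or a product of two coordinates of intermediate heights. In both cases each monomial is divisible by a product of coordinates of roots of intermediate height. The coordinates of height $\Ht_K(\alpha)-1$ occur only paired with a $K$-simple root, which is the quadratic part already isolated. So $Q_\alpha$ is independent of $x_{\beta,*}$ and $s_\beta$ for those $\beta$, by the root-grading and height count in the proof of Theorem \ref{thm:coordinate}.
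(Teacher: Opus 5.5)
Your route is the paper's. The paper proves (1) by citing Lemma~\ref{lem:eq}, which is the root-by-root corollary you run along a $\Ht_K$-compatible order, and proves (2) by appeal to the normalizations of Definition~\ref{defn:gauge}; your proposal expands exactly this. The step that fails is your justification of the first clause on $Q_\alpha$, that every remaining monomial is divisible by coordinates of intermediate height. In $r_\alpha=\sum\frac{1}{m!}a_{\alpha,[\xi_1,\dots,\xi_m]}\prod_j\mu_{\beta_j,\xi_j}$, the coefficient $\mu_{\beta,\xi}$ of a compound gauge cochain $c_{\beta,[\xi',\xi'',\dots]}$ is itself a product of lower coordinates. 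So $r_\alpha$ contains two further kinds of terms:
(i) terms with $m\ge 3$ and all $\beta_j\in\Delta_K$;
(ii) terms with $m=2$ of the form $\mu_{\delta,\xi}\,\mu_{\beta,[\xi',\xi'',\dots]}$, with $\delta\in\Delta_K$ and $\Ht_K(\beta)=\Ht_K(\alpha)-1$.
Your case split has no place for (ii). Both kinds are monomials in $K$-simple coordinates alone. For $\Ht_K(\alpha)=3$ the range $1<\Ht(\gamma_i)<2$ is empty, so the clause would force these Massey-type contributions to vanish. Nothing in Definition~\ref{defn:gauge} or Lemma~\ref{lem:gs-cup} arranges that, and the paper's one-line proof does not address it either. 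So that clause cannot be obtained by this bookkeeping.

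What your count does prove is the second clause, which is the one actually used later in the cone-model dimension bound. The needed ``height count'' is not additivity of $\Ht_K$, which fails; it is $[\Lie K_i,\Lie K_j]\subset\Lie K_{i+j}$. A nonzero nested bracket landing in $U_{\alpha^\vee}$ therefore has $\sum_j\Ht_K(\beta_j)\le\Ht_K(\alpha)$. Hence a free coordinate of a root of $K$-height $\Ht_K(\alpha)-1$ occurs only with $m=2$, against a $K$-simple root. You then need the whole pairing on $\Xi_{\beta,1}\times\Xi_{\delta,1}$ to be $\vec x_\beta\cdot\vec x_\delta$. Lemma~\ref{lem:gs-cup} fixes only the $\dF\times\dF$ block, so you must state as an extra normalization that the mixed pairings between indices $i\le\dF$ and $\dF+1$ or $\std$ vanish. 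Two smaller corrections here: $a_{\alpha,[i,j]}$ is $-N_{\beta,\delta}$ times the cup product, not $-1/N_{\beta,\delta}$; and both orderings of the pair occur in the ordered sum. Only unit-ness of the coefficient matters downstream.

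In (1), the conditions $t_\alpha\neq 1$ for $\alpha\notin\Psi_0\cup\Psi_2$ are not automatic on the displayed $\Spec$; they are what make $b_{\alpha,\std}$ a coboundary. Take such an $alpha$ with $C_\alpha$ meeting the Steinberg component, and use the gauge cochains of Definition~\ref{defn:gauge}, which are chosen over the whole component. The relation is then $s_\alpha(t_\alpha-1)+\dots=0$, which solves for $s_\alpha$ rather than setting it to $0$. Getting $s_\alpha=0$ means re-choosing $c_{\alpha,[\xi]}$ over $R[(t_\alpha-1)^{-1}]$ via $a\,b_{\alpha,\std}=\frac{a}{t_\alpha-1}d^1_{\Herr}(c_{\alpha,\std})$, and this changes the later coefficients $a_{\alpha',*}$. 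The identification therefore holds only on that open locus, a looseness already present in the statement.
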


\begin{proof}
(1) It follows from Lemma \ref{lem:eq}.

(2) It follows from the normalizations in Definition
\ref{defn:gauge}.
\end{proof}

\begin{defn}
Denote by
\[
\dot X_{K\rtimes \bT}\langle\Psi\rangle
\subset
X_{K\rtimes \bT}\langle\Psi\rangle
\]
the quasi-affine open subscheme
where
\[
\begin{cases}
(x_{\delta, 1}, \dots, x_{\delta,[F:\Q_p]})\neq 0
& \delta\in \Delta_K-\Psi_0-\Psi_2\\
(x_{\delta, 1}, \dots, x_{\delta,[F:\Q_p]+1})\neq 0
& \delta\in \Delta_K\cap\Psi_0-\Psi_2\\
(x_{\delta, 1}, \dots, x_{\delta,[F:\Q_p]}, s_\delta)\neq 0
& \delta\in \Delta_K\cap\Psi_2-\Psi_0\\
(x_{\delta, 1}, \dots, x_{\delta,[F:\Q_p]+1}, s_\delta)\neq 0
& \delta\in \Delta_K\cap\Psi_2\cap\Psi_0.
\end{cases}
\]

Denote by
$\cX_{\bB}[K]\langle\Psi\rangle\subset X_{\bB}$
for the locally closed image (c.f. Remark \ref{rem:sq-K}) of
$\dot X_{K\rtimes \bT}\langle\Psi\rangle$.

We call $\{\cX_{\bB}[K]\langle\Psi\rangle\subset X_{\bB}:
(K,\Psi_0,\Psi_2)\}$
the fine stratification of $\cX_{\bB}$.
\end{defn}

\begin{remark}
\label{rem:sq-K}
For applications,
it makes no difference to treat
$\cX_{\bB}[K]\langle\Psi\rangle\subset \cX_{\bB}$
as the scheme-theoretic image of
$X_{K\rtimes \bT}\langle\Psi\rangle$.

To be more precise,
if we write $\overline{\cX_{\bB}[K]\langle\Psi\rangle\subset X_{\bB}}$
for the scheme-theoretic image
of $\dot X_{K\rtimes \bT}\langle\Psi\rangle$,
then
$\cX_{\bB}[K]\langle\Psi\rangle\subset X_{\bB}$
is $\overline{\cX_{\bB}[K]\langle\Psi\rangle}
- \bigcup_{K'\subsetneq K}\overline{\cX_{\bB}[K']\langle\Psi\rangle}$.
\end{remark}

\begin{lem}
We have
\[
\dim \cX_{\bB}[K]\langle\Psi\rangle
=
\dim \dot X_{K\rtimes \bT}\langle\Psi\rangle
-\#\{\alpha\in \Phi^+-\Phi_K|\alpha\in \Psi_0\}
-\dkF\#\Phi_K.
\]
\end{lem}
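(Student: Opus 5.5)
The dimension formula should come from analysing the map $\dot X_{K\rtimes \bT}\langle\Psi\rangle\to \cX_{\bB}[K]\langle\Psi\rangle$ and computing the dimension of its generic fibers. I would factor this map as
\[
\dot X_{K\rtimes \bT}\langle\Psi\rangle\to \cX_{K\rtimes\bT}\to \cX_{\bB},
\]
using the destackification of Proposition \ref{prop:Borel-destack} applied to $K\rtimes\bT$ instead of $\bB$. The same recursive construction, adding one $\G_a^{\oplus\dkF}$-torsor for each root of $\Phi_K$, shows that $X_{K\rtimes\bT}\to\cX_{K\rtimes\bT}$ is smooth and surjective. Its fibers are $\Res_{k_{F_{\cyc}}/\F_p}K\rtimes\bT$-torsors, where the factor $\bT$ is already accounted for in the stacky quotient $[\bT/\bT]$. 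The contribution of this step is therefore exactly $-\dkF\#\Phi_K$.

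It then remains to compute the relative dimension of $\cX_{K\rtimes\bT}\langle\Psi\rangle\to\cX_{\bB}$ restricted to the open locus. Fix an $\bFp$-point of $\cX_{\bB}[K]\langle\Psi\rangle$, that is, a $\bar\rho$ valued in $K\rtimes\bT$, with $K$ minimal by the preceding lemma. Its preimages in $\cX_{K\rtimes\bT}$ are the $\bB$-conjugates of $\bar\rho$ that still land in $K\rtimes\bT$. Modulo $K\rtimes\bT$-conjugation, these are parametrised by elements $u\in (U/K)(\bFp)$ such that $u\bar\rho u^{-1}$ again lands in $K\rtimes\bT$.

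I would run the height induction from the proof of the lemma on minimal normal subgroups, but now in the quotient $U/K$. At each height, the conjugation action on the $\alpha$-coordinate is $c_\alpha\mapsto c_\alpha+(1-\alpha(t))u_\alpha$. Membership in $K$ forces $u_\alpha=0$ when $\alpha(t)\neq1$, and leaves $u_\alpha$ free when $\alpha(t)=1$, which means $\alpha\in\Psi_0$. So the fiber should be an affine space of dimension $\#\{\alpha\in\Phi^+-\Phi_K\mid\alpha\in\Psi_0\}$. This uses that $\bar\rho^{\SS}$ is fixed on the stratum, so the $H^0$ pattern $\Psi_0$ is constant along it, together with minimality of $K$ to rule out further identifications. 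On the stack side, these free $u_\alpha$ are precisely the automorphisms in $U/K$ that are not killed. The stacky dimension bookkeeping therefore gives the subtraction of this count.

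I expect the main obstacle to be making the fiber count rigorous beyond the height-one layer. At higher heights the commutator terms couple $u_\alpha$ to lower coordinates, so the fiber is not obviously linear. I would handle this as in the lemma: use centrality of each $\bar U_k$ in the appropriate quotient, so that each height step is an affine-linear condition on $u_\alpha$ whose linear part is $(1-\alpha(t))$. The solution set is then an iterated affine bundle of the claimed rank. Combining the three contributions, and using that dimension is additive for flat surjections with equidimensional fibers onto the image, gives the formula.
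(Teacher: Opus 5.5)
Your computation of the $\Psi_0$-term is correct. It is a more accurate version of the paper's one-line justification (``the difference in stabilizers''), and it is simpler than you fear. The fiber of $\cX_{K\rtimes\bT}\to\cX_{\bB}$ over $\bar\rho$ is $\{\bar u\in U/K:\bar u^{-1}\bar\rho\,\bar u\in K\rtimes\bT\}$. Since $K$ is normal and $\bar\rho$ maps to the torus-valued $\bar\rho^{\SS}$ in $(U/K)\rtimes\bT$, the condition is exactly that $\bar u$ centralizes $\bar\rho^{\SS}(\Gal_F)$. Under the $\bT$-equivariant isomorphism $\log_{\le h_{\bG}}$, this centralizer is the linear span of the root spaces with $\alpha\in\Psi_0\setminus\Phi_K$. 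So no height induction, no commutator bookkeeping and no minimality of $K$ is needed. Constant fiber dimension also suffices here; flatness is not required.

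These $\bar u$ are in general not automorphisms of $\bar\rho$ itself, since conjugating by $U_{\alpha^\vee}$ with $\alpha\in\Psi_0$ moves the $\Phi_K$-coordinates through $[e_\alpha,c_\beta]$. Rather, they parametrize distinct $K\rtimes\bT$-classes inside a single $\bB$-class. This is why a fiber count is the right formulation, rather than a comparison of stabilizers.

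The gap is in the destackification step. A torsor under $\Res_{k_{F_{\cyc}}/\F_p}K\rtimes\bT$ has relative dimension $\dkF\#\Phi_K+\dim\bT$, and the $\bT$-factor is not ``already accounted for''. The map $X_{\bT}=\coprod\bT\to\cX_{\bT}=\coprod[\bT/\bT]$ is itself that $\bT$-torsor, with $\dim[\bT/\bT]=0$. Meanwhile the coordinates $t_\delta^{\pm1}$ are counted in $\dim\dot X_{K\rtimes\bT}\langle\Psi\rangle$ (Theorem \ref{thm:eq}). Carried out honestly, your argument gives the displayed formula with an extra $-\dim\bT$ (equivalently $-\#\Delta$ in the chart of Theorem \ref{thm:eq}).

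This is not a harmless slip. Take $\PGL_2$, $K=U$, $\Psi=(\emptyset,\emptyset)$. Then $\dim\dot X=1+\dF+\dkF$, so the displayed formula gives $\dF+1$. But $\cX_{\bB}$ has dimension $\dF$ (Proposition \ref{prop:PGL2} and Theorem \ref{thm:PGL2-eqn}). So the identity as written holds only under an implicit convention that torus directions are discarded, which is how the paper uses it, e.g. in Theorem \ref{thm:reg}. The paper's proof is silent on this point. Your write-up should state that normalization or include the correction term, rather than assert that the $\bT$-factor drops out.
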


\begin{proof}
The difference in stabilizers contributes
to $\#\{\alpha\in \Phi^+-\Phi_K|\alpha\in \Psi_0\}$.
\end{proof}

\subsection{Cone models}

\begin{defn}
The cone model of $X_{K,\bT}\langle\Psi\rangle$
is the affine scheme 
\[
X_{K,\bT}\langle\Psi\rangle^\cone:=
\Spec \bFp
\left[
\begin{matrix}
x_{\alpha,1},\dots,x_{\alpha,\dF+\dkF},
s_\alpha&:&\alpha\in \Phi_K
\\
t_\delta^{\pm 1}&:&
\delta\in \Delta
\end{matrix}
\right]/
I^\cone
\]
where
$I^\cone$ is generated by
\[
\left\{
\begin{matrix}
x_{\alpha,\dF+\dkF} &:& \alpha\not\in\Psi_0 \\
s_\alpha &:& \alpha \not\in \Psi_2 \\
f_\alpha &:& \alpha \in \Psi_2, \Ht_K(\alpha)>1 \\
t_\alpha-1 &:& \alpha\in \Psi_0\cup\Psi_2
\end{matrix}
\right\}.
\]
Here, $f_\alpha=\sum_{\beta\in \Phi_K,
\delta\in \Delta_K, \delta+\beta=\alpha}
N_{\beta, \MINOREDITED{\delta}}\vec x_\beta\cdot
\vec x_{\MINOREDITED{\delta}}$.
\end{defn}

\begin{thm}
We have
\[
\dim X_{K,\bT}\langle\Psi\rangle^\cone
\ge 
\dim X_{K,\bT}\langle\Psi\rangle
\]
\end{thm}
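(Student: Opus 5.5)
The plan is to realize the cone model as containing a degeneration of $X_{K,\bT}\langle\Psi\rangle$ to the normal cone along its ``zero section''. Dimension is preserved under passage to the normal cone, and cutting by extra equations can only shrink the cone, so the cone model has at least the dimension of the original. Concretely, set
\[
Z:=\{x_{\alpha,i}=s_\alpha=0\ \text{for all}\ \alpha\in\Phi_K\}\cap X_{K,\bT}\langle\Psi\rangle.
\]
This is the torus locus $\Spec \bFp[t_\delta^{\pm1}]/(t_\alpha-1:\alpha\in\Psi_0\cup\Psi_2)$. I will compare $X_{K,\bT}\langle\Psi\rangle$ with its normal cone $C_Z X_{K,\bT}\langle\Psi\rangle$.

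\emph{Step 1: every irreducible component meets $Z$.} I would assign weight $\Ht(\alpha)$ to $x_{\alpha,i}$ and $s_\alpha$, and weight $0$ to $t_\delta$. By Lemma~\ref{lem:eq} and the recursive construction of gauge cochains (Definition~\ref{defn:gauge}), each coefficient $\mu_{\beta,[\xi_1,\dots,\xi_m]}$ is a product of the $\mu_{\beta_j,\xi_j}$ with $\sum\beta_j=\beta$. Hence every monomial $a_{\alpha,[\xi]}\prod_j\mu_{\beta_j,\xi_j}$ in $r_\alpha$ has weight $\Ht(\alpha)$. The only non-homogeneous term is $s_\alpha(t_\alpha-1)$, and it lies in the ideal because $t_\alpha-1\in I$ for $\alpha\in\Psi_2$. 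So $I$ is generated by weighted-homogeneous elements. The resulting $\G_m$-action contracts every point to $Z$ as $\lambda\to0$, so each irreducible component of $X_{K,\bT}\langle\Psi\rangle$ is $\G_m$-stable and meets $Z$. (If $K\neq U$, I would use the restriction of the ordinary height, which is still additive on the sums appearing, rather than $\Ht_K$.)

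\emph{Step 2: normal cone and dimension.} By the standard fact (cf.\ Fulton) that $C_Z V$ is equidimensional of dimension $\dim V$ on each component of $V$ meeting $Z$, we get $\dim C_Z X_{K,\bT}\langle\Psi\rangle=\dim X_{K,\bT}\langle\Psi\rangle$. Here $C_Z V=\Spec\bigoplus_n \mathcal I_Z^n/\mathcal I_Z^{n+1}$, and it is cut out inside $Z\times\A^{N}$ by the initial forms, with respect to the ordinary (unweighted) degree in the $x,s$ variables, of all elements of $I$. It therefore suffices to show $C_Z X\subseteq X^{\cone}_{K,\bT}\langle\Psi\rangle$, that is, that each generator of $I^{\cone}$ vanishes on the cone. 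For the generators $s_\alpha$ ($\alpha\notin\Psi_2$) and $t_\alpha-1$ this is immediate, since they already lie in $I$ and are their own initial forms. For $f_\alpha$ I use Theorem~\ref{thm:eq}(2): modulo $t_\alpha-1$,
\[
r_\alpha\equiv \tfrac12 f_\alpha+Q_\alpha .
\]
Every monomial of $Q_\alpha$ comes from a higher cup product of $m\ge3$ factors, or involves some $\mu_{\gamma,[\dots]}$ that is already a product of at least two variables. So $Q_\alpha$ has ordinary degree $\ge3$, and the initial form of $r_\alpha$ is $\tfrac12 f_\alpha$ when $f_\alpha\neq0$; when $f_\alpha=0$ the containment is trivial. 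Since $p>2$, $f_\alpha$ vanishes on the cone.

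\emph{Step 3 (main obstacle): the generators $x_{\alpha,\dF+\dkF}$ for $\alpha\notin\Psi_0$.} These are not visibly in $I$. The approach I would try first is to use the convention (\ref{eq:clast}): for $\alpha\notin\Psi_0$ the corresponding gauge cocycle is cohomologically trivial on the stratum, so the variable enters none of the $f_\beta$. It then splits off as a free $\A^1$-factor in both $X$ and the cone model. After that I would either show it is absorbed by the destackification (it accounts for the $\dkF$ correction in the dimension formula preceding the cone-model definition), or, failing that, compare $\dim X$ with $\dim(X^{\cone}\times\A^1)$ and check that the fibre coordinate contributes identically on both sides. Making this bookkeeping precise, in particular confirming that these variables genuinely decouple in degree $2$ and do not spoil Step~2, is where I expect the real work to lie. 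The remaining steps are formal consequences of the homogeneity in Step~1.
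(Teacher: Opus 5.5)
Step~1 (the contracting $\G_m$-action from the height grading) and the general normal-cone dimension count in Step~2 are fine. The argument breaks at the identification of the initial forms.

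\textbf{The degree claim for $Q_\alpha$ is false.} The $m=2$ part of $r_\alpha$ contains the pairing terms $\vec x_{\gamma_1}\cdot\vec x_{\gamma_2}$ for \emph{every} decomposition $\alpha=\gamma_1+\gamma_2$, not only those with $\gamma_1\in\Delta_K$. Take those with $\Ht(\gamma_1),\Ht(\gamma_2)\ge 2$, e.g.\ $e_1-e_5=(e_1-e_3)+(e_3-e_5)$ in type $A_4$. They are single products $x_{\gamma_1,i}x_{\gamma_2,j}$ of free variables, and they lie in $Q_\alpha$, not in $f_\alpha$. Theorem~\ref{thm:eq}(2) records exactly this: monomials of $Q_\alpha$ are divisible by $x_{\gamma_1,i_1}x_{\gamma_2,i_2}$ with $1<\Ht(\gamma_i)<\Ht_K(\alpha)-1$, and such a product may be the whole monomial. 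On a stratum with $\alpha\in\Psi_2$ their coefficients come from the perfect Tate pairing $H^1\times H^1\to H^2$, so they are genuinely present.

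Hence the lowest-degree form of $r_\alpha$ along $Z$ is $\tfrac12 f_\alpha+Q^{(2)}_\alpha$ with $Q^{(2)}_\alpha\neq 0$ in general. For $\Psi_2=\{e_1-e_5\}$, $X$ is the hypersurface $r_\alpha=0$ and $C_ZX$ is the quadric cone $\tfrac12 f_\alpha+Q^{(2)}_\alpha=0$, which is not contained in $X^{\cone}=\{f_\alpha=0\}$. So the cone model is not the normal cone along the zero section, and $C_ZX\subseteq X^{\cone}$ fails.

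\textbf{How the paper avoids this.} The paper never needs a degree bound. It fixes torus and height-one coordinates; at height two $Q_\alpha=0$, so $I$ and $I^{\cone}$ agree there, and the matching of Theorem~\ref{thm:height-two-contractible} handles those equations. Then each $f_\alpha$ is a linear form in the height-$(\Ht_K(\alpha)-1)$ block, while $Q_\alpha$ is independent of that block. In a height-block elimination order the row-reduced $r_\alpha$ therefore have the pivot variables as leading terms, and they generate $J\subseteq I$ with $\dim R/J=\dim R/L$. Lower-block quadratic terms of $Q_\alpha$ are harmless there.

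If you want to keep a degeneration argument, you need a non-uniform weight that separates $f_\alpha$ from the rest of the quadratic part. For $K=U$, give $x_{\beta,*},s_\beta$ weight $\Ht(\beta)^2$ and take \emph{highest}-weight forms. Among decompositions of $\alpha$ into at least two roots, the height pattern $\{1,\Ht(\alpha)-1\}$ uniquely maximizes $\sum_j\Ht(\beta_j)^2$. So modulo $t_\alpha-1$ the initial form of $r_\alpha$ is $\tfrac12 f_\alpha$ whenever $f_\alpha\neq 0$, and this Gr\"obner degeneration preserves dimension; Step~1 is then not even needed.

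\textbf{Step~3 cannot work as proposed.} The elements $x_{\alpha,\dF+\dkF}$ for $\alpha\notin\Psi_0$ are \emph{generators} of $I^{\cone}$. These coordinates are therefore set to zero in $X^{\cone}$ and do not split off as free factors there, whereas $I$ contains no such generator. They can only push $\dim X^{\cone}$ below $\dim X$, and they do. Take $\bG=\PGL_2$, $K=U$, $\Psi_0=\Psi_2=\emptyset$: then $I=(s_\delta)$ and $I^{\cone}=(x_{\delta,\dF+\dkF},s_\delta)$, so $\dim X=\dF+\dkF+1>\dim X^{\cone}$.

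So no bookkeeping closes Step~3 for the cone ideal as literally written. The paper's argument never uses these generators: each linear generator of its specialized cone ideal $L$ is row-reduced together with a partner $r_\alpha\in I$. What it actually establishes is the inequality for the cone ideal generated by the $s_\alpha$ ($\alpha\notin\Psi_2$), the $t_\alpha-1$ and the $f_\alpha$. That is the statement your argument should target.
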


\begin{proof}
Fix the torus coordinates. Consider the bipartite graph whose source
vertices are the height-$1$ roots, whose target vertices are the height-$2$
roots occurring in the cone equations, and whose edges are
\[
\delta\longrightarrow\alpha
\qquad\Longleftrightarrow\qquad
\alpha-\delta\text{ has height }1.
\]
By Theorem~\ref{thm:height-two-contractible}, successive contractions
give a matching containing every target vertex.

Choose the affine chart and the order associated with this matching.
First fix all coordinates of the unmatched height-$1$ roots, together with
all coordinates other than $x_{\delta,1}$ for the matched height-$1$ roots.

In reverse contraction order, the equation attached to the target matched
with $\delta$ has the form
\[
u_\delta x_{\delta,1}+v_\delta=0,
\qquad u_\delta\in R^\times,
\]
where $u_\delta$ and $v_\delta$ depend only on coordinates already fixed or
previously determined. Thus $x_{\delta,1}$ is determined uniquely. After
all matched coordinates have been determined, fix the remaining free
height-$1$ coordinates. This eliminates all height-$2$ equations. The
coordinate blocks belonging to height-$2$ roots have not been used and
remain free.

The rest of the proof follows from the theory of Gr\"obner basis.
We use the height-block elimination order: variables of larger $K$-height
precede every monomial in lower-height variables, and within each height block
we use degree-lexicographic order. Thus
$x_{\alpha,i}>s_\alpha>x_{\beta,j}>s_\beta$ if
$\Ht_K(\alpha)>\Ht_K(\beta)$.
Let $L\subset R$ be the specialized cone ideal and let
$I\subset R$ be the specialized ideal of the full equations. The ideal $L$ is
generated by homogeneous linear forms
(as the height $1$ and torus coordinates are fixed). If $q$ is the rank of their coefficient
matrix, Gaussian elimination gives independent forms
$\ell_1,\ldots,\ell_q$ with distinct pivot variables
$z_1,\ldots,z_q$. Hence
\[
\dim R/L=\dim R-q.
\]

Apply the same $q$ row operations to the corresponding equations
$r_\alpha=f_\alpha+Q_\alpha$. This gives elements
$g_1,\ldots,g_q\in I$. For a relation of $K$-height $h$, the form $f_\alpha$ is
linear in the height-$(h-1)$ block and $Q_\alpha$ is independent of that block.
Induction down the height blocks therefore gives
\[
g_i=z_i+(\text{nonpivot linear terms in the same block})
       +(\text{terms in lower-height blocks}),
\]
so $\operatorname{LT}(g_i)=z_i$. Let
$J=(g_1,\ldots,g_q)\subset I$. Since the leading monomials $z_i$ are distinct
variables, they are pairwise coprime, and Buchberger's product criterion shows
that the $g_i$ form a Gr\"obner basis for $J$. Consequently
\[
\dim R/J=\dim R-q=\dim R/L.
\]
Finally $J\subset I$, so additional equations, including equations produced by
dependency rows among the $f_\alpha$, can only decrease dimension. Thus
\[
\dim R/I\le \dim R/J=\dim R/L=\dim R/I^\cone,
\]
which is exactly the desired dimension inequality. No equality of initial
ideals and no Gr\"obner basis for the full ideal $I$ is required.
\end{proof}

\section{Example: $\mathrm{F}_4$}

In this paper, we use the following notation for roots:
for example,
if $\Delta=\{\delta_1, \cdots, \delta_4\}$
is the set of simple positive roots,
then we denote $\delta_1+2\delta_2+2\delta_3+2\delta_4$
by $1222$, and denote by $\delta_2+\delta_3+\delta_4$
by $0111$.
The root system for $\mathrm{F}_4$
is shown in Figure \ref{figure:D4F4}.

We have the following theorem:

\begin{thm}
\rm
If $\bG=F_4$, then
\[
\dim \cX_{\bB}[K]\langle\Psi\rangle^\cone\le
\dF\MINOREDITED{\#}\Phi^+
\]
for all fine configurations $(K,\Psi_0,\Psi_2)$.
As a consequence, $\cX_{\bB}$ is equidimensional
of dimension $\dF\MINOREDITED{\#}\Phi^+$.

(1)
If $F\neq \Q_p$,
then the irreducible components
of  $\cX_{\bB}$ are precisely the closure
of the strata
\[
\cX_{\bB}[U]\langle \Psi_0(C), \Psi_2(C)\rangle
\]
where $C\subset \cW_{\bB}$ is an irreducible component
and that
\begin{align*}
\Psi_2(C) &= \{\alpha\in\Phi^+| 
U_{\alpha^\vee}\text{~is pointwise the cyclotomic Galois character
over $C$}\}\\
\Psi_0(C) &= \{\alpha\in\Phi^+| 
U_{\alpha^\vee}\text{~is pointwise the trivial Galois character
over $C$}\}.
\end{align*}

(2)
If $F=\Q_p$,
then in addition to the irreducible components
$\overline{\cX_{\bB}[U]\langle \Psi_0(C), \Psi_2(C)\rangle}$,
$\cX_{\bB}$ have $4$ extra irreducible components,
listed below:

\begin{table}[H]
\centering
\scriptsize
\begin{tabular}{|c|c|c|c|}
\hline
ID & $\Phi^+-\Phi_K$ & $\Psi_0$ & $\Psi_2$ \\
\hline
I & $\{0010\}$ & $\{0010\}$ & $\{0001,0011,0100,0110,0120\}$ \\
II & $\{0010\}$ & $\{0010\}$ & $\{0001,0011,0100,0110,0120,1000\}$ \\
III & $\{1000,0010\}$ & $\{0010,1000\}$ & $\left\{ \begin{matrix} 0001,0011,0100,0110 \\ 0120,1100,1110,1120 \end{matrix} \right\}$ \\
IV & $\{1000,0001,0011,0010\}$ & $\{0001,0010,0011,1000\}$ & $\left\{ \begin{matrix} 0100,0110,0111,0120,0121,0122 \\ 1100,1110,1111,1120,1121,1122 \end{matrix} \right\}$ \\
\hline
\end{tabular}
\caption{Extra components for $\mathrm{F}_4$}
\label{table:F4-table}
\end{table}
In each of the cases listed above,
we have
$\Psi_2\subset \Delta_K$
and thus
$X_{K\rtimes \bT}\langle\Psi\rangle
=X_{K\rtimes \bT}\langle\Psi\rangle^\cone$
is smooth.
\label{thm:F4-table}
\end{thm}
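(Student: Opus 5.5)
The proof is a finite computation organised by the results already established. First, enumerate all fine configurations $(K,\Psi_0,\Psi_2)$ for $\mathrm{F}_4$. The subgroup $K$ runs over $\bT$-stable normal subgroups of $U$, i.e. upward-closed subsets $\Phi_K\subset\Phi^+$. The pair $(\Psi_0,\Psi_2)$ runs over pairs satisfying the two closure axioms. We also impose the constraint of Lemma \ref{lem:min-deg}: $\deg(\Psi_0,\Psi_2)\le\dF$. This bounds the set of configurations in terms of $\dF$. For $\dF$ large, so that $\dF>\frac{\dim\bG}{2}$, the combinatorics should stabilise, and a uniform argument in $\dF$ is needed. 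For each configuration, compute $\dim X_{K,\bT}\langle\Psi\rangle^\cone$. The cone equations $f_\alpha$ are bilinear in height-$1$ and height-$(h-1)$ blocks. So after fixing the torus coordinates ($t_\alpha=1$ on $\Psi_0\cup\Psi_2$), the dimension is computed as in the proof of the cone-model theorem. We use the matching from Theorem~\ref{thm:height-two-contractible} (successive contraction of the height-two bipartite graph), then do linear algebra height block by height block. This gives an explicit formula: the number of variables, minus the rank of the linear system, minus the corrections of the stratum dimension lemma ($\#\{\alpha\notin\Phi_K,\alpha\in\Psi_0\}$ and $\dkF\#\Phi_K$).

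Second, compare with $\dF\#\Phi^+$. Each root in $\Phi_K$ contributes $\dF+\dkF$ free $x$-parameters, plus $s_\alpha$ if $\alpha\in\Psi_2$. Each root in $\Psi_2$ of $K$-height $>1$ contributes one equation $f_\alpha$, and each root in $\Psi_0\cup\Psi_2$ imposes $t_\alpha=1$. The excess over $\dF\#\Phi^+$ is therefore a combinatorial count. Roughly, it is $\#(\Psi_2\cap\Delta_K)$ (free $s$-variables without equations) plus the torus freedom, minus $\dF\#(\Phi^+-\Phi_K)$, minus the rank deficits and the torus constraints. For $F\ne\Q_p$, the factor $\dF\ge2$ attached to the missing roots $\Phi^+-\Phi_K$ should dominate, forcing $K=U$ for top-dimensional strata. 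For $K=U$, the top strata are those with $\Psi_i=\Psi_i(C)$ generic over a component $C$ of $\cW_{\bB}$, since specialising $(\Psi_0,\Psi_2)$ further costs at least one torus dimension. This check is done by the computer enumeration described in Section~\ref{sec:rotate}; I would run it and record the configurations where equality holds. Since the cone dimension bounds the stratum dimension, Corollary~\ref{cor:XB-dim} then gives equidimensionality, and the components are the closures of the equality strata.

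Third, for $F=\Q_p$ the enumeration produces, besides the $K=U$ strata, exactly the four configurations of Table~\ref{table:F4-table}, which attain equality. For these, observe directly that $\Psi_2\subset\Delta_K$. Then no $r_\alpha$ with $\Ht_K(\alpha)>1$ lies in $I$, so $I$ and $I^\cone$ agree, and both are generated by coordinate and torus-linear equations. Hence $X_{K\rtimes\bT}\langle\Psi\rangle=X_{K\rtimes\bT}\langle\Psi\rangle^\cone$ is an affine space times a torus, so it is smooth, and its dimension is exactly $\dF\#\Phi^+$ by the count above. To see that these are genuinely extra components, note that their generic points have minimal subgroup $K\ne U$ (Lemma~\ref{lem:K-nonzero}). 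So they are not contained in the closure of any $K=U$ stratum of the same dimension.

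The main obstacle is the uniformity in $\dF$ in the second step: the enumeration is only finite once $\dF$ is fixed. I would first try to prove that for $\dF\ge 2$ the excess is monotonically decreasing in $\dF$ for every configuration with $K\ne U$. Each missing root costs $\dF$ while contributing at most a bounded gain from $s$-variables and torus freedom. This reduces the claim to checking $\dF\le$ some small bound, which the finite computation covers.
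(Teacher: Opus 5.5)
Your plan is essentially the paper's proof. The paper also gives an exhaustive exact computation of cone-model dimensions over the $105$ choices of $K$ and the fine pairs, finds exactly Configurations I--IV as the equality cases at $\dF=1$, and observes that $\Psi_2\subset\Delta_K$ removes every quadratic relation and gives smoothness. The only real difference is the large-degree reduction. You propose a per-configuration monotonicity in $\dF$, which would still have to be made uniform over all $(\Psi_0,\Psi_2)$, including those allowed only for large $\dF$. The paper instead builds, for each $K$, the grouped cone matrix with the largest target set $\Psi_2=\Phi^+$; this yields a bound that is uniform in $\Psi$, affine in $\dF$, and already sufficient for $\dF\ge 2$. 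Hence only the $4862$ fine pairs at $\dF=1$ need to be enumerated. This computation is in Appendix~\ref{app:cone-model-algorithms}, not Section~\ref{sec:rotate}.
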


\begin{proof}
Everything is explicitly computable.
There are $105$ possibilities for $K$.
The maximal cone estimate shows that
the inequality is automatic when $[F:\Q_p]\ge 2$.
For $F=\Q_p$, the $4862$ fine pairs give precisely
Configurations I--IV.
See Appendix~\ref{app:cone-model-algorithms}.
\end{proof}

\newpage
\phantomsection
\addcontentsline{toc}{part}{Part II: The combinatorics
and the twisted Weil-Deligne stacks}

\noindent
{\large Part II: The combinatorics and the twisted Weil-Deligne stacks}

\section{Poset graphs}

The set of positive roots $\Phi^+$
has the structure of directed graph:
two positive roots $\alpha, \beta$
are connected by a directed edge if
and only if
$\alpha-\beta\in \Delta$.

\begin{defn}
For each integer $h$,
write $\fB(\Phi^+, \Delta, h)$
for the induced subgraph of $\Phi^+$,
consisting of vertices of height
either $h$ or $h+1$.
So, $\fB(\Phi^+, \Delta, h)$
is a bipartite graph.
We will also call $\fB(\Phi^+, \Delta, h)$
the associated bipartite graph of height $h$.
\label{def:fB}
\end{defn}

\begin{example}
Root systems of type $A_n$, $B_n$, $C_n$, and $G_2$
have a very convenient feature
(see Figure \ref{figure:G2B4}):
their associated bipartite graphs in each height
is a tree!
In other words, have no loops or cycles.
\begin{figure}[h]
\begin{subfigure}{0.45\textwidth}
\centering
\tikzGtwo
\caption{$G_2$}
\end{subfigure}
\begin{subfigure}{0.45\textwidth}
\centering
\tikzBfour
\caption{$B_4/C_4$}
\end{subfigure}
\caption{Root poset for $G_2$, $B_4$ and $C_4$}
\label{figure:G2B4}
\end{figure}
\end{example}

\begin{example}
As is illustrated in Figure \ref{figure:D4F4},
we see that
\[
\fB(\Phi_{D_4}^+, \Delta, 2)
\cong
\fB(\Phi_{F_4}^+, \Delta, 3)
\]
are isomorphic bipartite graphs,
and they are both isomorphic to the cyclic graph $\vec C_6$.

\begin{figure}[h]
\begin{subfigure}{0.45\textwidth}
\centering
\tikzDfour
\caption{$D_4$: $\fB(\Phi^+, \Delta, 2)\cong \vec C_6$}
\end{subfigure}
\begin{subfigure}{0.45\textwidth}
\centering
\tikzFfour
\caption{$F_4$: $\fB(\Phi^+, \Delta, 3)\cong \vec C_6$}
\end{subfigure}
\caption{Root poset for $D_4$ and $F_4$}
\label{figure:D4F4}
\end{figure}
\end{example}

Indeed, we can give an exhaustive list of all cycles
in associated bipartite graphs:

\begin{table}[H]
\centering
\begin{tabular}{c| l| l| l | l}
\hline
Height range & $F_4$ & $E_6$ & $E_7$ & $E_8$ \\
\hline
$[2,3]$& & $\overrightarrow{C_6}$ & $\overrightarrow{C_6}$ & $\overrightarrow{C_6}$ \\
$[3,4]$& $\overrightarrow{C_6}$ &$\overrightarrow{C_6 \cup_{K_2} C_6}$& $\overrightarrow{C_6 \cup_{K_2} C_6}$ & $\overrightarrow{C_6 \cup_{K_2} C_6}$\\
$[4,5]$& & &$\overrightarrow{C_6}$  &$\overrightarrow{C_6}$ \\
$[5,6]$ && &  & $\overrightarrow{C_6 \cup_{K_2} C_6 \cup_{K_2} C_6}$ \\
$[8,9]$ && &$\overrightarrow{C_6}$ &$\overrightarrow{C_6}$\\
$[9,10]$ && & & $\overrightarrow{C_6 \cup_{K_2} C_6}$\\
$[14,15]$ && & & $\overrightarrow{C_6}$\\
\hline
\end{tabular}
\caption{Cycles in associated bipartite graphs.}
\label{tab:layer-graphs}
\end{table}

\begin{figure}[H]
\begin{subfigure}{0.4\textwidth}
\centering
\tikzESixTwo
\caption{$E_6$: $\fB(\Phi^+, \Delta, 2)\supset \vec C_6$}
\label{figure:E6, layer 2}
\end{subfigure}
\begin{subfigure}{0.4\textwidth}
\tikzESixThree
\caption{$E_6$: $\fB(\Phi^+, \Delta, 3)\cong \overrightarrow{C_6 \cup_{K_2} C_6}$}
\label{figure:E6, layer 3}
\centering
\end{subfigure}
\end{figure}

\begin{figure}[H]
\centering
\begin{subfigure}{.8\textwidth}
\centering
\setcounter{subfigure}{2}
\tikzEEightFive
\caption{$E_8$: $\fB(\Phi^+, \Delta, 5)\cong \overrightarrow{C_6 \cup_{K_2} C_6 \cup_{K_2} C_6}$}
\label{figure:E8, layer 5}
\end{subfigure}
\end{figure}

We remind the reader of the following very standard notion:

\begin{defn}
A {\it bipartite matching} for a bipartite graph,
is a collection of edges $\{\beta_i\to \alpha_i|i\in I\}$
such that $\{\alpha_i, \beta_i\}$
consists of exactly $2\#I$ vertices.
\end{defn}

The following notion is less standard.

\begin{defn}
Let $\fB$ be a directed graph.
We say $\fB$ is {\it contractible}
if there exists a vertex $v\in \fB$
such that there does not exist any edge $v'\to v$ pointing
to $v$,
and that there is exactly one edge
$v\to v'$ coming out of $v$,
and we say
$\fB-\{v, v'\}$ is a contraction of $\fB$.

We say a directed bipartite graph is a cycle
if it is connected, not contractible,
and non-empty.
\end{defn}

\begin{thm}
For every reduced root system, the bipartite graph
\[
\fB(\Phi^+,\Delta,1)
\]
contracts to isolated vertices.
\label{thm:height-two-contractible}
\end{thm}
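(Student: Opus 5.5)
The plan is to identify $\fB(\Phi^+,\Delta,1)$ with the subdivision of the Dynkin diagram, regarded as a simple graph, and then peel off leaves of that forest one at a time.

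\textbf{Step 1: height-$2$ roots.} First I determine the vertices of height $2$. A positive root of height $2$ has the form $\delta+\delta'$ with $\delta\neq\delta'$ in $\Delta$. The case $\delta=\delta'$ is excluded because $2\delta\notin\Phi$ in a reduced root system. Moreover, $\delta+\delta'\in\Phi$ if and only if $\langle\delta,\delta'^\vee\rangle\neq 0$, i.e.\ iff $\delta$ and $\delta'$ are joined in the Dynkin diagram. This is standard: $\delta'-\delta\notin\Phi$, so the $\delta$-string through $\delta'$ starts at $\delta'$ and has length $-\langle\delta',\delta^\vee\rangle$. So height-$2$ roots correspond bijectively to the edges of the underlying simple graph $D$ of the Dynkin diagram, with multiple bonds counted once.

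\textbf{Step 2: the bipartite graph as a subdivision.} By Definition~\ref{def:fB}, the edges of $\fB(\Phi^+,\Delta,1)$ are $\beta\to\alpha$ with $\alpha-\beta\in\Delta$. For $\alpha=\delta+\delta'$ the only in-edges are therefore from $\delta$ and from $\delta'$. The height-$2$ vertices have no out-edges, since we only keep heights $1$ and $2$. Hence $\fB(\Phi^+,\Delta,1)$ is the subdivision of $D$, with every edge oriented from the simple-root endpoints into the midpoint. In particular the out-degree of $\delta$ equals its valence in $D$.

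\textbf{Step 3: induction on leaves.} The Dynkin diagram of any reduced root system is a forest, including in the non-simply-laced and reducible cases. I induct on the number of edges of $D$.
\begin{itemize}
\item If $D$ has an edge, pick a leaf $\delta$ with unique neighbour $\delta'$. Then $\delta$ has no incoming edges and exactly one outgoing edge, $\delta\to\delta+\delta'$. So removing $\{\delta,\delta+\delta'\}$ is a contraction in the sense defined above.
\item The result is the subdivision of $D-\{\delta\}$, which is again a forest with one fewer edge, so the induction hypothesis applies.
\item When $D$ has no edges, all remaining vertices are isolated simple roots. These are not contractible, since they have out-degree $0$.
\end{itemize}
As a byproduct, every height-$2$ vertex is matched with the leaf removed alongside it, which is exactly what the proof of the cone-model dimension theorem uses.

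\textbf{Main obstacle.} The only real content is Step 1's criterion that $\delta+\delta'\in\Phi$ exactly when $\delta,\delta'$ are Dynkin-adjacent, together with the fact that Dynkin diagrams are acyclic. Both are standard facts (cf.\ \cite{Hum72}), which I would cite rather than reprove. The rest is bookkeeping.
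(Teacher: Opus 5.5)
Your proof is correct and takes the same approach as the paper. Both identify $\fB(\Phi^+,\Delta,1)$ with the subdivision of the acyclic Dynkin diagram, using that height-$2$ roots are exactly sums of adjacent simple roots, and then contract leaves one at a time. You add details the paper leaves implicit: the root-string justification in Step~1, the check that each leaf removal is a contraction in the paper's sense, and the reducible (forest) case.
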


\begin{proof}
The height-$2$ roots are the sums of adjacent simple roots. Thus
$\fB(\Phi^+,\Delta,1)$ is the barycentric subdivision of the underlying
Dynkin diagram, which is a tree. Successively remove its leaves.
\end{proof}

\begin{prop}
If $\Phi$ is a simple reduced root system,
then each $\fB(\Phi^+, \Delta, h)$
contracts to, up to isolated vertices, one of
\[
\emptyset,~ \vec C_6,~
\overrightarrow{C_6 \cup_{K_1}P_3},~
\overrightarrow{C_6 \cup_{K_2} C_6},~
\overrightarrow{C_6 \cup_{K_2} C_6 \cup_{K_2} C_6}.
\]
where $P_3$ is the chain
$\circ\leftarrow \circ \to \circ$
with $3$ vertices.
\label{prop:perfect-match}
\end{prop}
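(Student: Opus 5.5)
The plan is a case-by-case verification over the classification of simple reduced root systems. Two structural facts about the bipartite graph $\fB(\Phi^+,\Delta,h)$ make this feasible.

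First, each height-$(h+1)$ root $\alpha$ has in-degree at most $3$ in $\fB$. The in-neighbours of $\alpha$ are the simple roots $\delta$ with $\alpha-\delta\in\Phi^+$.

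Second, contraction is confluent up to isolated vertices. Removing a source $v$ of out-degree one together with its unique target only deletes edges, so it can only create new contractible sources or isolated vertices. A standard diamond-lemma argument should show that the fully contracted graph, with isolated vertices discarded, is independent of the order of contractions. This lets me speak of ``the'' contracted core and compute it greedily.

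For the classical types I proceed as follows.

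\emph{Types $A_n$, $B_n$, $C_n$.} By the Example preceding Table~\ref{tab:layer-graphs}, each $\fB(\Phi^+,\Delta,h)$ is a forest, oriented from height $h$ to height $h+1$. A finite directed bipartite forest with all edges pointing upward always has a leaf that is a source of out-degree one, or a leaf target. I will check that in these root posets every target $\alpha$ of height $h+1\ge2$ has an in-neighbour $\beta$ of height $h$ whose only out-edge goes to $\alpha$; this is visible from the explicit description of roots as $e_i\pm e_j$, $e_i$, $2e_i$. Iterating, the forest contracts to isolated vertices, giving $\emptyset$.

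\emph{Type $G_2$.} This case is read off Figure~\ref{figure:G2B4}.

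\emph{Type $D_n$.} I describe the height-$h$ roots explicitly as $e_i-e_j$ and $e_i+e_j$. The cycles can only arise where the branch node of the Dynkin diagram forces a root to have two simple-root predecessors on both sides. I expect to find a single $\vec C_6$ at $h=2$ in $D_4$, and for $n\ge5$ either a contraction to $\emptyset$ or to $\vec C_6$, possibly with a pendant path giving $\overrightarrow{C_6\cup_{K_1}P_3}$. The contraction is done greedily from the ``tails'' of the $e_i-e_j$ strings, which are contractible sources, inward.

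\emph{Types $E_6$, $E_7$, $E_8$, $F_4$.} These are finite computations on explicit root posets. Table~\ref{tab:layer-graphs} and the figures (Figure~\ref{figure:D4F4}, Figure~\ref{figure:E6, layer 2}, Figure~\ref{figure:E8, layer 5}) record the surviving cores. For each $h$ I run the greedy contraction; by confluence it does not matter which order is used. I then compare the core with the list. The first exceptional layer for $E_6$ at $h=2$ contains $\vec C_6$ with extra attached vertices, and the contraction there should leave exactly $\vec C_6$, or $\overrightarrow{C_6\cup_{K_1}P_3}$ if a pendant survives. This is a mechanical check that can be delegated to the same code as Appendix~\ref{app:cone-model-algorithms}.

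The main obstacle is the uniform treatment of $D_n$ for all $n$, where a computer check does not suffice. There I will try to show that for $h\ge3$ the layer graph is a disjoint union of paths whose ends are contractible. For $h=2$, the branch node $\delta_{n-2}$ together with $\delta_{n-3},\delta_{n-1},\delta_n$ produces the only cycle, isomorphic to the $D_4$ picture with a tail attached. A secondary point requiring care is proving confluence, so that the greedy computation actually certifies the contracted form.
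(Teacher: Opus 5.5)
Your case split is the one the paper relies on. The paper gives no separate proof: the statement rests on the chain observation for types $A,B,C,G$, Table~\ref{tab:layer-graphs}, and the computations of Appendix~\ref{app:root-poset-cycles}. Your confluence step is fine. Two contractions with distinct targets commute, and two with a common target differ only in which source is left isolated. Termination is clear, so Newman's lemma gives a unique core up to isolated vertices. In types $A,B,C$, being a forest is not enough by itself, since $P_3$ is an uncontractible tree; what one actually uses is that each layer is a path having an end that is a source of out-degree one. The genuine gap is type $D_n$, and it cannot be closed, because the statement is false there.

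Put $d_1=e_{n-h-1}-e_{n-1}$, $d_2=e_{n-h}-e_n$, $s_k=e_{n-h+k}+e_{n-k}$, $t_-=e_{n-h-1}-e_n$ and $t_m=e_{n-h-1+m}+e_{n-m}$. For every $2\le h\le n-2$, the sources $d_1,d_2,s_0$ and the targets $t_-,t_0,t_1$ form a hexagon; these are the $\beta_1,\beta_2,\beta_3$ of Appendix~\ref{app:root-poset-cycles}. So your expectation that the layers with $h\ge 3$ are unions of paths is wrong.

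Besides the hexagon, the layer contains two further pieces. The difference roots $e_i-e_{i+h}$, $e_i-e_{i+h+1}$ with $i\le n-h-2$ contract away, starting from the out-degree-one source $e_1-e_{1+h}$. The remaining piece is the sum chain $s_k\to t_k,t_{k+1}$.

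If $h$ is odd, the last sum source $e_{n-(h+1)/2}+e_{n-(h-1)/2}$ has out-degree one. The cascade along the chain removes $t_1$ and then the hexagon, leaving $\emptyset$.

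If $h=2r$ and $n\ge h+2$, every source that survives the difference cascade has out-degree two. The layer therefore contracts exactly to the hexagon with the pendant path $t_1\leftarrow s_1\to t_2\leftarrow\cdots\to t_r$ of $2r-1$ vertices, glued at $t_1=e_{n-h}+e_{n-1}$. For $r=2$ this is the listed $\overrightarrow{C_6\cup_{K_1}P_3}$, but for $r\ge 3$ it is not on the list.

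Concretely, for $D_8$ and $h=6$ the whole layer is
\[
e_1-e_7\to e_1\pm e_8,\qquad
e_2-e_8\to e_1-e_8,\ e_2+e_7,\qquad
e_2+e_8\to e_1+e_8,\ e_2+e_7,
\]
\[
e_3+e_7\to e_2+e_7,\ e_3+e_6,\qquad
e_4+e_6\to e_3+e_6,\ e_4+e_5.
\]
Every source has out-degree exactly two, so nothing contracts, and $\fB(\Phi^+,\Delta,6)\cong\overrightarrow{C_6\cup_{K_1}P_5}$.

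Hence no argument proves the proposition as stated. The paper's remark that all type-$D$ cycles are hexagons is correct about the cyclic part but overlooks these pendant paths. The list must be enlarged by $\overrightarrow{C_6\cup_{K_1}P_{2r-1}}$ for $r\ge 3$, and your computer check of the exceptional types should then be compared against the corrected list.
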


In the next subsection,
we deal with situations
where $\fB$ is not contractible to points.

\subsection{Admissible bipartite matching}

For each bipartite graph, we will attach to it an
algebraic variety that we call
the discriminant variety:

\begin{defn}
Let $\fB$ be a directed bipartite graph
with source vertices $\fB^{\src}\subset \Phi^+$
and target vertices $\fB^{\tar}\subset \Phi^+$.
Set
\[
\Delta_{\fB}=\{\alpha-\beta| \alpha\in \fB^{\tar},
\beta\in \fB^{\src}\}\cap \Phi^+,
\]
and write
$\G_m^{\Delta_\fB}=\Spec \bFp[x_\delta, x_\delta^{-1}|\delta\in \Delta_\fB]$.
Note that we don't require $\fB$ to be an induced subgraph
of $\Phi^+$,
and we only require that the vertices of $\fB$
can be interpreted as positive roots.

The {\it discriminant variety}
$\fD_{\fB}$ is defined as
the closed subvariety
of
\[
\G_m^{\Delta_\fB} \times \Spec \bFp [y_\beta, \beta\in \fB^{\src}]
\]
defined by the equations
\[
\sum_{\beta\to \alpha} N_{\beta, \alpha-\beta}
x_{\alpha-\beta}y_\beta=0
\]
for each $\alpha\in \fB^{\tar}$,
where $N_{**}\in \Z_{\neq 0}$ 
\MINOREDITED{are the standard structure constants}
defined in Eq. (\ref{eq:Nab}).

We say the graph $\fB$ is {\it admissible}
if $\fD_{\fB}\to \G_m^{\Delta_\fB}$ is smooth
of relative dimension $\#\fB^{\src}-\#\fB^{\tar}$.
\label{def:admissible}
\end{defn}

\begin{example}
Consider the cyclic graph $\vec C_6$:
\raisebox{-7ex}{\scalebox{.6}{\GraphCSix}}.
Write $\delta_1=\alpha_3-\beta_2$,
$\delta_2=\alpha_3-\beta_1$, and
$\delta_3=\alpha_2-\beta_1$.
If $\alpha_*$ are of height $h+1$ and $\beta_*$
are of height $h$,
then $\Delta_{\vec C_6} = \{\delta_1, \delta_2, \delta_3\}
\subset \Delta$.
Then the discriminant variety
is given by the equations
\begin{align*}
\begin{cases}
N_{\beta_2\delta_1}x_{\delta_1}y_{\beta_2} +
N_{\beta_1\delta_2}x_{\delta_2}y_{\beta_1}=&0 \\
N_{\beta_3\delta_2}x_{\delta_2}y_{\beta_3} +
N_{\beta_2\delta_3}x_{\delta_3}y_{\beta_2}=&0 \\
N_{\beta_1\delta_3}x_{\delta_3}y_{\beta_1} +
N_{\beta_3\delta_1}x_{\delta_1}y_{\beta_3}=&0,
\end{cases}
\end{align*}
whose
(relative) Jacobian matrix
is
$
\begin{pmatrix}
N_{\beta_1\delta_2}x_{\delta_2} &
N_{\beta_2\delta_1}x_{\delta_1} &
0 \\[6pt]
0 &
N_{\beta_2\delta_3}x_{\delta_3} &
N_{\beta_3\delta_2}x_{\delta_2} \\[6pt]
N_{\beta_1\delta_3}x_{\delta_3} &
0 &
N_{\beta_3\delta_1}x_{\delta_1}
\end{pmatrix}
$.
So,
$\vec C_6$ is admissible
if and only if
$
N_{\beta_2\delta_1}N_{\beta_3\delta_2}N_{\beta_1\delta_3}
+
N_{\beta_1\delta_2}N_{\beta_2\delta_3}N_{\beta_3\delta_1}
\neq 0.
$
Indeed, this value is \MINOREDITED{always} equal to one of
$\{2, -2, 3\}$
across all root systems,
and is non-zero if $p>3$.
\end{example}

\begin{thm}
All induced subgraphs $\fB\subset \fB(\Phi^+,\Delta,h)$ 
such that $\fB$ contains all height $h$ roots are
admissible.
\label{thm:C6}
\end{thm}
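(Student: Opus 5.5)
Fix $h$ and an induced subgraph $\fB\subset \fB(\Phi^+,\Delta,h)$ containing every height-$h$ root. Admissibility asks that the linear system
\[
\sum_{\beta\to\alpha} N_{\beta,\alpha-\beta}\,x_{\alpha-\beta}\,y_\beta=0,\qquad \alpha\in\fB^{\tar},
\]
in the unknowns $y_\beta$ have full row rank $\#\fB^{\tar}$ at every point of $\G_m^{\Delta_\fB}$. Here every entry $x_\delta$ is a unit. Because the equations are linear in $y$, $\fD_\fB$ is a vector-bundle-type family over $\G_m^{\Delta_\fB}$. It is smooth of the stated relative dimension exactly when this coefficient matrix $M(x)$ is surjective pointwise. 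So the plan is to prove that $M(x)$ has full row rank for all $x\in(\bFp^\times)^{\Delta_\fB}$.

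The first step is to reduce by contractions. Suppose $v$ is a source vertex with no incoming edge and exactly one outgoing edge $v\to v'$, for example a leaf source. Then the column of $y_v$ has a single nonzero entry $N\,x_\delta$, which is a unit because $p>3$ exceeds all structure constants $|N|\le 3$. Expanding along that column, $M(x)$ has full row rank if and only if the matrix for $\fB-\{v,v'\}$ does. Target vertices with no edges cannot occur: every target root of height $h+1$ has a predecessor of height $h$, and all such predecessors are in $\fB$. Source vertices left isolated only add zero columns, which is harmless. Contraction can create sources that are not of this form, so I track the reduction via Proposition~\ref{prop:perfect-match}. After contracting, what remains up to isolated vertices is empty, $\vec C_6$, $\overrightarrow{C_6\cup_{K_1}P_3}$, $\overrightarrow{C_6\cup_{K_2}C_6}$, or $\overrightarrow{C_6\cup_{K_2}C_6\cup_{K_2}C_6}$. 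Induced subgraphs containing all sources are obtained by deleting targets, which only makes contraction easier, so they reduce to sub-configurations of these same shapes.

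The second step handles the finitely many residual shapes. For $\vec C_6$ the Example shows
\[
\det M=x_{\delta_1}x_{\delta_2}x_{\delta_3}\bigl(N_{\beta_2\delta_1}N_{\beta_3\delta_2}N_{\beta_1\delta_3}+N_{\beta_1\delta_2}N_{\beta_2\delta_3}N_{\beta_3\delta_1}\bigr),
\]
which is a unit times $\pm2$ or $3$, hence nonzero for $p>3$. For $C_6\cup_{K_1}P_3$ I contract the $P_3$ leaf source, or take the $3\times3$ minor on the cycle sources, and am left with a $C_6$. For $C_6\cup_{K_2}C_6$ there are $4$ targets and $4$ sources. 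I would expand the $4\times4$ determinant along a column of a shared source and use Jacobi-type sign relations among the $N$'s. The expected result is a monomial in the $x$'s times an integer combination of products of structure constants, which can be checked from the explicit tables for $E_6,E_7,E_8$ in Table~\ref{tab:layer-graphs}. The triple graph for $E_8$ is handled the same way with a $5\times5$ determinant. If the top determinant is not itself a single monomial, I would instead exhibit one maximal minor that is a single nonzero monomial and use it.

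The main obstacle is this last verification. The determinants for the glued cycles are a priori sums of several monomials in distinct $x_\delta$. One must show they never vanish on the torus, not merely that they are nonzero as polynomials. I would first test whether some maximal minor factors as a single monomial times an integer coprime to $p$, obtained by deleting one source of a shared edge. Failing that, I would fall back on an explicit finite computation over the few cycle configurations in Table~\ref{tab:layer-graphs}, using the fixed Chevalley basis signs.
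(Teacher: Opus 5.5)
Your route is essentially the paper's: contract out-degree-one sources, then certify what survives by a maximal minor equal to an integer prime to $p$ times a monomial, with the glued-cycle cases settled by the same finite Chevalley-constant computation recorded in Appendix~\ref{app:root-poset-cycles}. (The paper treats types $A,B,C,G$ directly as chains and does not go through Proposition~\ref{prop:perfect-match}, which it states without proof.) Two corrections: $\overrightarrow{C_6\cup_{K_2}C_6}$ and the triple graph give $5\times5$ and $7\times7$ determinants, not $4\times4$ and $5\times5$; and your ``main obstacle'' does not exist, because if $x_\delta$ is given degree $\delta\in\Z\Delta$, every maximal minor with rows $T$ and columns $C$ is homogeneous of degree $\sum_{\alpha\in T}\alpha-\sum_{\beta\in C}\beta$, hence an integer times a single monomial, so only that integer must be checked to be prime to $p$ (and subgraphs obtained by deleting targets need no separate argument, since a subset of linearly independent rows stays independent).
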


\begin{proof}
For type ABCG, $\fB(\Phi^+, \Delta, h)$
is always a chain, and thus $\fD_{\fB}\to \G_m^\Delta$
is not only smooth, but also a vector bundle;
by elimination of variables.
The remaining types follow from
Appendix~\ref{app:root-poset-cycles}.
\end{proof}

\begin{cor}
$\cX_{\bB}[U]\langle\Psi\rangle=
\dot \cX_{\bB}\langle\Psi\rangle
$
is always smooth
of dimension
\[
\dF\#\Phi^+-
(\dim \Span_{\Q}
\Psi_2
-\dim \Span_{\Q}(\Psi_2\cap \Delta)).
\]
Write $\cW_{\bB}[U]\langle\Psi\rangle\subset \cW_{\bB}$
for the image of
$\cX_{\bB}[U]\langle\Psi\rangle$,
then
$
\cX_{\bB}[U]\langle\Psi\rangle \to
\cW_{\bB}[U]\langle\Psi\rangle$
induces a bijection of irreducible components.
\label{cor:C6}
\end{cor}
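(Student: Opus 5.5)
Since $K=U$, every simple root lies in $\Delta_U=\Delta$ and the $U$-height is the ordinary height. I will study $\dot X_{U\rtimes\bT}\langle\Psi\rangle$ directly through the presentation of Theorem~\ref{thm:eq}. The torus coordinates are cut down by $t_\alpha-1$ for $\alpha\in\Psi_0\cup\Psi_2$, which leaves a torus of dimension $\#\Delta-\dim\Span_\Q(\Psi_0\cup\Psi_2)$. Over a point of this torus, the roots with $\alpha\notin\Psi_2$ contribute free blocks of coordinates $x_{\alpha,*}$, and the relations $s_\alpha=0$ kill their $s$-coordinates. For $\alpha\in\Psi_2$ with $\Ht(\alpha)>1$, the only constraints are the relations $r_\alpha$. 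I will order the roots by height and proceed by induction, adding one height layer $h+1$ at a time.

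\textbf{Smoothness of each layer.} The layer-$(h+1)$ relations, for $\alpha\in\Psi_2$, read
\[
r_\alpha=s_\alpha(t_\alpha-1)+\tfrac12\sum_{\delta+\beta=\alpha}N_{\beta,\delta}\,\vec x_\beta\cdot\vec x_\delta+Q_\alpha ,
\]
where $t_\alpha=1$ on the stratum. By Theorem~\ref{thm:eq}(2), $Q_\alpha$ does not involve the height-$h$ block. So, as functions of the height-$h$ coordinates, the $r_\alpha$ are affine-linear, and their linear part is exactly the system defining the discriminant variety $\fD_\fB$. Here $\fB$ is the induced subgraph of $\fB(\Phi^+,\Delta,h)$ on all height-$h$ roots together with the height-$(h+1)$ roots lying in $\Psi_2$; the simple-root coordinates $x_{\delta,*}$ play the role of the $x_\delta$, and the height-$h$ blocks play the role of the $y_\beta$. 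The open condition defining $\dot X$ says that each simple-root vector $\vec x_\delta$ is nonzero. After a linear change of variables inside each block, this places us in the invertible-$x_\delta$ situation of Definition~\ref{def:admissible}, with the $\dF$ pairing coordinates acting as parallel copies. Theorem~\ref{thm:C6} then shows that the Jacobian in the height-$h$ variables has full rank $\#(\Psi_2\cap\Phi_{h+1})$. Consequently the layer-$(h+1)$ equations cut out a smooth subscheme of codimension exactly that number, flat over the lower layers. Inducting on $h$ gives smoothness of $\dot X_{U\rtimes\bT}\langle\Psi\rangle$.

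\textbf{Dimension count.} The total codimension in the affine space of all coordinates is $\#\Psi_2-\#(\Psi_2\cap\Delta)$ from the $r_\alpha$, plus $\dim\Span(\Psi_0\cup\Psi_2)$ from the torus. Each $\alpha\in\Psi_2$ adds the coordinate $s_\alpha$, which raises the dimension by $\#\Psi_2$, and the roots of $\Psi_0$ restore coordinates $x_{\alpha,\dF+1}$. The stabilizer correction of the preceding lemma removes $\dkF\#\Phi^+$ together with the stacky $\bT$-directions. Bookkeeping these contributions should reduce the formula to
\[
\dF\#\Phi^+-\bigl(\dim\Span\Psi_2-\dim\Span(\Psi_2\cap\Delta)\bigr).
\]
I expect this to require a careful but routine comparison: the $\#\Psi_2$ relations $r_\alpha$ against the $s_\alpha$ variables, and the rank of $\Psi_2$ against the torus. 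The equality $\cX_\bB[U]\langle\Psi\rangle=\dot\cX_\bB\langle\Psi\rangle$ holds because no proper $K'\subsetneq U$ is removed: for $K=U$ the remark defining the strata only subtracts closures of smaller-$K$ strata, and the nonvanishing conditions of $\dot X$ together with Lemma~\ref{lem:K-nonzero} exclude those points.

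\textbf{Bijection of components.} Map the stratum to $\cW_\bB[U]\langle\Psi\rangle$ via $\tr_1\WD$, which only sees the simple-root blocks and the torus. By the smoothness argument above, the fibres are iterated affine bundles over the open locus in the simple-root data, so they are irreducible of constant dimension. A smooth surjection with irreducible fibres of constant dimension induces a bijection of irreducible components. The main obstacle is the smoothness step: one must check that the open condition $\vec x_\delta\neq0$, rather than an invertibility condition on individual coordinates, suffices to apply Theorem~\ref{thm:C6} uniformly. The presence of $\dF$ parallel copies and of the $s$-pairing terms inside $\vec x_\beta\cdot\vec x_\delta$ could make the rank computation degenerate, and I would first try to handle this by choosing, locally, a coordinate in each $\vec x_\delta$ that is a unit.
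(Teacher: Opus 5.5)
Your route is the paper's: full rank of the Jacobian via Theorem~\ref{thm:C6} gives smoothness, elimination ordered by height gives irreducible fibres over $\cW_{\bB}[U]\langle\Psi\rangle$, and a variable count gives the dimension. Two steps, however, do not go through as written.

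\textbf{The bottom layer.} Your induction treats the height-$(h+1)$ relations as affine-linear in the height-$h$ block, with coefficients taken from lower layers. For $h=1$ this is false. For $\alpha=\delta+\delta'\in\Psi_2$, the relation $r_\alpha$ is bilinear in $\vec x_\delta$ and $\vec x_{\delta'}$, and both belong to the height-one block.

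These simple-root coordinates are also invisible to $\tr_1\WD$. The stack $\cW_{\bB}$ records only the torus and the Steinberg parameters $s_\delta$; the $x_{\delta,i}$ are the $\G_a^{\dF+\dkF}$-directions of Proposition~\ref{prop:PGL2} and Theorem~\ref{thm:PGL2-eqn}. So the fibre of $\cX_{\bB}[U]\langle\Psi\rangle\to\cW_{\bB}[U]\langle\Psi\rangle$ is an iterated affine bundle over the locus $Z\subset\prod_{\delta\in\Delta}\{\vec x_\delta\neq0\}$ cut out by the height-two quadrics. It is not a bundle over an open set, and your bijection-of-components step needs $Z$ to be irreducible, which you do not address.

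Smoothness survives, since the Jacobian of these bilinear equations in the height-one variables is again the discriminant matrix of $\fB(\Phi^+,\Delta,1)$. For irreducibility, what is missing is Theorem~\ref{thm:height-two-contractible}. Since $\fB(\Phi^+,\Delta,1)$ is a tree, contract leaves. Then, in reverse order, solve the equation of the target matched with a leaf $\delta$ for a coordinate of $\vec x_\delta$; its coefficient comes from the already-determined neighbour $\vec x_{\delta'}$ and is a unit on the relevant chart. This exhibits $Z$, chartwise, as an open subset of an iterated affine bundle, so it is irreducible when nonempty. This is the matching used in the proof of the cone-model dimension inequality, and it is what the paper's ``linear in a new variable with unit coefficient'' amounts to at height two.

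\textbf{The dimension.} Your count is only asserted, and the ingredients you list do not give the displayed formula. In Theorem~\ref{thm:eq} all $x_{\alpha,i}$ are free; killing $x_{\alpha,\dF+\dkF}$ for $\alpha\notin\Psi_0$ happens only in the cone model, so nothing is ``restored''. The correct count uses:
\begin{itemize}
\item the $\#\Psi_2$ coordinates $s_\alpha$;
\item the $\#\Psi_2-\#(\Psi_2\cap\Delta)$ independent relations $r_\alpha$;
\item the torus, cut down by $t_\alpha=1$ for $\alpha\in\Psi_0\cup\Psi_2$;
\item removal of the $\dkF\#\Phi^+$ framing directions and the $\#\Delta$ torus directions.
\end{itemize}
This gives
\[
\dF\#\Phi^+-\dim\Span_{\Q}(\Psi_0\cup\Psi_2)+\#(\Psi_2\cap\Delta).
\]
That agrees with the display only when $\Psi_0\subset\Span_{\Q}\Psi_2$, as for the generic pairs $(\Psi_0(C),\Psi_2(C))$. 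For example, take $\PGL_2$ with $\Psi_0=\Phi^+$, $\Psi_2=\emptyset$ and $\mu_p\not\subset F$. The stratum of nonsplit self-extensions of the trivial character has dimension $(\dF+1)-2=\dF-1$, not $\dF$. With the display as written, this stratum would be top-dimensional, contradicting Corollary~\ref{cor:reg}. So you should prove the $\Psi_0\cup\Psi_2$ version. The paper's proof, which only says that counting eliminated variables gives the formula, does not address this either.

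\textbf{The obstacle you flag} is milder than the two above. At a point, take a generic linear combination, with weights $\lambda_i$, of the $\dF$ parallel column families of the Jacobian. The result is the discriminant matrix evaluated at $x_\delta=\sum_i\lambda_ix_{\delta,i}$. These values are all nonzero for generic $\lambda$, so Theorem~\ref{thm:C6} applies. Choosing a unit coordinate separately in each $\vec x_\delta$ does not achieve this, because the unit index may depend on $\delta$. The paper simply assumes $x_{\delta,1}$ is invertible.
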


\begin{proof}
The Jacobian matrix
\[
(\frac{\partial r_\alpha}{\partial x_{\beta,1}})
_{\alpha\in \Psi_{2,\Ht\ge 2}, \beta\in \Phi^+}
\]
is of full rank by 
Theorem 
\ref{thm:C6}.
\MINOREDITED{The full-rank Jacobian gives smoothness by \cite[Tag 01V9]{Stacks}.}
\MAJOREDIT{On the stratum under consideration every $x_{\delta,1}$ for
$\delta\in\Delta$ is invertible, and the full-rank Jacobian minor is a monomial
in these variables. Ordered by height, each cone equation is therefore linear
in a new variable with unit coefficient. Successive elimination identifies
every fiber over $\cW_{\bB}[U]\langle\Psi\rangle$ with an affine space of
constant dimension. Counting the eliminated variables gives the displayed
dimension formula. The morphism is consequently smooth and surjective with
geometrically irreducible fibers, and hence induces a bijection on irreducible
components.}
\end{proof}

\begin{cor}
For each irreducible component
$C\subset \cW_{\bB}$,
$\cX_{\bB}(C)$
contains the top-dimensional irreducible component
$\overline{\dot\cX_{\bB}\langle \Psi_0(C), \Psi_2(C)\rangle}$.

On the other hand,
If $(\Psi_0,\Psi_2)$ is not of the form 
$(\Psi_0(C), \Psi_2(C))$,
then
$\cX_{\bB}[U]\langle\Psi\rangle$
is nowhere dense in $\cX_{\bB}$.
\label{cor:reg}
\end{cor}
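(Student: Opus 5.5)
The plan is to compute dimensions with Corollary~\ref{cor:C6} and compare them against the lower bound of Corollary~\ref{cor:XB-dim}.

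\textbf{Step 1: dimension of each $U$-stratum.} By Corollary~\ref{cor:C6}, the stratum $\cX_{\bB}[U]\langle\Psi\rangle$ is smooth of dimension
\[
\dF\#\Phi^+-\bigl(\dim\Span_\Q\Psi_2-\dim\Span_\Q(\Psi_2\cap\Delta)\bigr).
\]
Moreover, it maps onto $\cW_{\bB}[U]\langle\Psi\rangle$ inducing a bijection of irreducible components.

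\textbf{Step 2: identify the defect.} I will show that the defect $\dim\Span_\Q\Psi_2-\dim\Span_\Q(\Psi_2\cap\Delta)$ vanishes exactly when $(\Psi_0,\Psi_2)=(\Psi_0(C),\Psi_2(C))$ for an irreducible component $C$ of $\cW_{\bB}$.

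On $\cW_{\bB}$, the torus coordinates are the $t_\delta$ for non-Steinberg $\delta$. Over a component $C$, the conditions $t_\alpha=1$ cut out the loci where $U_{\alpha^\vee}$ is trivial or cyclotomic. On a generic point of $C$, the only constraints come from the simple roots: either $\delta$ is Steinberg, or $t_\delta$ is free. Hence generically the characters on $\Psi_2(C)$ and $\Psi_0(C)$ are determined by the simple ones, i.e.\ $\Psi_2(C)$ lies in the $\Q$-span of $\Psi_2(C)\cap\Delta$ together with $\Psi_0$-relations.

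Conversely, any $\Psi_2$ with extra span beyond $\Psi_2\cap\Delta$ imposes additional independent conditions $t_\alpha=1$ on the free torus. It therefore lies in a proper closed subset of some component $C$. This matches the defect term: each independent non-simple relation cuts dimension by one.

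I expect the bookkeeping with $\Psi_0$ (the trivial-character roots, which relate to the cyclotomic ones by the torus relation) to be the main obstacle. Concretely, I must check that for a generic point of $C$ the configuration is exactly $(\Psi_0(C),\Psi_2(C))$. This should follow because the $\bFp$-points of $C$ with strictly larger $(\Psi_0,\Psi_2)$ satisfy extra equations $t_\alpha=1$ in the torus, so they form a proper closed subset.

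\textbf{Step 3: conclude.} For the first claim, the stratum for $(\Psi_0(C),\Psi_2(C))$ is open dense in $\cX_{\bB}(C)$ over the generic part of $C$, and has dimension $\dF\#\Phi^+$. By Corollary~\ref{cor:XB-dim}, every component has dimension at least $\dF\#\Phi^+$. Since the stratum is smooth and irreducible over the irreducible $\cW_{\bB}[U]\langle\Psi\rangle=C^{\circ}$, its closure is a top-dimensional irreducible component.

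For the second claim, if the configuration is not of the form $(\Psi_0(C),\Psi_2(C))$, then Step 2 makes the defect positive, so the stratum has dimension strictly less than $\dF\#\Phi^+$. Since every irreducible component of $\cX_{\bB}$ has dimension at least $\dF\#\Phi^+$, the stratum cannot contain a dense open subset of any component. It is therefore nowhere dense.
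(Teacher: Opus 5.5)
Your route is the paper's: Corollary~\ref{cor:C6} gives the dimension of each $U$-stratum and Corollary~\ref{cor:XB-dim} gives the lower bound. However, Step~2 is false as stated, and your proof of the second claim depends on it.

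\textbf{The gap in Step~2.} The defect $\dim\Span_{\Q}\Psi_2-\dim\Span_{\Q}(\Psi_2\cap\Delta)$ involves only $\Psi_2$, so it cannot detect $\Psi_0$. Take $\bG=\PGL_2$ with $\mu_p\not\subset F$ and $(\Psi_0,\Psi_2)=(\{\delta\},\emptyset)$, i.e.\ non-split extensions with trivial $\delta$-character. The defect is $0$, yet no component $C\subset\cW_{\bB}$ has $\Psi_0(C)=\{\delta\}$. On the non-Steinberg components $t_\delta$ is free, and on the Steinberg component the character is cyclotomic, hence non-trivial. In higher rank the same happens whenever $\Psi_2\subset\Span_{\Q}(\Psi_2\cap\Delta)$ but some root of $\Psi_0$ involves a non-Steinberg simple root. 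For all such strata your sentence ``Step~2 makes the defect positive'' fails, although the strata are in fact nowhere dense. Note that the formula displayed in Corollary~\ref{cor:C6} records only the $\Psi_2$ part, so quoting it verbatim cannot yield the $\Psi_0$ half of the dichotomy.

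\textbf{How to repair it.} Your fallback remark, that the extra conditions $t_\alpha=1$ cut out a proper closed subset of $C$, is the right idea, but it only controls the image in $\cW_{\bB}$. You must combine it with the fact that the relative dimension of $\cX_{\bB}[U]\langle\Psi\rangle\to\cW_{\bB}[U]\langle\Psi\rangle$ equals the generic value $\dF\#\Phi^+$ for every $\Psi$. This is what the elimination in the proof of Corollary~\ref{cor:C6} actually gives: each root contributes $\dF+\dkF$ coordinates in $X_{\bB}$ regardless of $t$, and each $s_\alpha$ with $\alpha\in\Psi_2$ non-simple is cancelled by $r_\alpha$. The image $\cW_{\bB}[U]\langle\Psi\rangle$ is cut out by the torus conditions $t_\alpha-1$ for $\alpha\in\Psi_0\cup\Psi_2$ (as in Theorem~\ref{thm:eq}), offset by the Steinberg lines for $\Psi_2\cap\Delta$. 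This yields
\[
\dim\cX_{\bB}[U]\langle\Psi\rangle=\dF\#\Phi^+-(\dim\Span_{\Q}(\Psi_0\cup\Psi_2)-\#(\Psi_2\cap\Delta)).
\]
The correction term vanishes if and only if $\Psi_0\cup\Psi_2\subset\Span_{\Q}(\Psi_2\cap\Delta)$. For a nonempty stratum this forces $(\Psi_0,\Psi_2)=(\Psi_0(C),\Psi_2(C))$, where $C$ is the component that is Steinberg exactly along $\Psi_2\cap\Delta$. That is the dichotomy you need.

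\textbf{A smaller point on the first claim.} The lower bound of Corollary~\ref{cor:XB-dim} does not by itself make the closure of the stratum a component: an irreducible set of the minimal dimension could lie inside a larger component. What does make it a component is openness. Consider the open part of $C$ that avoids the other components of $\cW_{\bB}$ and the special torus loci. Over it the configuration is constant, and the maximally non-split condition is the open condition $(x_{\delta,1},\dots)\neq 0$. Hence the stratum contains a nonempty open substack of $\cX_{\bB}$, and its closure is an irreducible component. Your assertion that the stratum is dense in $\cX_{\bB}(C)$ over that open part is neither needed nor established by the ingredients you cite. Proving it would require controlling the $K\neq U$ strata, as in Theorem~\ref{thm:reg}.
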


\begin{proof}
Combine Corollary 
\ref{cor:C6}
and \MINOREDITED{Corollary} \ref{cor:XB-dim}.
\end{proof}

\begin{thm}
\rm
If $\dF\ge \#\Phi^+ + 2$,
then
each $\cX_{\bB}(C)$
is irreducible.
As a consequence,
$\cX_{\bB}\to \cW_{\bB}$
is generically smooth and induces a bijection
of irreducible components.
\label{thm:reg}
\end{thm}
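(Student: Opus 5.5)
By Corollary \ref{cor:XB-dim}, every irreducible component of $\cX_{\bB}(C)$ has dimension at least $\dF\#\Phi^+$. By Corollary \ref{cor:reg}, $\cX_{\bB}(C)$ already contains the top-dimensional component $\overline{\dot\cX_{\bB}\langle\Psi_0(C),\Psi_2(C)\rangle}$. So irreducibility of $\cX_{\bB}(C)$ reduces to one claim: every other stratum of the fine stratification meeting $\cX_{\bB}(C)$ has dimension strictly less than $\dF\#\Phi^+$. The plan is to prove this by crude dimension counting, bounding each stratum through its explicit presentation in Theorem \ref{thm:eq}.

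Fix a fine configuration $(K,\Psi_0,\Psi_2)$ other than $(U,\Psi_0(C),\Psi_2(C))$. First I bound $\dim\dot X_{K\rtimes\bT}\langle\Psi\rangle$ above by the number of free parameters. These are the torus variables, at most $\dim\bT$ of them. Each $\alpha\in\Phi_K$ contributes $\dF+\dkF$ variables $x_{\alpha,*}$, plus one $s_\alpha$ when $\alpha\in\Psi_2$. The stratum dimension formula then subtracts $\dkF\#\Phi_K$ and the stabilizer correction.

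This yields a bound of the shape
\[
\dim\cX_{\bB}[K]\langle\Psi\rangle\le \dF\#\Phi_K+\#\Psi_2+\dim\bT.
\]
If $K\subsetneq U$, we lose at least $\dF$, and $\dF\ge\#\Phi^++2>\#\Psi_2+\dim\bT$ whenever $\dim\bT\le\#\Phi^+$. The torus contribution is exactly what the ``$+2$'' slack and the bound $\#\Psi_2\le\#\Phi^+$ must absorb. Here I expect to need care with the torus dimension: either I use that only $\#\Delta$ torus directions matter after fixing the center (and $\#\Delta\le\#\Phi^+$), or I pass to the fibers over $\cW_{\bB}$, where the torus directions are already accounted for by $C$.

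The harder case is $K=U$ with $(\Psi_0,\Psi_2)\neq(\Psi_0(C),\Psi_2(C))$. Such a stratum lies over a proper closed subset of $C$, since the $t$-coordinates are constrained, together with possibly extra $s_\alpha$'s. By Corollary \ref{cor:C6}, this stratum is smooth of dimension
\[
\dF\#\Phi^+-(\dim\Span\Psi_2-\dim\Span(\Psi_2\cap\Delta)),
\]
and it maps to a lower-dimensional piece of $\cW_{\bB}$. So I should compare it with the generic stratum of $C$, using that every $\Psi_2$ for points of $C$ contains $\Psi_2(C)$ and imposes at least one more independent condition $t_\alpha=1$. Corollary \ref{cor:reg} already asserts these strata are nowhere dense, so I will simply invoke it.

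The main obstacle is the $K\subsetneq U$ strata in the Steinberg directions, where the $s_\alpha$ variables could raise the dimension. I will handle them by noting that each dropped root removes $\dF$ free $x$-parameters, while at most one $s_\alpha$ is gained per root in $\Psi_2$. Since $\#\Psi_2\le\#\Phi^+<\dF-1$, the loss always dominates.

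Once every stratum has dimension at most $\dF\#\Phi^+$, and every non-generic stratum has strictly smaller dimension, Corollary \ref{cor:XB-dim} shows $\cX_{\bB}(C)$ is irreducible. For the consequence, $\cX_{\bB}\to\cW_{\bB}$ is then a bijection on irreducible components, and generic smoothness follows from the smoothness of the generic stratum in Corollary \ref{cor:C6}.
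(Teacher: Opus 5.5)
Your proposal follows the paper's proof: a crude parameter count from Theorem~\ref{thm:eq} rules out every stratum with $K\neq U$, and Corollary~\ref{cor:reg} disposes of the non-generic strata with $K=U$. The one slip is your inequality $\#\Phi^++2>\#\Psi_2+\dim\bT$, which does not follow from $\#\Psi_2\le\#\Phi^+$ and $\dim\bT\le\#\Phi^+$. The correct bookkeeping is that the torus coordinates of $X_{\bT}$ cancel against the $\bT$-stackiness ($\cX_{\bT}$ is $0$-dimensional), and only the $s_\alpha$ with $\alpha\in\Psi_2\cap\Phi_K$ occur; you then need only $\dF\,\#(\Phi^+-\Phi_K)>\#\Phi_K$, which holds since $\dF\ge\#\Phi^++2$.
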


\begin{proof}
We always have the trivial upper bound
\[
\dim X_{K\rtimes \bB}\langle \Psi\rangle
\le (\dF+\dkF+1)\#\Phi_K+\dim \bT
\]
by Theorem \ref{thm:eq}.
So, if $\dF\ge \#\Phi^+ + 2$
and $K\neq U$,
then $\dim \cX_{\bB}[K]\langle\Psi\rangle
< \dF\#\Phi^+$.
The theorem now follows from Corollary \ref{cor:reg}.
\end{proof}

There are only finitely many $F/\Q_p$ cases
left, thanks to
Theorem \ref{thm:reg}.

\subsection{$K$-restricted posets}

Recall that $K\subset U$
is a $\bT$-stable normal subgroup.

\begin{defn}
We attach to $\Phi_K$
the structure of a directed graph:
there is a directed edge
$\beta\to \alpha$
if and only if
$\alpha-\beta\in \Delta_K$.

Similar to  Definition \ref{def:fB},
we write
$\fB(\Phi_K, \Delta_K, h)$
for the induced subgraph of $\Phi_K$
consisting of vertices of $K$-height
either $h$ or $h+1$.
\end{defn}

\begin{example}
Consider $\bG=F_4$, $\Phi_K=\Phi^+-\{0010, 0001, 0011\}$.
The root poset structure of $\Phi_K$
is illustrated below
(each row has roots of the same $K$-height,
and two vertices are connected with a directed edge
if their difference is a root of $\Delta_K$):

\begin{figure}[H]
\scriptsize
\scalebox{.7}{
\FKfull
}
\caption{The full root poset}
\label{figure:F4-K-example}
\end{figure}

\begin{figure}[H]
\scriptsize
\scalebox{.7}{\FKfour}
\caption{The graph $\fB(\Phi_K, \Delta_K, 4)$}
\end{figure}

\begin{figure}[H]
\scriptsize
\scalebox{.7}{\FKthree}
\caption{The graph $\fB(\Phi_K, \Delta_K, 3)$}
\end{figure}

\begin{figure}[H]
\scriptsize
\scalebox{.7}{\FKtwo}
\caption{The graph $\fB(\Phi_K, \Delta_K, 2)$}
\label{figure:F4-K-example-2}
\end{figure}

\begin{figure}[H]
\scriptsize
\scalebox{.7}{\FKone}
\caption{The graph $\fB(\Phi_K, \Delta_K, 1)$}
\end{figure}
\end{example}

As is illustrated in Figure \ref{figure:F4-K-example},
the poset graph $\Phi_K$
may have directed edges
connecting non-adjacent $K$-heights.
In other words,
$K$-height is not additive.
In this paper,
edges that skips layers
will not be relevant
to us anyway,
and we may as well redefine
the directed tree structure
as $\beta\to \alpha$
if and only if
$\Ht_K(\alpha)-\Ht_K(\beta)=1$
and $\alpha-\beta\in \Delta_K$.
All that matter are the bipartite graphs.

\section{Twisted Borels and relative root posets}

We recall the structure theorem for mod $p$ Galois representations
established in our earlier work:

\begin{thm} (\cite[Theorem 4]{Lin22})
\rm
If $\bar\rho:\Gal_F\to \bG(\bFp)$
is a semisimple Galois representation,
then, up to conjugation, there
is a Levi subgroup $\bM\subset \bG$
with a maximal torus $\bT\subset \bM$
such that
\begin{itemize}
\item the connected centralizer of $\bar\rho(I_F)$
in $\bM$ is equal to $\bT$,

\item there is an {\it elliptic Weyl group element}
$w\in N_{\bM}(\bT)/\bT$ such that
$\bar\rho$ factors through $\bT(\bFp) \langle w\rangle
:=\langle\bT(\bFp), w\rangle
\subset \bM$.
\end{itemize}
Once a Levi $\bM$ is chosen, the maximal torus
$\bT\subset \bM$ is uniquely determined.
\end{thm}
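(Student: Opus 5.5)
The plan is a reduction to tame inertia, followed by a torus-normalization argument, with an induction on $\dim\bG$ to handle the one case where the naive construction gives too large a centralizer. Here "semisimple" is taken in Serre's sense: whenever $\bar\rho$ factors through a parabolic of $\bG$, it factors through a Levi of that parabolic.

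\textbf{Step 1: kill wild inertia.} The image $\bar\rho(P_F)$ of wild inertia is a finite $p$-group that is normal in the finite group $\bar\rho(\Gal_F)$. Its elements are therefore unipotent. By the Borel--Tits construction, the subgroup $\bar\rho(P_F)$ lies in the unipotent radical of a parabolic normalized by all of $\bar\rho(\Gal_F)$. Semisimplicity then forces $\bar\rho$ into a Levi of that parabolic, on which this normal unipotent subgroup must be trivial. So $\bar\rho$ is tamely ramified. Consequently $\bar\rho(I_F)=\langle s\rangle$ for a single element $s$ of prime-to-$p$ order, hence semisimple. A lift $\phi$ of Frobenius satisfies $\phi s\phi^{-1}=s^{q}$.

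\textbf{Step 2: find a torus with Frobenius acting by a Weyl element.} Put $H=Z_{\bG}(s)^0$. This is connected reductive and contains a maximal torus of $\bG$ through $s$. Because $\phi$ normalizes $\langle s\rangle$, it normalizes $Z_{\bG}(\langle s\rangle)$, which has identity component $H$. The element $\phi$ has finite order, so $\mathrm{Ad}(\phi)$ is a finite-order automorphism of $H$, and Steinberg's theorem on quasi-semisimple automorphisms should give a $\phi$-stable pair $(\bB_H,\bT)$ in $H$. Then $s\in\bT$, and $\phi$ normalizes $\bT$, acting on it through some $w\in N_{\bG}(\bT)/\bT$. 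Define
\[
\bM:=Z_{\bG}\bigl((\bT^{w})^0\bigr).
\]
This is a Levi subgroup containing $\bT$ and $\phi$, with $w\in W_{\bM}$. By construction $w$ fixes no nonzero cocharacter of $\bT$ modulo the centre of $\bM$, so $w$ is elliptic in $W_{\bM}$. Since $s\in\bT$ and $\phi\in\bT w$, $\bar\rho$ factors through $\bT(\bFp)\langle w\rangle$.

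\textbf{Step 3: force $Z_{\bM}(s)^0=\bT$; this is the main obstacle.} In general $Z_{\bM}(s)^0=H\cap\bM$ may be strictly larger than $\bT$. I would handle this by induction on $\dim\bG$. Suppose $L:=Z_{\bM}(s)^0\supsetneq\bT$. The $\phi$-stable Borel $\bB_H\cap L$ of $L$, together with a $\phi$-stable cocharacter that is regular dominant for it, produces a proper parabolic of $\bM$ normalized by the whole image. Semisimplicity (applied inside $\bM$) then puts $\bar\rho$ into a proper Levi $\bM'$, and I would rerun Steps 1--2 inside $\bM'$. The delicate point is making the $\phi$-stable cocharacter genuinely non-central, so that the induction strictly decreases. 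The fallback I would try first is the opposite direction: choose $\bT$ so that $(\bT^{w})^0$ is as large as possible, i.e.\ $\bM$ minimal. Any nontrivial $L$ would then supply, through a $\phi$-stable Borel of $L$ and the $\phi$-fixed part of its torus, a larger $\phi$-fixed subtorus, contradicting minimality.

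\textbf{Step 4: uniqueness of $\bT$.} Once $\bM$ is fixed, $\bT$ is the unique maximal torus equal to $Z_{\bM}(\bar\rho(I_F))^0$, so it is determined by $\bM$.
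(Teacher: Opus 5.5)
Steps 1, 2 and 4 are sound in outline. Defining $\bM:=Z_{\bG}((\bT^{w})^0)$ is a nice touch: it makes ellipticity of $w$ automatic, whereas the paper simply cites \cite[Theorem 4]{Lin22} and deduces ellipticity from the uniqueness of $\bT$.

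\textbf{The gap in Step 3.} Step 3 is the first bullet of the theorem, and the induction you sketch there cannot work. Any $\phi$-fixed cocharacter of $\bT$ has image in $(\bT^{w})^0=Z(\bM)^0$. It is therefore central in $\bM$, and the parabolic of $\bM$ it defines is $\bM$ itself, so no proper Levi $\bM'$ ever appears. This is exactly the ``delicate point'' you could not resolve, and the minimality fallback needs the same missing fact.

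The missing idea is to use this centrality as a contradiction, and the torus from your Step 2 already suffices. Since $(\bB_H,\bT)$ is $\phi$-stable, $w$ permutes $\Phi^+(H,\bT)$. So if $\lambda_0\in X_*(\bT)$ pairs strictly positively with every root in $\Phi^+(H,\bT)$, then $\lambda:=\sum_{i=0}^{n-1}w^i\lambda_0$ is $w$-fixed and still pairs strictly positively with all of them. Hence $\lambda(\G_m)\subset(\bT^{w})^0=Z(\bM)^0$ is $H$-regular. Because $\bM=Z_{\bG}(Z(\bM)^0)$ and centralizers of tori in connected groups are connected, $Z_{\bM}(s)^0=Z_H(Z(\bM)^0)\subseteq Z_H(\lambda)=\bT$. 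No induction and no second use of semisimplicity is needed.

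\textbf{A smaller gap in Step 2.} Steinberg's theorem needs $\mathrm{Ad}(\phi)|_H$ to be a \emph{semisimple} automorphism. Finite order does not give this in characteristic $p$: for unipotent $1\neq u\in H$, $\mathrm{Ad}(u)$ stabilizes no Borel pair of $H$. You must show $\phi$ is semisimple, and this is where $p>h_{\bG}$ enters.
\begin{itemize}
\item The unipotent part $\phi_u$ of $\phi$ is a power of $\phi$, hence normalizes $H$.
\item $N_{\bG}(H)/H$ embeds in the Weyl group (by a Frattini argument on Borel pairs of $H$), and $p\nmid\#W$. So $\phi_u\in H\subset Z_{\bG}(s)$.
\item Then $\langle\phi_u\rangle$ is a central unipotent subgroup of $\bar\rho(\Gal_F)=\langle s,\phi\rangle$, and your Step 1 Borel--Tits argument forces it to be trivial.
\end{itemize}
With these two patches your outline becomes a complete, self-contained proof.
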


\begin{proof}
The only part not explicitly mentioned in \cite[Theorem 4]{Lin22}
is the ellipticity of $w$,
which is a consequence of the uniqueness
of $\bT$.
\end{proof}

\begin{remark}
In other words,
the image of $\bar\rho$ is generated by two elements
$s\in \bT(\bFp)$ and $t\in N_{\bM}(\bT)(\bFp)$
such that
\begin{itemize}
\item $s$ is a {\it regular semisimple element} of $\bM$, and
\item the action of $w$ on the root lattice of $\bM$ does not have eigenvalue $1$,
\item $t s t^{-1}=s^q$, where $q$ is the size of the residue field of $F$.
\end{itemize}
Here, $s$ is the image of a generator of the tame quotient
of $I_F$,
and $t$ is the image of a Frobenius element.

Note that we embed
$N_{\bM}(\bT)/\bT\hookrightarrow \bM$
through the canonical Tits representatives
(which does not need to be a group homomorphism);
indeed, the Tits representatives \MINOREDITED{generate}
a group of order 
$2^{k_0}\#(N_{\bM}(\bT)/\bT)$ for some integer $k_0$.
The order of $w$ in this paper is interpreted as
the order of the Tits representative of $w$
(we remark that it is a standard fact that the order of
$w$ in $\bM$ and the order of $w$ in $N_{\bM}(\bT)/\bT$
are either equal, or differ by a factor of $2$).

We don't necessarily have
$\bT\langle w\rangle\cong \bT\rtimes \langle w \rangle$.
\end{remark}

\begin{setup}
\label{setup:w}
Let $\bM\subset \bG$ be a proper Levi.
Let $w\in N_{\bM}(\bT)$ be an elliptic Weyl group element.

Let $U_{\bM}:=\bB\cap \bM$, which is the unipotent radical
of the Borel $\bB_{\bM}=\bB\cap \bM$.

Let $U^{\bM}:=\ker(U\to U_{\bM})$,
so that $U^{\bM}\rtimes \bM$ is the parabolic subgroup
for $\bM$.

Write $\Delta_{\bM}\subset \Delta$
for the simple positive roots
of $\bM$.

Write $\Phi_{\bM}$ for the roots of $\bM$.

Write $\Phi^{\bM+}\subset \Phi^+$
for the positive roots
that appear in $U^{\bM}$.

Let $\bar\rho^\SS:\Gal_F\to\bT(\bFp) \langle w\rangle$
be an elliptic Galois representation.
\end{setup}

\begin{defn}
A {\it $w$-relative root}
$\Balpha=\langle w\rangle \alpha$
is a $w$-orbit of roots
$\{\alpha, w \alpha, w^2\alpha, \ldots\}$.
The {\it $w$-relative root group}
is the product
\[
U_{\Balpha}=
U_{\langle w\rangle \alpha}
:=
\prod U_{w^n\alpha^{\vee}}.
\]
\end{defn}

The structure theorem imposes
the following very strong constraint:

\begin{lem}
\rm
Set
$
\Balpha-\Balpha=\{\alpha-\alpha'|\alpha,\alpha'\in \Balpha\}
$.
If $H^2(\Gal_F, U_{\Balpha}(\bFp))\neq 0$,
then
$(\Balpha-\Balpha)\cap \Phi_{\bM}=\{0\}$.
\end{lem}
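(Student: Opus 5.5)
We reduce the $H^2$-condition to a statement about characters of the finite group $\bT(\bFp)\langle w\rangle$. By local Tate duality, $H^2(\Gal_F, U_{\Balpha}(\bFp))\neq 0$ if and only if $H^0(\Gal_F, U_{\Balpha}(\bFp)^\vee(1))\neq 0$. Equivalently, the $\Gal_F$-representation $U_{\Balpha}(\bFp)$ admits a quotient isomorphic to the mod $p$ cyclotomic character $\bar\omega$. Since $\bar\rho^\SS$ factors through $\bT(\bFp)\langle w\rangle$, the group $U_{\Balpha}$ is an induced-type representation. Inertia acts on $U_{w^n\alpha^\vee}$ through the character $w^n\alpha^\vee\circ\bar\rho^\SS|_{I_F}$, and Frobenius permutes these lines cyclically via the Tits representative of $w$. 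So the first step is to record that a cyclotomic quotient forces inertia to act on \emph{every} line $U_{w^n\alpha^\vee}$ by the same character, namely $\bar\omega|_{I_F}$. The reason is that the inertia characters on the lines of one orbit are Frobenius-conjugate, hence $q$-power twists of one another. A one-dimensional Frobenius-stable quotient on which inertia acts by $\bar\omega$ (which satisfies $\bar\omega^q=\bar\omega$ on $I_F$) must pair nontrivially with some line. Because Frobenius moves the lines transitively, the whole orbit then has inertia character $\bar\omega|_{I_F}$.

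Next, we translate this equality into the torus element $s$, the image of a tame inertia generator. For any two roots $\alpha,\alpha'\in\Balpha$, we get $\alpha^\vee(s)=\alpha'^\vee(s)$, that is, $(\alpha-\alpha')^\vee(s)=1$. Here we use that these are characters of $\bT$ evaluated at $s$; the coroot and root notation just reflects the dual group convention, and $\alpha^\vee-\alpha'^\vee$ is the corresponding character.

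Now suppose $\alpha-\alpha'=\eta\in\Phi_{\bM}$ with $\eta\ne 0$. Then $\eta^\vee(s)=1$ for a root $\eta$ of $\bM$. Since $s$ is regular semisimple in $\bM$ (see the Remark following the structure theorem), no root of $\bM$ is trivial on $s$. This contradiction gives $(\Balpha-\Balpha)\cap\Phi_{\bM}=\{0\}$. The regularity needed is exactly the statement that the connected centralizer of $\bar\rho(I_F)$ in $\bM$ is $\bT$. We note that if $\eta(s)=1$, then $U_{\eta^\vee}$ centralizes $s$, which is equivalent to the same contradiction.

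The main obstacle is the first step: checking rigorously that a nonzero $H^2$ forces all inertia characters in the orbit to coincide. The worry is that the Tits representative of $w$ may act on the lines with signs or scalars rather than by a plain permutation. We would handle this by computing $H^0$ of the dual twist explicitly. Restricted to the prime-to-$p$ inertia, which has order prime to $p$, the representation is semisimple and splits into isotypic pieces by inertia character. The $\bar\omega$-isotypic piece is Frobenius-stable and is spanned by the lines whose character equals $\bar\omega$. This piece is nonzero only if at least one line has character $\bar\omega$. Frobenius-conjugacy then gives the same character on all lines of the orbit, since $\bar\omega^{q^n}=\bar\omega$. Scalars coming from the Tits representatives do not affect the inertia characters, so they cause no trouble.
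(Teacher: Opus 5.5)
Your argument is correct and is essentially the paper's proof. Local Tate duality produces a cyclotomic constituent, and Frobenius conjugacy ($tst^{-1}=s^q$, with Frobenius permuting the lines $U_{w^n\alpha^\vee}$) forces every line in $\Balpha$ to carry that same character; a nonzero root of $\bM$ in $\Balpha-\Balpha$ would then be trivial on $s$, contradicting the regularity of $s$ in $\bM$. The only difference is cosmetic: the paper restricts to $\Gal_{F_m}$ and compares the full characters $\chi,\chi^q,\dots,\chi^{q^{m-1}}$, whereas you compare only their restrictions to inertia, which is all the final regularity step needs.
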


\begin{proof}
Write $m$ for $\#\Balpha$
and let $F_m/F$ be the unramified extension of degree $m$.
Then as a $\Gal_{F_m}$-module,
$U_{\Balpha}(\bFp)$
is a direct sum of Galois characters
$\chi, \chi^q, \cdots, \chi^{q^{m-1}}$
where $\chi:\Gal_{F_m}\to \bFp^\times$
is the adjoint action on $U_{\alpha^\vee}$.
If
\[
H^2(\Gal_F, U_{\Balpha}(\bFp))
\cong H^0(\Gal_F, U_{\Balpha}(\bFp)^\vee(1))^\vee
\neq 0,
\]
then at least one of
$\{\chi, \chi^q, \cdots, \chi^{q^{m-1}}\}$
must be the cyclotomic character,
and thus they must all be the cyclotomic character.
This implies each quotient
$\chi/\chi^{q^k}$ is the trivial character.
If $\Balpha-\Balpha$ contains a nontrivial root
$\beta$
of $\Phi_{\bM}$,
then the action of $\bar\rho^\SS(I_F)$
on the root group $U_{\beta^\vee}$ is trivial,
which contradicts the setup
that $\bar\rho$ is an elliptic Galois representation
(which implies $\bar\rho(I_F)\subset \bT$
contains a regular semisimple element of $\bM$).
\end{proof}

\begin{example}
We first consider the case where $\bG=\GL_5$
and that $\bM=\GL_2\times \GL_3$:

\blockFive

\noindent
Say $\bar\rho^\SS=\bar\rho_1\times \bar\rho_2$
where $\bar\rho_1:\Gal_F\to \GL_2(\bFp)$
and $\bar\rho_2:\Gal_F\to \GL_3(\bFp)$
are irreducible Galois representations.
We have
$\Hom_{\Gal_F}(\bar\rho_1(1), \bar\rho_2)=0$
as they are irreducible Galois representations
of different rank.
The single six-element $w$-orbit is drawn above.
Note that $1111-1100=0011$, which is a root
for $\GL_3\subset\bM$.

On the other hand, consider $\bG=\GL_4$:

\begin{tikzpicture}[
    x=8mm,y=8mm,
    >=Stealth,
    every node/.style={font=\scriptsize}
]

\draw[step=1] (0,0) grid (4,-4);

\fill[gray!55] (0,0) rectangle (2,-2);
\fill[gray!55] (2,-2) rectangle (4,-4);

\foreach \i in {1,...,4}{
  \node[left]  at (0,-\i+0.5) {$\i$};
  \node[above] at (\i-0.5,0) {$\i$};
}


\node[text=red]  (r13) at (2.5,-0.5) {110};
\node[text=red]  (r24) at (3.5,-1.5) {011};

\node[text=blue] (b14) at (3.5,-0.5) {111};
\node[text=blue] (b23) at (2.5,-1.5) {010};

\draw[->,red,thick,bend left=18] (r13) to (r24);
\draw[->,red,thick,bend left=18] (r24) to (r13);

\draw[->,blue,thick,bend left=18] (b14) to (b23);
\draw[->,blue,thick,bend left=18] (b23) to (b14);

\end{tikzpicture}

\noindent
There are two $w$-orbits:
$\{110,011\}$ and $\{111,010\}$.
The difference $110-011$ is not even a root.
Neither is the difference $111-010=101$.
This shows that both
\[
H^2(\Gal_F, U_{\langle w\rangle 110}(\bFp))
\text{~and~}
H^2(\Gal_F, U_{\langle w\rangle 010}(\bFp))
\]
can be non-trivial.
Nevertheless, they cannot be \MINOREDITED{non-trivial} {\it simultaneously},
as $110-010=100$ is a root of $\bM$.
This observation can also be confirmed by
local Tate duality --
nonvanishing of $H^2(\Gal_F, U^{\bM}(\bFp))$
\MINOREDITED{implies} $\bar\rho_1(1)\cong \bar\rho_2$,
and by Schur's lemma, we have
$\Hom_{\Gal_F}(\bar\rho_1(1), \bar\rho_2)\cong\bFp$.
\end{example}

\begin{lem}
\rm
(1) If $H^2(\Gal_F, U_{\Balpha}(\bFp))$
and $H^2(\Gal_F, U_{\Balpha'}(\bFp))$
are both nontrivial,
then
\[
((\Balpha\cup\Balpha')-(\Balpha\cup\Balpha'))
\cap \Phi_{\bM}=\{0\}.
\]

(2) If $H^i(\Gal_F, U_{\Balpha}(\bFp))\neq 0$
for $i=0,2$,
then it is equal to $\bFp$.
\end{lem}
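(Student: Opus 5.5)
We reduce both parts to the unramified-induction description used in the proof of the preceding lemma. Let $m=\#\Balpha$ and let $F_m/F$ be the unramified extension of degree $m$. By Shapiro's lemma, $U_{\Balpha}(\bFp)\cong \operatorname{Ind}_{\Gal_{F_m}}^{\Gal_F}\chi_\alpha$, where $\chi_\alpha:\Gal_{F_m}\to\bFp^\times$ is the character through which $\bar\rho^\SS|_{\Gal_{F_m}}$ acts on $U_{\alpha^\vee}$. Hence
\[
H^i(\Gal_F,U_{\Balpha}(\bFp))\cong H^i(\Gal_{F_m},\chi_\alpha),\qquad i=0,2.
\]
The right-hand side is a cohomology group of a one-dimensional representation of $\Gal_{F_m}$.

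For part (2), $H^0(\Gal_{F_m},\chi_\alpha)\subset\bFp$, so it is either $0$ or $\bFp$. By local Tate duality over $F_m$,
\[
H^2(\Gal_{F_m},\chi_\alpha)\cong H^0(\Gal_{F_m},\chi_\alpha^{-1}(1))^\vee,
\]
which is likewise either $0$ or $\bFp$. So part (2) follows directly, provided the Shapiro identification is made carefully. The point to check is the Tits representative of $w$: the Frobenius acts on $\bigoplus_n U_{w^n\alpha^\vee}$ by permuting the summands cyclically, possibly with nontrivial scalars. After $m$ steps one returns to $U_{\alpha^\vee}$ with some scalar factor, and this factor is simply absorbed into $\chi_\alpha(\Frob_{F_m})$. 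So the module really is induced from a character.

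For part (1), suppose both $H^2$'s are nonzero. As in the previous lemma, every root $\eta\in\Balpha\cup\Balpha'$ then has $\bar\rho^\SS(I_F)$ acting on $U_{\eta^\vee}$ through the cyclotomic character. The reason is that the inertia characters on the root groups in an orbit are the $q$-power twists $\chi^{q^k}|_{I}$, and one of them equals $\omega$, which is stable under $q$-powers. Now take $\eta,\eta'\in\Balpha\cup\Balpha'$ with $\eta-\eta'\in\Phi_{\bM}\setminus\{0\}$. Then $I_F$ acts on $U_{(\eta-\eta')^\vee}$ by $\omega\omega^{-1}=1$, so $\bar\rho^\SS(I_F)$ centralizes a root group of $\bM$. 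This contradicts the fact that the connected centralizer of $\bar\rho(I_F)$ in $\bM$ is $\bT$, i.e.\ that the tame generator $s$ is regular semisimple in $\bM$.

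The step needing care is this passage from characters to roots: a character of the torus restricted to $\bar\rho(I_F)$ must be additive in the root lattice. It is, because $\bar\rho(I_F)\subset\bT(\bFp)$ and roots are characters of $\bT$, so the inertia character attached to $\eta-\eta'$ is the quotient of those attached to $\eta$ and $\eta'$. With this, the argument covers pairs drawn from the same orbit and from different orbits uniformly, and part (1) reduces to the computation already carried out in the previous lemma.
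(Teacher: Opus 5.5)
Your proposal is correct and is essentially the paper's argument. Part (2) is the Frobenius reciprocity (Shapiro) reduction to a single character of $\Gal_{F_m}$, which the paper cites in one line, and part (1) is the argument of the preceding lemma run on $\Balpha\cup\Balpha'$, which is what the paper's ``Clear'' abbreviates. One small caveat concerns your additivity step. It holds when the labels $\eta$ are read as the $\bT$-characters on the root groups, i.e.\ roots of $\bG$. Under the literal convention $U_{\eta^\vee}$ with $\eta$ a root of $G$, the map $\eta\mapsto\eta^\vee$ is not additive in non-simply-laced types. The paper's preceding lemma makes the same identification, so this is not a gap relative to the paper.
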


\begin{proof}
(1) Clear.
(2) It follows from Frobenius reciprocity.
\end{proof}

\begin{defn}
Let $\alpha=\sum_{\delta\in \Delta}n_\delta\delta\in \Phi^+$.
Define
\begin{align*}
\Ht^{\bM}(\alpha)
:=\sum_{\delta\not\in \Delta_{\bM}}n_\delta,\text{~and}
\Ht_{\bM}(\alpha)
:=\sum_{\delta\in \Delta_{\bM}}n_\delta.
\end{align*}
\end{defn}

The descending central filtration for $U^{\bM}$
coincides with the $\Ht^{\bM}$ filtration:
if we write
\[
\Lie U^{\bM}_k
=
\begin{cases}
\Lie U^{\bM} & k=1 \\
[\Lie U^{\bM},\Lie U^{\bM}_{k-1}] & k>1,
\end{cases}
\]
then
\[
\bar U^{\bM}_k:=U^{\bM}_k/U^{\bM}_{\MINOREDITED{k+1}}
=\prod_{\alpha\in \MINOREDITED{\Phi^{\bM+}}, \Ht^{\bM}(\alpha)=k}
U_{\alpha^{\vee}}.
\]

Since $w$ preserves the height $\Ht^{\bM}$,
each $U^{\bM}_{k}$
is a product
of $w$-relative root groups.

\subsection{Relative root posets}
Recall that $\Phi^+$
is a directed graph,
whose directed edges $\beta\to \alpha$
are precisely positive roots
$\alpha, \beta$ such that $\alpha-\beta\in \Delta$.
We can similarly equip $\Phi^{\bM+}$
with a directed graph structure
whose directed edges
$\beta\to \alpha$
are positive roots $\alpha, \beta$
satisfying $\alpha-\beta\in \Phi^{\bM+}$
and $\Ht^{\bM}(\alpha-\beta)=1$.

\begin{defn}
The {\it $w$-root poset}
\[
\BPhi^+
=\BPhi^{\bM+}:=\Phi^{\bM+}/w
\]
is by definition
the quotient graph of $\Phi^{\bM+}$
by the cyclic group action of $\{1, w, w^2, \cdots\}$.
Write
\[
\BDelta=\BDelta^{\bM}
:=\{\Balpha\in \BPhi^+|\Ht^{\bM}(\Balpha)=1\},
\]
and call it the {\it $w$-relative simple roots}.

Denote by $\fB(\BPhi^+, \BDelta, h)\subset \BPhi^+$
for the induced bipartite subgraph consisting
of $w$-relative roots $\Balpha$ with $\Ht^{\bM}(\Balpha)\in\{h,h+1\}$.

We say a $w$-relative root $\Balpha$
is {\it non-obstructing}
if $(\Balpha-\Balpha)\cap \Phi_{\bM}\neq \{0\}$.
We say a set $S$ of $w$-relative root
is {\it non-obstructing}
if $(\bigcup_{\Balpha\in S}\Balpha-\bigcup_{\Balpha\in S}\Balpha)\cap \Phi_{\bM}\neq \{0\}$.

We say \MINOREDITED{an} induced bipartite subgraph
$\fB\subset \BPhi^+$
is {\it non-obstructing}
if the set of target vertices
$\fB^{\tar}$ is non-obstructing.
\label{def:non-obstructing}
\end{defn}

\begin{defn}
Let $\fB\subset \BPhi^+$ be an induced bipartite subgraph.
The {\it associated absolute bipartite graph}
$\fB_{\abs}$ is by definition the directed \MINOREDITED{bipartite} graph such that
\begin{itemize}
\item the source vertices of $\fB_{\abs}$ are the absolute roots
contained in the $w$-relative roots
that are source vertices of $\fB$,
\item the target vertices $\fB_{\abs}$ are the absolute roots
contained in the $w$-relative roots
that are target vertices of $\fB$,
\item the graph structure of $\fB_{\abs}$
is given by the induced subgraph structure of $\Phi^+$.
\end{itemize}
\end{defn}

\begin{thm}
\rm
(1) If $\bG=F_4$, then all cycles in
$\fB(\BPhi^+, \BDelta, h)$ are non-obstructing.

(2) If $\bG=E_6, E_7$ and $E_8$,
then all cycles $\fC$ in
$\fB(\BPhi^+, \BDelta, h)$ \MINOREDITED{are} either non-obstructing
or have admissible associated absolute bipartite graph
$\fC_{\abs}$.
\label{thm:twist-C6}
\end{thm}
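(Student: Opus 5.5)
The statement is finite and combinatorial, so the plan is an exhaustive enumeration organized to make the search small.

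First I reduce the data. For each exceptional $\bG$, the relevant triples are: a proper Levi $\bM\subset\bG$ (up to $W$-conjugacy, i.e.\ subsets $\Delta_{\bM}\subset\Delta$ modulo diagram symmetry), and an elliptic element $w\in N_{\bM}(\bT)/\bT$ (up to conjugacy in the Weyl group of $\bM$). Ellipticity means $w$ has no eigenvalue $1$ on the root lattice of $\bM$. By the conjugacy classification (Carter), elliptic classes of the Weyl group of a Levi are products of elliptic classes of its simple factors. There are therefore finitely many pairs $(\bM,w)$, and they are explicitly listable.

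For each pair I would:
\begin{itemize}
\item compute the orbits of $\langle w\rangle$ on $\Phi^{\bM+}$, noting that $w$ preserves $\Ht^{\bM}$;
\item build the quotient graph $\BPhi^+$ with its layers $\fB(\BPhi^+,\BDelta,h)$;
\item repeatedly delete contractible vertices (a source with no incoming edge and exactly one outgoing edge, together with its target) until only isolated vertices and cycles remain.
\end{itemize}
Since contraction only removes vertices, every cycle in the sense of the definition is a connected component of the fully contracted graph. This mirrors the proof of Proposition~\ref{prop:perfect-match} for the absolute poset.

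For each surviving cycle $\fC$, I first test the non-obstructing condition of Definition~\ref{def:non-obstructing}. Let $S$ be the union of the absolute roots in the target orbits of $\fC$. The test asks whether $(S-S)\cap\Phi_{\bM}$ contains a nonzero root, which is a pure lattice computation.

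For (1), the claim is that every cycle for $\mathrm{F}_4$ passes this test. The heuristic is that in $\mathrm{F}_4$, any cycle in a relative layer has at least two target orbits. Two absolute targets in a $\vec C_6$ both cover a common source, so their difference is a difference of two simple-type roots $\delta-\delta'$. Once $\Ht^{\bM}$ is accounted for, I expect this difference to lie in $\Phi_{\bM}$. I would verify this case by case rather than trust the heuristic.

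For (2), when a cycle $\fC$ in $E_6,E_7,E_8$ is obstructing, I form $\fC_{\abs}$ as the induced subgraph of $\Phi^+$ and check admissibility as in Definition~\ref{def:admissible}. This means checking that the Jacobian of the discriminant equations, with respect to the $y_\beta$, has full rank $\#\fB^{\tar}$ at every point of $\G_m^{\Delta_{\fB}}$. Because the entries are monomials $N_{\beta,\alpha-\beta}x_{\alpha-\beta}$, I would proceed in two steps. First, contract $\fC_{\abs}$ inside $\Phi^+$ as far as possible; each contraction removes one row and one column, with a unit pivot. Then compare the residual core with the list in Table~\ref{tab:layer-graphs}: $\vec C_6$, $\overrightarrow{C_6\cup_{K_2}C_6}$, and the triple gluing. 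For these cores, admissibility reduces to nonvanishing of the signed sums of products of structure constants, exactly as in the $\vec C_6$ example. There the sums lie in $\{\pm2,3\}$ and are nonzero since $p>h_{\bG}>3$, so I can invoke Theorem~\ref{thm:C6} and Appendix~\ref{app:root-poset-cycles} for these cores.

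The main obstacle is that $\fC_{\abs}$ is not in general an induced subgraph of a single absolute layer containing all height-$h$ roots. It is assembled from several absolute layers, because $\Ht^{\bM}$ and $\Ht$ differ. So Theorem~\ref{thm:C6} cannot be applied blindly. I expect to settle this by direct machine verification. For each obstructing cycle, compute the maximal minors of the monomial Jacobian and confirm that their gcd, as a polynomial in the $x_\delta$, is a monomial times a nonzero integer prime to $p$. Separately, I would check the equivalent condition that $\fC_{\abs}$ splits into blocks that contract to the tabulated cores.
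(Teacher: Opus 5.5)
Your strategy is the paper's own: enumerate the finitely many pairs $(\bM,w)$, contract the relative layers, test non-obstruction, and certify the surviving absolute Jacobians by machine (Algorithm~\ref{alg:twisted-cycles} in Appendix~\ref{app:root-poset-cycles}). However, two steps of your plan would not certify the statement in the form in which it is used.

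\emph{First gap: you discard non-obstructing components wholesale.} You claim that every cycle is a connected component of the fully contracted layer. This is false: each $\vec C_6$ inside $\overrightarrow{C_6\cup_{K_2}C_6}$ is itself connected and non-contractible. The claim is what leads you to throw away a non-obstructing component entirely.

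What actually has to be certified is full rank of the Jacobian whose target set is a layer of $\BPsi_2$; this is what Theorem~\ref{thm:twist-reg}(2) uses through Corollary~\ref{cor:w-C6}. Here $\BPsi_2$ is only required to be \emph{not} non-obstructing. That property passes to subsets, and admissibility passes to target subsets (any set of rows of a pointwise full-rank matrix is still full rank). So your treatment of obstructing components is fine.

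The problem is the other branch. A non-obstructing component can contain a not-non-obstructing target subset $T$ whose restriction, with all sources kept, still fails to contract. Your plan never examines such $T$. This affects part (1) as well: for $\mathrm{F}_4$ one must show that every such $T$ contracts to the empty graph.

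The paper handles this as follows. It deletes the individually non-obstructing targets and contracts. It then loops over every target subset $T$ containing no non-obstructing pair, restricts to $T$, and contracts again before passing to the absolute graph. You need this loop. Contracting the full layer first is harmless, because a leaf $v\to v'$ either stays a leaf after restricting to $T$ or becomes isolated.

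\emph{Second gap: your admissibility certificate is not sufficient.} By Definition~\ref{def:admissible}, admissibility means the coefficient matrix has full row rank at every point of $\G_m^{\Delta_{\fB}}$. Equivalently, its maximal minors generate the unit ideal of the Laurent polynomial ring.

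A monomial gcd of the minors only excludes a common divisor; it does not exclude a common zero in higher codimension. For example, $x_1-x_2$ and $x_1-x_3$ are coprime, but both vanish on $x_1=x_2=x_3$.

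The paper instead requires a single maximal minor to be a nonzero monomial whose integer coefficient is prime to $p$, and that does certify the condition. Your fallback, that $\fC_{\abs}$ splits into blocks contracting to the cores of Table~\ref{tab:layer-graphs}, is not equivalent to admissibility. Moreover, as you note yourself, those absolute-layer results do not apply to $\fC_{\abs}$, which mixes absolute heights.
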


\begin{proof}
This follows from Appendix~\ref{app:root-poset-cycles}.
\end{proof}

\begin{example}
($w$-root poset graphs for $F_4$)
We will only show the rank $1$ Levi cases here:
there are four rank $1$ Levi's, corresponding to the four
simple roots;
and each rank $1$ Levi has a unique elliptic Weyl group element.
The non-obstructing $w$-relative roots are colored by gray.

\begin{figure}[H]
\begin{subfigure}{0.24\textwidth}
\centering
\scalebox{.7}{
\FwPosetA
}
\caption{$w$-root poset for $\bM_{1000}$}
\end{subfigure}
\begin{subfigure}{0.24\textwidth}
\centering
\scalebox{.7}{
\FwPosetB
}
\caption{$w$-root poset for $\bM_{0100}$}
\end{subfigure}
\begin{subfigure}{0.26\textwidth}
\centering
\scalebox{.7}{
\FwPosetC
}
\caption{$w$-root poset for $\bM_{0010}$}
\end{subfigure}
\begin{subfigure}{0.24\textwidth}
\centering
\scalebox{.7}{
\FwPosetD
}
\caption{$w$-root poset for $\bM_{0001}$}
\end{subfigure}
\end{figure}
\end{example}

\section{Unramified descent}
Let $F_n/F$ be the
unramified extension
of $p$-adic fields
of degree $n$ coprime to $p$.
Then $\bfE_{F_n}/\bfE_F$ is
also an unramified extension.
As a consequence,
$\bfE_{F_n}/\bfE_F$ is a
finite \'etale extension.
Let $A$ be an $\bFp$-algebra.

\begin{lem}
$\bfE_{F_n}/\bfE_{F}$
is necessarily an unramified extension of degree $n$.
\end{lem}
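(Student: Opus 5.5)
The plan is to realize both field-of-norms rings explicitly and compare their residue fields and uniformizers. Recall that $\bfE_F=k_{F_{\cyc}}(\!(T_F)\!)$ is the field of norms of the cyclotomic tower $F_{\cyc}/F$. Since $F_n/F$ is unramified of degree $n$ prime to $p$, the extension $F_n$ and $F(\mu_{p^\infty})$ behave well together, and one has $F_{n,\cyc}=F_n F_{\cyc}$. Moreover $F_{n,\cyc}/F_{\cyc}$ is unramified; this is because unramifiedness is preserved under base change. Its degree is $n/d$, where $d=[F_n\cap F_{\cyc}:F]$.

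The first step is to show that $d=1$ after the $\Delta_F$-twist is accounted for. Here I would argue as follows. The maximal unramified subextension of $F(\mu_{p^\infty})/F$ is contained in $F(\mu_p)$, and it is cut out by a quotient of $\Delta_F$. Since $F_{\cyc}=F(\mu_{p^\infty})^{\Delta_F}$ is totally ramified over $F$, we get $F_n\cap F_{\cyc}=F$, so $[F_{n,\cyc}:F_{\cyc}]=n$. Note that the corresponding $\Delta$ and $\Gamma$ for $F_n$ may differ slightly from those for $F$, and I would check this carefully. The key input is that $\Gamma\cong\Z_p$ is pro-$p$ while $n$ is prime to $p$; this guarantees that $\Gamma_{F_n}=\Gamma_F$, so the $\Z_p$-towers match.

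The second step applies the field-of-norms functor of Fontaine--Wintenberger. The functor $X(-)$ gives an equivalence between finite separable extensions of $F_{\cyc}$ and finite separable extensions of $\bfE_F$. This equivalence preserves degrees, and it preserves ramification indices and residue degrees. It follows that $\bfE_{F_n}=X(F_{n,\cyc})$ is a separable extension of $\bfE_F$ of degree $n$. It is unramified because $F_{n,\cyc}/F_{\cyc}$ is unramified: its residue field is $k_{F_{n,\cyc}}$, which has degree $n$ over $k_{F_{\cyc}}$, and $T_F$ remains a uniformizer. Concretely, this gives $\bfE_{F_n}\cong k_{F_{n,\cyc}}\otimes_{k_{F_{\cyc}}}\bfE_F$.

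The main obstacle is the first step, namely the bookkeeping of the torsion part $\Delta_F$ and the identification $F_{n,\cyc}=F_nF_{\cyc}$. My first attempt would be to use that $\Delta_F\subset\Z_p^\times$ has order dividing $p-1$. This gives a canonical decomposition of $\Gal(F_n(\mu_{p^\infty})/F_n)$ as a subgroup of $\Gal(F(\mu_{p^\infty})/F)$, and both factors are compatible. From there, comparing the pro-$p$ parts yields the claim.
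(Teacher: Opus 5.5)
Your overall route is the paper's: show that $F_n$ and $F_{\cyc}$ are linearly disjoint over $F$, so that $F_{n,\cyc}=F_nF_{\cyc}$ is unramified of degree $n$ over $F_{\cyc}$, and then pass to $\bfE$. Your field-of-norms step simply makes explicit what the paper leaves implicit.

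However, the argument you actually write down for $F_n\cap F_{\cyc}=F$ is wrong. In general the maximal unramified subextension of $F(\mu_{p^\infty})/F$ need not lie in $F(\mu_p)$, and $F_{\cyc}/F$ need not be totally ramified; this is why one must work with $k_{F_{\cyc}}$ rather than $k_F$. Here is a counterexample. Let $p$ be odd and $L=\Q_{p^p}(\mu_{p^2})$, where $\Q_{p^p}/\Q_p$ is unramified of degree $p$, so that $\Gal(L/\Q_p)\cong\Z/p\times(\Z/p^2)^\times$. Let $F=L^H$ with $H=\langle(\Frob,g)\rangle$ for some $g\in(\Z/p^2)^\times$ of order $p$. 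Since $g\equiv 1 \bmod p$, we get $\mu_p\subset F$, hence $\Delta_F=1$ and $F_{\cyc}=F(\mu_{p^\infty})$. Moreover, $H$ meets trivially both the inertia group $0\times(\Z/p^2)^\times$ and $\Z/p\times 1$. Hence $F(\mu_{p^2})=L$, and $L/F$ is unramified of degree $p$ with $L\subset F_{\cyc}$.

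Notice that your total-ramification argument never uses $p\nmid n$, so it would also ``prove'' the lemma for $n=p$, where it fails in this example: $F_p=L\subset F_{\cyc}$, so $F_{p,\cyc}=F_{\cyc}$ and $\bfE_{F_p}=\bfE_F$.

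The correct reason is the one you call the key input and return to in your last paragraph, and it is exactly the paper's argument. The extension $F_{\cyc}/F$ is pro-$p$, so all its finite subextensions have $p$-power degree, while $[F_n:F]=n$ is prime to $p$; hence $F_n\cap F_{\cyc}=F$. Equivalently, restriction identifies $\Gal(F_n(\mu_{p^\infty})/F_n)$ with a subgroup of $\Delta_F\times\Gamma_F$ of index dividing $n$. Its pro-$p$ part is therefore all of $\Gamma_F$, which gives $F_{n,\cyc}=F_nF_{\cyc}$ with $[F_{n,\cyc}:F_{\cyc}]=n$. Replace the total-ramification argument by this one, and the rest of your proof goes through.
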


\begin{proof}
Recall that $\Gamma_F=\Gal_{F(\mu_{p^\infty})/F}
\cong \Z_p \times \Delta_F$,
and 
$\bfE_{F}= (k_{F(\mu_{p^\infty})}(\!(T_{F}')\!))^{\Delta_F}
=
k_{F(\mu_{p^\infty})^{\Delta_F}}(\!(T_{F})\!)
$.
Here, $k_F$ is the residual field of $F$.
Note that
$F(\mu_{p^\infty})^{\Delta_F}/F$ is a $p$-power extension,
and thus
$F_n(\mu_{p^\infty})^{\Delta_F}/F(\mu_{p^\infty})^{\Delta_F}$
is unramified of degree $n$.
\end{proof}

Let $M$ be an \'etale
$(\varphi, \Gamma)$-module
over $\bfE_{F,A}$.
Then $M\otimes_{\bfE_{F,A}}\bfE_{F_n,A}$
is an \'etale
$(\varphi, \Gamma)$-module
over $\bfE_{F_n,A}$,
together with Galois descent data:

\begin{prop}
Write $\sigma$ for a generator
of $\Gal(\bfE_{F_n}/\bfE_F)$.
There is an equivalence
of categories
\[
\{\text{\'etale }
(\varphi,
\Gamma)\text{-modules }
(M,
\phi_M,
\gamma_M)
\text{ over }
\bfE_{F, A}\}
\cong
\left\{
\begin{matrix}
\text{\'etale }
(\varphi,
\Gamma)\text{-modules }
(M',
\phi_{M'},
\gamma_{M'})
\text{ over }
\bfE_{F_n, A}
\\
\text{together with }
\sigma_{M'}:
\sigma^*M'\xrightarrow{\cong}M'
\\
\text{that commutes with 
$\phi_{M'}$
and $\gamma_{M'}$, and satisfies the cyclic cocycle condition}
\end{matrix}
\right\}
\]
\label{prop:udescent}
\end{prop}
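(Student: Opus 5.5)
The plan is to treat this as Galois descent along the finite étale Galois cover $\bfE_{F,A}\to\bfE_{F_n,A}$, which has cyclic group $\langle\sigma\rangle\cong\Z/n$. The extra structures $\varphi$ and $\Gamma$ are then carried along because they commute with $\sigma$.

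First, I would check that $\bfE_{F_n,A}=\bfE_{F_n}\otimes_{\bfE_F}\bfE_{F,A}$. By the preceding lemma, $\bfE_{F_n}=k'(\!(T_F)\!)$ with $k'/k_{F_{\cyc}}$ the unramified extension of degree $n$, so $\bfE_{F_n}=\bfE_F\otimes_{k_{F_{\cyc}}}k'$. Since $k'$ is finite over $k_{F_{\cyc}}$, tensoring with it commutes with the inverse limit defining $\bfE_{F,A}$. Hence $\bfE_{F_n,A}=\bfE_{F,A}\otimes_{k_{F_{\cyc}}}k'$, and this is a finite étale Galois extension of $\bfE_{F,A}$ with group $\langle\sigma\rangle$, where $\sigma$ acts on $k'$ by Frobenius. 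I would also check that the $\varphi$- and $\Gamma$-actions on $\bfE_{F_n,A}$ commute with $\sigma$. The Frobenius $\varphi$ acts on $k'$ by $x\mapsto x^p$, which commutes with the Galois group of a finite field extension. The group $\Gamma$ acts trivially on $k'$, and $\sigma$ fixes $T_F$.

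Second, I would apply faithfully flat (finite étale) Galois descent for finite projective modules. Modules over $\bfE_{F,A}$ are equivalent to modules over $\bfE_{F_n,A}$ equipped with isomorphisms $\sigma_{M'}:\sigma^*M'\to M'$ satisfying the cyclic cocycle condition $\sigma_{M'}\circ\sigma^*\sigma_{M'}\circ\cdots\circ(\sigma^{n-1})^*\sigma_{M'}=\mathrm{id}$. Under this equivalence the descended module is $M=(M')^{\sigma}$. Projectivity and finiteness descend by fpqc descent of these properties.

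Third, I would match the semilinear operators. Given $M$, the maps $\phi_M\otimes\varphi$ and $\gamma_M\otimes\gamma$ on $M\otimes\bfE_{F_n,A}$ commute with $\mathrm{id}\otimes\sigma$, which is the descent datum. Conversely, operators commuting with $\sigma_{M'}$ preserve the invariants and so restrict to $M$. The étaleness condition, that $\varphi^*M\to M$ is an isomorphism, can be checked after the faithfully flat base change.

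I expect the main obstacle to be the first step: identifying the completed coefficient ring $\bfE_{F_n,A}$ with the base change, so that descent applies. The commutation of $\varphi$ and $\sigma$ is routine once the extension is known to be the constant-field extension.
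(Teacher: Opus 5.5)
Your proposal is correct and follows the paper's argument: Galois descent along the finite \'etale Galois cover $\bfE_{F,A}\to\bfE_{F_n,A}$, together with the fact that $\sigma$ commutes with $\varphi$ and $\Gamma$. You add useful detail the paper omits, namely identifying $\bfE_{F_n,A}$ with $\bfE_{F,A}\otimes_{k_{F_{\cyc}}}k'$ and checking \'etaleness after faithfully flat base change.

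One small correction: $\Gamma$ need not act trivially on $k'$, since it can act nontrivially on $k_{F_{\cyc}}$ when $F_{\cyc}/F$ has a nontrivial unramified subextension. The commutation still holds, because $\gamma(T_F)\in k_{F_{\cyc}}[\![T_F]\!]$ is fixed by $\sigma$ and $\Gal(k'/\F_p)$ is abelian.
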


\begin{proof}
Because $\bfE_{F_n}/\bfE_F$
is a finite \'etale Galois
extension with Galois group
$\Gal(\bfE_{F_n}/\bfE_F)$,
standard Galois
descent establishes
the equivalence at the level
of underlying modules.
Furthermore,
since $F_n/F$ is unramified,
the action of
$\Gal(\bfE_{F_n}/\bfE_F)$
commutes with both
the absolute Frobenius $\varphi$
and the cyclotomic Galois group
$\Gamma$.
Thus,
the descent datum naturally
respects the
$(\varphi, \Gamma)$-structure.
\end{proof}

\begin{cor}
Let $M$ be an \'etale
$(\varphi, \Gamma)$-module
over $\bfE_{F,A}$.
Write
$M':=M\otimes_{\bfE_F}\bfE_{F_n}$.

(1)
The map $c\mapsto c\otimes 1$
induces an embedding
\[
Z^1_{\Herr}(M)\hookrightarrow
Z^1_{\Herr}(M').
\]

(2)
$\sigma_{M'}$ induces a
semilinear action of
$\Gal(\bfE_{F_n}/\bfE_F)$
on $Z^1_{\Herr}(M')$
such that $Z^1_{\Herr}(M)$
is identified with
the fixed points.

(3)
The trace
$\tr_{\sigma_{M'}} =
\frac{1}{[\bfE_{F_n}:\bfE_F]}
\sum_{i} \sigma_{M'}^i$
induces a section
\[
Z^1_{\Herr}(M') \to Z^1_{\Herr}(M)
\]
of the embedding in part (1).
\label{cor:udescent}
\end{cor}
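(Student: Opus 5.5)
The plan is to derive all three parts from Proposition \ref{prop:udescent}, working directly with the Herr complex. Set $G:=\Gal(\bfE_{F_n}/\bfE_F)=\langle\sigma\rangle$, cyclic of order $n$.

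For (1), I will first observe that $\bfE_{F,A}\to\bfE_{F_n,A}$ is finite free of rank $n$. It is the base change to $A$ of the finite étale extension $\bfE_{F_n}/\bfE_F$, using the description $k_{F_{n,\cyc}}(\!(T_F)\!)$ from the preceding lemma. Hence $M\to M'$, $m\mapsto m\otimes1$, is injective, and so is $C^1_{\Herr}(M)=M\oplus M\to M'\oplus M'=C^1_{\Herr}(M')$. Since $\varphi_{M'}=\varphi_M\otimes\varphi$ and $\gamma_{M'}=\gamma_M\otimes\gamma$, the map $c\mapsto c\otimes1$ commutes with $d^1_{\Herr}$. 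It therefore carries cocycles to cocycles, and the restriction to $Z^1$ stays injective.

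For (2), I will let $\sigma$ act on $C^\bullet_{\Herr}(M')$ through the descent isomorphism $\sigma_{M'}$, that is, by $1\otimes\sigma$ on $M\otimes\bfE_{F_n}$. By Proposition \ref{prop:udescent}, this action commutes with $\varphi_{M'}$ and $\gamma_{M'}$. So it commutes with $d^0_{\Herr}$ and $d^1_{\Herr}$, and it preserves $Z^1_{\Herr}(M')$. It is semilinear with respect to $\sigma$ on $\bfE_{F_n,A}$, and the cyclic cocycle condition makes it a genuine $G$-action. The identification of the fixed points is ordinary Galois descent, $(M')^{G}=M\otimes(\bfE_{F_n,A})^G=M$, which holds because $\bfE_{F_n,A}^G=\bfE_{F,A}$ for a finite étale Galois extension after flat base change. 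Taking fixed points commutes with the kernel $Z^1=\ker d^1_{\Herr}$, since both are kernels. This gives $Z^1_{\Herr}(M')^G=Z^1_{\Herr}(M)$.

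For (3), the operator $\frac1n\sum_{i=0}^{n-1}\sigma^i_{M'}$ makes sense because $n$ is prime to $p$, so $n$ is invertible in $\bFp\subset A$. I will write $[\bfE_{F_n}:\bfE_F]=n$ using the preceding lemma. The operator is $\bfE_{F,A}$-linear, its image is $G$-invariant, and it commutes with $d^1_{\Herr}$ by (2). It therefore maps $Z^1_{\Herr}(M')$ into $Z^1_{\Herr}(M')^G=Z^1_{\Herr}(M)$. On a fixed element it acts as $\frac1n\cdot n=1$, so it is a section of the embedding in (1).

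The only point I expect to need care is the fixed-point identification $\bfE_{F_n,A}^G=\bfE_{F,A}$ for general $A$. The ring $\bfE_{F,A}$ is a completed tensor product with $1/T_F$ inverted, not a plain tensor product. I would handle this by writing $\bfE_{F_n,A}=\bfE_{F,A}\otimes_{\bfE_F}\bfE_{F_n}$, which is valid because the extension is finite free. Averaging then gives exactness of invariants, since $n$ is invertible.
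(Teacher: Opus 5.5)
Your proposal is correct and follows essentially the same route as the paper. Both arguments use the compatibility of $c\mapsto c\otimes 1$ with $d^1_{\Herr}$, the descent action commuting with $\varphi$ and $\gamma$ so that it acts on the Herr complex, Galois descent to identify the fixed points, and averaging with $n$ invertible in $\bFp$. You also justify two points the paper leaves implicit: the freeness of $\bfE_{F_n,A}$ over $\bfE_{F,A}$, which gives injectivity, and the identification $\bfE_{F_n,A}^{G}=\bfE_{F,A}$ for general $A$.
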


\begin{proof}
We have $C_{\Herr}^\bullet(M') =
C_{\Herr}^\bullet(M)
\otimes_{\bfE_F} \bfE_{F_n}$.
The map $c \mapsto c \otimes 1$
respects $d^1_\Herr$,
proving (1).
Because the semilinear action
of $\sigma_{M'}$ on $M'$
commutes with $\varphi$ and
$\Gamma$,
it extends to the Herr complex.
The fixed points of
$Z^1_{\Herr}(M')$
under this finite group
action recover exactly
$Z^1_{\Herr}(M)$,
establishing (2).
Finally,
because the degree
$[\bfE_{F_n}:\bfE_F]$
divides $n$,
which is coprime to $p$,
it is invertible in the
coefficient field $\bar \F_p$,
making the trace map in (3)
a well-defined idempotent
projecting $Z^1_{\Herr}(M')$
onto $Z^1_{\Herr}(M)$.
\end{proof}

\begin{remark}
Since $\Gal_F\to \Gal(F_n/F)$
has no (tautological)
splitting,
we do not have an analogue
of Corollary \ref{cor:udescent}
for Galois cohomology.
The action of $\Gal(F_n/F)$
on
$H^1(\Gal_{F_n},
\Res_{F_n/F}V)$
does not lift to an action
of $\Gal(F_n/F)$
on cocycles
$Z^1(\Gal_{F_n},
\Res_{F_n/F}V)$.

The non-liftability of the
$\Gal(F_n/F)$-action
to cocycles indeed carries
over to the setup of
Herr complex cohomology.
There is no \textit{linear}
action of $\Gal(F_n/F)$
on $Z^1_{\Herr}(M')$
lifting the action on
$H^1_{\Herr}(M')$:
for any (hypothetical) linear action
$\tilde{\sigma}$,
the power $\tilde{\sigma}^n$
will never be the identity map,
rather
$\tilde{\sigma}^n - \mathrm{id}$
is a nontrivial coboundary.
If one systematically
corrects this coboundary,
one will always end up with
a semilinear action.
Such a correction is
impossible to do for
Galois cocycles,
as the coefficient module $V$
is not large enough
to absorb the twist.

One of the advantages of
the Herr complex is that
it enables us to lift linear
(inflation-restriction)
Galois actions on cohomology
groups to semilinear actions
on cocycle groups.
\end{remark}

\begin{thm}
\rm
Let $\bar\chi:\Gal_F\to\bar{\F}_p^\times$ be a Galois character, and let
$\bar\chi_n$ denote its restriction to $\Gal_{F_n}$. Let $\bar\chi_{\cyc}$
denote the mod $p$ cyclotomic character. The full cohomology group $H^1(\Gal_{F_n},\bar\chi_n)$ decomposes
as an $\bar{\F}_p[\Gal(F_n/F)]$-module as follows:
\[
H^1(\Gal_{F_n},\bar\chi_n)
\cong
\begin{cases}
\bar{\F}_p[\Gal(F_n/F)]^{\oplus [F:\Q_p]},
& \bar\chi_n\not\cong\bar{\F}_p,\ \bar{\F}_p(1),
\\[4pt]
\bar{\F}_p[\Gal(F_n/F)]^{\oplus [F:\Q_p]} \oplus \bar\chi,
& \bar\chi_n\cong\bar{\F}_p,\ \bar\chi_n\not\cong\bar{\F}_p(1),
\\[4pt]
\bar{\F}_p[\Gal(F_n/F)]^{\oplus [F:\Q_p]} \oplus 
(\bar\chi\otimes\bar\chi_{\cyc}^{-1}),
& \bar\chi_n\cong\bar{\F}_p(1),\ \bar\chi_n\not\cong\bar{\F}_p,
\\[4pt]
\bar{\F}_p[\Gal(F_n/F)]^{\oplus [F:\Q_p]} \oplus \bar\chi \oplus 
(\bar\chi\otimes\bar\chi_{\cyc}^{-1}),
& \bar\chi_n\cong\bar{\F}_p \cong \bar{\F}_p(1).
\end{cases}
\]
\label{thm:h1-structure}
\end{thm}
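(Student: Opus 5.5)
The plan is to use that $n=[F_n:F]$ is prime to $p$, so $\bar\F_p[\Gal(F_n/F)]$ is semisimple. Since $\Gal(F_n/F)$ is cyclic of order $n$, it is a direct sum of the $n$ characters $\eta:\Gal(F_n/F)\to\bar\F_p^\times$, each with multiplicity one. A finite $\bar\F_p[\Gal(F_n/F)]$-module is therefore determined by the dimensions of its $\eta$-isotypic parts. So it suffices to show that each $\eta$-isotypic part of $H^1(\Gal_{F_n},\bar\chi_n)$ has dimension $[F:\Q_p]$, plus one for each extra summand in the corresponding case of the statement.

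To compute these multiplicities I will use Hochschild--Serre for $\Gal_{F_n}\triangleleft\Gal_F$ with coefficients $\bar\chi\otimes\eta^{-1}$, where $\eta$ is inflated to an unramified character of $\Gal_F$. Since $\#\Gal(F_n/F)$ is invertible in $\bar\F_p$, the higher cohomology of the finite quotient vanishes. Restriction then gives
\[
H^i(\Gal_F,\bar\chi\eta^{-1})\xrightarrow{\cong} H^i(\Gal_{F_n},\bar\chi_n\otimes\eta^{-1}|_{\Gal_{F_n}})^{\Gal(F_n/F)}.
\]
The restriction of $\eta$ to $\Gal_{F_n}$ is trivial, so the right-hand side is the $\eta$-isotypic part of $H^1(\Gal_{F_n},\bar\chi_n)$. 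This is for the action $\sigma\cdot c=\bar\chi(\tilde\sigma)\,c(\tilde\sigma^{-1}-\tilde\sigma)$, i.e. the usual conjugation action induced by $\Gal_F$. Hence the $\eta$-multiplicity equals $\dim H^1(\Gal_F,\bar\chi\eta^{-1})$. Equivalently, Shapiro's lemma gives $H^1(\Gal_{F_n},\bar\chi_n)\cong\bigoplus_\eta H^1(\Gal_F,\bar\chi\eta^{-1})$ equivariantly.

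Next I apply the local Euler characteristic formula recalled in Subsection~\ref{subsec:unramified}:
\[
\dim H^1(\Gal_F,\bar\chi\eta^{-1})=[F:\Q_p]+\dim H^0(\Gal_F,\bar\chi\eta^{-1})+\dim H^2(\Gal_F,\bar\chi\eta^{-1}).
\]
The $H^0$ term is nonzero, and then equal to $1$, exactly when $\bar\chi=\eta$. This can happen only if $\bar\chi$ is unramified of order dividing $n$, i.e. only if $\bar\chi_n$ is trivial, and then for the unique choice $\eta=\bar\chi$. By local Tate duality, the $H^2$ term is nonzero, and then equal to $1$, exactly when $\bar\chi\eta^{-1}=\bar\chi_{\cyc}$. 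This forces $\bar\chi_n\cong\bar\F_p(1)$, and then $\eta=\bar\chi\bar\chi_{\cyc}^{-1}$ for the unique such $\eta$. Summing over $\eta$ gives $[F:\Q_p]$ copies of the regular representation, plus the extra characters $\bar\chi$ and/or $\bar\chi\bar\chi_{\cyc}^{-1}$ in precisely the four cases listed. In the last case both extra characters appear, and they are distinct unless $\bar\chi_{\cyc}$ is trivial; the formula is correct either way.

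The main obstacle is purely one of conventions: fixing the action of $\Gal(F_n/F)$ on $H^1(\Gal_{F_n},\bar\chi_n)$ so that the extra summands come out as $\bar\chi$ and $\bar\chi\bar\chi_{\cyc}^{-1}$ rather than their inverses. I will pin this down by checking the case where $\bar\chi$ is unramified with $\bar\chi_n$ trivial. There the extra class is the unramified homomorphism $\Gal_{F_n}\to\bar\F_p$, and conjugation by a Frobenius lift multiplies it by $\bar\chi(\Frob)$. This fixes the sign convention, which must also be kept compatible with the semilinear action $\sigma_{M'}$ of Corollary~\ref{cor:udescent} when the result is later transported to Herr complexes.
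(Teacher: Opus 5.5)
Your proposal is correct and follows essentially the same route as the paper: semisimplicity of $\bFp[\Gal(F_n/F)]$, inflation--restriction to identify the $\eta$-isotypic part of $H^1(\Gal_{F_n},\bar\chi_n)$ with $H^1(\Gal_F,\bar\chi\eta^{-1})$, and the local Euler characteristic together with Tate duality to locate the two extra characters. Your convention check goes slightly beyond the paper, which leaves this point implicit: under $(\sigma\cdot c)(g)=\bar\chi(\tilde\sigma)\,c(\tilde\sigma^{-1}g\tilde\sigma)$, the unramified class lies in the $\bar\chi$-isotypic part. Note that this is how your garbled formula $c(\tilde\sigma^{-1}-\tilde\sigma)$ should read.
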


\begin{proof}
Since the order of $\Gal(F_n/F)$ is prime to $p$, the group algebra
$\bar{\F}_p[\Gal(F_n/F)]$ is split semisimple. A representation over this
algebra is entirely determined by the dimensions of its isotypic
components.

Let $\vartheta: \Gal(F_n/F) \rightarrow\bar{\F}_p^\times$ be a character,
which we inflate to $\Gal_F$. Because $H^i(\Gal(F_n/F), -) = 0$ for $i > 0$,
the inflation--restriction exact sequence yields an isomorphism of the
$\vartheta$-isotypic component:
\[
H^1(\Gal_{F_n},\bar\chi_n)_\vartheta
\cong
H^1(\Gal_F,\bar\chi\otimes\vartheta^{-1}).
\]

The local Euler characteristic formula states that
\[
\dim H^1(\Gal_F,\bar\chi\otimes\vartheta^{-1})
=
[F:\Q_p]
+
\dim H^0(\Gal_F,\bar\chi\otimes\vartheta^{-1})
+
\dim H^0(\Gal_F,(\bar\chi\otimes\vartheta^{-1})^\vee(1)).
\]
Because the character $\bar\chi\otimes\vartheta^{-1}$ is one-dimensional,
each $H^0$-term is either $0$ or $1$. Specifically,
$\dim H^0(\Gal_F,\bar\chi\otimes\vartheta^{-1}) = 1$ if and only if
$\bar\chi\otimes\vartheta^{-1} \cong \bar{\F}_p$, which occurs exactly
when $\vartheta = \bar\chi$. Note that this case is possible if and only
if $\bar\chi_n \cong \bar{\F}_p$.

Similarly, $\dim H^0(\Gal_F,(\bar\chi\otimes\vartheta^{-1})^\vee(1)) = 1$
if and only if $\bar\chi\otimes\vartheta^{-1} \cong \bar{\F}_p(1)$, meaning
$\vartheta = \bar\chi\otimes\bar\chi_{\cyc}^{-1}$. This case is possible
if and only if $\bar\chi_n \cong \bar{\F}_p(1)$.

For any character $\vartheta$ not meeting these conditions, the $H^0$
terms vanish, giving a baseline dimension of $[F:\Q_p]$. The regular
representation $\bar{\F}_p[\Gal(F_n/F)]$ contains exactly one copy of
every character $\vartheta$. Thus, the uniform dimension of $[F:\Q_p]$
across all isotypic components contributes precisely
$\bar{\F}_p[\Gal(F_n/F)]^{\oplus [F:\Q_p]}$ to the overall structure.

The only deviations from this regular baseline are the extra dimensions
arising from non-zero $H^0$ terms. If $\bar\chi_n \cong \bar{\F}_p$, the
cohomology group gains one extra copy of the character $\vartheta = \bar\chi$.
If $\bar\chi_n \cong \bar{\F}_p(1)$, it gains one extra copy of the
character $\vartheta = \bar\chi\otimes\bar\chi_{\cyc}^{-1}$. Summing the
baseline regular representation and these specific anomalous
one-dimensional pieces yields the four cases.
\end{proof}

\begin{cor}
Write $M'=M'_{\bar\chi_n}$ for the rank $1$ \'etale
$(\varphi, \Gamma)$-module
over $\bfE_{F_n,\bFp[t^{\pm 1}]}$
that is the universal unramified twist
of $\bar\chi_n$.
Then, we have
\[
H^1_{\Herr}(M')
\cong
\begin{cases}
\bFp[t^{\pm1}][\Gal(F_n/F)]^{\oplus [F:\Q_p]}
& \bar\chi_n\neq \bFp \\
\bFp[t^{\pm1}][\Gal(F_n/F)]^{\oplus [F:\Q_p]}
\oplus \bar\chi
& \bar\chi_n= \bFp.
\end{cases}
\]
So, 
\[
H^1_{\Herr,+}(M')
\cong
\bFp[t^{\pm1}][\Gal(F_n/F)]^{\oplus [F:\Q_p]}
\oplus 
(\bFp[t^{\pm1}]\bar\chi \otimes_{\F_p} k_{F_{n,\cyc}})
\]
as a $\bFp[t^{\pm 1}][\Gal(F_n/F)]$-module
in both cases.
\end{cor}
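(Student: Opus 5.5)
The plan is to reduce everything to rank-one universal unramified families over $F$, one for each character of $G:=\Gal(F_n/F)$, and then apply the $\PGL_2$ analysis of Part I to each. Throughout, set $R=\bFp[t^{\pm1}]$. Because $\#G=n$ is prime to $p$, the algebra $R[G]\cong\prod_{\vartheta}R$ is a product over the characters $\vartheta:G\to\bFp^\times$. So an $R[G]$-module is just an $R$-module for each $\vartheta$. It therefore suffices to compute, for each $\vartheta$, the $\vartheta$-isotypic part $H^1_{\Herr}(M')_\vartheta$ as an $R$-module.

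\textbf{Step 1: the isotypic pieces are $F$-level families.} I would use the descent datum $\sigma_{M'}$ of Proposition~\ref{prop:udescent} and the trace idempotent of Corollary~\ref{cor:udescent}(3). Twisted by $\vartheta^{-1}$, these should give the Herr-complex analogue of the inflation--restriction step in the proof of Theorem~\ref{thm:h1-structure}:
\[
H^1_{\Herr}(M')_\vartheta\cong H^1_{\Herr}(M_\vartheta).
\]
Here $M_\vartheta$ is the rank-one \'etale $(\varphi,\Gamma)$-module over $\bfE_{F,R}$ obtained by descending $M'\otimes\vartheta^{-1}$. It is a universal unramified twist of $\bar\chi\vartheta^{-1}$, with $t$ normalized so that $t=1$ is the base point. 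This identification is compatible with the complexes: since $n$ is invertible, the idempotents commute with $d^0_{\Herr}$ and $d^1_{\Herr}$, so taking isotypic parts commutes with taking cohomology.

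\textbf{Step 2: compute each $H^1_{\Herr}(M_\vartheta)$ over the PID $R$.} I would use a perfect complex $P^\bullet$ as in Lemma~\ref{lem:PID-Tor}, together with the fact that $H^0(M_\vartheta,R)=0$ for a nonconstant family. The key observation is that $R$-torsion in $H^1(P^\bullet)$ can come only from points where $H^0$ jumps. Indeed, by the universal-coefficient sequence in degree $0$, fiberwise $H^0$ at $t=1$ is $\Tor_1^R(H^1(P^\bullet),R/(t-1))$. Therefore:
\begin{itemize}
\item if $\bar\chi\vartheta^{-1}$ is nontrivial, $H^1$ is torsion-free, hence free by the structure theorem over a PID;
\item if $\vartheta=\bar\chi$, the torsion is exactly $R/(t-1)$, spanned by the unramified class $c_{\un}$. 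This matches the discussion of $X_B(C_{\triv})$: that class is a coboundary away from $t=1$.
\end{itemize}
The generic rank is $[F:\Q_p]$ by the Euler characteristic. The torsion in $H^2$ (the cyclotomic case) only affects fibers through $\Tor_1$ and does not change $H^1(P^\bullet)$.

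\textbf{Step 3: assemble.} Combining the pieces gives
\[
H^1_{\Herr}(M')\cong R[G]^{\oplus[F:\Q_p]}\oplus\bigl(R/(t-1)\bigr)_{\bar\chi},
\]
where the extra summand occurs only if $\vartheta=\bar\chi$ is a character of $G$, that is, iff $\bar\chi_n$ is trivial. This is the stated formula, reading ``$\oplus\bar\chi$'' as that torsion line with $G$ acting through $\bar\chi$.

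For $H^1_{\Herr,+}$, I would use Definition~\ref{def:extra-Herr} and Lemma~\ref{lem:Herr+}. The constant-term component of $d^0_{\Herr,+}$ kills the $H^0$-jump, so the torsion line no longer appears as torsion. At the same time, the extra summand $R\otimes k_{F_{n,\cyc}}$ contributes a free piece. The $G$-action on this piece is through the chosen $\Gamma$-invariant generator, i.e.\ through $\bar\chi$, which gives the stated uniform answer in both cases.

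\textbf{Main obstacle.} I expect the hardest step to be Step 1 together with the $G$-equivariance in Step 3: showing that the semilinear $\sigma_{M'}$ yields an $R$-linear $G$-action on Herr cohomology whose $\vartheta$-pieces are exactly the $F$-level twists, and tracking how $G$ acts on the constant-term summand. I would handle both by explicit twisting of descent data, using that $\Gal(\bfE_{F_n}/\bfE_F)$ commutes with $\varphi$ and $\Gamma$.
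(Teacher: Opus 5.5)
Your reduction is close to the paper's. Both use the prime-to-$p$ idempotents $e_\vartheta$, take isotypic ranks from Theorem~\ref{thm:h1-structure} or the Euler characteristic, and locate torsion at $H^0$-jumps. Your Step~1 identification $H^1_{\Herr}(M')_\vartheta\cong H^1_{\Herr}(M_\vartheta)$ is correct provided $M'=M\otimes_{\bfE_F}\bfE_{F_n}$. (The literal universal twist over $F_n$ has no descent datum over $R$, because $t$ is not an $n$-th power in $R^\times$.)

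The gap is in Step~2. The degree-$0$ universal-coefficient sequence puts torsion in $H^1_{\Herr}(M_\vartheta)$ at \emph{every} closed point $t_0$ whose fiber has $H^0\neq 0$, but you only test $t_0=1$. All the $M_\vartheta=\bar\chi\vartheta^{-1}\cdot\mathrm{ur}(t)$ share one coordinate $t$. The action of $G=\Gal(F_n/F)$ is $R$-linear, so you cannot renormalize $t$ piece by piece. Such a family has a trivial fiber if and only if $\bar\chi\vartheta^{-1}$ is unramified. Every $\vartheta$ is unramified, so this happens for \emph{all} $\vartheta$ once $\bar\chi$ is unramified, in particular whenever $\bar\chi_n$ is trivial.

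For example, take $F=\Q_p$, $n=2$, $\bar\chi=1$. Then $M'$ has trivial fibers at $t=\pm1$, and the torsion line at $t=-1$ lies in the isotypic part of the nontrivial character of $G$. So the implication ``$\bar\chi\vartheta^{-1}$ nontrivial $\Rightarrow$ torsion-free'' is false; the correct criterion is that $\bar\chi$ be ramified. Done correctly, your Steps~1--3 give one torsion line $(R/(t-t_\vartheta))_\vartheta$ in each isotypic part, at $n$ distinct points. That agrees with the displayed single ``$\oplus\bar\chi$'' only after localizing at $t=1$. The paper's argument never needs to know where the torsion sits isotypically: it quotients out the unramified torsion and compares isotypic ranks of the free quotient pointwise.

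For $H^1_{\Herr,+}$, which is what Corollary~\ref{cor:Ggen} uses, saying the torsion is ``absorbed'' and the constant-term summand ``contributes a free piece'' is not a proof. The sequence $0\to R\otimes_{\F_p}k_{F_{n,\cyc}}\to H^1_{\Herr,+}(M')\to H^1_{\Herr}(M')\to 0$ is non-split as soon as $H^1_{\Herr}(M')$ has torsion, so you cannot simply add the pieces. The clean fix is to run your Step~2 on $C^\bullet_{\Herr,+}$ instead:
\begin{itemize}
\item By Lemma~\ref{lem:Herr+}, $H^0_{\Herr,+}$ vanishes in every fiber. Hence $\Tor_1^R(H^1_{\Herr,+}(M',R),R/(t-t_0))=0$ for all $t_0$.
\item Therefore $H^1_{\Herr,+}(M')$, and each summand $e_\vartheta H^1_{\Herr,+}(M')$, is torsion-free and so free over the PID $R$.
\item Read the ranks at a single generic $t_0$, where $\Tor_1^R(H^2_{\Herr}(M',R),R/(t-t_0))=0$ and base change is exact. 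There $H^1_{\Herr,+}$ is an extension of $\bFp[G]^{\oplus\dF}$ (Theorem~\ref{thm:h1-structure}) by $k_{F_{n,\cyc}}\otimes_{\F_p}\bFp\cong\bFp[G]^{\oplus\dkF}$ (normal basis theorem).
\item This extension splits because $\bFp[G]$ is semisimple.
\end{itemize}
The result is $H^1_{\Herr,+}(M')\cong R[G]^{\oplus(\dF+\dkF)}$ in all cases. Whether $G$ acts on the constant-term summand through $\bar\chi$ does not matter, since twisting a free $R[G]$-module by a character leaves it unchanged.
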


\begin{proof}
By Lemma \ref{lem:PID-Tor},
$H^1_{\Herr}(M')$
does not have the cyclotomic terms,
compared to Theorem \ref{thm:h1-structure}.
When $\bar\chi_n$ is trivial,
there is the unramified
cocycle $(1,0)\in Z^1_{\Herr}(M')$ (c.f. Subsection
\ref{subsec:unramified}),
which is a coboundary over the locus
$(t\neq 1)$.
Quotienting out by $[(1,0)]$,
we obtain a finite free
$\bFp[t^{\pm1}]$-module
whose isotypic parts
have the same rank by Theorem \ref{thm:h1-structure}.

Write $H^1_{\Herr}(M')_{\vartheta}$
for the $\vartheta$-isotypic part
of the (maximal) finite free quotient
$H^1_{\Herr, \text{tf}}(M')$
of $H^1_{\Herr}(M')$.
Choose a basis
$\{v_{k, \vartheta}\}_{k=1}^{[F:\Q_p]}$
for each isotypic part.
Set
\[
c'_k:=\sum_{\vartheta} v_{k, \vartheta},~
k=1,\dots,[F:\Q_p].
\]
Note that we have the projection operators
$$e_\vartheta = \frac{1}{\MINOREDITED{n}} \sum_{\sigma \in \Gal(F_n/F)} \vartheta(\sigma)^{-1} \, \sigma$$
satisfying
\[
e_\vartheta(c'_k) = v_{k, \vartheta}.
\]
The upshot is that
\[
H^1_{\Herr,\text{tf}}(M')=
\bigoplus_{k=1}^{[F:\Q_p]}
\bFp[t^{\pm1}][\Gal(F_n/F)]c'_k.
\qedhere
\]
\end{proof}

Note that
\[
k_{F_n,\cyc}\cong\F_p[\Gal(k_{F_{n,\cyc}}/\F_p)]
\cong\F_p[\Gal(F_n/F)]^{\oplus \dkF}
\]
by the normal basis theorem.
Thus
\[
H^1_{\Herr,+}(M')
\cong
\bFp[t^{\pm1}][\Gal(F_n/F)]^{\oplus \dF+\dkF}
\]
as a $\bFp[t^{\pm 1}][\Gal(F_n/F)]$-module.
So, we get the following corollary:

\begin{cor}
\label{cor:Ggen}
There \MINOREDITED{exist} cocycles
\[
c_1',\cdots, c_{\dF+\dkF}'\in
Z^1_{\Herr,+}(M')
\]
such that
\[
\{\sigma^j\cdot c_i'|\sigma^j\in\Gal(F_n/F),
1\le i\le \dF+\dkF\}
\]
forms a basis for $H^1_{\Herr,+}(M')$.
So,
\[
\{c_i:=\sum_{j=0}^{n-1}\MINOREDITED{\sigma^j}c_i'|
i=1,\cdots, \dF+\dkF\}
\]
forms a basis for $H^1_{\Herr,+}(M)$.
\end{cor}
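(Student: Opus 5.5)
The first assertion will follow from the preceding corollary together with the normal basis identification of $k_{F_{n,\cyc}}$. That corollary describes the torsion-free part of $H^1_{\Herr}(M')$ as $\bigoplus_k \bFp[t^{\pm1}][\Gal(F_n/F)]c'_k$. The unramified constant-term summand $(\bFp[t^{\pm1}]\bar\chi\otimes_{\F_p}k_{F_{n,\cyc}})$ of $H^1_{\Herr,+}(M')$ is, by the normal basis theorem, a free $\bFp[t^{\pm1}][\Gal(F_n/F)]$-module of rank $\dkF$. Twisting by the character $\bar\chi$ preserves freeness, because tensoring the regular representation with a character returns the regular representation.

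From this I will pick $\dF+\dkF$ elements of $H^1_{\Herr,+}(M')$ that generate it freely as a group-ring module, and lift them to cocycles $c'_i\in Z^1_{\Herr,+}(M')$. Then $\{\sigma^j c'_i\}$ is an $\bFp[t^{\pm1}]$-basis of $H^1_{\Herr,+}(M')$. Here I need the semilinear action $\sigma_{M'}$ of Corollary~\ref{cor:udescent}(2), extended to the $+$-complex by letting $\sigma$ act on the constant-term coordinate in $S\otimes k_{F_{n,\cyc}}$ via its Galois action. I must check that this action is compatible with $d^0_{\Herr,+}$, and I will do so by choosing the $\Gamma$-stable basis vector of $M^\Gamma$ to be $\sigma$-fixed.

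For the descent statement, Corollary~\ref{cor:udescent}(2),(3) shows that $Z^1_{\Herr,+}(M)$ is the $\sigma$-fixed part of $Z^1_{\Herr,+}(M')$ and that the averaging trace is a projector onto it. Since $n$ is prime to $p$, taking invariants is exact, so $H^1_{\Herr,+}(M)=H^1_{\Herr,+}(M')^{\Gal(F_n/F)}$. The invariants of a free module $\bigoplus_i \bFp[t^{\pm1}][G]c'_i$ are $\bigoplus_i \bFp[t^{\pm1}]\bigl(\sum_j\sigma^j\bigr)c'_i$, because the invariants of the regular representation are spanned by the norm element. Hence the $c_i=\sum_j\sigma^j c'_i$ form an $\bFp[t^{\pm1}]$-basis of $H^1_{\Herr,+}(M)$, and they are cocycles over $\bfE_{F}$ because they are $\sigma$-fixed.

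I expect the main obstacle to be the claim that $H^1_{\Herr,+}$ commutes with invariants compatibly with the ``+'' modification and with base change. I would first argue on $\bFp$-fibers, where everything reduces to Theorem~\ref{thm:h1-structure}, and then globalize. For the globalization I would use Swan's theorem and the fact that a map of finite free $\bFp[t^{\pm1}]$-modules that is an isomorphism on all closed fibers is an isomorphism. The torsion $\Tor_1$ part is absent by Lemma~\ref{lem:Herr+} applied on the non-cyclotomic component. On the cyclotomic component, the statement is about the kernel to $\Tor_1$, and I will handle that case the same way.
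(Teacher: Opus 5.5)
Your proposal is correct and follows essentially the paper's route: the preceding corollary together with the normal basis theorem makes $H^1_{\Herr,+}(M')$ free of rank $\dF+\dkF$ over $\bFp[t^{\pm1}][\Gal(F_n/F)]$, and since $C^\bullet_{\Herr,+}(M)=C^\bullet_{\Herr,+}(M')^{\Gal(F_n/F)}$ with invariants exact for $p\nmid n$, the norms $\sum_j\sigma^jc'_i$ of free generators give the basis downstairs. The fiberwise Swan-type globalization you anticipate as the main obstacle is therefore unnecessary, and one caution applies: the splitting in the preceding corollary is only an abstract isomorphism (when $\bar\chi_n$ is trivial the image of the constant-term coordinate is not saturated in $H^1_{\Herr,+}(M')$, since its saturation contains the unramified class), so choose the free generators abstractly, e.g.\ one isotypic component at a time, rather than by literally combining the two pieces.
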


\begin{example}
Let $F=\Q_p$ and $F_2=\Q_{p^2}$.

Let $\chi:\Gal_{F_2}\to \bFp^\times$ be a
character of order \MINOREDITED{$p^2-1$}.
For arbitrary scalars $a,b\neq 0\in \bFp^\times$,
we can construct an irreducible Galois representation
$\rho_{a,b}:\Gal_{\MINOREDITED{F}}\to \GL_2(\bFp)$
such that
$\rho_{a,b}|_{\Gal_{F_2}}=
\begin{pmatrix}
\chi & \\ & \chi^p
\end{pmatrix}
$
and $\rho_{a,b}(\Frob)=
\begin{pmatrix}
& a \\ b & 
\end{pmatrix}
$.
We form a $3$-dimensional Galois representation
$
\left(
\begin{array}{c|c}
\rho_{a,b} & 
\begin{matrix}
c_1 \\
c_2
\end{matrix}
\\
\hline
& 1
\end{array}
\right)
$
We remark that
the conjugation action
of $\rho_{a,b}(\Frob)$ on
$\Ext^1(\MINOREDITED{\bFp,\rho_{a,b}})$
is not necessarily idempotent.
Indeed, $\rho_{a,b}(\Frob^2)$ acts
by scalar multiplication by $ab$.

Next, we convert the Galois representations
to \'etale $(\varphi, \Gamma)$-modules.
Write $D_F(-)$ for the Fontaine functor
from Galois representations to \'etale
$(\varphi, \Gamma)$-modules over $\bfE_{F}$.
Then $D_F(\rho_{a,b})$
is an \'etale $(\varphi, \Gamma)$-module
over $\bfE_{F, \bFp}=\bfE_{F,\bFp}$.
We have
\[
D_{F_2}(\rho_{a,b}|_{\Gal_{F_2}})
=D_F(\rho_{a,b}) \otimes_{\bfE_{F,\bFp}}\bfE_{F_2,\bFp}
=D_F(\rho_{a,b})\otimes_{\F_p}\F_{p^2},
\]
and is equipped with Galois
descent data
$\tau: x\otimes y (\in D_F(\rho_{a,b})\otimes_{\F_p}\F_{p^2})
\mapsto x\otimes y^p$.
The descent data $\tau$ is always idempotent:
$\tau^2=1$.

Over the unramified extension $F_2 = \Q_{p^2}$, the restricted Galois representation decomposes as a direct sum of the characters $\chi$ and $\chi^p$. Correspondingly, the \'etale $(\varphi, \Gamma)$-module splits as 
$M' := D_{F_2}(\rho_{a,b}|_{\Gal_{F_2}}) = M'_\chi \oplus M'_{\chi^p}$.
The $\tau$-action on
$$Z^1_{\Herr}(M') = Z^1_{\Herr}(M'_\chi) \oplus Z^1_{\Herr}(M'_{\chi^p})$$
exchanges the two isotypic factors.

Let $\{\MINOREDITED{\delta_1}, \delta_2, \gamma=\delta_1+\delta_2\}$
be the positive roots of $\GL_3$.
We have
\begin{align*}
M'_{\chi} &= U_{\gamma^\vee}(\bfE_{F_2,\bFp})
\\
M'_{\chi^p} &= U_{\delta_2^\vee}(\bfE_{F_2,\bFp})
\end{align*}
So, if
\[
\{c_{\delta_2, 1}, c_{\delta_2,2}, c_{\delta_2,3},
c_{\delta_2,4}
\}
\subset Z^1_{\Herr,+}
(U_{\delta_2^\vee}(\bfE_{F_2,\bFp[a^{\pm1},b^{\pm1}]}))
\]
forms a basis for
$H^1_{\Herr,+}(
U_{\delta_2^\vee}(\bfE_{F_2,\bFp[a^{\pm1},b^{\pm1}]}))$,
then
\[
\{\tau c_{\delta_2, 1}, \tau c_{\delta_2,2}, \tau c_{\delta_2,3},
\tau c_{\delta_2,\MINOREDITED{4}}
\}
\subset Z^1_{\Herr,+}(
U_{\gamma^\vee}(\bfE_{F_2,\bFp[a^{\pm1},b^{\pm1}]}))
\]
forms a basis for
$H^1_{\Herr,+}(
U_{\gamma^\vee}(\bfE_{F_2,\bFp[a^{\pm1},b^{\pm1}]}))$.
Moreover,
\[
\{
(c_{\delta_2,1}, \tau c_{\delta_2,1}),
(c_{\delta_2,2}, \tau c_{\delta_2,2}),
(c_{\delta_2,3}, \tau c_{\delta_2,3}),
(c_{\delta_2,4}, \tau c_{\delta_2,4})
\}
\subset Z^1_{\Herr,+}(
(U_{\delta_2^\vee}\oplus U_{\gamma^\vee})(\bfE_{F,\bFp[a^{\pm1},b^{\pm1}]}))
\]
forms a basis for
$H^1_{\Herr,+}(
(U_{\delta_2^\vee}\oplus U_{\gamma^\vee})(\bfE_{F,\bFp[a^{\pm1},b^{\pm1}]}))
$.
Here, we always choose
$\{c_{\delta_2, 3}, c_{\delta_2,4}\}$ \MINOREDITED{to be} the unramified cocycles.
\end{example}

\begin{example}
Now consider $\bG=\PGL_5$:

\blockFive

Consider the Galois representations of the form
\[
\rho|_{I_n}
=
\begin{pmatrix}
\chi & & * & * & * \\
& \chi^p & * & * & * \\
& & \omega & & \\
& & & \omega^p & \\
& & & & \omega^{p^2}
\end{pmatrix},
\qquad
\rho(\Frob)
=
\begin{pmatrix}
& a_1 & * & * & * \\
a_2 & & * & *& * \\
& & & a_3 & \\
& &  & & a_4 \\
& & a_5 & &
\end{pmatrix}.
\]
Set $R=\bFp[a_i^{\pm1}:i=1,2,3,4,5]$.
The order of $w$ is $6$.
So, assume $F=\Q_p$
and $F_6=\Q_{p^6}$.
\MINOREDITED{There is one $w$-relative root containing the six positive cross-block roots:
\[
\Balpha = \{1100,0110,1111,0100,1110,0111\}.
\]}
\MAJOREDIT{Note that
$F_6$ is the splitting field of this relative root. Let
$c_{1100,1},\ldots,c_{1100,m}$ be cocycle representatives for an $R$-basis of
$H^1_{\Herr,+}(U_{1100}(\bfE_{F_6,R}))$. Write $\sigma$ for a generator of
$\Gal(F_6/F)$ and $\tau$ for the induced semilinear action on
$U_{\Balpha}$. By Shapiro descent, the universal cocycle for
$Z^1_{\Herr,+}(U_{\Balpha}(\bfE_{F,R}))$ is
\[
c_{\Balpha}^\univ
=
\sum_{i=1}^m x_i\sum_{j=0}^{5}\tau^j c_{1100,i}
\in Z^1_{\Herr,+}(U_{\Balpha}(\bfE_{F,R[x_1,\ldots,x_m]})).
\]}
\end{example}

\section{Twisted destackification and twisted
Weil-Deligne stacks}

Setup \ref{setup:w} is in force in this subsection.
We also keep the notation of the previous section.
Let $\BK\subset U^{\bM}$
\MINOREDITED{be} a $\bT\langle w\rangle$-stable
subgroup.
Write $\BK=\prod_{\Balpha\in \BPhi_{\BK}}U_{\Balpha}$.
Write $\BDelta_{\BK}\subset \BPhi_{\BK}$
for the set of relative roots \MINOREDITED{occurring}
in $\Lie \BK/[\Lie\BK, \Lie\BK]$.
Write $n$ for the order of $w$ in the Weyl group.

\begin{lem}
If $p>h_{\bG}$, then $p\nmid (\#N_{\bG}(\bT)/\bT)$.
\end{lem}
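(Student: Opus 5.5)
The plan is to show that every prime divisor of the order of the Weyl group $W=N_{\bG}(\bT)/\bT$ is at most $h_{\bG}$. Then $p>h_{\bG}$ forces $p\nmid \#W$.

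First reduce to the simple case. The Weyl group of a connected reductive group is the product of the Weyl groups of the simple factors of its derived group. Here $h_{\bG}$ is the maximum of the Coxeter numbers of those factors, and a torus contributes a trivial Weyl group. So it suffices to treat $\bG$ simple with irreducible root system $\Phi$ of rank $\ell$.

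Next, use the classical factorization $\#W=\prod_{i=1}^{\ell} d_i$, where $d_1\le\cdots\le d_\ell$ are the degrees of the fundamental invariants of $W$ acting on $\mathfrak t_{\Q}$ (Chevalley's theorem). Combine this with the standard fact that the largest degree is the Coxeter number, $d_\ell=h$, which follows from the Kostant–Coleman description of exponents via the Coxeter element. Then every $d_i\le h$. Every prime divisor of $\#W$ divides some $d_i$, so it is $\le h<p$.

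Alternatively, since the paper works with explicit types, one can avoid invariant theory and check the classification directly:
\begin{itemize}
\item $A_\ell$: $\#W=(\ell+1)!$ and $h=\ell+1$.
\item $B_\ell,C_\ell$: $\#W=2^\ell \ell!$ and $h=2\ell$.
\item $D_\ell$: $\#W=2^{\ell-1}\ell!$ and $h=2\ell-2\ge \ell$.
\item $G_2$: $\#W=12$ and $h=6$.
\item $F_4$: $\#W=2^7 3^2$ and $h=12$.
\item $E_6$: $\#W=2^7 3^4 5$ and $h=12$.
\item $E_7$: $\#W=2^{10}3^4 5\cdot 7$ and $h=18$.
\item $E_8$: $\#W=2^{14}3^5 5^2 7$ and $h=30$.
\end{itemize}
In each case all prime factors are visibly $\le h$.

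The only point needing care is the identification $d_\ell=h$, or equivalently trusting the tabulated Weyl group orders. I would cite Humphreys (Reflection Groups and Coxeter Groups, \S3.9 and \S3.18) or Bourbaki for this rather than reprove it. I expect no genuine obstacle beyond the reduction to simple factors, which is routine.
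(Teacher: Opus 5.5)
Your proposal is correct and follows essentially the paper's route. The paper's proof just cites Humphreys' Theorem 3.9, that $\#W=\prod_i d_i$, and leaves implicit your step that every degree satisfies $d_i\le d_\ell=h$. Your case-by-case table of Weyl group orders is a valid independent check but is not needed.
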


\begin{proof}
It is a standard fact.
See, for example, 
\cite[Theorem 3.9]{Hum90}.
\end{proof}

As a consequence, $p\nmid n$.

\subsection{Destackification of
$\cX_{\bT \langle w\rangle}$}

We have
\[
{\cX_{\bT \langle w\rangle}}
\cong
\coprod [\bT/_w\bT]
\]
is a disjoint union of $[\bT/_w\bT]$.
Unlike the untwisted case,
the $\bT$-action on $\bT$ is non-trivial.
We set $X_{\bT \langle w\rangle}
:=\coprod \bT$.

\subsection{Destackification of $\cX_{\BK\rtimes \bT \langle w\rangle}$}
We fix a total ordering
on $\BPhi_{\BK}$ compatible with $\BK$-height.

We recursively build destackifications $X_{\le \Balpha}$
for $\cX_{\le \Balpha}:=\cX_{U_{\le \Balpha}
\rtimes \bT \langle w\rangle}$
in a manner similar to the procedure
of Subsection \ref{subsec:Borel-destack},
with the only difference being that sections
$\xi:P_A\to P_A'$ are required to be both $\Gamma$-stable
and $\Gal(F_n/F)$-stable.

\begin{prop}
The morphism
$X_{\le \Balpha}\to \cX_{\le \Balpha}$ is smooth, surjective,
affine, and \MINOREDITED{has fibers}
are isomorphic to $\Res_{k_{F_{\cyc}}/\F_p}U^{\bM}\rtimes \bT$.

Moreover, the formation of $X_{\le \Balpha}$
tautological and does not depend
on the choice of the total ordering on $\BPhi_{\BK}$.
\end{prop}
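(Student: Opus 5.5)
We mimic the proof of Proposition \ref{prop:Borel-destack}, inducting along the fixed total ordering on $\BPhi_{\BK}$. The key input replacing the untwisted torsor lemma is unramified descent (Proposition \ref{prop:udescent} and Corollary \ref{cor:udescent}).

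\textbf{Base case.} We start from $X_{\bT\langle w\rangle}=\coprod\bT\to\cX_{\bT\langle w\rangle}=\coprod[\bT/_w\bT]$. This map is a $\bT$-torsor, hence smooth, surjective and affine.

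\textbf{Inductive step.} Assume the statement holds for $X_{<\Balpha}\to\cX_{<\Balpha}$. It suffices to show that $\cX^+_{\le\Balpha}\to\cX_{\le\Balpha}$ is a genuine torsor under $\G_a^{\oplus\#\Balpha\cdot\dkF}$, i.e. under the points of $\Res_{k_{F_\cyc}/\F_p}U_{\Balpha}$. Then $X_{\le\Balpha}$, defined as the fiber product, inherits smoothness, surjectivity and affineness from the base change of both factors. Its fiber grows by $\Res_{k_{F_\cyc}/\F_p}U_{\Balpha}$; since $U_{\Balpha}$ is central in $U_{\le\Balpha}$, the iterated extensions assemble to $\Res_{k_{F_\cyc}/\F_p}U^{\bM}\rtimes\bT$, the product over all $\Balpha$ of the $U_{\Balpha}$ being $U^{\bM}$.

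\textbf{Torsor property.} To see that $\cX^+_{\le\Balpha}\to\cX_{\le\Balpha}$ is a torsor, base change along $\bfE_{F,A}\to\bfE_{F_n,A}$. Over $F_n$ the element $w$ becomes trivial on the relevant data, so $U_{\Balpha}$ splits as $\prod_j U_{w^j\alpha^\vee}$. Each factor is then handled exactly as in the untwisted lemma, using the vanishing of $H^1(P_A,\Res\G_a)$ and the existence of $\Gamma$-invariant generators of the rank-$1$ difference modules. This gives $\Gamma$-stable sections over $\bfE_{F_n,A}$, fppf locally on $A$.

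To descend, we average a chosen section with the trace operator $\frac1n\sum_i\sigma^i$ of Corollary \ref{cor:udescent}(3). This is legitimate because $p\nmid n$ by the lemma on $\#N_{\bG}(\bT)/\bT$. The sections form an affine space under a $(\varphi,\Gamma)$-module, so the average is again a section; it is both $\Gamma$-stable and $\Gal(F_n/F)$-stable, since $\sigma$ commutes with $\Gamma$. The set of such sections is torsored by $(M_0)^{\Gamma,\Gal(F_n/F)}$, where $M_0$ is the descended difference module $U_{\Balpha}(\bfE_{F,A})$. By Shapiro/normal basis (as in Corollary \ref{cor:Ggen}), this invariant module is free of rank $\#\Balpha\cdot\dkF$, or equivalently equals $\Res_{k_{F_\cyc}/\F_p}U_{\Balpha}$.

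\textbf{Independence of ordering.} This follows as before: relative roots of equal $\BK$-height give commuting (central) root groups. The torsors can therefore be added in either order, and the resulting fiber products are canonically isomorphic.

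\textbf{Main obstacle.} The delicate point is the rank computation of the invariants $(M_0)^{\Gamma,\Gal(F_n/F)}$ uniformly in $A$. The planned approach is to reduce to the universal case $A=\bFp[t^{\pm1}]$, compute the invariants fiberwise via Theorem \ref{thm:h1-structure}-style isotypic decomposition, and conclude freeness by Swan's theorem as in the rank-$1$ lemma on $M^\Gamma$.
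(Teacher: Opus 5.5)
Your proposal is correct and is essentially the paper's argument. The paper proves this by referring back to Proposition~\ref{prop:Borel-destack}: the same inductive construction adds one torsor per relative root, central at its $\BK$-height and hence order-independent, with the only change being that the sections $\xi$ must be both $\Gamma$-stable and $\Gal(F_n/F)$-stable, which your base change to $F_n$ plus prime-to-$p$ averaging makes explicit. The obstacle you flag is milder than it looks: the invariants are the image of the $A$-linear idempotent $\frac1n\sum_i\sigma^i$ on the $\Gamma$-invariants over $\bfE_{F_n,A}$, which are free of rank one over $A\otimes_{\F_p}k_{F_{n,\cyc}}$ on each of the $\#\Balpha$ summands, so they are projective of $A$-rank $\#\Balpha\cdot\dkF$, their formation commutes with base change in $A$, and no fiberwise or Swan argument is needed.
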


\begin{proof}
Similar to Proposition 
\ref{prop:Borel-destack}.
\end{proof}

\subsection{Twisted Weil-Deligne stacks}

For each relative root $\Balpha\in \BPhi^+$,
set
$\G_{m,\Balpha}
:=
\prod_{\alpha\in \Balpha}\G_{m, \alpha}
$.
Let
\[
\cW_{U_{\Balpha}\rtimes \G_{m,\Balpha} \langle w\rangle}
\to
\cX_{U_{\Balpha}\rtimes \G_{m,\Balpha} \langle w\rangle}
\]
be the morphism
relatively representing
the presheaf
\[
R\mapsto
\Tor_1^{\cO_{X_{\G_{m,\Balpha} \langle w\rangle}}}(H^2_\Herr(\bfE_{F, \cO_{X_{\G_{m,\Balpha} \langle w\rangle}}}), R).
\]
Note that
\begin{align}
\label{eq:Tor-G}
\Tor_1^{\cO_{X_{\G_{m,\Balpha} \langle w\rangle}}}(H^2_\Herr(\bfE_{F, \cO_{X_{\G_{m,\Balpha} \langle w\rangle}}}), R)
=
\Tor_1^{\cO_{X_{\G_{m,\Balpha}}}}(H^2_\Herr(\bfE_{F_n, \cO_{X_{\G_{m,\Balpha}}}}), R)^{\Gal(F_n/F)}.
\end{align}

\begin{lem}
We have
\[
\cW_{U_{\Balpha}\rtimes \G_{m,\Balpha} \langle w\rangle}
=
(\cW_{F_n, U_{\Balpha}\rtimes \G_{m,\Balpha}}
\times_{\cX_{F_n,\G_{m,\Balpha}}}
\cX_{\G_{m,\Balpha} \langle w\rangle})^{\Gal(F_n/F)}.
\]
\end{lem}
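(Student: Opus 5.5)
The plan is to deduce the lemma from the unramified descent equivalence of Proposition \ref{prop:udescent} together with the identification \eqref{eq:Tor-G}, by checking that both sides represent the same presheaf over $\cX_{\G_{m,\Balpha}\langle w\rangle}$. First, fix $\Spec R\to \cX_{\G_{m,\Balpha}\langle w\rangle}$ and base change along $F_n/F$. This gives a point of $\cX_{F_n,\G_{m,\Balpha}}$ together with the semilinear descent datum $\sigma_{M'}$. Because $w$ is realized by the Frobenius of $\Gal(F_n/F)$ permuting the factors of $\G_{m,\Balpha}$, the datum induces an action of $\Gal(F_n/F)$ on the fiber product $\cW_{F_n,U_{\Balpha}\rtimes\G_{m,\Balpha}}\times_{\cX_{F_n,\G_{m,\Balpha}}}\cX_{\G_{m,\Balpha}\langle w\rangle}$. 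This is what makes the right-hand side meaningful.

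Second, identify the $R$-points of both sides. On the right, a point is a $\Gal(F_n/F)$-invariant element of $\Tor_1(H^2_\Herr(\bfE_{F_n,\cO}),R)$. On the left, a point is an element of $\Tor_1(H^2_\Herr(\bfE_{F,\cO}),R)$. Equation \eqref{eq:Tor-G} says these coincide. To justify \eqref{eq:Tor-G}, I would use Corollary \ref{cor:udescent}: the Herr complex of $M$ is the $\Gal(F_n/F)$-fixed direct summand of the Herr complex of $M'$, cut out by the trace idempotent $\tr_{\sigma_{M'}}$. This idempotent is well defined because $p\nmid n$ (the preceding lemma gives $p\nmid\#N_{\bG}(\bT)/\bT$). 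Taking invariants is therefore exact, so it commutes with forming $H^2$ and with $\Tor_1(-,R)$.

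Third, promote the bijection on points to an isomorphism of stacks. Both sides are relatively representable over $\cX_{\G_{m,\Balpha}\langle w\rangle}$: the left by Lemma \ref{lem:pd-1-representable}, the right because the fixed points of a finite group of order prime to $p$ acting on a finite type affine scheme are a closed subscheme. The natural comparison map is functorial in $R$. The description of points above is compatible with base change, so the comparison is an isomorphism of presheaves, hence of the representing stacks.

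The main obstacle is the exactness and base-change compatibility in the second step. The functor $H^2$ with $\Tor_1$ is evaluated over the big affine site, where the remark after Lemma \ref{lem:PID-Tor} warns that quasi-coherent kernels and cokernels misbehave. I would address this by working with the perfect complex $P^\bullet$ from the proof of Lemma \ref{lem:PID-Tor}, taken $\Gal(F_n/F)$-equivariantly. Its isotypic decomposition is a splitting of perfect complexes, so $\Tor_1(H^2(-),R)$ decomposes compatibly for every $R$, and the invariant part is computed by the trivial-isotypic summand. That summand is the complex attached to $M$ over $F$.
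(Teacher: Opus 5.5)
Your proposal is correct and follows the paper. The paper's entire proof is that the lemma follows from Eq.~(\ref{eq:Tor-G}); reading the lemma off from that identity on $R$-points and concluding by Yoneda is exactly your argument, while your trace-idempotent argument via Corollary~\ref{cor:udescent} (exact because $p\nmid n$) justifies Eq.~(\ref{eq:Tor-G}), which the paper only states.

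One point your justification leaves implicit is the change of base ring. The right side of Eq.~(\ref{eq:Tor-G}) takes $\Tor_1$ over $\cO_{X_{F_n,\G_{m,\Balpha}}}$, not over $\cO_{X_{\G_{m,\Balpha}\langle w\rangle}}$, and the restriction map between them is a norm map, not flat. Compatibility still holds because the equation cutting out the cyclotomic locus pulls back to a nonzero element of a Laurent polynomial ring: the orbit sum $\sum_i w^i\alpha$ has $\Ht^{\bM}$ equal to $n\Ht^{\bM}(\alpha)\neq 0$. Hence the two-term resolution of $H^2$ remains a resolution after base change.
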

\begin{proof}
Follows from Eq. (\ref{eq:Tor-G})
\end{proof}

Set
\[
W_{U_{\Balpha}\rtimes\G_{m,\Balpha}\langle w\rangle}
:=X_{\G_{m,\Balpha}\langle w\rangle}
\times_{\cX_{\G_{m,\Balpha}\langle w\rangle}}
\cW_{U_{\Balpha}\rtimes\G_{m,\Balpha}\langle w\rangle}
\]
for the destackified Weil-Deligne stack. We add a subscript $F$ to $W$ or $X$
when the ground field must be emphasized.
Note that each connected component of
\[
W_{F_n, U_{\Balpha}\rtimes \G_{m,\Balpha}}
\times_{X_{F_n,\G_{m,\Balpha}}}
X_{\G_{m,\Balpha} \langle w\rangle}
\]
is of the form $\Spec \bFp[t_1^{\pm1}, \cdots, t_k^{\pm1},
s_1, \cdots, s_{k'}]$;
the $\Gal(F_n/F)$-action
fixes the $t_*$-parameters
and permutes the $s_*$-parameters.

\begin{defn}
A connected component of 
$W_{U_{\Balpha}\rtimes \G_{m,\Balpha} \langle w\rangle}$
is either of the form
\[
\G_{m, \Balpha}
\]
or of the form
\[
\G_{m, \Balpha} [s]/(s~u)
\]
where $u\in \cO_{\G_{m,\Balpha}}$
is the equation defining the closed subscheme
of $\G_{m,\Balpha}$
that acts on $U_{\Balpha}$
via the cyclotomic character
$\bFp$-pointwise
when restricted to $\Gal_{F_n}$.

We call $\G_{m,\Balpha}/(u) \times \Spec \bFp[s]
\subset \G_{m, \Balpha} [s]/(s~u)$
the Steinberg component.
\end{defn}

\subsection{The $b_{\std}$ and $c_{\std}$ cochains}
Let $\alpha\in \Balpha$ be any absolute root.
Write $F_d/F$ for the splitting field of $\alpha$;
so $d=\#\Balpha$.

Let $\Spec \bFp[t_{\alpha}^{\pm 1}, s_\alpha]/(s_\alpha(t_{\alpha}-1))\subset W_{F_n, U_{\alpha^\vee}\rtimes\G_{m, \alpha}}$
be the connected component containing the Steinberg component.
Set
$R:=\bigotimes_{\alpha\in \Balpha}
\bFp[t_{\alpha}^{\pm 1}, s_\alpha]/(s_\alpha(t_{\alpha}-1)).
$

Recall that there exists a cochain
\[
c_{\alpha, \std}'\in C^1_{\Herr,+}(U_{\alpha^\vee}(\bfE_{F_n, R}))
\]
such that
\[
d_{\Herr}^1(c_{\alpha,\std}')
=(t_\alpha-1)b_{\alpha, \std}'
\]
and that
$
[b_{\alpha, \std}']
$ generates
$H^2_{\Herr}(U_{\alpha^\vee}(\bfE_{F_n,R}))$.
Note that
\[
b_{\Balpha,\std}:=\sum_{\sigma^{j}\in \Gal(F_n/F)}
\sigma^{j}b_{\alpha,\std}'
\]
generates
$H^2_{\Herr}(U_{\Balpha}(\bfE_{F, R}))$,
by Frobenius reciprocity.
The $\alpha$-coordinate of $b_{\Balpha,\std}$
is
\[
b_{\alpha,\std}:=
\sum_{\sigma^{jd}\in \Gal(F_n/F_d)}
\sigma^{jd}b_{\alpha,\std}'
\in C^2_{\Herr,+}(U_{\alpha^\vee}(\bfE_{F_d,R})),
\]
which is a coboundary
if and only if $H^2_{\Herr}(U_{\Balpha}(\bfE_{F, R}))=0$.
Set $c_{\Balpha, \std}:=\sum_{\sigma^{j}\in \Gal(F_n/F)}
\sigma^{j}c_{\alpha,\std}'$.

\subsection{Gauge cochains}
Set
\[
W_{\BK\rtimes \bT \langle w\rangle}
:=
\prod_{\Balpha\in \BPhi}W_{U_{\Balpha}\rtimes \G_{m,\Balpha}\langle w\rangle}
\underset{\prod_{\Balpha\in \BPhi}
X_{\G_{m,\Balpha}\langle w\rangle}
}{\times}
X_{\bT \langle w\rangle}.
\]
Let $C\subset W_{\BK\rtimes \bT \langle w\rangle}$
be an irreducible component,
and set $R=\cO_C$.
Write
\begin{align*}
\BPsi_0(C) &= \{\Balpha\in \BPhi^+|
H^0_{\Herr}(U_{\Balpha}(\bfE_{F,R}))
\text{~is finite free of rank $1$}
\} 
\\
\BPsi_2(C) &= \{\Balpha\in \BPhi^+|
H^2_{\Herr}(U_{\Balpha}(\bfE_{F,R}))
\text{~is finite free of rank $1$}\}
\\
\overline\BPsi_0(C) &= \{\Balpha\in \BPhi^+|
H^0_{\Herr}(U_{\Balpha}(\bfE_{F,R}))\neq 0
\}
\\
\overline\BPsi_2(C) &= \{\Balpha\in \BPhi^+|
H^2_{\Herr}(U_{\Balpha}(\bfE_{F,R}))\neq 0
\}.
\end{align*}

We fix a representative $\alpha\in \Balpha$
once for all.
For each $\alpha$,
we choose
a set of cocycles
\[
\{
c'_{\alpha, 1}, \cdots, c'_{\alpha, n\dF+n\dkF}
\}
\subset
Z^1_{\Herr,+}(U_{\alpha^\vee}(\bfE_{F_n,R}))
\]
which is stable under the semilinear $\Gal(F_n/F)$-action
and 
forms a basis of $H^1_{\Herr,+}(U_{\alpha^\vee}(\bfE_{F_n,R}))$.
Set
\[
c_{w^j\cdot \alpha, i}':= \sigma^j c_{\alpha, i}'
\]
for $1\le i\le n(\dF+\dkF)$
and $0\le j< \#\Balpha=:d$.
Also write
\[
\sigma^{jd}c'_{\alpha, i}=
c'_{\alpha, \sigma^{jd}(i)}.
\]

\begin{defn}
\label{defn:twisted-gauge}
We recursively define finitely many
$1$-cochains
$\fC_{\Xi_{\Balpha,k}}=\{c_{\Balpha, \xi}\}
\subset \MINOREDITED{C^1_{\Herr,+}}(U_{\Balpha}(\bfE_{F,R}))$,
$\xi\in \Xi_{\Balpha,k}$
for $k=1,\dots,h_{\bG}$.

We set \[
\Xi_{\Balpha,1} =
\begin{cases}
\{1,2,\dots, \dF+\dkF,\std\} & 
\text{$\Balpha\in \overline\BPsi_2(C)$}
\\
\{1,2,\dots, \dF+\dkF\} & 
\text{otherwise}
\end{cases}
\]
and $\fC_{\Xi_{\alpha,1}}=\{c_{\Balpha,\xi}|\xi\in \Xi_{\alpha,1}\}$,
where
\[
c_{\Balpha,\xi}=\sum_{\sigma^j\in\Gal(F_n/F)}\sigma^jc_{\alpha,\xi}'.
\]

Suppose $\fC_{\Xi_{\alpha,k}}$ is already defined.
For each sequence of roots $\Bbeta_1+\dots+\Bbeta_s\supset \Balpha$
and elements $c_{\Bbeta_i, \xi_i}\in C_{\Xi_{\Bbeta_i,k}}$,
denote by
\[
[c_{\Bbeta_1,\xi_1},\dots, c_{\Bbeta_s,\xi_s}]_{\Balpha}
\]
the $\Balpha$-factor of
\[
[c_{\Bbeta_1,\xi_1},\dots, c_{\Bbeta_s,\xi_s}]
\in C^1_{\Herr,+}((\prod_{\beta\in 
\Bbeta_1+\dots+\Bbeta_s}U_{\beta^\vee})(\bfE_{F, R})).
\]
We choose
an element $c_{\Balpha, [\xi_1,\xi_2,\dots,\xi_s]}
\in C^1_{\Herr,+}(U_{\Balpha^\vee}(\bfE_{F,R}))$
such that
\[
[c_{\Bbeta_1,\xi_1},\dots, c_{\Bbeta_s,\xi_s}]_{\Balpha}
=
\begin{cases}
a_{\Balpha, [\xi_1,\dots,\xi_s]} b_{\Balpha,\std} - d_{\Herr}^1(c_{\Balpha,[\xi_1,\xi_2,\dots,\xi_s]})
& \Balpha\in \overline\BPsi_2(C)
\\
-d_{\Herr}^1(c_{\Balpha,[\xi_1,\xi_2,\dots,\xi_s]})
& \text{otherwise}
\end{cases}
\]
where $a_{\Balpha,*}\in R$ is the unique scalar
that makes the equation hold.
We set
\[
\Xi_{\alpha, k+1} = 
\Xi_{\alpha,k}\cup\{[\xi_1,\dots,\xi_k],\xi_i\in 
\fC_{\Xi_{\beta_i,k}}\}
\]
and
\[
\fC_{\Xi_{\alpha,k+1}}
=\{c_{\alpha, \xi}: \xi\in \Xi_{\alpha, k+1}\}.
\]

We call elements of
$\fC_{\Xi_{*,*}}$
the {\it gauge cochains}.
\end{defn}

\begin{lem}
If all irreducible components of $X_{<\Balpha}(C)$
have dimension $\ge d$,
then
all irreducible components of
$X_{\le\Balpha}(C)$
have irreducible components of dimension
$\ge d+(\dF+\dkF)\#\Balpha$.
\label{cor:w-least-dim}
\end{lem}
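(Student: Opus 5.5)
This is the twisted analogue of Corollary \ref{cor:least-dim}, and I would prove it the same way: first establish a twisted version of Lemma \ref{lem:eq} together with its corollary, so that $X_{\le\Balpha}(C)$ becomes an explicit scheme over $X_{<\Balpha}(C)$, and then apply Krull's Hauptidealsatz.

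\textbf{Step 1: the universal cochain.} Working over $X_{<\Balpha}(C)$, I would write the universal cochain in the $\Balpha$-coordinate as
\[
c^\univ_{\Balpha}=\sum \tfrac{1}{m!}\{c^\univ_{\Bbeta_1},\dots,c^\univ_{\Bbeta_m}\}_{\Balpha}+\sum_{i,j} x_{\Balpha,i,j}\,\sigma^j c'_{\alpha,i}\;(+\,s_{\Balpha}c_{\Balpha,\std}),
\]
where the expansion uses the twisted gauge cochains of Definition \ref{defn:twisted-gauge}. The new free parameters are the coordinates of a basis of $H^1_{\Herr,+}(U_{\Balpha}(\bfE_{F,R}))$. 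By Corollary \ref{cor:Ggen} and Frobenius/Shapiro reciprocity, this module is finite free of rank $(\dF+\dkF)\#\Balpha$, because $U_{\Balpha}$ is induced from the splitting field $F_d$ with $d=\#\Balpha$.

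\textbf{Step 2: the cocycle condition.} By Theorem \ref{thm:coordinate} (the central root group is now $U_{\Balpha}$), the cocycle condition reduces to an equation in $C^2_{\Herr}(U_{\Balpha})$. When $\Balpha\notin\overline\BPsi_2(C)$, this $H^2$ vanishes, and the gauge cochains are chosen so that the equation holds identically. When $\Balpha\in\overline\BPsi_2(C)$, the lemma preceding the example for $\GL_4$ (part (2)) says $H^2$ is at most $\bFp$. Since $b_{\Balpha,\std}$ generates it, the condition collapses to a single scalar equation
\[
r_{\Balpha}=s_{\Balpha}u+\sum \tfrac{1}{m!}a_{\Balpha,[\xi]}\prod\mu=0,
\]
where $u$ is the function cutting out the Steinberg locus. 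This is the argument of Lemma \ref{lem:eq}, combined with the analogue of Lemma \ref{lem:bstd} for $c_{\Balpha,\std}$.

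\textbf{Step 3: the structure of $X_{\le\Balpha}(C)$ and the dimension count.} Steps 1 and 2 give
\[
X_{\le\Balpha}(C)\cong X_{<\Balpha}(C)[s_{\Balpha}]/(r_{\Balpha})\times\A^{(\dF+\dkF)\#\Balpha}
\]
in the obstructed case, and $X_{<\Balpha}(C)\times\A^{(\dF+\dkF)\#\Balpha}$ otherwise. In the unobstructed case the dimension bound is immediate. In the obstructed case, adjoining $s_{\Balpha}$ raises every component's dimension by one, and cutting by the single equation $r_{\Balpha}$ lowers it by at most one, by Krull's Hauptidealsatz \cite[Tag 0EPZ]{Stacks}. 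So every irreducible component still has dimension $\ge d+(\dF+\dkF)\#\Balpha$.

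\textbf{Main obstacle.} The hard step is justifying the product decomposition in Step 3, i.e.\ that the relative parameter space really is a trivial affine bundle of the stated rank. This needs the $\Gal(F_n/F)$-stable basis from Corollary \ref{cor:Ggen} to descend to a $\Gal(F_n/F)$-invariant basis over $R$; the trace idempotent of Corollary \ref{cor:udescent} applies because $p\nmid n$. It also needs the destackifying sections, which must be both $\Gamma$- and $\Gal(F_n/F)$-stable, to absorb exactly the $H^0$ contribution, as in Lemma \ref{lem:Herr+}.
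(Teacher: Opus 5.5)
Your proposal is correct and follows the paper's route. The paper's proof is just ``completely similar to Corollary~\ref{cor:least-dim}'': present $X_{\le\Balpha}(C)$ over $X_{<\Balpha}(C)$ as an affine space of rank $(\dF+\dkF)\#\Balpha$, possibly with one extra Steinberg variable $s_{\Balpha}$ cut out by a single equation $r_{\Balpha}$, and then apply Krull's Hauptidealsatz, exactly as you do; your discussion of the Galois-invariant basis and descent fills in details the paper leaves implicit.
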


\begin{proof}
Completely similar to
Corollary \ref{cor:least-dim}.
\end{proof}

\subsection{The fine stratification}

\begin{defn}
For subsets $\BPsi_0, \BPsi_2\subset \BPhi^+$
such that $\BPsi_2$ is {\it not} non-obstructing,
denote by
\[
X_{\BK\rtimes \bT\langle w\rangle}\langle\BPsi\rangle
\subset
X_{\BK\rtimes \bT\langle w\rangle}
\]
the locally closed subscheme
where
$H^i_{\Herr}(U_{\Balpha})$
is finite free of rank $1$
if and only if $\Balpha\in \BPsi_i$,
for $i\in\{0,2\}$.
\end{defn}

\begin{thm}
\rm
\label{thm:weq}
(1) 
$
X_{\BK\rtimes \bT \langle w \rangle}\langle\BPsi\rangle$
is a (possibly empty) union of certain connected components of
\[
\Spec \bFp
\left[
\begin{matrix}
x_{\Balpha,1},\dots,x_{\Balpha,\dF+\dkF,
s_{\Balpha}}&:&\Balpha\in \BPhi_{\BK}
\\
t_\delta^{\pm 1}&:&
\delta\in \Delta
\end{matrix}
\right]/
I
\]
where $I$ is generated by
\[
\left\{
\begin{matrix}
s_{\Balpha} &:& \Balpha \not\in \BPsi_2 \\
r_{\Balpha} &:& \Balpha \in \BPsi_2, \Ht_{\BK}(\Balpha)>1 \\
(t_\alpha-1)^n &:& \alpha\in \bigcup_{\Balpha\in\Psi_0\cup\Psi_2}
\Balpha\\
x_{\Balpha,i}=\sigma^{j\#\Balpha}x_{\Balpha,i}&:&
0\le j\#\Balpha<n, 1\le i\le \dF+\dkF
\end{matrix}
\right\}.
\]
Here, \[
r_{\Balpha}:=
\sum_{\Bbeta_1+\dots+\Bbeta_m=\Balpha, \xi_1,\dots,\xi_m}
\frac{1}{m!}
a_{\Balpha,[\xi_1,\dots,\xi_m]}
\prod_{j=1}^m\mu_{\Bbeta_j,\xi_j}
\]
where $\mu_{\Bbeta_j,\xi_j}$
is the coefficient before
$c_{\Bbeta_j,\xi_j}$
in the universal cochain $c_{\Bbeta_j}^\univ$.

\MINOREDITED{(2)}
We can normalize it so that
\[
r_{\Balpha}=
\sum_{\beta\in\Bbeta\in \BPhi_{\BK},
\delta\in\Bdelta\in \BDelta_{\BK}, \Bdelta+\Bbeta\supset \Balpha}
\frac{1}{2}N_{\beta, \delta}
x_{\Bbeta,1}x_{\Bdelta,1}
+Q_{\Balpha}
\]
where 
$Q_{\Balpha}$
does not depend on
$x_{\Bbeta,1}$
for $\Ht_{\BK}(\Bbeta)=\Ht_{\BK}(\Balpha)-1$.
\end{thm}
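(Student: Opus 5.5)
The plan is to transport the proof of Theorem \ref{thm:eq} through unramified descent: pass to $F_n$, where $\bar\rho^\SS$ becomes torus-valued and the untwisted theory of the previous part applies, and then cut out the $\Gal(F_n/F)$-fixed locus.

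\textbf{Step 1: reduction to $F_n$.} By Proposition \ref{prop:udescent} and Corollary \ref{cor:udescent}, an \'etale $(\varphi,\Gamma)$-module with structure group $\BK\rtimes\bT\langle w\rangle$ over $\bfE_{F,A}$ is the same as a $(U_{\BK}\rtimes\bT)$-valued module over $\bfE_{F_n,A}$ together with a compatible semilinear $\sigma$-descent datum. The ordering on $\BPhi_{\BK}$ refines $\BK$-height and $w$ preserves $\Ht^{\bM}$. I therefore run the recursion of Lemma \ref{lem:eq} one relative root $\Balpha$ at a time, with every $\alpha\in\Balpha$ adjoined simultaneously.

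\textbf{Step 2: coordinates at each stage.} At the stage for $\Balpha$ the new coordinate is $c^\univ_{\Balpha}$. By Corollary \ref{cor:Ggen} and the $\sigma$-stable choice of $c'_{\alpha,i}$, the $\sigma$-invariant part of $Z^1_{\Herr,+}$ modulo coboundaries is free on the orbit sums $c_{\Balpha,i}$. Hence the free parameters $x_{\Balpha,i}$ are exactly the coefficients invariant under $\sigma^{j\#\Balpha}$, which accounts for the relations $x_{\Balpha,i}=\sigma^{j\#\Balpha}x_{\Balpha,i}$.

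\textbf{Step 3: the obstruction.} Apply Theorem \ref{thm:coordinate} over $F_n$ and take the $\Balpha$-factor. The obstruction $D_{\Balpha}$ expands through Definition \ref{defn:twisted-gauge} into $\sum a_{\Balpha,[\xi]}\prod\mu\, b_{\Balpha,\std}$ minus a coboundary. Since $b_{\Balpha,\std}$ generates $H^2_{\Herr}(U_{\Balpha}(\bfE_{F,R}))$ by Frobenius reciprocity, the extension exists iff $r_{\Balpha}$ vanishes. The term $s_{\Balpha}c_{\Balpha,\std}$ contributes $s_{\Balpha}\cdot u$; on the stratum $u$ is nilpotent or zero, so it is absorbed and the stated $r_{\Balpha}$ is correct. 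When $\Balpha\notin\BPsi_2$, $H^2=0$, there is no $s_{\Balpha}$, and we get $s_{\Balpha}=0$.

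\textbf{Step 4: the torus conditions.} By Lemma \ref{lem:PID-Tor} and Lemma \ref{lem:structure-of-Tor} over $F_n$, together with Eq. (\ref{eq:Tor-G}), the locus where $H^0$ or $H^2$ is free of rank $1$ is cut out set-theoretically by $t_\alpha=1$ for $\alpha$ in the orbit. Since the $\Gal(F_n/F)$-action on the torus is twisted through $w$, I write this scheme-theoretically with the nilpotent thickening $(t_\alpha-1)^n$. The relevant locus is only a union of connected components, because different $\bT\langle w\rangle$-orbits of points can share the same equations; this is why the statement reads ``a union of certain connected components''.

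\textbf{Part (2): normalization.} Mimic Lemma \ref{lem:gs-cup} and Theorem \ref{thm:eq}(2). The quadratic terms $\Bdelta+\Bbeta\supset\Balpha$ come from $n=2$ in Eq. (\ref{eq:nonab-herr}), giving the factor $\tfrac12 N_{\beta,\delta}$. The cup-product matrix between $c_{\Bbeta,*}$ and $c_{\Bdelta,*}$ can be normalized so that only the $(1,1)$ entry contributes in the variables $x_{\Bbeta,1}$ with $\Ht_{\BK}(\Bbeta)=\Ht_{\BK}(\Balpha)-1$. Every other occurrence of such variables is then absorbed into gauge cochains, leaving $Q_{\Balpha}$ independent of them.

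\textbf{Main obstacle.} The hardest step is this normalization. Local Tate duality must be made compatible with the semilinear $\sigma$-action so that the orbit sums pair to the identity. I would first attempt it by normalizing on one representative $\alpha$ over $F_d$ and then transporting the choice by $\sigma$, coloring as in Lemma \ref{lem:gs-cup} in the self-dual case.
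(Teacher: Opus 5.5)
Your overall route (descend to $F_n$, run the recursion of Lemma~\ref{lem:eq} one relative root at a time with the orbit-sum gauge cochains of Definition~\ref{defn:twisted-gauge}, then normalize cup products as in Lemma~\ref{lem:gs-cup}) is what the paper means by ``completely similar to Theorem~\ref{thm:eq}''. The twisted statement differs from Theorem~\ref{thm:eq} in two features: the generators $(t_\alpha-1)^n$, and the qualifier ``a union of certain connected components''. Your Steps~3--4 match both by assertion rather than derive them, and Step~3 fails as written.

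\textbf{Step~3.} You drop $s_{\Balpha}u$ because $u$ is ``nilpotent or zero''. In Theorem~\ref{thm:eq} the analogous term $s_\alpha(t_\alpha-1)$ really vanishes, because $t_\alpha-1\in I$. Here $I$ only contains $(t_\alpha-1)^n$, so $u$ is nilpotent but in general nonzero modulo $I$. A nilpotent summand cannot be deleted from a generator. For example, in $\bFp[t,s,x]$ the ideals $((t-1)^2,\,s(t-1)+x)$ and $((t-1)^2,\,x)$ have the same radical but differ, since modulo the first one $x=-s(t-1)\neq 0$. So you must either show $u\in I$, which forces the reduced structure $t_\alpha-1$, or settle for (1) up to nilradical. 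The latter is all that Lemma~\ref{cor:w-least-dim}, Corollary~\ref{cor:w-C6} and Theorem~\ref{thm:twist-reg} use. The clean repair is therefore to state and prove (1) for reduced structures.

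\textbf{Step~4.} Nothing in your argument produces the exponent $n$. Your explanation of the connected-components qualifier also cannot be right, because a nilpotent thickening has the same connected components as its reduction. The missing input is Shapiro's lemma. With $d=\#\Balpha$ one has $H^i_{\Herr}(U_{\Balpha}(\bfE_{F,R}))\cong H^i_{\Herr}(U_{\alpha^\vee}(\bfE_{F_d,R}))$, so membership in $\BPsi_i$ is decided by the Frobenius eigenvalue of $\Gal_{F_d}$ on $U_{\alpha^\vee}$. Up to a constant coming from the Tits lift of $w^d$, this eigenvalue is the orbit product $\prod_{\beta\in\Balpha}t_\beta$. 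That product is invariant under the twisted conjugation $t\mapsto s\,t\,w(s)^{-1}$, so a relative root imposes a single torus condition. Read literally, your separate conditions $t_\beta=1$ for all $\beta\in\Balpha$ are not stable under twisted conjugation when $\#\Balpha>1$. They only make sense if $t_\alpha$ is read as the $F_n$-coordinate of Lemma~\ref{lem:structure-of-Tor}, which is common to the whole orbit.

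Relative to the base point, that $F_n$-coordinate is the $(n/d)$-th power of the $F_d$-eigenvalue. Since $p\nmid n$, the condition over $F_n$ is a separable equation cutting out $n/d$ disjoint translates, and exactly one of them satisfies the condition over $F$. This, not a nilpotent thickening, is why only certain connected components occur. It has to be established before the equations of Lemma~\ref{lem:eq} can be imported component by component.

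\textbf{Part~(2).} This is still a plan, at the level of the paper's appeal to the normalizations. Note that $H^1(U_{\Bbeta})$ and $H^1(U_{\Bdelta})$ have generic ranks $\#\Bbeta\cdot\dF$ and $\#\Bdelta\cdot\dF$. When the orbit sizes differ, the cup-product matrix is not square, so ``pair to the identity'' must be replaced by a block normal form.
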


\begin{proof}
Completely similar to Theorem \ref{thm:eq}.
\end{proof}

\begin{defn}
Write $f^\univ, g^\univ
\in (\BK\rtimes \bT \langle w \rangle)(\bfE_{F,
\cO_{X_{\BK\rtimes \bT \langle w\rangle}\langle\BPsi\rangle}})$
for the universal \'etale $(\varphi, \Gamma)$-module
satisfying
\[
f^\univ \varphi(g^\univ)=g^\univ \gamma(f^\univ).
\]
Set
\[
\log_{\le h_{\bG}}(f^\univ (f^{\univ})^{\SS-1},
g^{\univ}(g^{\univ})^{\SS-1})
=\sum_{\Balpha\in \BPhi_{\BK}}c_{\Balpha}
\]
Denote by
\[
\dot X_{\BK\rtimes \bT\langle w\rangle}
\subset
 X_{\BK\rtimes \bT\langle w\rangle}
\]
the quasi-affine open subscheme
where
$[c_{\Bdelta}]\neq 0\in 
H^1_{\Herr}(U_{\Bdelta})$
for $\Bdelta\in \BDelta_{\BK}$.
\end{defn}

\begin{cor}
\label{cor:w-C6}
(1) If
the Jacobian
\[
J=(\frac{\partial r_{\Balpha}}{\partial x_{\Bbeta,1}})
\]
is of full rank,
then
$\dot X_{\BK\rtimes \bT \langle w\rangle}
\langle \BPsi\rangle$
is smooth.
Write 
$\dot W_{\BK\rtimes \bT \langle w\rangle}\langle\Psi\rangle\subset W_{\BK\rtimes \bT \langle w\rangle}$
for the image of 
$\dot X_{\BK\rtimes \bT \langle w\rangle}
\langle \BPsi\rangle$,
then
all fibers of
$\dot X_{\BK\rtimes \bT \langle w\rangle}
\langle \BPsi\rangle\to
\dot W_{\BK\rtimes \bT \langle w\rangle}
\langle \BPsi\rangle$
are irreducible of dimension
$[F:\Q_p]\#\bigcup_{\Balpha\in \BPhi_{\BK}}\Balpha$.

(2)
Write $r$ for the corank of $J$.
Then
all fibers of
$\dot X_{\BK\rtimes \bT \langle w\rangle}
\langle \BPsi\rangle\to
\dot W_{\BK\rtimes \bT \langle w\rangle}
\langle \BPsi\rangle$
are of dimension at most
$[F:\Q_p]\#\bigcup_{\Balpha\in \BPhi_{\BK}}\Balpha+r$.
\end{cor}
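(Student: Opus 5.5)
We model the argument on the proof of Corollary \ref{cor:C6}, working on $\dot X_{\BK\rtimes \bT \langle w\rangle}\langle \BPsi\rangle$. Theorem \ref{thm:weq} presents it as a union of connected components of an explicit affine scheme. In the coordinates $x_{\Balpha,i}, s_{\Balpha}, t_\delta$, the only nontrivial relations are the $r_{\Balpha}$ for $\Balpha\in\BPsi_2$ with $\Ht_{\BK}(\Balpha)>1$. The remaining generators cut out the torus part: they set $s_{\Balpha}=0$ off $\BPsi_2$, impose $(t_\alpha-1)^n$, and impose the $\sigma$-invariance conditions. Together these define the base $\dot W_{\BK\rtimes \bT\langle w\rangle}\langle\BPsi\rangle$ after reduction.

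We then fix a point of the base and study its fiber. The fiber is the subscheme of the affine space with coordinates $\{x_{\Balpha,i}, s_{\Balpha}\}$ cut out by the $r_{\Balpha}$, restricted to the open locus $[c_{\Bdelta}]\neq0$. The free coordinate count is $(\dF+\dkF)\sum_{\Balpha}\#\Balpha$, by Lemma \ref{cor:w-least-dim} applied inductively. The $\dkF$-directions are absorbed by the $\G_a$-torsor structure of the destackification. This leaves $\dF\#\bigcup\Balpha$ directions before the equations are imposed, plus the $s_{\Balpha}$-coordinates, which are balanced against the constraints as in the $\PGL_2$ computation of Proposition \ref{prop:PGL2}.

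We order the relative roots by $\BK$-height and eliminate one height block at a time. By Theorem \ref{thm:weq}(2), each $r_{\Balpha}$ has the form
\[
\tfrac12\sum N_{\beta,\delta}\,x_{\Bbeta,1}x_{\Bdelta,1}+Q_{\Balpha},
\]
where $Q_{\Balpha}$ does not involve the height-$(\Ht_{\BK}(\Balpha)-1)$ variables $x_{\Bbeta,1}$. Consequently, once all lower blocks are fixed, the equations at height $h+1$ are affine-linear in the block $\{x_{\Bbeta,1}:\Ht_{\BK}(\Bbeta)=h\}$, and their coefficient matrix is the corresponding block of $J$.

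For part (1), $J$ has full rank. The argument of Corollary \ref{cor:C6} then applies: on the open stratum, the full-rank minor is invertible, so each block of equations can be solved for the pivot variables as functions of the remaining ones. Proceeding inductively, every fiber becomes an open subset of an affine space of the stated dimension, hence irreducible. Since the Jacobian has full rank, \cite[Tag 01V9]{Stacks} gives smoothness of $\dot X$.

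For part (2), $J$ has corank $r$. At a point of the fiber, we use the Gaussian-elimination and Gr\"obner argument from the proof of the cone-model inequality (the theorem following the definition of cone models). After row-reducing the linearized system, we obtain elements $g_i\in I$ whose leading terms are distinct pivot variables, and there are $\operatorname{rank}J$ of them. They therefore form a Gröbner basis of a subideal, and the fiber dimension is at most the number of variables minus $\operatorname{rank}J$. This equals the expected dimension plus $r$.

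The main obstacle is the block-by-block structure. We must check that the relevant linear part really consists of the entries of $J$ at each height, so that leading terms stay in the correct block under the height-block elimination order. This holds because $Q_{\Balpha}$ is independent of the adjacent block, so we expect it to go through. A second point needs care in the twisted setting: the $\sigma$-invariance relations reduce each orbit to a single coordinate per index. This follows from Corollary \ref{cor:Ggen}, and it is what makes the dimension count $\dF\#\bigcup\Balpha$ rather than $\dF\#\BPhi_{\BK}$.
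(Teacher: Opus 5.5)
Your route is the paper's own. Its proof just says ``completely similar to Corollary~\ref{cor:C6}'', meaning height-ordered elimination with unit pivot coefficients plus \cite[Tag 01V9]{Stacks}. However, the inductive step fails exactly where you call the ``main obstacle'' harmless.

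For $\Ht_{\BK}(\Balpha)=2$, the leading part of $r_{\Balpha}$ in Theorem~\ref{thm:weq}(2) is a sum of products $x_{\Bbeta,1}x_{\Bdelta,1}$ in which \emph{both} $\Bbeta$ and $\Bdelta$ have $\BK$-height one. So there is no lower block to fix, and the relation is bilinear in the height-one block. The height-two rows of $J$ have entries that are linear forms in the height-one unknowns themselves, possibly the pivots of other rows. Independence of $Q_{\Balpha}$ from the adjacent block does nothing here.

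An invertible minor of such a bilinear system only makes the projection \'etale. It does not let you ``solve for the pivots as functions of the remaining ones.'' So your identification of the fibers with open subsets of affine space, and with it irreducibility, is not justified. For instance, for $x_2x_3=a$, $x_1x_3=b$, $x_1x_2=c$ the Jacobian minor $2x_1x_2x_3$ is a monomial, yet elimination forces $x_3^2=ab/c$, a square root rather than a linear solve.

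The missing ingredient is the one used in the first paragraph of the proof of the cone-model inequality. You need an order on the height-one variables coming from leaf contraction of the bottom bipartite layer. Under that order, each height-two relation is linear in one new variable whose coefficient is an already-fixed unit. In the absolute case this is Theorem~\ref{thm:height-two-contractible}; in the relative case it must come from the layer-one part of Theorem~\ref{thm:twist-C6} and Appendix~\ref{app:root-poset-cycles}. Only after this step are the layers $h\ge 2$ affine-linear in $\{x_{\Bbeta,1}:\Ht_{\BK}(\Bbeta)=h\}$, with coefficients from the fixed height-one block.

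You also need the full-rank minor to be a unit at \emph{every} point of $\dot X_{\BK\rtimes\bT\langle w\rangle}\langle\BPsi\rangle$, not just generically, for the Jacobian criterion to give smoothness. The paper gets this because the minor is a monomial in the $x_{\Bdelta,1}$, which are units on affine charts of the locus $[c_{\Bdelta}]\neq 0$.

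Part (2) inherits the same defect. The Gr\"obner step of the cone-model proof is run only after the height-one and torus coordinates have been specialized. Otherwise the leading monomials of the $r_{\Balpha}$ are products sharing height-one factors. They are then neither distinct variables nor pairwise coprime, and Buchberger's product criterion does not apply. After specializing, the height-two relations involve none of the remaining variables. So you get pivot elements only from the height-$\ge 3$ rows, not $\operatorname{rank}J$ of them, and the height-two rows must be bounded separately on the height-one parameter space.

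A cleaner fix is pointwise. At any point $z$ of a fiber, $\dim_z\le\dim T_z\le(\#\text{fiber coordinates})-\operatorname{rank}J(z)$, because $J$ is a column submatrix of the full Jacobian. This gives ``expected dimension $+\,r$'' directly, provided the corank is at most $r$ at every point, and it needs no block structure.
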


\begin{proof}
Completely similar to Corollary \ref{cor:C6}.
\end{proof}

It turns out that for the twisted situation,
we don't even need the full power of the cone model.

\begin{thm}
\rm
\label{thm:twist-reg}
Let $C\subset \cW_{U^{\bM}\rtimes \bT \langle w\rangle}$
be an irreducible component.

(1)
$\cW_{U^{\bM}\rtimes \bT \langle w\rangle}$
is equidimensional of dimension $0$.

(2)
If $[F:\Q_p]\ge \#\Phi^+ + 2$,
then
each $\cX_{U^{\bM}\rtimes \bT \langle w\rangle}(C)$
is irreducible.
As a consequence,
$
\cX_{U^{\bM}\rtimes \bT \langle w\rangle}
\to
\cW_{U^{\bM}\rtimes \bT \langle w\rangle}
$
is generically smooth and induces a bijection
of irreducible components.

(3)
If $\bG=\mathrm{F}_4$,
then part (2) holds without the $[F:\Q_p]\ge \#\Phi^+ + 2$
assumption.
\end{thm}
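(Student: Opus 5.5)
I would prove the three parts by transporting the untwisted arguments of Part I to the twisted setting, via unramified descent (Proposition \ref{prop:udescent}, Corollary \ref{cor:udescent}) and the twisted equations of Theorem \ref{thm:weq}.

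\textbf{Part (1).} I would work one $w$-relative root at a time. By the lemma identifying $\cW_{U_{\Balpha}\rtimes\G_{m,\Balpha}\langle w\rangle}$ with the $\Gal(F_n/F)$-invariants of the $F_n$-version, every connected component of the destackification $W_{U_{\Balpha}\rtimes\G_{m,\Balpha}\langle w\rangle}$ is either $\G_{m,\Balpha}$ or $\G_{m,\Balpha}[s]/(s\,u)$. In the second case $u$ cuts out a divisor and the Steinberg piece $\G_{m,\Balpha}/(u)\times\Spec\bFp[s]$ trades one torus direction for the $s$-line. So every irreducible component of $W$ has dimension $\dim\G_{m,\Balpha}$, exactly as in Lemma \ref{lem:structure-of-Tor}.

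Next I take the fiber product over $\prod_{\Balpha}X_{\G_{m,\Balpha}\langle w\rangle}$ with $X_{\bT\langle w\rangle}$. For each $\Balpha$ the map to $X_{\G_{m,\Balpha}\langle w\rangle}$ is flat away from the Steinberg loci, and on those loci it is a union of pieces of the same dimension. Hence $W_{U^{\bM}\rtimes\bT\langle w\rangle}$ is equidimensional of dimension $\dim\bT$. Dividing by the $\bT$-action on $X_{\bT\langle w\rangle}=\coprod\bT$, which has finite stabilizers up to the center, gives dimension $0$. One point needs a check: the twisted action $[\bT/_w\bT]$ has stabilizer $\bT^{w}$. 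Since $w$ is elliptic, $\bT^{w}$ is finite modulo the center, and the center's contribution is absorbed as in Proposition \ref{prop:PGL2}.

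\textbf{Part (2).} I would mirror Theorem \ref{thm:reg}. Lemma \ref{cor:w-least-dim}, applied inductively along the $\BK$-height ordering, shows every irreducible component of $\cX_{U^{\bM}\rtimes\bT\langle w\rangle}(C)$ has dimension at least $\dF\#\Phi^{\bM+}$. The twisted analogue of Corollary \ref{cor:XB-dim} then shows that the components are exactly the closures of the top-dimensional fine strata. For $\BK=U^{\bM}$ and $(\BPsi_0,\BPsi_2)=(\BPsi_0(C),\BPsi_2(C))$, I want the Jacobian in Corollary \ref{cor:w-C6}(1) to have full rank.

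\begin{itemize}
\item If the relevant cycle in $\fB(\BPhi^+,\BDelta,h)$ is non-obstructing, its target vertices cannot all lie in $\BPsi_2$, so after contraction no equation is left to control.
\item If it is obstructing, Theorem \ref{thm:twist-C6} together with Theorem \ref{thm:C6} says the absolute graph $\fC_{\abs}$ is admissible. I then descend that smoothness along the trace of Corollary \ref{cor:udescent}(3), which is possible because $p\nmid n$.
\end{itemize}

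Full rank gives a smooth stratum whose fibers over $W$ are irreducible of the expected dimension, so each $C$ contributes exactly one top-dimensional component. For $\BK\ne U^{\bM}$, the trivial bound $(\dF+\dkF+1)\#\bigcup\BPhi_{\BK}+\dim\bT$ from Theorem \ref{thm:weq} is strictly below $\dF\#\Phi^{\bM+}$ once $\dF\ge\#\Phi^++2$. Such strata are therefore nowhere dense, which yields irreducibility of each $\cX(C)$ and the bijection of components.

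\textbf{Part (3): the main obstacle.} For $\mathrm{F}_4$ the crude bound for $\BK\subsetneq U^{\bM}$ no longer applies. I would first observe, via Theorem \ref{thm:twist-C6}(1), that every cycle is non-obstructing, so the fine strata are governed by chains. I would then bound $\dim\dot X_{\BK\rtimes\bT\langle w\rangle}\langle\BPsi\rangle$ using Corollary \ref{cor:w-C6}(2) by contracting the bipartite graphs, as in the cone-model argument.

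Each relative simple root $\Bdelta$ that is missing from $\BK$, or whose class is forced to be nonzero, costs at least $\#\Bdelta$ dimensions. Meanwhile, Lemma \ref{lem:min-deg} says extra $H^0,H^2$ can appear only on at most $\#\BDelta$ orbits, each saving at most one dimension. I expect the key difficulty to be showing that this saving never compensates the cost when $\#\Bdelta\ge 2$ for all non-Levi orbits of an elliptic $w$. I would attempt to verify this by enumerating the finitely many Levi subgroups $\bM\subsetneq\mathrm{F}_4$ and elliptic $w$, using the relative posets drawn above, and falling back on the Gröbner-basis computation of Appendix~\ref{app:cone-model-algorithms} if the hand count fails for small $\dF$.
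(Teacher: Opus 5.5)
\textbf{Part (1) has a genuine gap at the fiber-product step.}

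Look at the locus of $X_{\bT\langle w\rangle}$ where a set $S$ of relative simple roots is simultaneously Steinberg. Over it, the fiber product has fibers $\A^{\#S}$, one $s$-line for each $\Bdelta\in S$. For the corresponding component to have the expected dimension, this locus must have codimension exactly $\#S$; that is, the pulled-back Steinberg divisors must meet properly on $X_{\bT\langle w\rangle}$. Equidimensionality of each factor $W_{U_{\Balpha}\rtimes\G_{m,\Balpha}\langle w\rangle}$ does not control this, because the divisors are pulled back along $X_{\bT\langle w\rangle}\to\prod_{\Balpha}X_{\G_{m,\Balpha}\langle w\rangle}$, which need not be flat.

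The divisors can in fact be dependent. The Steinberg conditions are invariant under $w$-twisted conjugation, so they come from the $w$-coinvariant torus. Since $w$ is elliptic in $\bM$, that torus has rational character group $X^*(\bT)_{\Q}/\Q\Phi_{\bM}$. There a relative simple root remembers only its omitted direction in $\Delta-\Delta_{\bM}$, and $\#\BDelta$ can exceed $\#(\Delta-\Delta_{\bM})$. For example, take $\mathrm{F}_4$ and $\bM=\bM_{0010}$. The orbits $\{0100,0120\}$ and $\{0110\}$ are both congruent to $0100$ modulo $\Q\cdot 0010$, so their Steinberg conditions come from the same coordinate. If both could be Steinberg, you would get a component of dimension one too large.

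The missing idea is the entire content of the paper's proof. The map $\pi\colon S_C:=\BPsi_2(C)\cap\BDelta\to\Delta-\Delta_{\bM}$ is injective, because two elements over the same omitted direction would make $\BPsi_2(C)$ non-obstructing. This is ellipticity: characters of root groups differing by a root of $\bM$ cannot both be cyclotomic on inertia. Conditions over distinct omitted directions are independent modulo the characters $\delta-w\delta$. Hence the additive gain $\#S_C$ exactly cancels the multiplicative loss $\#\pi(S_C)$.

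Note that your argument uses nothing specific to $U^{\bM}$. It would therefore prove Part (1) for an arbitrary $\BK$, which the paper explicitly remarks is false. Relatedly, $\bT^w$ is not finite modulo the center: $w$ is elliptic only in $\bM$, so $\bT^w\supset Z(\bM)^\circ$. This is harmless for the stack dimension $\dim\bT-\dim\bT=0$. But this positive-dimensional coinvariant torus is exactly what limits the number of independent Steinberg conditions.

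\textbf{Parts (2) and (3).} Your Part (2) follows the paper's route: the lower bound of Lemma~\ref{cor:w-least-dim}, full rank from Theorem~\ref{thm:twist-C6} via Corollary~\ref{cor:w-C6}, and the crude bound for $\BK\ne U^{\bM}$. It still needs Part (1) to know that the top stratum has dimension $\dF\#\Phi^{\bM+}$.

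Part (3) is only a plan. The paper uses Part (1) and $\dF\ge 1$ to reduce it to a finite check independent of $F$. For $\BK\ne U^{\bM}$ one needs the corank of $J$ to be at most $\#\bigcup_{\Balpha\in\BPhi^+-\BPhi_{\BK}}\Balpha-\dim\cW_{\BK\rtimes\bT\langle w\rangle}$. Here $\dim\cW_{\BK\rtimes\bT\langle w\rangle}$ is bounded by $\#(\BPsi_2\cap\BDelta_{\BK})$ minus the dimension of $\Span_{\Q}(\BPsi_2\cup\BPsi_0)$ modulo $\Span_{\Q}\{\delta-w\delta\}$. The paper then enumerates configurations.

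Your cost-versus-saving heuristic lacks precisely this possibly positive dimension of the twisted Weil--Deligne stack for $\BK$. Two further points:
\begin{itemize}
\item Lemma~\ref{lem:min-deg} does not bound dimension gained from extra $H^0$ or $H^2$; it only restricts which configurations occur for a given $F$.
\item The Gr\"obner computations of Appendix~\ref{app:cone-model-algorithms} concern the Borel case, not the twisted one.
\end{itemize}
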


\begin{proof}
(1) 
\MAJOREDIT{Let
\[
S_C:=\BPsi_2(C)\cap\BDelta
\]
be the relative height-one orbits on which $C$ is Steinberg. A relative
height-one root has coefficient one in a unique simple-root direction outside
$\Delta_{\bM}$ and coefficient zero in every other omitted direction. Since
$w\in W_{\bM}$, this direction is constant on each cyclic orbit, giving a map
\[
\pi:S_C\longrightarrow\Delta-\Delta_{\bM}.
\]
We only need this map on $S_C$, not a bijection on all of $\BDelta$.

The map $\pi$ is injective. Indeed, argue contrapositively. In a fixed omitted
direction, the relative-height-one slice is connected by Levi root strings.
The root-string decomposition under the cyclic action has the following
dichotomy: either one cyclic orbit contains adjacent roots whose difference is
in $\Phi_{\bM}$, or two distinct cyclic orbits in that direction contain such
an adjacent pair. Thus two elements of $S_C$ with the same image under $\pi$
would make $S_C$ non-obstructing. This is impossible because
$S_C\subset\BPsi_2(C)$ and the fine stratum requires $\BPsi_2(C)$ to be not
non-obstructing.

Each $\Bdelta\in S_C$ contributes one Steinberg additive parameter, so the
additive dimension gain is $\#S_C$. On the multiplicative side, all relative
roots in one omitted direction impose one torus condition after quotienting by
the characters $\delta-w\delta$. Conditions in distinct omitted directions are
independent, as their images are distinct fundamental-coweight coordinates
modulo the Levi character lattice. Hence the multiplicative dimension loss is
$\#\pi(S_C)$. Injectivity gives
\[
\#S_C=\#\pi(S_C),
\]
so the additive gain exactly cancels the multiplicative loss. More additive
gains would force two elements over one omitted direction and hence violate the
non-obstructing constraint. Therefore every allowed component has dimension
zero. Over $F_n$, the restricted actions on $U_{w\delta^\vee}$ and
$U_{\delta^\vee}$ need not be trivial but are equal, so the indicated torus
quotient is compatible with the root-group factors.}

(2) Follows from Corollary \ref{cor:w-C6}
and Theorem \ref{thm:twist-C6}.

(3)
By Part (1),
it remains to show
when $\BK\neq U^{\bM}$,
the corank of 
$J=(\frac{\partial r_{\Balpha}}{\partial x_{\Bbeta,1}})$
does not exceed
\[
\#\bigcup_{\Balpha\in \BPhi^+-\BPhi_{\BK}}\Balpha
-
\dim \cW_{\BK\rtimes \bT \langle w\rangle}.
\]
We also have
\[
\dim \cW_{\BK\rtimes \bT \langle w\rangle}
\le
\# \BPsi_2\cap \BDelta_{\BK}
-\dim \frac{\Span_{\Q}\{\alpha|\alpha\in \BPsi_2\cup
\BPsi_0\}}{
\Span_{\Q}\{\delta-w \delta| \delta\in \Delta\}}.
\]
Since every term is easily computable by linear algebra,
Part (3) follows from enumeration
of all configurations of $(\BK, \BPsi)$.
\end{proof}

We remark that part (1) of 
Theorem 
\ref{thm:twist-reg}
does not have to be true for general $\BK$:
in general, $\BDelta_{\BK}$ may have more elements
than $\Delta-\Delta_{\bM}$.

\newpage
\phantomsection
\addcontentsline{toc}{part}{Part III: 
The $p$-adic formal Weil-Deligne stacks}

\noindent
{\large Part III: The $p$-adic formal Weil-Deligne stacks}

\section{Geometrization of the $p$-adic local Euler characteristic}

We use the integral and rigid moduli theory of
\cite[Lemma 11 and Theorems 10--12]{Lin25}.
For the twisted version, sections are additionally required
to be Galois-stable.
There is no other difference in the construction.

\begin{lem}
\rm
Let $X = \mathrm{Spf} R$ where
$R = \mathbb{Z}_p\langle T \rangle$.
The presheaf over $X$ defined by
$S \mapsto \mathrm{Tor}^R_1(R/(T), S)$
is representable by the formal scheme
$\mathrm{Spf}(R\langle Y \rangle / (TY))$.
\end{lem}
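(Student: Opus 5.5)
We mimic Lemma~\ref{lem:pd-1-representable} and Example~\ref{ex:strict-picard} in the $p$-adic formal setting. The module $R/(T)$ has the two-term free resolution
\[
0\to R\xrightarrow{\,T\,}R\to R/(T)\to 0 .
\]
It is exact on the left because $T$ is a non-zero-divisor in the domain $\Z_p\langle T\rangle$. For any (topologically finite type, $p$-adically complete) $R$-algebra $S$, we tensor with $S$. Since $R$ is free, the long exact sequence identifies
\[
\Tor^R_1(R/(T),S)\cong \ker\bigl(S\xrightarrow{\,T\,}S\bigr)=\{y\in S: Ty=0\},
\]
functorially in $S$.

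We then identify this kernel functor with a representable one. The functor $S\mapsto S$ on complete $R$-algebras is represented by $\Spf R\langle Y\rangle$: a continuous $R$-algebra map $R\langle Y\rangle\to S$ is the same as an element $y\in S$. The reason is that $S$ is $p$-adically complete, so any $y$ extends uniquely to restricted power series, and every element of $S$ is power bounded in the relevant sense. The kernel of multiplication by $T$ is the fiber product of the map of group objects
\[
\Spf R\langle Y\rangle\to\Spf R\langle Y\rangle,\qquad Y\mapsto TY,
\]
with the zero section. Its coordinate ring is
\[
R\langle Y\rangle\widehat{\otimes}_{R\langle Y\rangle}R\langle Y\rangle/(Y)=R\langle Y\rangle/(TY).
\]
Unwinding this, a map $R\langle Y\rangle/(TY)\to S$ is precisely an element $y\in S$ with $Ty=0$. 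This gives the isomorphism of presheaves $\Tor^R_1(R/(T),-)\cong\Hom_{\mathrm{cont}}(R\langle Y\rangle/(TY),-)$.

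The main obstacle is the bookkeeping of completed tensor products. First, $R/(T)\otimes_R S$ and the resolution must behave as in the algebraic case. This holds because $R\xrightarrow{T}R$ is a complex of finite free modules, so no completion issues arise when tensoring with $S$. Second, $R\langle Y\rangle/(TY)$ must already be $p$-adically complete, so that $\Spf$ makes sense. This holds since $R\langle Y\rangle$ is Noetherian and $p$-adically complete, so every quotient by an ideal is complete. Once these points are settled, the rest is the same formal argument as in Lemma~\ref{lem:pd-1-representable}.
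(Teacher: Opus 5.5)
Your proposal is correct and follows essentially the same route as the paper. Both use the free resolution $0\to R\xrightarrow{T}R\to R/(T)\to 0$ to identify the functor with $S\mapsto S[T]$, and then check that continuous $R$-algebra maps $R\langle Y\rangle/(TY)\to S$ are exactly elements killed by $T$. Your detour through the fiber product with the zero section, as in Lemma~\ref{lem:pd-1-representable}, and your remarks on $p$-adic completeness just make explicit what the paper leaves implicit in its phrase ``because $A$ is a Tate algebra.''
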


\begin{proof}
First, compute the functor
$F(S) = \mathrm{Tor}^R_1(R/(T), S)$.
Since $T$ is a non-zero-divisor in $R$,
we have a free resolution:
\[ 0 \to R \xrightarrow{\cdot T} R \to
R/(T) \to 0 \]
Tensoring with an $R$-algebra $S$,
we get the complex:
\[ 0 \to S \xrightarrow{\cdot T} S \to
S/TS \to 0 \]
The first homology group is
$\ker(S \xrightarrow{\cdot T} S) = S[T]$.
Thus, $F(S)$ assigns each $S$ its
$T$-torsion submodule, $S[T]$.

To represent this functor, consider
the $p$-adically complete $R$-algebra
$A = R\langle Y \rangle / (TY)$.
A continuous $R$-algebra homomorphism
$\phi: A \to S$ is determined by
$y = \phi(Y) \in S$.
This requires $Ty = 0$ in $S$.
Because $A$ is a Tate algebra,
any such $y$ gives a continuous map.
This yields a bijection
$\mathrm{Hom}_{R}(A, S) \cong S[T]$.
\end{proof}

Consider the moduli
stack 
$\cX_{F,\G_m}^{\underline{0},\tame}$
of 
potentially semistable
rank $1$
\'etale $(\varphi, \Gamma)$-module
of trivial Hodge type
and tame inertial type
over $\bfA_{F, \bar\Z_p}$.
Here, $\bfA_{F,A}$ is the usual thickening of $\bfE_{F,A}$.
All such 
\'etale $(\varphi, \Gamma)$-modules
correspond to unramified twists
of Teichm\"uller lifts
of a mod $p$ Galois representation.
So, we have
\[
\cX_{F,\G_m}^{\underline{0},\tame}
=
\coprod_{
\tau\in \F_q^\times}
\cX_{F, \G_m}^{\underline{0}, \tau}
=
\coprod_{
\tau\in \F_q^\times}
[\Spf \bar\Z_p\langle T^{\pm1}\rangle/\Spf \bar\Z_p\langle T^{\pm1}\rangle],
\]
and admits a destackification
\[
X_{F,\G_m}^{\underline{0},\tame}
:=
\coprod_{
\tau\in \F_q^\times}
\Spf \bar\Z_p\langle T^{\pm1}\rangle.
\]
Write
$M^{\underline{0},\tame}$
for the universal \'etale
$(\varphi, \Gamma)$-module
over $R=\bfA_{F, \bar\Z_p\langle T^{\pm1}\rangle}$.
Write
$M_{\cyc}$ for the cyclotomic
\'etale $(\varphi, \Gamma)$-module
over $\bfA_{F, \bar\Z_p}$.
Consider the presheaf
\[
S\mapsto
\Tor^R_1(H^2_{\Herr}(M^{\underline{0},\tame}\otimes M_{\cyc}), S)
=
\Tor^R_1(\bar\Z_p\langle T^{\pm1}\rangle/(T-1),S),
\]
which is representable by
\[
\Spf \bar\Z_p\langle T^{\pm1}, Y\rangle
/(Y(T-1)).
\]
The formal scheme
$\Spf \bar\Z_p\langle T^{\pm1}, Y\rangle
/(Y(T-1))$ 
has two irreducible components
\[
\Spf \bar\Z_p\langle T^{\pm1}\rangle
\cup
\Spf \bar\Z_p\langle Y\rangle
\]
with special fiber
\[
\Spec \bar\F_p[ T^{\pm1}]
\cup
\Spec \bar\F_p[ Y]
\]
and generic fiber
\[
\Sp \bar\Q_p\langle T^{\pm1}\rangle
\cup
\Sp \bar\Q_p\langle Y\rangle.
\]

\begin{lem}
\rm
If $\bG$ has connected center,
then there exists a cocharacter
$\lambda\in X_*(\bT)$
such that
\[
\langle \lambda, \delta\rangle=-1
\]
for all $\delta\in \Delta$.
\end{lem}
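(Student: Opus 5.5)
The plan is to reduce the statement to a lattice-theoretic fact about the root datum of $\bG$ and then use connectedness of the center to produce $\lambda$. We want $\lambda\in X_*(\bT)$ with $\langle\lambda,\delta\rangle=-1$ for all $\delta\in\Delta$. Equivalently, $-\lambda$ should pair to $1$ with every simple root, i.e.\ $-\lambda$ plays the role of $\rho^\vee$ (half the sum of positive coroots). Since $\rho^\vee$ need not lie in the cocharacter lattice, we look instead for any element of $X_*(\bT)$ with this property.

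Connectedness of the center of $\bG$ is dual to the statement that the derived subgroup of the dual group $G$ is simply connected. It is also equivalent to the statement that the root lattice $\Z\Delta$ is a saturated subgroup of $X^*(\bT)$, i.e.\ $X^*(\bT)/\Z\Delta$ is torsion free. This is because $Z(\bG)$ is the diagonalizable group with character group $X^*(\bT)/\Z\Phi$, and it is connected iff this group has no torsion. I would first record this standard fact.

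Next, because $\Delta$ is linearly independent and $\Z\Delta$ is saturated, $\Z\Delta$ is a direct summand of the free abelian group $X^*(\bT)$. Hence any homomorphism $\Z\Delta\to\Z$ extends to a homomorphism $X^*(\bT)\to\Z$. Take the homomorphism sending each $\delta\in\Delta$ to $-1$. Since $X_*(\bT)=\Hom(X^*(\bT),\Z)$ via the perfect pairing, an extension of this homomorphism is exactly a cocharacter $\lambda$ with $\langle\lambda,\delta\rangle=-1$ for all $\delta$.

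The only step needing care is the identification of connectedness of $Z(\bG)$ with torsion-freeness of $X^*(\bT)/\Z\Phi$. This is standard from the description of the center as the intersection of the kernels of the roots, together with the character theory of diagonalizable groups over an algebraically closed field. One subtlety is that in characteristic $p$, $p$-torsion in the quotient would give a non-reduced, though still connected, center. So I would state the hypothesis as "$X^*(\bT)/\Z\Phi$ torsion free", which is the intended meaning of connected center in this paper, following \cite{Lin25}. Otherwise I would note that $p>h_{\bG}$ excludes $p$-torsion in this quotient.
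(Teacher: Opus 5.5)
Your proof is correct and is essentially the paper's argument. The paper recalls the equivalence ``connected center $\Leftrightarrow$ fundamental coweights exist in $X_*(\bT)$'', as the dual of ``simply connected derived subgroup $\Leftrightarrow$ fundamental weights exist'', and takes $\lambda$ to be minus the sum of the fundamental coweights. Your argument, that $\Z\Delta$ is a direct summand of $X^*(\bT)$ so the functional $\delta\mapsto -1$ extends, is exactly the justification of that equivalence, applied directly to the functional you need.

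Your remark that $p>h_{\bG}$ rules out $p$-torsion in $X^*(\bT)/\Z\Phi$ is also correct. It means scheme-theoretic connectedness of the center and torsion-freeness agree in this setting, a point the paper leaves implicit.
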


\begin{proof}
Recall that
``having simply-connected derived subgroup''
$\Leftrightarrow$
``having fundamental weights''.
Dualizing both sides, we get
``having connected center''
$\Leftrightarrow$
``having fundamental coweights''.
Take $\lambda$ to be the \MINOREDITED{negative} sum of fundamental coweights.
\end{proof}

We fix such a cocharacter $\lambda$ once for all.
Let $w\in N_{\bG}(\bT)/\bT$ be a Weyl group element
of order $n$, that is elliptic in a proper Levi
$\bM\subset \bG$.
Let 
\[
\chi_{\LT}:\Gal_{F_n}\to \G_m(\cO_{F_n})=\cO_{F_n}^\times
\]
be the Lubin-Tate character.
Write $
\chi_{\LT}^{\lambda}
$
for the composite
\[
\Gal_{F_n}\xrightarrow{\chi_{\LT}} \G_m(\cO_{F_n})
\xrightarrow{\lambda(\cO_{F_n})}
\bT(\cO_{F_n}).
\]
Fix a lift $\sigma \in \Gal_F$ of a generator of $\Gal(F_n/F)$.
We define the Galois representation $N_w(\chi_{\LT}^{\lambda}): \Gal_F \to \bT(\bar\Z_p) \langle w\rangle$ by setting:
\[
N_w(\chi_{\LT}^{\lambda})(g \sigma^k)
:=
\left( \prod_{i=0}^{n-1} {w}^i \chi_{\LT}^{\lambda}(\sigma^{-i}g\sigma^i) {w}^{-i} \right) {w}^k.
\]

\begin{lem}
Let $\delta^\vee\in \Delta-\Delta_{M}$
and set $\Bdelta:=\langle w\rangle \cdot \delta^\vee$.
we have
\[
H^2(\Gal_F, U_{\Bdelta}(\bar\Z_p))
\cong \bar\Z_p.
\]
\end{lem}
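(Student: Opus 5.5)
We reduce to a statement about a single character over the splitting field of the relative root via Shapiro's lemma, and then compute its $H^2$ by local Tate duality.

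\textbf{Shapiro reduction.} Write $d=\#\Bdelta$ and let $F_d/F$ be the unramified extension of degree $d$, with $F_d\subset F_n$. The representation $N_w(\chi_{\LT}^{\lambda})$ sends $\sigma$ to an element whose image in the Weyl group is $w$. Hence $\sigma$ permutes the root groups $U_{w^i\delta^\vee}$ cyclically. The stabilizer of the factor $U_{\delta^\vee}$ is therefore $\Gal_{F_d}$, and
\[
U_{\Bdelta}(\bar\Z_p)\cong \operatorname{Ind}_{\Gal_{F_d}}^{\Gal_F}\psi,
\]
where $\psi:\Gal_{F_d}\to\bar\Z_p^\times$ is the character by which $\Gal_{F_d}$ acts on $U_{\delta^\vee}$. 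Since $F_d/F$ is finite, induction and coinduction agree, and Shapiro's lemma gives
\[
H^2(\Gal_F,U_{\Bdelta}(\bar\Z_p))\cong H^2(\Gal_{F_d},\psi).
\]

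\textbf{Identifying $\psi$.} Restricted to $\Gal_{F_n}$, the product formula shows that $\psi$ is
\[
\prod_{i=0}^{n-1} (w^{-i}\delta)\circ\lambda\circ\chi_{\LT}\circ\operatorname{Ad}(\sigma^{-i}),
\]
up to an unramified twist coming from the Tits representative of $w^d$ and the Frobenius bookkeeping. We use two facts.
\begin{itemize}
\item Since $w\in W_{\bM}$ and $\delta\notin\Delta_{\bM}$, each root $w^{-i}\delta$ equals $\delta$ plus a combination of roots of $\bM$.
\item Since $\lambda$ is minus the sum of the fundamental coweights, $\langle\lambda,\delta'\rangle=-1$ for every simple root $\delta'$.
\end{itemize}
Consequently $\langle\lambda,w^{-i}\delta\rangle=-1-\Ht_{\bM}(w^{-i}\delta)$.

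The key point is that the product $\prod_i \chi_{\LT}\circ\operatorname{Ad}(\sigma^{-i})$ over a full set of conjugates is the norm of the Lubin–Tate character. This norm agrees with the cyclotomic character up to an unramified character. So $\psi$ is $\chi_{\cyc}^{-1}$ times a finite-order (tame) character times an unramified character.

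The step I expect to be the real obstacle is pinning down exactly which unramified twist $\psi$ carries, and how the factor $1/d$ (or $n/d$) in the exponent is absorbed. To control this, I will first check that the Levi contributions $\Ht_{\bM}(w^{-i}\delta)$ sum to zero over a cycle; this should follow from $\sum_i w^i$ annihilating the roots of $\bM$, because $w$ is elliptic in $\bM$. After that, only the $-1$ terms survive.

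\textbf{Computing $H^2$.} By local Tate duality,
\[
H^2(\Gal_{F_d},\psi)\cong H^0(\Gal_{F_d},\psi^{-1}\otimes\bar\Q_p/\bar\Z_p(1))^\vee,
\]
and we need this to be $\bar\Z_p$. This holds if and only if $\psi^{-1}(1)$ is trivial on $\Gal_{F_d}$; in that case the invariants are all of $\bar\Q_p/\bar\Z_p$, whose Pontryagin dual is $\bar\Z_p$. The required triviality says that $\psi$ equals the cyclotomic character exactly. This is precisely the normalization that the choice of $\lambda$, the Lubin–Tate character and the Tits representatives is designed to achieve, and it is the identity I would verify carefully.

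If instead $\psi$ is only congruent to the cyclotomic character, then $H^2$ would be a finite cyclic group rather than $\bar\Z_p$, so the exact equality $\psi=\chi_{\cyc}$ is essential to the argument.
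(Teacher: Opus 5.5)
Your Shapiro reduction is exactly the first step of the paper's proof. You have also correctly identified the crux: everything rests on the character $\psi$ of $\Gal_{F_d}$ on $U_{\delta^\vee}$ being \emph{exactly} $\chi_{\cyc}$. The paper's proof is that reduction followed by the bare assertion $U_{\delta^\vee}(\bar\Z_p)\cong\bar\Z_p(1)$ as a $\Gal_{F_m}$-module. Your proposal does not establish this identity, and the computation you sketch actually argues against it, so there is a genuine gap.

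\textbf{The sign.} From $\langle\lambda,\delta^\vee\rangle=-1$ you obtain $\psi=\chi_{\cyc}^{-1}\cdot(\text{finite order})\cdot(\text{unramified})$. No such twist turns $\chi_{\cyc}^{-1}$ into $\chi_{\cyc}$, because the Hodge--Tate weights differ. For such a $\psi$, your own duality step gives a \emph{finite} $H^2$. Saying the normalization is ``designed to achieve'' $\psi=\chi_{\cyc}$ is not an argument.

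\textbf{The Levi contributions.} The claim that the $\Ht_{\bM}(w^{-i}\delta^\vee)$ sum to zero is false. Ellipticity makes $\sum_i w^i$ kill $\Span\Phi_{\bM}$, but $\delta^\vee$ is not in that span, and it has a nonzero component there whenever $\delta$ is adjacent to $\Delta_{\bM}$. For example, take $\GL_3$ with $\Delta_{\bM}=\{\alpha_1\}$ and $w=s_1$: the orbit $\{\alpha_2,\alpha_1+\alpha_2\}$ contributes $0+1$.

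More fundamentally, the exponents $\langle\lambda,w^{-i}\delta^\vee\rangle$ are attached to the distinct conjugates $\chi_{\LT}\circ\operatorname{Ad}(\sigma^{-i})$. These have their Hodge--Tate weights at different embeddings of $F_n$, so the exponents cannot be added. Moreover, a product over $\Gal(F_n/F)$-conjugates is a norm down to $F$, not to $\Q_p$. Hence it is not cyclotomic up to an unramified twist unless $F=\Q_p$.

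\textbf{The identity can fail.} The missing step is not unramified bookkeeping: for the action through $N_w(\chi_{\LT}^{\lambda})$ as defined in the paper, it can genuinely fail. In the $\GL_3$ example, take $\lambda(x)=\mathrm{diag}(1,x,x^2)$, and write $\chi=\chi_{\LT}$ and $\chi^\sigma(g)=\chi(\sigma^{-1}g\sigma)$. For $g\in\Gal_{F_2}$ one gets $N_w(\chi_{\LT}^{\lambda})(g)=\mathrm{diag}\bigl(\chi^\sigma(g),\chi(g),\chi(g)^2\chi^\sigma(g)^2\bigr)$, so $\psi=\chi^{-1}(\chi^\sigma)^{-2}$ on $U_{\alpha_2}$. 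Then $\psi^\sigma/\psi=\chi^\sigma\chi^{-1}$ is nontrivial on inertia, so $\psi$ is not even the restriction of a character of $\Gal_F$. In particular $\psi\neq\chi_{\cyc}|_{\Gal_{F_2}}$, and $H^2(\Gal_F,U_{\Bdelta}(\bar\Z_p))\cong H^2(\Gal_{F_2},\psi)$ is finite.

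Closing the argument therefore requires a genuinely different input: a choice of the torus-valued lift (equivalently, of the Hodge type) for which the $\Gal_{F_d}$-character on $U_{\delta^\vee}$ is literally $\chi_{\cyc}$. Neither your proposal nor the paper's one-line proof supplies it.
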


\begin{proof}
Write $F_m$ for the splitting field of $\delta$.
By Shapiro's Lemma, we have an isomorphism of cohomology groups:
\[
H^2(\Gal_F, U_{\Bdelta}(\bar{\mathbb{Z}}_p)) \cong H^2(\Gal_{F_m}, U_{\delta^\vee}(\bar{\mathbb{Z}}_p))
=H^2(\Gal_{F_m}, \bar\Z_p(1))
=\bar\Z_p.
\qedhere
\]
\end{proof}

\begin{defn}
Write $\underline{\lambda}_w
\in X^*(\Res_{F/\Q_p}\bT)$
for the Hodge type of $N_w(\chi_{\LT}^{\lambda})$.

Write
\[
\cX_{U^{\bM}\rtimes \bT \langle w\rangle}
^{\underline{\lambda}_w, \tame}
\]
for the moduli stack
of potentially semistable
Galois representations
of Hodge type $\underline{\lambda}_w$
and tame inertial type.
Write
\[
\fX_{U^{\bM}\rtimes \bT \langle w\rangle}
^{\underline{\lambda}_w, \tame}
\]
for the rigid generic fiber of 
$
\cX_{U^{\bM}\rtimes \bT \langle w\rangle}
^{\underline{\lambda}_w, \tame}$.
\end{defn}

Next, we construct the integral
Weil-Deligne stack.

\begin{defn}
Let $R=\cO_{X_{\bT \langle w\rangle}^{\underline{\lambda}_w, \tame}}$.
Define
$\sW_{U^{\bM}\rtimes \bT \langle w\rangle}
\to \cX_{\bT\langle w\rangle}^{\underline{\lambda}_w, \tame}$
to be the morphism
that represents the presheaf
\[
S\mapsto \prod_{\Bdelta\in \BDelta}
\Tor^R_1(H_{\Herr}^2(U_{\Bdelta}(\bfA_{F,R})),S).
\]
Write
$\fW_{U^{\bM}\rtimes \bT \langle w\rangle}$
for the rigid generic fiber of
$\sW_{U^{\bM}\rtimes \bT \langle w\rangle}$,
whose representability follows from unramified descent
of prime-to-$p$ degree.
Note that the special fiber of
$\sW_{U^{\bM}\rtimes \bT \langle w\rangle}$
is precisely
$\cW_{U^{\bM}\rtimes \bT \langle w\rangle}$.
\end{defn}

We recall the following theorem from our previous work:

\begin{thm} (\cite[Theorem 3]{Lin25})
\rm
$
\fX_{U^{\bM}\rtimes \bT \langle w\rangle}
^{\underline{\lambda}_w, \tame}
\to
\fW_{U^{\bM}\rtimes \bT \langle w\rangle}
$
induces a bijection of irreducible components.
\label{thm:rigid}
\end{thm}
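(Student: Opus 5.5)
The plan is to mirror the mod $p$ argument of Proposition \ref{prop:PGL2} and Theorem \ref{thm:PGL2-eqn} on the rigid generic fiber, using the extra rigidity available in characteristic $0$. Both $\fX:=\fX_{U^{\bM}\rtimes \bT \langle w\rangle}^{\underline{\lambda}_w, \tame}$ and $\fW:=\fW_{U^{\bM}\rtimes \bT \langle w\rangle}$ live over the rigid torus stack $\fX_{\bT\langle w\rangle}^{\underline{\lambda}_w,\tame}$, whose destackification is a disjoint union of rigid tori $\Sp\bar\Q_p\langle T^{\pm1},\dots\rangle$. The first step is to invoke the theorem of \cite{Lin25} recalled in the introduction. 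It says that every $U^{\bM}\rtimes\bT\langle w\rangle$-valued extension of a potentially semistable $\bT\langle w\rangle$-valued representation of $P^-$-dominant Hodge type is automatically potentially semistable of the same type. Hence $\fX$ is simply the full moduli of $U^{\bM}$-extensions over the torus base, with no further $p$-adic Hodge theoretic condition. I would then filter $U^{\bM}$ by the $\Ht^{\bM}$-filtration, refined by a total order on $\BPhi^+$ as in Subsection \ref{subsec:Borel-destack}. I would destackify using Galois-stable sections, exactly as in the twisted destackification, reducing to schemes $\fX_{\le\Balpha}\to\fX_{<\Balpha}$.

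The key observation for the inductive step is a weight argument. By the choice of $\lambda$ with $\langle\lambda,\delta\rangle=-1$, the character through which the torus acts on $U_{\alpha^\vee}$ has Hodge--Tate weight $-\Ht(\alpha)$ along each embedding. It can only be (an unramified twist of) the cyclotomic character when $\Ht^{\bM}$-height is one, that is, only for $\Bdelta\in\BDelta$. Consequently $H^2(\Gal_F,U_{\Balpha})$ vanishes identically on the rigid base for every non-simple relative root. So there are no obstructions to extending from $\fX_{<\Balpha}$ to $\fX_{\le\Balpha}$, and for those steps $\fX_{\le\Balpha}\to\fX_{<\Balpha}$ is a torsor under a vector bundle given by $H^1$. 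The rank of that bundle is constant by the local Euler characteristic, since $H^0$ and $H^2$ both vanish. For $\Bdelta\in\BDelta$, I would use the characteristic-$0$ analogue of Lemma \ref{lem:PID-Tor}, namely the universal coefficient sequence for a perfect Herr complex over a one-dimensional (PID-like) rigid torus. This shows that $H^1(U_{\Bdelta},S)\to\Tor_1(H^2(U_{\Bdelta}),S)$ is surjective with kernel $H^1(U_{\Bdelta},R)\otimes S$ of constant rank. Shapiro's lemma and the computation $H^2(\Gal_F,U_{\Bdelta})\cong\bar\Z_p$ above show that the $\Tor_1$ term is represented by the two-component space $\Sp\bar\Q_p\langle T^{\pm1},Y\rangle/(Y(T-1))$, and this is precisely the factor of $\fW$ attached to $\Bdelta$.

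Combining the layers, $\fX\to\fW$ becomes an iterated torsor under vector bundles of constant rank over each irreducible component of $\fW$. It is therefore smooth and surjective with geometrically irreducible fibers (affine spaces), which gives the bijection on irreducible components. I expect the main obstacle to be showing that the pseudo-torsor over the Steinberg component $\Sp\bar\Q_p\langle Y\rangle$ is a genuine torsor, i.e.\ admits local sections, together with the gluing at the crossing $T=1$. For this I would copy the argument of Proposition \ref{prop:PGL2}. A très ramifié cocycle scaled by $Y$ gives a section whose composite to $\Sp\bar\Q_p\langle Y\rangle$ is a group homomorphism of $\G_a$. In characteristic $0$ such a homomorphism is linear, hence an isomorphism, which is even easier than the mod $p$ case. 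For the crossing, I would use the Smith-normal-form splitting of the perfect complex as in Lemma \ref{lem:PGL2-local}, after Galois descent along $F_n/F$ of prime-to-$p$ degree.
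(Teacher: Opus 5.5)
The paper gives no argument for this statement; it is imported as an external input, \cite[Theorem 3]{Lin25}. Your proposal therefore takes a genuinely different route. You replace Theorem 3 by the automatic-potential-semistability theorem of \cite{Lin25} recalled in the introduction. You then rebuild the rigid picture as in Part I: no obstructions above $\Ht^{\bM}=1$, the $\Ht^{\bM}=1$ layer mapping to $\fW$ through the universal-coefficient surjection, and a Steinberg/crossing analysis modelled on Proposition~\ref{prop:PGL2} and Lemma~\ref{lem:PGL2-local}. The architecture is sound and gives more than the citation, namely a local model of $\fX$ as $\fW$ times a polydisc. The citation is shorter, but it leaves unchecked whether the hypotheses of \cite{Lin25} match the present $\underline{\lambda}_w$ and $\fW$. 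Your route makes those hypotheses explicit, and that is exactly where it needs repair.

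\textbf{Hodge--Tate weights.} Your weight claim is only right for $w=1$ and $F=\Q_p$. On $\Gal_{F_n}$, the factor $g\mapsto\chi_{\LT}(\sigma^{-i}g\sigma^i)$ of $N_w(\chi_{\LT}^\lambda)$ has its only nonzero Hodge--Tate weight at one embedding $\iota_i$ of $F_n$ (a conjugate of the fixed one). Hence $U_{\alpha^\vee}$ has weight $\langle\lambda,w^{-i}\alpha\rangle=-\Ht(w^{-i}\alpha)$ at $\iota_i$, and weight $0$ at embeddings not above $\iota|_F$.
\begin{itemize}
\item Vanishing of $H^2$ for $\Ht^{\bM}(\Balpha)\ge2$ survives, since $\Ht(w^{-i}\alpha)\ge\Ht^{\bM}(\alpha)$.
\item Suppose $w\delta\neq\delta$; this holds for every $\delta$ adjacent to $\Delta_{\bM}$, since $w$ is elliptic. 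Then $\Ht(w^{-i}\delta)>1$ for some $i$. So the character of $\Gal_{F_m}$ on $U_{\delta^\vee}$ has non-constant weights and is never $\bar\Q_p(1)$. Consequently $H^2(\Gal_F,U_{\Bdelta})[1/p]$ vanishes identically: that factor has no Steinberg component, and the $\bar\Z_p$-computation you quote cannot hold there.
\item Your argument survives this: that factor of $\fW$ is just the base, and the layer is a vector-bundle torsor. But you must treat it separately. It also bears on Corollary~\ref{cor:Ch-lower-bd}, which matches components of $\cW$ with special fibres of components of $\sW$.
\end{itemize}

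\textbf{Regularity for $F\neq\Q_p$.} The same bookkeeping shows that, for $F\neq\Q_p$, the literal $\underline{\lambda}_w$ is not regular. Root characters have weight $0$ at some embeddings, so no characteristic-$0$ Steinberg component survives at all. Moreover, by Bloch--Kato $H^1_g\subsetneq H^1$, so your step ``$\fX$ is the full moduli of extensions'' fails. You must either assume a Hodge type that is $P^-$-regular at every embedding of $F$, or run the torsor argument with $H^1_g$, which still has constant rank.

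\textbf{The Steinberg section.} A nonzero linear self-map $Y\mapsto aY$ of the closed unit disc is an isomorphism only if $a\in\bar\Z_p^\times$, so ``linear, hence an isomorphism'' does not follow. Choose the cocycle replacing the tr\`es ramifi\'e class so that it lifts a generator of $\Tor_1(H^2,-)$ integrally; this is possible by integral surjectivity of the universal-coefficient map.
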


\begin{proof}
This is used as the external input \cite[Theorem 3]{Lin25}.
\end{proof}

\begin{cor}
The special fiber of 
$
\cX_{U^{\bM}\rtimes \bT \langle w\rangle}
^{\underline{\lambda}_w, \tame}$
is equidimensional of dimension
$[F:\Q_p]\#\Phi^{\bM}$,
and its number of irreducible components
is at least as many as the number of
irreducible components of
$\cW_{U^{\bM}\rtimes \bT \langle w\rangle}$.
\label{cor:Ch-lower-bd}
\end{cor}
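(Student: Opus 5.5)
The plan is to transport the information in Theorem \ref{thm:rigid} from the rigid generic fiber down to the special fiber, and to combine it with the a priori lower bound on dimensions furnished by the flatness of the integral potentially semistable stack. Write $\cX^{\underline{\lambda}_w,\tame}$ for $\cX_{U^{\bM}\rtimes \bT \langle w\rangle}^{\underline{\lambda}_w, \tame}$ and $\fX$ for its rigid generic fiber. I will use the integral moduli theory of \cite[Lemma 11 and Theorems 10--12]{Lin25}: $\cX^{\underline{\lambda}_w,\tame}$ is a $p$-adic formal algebraic stack, flat over $\Spf\bar\Z_p$, and is the closure of its generic fiber. The twisted version has the same properties, since we only impose Galois-stable sections.

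\textbf{Step 1: equidimensionality.} By the Kisin-style analysis recalled in the introduction, $\fX$ is equidimensional, and its dimension can be read off from the Weil--Deligne description. The fibers of $\fX\to\fW_{U^{\bM}\rtimes\bT\langle w\rangle}$ are torsors under the $H^1$ modulo the $H^2$-contribution. Because the Hodge type $\underline{\lambda}_w$ is regular along $U^{\bM}$ (we have $\langle\lambda,\delta\rangle=-1$), these fibers have dimension $[F:\Q_p]\#\Phi^{\bM}$. Separately, $\fW$ is equidimensional of dimension $0$; this follows from the same count as Theorem \ref{thm:twist-reg}(1), carried out for the lift over $\Z_p$. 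Flatness then shows that the special fiber is equidimensional of the same dimension $[F:\Q_p]\#\Phi^{\bM}$ (generic and special fiber dimensions agree for flat formal stacks of finite type, cf. \cite[Tag 0EPZ]{Stacks}-type arguments).

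\textbf{Step 2: counting components.} By Theorem \ref{thm:rigid}, the number of irreducible components of $\fX$ equals that of $\fW$. Since $\sW_{U^{\bM}\rtimes\bT\langle w\rangle}$ is built from the $\Tor$-presheaves of the Lemma above, each of its components is flat with irreducible special and generic fibers; the model case is $\Spf\bar\Z_p\langle T^{\pm1},Y\rangle/(Y(T-1))$. Hence $\#\mathrm{Irr}(\fW)=\#\mathrm{Irr}(\cW_{U^{\bM}\rtimes\bT\langle w\rangle})$.

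\textbf{Step 3: specialization.} The remaining task is to show that the special fiber has at least as many components as $\fX$. Each component $\fC\subset\fX$ has a flat closure $\overline{\fC}$, whose special fiber is nonempty and, by Step 1, a union of top-dimensional components. To separate distinct $\fC$, I will use the morphism $\WD$ on integral models, $\cX^{\underline{\lambda}_w,\tame}\to\sW$, which is compatible with its reductions $\cX\to\cW$. The closure $\overline{\fC}$ maps onto the closure of a single component $C'$ of $\fW$, so the special fiber of $\overline{\fC}$ dominates the reduction of $C'$, which is a single component of $\cW$. Distinct $\fC$ correspond to distinct $C'$ and hence hit distinct components of $\cW$. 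Therefore their special fibers contain distinct top-dimensional components, which gives the lower bound.

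I expect the main obstacle to be Step 3, specifically the claim that the special fiber of $\overline{\fC}$ actually dominates the reduction of $C'$ rather than merely meeting it. The first thing I would try is a properness/upper-semicontinuity argument on fiber dimensions of $\WD$, which should force every top-dimensional component in the special fiber to map onto a $0$-dimensional component of $\cW$. As a fallback, the flat-closure argument applied to $\sW$ itself would show that the reduction of $\overline{C'}$ is irreducible.
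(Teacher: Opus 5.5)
Your outline is the paper's proof. The paper likewise identifies the components of $\cW_{U^{\bM}\rtimes \bT \langle w\rangle}$ and $\fW_{U^{\bM}\rtimes \bT \langle w\rangle}$ with the special and generic fibers of components $\sC$ of $\sW_{U^{\bM}\rtimes \bT \langle w\rangle}$. It takes the $p$-torsion-free closure $\cY$ of the generic fiber of $\cX_{U^{\bM}\rtimes \bT \langle w\rangle}^{\underline{\lambda}_w, \tame}\times_{\sW}\sC$, which is nonempty by Theorem~\ref{thm:rigid}. It then uses flatness and Krull's Hauptidealsatz to get both $\dim(\cY\otimes\bFp)=[F:\Q_p]\#\Phi^{\bM}$ and equidimensionality.

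You organize by components $\fC$ of $\fX$ rather than by $\sC$. Via Theorem~\ref{thm:rigid} this is the same argument, and in both versions $\cY\otimes\bFp$ automatically lies over $C=\sC\otimes\bFp$.

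The step you flag is a genuine gap, and the paper does not fill it either: it simply asserts that some component of $\cY\otimes\bFp$ ``maps onto $C$ and does not map onto other irreducible components.'' The count needs something weaker than dominance, but still not formal. Some component of $\cY\otimes\bFp$ must not lie entirely over $C\cap C''$ for another component $C''$, i.e.\ over the Steinberg/non-Steinberg crossings. Otherwise the components you select for $C$ and $C''$ may coincide.

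Flatness alone cannot give this. The formal scheme $\Spf\Z_p\langle T,Z\rangle/(pZ-(T-1))\cong\Spf\Z_p\langle Z\rangle$ is flat over $\Spf\Z_p\langle T\rangle$, and its generic-fiber image (the disc $|T-1|\le|p|$) is Zariski dense. Yet its whole special fiber $\A^1_Z$ lies over the single point $T=1$.

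Neither of your fixes closes the gap:
\begin{itemize}
\item Your fallback is already Step 2. Irreducibility of the reduction of $\overline{C'}$ says nothing about where $\cY\otimes\bFp$ lands inside it.
\item Semicontinuity needs a bound $\le[F:\Q_p]\#\Phi^{\bM}$ on the dimension of the special fiber over the crossing loci. That bound is not available in this generality; fiber dimensions of the mod $p$ map to $\cW$ do jump over special loci, cf.\ the extra strata in Table~\ref{table:F4-table}.
\end{itemize}

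What would close it is a pointwise input on the generic fiber, for instance surjectivity of $\fX\to\fW$ on rigid points. This is more than the bijection of components recorded in Theorem~\ref{thm:rigid}. With it, the argument runs as follows:
\begin{enumerate}
\item Pick a closed point $\bar x\in C$ lying on no other component.
\item Lift $\bar x$ to a rigid point $x$ of the generic fiber of $\sC$; specialization is surjective onto closed points for flat formal models.
\item Choose $y\in\fX$ over $x$, and extend $y$ to an integral point of $\cX\times_{\sW}\sC$. This point factors through $\cY$.
\item The component of $\cY\otimes\bFp$ through its special point lies over $C$ and over no other $C''$. Components obtained this way for distinct $C$ are therefore distinct.
\end{enumerate}
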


\begin{proof}
Note that
irreducible components of
$\cW_{U^{\bM}\rtimes \bT \langle w\rangle}$
and
$\fW_{U^{\bM}\rtimes \bT \langle w\rangle}$
are precisely the special fiber
and the rigid generic fiber
of irreducible components $\sC$ of the
$p$-adic formal stack
$\sW_{U^{\bM}\rtimes \bT \langle w\rangle}$.

Consider
\[
\cX_{U^{\bM}\rtimes \bT\langle w\rangle}
^{\underline{\lambda}_w, \tame}(\sC)
:=
\cX_{U^{\bM}\rtimes \bT \langle w\rangle}
^{\underline{\lambda}_w, \tame}
\times_{\sW_{U^{\bM}\rtimes \bT \langle w\rangle}}
\sC,
\]
whose rigid generic fiber
$\fX_{U^{\bM}\rtimes \bT \langle w\rangle}
^{\underline{\lambda}_w, \tame}(\sC)$
is of dimension $[F:\Q_p]\#\Phi^{\bM}$
and non-empty
by Theorem \ref{thm:rigid}.
Since 
$\cX_{U^{\bM}\rtimes \bT \langle w\rangle}
^{\underline{\lambda}_w, \tame}(\sC)$
is a closed sub-formal scheme
of
$\cX_{U^{\bM}\rtimes \bT \langle w\rangle}
^{\underline{\lambda}_w, \tame}$,
the Zariski closure 
$\cY$ of 
$\fX_{U^{\bM}\rtimes \bT \langle w\rangle}
^{\underline{\lambda}_w, \tame}(\sC)$
in 
$\cX_{U^{\bM}\rtimes \bT \langle w\rangle}
^{\underline{\lambda}_w, \tame}$
is contained in 
$\cX_{U^{\bM}\rtimes \bT \langle w\rangle}
^{\underline{\lambda}_w, \tame}(\sC)$.
Take $\cY$ to be the schematic closure of the indicated
nonempty rigid component, with its $p$-power torsion removed.  It is then
$\bar\Z_p$-flat by construction.  A nonzero flat, topologically finite-type
formal scheme over the complete DVR (all stacks descend to a DVR) cannot have empty special
fiber: otherwise $p$ would be a unit on its coordinate ring, contrary to
adic completeness.  Hence $\cY\otimes\bar\F_p$ is nonempty.  Flatness and
Krull's Hauptidealsatz give
$\dim(\cY\otimes\bar\F_p)=\dim\cY-1
=[F:\Q_p]\#\Phi^{\bM}$.  This argument is applied separately to every
$\sC$.
So, there exists at least one irreducible component
of $\cX_{U^{\bM}\rtimes \bT \langle w\rangle}
^{\underline{\lambda}_w, \tame}\otimes \bar\F_p$
that maps onto $C:=\sC\otimes \bar\F_p$
and does not map onto other
irreducible components of
$\cW_{U^{\bM}\rtimes \bT \langle w\rangle}$.

The equidimensionality follows from the 
Krull's Hauptidealsatz argument applied to the entire stack 
$\cX_{U^{\bM}\rtimes \bT \langle w\rangle}
^{\underline{\lambda}_w, \tame}$.
\end{proof}

\begin{thm}
\rm
If the number of irreducible components
of $\cX_{U^{\bM}\rtimes \bT \langle w\rangle}$
does not exceed the number of irreducible components
of 
$\cW_{U^{\bM}\rtimes \bT \langle w\rangle}$,
then
all $\bFp$-points of
$\cX_{U^{\bM}\rtimes \bT \langle w\rangle}$
\MINOREDITED{have} a potentially semistable lift of regular Hodge type
$\underline{\lambda}_w$
and tame inertial type.

The condition is satisfied in the following cases:
\begin{itemize}
\item $\bG$ and $w$ arbitrary and $[F:\Q_p]\ge \#\Phi^++2$,
\item $\bG=\mathrm{F}_4$ and $w\neq 1$, or
\item $\bG=\mathrm{F}_4$ and $F\neq \Q_p$.
\end{itemize}
\label{thm:lift0}
\end{thm}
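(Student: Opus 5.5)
The strategy is a squeeze on component counts. Write $\cY:=\cX_{U^{\bM}\rtimes \bT\langle w\rangle}^{\underline{\lambda}_w,\tame}\otimes\bFp$ for the special fiber of the potentially semistable stack. It is a closed substack of $\cX_{U^{\bM}\rtimes \bT\langle w\rangle}$: the reduction of a potentially semistable family is still an étale $(\varphi,\Gamma)$-module with the same reduced torus data. By Corollary \ref{cor:Ch-lower-bd}, $\cY$ is equidimensional of dimension $[F:\Q_p]\#\Phi^{\bM}$. The same corollary shows that for every irreducible component $C$ of $\cW_{U^{\bM}\rtimes \bT\langle w\rangle}$ there is an irreducible component $\cY_C$ of $\cY$ mapping onto $C$ and onto no other component. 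In particular the $\cY_C$ are pairwise distinct, so $\cY$ has at least $N_W:=\#\mathrm{Irr}(\cW_{U^{\bM}\rtimes \bT\langle w\rangle})$ irreducible components.

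Next I would show that each $\cY_C$ is an irreducible component of $\cX_{U^{\bM}\rtimes \bT\langle w\rangle}$ itself. By the twisted analogue of Corollary \ref{cor:XB-dim} (via Lemma \ref{cor:w-least-dim}, with $\cW$ of dimension $0$ by Theorem \ref{thm:twist-reg}(1)), every component of $\cX_{U^{\bM}\rtimes \bT\langle w\rangle}$ has dimension at least $[F:\Q_p]\#\Phi^{\bM}$. Under the hypothesis, the purity/stratification argument bounds the dimension from above by the same number, so $\cX_{U^{\bM}\rtimes \bT\langle w\rangle}$ is equidimensional of that dimension. A closed irreducible substack of full dimension in an equidimensional stack is a component, so the $\cY_C$ give $N_W$ distinct components of $\cX_{U^{\bM}\rtimes \bT\langle w\rangle}$. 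The hypothesis says there are at most $N_W$ such components, so these are all of them. Hence $\cY_{\red}=\cX_{U^{\bM}\rtimes \bT\langle w\rangle}$ on the level of underlying topological spaces, and every $\bFp$-point lies in the special fiber of the potentially semistable stack. A point of the special fiber of a $p$-adic formal stack flat over $\bar\Z_p$ (after removing $p$-torsion, as in the proof of Corollary \ref{cor:Ch-lower-bd}) lifts to an $\cO_E$-point for some finite $E/\Q_p$. That lift is the desired potentially semistable lift of Hodge type $\underline{\lambda}_w$ and tame type.

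For the listed cases I would verify the hypothesis separately.
\begin{itemize}
\item When $[F:\Q_p]\ge\#\Phi^++2$, Theorem \ref{thm:twist-reg}(2) says each $\cX(C)$ is irreducible and the map to $\cW$ is a bijection on components, so the counts agree.
\item When $\bG=\mathrm{F}_4$ and $w\neq1$, Theorem \ref{thm:twist-reg}(3) gives the same conclusion.
\item When $\bG=\mathrm{F}_4$, $w=1$ and $F\neq\Q_p$, Theorem \ref{thm:F4-table}(1) identifies the components of $\cX_{\bB}$ with the closures $\overline{\cX_{\bB}[U]\langle\Psi_0(C),\Psi_2(C)\rangle}$. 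Corollary \ref{cor:reg} and Corollary \ref{cor:C6} then show these are in bijection with $\mathrm{Irr}(\cW_{\bB})$.
\end{itemize}

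I expect the main obstacle to be the second step: making precise that $\cY$ embeds as a closed substack of the reduced mod $p$ stack in a way compatible with the morphisms to $\cW$. This requires the special fiber of $\sW$ to be exactly $\cW$, and the map $\cY\to\cW$ to agree with the mod $p$ Weil–Deligne map $\WD$. I would check this on the torus and Steinberg coordinates $(T,Y)$ by comparing the integral $\Tor_1$ presheaf with its reduction. The equality $\Spf\bar\Z_p\langle T^{\pm1},Y\rangle/(Y(T-1))\otimes\bFp=\Spec\bFp[T^{\pm1},Y]/(Y(T-1))$ should make this compatibility formal.
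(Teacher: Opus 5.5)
Your overall route is the paper's. Its proof combines Corollary~\ref{cor:Ch-lower-bd} with the component counts of Theorems~\ref{thm:reg}, \ref{thm:twist-reg} and \ref{thm:F4-table}, and that is exactly your squeeze. The corollary supplies at least as many components of dimension $[F:\Q_p]\#\Phi^{\bM}$ in the flat part of the potentially semistable special fiber as $\cW_{U^{\bM}\rtimes\bT\langle w\rangle}$ has.

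However, one step you add does not follow as written. You claim that ``under the hypothesis'' $\cX_{U^{\bM}\rtimes\bT\langle w\rangle}$ has dimension at most $[F:\Q_p]\#\Phi^{\bM}$. The hypothesis only bounds the number of components. Lemma~\ref{cor:w-least-dim}, like Corollary~\ref{cor:XB-dim}, gives only the lower bound, and Theorem~\ref{thm:purity} is conditional on precisely this upper bound. From the count alone you get only that every component $Z$ dominates a unique $C$ and contains $\cY_C$. If $\dim Z>[F:\Q_p]\#\Phi^{\bM}$, then $\cY_C\subsetneq Z$, and nothing shows that the generic points of $Z$ lift. This dimension bound is the real issue. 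The compatibility of $\cY\to\cW$ with the Weil--Deligne map, which you flag as the main obstacle, is built into the paper's definition of the integral Weil--Deligne stack.

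The missing bound is available exactly in the listed cases, from the same theorems. Theorem~\ref{thm:F4-table} states equidimensionality outright. The stratum-by-stratum estimates behind Theorems~\ref{thm:reg} and \ref{thm:twist-reg}(2),(3) bound every fine stratum by $[F:\Q_p]\#\Phi^{\bM}$. So you should either add ``equidimensional of dimension $[F:\Q_p]\#\Phi^{\bM}$'' to the hypothesis of the first sentence, or prove only the listed cases and import the bound there.

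Two smaller corrections:
\begin{itemize}
\item For $w=1$ with $[F:\Q_p]\ge\#\Phi^++2$ you need Theorem~\ref{thm:reg}, not Theorem~\ref{thm:twist-reg}(2). The twisted results are for $w\neq1$: part (3) with $w=1$ and $F=\Q_p$ would contradict the four extra components in Theorem~\ref{thm:F4-table}(2).
\item For the lifting step, use that the $\cY_C$ lie in the special fiber of the flat closure constructed in Corollary~\ref{cor:Ch-lower-bd}. Do not remove $p$-torsion after identifying $\cY$ with the full special fiber, since that can shrink the special fiber.
\end{itemize}
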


\begin{proof}
Combine 
Theorem \ref{thm:twist-reg},
Theorem \ref{thm:reg},
Theorem \ref{thm:F4-table} and
Corollary \ref{cor:Ch-lower-bd}.
\end{proof}

\section{The qualitative Breuil-M\'ezard conjecture}

\begin{lem}
\rm
The scheme-theoretic image $Y$ of each
\[
\dot\cX_{\bB}\langle\Psi\rangle
\to \cX_{\bG}^{\EG}
\]
has dimension at least $\dim \dot\cX_{\bB}\langle\Psi\rangle$.
\label{lem:maximal-ns}
\end{lem}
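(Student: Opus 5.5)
We would show that the morphism $\dot\cX_{\bB}\langle\Psi\rangle\to Y$ has fibers of dimension $\le 0$ generically, i.e.\ that the change-of-group map $\cX_{\bB}\to\cX_{\bG}^{\EG}$ is quasi-finite on the stratum in the stacky sense, after which the dimension inequality $\dim Y\ge \dim\dot\cX_{\bB}\langle\Psi\rangle$ follows from the fiber-dimension inequality for morphisms of finite type algebraic stacks (cf.\ \cite[Tag 0DRQ]{Stacks} and its stacky analogues): $\dim \dot\cX_{\bB}\langle\Psi\rangle\le \dim Y+\max(\text{relative dimension})$.

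The fiber of $\cX_{\bB}\to\cX_{\bG}^{\EG}$ over an $\bFp$-point $\bar\rho:\Gal_F\to\bG(\bFp)$ is the stack of $\bB$-reductions of $\bar\rho$, i.e.\ the $\Gal_F$-fixed locus of $\bG/\bB$ (the flag variety) under $\bar\rho$, taken together with the automorphism groups. Concretely it is $[(\bG/\bB)^{\bar\rho}]$ with stabilizers comparing $\mathrm{Aut}_{\bB}$ and $\mathrm{Aut}_{\bG}$. The first step is to observe that $(\bG/\bB)^{\bar\rho}$ is a closed subscheme of the flag variety fixed by the Zariski closure $H$ of $\bar\rho(\Gal_F)$; its dimension equals the dimension of the flag variety of the centralizer-type data of $H$, which is bounded by $\dim Z_{\bG}(H)^\circ/(Z_{\bG}(H)^\circ\cap\bB)$. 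The second step is to compute relative dimension as $\dim (\bG/\bB)^{\bar\rho} - \dim \mathrm{Aut}_{\bG}(\bar\rho)+\dim\mathrm{Aut}_{\bB}(\bar\rho)$ and show it is $\le 0$: the positive-dimensional part of $(\bG/\bB)^{\bar\rho}$ is swept out by the connected centralizer $Z_{\bG}(\bar\rho)^\circ$ acting on it, so each orbit contributes $\dim Z_{\bG}(\bar\rho)-\dim Z_{\bB}(\bar\rho)$, exactly cancelled by the stabilizer difference; finiteness of the orbit set would follow since $(\bG/\bB)^{\bar\rho}$ is a union of finitely many $Z_{\bG}(\bar\rho)^\circ$-orbits (Borel fixed-point / Bruhat decomposition argument for the solvable-by-finite group generated by the image, using $p>h_{\bG}$ so that unipotent elements lie in a unique minimal unipotent behaving as in characteristic $0$).

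The main obstacle is this finiteness of orbits when $\bar\rho$ has large unipotent image with nonreductive centralizer: fixed points of a unipotent element on $\bG/\bB$ (Springer fibers) need not be finitely many centralizer orbits. We would first try to avoid this by restricting to a dense open of $\dot\cX_{\bB}\langle\Psi\rangle$: on $\dot\cX$ the classes $[c_\delta]$ for $\delta\in\Delta_K$ are nonzero (Lemma \ref{lem:K-nonzero}), which should force the unipotent part of $\bar\rho$ to be regular within the Levi determined by $K$, whose Springer fiber is a point. Then generically the fiber is a single point modulo stabilizers, giving relative dimension $\le 0$ on a dense open and hence the claimed bound for the closure $Y$.
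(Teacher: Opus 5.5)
Your reduction is the paper's: it suffices that every fiber of $\dot\cX_{\bB}\langle\Psi\rangle\to Y$ over an $\bFp$-point has dimension $\le 0$. The idea you reach in your last paragraph, that a regular unipotent element lies in a unique Borel, is also the paper's key fact; its sublemma proves $Z_{\bG}(u)\subset\bB$ by a Bruhat argument.

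\textbf{Your second step is wrong.} Since $\bB\hookrightarrow\bG$ is a monomorphism, the fiber of $\cX_{\bB}\to\cX_{\bG}^{\EG}$ over $\bar\rho$ is the algebraic space $(\bG/\bB)^{\bar\rho}$ of $\bB$-reductions, with no residual stabilizers. The automorphism groups are already built into the stack dimensions of source and target. So there is no correction term $-\dim\Aut_{\bG}(\bar\rho)+\dim\Aut_{\bB}(\bar\rho)$, and nothing cancels an orbit $Z_{\bG}(\bar\rho)/Z_{\bB}(\bar\rho)$. A test case: $[\ast/\bB]\to[\ast/\bG]$ has fiber $\bG/\bB$ and image of dimension $-\dim\bG<-\dim\bB$, whereas your formula predicts relative dimension $0$. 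Positive-dimensional orbits genuinely lower $\dim Y$; this is exactly the mechanism of Lemma~\ref{lem:rot-dim}. If your cancellation held, the bound would hold on every stratum, not only the maximally non-split one. What must actually be shown is that the fiber is finite, i.e.\ that $\bB$ is the only Borel containing $\bar\rho(\Gal_F)$.

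\textbf{The step carrying all the weight is only asserted.} You say the nonvanishing of the classes ``should force'' a regular unipotent; the paper likewise only says ``as $\bar\rho$ is maximally non-split'', so this needs an argument. On $\dot\cX_{\bB}\langle\Psi\rangle$ one has $K=U$, so you want regularity in $\bG$ itself. Also $[c_\delta]\neq0$ holds for all $\delta\in\Delta$ at every point, so no dense open is needed. One way to fill the step:
\begin{itemize}
\item Let $\Gal_{F'}=\ker\bar\rho^{\SS}$, which has index prime to $p$. Restriction to $\Gal_{F'}$ is injective on $H^1(\Gal_F,U_{\delta^\vee}(\bFp))$, and the coefficients become trivial, so each $c_\delta|_{\Gal_{F'}}$ is a nonzero homomorphism.
\item The image of $\Gal_{F'}$ in $(U/[U,U])(\bFp)$ is a finite $\F_p$-vector space. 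It is not the union of the $\#\Delta$ proper subspaces $\ker c_\delta$ when $\#\Delta\le p$, which holds for simple $\bG$ since $\#\Delta<h_{\bG}<p$.
\item Hence some $\bar\rho(\sigma)\in U(\bFp)$ has all simple components nonzero, and the sublemma applies.
\end{itemize}
A cleaner route avoids single elements altogether. Suppose $\bar\rho(\Gal_F)\subset\bB\cap g\bB g^{-1}$ with $g\in\bB\dot w\bB$ and $w\neq1$. After conjugating by an element of $\bB$, which preserves $[c_\delta]\neq0$, $\bar\rho$ lands in $\bT\cdot(U\cap\dot wU\dot w^{-1})$. The image of that group in $U/[U,U]$ omits $U_{\delta^\vee}$ for any simple $\delta^\vee$ with $w^{-1}\delta^\vee<0$, which forces $[c_\delta]=0$, a contradiction. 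This works for any reductive $\bG$ and shows every fiber is a single point.
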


\begin{proof}
It suffices to show for
each
$\Spec \bFp\to Y$,
the fiber
\[
\Spec \bFp\times_{Y}\dot\cX_{\bB}\langle\Psi\rangle
\]
has dimension (at most) $0$.
Suppose this $\bFp$-point corresponds to a Galois 
representation
$\bar\rho:\Gal_F\to \bB(\bFp)$,
then this fiber embeds
in $\Aut_{\bG}(\bar\rho)/\Aut_{\bB}(\bar\rho)=\{1\}$,
as $\bar\rho$ is maximally non-split:

\begin{sublem}
Let $g = tu \in \bB$
where $t\in \bT$ and $u\in U$,
such that $\log_{\le h_{\bG}} u = \sum_{\alpha \in \Phi^+} u_{\alpha}$. If $u_{\delta} \neq 0$ for all simple roots $\delta \in \Delta$, then $Z_{\bG}(u) \subset \bB$.
\end{sublem}

\begin{proof}
Let $z \in Z_{\bG}(u)$.
By the Bruhat decomposition, we can uniquely write 
$z = b \dot{w} v$, where:
\begin{itemize}
\item $b \in \bB$,
\item $\dot{w}$ is a representative of a Weyl group element,
\item $v \in U_w = U \cap \dot{w}^{-1} U^- \dot{w} \subset U$.
\end{itemize}

Since $z$ centralizes $u$, we have $z^{-1} u z = u$. Substituting the Bruhat decomposition of $z$, we get:
\[ (b \dot{w} v)^{-1} u (b \dot{w} v) = u \implies \dot{w}^{-1} (b^{-1} u b) \dot{w} = v u v^{-1} \]

Let $u_1 = b^{-1} u b$. Because $b \in \bB$, the conjugation action of $b$ preserves $U$ and its lower central series. Specifically, the induced action on the abelianization $U / [U,U] \cong \prod_{\delta \in \Delta} \mathfrak{g}_\delta$ is given by non-zero scalars on each simple root space. Therefore, $u_1$ is also regular unipotent, meaning $\log u_1$ has a non-zero component in $\mathfrak{g}_\delta$ for every $\delta \in \Delta$.

On the right side of the equation, let $u_2 = v u v^{-1}$. Since both $v$ and $u$ are in $U$, $u_2 \in U$. 

We now have the equality $\dot{w}^{-1} u_1 \dot{w} = u_2$. Passing to the Lie algebra, this gives:
\[ \text{Ad}(\dot{w}^{-1}) \log u_1 = \log u_2 \]

Assume for the sake of contradiction that $\dot w \neq 1$. By the standard properties of the Weyl group, there exists at least one simple root $\delta \in \Delta$ such that $w^{-1}(\delta) < 0$ (i.e., it is a negative root). 

Because $u_1$ is regular unipotent, $\log u_1$ has a non-zero component $X_\delta \in \mathfrak{g}_\delta$. Under the adjoint action of $\dot{w}^{-1}$, this component is mapped to $\text{Ad}(\dot{w}^{-1})X_\delta$, which is a non-zero element in the negative root space $\mathfrak{g}_{w^{-1}(\delta)}$. 
This contradiction forces $\dot w = 1$. 
\end{proof}
\end{proof}

\begin{prop}
If 
\begin{itemize}
\item 
$\bG$ is arbitrary and $[F:\Q_p]\ge \#\Phi^++2$ or
\item
$\bG=\mathrm{F}_4$ and $F\neq\Q_p$,
\end{itemize}
then
$\cX_{\bG, \red}^{\EG}$
is equidimensional and its irreducible components
are in bijection with the irreducible components
of $\cW_{\bB}$.
\label{prop:TBM}
\end{prop}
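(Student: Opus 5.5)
We stratify $\cX_{\bG,\red}^{\EG}$ according to the shape of the semisimplification, following the structure theorem of \cite[Theorem 4]{Lin22}. Every $\bFp$-point $\bar\rho$ is, up to conjugation, valued in a parabolic $U^{\bM}\rtimes \bM$. Its Levi part factors through $\bT(\bFp)\langle w\rangle$ with $w$ elliptic in $\bM$. There are two kinds of strata.
\begin{itemize}
\item The Borel-valued locus ($\bM=\bT$, $w=1$) is the image of $\cX_{\bB}\to\cX_{\bG,\red}^{\EG}$.
\item For each proper Levi $\bM\neq\bT$ and elliptic $w$, we take the image of $\cX_{U^{\bM}\rtimes\bT\langle w\rangle}\to\cX_{\bG,\red}^{\EG}$.
\end{itemize}
There are finitely many such pairs $(\bM,w)$ up to conjugacy, so these images cover $\cX_{\bG,\red}^{\EG}$. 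It therefore suffices to control the dimension of each image and to identify which images are top-dimensional.

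\textbf{Upper bound.} The target dimension is $[F:\Q_p]\dim \bG/\bB=\dF\#\Phi^+$, and we show every image has dimension at most this.
\begin{itemize}
\item Borel case: under either hypothesis, Theorem \ref{thm:reg} resp.\ Theorem \ref{thm:F4-table} shows $\cX_{\bB}$ is equidimensional of dimension $\dF\#\Phi^+$, and images cannot increase dimension.
\item Twisted case: Theorem \ref{thm:twist-reg} shows $\cX_{U^{\bM}\rtimes\bT\langle w\rangle}$ has dimension $\dF\#\Phi^{\bM+}$. This holds for general $\bG$ when $\dF\ge\#\Phi^++2$, and for $\mathrm{F}_4$ unconditionally by part (3). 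The map to $\cX_{\bG}$ has positive-dimensional fibers modulo the $\bM$-conjugation orbit, or at least does not gain dimension beyond $\dim \bG/P$ once stabilizers are accounted for. The resulting image has dimension $\le \dF\#\Phi^{\bM+}+\dF\#\Phi^+_{\bM}$. I expect the bookkeeping here to go as follows: $\bT\langle w\rangle$-valued points with $w\ne1$ form an open condition of dimension $\le \dF\#\Phi_{\bM}^+-\dim(\text{generic stabilizer drop})$. I would reduce this via unramified descent to the fact that the $\bM$-valued semisimple locus has dimension $\le 0$ as a stack, and then add $\dF\#\Phi^{\bM+}$.
\end{itemize}

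\textbf{Lower bound and bijection.} By Lemma \ref{lem:maximal-ns}, the image of each top-dimensional stratum $\dot\cX_{\bB}\langle\Psi_0(C),\Psi_2(C)\rangle$ (Corollary \ref{cor:reg}) has dimension exactly $\dF\#\Phi^+$. Hence each irreducible component $C\subset\cW_{\bB}$ produces a top-dimensional irreducible closed subset of $\cX_{\bG,\red}^{\EG}$. Conversely, we also need the lower bound that every irreducible component of $\cX_{\bG,\red}^{\EG}$ has dimension $\ge\dF\#\Phi^+$. We get it from Krull's Hauptidealsatz, exactly as in Corollary \ref{cor:XB-dim}: since $\cX^{\EG}_{\bG}$ is the special fiber of a flat $p$-adic formal stack with known generic dimension, all its components have the expected dimension. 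Together with the upper bound, this gives equidimensionality, and every component is the closure of a top-dimensional stratum.

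The main obstacle is twofold. First, the twisted strata with $w\ne1$ must not contribute additional components. Second, distinct components $C\ne C'$ of $\cW_{\bB}$ must give distinct closures in $\cX_{\bG}$. For the first, we show twisted strata have dimension strictly less than $\dF\#\Phi^+$. The count above gives $\dF\#\Phi^+$ minus the dimension of the Levi-unipotent directions that are forced to vanish. These directions are forced to vanish because the Levi part is irreducible (elliptic), so no Borel reduction exists. Thus the $\bM$-part contributes dimension $0$ rather than $\dF\#\Phi^+_{\bM}$. For the second, we use Lemma \ref{lem:maximal-ns}: generic points of the stratum attached to $C$ are maximally non-split. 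The sublemma then forces $Z_{\bG}(\bar\rho)\subset\bB$, so the Borel reduction is unique and its semisimplified Weil--Deligne datum recovers $C$. Distinct $C$ therefore have disjoint generic loci.
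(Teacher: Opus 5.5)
\textbf{There is a genuine gap in your lower bound.} You claim that every irreducible component of $\cX_{\bG,\red}^{\EG}$ has dimension $\ge\dF\#\Phi^+$ ``by Krull's Hauptidealsatz, exactly as in Corollary~\ref{cor:XB-dim}''. Nothing justifies this.
\begin{itemize}
\item Corollary~\ref{cor:XB-dim} is a characteristic-$p$ statement. It rests on the explicit presentation of $\cX_{\bB}$ by successive hypersurfaces $r_\alpha$ in affine spaces. Nothing of the kind exists for $\cX_{\bG}$, whose non-Borel points are reached only through the non-open maps from the twisted strata.
\item Nor can Krull be applied to $\cX_{\bG}^{\EG}$ itself. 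Its $p$-adic behaviour is that of unrestricted deformations, of generic dimension $\dF\dim\bG$. No flat stack with generic dimension $\dF\#\Phi^+$ and special fiber exhausting $\cX_{\bG,\red}^{\EG}$ is known a priori.
\item The flat stacks with the right generic dimension are the potentially semistable (or crystalline) stacks of a fixed regular Hodge type. Knowing that their special fibers exhaust $\cX_{\bG,\red}^{\EG}$ is exactly the lifting statement.
\end{itemize}
The paper itself shows this issue is real. In Corollary~\ref{cor:baby-F4} ($\mathrm{F}_4$, $F=\Q_p$) the top-dimensional components are already identified, yet equidimensionality is left open. The reason is that some loci have lower-dimensional image (Lemma~\ref{lem:rot-dim}), and Section~\ref{sec:rotate} is needed to show they lie in the closure of the big components. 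Your argument would make all of that unnecessary.

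For the same reason, showing that the images of the twisted strata $\cX_{U^{\bM}\rtimes\bT\langle w\rangle}$ have dimension $<\dF\#\Phi^+$ does not prevent them from forming small irreducible components. What must be shown is that their points lie in the closure of the top-dimensional locus. The paper obtains this from Theorem~\ref{thm:lift0}, which you never use. It combines two counts:
\begin{itemize}
\item Theorem~\ref{thm:twist-reg} (and Theorems~\ref{thm:reg}, \ref{thm:F4-table} for $w=1$) bounds the number of components of $\cX_{U^{\bM}\rtimes\bT\langle w\rangle}$ by that of $\cW_{U^{\bM}\rtimes\bT\langle w\rangle}$.
\item Corollary~\ref{cor:Ch-lower-bd}, resting on the rigid-analytic input Theorem~\ref{thm:rigid}, shows that the special fiber of the flat potentially semistable stack of regular type $\underline{\lambda}_w$ and tame type has at least that many components, all of the same dimension.
\end{itemize}
Hence that special fiber fills the whole stratum, and every point has a potentially semistable lift. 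It follows that $\cX_{\bG,\red}^{\EG}$ is the reduced special fiber of a flat potentially semistable stack, which gives both dimension bounds at once. Only then does Lemma~\ref{lem:maximal-ns} produce the bijection with the components of $\cW_{\bB}$; your final step there is in line with the paper.

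Separately, your upper bound for the twisted strata is garbled but easily repaired. The morphism $\cX_{U^{\bM}\rtimes\bT\langle w\rangle}\to\cX_{\bG}$ is representable, with fibers the schemes of reductions. So the image has dimension at most $\dim\cX_{U^{\bM}\rtimes\bT\langle w\rangle}=\dF\#\Phi^{\bM+}$, and no $\dF\#\Phi^+_{\bM}$ term enters.
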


\begin{proof}
By Theorem \ref{thm:lift0},
$\cX_{\bG, \red}^{\EG}$
is contained in the special fiber
of
$\cX_{\bG}^{\underline{\lambda}_1,\tame}$,
which is equidimensional
of dimension $[F:\Q_p]\#\Phi^+$.
\MAJOREDIT{Here the crystalline stack is a closed substack of the integral
Emerton--Gee stack by its scheme-theoretic-image construction. Conversely,
Theorem~\ref{thm:lift0} gives a closed immersion from the reduced
Emerton--Gee stack into the reduced special fiber of this crystalline stack.
The two opposite closed immersions identify these reduced substacks. Hence
$\cX_{\bG,\red}^{\EG}$ is the reduced special fiber of the indicated
crystalline stack and is equidimensional of the same dimension; in particular,
there are no additional lower-dimensional irreducible components.}
By Lemma
\ref{lem:maximal-ns},
the image of
$\dot \cX_{\bB}(C)$
in $\cX_{\bG}^{\EG}$
is a top-dimensional irreducible component
for each irreducible component $C\subset \cW_{\bB}$.
We remark
that $\dot \cX_{\bB}(C)$ is always irreducible,
despite that $\cX_{\bB}(C)$ may not be.
\end{proof}

\begin{lem}
\label{lem:rot-dim}
\rm
The morphism
\[
\cX_{K\rtimes \bT}[K]\langle \Psi\rangle
\to
\cX_{\bG}^{\EG}
\]
factors through
$
[\cX_{K\rtimes \bT}[K]\langle \Psi\rangle/
\prod_{\beta\in \Psi_0}U_{(-\beta)^{\vee}}
]
$.

So, its scheme-theoretic image
has dimension
$\le \dim \cX_{K\rtimes \bT}[K]\langle \Psi\rangle-\#\Psi_0$.
\end{lem}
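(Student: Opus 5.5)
The plan is to exhibit an action of the unipotent group $V:=\prod_{\beta\in\Psi_0}U_{(-\beta)^{\vee}}$ on the stratum $\cX_{K\rtimes \bT}[K]\langle \Psi\rangle$ (or rather on a suitable family over it) under which the map to $\cX_{\bG}^{\EG}$ is invariant. We then show that this action has finite stabilizers generically, so that the quotient drops the dimension by $\dim V=\#\Psi_0$.

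First, work over $\bFp$-points and their families. A point of the stratum is a $\bB$-valued (in fact $(K\rtimes\bT)$-valued) \'etale $(\varphi,\Gamma)$-module $(f,g)=(c_f f^\SS, c_g g^\SS)$. For $\beta\in\Psi_0$ the root group $U_{\beta^\vee}$ carries a trivial Galois action on the semisimplification. Equivalently, $\beta(f^\SS)$ and $\beta(g^\SS)$ are trivial after the unramified normalization $t_\beta=1$, so the constant elements of $U_{(-\beta)^\vee}(\bFp)$ commute with $f^\SS,g^\SS$ up to the Frobenius and $\Gamma$ twist. Conjugating $(f,g)$ by $v\in V(A)\subset\bG(A)$, viewed as a constant element of $\bG(\bfE_{F,A})$ fixed by $\varphi$ and $\gamma$, gives the gauge-equivalent pair $(vfv^{-1},vgv^{-1})$. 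This pair defines the same $A$-point of $\cX_{\bG}^{\EG}$. The task is to show that after a further correction by $\bB(\bfE_{F,A})$ (made functorially, using the recursion of Theorem \ref{thm:coordinate} and the destackification $\G_a$-torsors) we land back in $\cX_{K\rtimes\bT}[K]\langle\Psi\rangle$. Concretely, I would work one $\beta$ at a time, ordered by height. The commutators $[\log v,\log c_f]$ produce terms in root spaces $\alpha-\beta$, and possibly negative ones. I expect these to be killed using that $\Psi_0$ is closed under the configuration axioms: the second axiom, $\alpha\in\Psi_2,\beta\in\Psi_0\Rightarrow\alpha+\beta\in\Psi_2$, together with its $\Psi_0$-analogue, guarantees that the shifted coordinates still satisfy the $H^0/H^2$ pattern of Lemma \ref{lem:min-deg}. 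This yields a morphism $V\times\cX_{K\rtimes\bT}[K]\langle\Psi\rangle\to\cX_{K\rtimes\bT}[K]\langle\Psi\rangle$ compatible with the map to $\cX^{\EG}_{\bG}$, hence the factorization through the quotient stack.

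The dimension bound then follows formally. The scheme-theoretic image has dimension at most that of the quotient stack $[\cX_{K\rtimes \bT}[K]\langle \Psi\rangle/V]$, which equals $\dim\cX_{K\rtimes \bT}[K]\langle \Psi\rangle-\dim V$, and $\dim V=\#\Psi_0$.

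The main obstacle is the first step: making sense of conjugation by negative root groups $U_{(-\beta)^\vee}$, which do not preserve $\bB$. My first attempt would be to avoid a literal action on the Borel stratum. Instead I would argue pointwise, as in the proof of Lemma \ref{lem:maximal-ns}: each fiber of the map to $\cX^{\EG}_\bG$ over a point $\bar\rho$ contains the orbit of $Z_{\bG}(\bar\rho^\SS)\cap V$ acting through $\bB$-conjugacy classes of Borel reductions of $\bar\rho$. I would then show this orbit has dimension $\#\Psi_0$ by computing its tangent space, namely the image of $\bigoplus_{\beta\in\Psi_0}H^0(\Gal_F,U_{(-\beta)^\vee})$ in the deformation space. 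Fiber-dimension semicontinuity then gives the bound on the image directly, which is all that is needed.
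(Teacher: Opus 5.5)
Your strategy is the paper's. Its entire proof is the remark that the stratum is stable under conjugation by $U_{(-\beta)^\vee}$, $\beta\in\Psi_0$; these elements commute with $\bar\rho^{\SS}$ and so do not move the point of $\cX_{\bG}^{\EG}$. However, the step you single out as the main obstacle is exactly the content of that remark, and neither of your ways around it works.

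\textbf{The $\bB$-correction cannot work.} Since $\bB$ is a subgroup, $(b\,x\,\varphi(b)^{-1},\,b\,y\,\gamma(b)^{-1})$ is $\bB$-valued if and only if $(x,y)$ is. So once $(vfv^{-1},vgv^{-1})$ has a negative-root component, no $\bB(\bfE_{F,A})$-gauge removes it.

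\textbf{The pointwise fallback presupposes the same thing.} $V$ does not centralize $\bar\rho$, so it does not act on the Borel reductions of a fixed $\bar\rho$. The points $v\bar\rho v^{-1}$ lie over the source only if they are again $K\rtimes\bT$-valued.

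\textbf{In general the gap cannot be closed.} For $\beta\in\Psi_0\cap\Phi_K$ the conclusion is false. Take $\bG=\PGL_2$, $F=\Q_p$, $K=U$, $\Psi_0=\{\gamma\}$, $\Psi_2=\emptyset$. A nonsplit extension of the trivial character by itself has a unique Borel reduction, and $Z_{\bB}(\bar\rho)=Z_{\bG}(\bar\rho)=U$. Hence the stratum maps monomorphically to $\cX_{\bG}^{\EG}$, and its image has the same dimension, not one less.

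The fine-configuration axioms you invoke play no role here: conjugation by elements commuting with $\bar\rho^{\SS}$ leaves $\bar\rho^{\SS}$, hence $\Psi$, unchanged.

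\textbf{The missing input.} What you need is a hypothesis that is implicit in the paper. It holds in the cases where the lemma is actually used, namely Configurations I--IV of Table~\ref{table:F4-table} in Corollary~\ref{cor:baby-F4}: there $\Psi_0=\Phi^+-\Phi_K$. Under it, $\mathrm{Ad}(\exp(\mu e_{-\beta}))$ preserves $\Lie K$ for every $\beta\in\Psi_0$. To see this, let $\alpha\in\Phi_K$ with $\alpha-\beta$ a root.
\begin{itemize}
\item If $\alpha-\beta$ were negative, normality of $K$ would give $\beta=\alpha+(\beta-\alpha)\in\Phi_K$, a contradiction.
\item If $\alpha-\beta$ were positive but not in $\Phi_K$, it would lie in $\Psi_0$. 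Then $\alpha=\beta+(\alpha-\beta)$ would have trivial character, i.e.\ $\alpha\in\Psi_0\cap\Phi_K=\emptyset$, again a contradiction.
\end{itemize}
Iterating along the unbroken $\beta$-string puts every root $\alpha-n\beta$ in $\Phi_K$.

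\textbf{Finishing the argument.} Since $\Psi_0$ is additively closed, $V=\prod_{\beta\in\Psi_0}U_{(-\beta)^\vee}$ is a group. Its elements are constant and commute with $f^{\SS},g^{\SS}$ after the normalization $t_\beta=1$ that you describe. Hence $(vfv^{-1},vgv^{-1})$ is $K\rtimes\bT$-valued with the same semisimplification. It also satisfies the open condition defining the $[K]$-stratum for $v$ in a dense open subset of $V$; in Configuration~I, for instance, the height-one classes change by substitutions such as $c_{0001}\mapsto c_{0001}-\mu c_{0011}$.

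No stabilizer analysis is then needed. Because $V\cap\bB=\{1\}$, the objects $v\bar\rho v^{-1}$, equipped with the $\bG$-isomorphisms $v^{-1}$ to $\bar\rho$, are pairwise non-isomorphic points of the fiber over $[\bar\rho]$ of the representable morphism $\cX_{K\rtimes\bT}\to\cX_{\bG}^{\EG}$. So every nonempty fiber of the stratum over its image has dimension at least $\#\Psi_0$, which gives the bound.
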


\begin{proof}
Note that $\cX_{K\rtimes \bT}[K]\langle \Psi\rangle$
is clearly stable under the conjugation action
of the negative root group action
of $U_{(-\beta)^{\vee}}$
for $\beta\in \Psi_0$.
\end{proof}

\begin{cor}
When $\bG=\mathrm{F}_4$ and $F=\Q_p$,
the top-dimensional irreducible components
$\cX_{\bG, \red}^{\EG}$
are in bijection with the irreducible components
of $\cW_{\bB}$.

The only possibilities
for the non-top-dimensional irreducible components
are the scheme-theoretic image
of $\cX_{\bB}[K]\langle \Psi\rangle$
for the configurations $(K, \Psi)$
listed in Table \ref{table:F4-table}.
\label{cor:baby-F4}
\end{cor}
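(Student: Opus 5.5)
The plan is to cover $\cX_{\bG,\red}^{\EG}$ by the images of the Borel-valued and twisted-Borel-valued strata, and then treat the two kinds separately. By the structure theorem of \cite[Theorem 4]{Lin22}, every $\bFp$-point of $\cX_{\bG,\red}^{\EG}$ lies either in the image of $\cX_{\bB}$ (when $w=1$) or in the image of some $\cX_{U^{\bM}\rtimes\bT\langle w\rangle}$ with $\bM$ a proper Levi and $w\ne 1$ elliptic.

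For the twisted pieces with $w\neq 1$, Theorem \ref{thm:lift0} applies, since $\bG=\mathrm{F}_4$ and $w\ne1$. It gives potentially semistable lifts of regular Hodge type $\underline{\lambda}_w$ and tame inertial type. Hence these images lie in the special fiber of a potentially semistable stack, which is equidimensional of dimension $[F:\Q_p]\#\Phi^+$ by Corollary \ref{cor:Ch-lower-bd}, after passing from $U^{\bM}\rtimes\bT\langle w\rangle$ to $\bG$ and noting that $\#\Phi^{\bM}$ plus the Levi part gives $\#\Phi^+$ after the Levi conjugation quotient. So every irreducible component of $\cX_{\bG,\red}^{\EG}$ meeting these loci is either top-dimensional or is contained in the closure of top-dimensional components. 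Thus they contribute no extra non-top-dimensional components.

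For the Borel part we use the fine stratification of $\cX_{\bB}$ together with Theorem \ref{thm:F4-table}. Every stratum $\cX_{\bB}[K]\langle\Psi\rangle$ has dimension at most $[F:\Q_p]\#\Phi^+$, with equality exactly for:
\begin{itemize}
\item the regular strata $\cX_{\bB}[U]\langle\Psi_0(C),\Psi_2(C)\rangle$;
\item the four configurations I--IV of Table \ref{table:F4-table}.
\end{itemize}
By Lemma \ref{lem:maximal-ns}, the regular strata $\dot\cX_{\bB}(C)$ map to top-dimensional irreducible subsets of $\cX_{\bG}^{\EG}$, one for each component $C\subset\cW_{\bB}$. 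These are distinct, because the Weil--Deligne data $C$ is recovered from a maximally non-split point via the semisimplification and the Steinberg parameters. Corollary \ref{cor:reg} gives that the remaining $K=U$ strata are nowhere dense.

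For strata with $K\ne U$, Lemma \ref{lem:rot-dim} shows that their images have dimension at most $\dim\cX_{K\rtimes\bT}[K]\langle\Psi\rangle-\#\Psi_0$. For configurations I--IV we have $\Psi_0\ne\emptyset$, so these images drop strictly below $[F:\Q_p]\#\Phi^+$. Hence the top-dimensional components of $\cX_{\bG,\red}^{\EG}$ are exactly the closures of the images of $\dot\cX_{\bB}(C)$, and they are in bijection with the components of $\cW_{\bB}$. Any remaining irreducible component must then be non-top-dimensional. Such a component can only be the closure of an image of a Borel stratum of locally maximal dimension that is not contained in a top-dimensional closure. By the dimension table, the only candidates are images of $\cX_{\bB}[K]\langle\Psi\rangle$ for configurations I--IV.

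The main obstacle is the justification that strata of dimension strictly less than $[F:\Q_p]\#\Phi^+$, other than I--IV, cannot produce irreducible components of $\cX_{\bG,\red}^{\EG}$. I would handle it through the purity of $\cX_{\bB}$ (Corollary \ref{cor:XB-dim}). Because $\cX_{\bB}$ is equidimensional away from the four extra components, every such stratum lies in the closure of a top-dimensional Borel stratum or of one of I--IV. Its image then lies in the closure of the corresponding image, since the map $\cX_{\bB}\to\cX_{\bG}^{\EG}$ is continuous.
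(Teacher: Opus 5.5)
Your argument is essentially the paper's own. The paper's proof is the one line ``combine Lemma~\ref{lem:rot-dim}, Theorem~\ref{thm:F4-table}, and the proof of Proposition~\ref{prop:TBM}''. Your Borel-side reasoning spells that out correctly:
\begin{itemize}
\item the top-dimensional strata are the regular ones together with I--IV;
\item Lemma~\ref{lem:rot-dim} pushes the images of I--IV below top dimension because $\Psi_0\neq\emptyset$;
\item every lower-dimensional stratum lies in the closure of a top-dimensional one, by the equidimensionality of $\cX_{\bB}$ from Theorem~\ref{thm:F4-table} and Corollary~\ref{cor:XB-dim}.
\end{itemize}

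The twisted paragraph needs repair in two places.

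\textbf{Wrong citation.} Corollary~\ref{cor:Ch-lower-bd} concerns the $U^{\bM}\rtimes\bT\langle w\rangle$-valued potentially semistable stack. Your remark about ``the Levi part'' and ``the Levi conjugation quotient'' does not convert it into a statement about the $\bG$-valued stack. What you need is the fact invoked in the proof of Proposition~\ref{prop:TBM}. Namely, the reduced special fiber of the $\bG$-valued potentially semistable stack of regular type $\underline{\lambda}_w$ is a closed substack of $\cX^{\EG}_{\bG,\red}$, it contains the twisted loci by Theorem~\ref{thm:lift0}, and it is equidimensional of dimension $[F:\Q_p]\#\Phi^+$. From this you get that an irreducible component whose \emph{generic point} lies in a twisted locus is top-dimensional. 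You do not get a statement about components merely \emph{meeting} the twisted loci, and you do not need one.

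\textbf{Missing step.} Your sentence ``Hence the top-dimensional components are exactly the closures of the images of $\dot\cX_{\bB}(C)$'' assumes that no top-dimensional component has twisted generic point. The paper also leaves this implicit. Here Corollary~\ref{cor:Ch-lower-bd} is the right input, together with Theorem~\ref{thm:lift0}; Theorem~\ref{thm:twist-reg} would also do. It gives $\dim\cX_{U^{\bM}\rtimes\bT\langle w\rangle}\le[F:\Q_p]\#\Phi^{\bM+}<[F:\Q_p]\#\Phi^+$. The inequality is strict because an elliptic $w\neq1$ forces $\bM\neq\bT$, and the image in $\cX^{\EG}_{\bG}$ can only lose dimension.

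Combining the two points, no irreducible component of $\cX^{\EG}_{\bG,\red}$ has twisted generic point. With that, the rest of your argument goes through: the bijection with components of $\cW_{\bB}$ holds, and the only possible non-top-dimensional components are the images of configurations I--IV.
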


We don't know whether 
$\cX_{\mathrm{F}_4, \red}^{\EG}$
is equidimensional yet.

\begin{proof}
Combine 
Lemma \ref{lem:rot-dim},
Theorem 
\ref{thm:F4-table},
and the proof of Proposition \ref{prop:TBM}.
\end{proof}

\section{Rotation by negative root groups}
\label{sec:rotate}
In this section, we treat the $4$ extra components
of $\cX_{\bB}$ for $\bG=\mathrm{F}_4$ and $F=\Q_p$.

\begin{table}[H]
\centering
\scriptsize
\begin{tabular}{|c|c|c|c|}
\hline
ID & $\Phi^+-\Phi_K$ & $\Psi_0$ & $\Psi_2$ \\
\hline
I & $\{0010\}$ & $\{0010\}$ & $\{0001,0011,0100,0110,0120\}$ \\
II & $\{0010\}$ & $\{0010\}$ & $\{0001,0011,0100,0110,0120,1000\}$ \\
III & $\{1000,0010\}$ & $\{0010,1000\}$ & $\left\{ \begin{matrix} 0001,0011,0100,0110 \\ 0120,1100,1110,1120 \end{matrix} \right\}$ \\
IV & $\{1000,0001,0011,0010\}$ & $\{0001,0010,0011,1000\}$ & $\left\{ \begin{matrix} 0100,0110,0111,0120,0121,0122 \\ 1100,1110,1111,1120,1121,1122 \end{matrix} \right\}$ \\
\hline
\end{tabular}
\end{table}

The goal is \MINOREDITED{to} prove the following:

\begin{prop}
(Theorem \ref{thm:ht1-codim0})
\rm
There exists a Zariski dense substack
$\cU\subset \cX_{K\rtimes \bT}\langle \Psi\rangle$
where $(K, \Psi_0,\Psi_2)$ is
one of the four configurations above,
such that
for each
$\bFp$-point of $\cU$
corresponding to
$\bar\rho:\Gal_F\to (K\rtimes \bT)(\bFp)
\subset \bB(\bFp)$,
there exists
\begin{itemize}
\item 
a deformation
\[
\rho:\Gal_F\to \bB(\bFp[\![\varepsilon]\!])
\]
\item
an element $\mu\in \bFp[\![\varepsilon]\!]$, and
\item
a root $\delta_{\rot}\in \Psi_0$
\end{itemize}
satisfying
\[
\bar\rho=\exp(-\mu e_{-\delta_{\rot}})\rho
\exp(\mu e_{-\delta_{\rot}})
\mod \varepsilon
\]
and $\MINOREDITED{\rho}[\frac{1}{\varepsilon}]$
is a $\bFp(\!(\varepsilon)\!)$-point
of $\dot\cX_{\bB}=\cX_{\bB}[U]$.
\label{prop:rotate}
\end{prop}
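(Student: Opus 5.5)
Fix one of the four configurations $(K,\Psi_0,\Psi_2)$ of Table \ref{table:F4-table}. In each case $\Psi_2\subset\Delta_K$, so $X_{K\rtimes\bT}\langle\Psi\rangle$ equals its cone model and is smooth (Theorem \ref{thm:F4-table}). In particular it is an explicit product of a torus chart with affine spaces in the coordinates $x_{\alpha,i},s_\alpha$. The plan is to conjugate by the negative root group $U_{(-\delta_{\rot})^\vee}$ for a suitably chosen $\delta_{\rot}\in\Psi_0\cap(\Phi^+-\Phi_K)$, namely $0010$ (and $1000$ where needed). This moves a generic point into the open stratum $\dot\cX_{\bB}=\cX_{\bB}[U]$ after a formal deformation.

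\textbf{Step 1: exploit the symmetry.} Since $\delta_{\rot}\in\Psi_0$, the torus acts trivially on $U_{\delta_{\rot}^\vee}$ and hence on $U_{(-\delta_{\rot})^\vee}$. Therefore $\exp(\mu e_{-\delta_{\rot}})$ commutes with $\bar\rho^{\SS}$, and conjugating $\bar\rho$ by it gives a $\bG$-valued representation with the same semisimplification. This is the mechanism already used in Lemma \ref{lem:rot-dim}.

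\textbf{Step 2: build the family.} Work over $\bFp[\![\varepsilon]\!]$ and set $\rho_0:=\exp(\mu e_{-\delta_{\rot}})\bar\rho\exp(-\mu e_{-\delta_{\rot}})$ with $\mu=\varepsilon^{-1}$ formally. Expand with Hadamard's lemma as in Theorem \ref{thm:coordinate}: the $(-\delta_{\rot})$-component of $\rho_0$ is a polynomial in $\mu$ whose top coefficient is a bracket $[e_{-\delta_{\rot}},\cdot]$ applied to the coordinates $c_\beta$ with $\beta-\delta_{\rot}\in\Phi^+$. The aim is to choose the Zariski open set $\cU$, cut out by nonvanishing of finitely many explicit $\bFp$-polynomials in the coordinates, such that the following holds. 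After replacing $\mu$ by a suitable unit multiple of a negative power of $\varepsilon$ and conjugating further by a $\bT(\bFp(\!(\varepsilon)\!))$-element (a cocharacter twist $\varepsilon^{\lambda'}$), the result $\rho$ is $\bB$-valued and integral. We also need its reduction to be conjugate back to $\bar\rho$, which is the required identity $\bar\rho=\exp(-\mu e_{-\delta_{\rot}})\rho\exp(\mu e_{-\delta_{\rot}})\bmod\varepsilon$. Finally, over $\bFp(\!(\varepsilon)\!)$ the simple-root coordinate $c_{\delta_{\rot}}$ of $\rho[1/\varepsilon]$ must be a nonzero cohomology class, generated by the brackets $[e_{-\delta_{\rot}},c_{\delta_{\rot}+\beta}]$ landing in $U_{\beta^\vee}$ together with the rotated simple classes. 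By Lemma \ref{lem:K-nonzero} and the definition of $\dot X$, this places $\rho[1/\varepsilon]$ in $\dot\cX_{\bB}$.

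\textbf{Step 3: verify case by case.} Since $\Psi_2\subset\Delta_K$ and the relations are at most quadratic, one checks for each of the four configurations that a single rotation (iterating over $1000$, then $0001$ and $0011$ for configuration IV) produces nonzero classes on all missing simple roots. This reduces to a rank condition on finitely many cup-product matrices $J_{\beta,\gamma}$, which were normalized to identities by Lemma \ref{lem:gs-cup}.

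\textbf{Main obstacle.} The hard part is Step 2: ensuring that $\rho$ is genuinely $\bB$-valued and integral, not merely $\bG$-valued, while making $[c_{\delta_{\rot}}]$ nonzero rather than a coboundary. I would first try choosing $\lambda'$ so that $\langle\lambda',\delta_{\rot}\rangle=1$ and $\lambda'$ is trivial on the remaining simple roots. The idea is that the negative-root part is damped by $\varepsilon$, while the leading bracket term survives at order $\varepsilon^0$ in the $\delta_{\rot}$-slot. The remaining work is then a Gr\"obner-type check that the leading coefficient does not vanish identically on $X_{K\rtimes\bT}\langle\Psi\rangle$, so that $\cU$ is dense.
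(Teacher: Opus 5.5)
There is a genuine gap, and it is in your Step 2. Everything you do to $\bar\rho$ there is a conjugation: first by $\exp(\mu e_{-\delta_{\rot}})$ with $\mu=\varepsilon^{-1}$, then by a cocharacter $\varepsilon^{\lambda'}$. So $\rho[1/\varepsilon]$ is $\bG$-conjugate to the constant family $\bar\rho\otimes\bFp(\!(\varepsilon)\!)$, and by Lemma~\ref{lem:rot-dim} it never leaves the stratum $\cX_{\bB}[K]\langle\Psi\rangle$.

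Concretely:
\begin{itemize}
\item Since $\exp(\mu e_{-\delta_{\rot}})$ commutes with $\bar\rho^{\SS}$ (your own Step 1), $\rho_0$ has no $(-\delta_{\rot})$-component at all.
\item $\operatorname{ad}(e_{-\delta_{\rot}})$ sends $e_\beta$ to a multiple of $e_{\beta-\delta_{\rot}}$. So $[e_{-\delta_{\rot}},c_{\delta_{\rot}+\beta}]$ lands in $U_{\beta^\vee}$, never in $U_{\delta_{\rot}^\vee}$, because $2\delta_{\rot}$ is not a root.
\item Torus conjugation only rescales root coordinates.
\end{itemize}
Hence the $\delta_{\rot}$-coordinate of your $\rho[1/\varepsilon]$ is identically zero and $t_{\delta_{\rot}}=1$, so it does not lie in $\dot\cX_{\bB}$.

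The cocharacter-limit idea also runs in the wrong direction. It produces an integral model whose generic fiber is $\bar\rho$ and whose special fiber is a degeneration of $\bar\rho$. The proposition needs the opposite: $\bar\rho$ (up to rotation) must be the special fiber of a family whose generic fiber is a genuinely different, maximally non-split representation. Note also that $\mu=\varepsilon^{-1}\notin\bFp[\![\varepsilon]\!]$.

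The missing idea is an honest first-order deformation in which the rotation parameter is an \emph{unknown}. The paper proceeds as follows.
\begin{itemize}
\item It keeps $\mu$ free and passes to $\bar\rho_\mu=\exp(\mu e_{-\delta_{\rot}})\bar\rho\exp(-\mu e_{-\delta_{\rot}})$, which is still $\bB$-valued and still in the same stratum.
\item It then switches on the missing simple root: $c_{\delta_{\rot}}=\varepsilon(b_{1}c_{\delta_{\rot},1}+b_{2}c_{\delta_{\rot},2})$ and $t_{\delta_{\rot}}=1+\varepsilon\lambda$, with $O(\varepsilon^2)$ corrections on the remaining coordinates.
\item The cup products of this new class with the existing ones land in Steinberg root groups $U_{\alpha^\vee}$, $\alpha\in\Psi_2$. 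The $\varepsilon$-linear parts of the relations $r_\alpha$, together with a normalization of $\lambda$, give a system in $\mu$ and the $b$'s.
\item For fixed $\mu$ this system is overdetermined; in Configuration I it is three equations, affine-linear in $b_1,b_2$. The rotation parameter is exactly the extra unknown that makes it solvable; at the paper's sample specialization it becomes the quadratic $\mu^2-2\mu-15$.
\item One then needs a solution mod $\varepsilon$ with invertible Jacobian. Hensel's lemma lifts it to $\bFp[\![\varepsilon]\!]$, and Lemma~\ref{lem:pointwise-check} spreads it from one point to a dense open $\cU$.
\end{itemize}

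This is a nonlinear computation, not a rank condition on the normalized matrices $J_{\beta,\gamma}$. It is certified configuration by configuration in Appendix~\ref{app:rotation-algorithm}, and the choice of $\delta_{\rot}$ is not free. For IV, the paper tries $1000$, $0010$ and $0001$ and only $0010$ works. For III, it tries $1000$ and $0010$ and only $1000$ works. Your Step 3 plan for IV, which tries $1000$, $0001$ and $0011$, would therefore miss the rotation root that succeeds.
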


\begin{remark}
The proposition is 
reduced to Theorem \ref{thm:ht1-codim0}
which is proved
towards
the end of this section.
The insight here is that,
even though
these $\cX_{\bB}[K]\langle\Psi\rangle$
are not in the closure of
$\dot \cX_{\bB}$,
their scheme-theoretic image
in $\cX_{\bG}^{\EG}$
is in the closure
of the scheme-theoretic image of
$\dot \cX_{\bB}$
in $\cX_{\bG}^{\EG}$.
\end{remark}

The key observation is that
$\Psi_2\subset \Delta_K$.
Write 
\[
V_K:=\prod_{\alpha\in (\Phi^+-\Phi_K)\cup \Delta_K}
U_{\alpha^\vee},
\]
so there is a short exact sequence
\[
\MINOREDITED{1}\to
\bar K_1 \to V_K \to \frac{U}{K}\to 1
\]

\begin{lem}
\rm
Proposition \ref{prop:rotate}
is equivalent to the 
same statement
after replacing
$K\rtimes \bT$ and $\bB$
by
$\bar K_1\rtimes \bT$
and $V\rtimes \bT$:
There exists a Zariski dense substack
$\cU\subset \cX_{\bar K_1\rtimes \bT}\langle \Psi\rangle$
where $(K, \Psi_0,\Psi_2)$ is
one of the four configurations above,
such that
for each
$\bFp$-point of $\cU$
corresponding to
$\bar\rho:\Gal_F\to (\bar K_1\rtimes \bT)(\bFp)
\subset (V\rtimes \MINOREDITED{\bT})(\bFp)$,
there exists
\begin{itemize}
\item 
a deformation
\[
\rho:\Gal_F\to (V\rtimes \bT)(\bFp[\![\varepsilon]\!])
\]
\item
an element $\mu\in \bFp[\![\varepsilon]\!]$, and
\item
a root $\delta_{\rot}\in \Psi_0$
\end{itemize}
satisfying
\[
\bar\rho=\exp(-\mu e_{-\delta_{\rot}})\rho
\exp(\mu e_{-\delta_{\rot}})
\mod \varepsilon
\]
and that
$\MINOREDITED{\rho}[\frac{1}{\varepsilon}]$
does not factor through any proper
$\bT$-stable normal subgroup
of $V\rtimes \bT$.
\end{lem}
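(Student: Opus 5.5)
The plan is to prove both directions by passing through the quotient map $\pi\colon K\rtimes\bT\to (K/K_2)\rtimes\bT=\bar K_1\rtimes\bT$ and the analogous map $\bB\to V_K\rtimes\bT$. Here $V_K=U/U_{\Phi_K-\Delta_K}$ is a quotient of $U$: the subgroup $U_{\Phi_K-\Delta_K}=K_2$ is $\bT$-stable and normal in $U$, because $K$ is normal and $[U,K]\subset K$. Since $\bar K_1$ is normal in $V_K$ and $V_K/\bar K_1=U/K$, the $\bT$-stable normal subgroups of $V_K\rtimes\bT$ correspond to $\bT$-stable normal subgroups of $U$ containing $K_2$. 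The negative root groups $U_{(-\delta_{\rot})^\vee}$ for $\delta_{\rot}\in\Psi_0$ do not live in $\bB$, so the conjugation $\exp(-\mu e_{-\delta_{\rot}})\rho\exp(\mu e_{-\delta_{\rot}})$ is taken in $\bG$, and it is only required to equal $\bar\rho$ mod $\varepsilon$. I will treat this compatibility with the projections as bookkeeping. For it one uses that $\delta_{\rot}\in\Phi^+-\Phi_K$, which holds in all four configurations by inspection of the table.

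For the forward direction (Proposition \ref{prop:rotate} implies the reduced statement), I push forward along $\bB\to V_K\rtimes\bT$. Given $\bar\rho$ valued in $K\rtimes\bT$ with image $\pi\bar\rho$ in $\bar K_1\rtimes\bT$, the deformation $\rho$ pushes forward to a deformation in $V_K\rtimes\bT$. If the pushforward of $\rho[1/\varepsilon]$ factored through a proper $\bT$-stable normal subgroup $N\supset K_2$, then $\rho[1/\varepsilon]$ would factor through its preimage in $U$, which is proper. That contradicts $\rho[1/\varepsilon]\in\dot\cX_{\bB}$. For density, I use that $\cX_{K\rtimes\bT}\langle\Psi\rangle\to\cX_{\bar K_1\rtimes\bT}\langle\Psi\rangle$ is surjective with the ``irreducible-fiber'' structure from Theorem \ref{thm:eq}. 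Since $\Psi_2\subset\Delta_K$, there are no $r_\alpha$ equations in $K$-height $>1$, so the higher coordinates are free affine parameters. The image of a dense set is therefore dense.

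For the converse, I lift. Since $\Psi_2\subset\Delta_K$, Theorem \ref{thm:eq} shows that $X_{K\rtimes\bT}\langle\Psi\rangle\cong X_{\bar K_1\rtimes\bT}\langle\Psi\rangle\times\A^N$, and I take $\cU$ to be the preimage of the dense $\cU$ downstairs. Given $\bar\rho$ upstairs, I take the deformation $\rho_1$ of $\pi\bar\rho$ provided downstairs and lift it to $\bB(\bFp[\![\varepsilon]\!])$ stage by stage along the central series of $K_2$ (the kernel of $\bB\to V_K\rtimes\bT$), choosing each lift so that mod $\varepsilon$ it recovers the corresponding root-coordinates of the conjugated $\bar\rho$.

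The main obstacle is the lifting obstruction. At each root $\alpha\in\Phi_K-\Delta_K$, Corollary \ref{cor:nonab-herr-2} asks a $2$-cochain to be a coboundary in $C^2_\Herr(U_{\alpha^\vee}(\bfE_{F,A}))$. For the relevant $\alpha$ we have $\alpha\notin\Psi_2$ at the special fiber, and I would first try to use this vanishing of $H^2$, which is an open condition and so persists over $\bFp[\![\varepsilon]\!]$; the lifting is then unobstructed. If $\rho_1$ moves a character into the cyclotomic locus, I would instead shrink $\cU$ and reparametrize $\mu$. Once $\rho$ exists, maximal non-splitness follows from the pushforward, because any proper $\bT$-normal subgroup of $U$ maps onto a proper one of $V_K$ unless it contains all of $\Delta_K\cup(\Phi^+-\Phi_K)$, and such a subgroup is all of $U$.
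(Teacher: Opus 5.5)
Your proposal is correct and takes the same route as the paper. The paper's one-line proof records exactly your key step: for $\alpha\in\Phi_K-\Delta_K$ one has $\alpha\notin\Psi_2$, so $H^2(\Gal_F,U_{\alpha^\vee}(\bFp[\![\varepsilon]\!]))=H^2(\Gal_F,U_{\alpha^\vee}(\bFp))=0$ by Nakayama, and the lift from $V_K\rtimes\bT$ to $\bB$ is unobstructed. Your fallback about $\rho_1$ moving a character into the cyclotomic locus is not needed: mod $\varepsilon$ the torus part is still $\bar\rho^{\SS}$ because $\delta_{\rot}\in\Psi_0$, and Nakayama over the local ring $\bFp[\![\varepsilon]\!]$ already gives the vanishing on all of $\Spec\bFp[\![\varepsilon]\!]$.
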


\begin{proof}
By Nakayama's lemma
$H^2(\Gal_F, U_{\alpha^\vee}(\bFp[\![\varepsilon]\!]))
=H^2(\Gal_F, U_{\alpha^\vee}(\bFp))=0$
for each $\alpha\in \Phi_K -\Delta_K$.
\end{proof}

It is clear that
\[
X_{\bar K_1\rtimes\bT}\langle\Psi\rangle
=\Spec \bFp[t_\delta^{\pm1}, x_{\alpha,1}, x_{\alpha,2},
s_{\beta}:
\delta\in \Delta, \alpha\in \Delta_K,
\beta\in \Psi_2]/
(t_\alpha-1: \alpha\in \Psi_0\cup\Psi_2).
\]

\begin{example} (Configuration I)
We work out the first row of the table:

\begin{figure}[H]
\scalebox{.6}{\StratumK}
\end{figure}

\noindent
where $\Psi_2$ roots are colored by red
and the $\Psi_0$ root is colored by green.
It is harmless to assume
$x_{\alpha,2}=0$
as $\Psi_0\cap \Delta_K=\emptyset
\Rightarrow
[c_{\alpha,2}]=0\in H^1(\Gal_F, U_{\alpha^\vee}(\bFp))$;
the variable $x_{\alpha,2}$
only plays the role of framing.
We can also remove the roots $1000,1100,1110,0111$
as they are not dominated by the $\Psi_2$ roots,
and replace
$V$ by $\prod_{\alpha\in \{0001,0010,0100,0011,0110,0120\}}
U_{\alpha^\vee}$.
Write $x_{\alpha}:=x_{\alpha,1}$.

The universal family of $X_{\bar K_1\rtimes \bT}\langle \Psi\rangle$
is given by
\begin{align*}
c^{\univ}_{0010} &= 0 \\
c^{\univ}_{0001} &= x_{0001}c_{0001,1}
+ s_{0001}c_{0001,\std}\\
c^{\univ}_{0011} &= x_{0011}c_{0011,1}
+ s_{0011}c_{0011,\std}\\
c^{\univ}_{0100} &= x_{0100}c_{0100,1}
+ s_{0100}c_{0100,\std}\\
c^{\univ}_{0110} &= x_{0110}c_{0110,1}
+ s_{0110}c_{0110,\std}\\
c^{\univ}_{0120} &= x_{0120}c_{0120,1}
+ s_{0120}c_{0120,\std}.
\end{align*}

Set $\delta_{\rot}:=0010$.
Let $\mu$
be a free parameter.
We replace
$\bar\rho$ by
\[
\bar\rho_\mu:=\exp(\mu e_{-0010})\bar\rho \exp(-\mu e_{-0010})
\]
where $e_{-0010}$ is the standard Chevalley basis element.
Note that
$\bar\rho^\SS\exp(\mu e_{-0010})=\exp(\mu e_{-0010})\bar\rho^\SS$.
Also note that
\begin{align*}
\exp(\mu e_{-0010})e_{0120} \exp(-\mu e_{-0010})
&=e_{0120}
+ \mu[e_{-0010},e_{0120}]
+ \frac{\mu^2}{2}[e_{-0010},[e_{-0010},e_{0120}]]
\\&=
e_{0120}
-\mu e_{0110}
-\mu^2 e_{0100}
\\
\exp(\mu e_{-0010})e_{0110} \exp(-\mu e_{-0010})
&=
e_{0110} + 2\mu e_{0100}
\\
\exp(\mu e_{-0010})e_{0011} \exp(-\mu e_{-0010})
&=
e_{0011} -\mu e_{0001}
\end{align*}
This transforms the universal coordinates
according to the rule
\begin{align*}
c_{0110}^\univ &\mapsto
c_{0110}(\mu):=  c^{\univ}_{0110}-\mu c^{\univ}_{0120}\\
c^\univ_{0100} &\mapsto c_{0100}(\mu):= 
c^\univ_{0100}+2\mu c^\univ_{0110}
-\mu^2c^\univ_{0120}\\
c^\univ_{0011} &\mapsto c_{0011}(\mu):= 
c_{0011}^\univ\\
c^\univ_{0001} &\mapsto c_{0001}(\mu):= 
c^\univ_{0001}-\mu c^\univ_{\MINOREDITED{0011}}.
\end{align*}

Next, we construct the universal
$\varepsilon$-deformation
\begin{align*}
c_{0010}(\varepsilon, \mu)&:=
\varepsilon (b_{0010r}c_{0010,1}
+b_{0010u}c_{0010,2}) \\
c_{0001}(\varepsilon, \mu)&:=
c_{0001}(\mu) \\
c_{0100}(\varepsilon, \mu)&:=
c_{0100}(\mu) \\
c_{0011}(\varepsilon, \mu)&:=
c_{0011}(\mu) +\mathrm{O}(\varepsilon^2)
\\
c_{0110}(\varepsilon, \mu)&:=
c_{0110}(\mu) +\mathrm{O}(\varepsilon^2)
\\
c_{0120}(\varepsilon, \mu)&:=
c_{0120}(\mu) +\mathrm{O}(\varepsilon^2)
\\
t_{0001}(\varepsilon, \mu)&=1 \\
t_{0100}(\varepsilon, \mu)&=1 \\
t_{0010}(\varepsilon, \mu)&=1+\varepsilon \lambda_{0010} \\
t_{1000}(\varepsilon, \mu)&= t_{1000}
\end{align*}
where $\mathrm{O}(\varepsilon^2)$
is the linear combination of the gauge cochains
having coefficients divisible by $\varepsilon^2$
coming from (higher) cup products,
and $\lambda_{0010}$ is a free parameter.

We plug the universal $\varepsilon$-deformation
into the explicit equations in Lemma \ref{lem:eq}.
Clearly all the equations are satisfied mod $\varepsilon$.
Note that there are $4$ free parameters
$\lambda, \mu, b_{0010r}, b_{0010u}$
(mod $\varepsilon^2$),
and three equations
$\{r_{0011}, r_{0110}, r_{0120}\}$.
We add one more equation that $\lambda_{0010}=1$
so that the number of parameters matches the number
of equations.
The mod $\varepsilon^2$ terms of the equations
($=\frac{r_*}{\varepsilon}$ mod $\varepsilon^2$) 
are given by
\[
\scalebox{.7}{
$\StratumKEqns$
}
\]
whose Gr\"obner basis can be calculated explicitly:
\[
\scalebox{.7}{$\StratumKGrob$}
\]
The results above are overwhelming both to compute
and to decipher.
We simplify the procedure using the following lemma:

\begin{lem}
\rm
Let $\Spec A$ be an irreducible
affine variety over $\bFp$
and let $B = A[s_1, \dots, s_m]/(f_1, \dots, f_m)$. Let $g: \mathrm{Spec}(B) \to \mathrm{Spec}(A)$ be the induced morphism of affine schemes. Suppose there exists a point $\mathfrak{p} \in \mathrm{Spec}(B)$ such that the Jacobian matrix $J = \left( \frac{\partial f_i}{\partial s_j} \right)_{1 \le i, j \le m}$ is invertible at $\mathfrak{p}$. Then the image $g(\mathrm{Spec}(B))$ contains an open neighborhood of $g(\mathfrak{p})$. In particular,
after replacing both $\Spec A$ and $\Spec B$ by non-empty open
subschemes, $\Spec B\to \Spec A$
is \MINOREDITED{\'etale and surjective}.
\label{lem:pointwise-check}
\end{lem}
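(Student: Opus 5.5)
This is an instance of the Jacobian criterion for étaleness, followed by Chevalley's theorem on images of étale maps. I will first localize at $\mathfrak p$, then spread the resulting étaleness out over open sets.

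\emph{Step 1: étaleness near $\mathfrak p$.} The morphism $g$ is of finite presentation, and $B$ is a relative complete intersection over $A$ with as many equations as variables. Its relative cotangent complex is represented by the two-term complex
\[
\textstyle\bigoplus_i B f_i \xrightarrow{J} \bigoplus_j B\, ds_j,
\]
modulo the usual identifications. Since $\det J$ is a unit in $B_{\mathfrak p}$, we have $\Omega_{B/A}\otimes B_{\mathfrak p}=0$. Moreover, $(f_1,\dots,f_m)$ is standard smooth of relative dimension $0$ at $\mathfrak p$ (Stacks, Tag 00T6 / Tag 00U9). Consequently $g$ is étale on the principal open $D(\det J)\subset \Spec B$, which contains $\mathfrak p$.

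\emph{Step 2: openness of the image.} Étale morphisms are flat and locally of finite presentation, hence open (Stacks, Tag 01UA). So $g(D(\det J))$ is an open subset of $\Spec A$ containing $g(\mathfrak p)$, which proves the first assertion.

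\emph{Step 3: the ``in particular'' clause.} Because $\Spec A$ is irreducible, the nonempty open set $V:=g(D(\det J))$ is dense, and it contains the generic point. Replace $\Spec A$ by a principal open $D(a)\subset V$ and $\Spec B$ by $D(\det J)\cap g^{-1}(D(a))$. The restricted map is étale. Its image is $D(a)$, since $D(a)\subset V=g(D(\det J))$ and $g^{-1}(D(a))$ is saturated over $D(a)$. Therefore the restriction is étale and surjective.

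The one point needing care is Step 1. There I must make sure the Jacobian criterion is applied in its relative form over the possibly singular base $A$. Étaleness does not require smoothness of $A$: the presentation is standard étale after inverting $\det J$, since $B_{\det J}=A[s][1/\det J]/(f)$. I would cite that normal form directly rather than argue via tangent spaces.
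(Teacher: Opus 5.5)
Your proposal is correct and follows the paper's proof: restrict to the principal open $D(\det J)$, observe that $g$ is \'etale there because the presentation is standard smooth of relative dimension $0$, and conclude from the openness of \'etale (flat, finitely presented) morphisms. You also spell out the ``in particular'' clause by shrinking to a principal open $D(a)$ inside the image, which the paper leaves implicit. One terminological correction: ``standard \'etale'' in the Stacks sense means a one-variable monic presentation, so the result to cite is the standard-smooth-of-relative-dimension-zero criterion, which you also mention.
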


\begin{proof}
Let $\cU_B \subseteq \mathrm{Spec}(B)$ be the principal open subscheme where the determinant of the Jacobian matrix $J$ is invertible. By hypothesis, the point $\mathfrak{p}$ lies in $\cU_B$.
The morphism $g$ restricted to $\cU_B$ is \'etale and thus
has open image.
\end{proof}

By Lemma \ref{lem:pointwise-check},
we are allowed to assign random values
to $x_*$ and $s_*$,
and check the invertibility of Jacobians
and the existence of solutions
after specializing to that particular
set of values of $\{x_*, s_*\}$.
For example, if we set
\[
\begin{matrix}
s_{0001}=2 &
x_{0001}=2 &
s_{0011}=4 &
x_{0011}=6 &
s_{0100}=4 \\
x_{0100}=5 &
s_{0110}=7&
x_{0110}=3&
s_{0120}=7&
x_{0120}=1
\end{matrix},
\]
then the Gr\"obner basis becomes
\[
G=\{\mu^2 - 2\mu - 15,\qquad b_{0010r},\qquad
b_{0010u} + 1/16\mu - 1/16,\qquad
\lambda_{0010} - 1
\}
\]
and the Jacobian matrix is
\[
J = \begin{pmatrix} 2\mu - 2 & 0 & 0 & 0 \\ 0 & 1 & 0 & 0 \\ \frac{1}{16} & 0 & 1 & 0 \\ 0 & 0 & 0 & 1 \end{pmatrix}
\]
which has determinant
\[
\det(J)=2(\mu-1)
\]
which is clearly non-trivial modulo $G$.

By the Newton's method or the multivariable Henselian's lemma,
if a polynomial equation has a solution
mod $\varepsilon$
and the Jacobian is a unit mod $\varepsilon$,
then the equation has a solution
in $\bFp[\![\varepsilon]\!]$.
\end{example}

{
\begin{tcolorbox}
\begin{algorithmic}[1]
    \Require A configuration $(K,\Psi)$ 
such that $\Psi_2\subset \Delta_K$, and a root $\delta_{\rot}\in \Psi_0$
for rotation.
\State Write down the universal family
$\bar\rho^{\univ}$ for $X_{\bar K_1\rtimes \bT}$.
\State Set $\bar\rho(\mu)\gets
\exp(\mu e_{-\delta_\rot})\bar\rho^{\univ} \exp(\mu e_{-\delta_\rot})$.
\State
Write $t_{\alpha}(\mu), s_{\alpha}(\mu), x_{\alpha,i}(\mu)$
for the parameters defining $\bar\rho(\mu)$.
\State Set
\[
t_\delta(\varepsilon, \mu)
:= 
\begin{cases}
t_\delta(\mu) & \delta\not\in \Psi_0 \\
1+\varepsilon\lambda_\delta & \delta\in \Psi_0
\end{cases}
\]
\[
x_{\alpha, i}(\varepsilon, \mu) = 
\begin{cases}
x_{\alpha,i}(\mu) & \alpha\not\in \Psi_0
\\
\varepsilon b_{\alpha,i}
&\alpha\MINOREDITED{\in} \Psi_0
\end{cases}
\]
\[
s_{\alpha}(\varepsilon, \mu) = s_{\alpha}(\mu),
\]
and treat
\[
\{\lambda_*, \mu, b_{*,*}\}
\]
as the free variables.
\State Plug $t_\alpha(\varepsilon, \mu), s_\alpha(\varepsilon, \mu), x_{\alpha, i}(\varepsilon, \mu)$
\MINOREDITED{into} the equations
$\{\frac{r_\alpha}{\varepsilon}:\alpha\in \Psi_2, \Ht(\alpha)>1\}$
and write
$P_\alpha$ for the constant terms (with respect to $\varepsilon$).
\State $G\gets$ Gr\"obner basis of $\{P_\alpha:
\alpha\in \Psi_2, \Ht(\alpha)>1
\}\cup\{\sum_{\delta}\lambda_\delta-1\}$
over the function field of $X_{\bar K_1\rtimes \bT}$.
\If{$G=\{1\}$} \Comment{There is no mod $\varepsilon$ solution.}
\State Report failure!
\EndIf
\State $G_{\text{aug}}\gets$ Gr\"obner basis of $\{P_\alpha:
\alpha\in \Psi_2, \Ht(\alpha)>1
\}\cup\{\sum_{\delta}\lambda_\delta-1,
\text{\MINOREDITED{Jacobian} of $\{P_\alpha\}$}\}$.
\If{$G=G_{\text{aug}} $} \Comment{Jacobian is not a unit.}
    \State Report failure!
\EndIf
\State Report success.
\end{algorithmic}
\end{tcolorbox}
\vspace{-5mm}
\renewcommand{\figurename}{Algorithm}
\captionof{figure}{The Rotational Deformational Test}
\label{box:rot}
}

\begin{thm}
\label{thm:ht1-codim0}
\rm
Each of the Configurations I-IV pass the Rotational Deformation Test
(c.f. Algorithm \ref{box:rot})
by using the following rotation roots:
\begin{itemize}
\item[(I, II, IV)] $\delta_\rot=0010$,
\item[(III)] $\delta_\rot=1000$.
\end{itemize}
\end{thm}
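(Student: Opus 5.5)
The plan is to run Algorithm~\ref{box:rot} on each of the four configurations, following the pattern of Configuration~I worked out in the example above. The common first step is a reduction. By the lemma replacing $K\rtimes\bT$ and $\bB$ with $\bar K_1\rtimes\bT$ and $V\rtimes\bT$, together with $\Psi_2\subset\Delta_K$, the stratum $X_{\bar K_1\rtimes\bT}\langle\Psi\rangle$ is the explicit smooth affine scheme displayed before the example. Its universal family is a linear combination $x_{\alpha}c_{\alpha,1}+s_\alpha c_{\alpha,\std}$ over $\alpha\in\Delta_K$. We may set the framing variables $x_{\alpha,2}=0$ for $\alpha\notin\Psi_0$, and we discard every root of $V$ that is not dominated by a root of $\Psi_2$. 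The effect is that the only equations to check are the $r_\alpha$ for the height $>1$ roots $\alpha\in\Psi_2$ of $V$.

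Next comes the conjugation step. For $\delta_\rot=0010$ in Configurations I, II and IV, and $\delta_\rot=1000$ in Configuration III, I compute $\exp(\mu e_{-\delta_\rot})$-conjugation on the root vectors $e_\alpha$ with $\alpha\in\Psi_2$ via Hadamard's lemma. This amounts to evaluating the $-\delta_\rot$-strings through $\Psi_2$, and these strings have length at most $3$ in $\mathrm{F}_4$. The result is the transformed coordinates $c_\alpha(\mu)$. I then insert the $\varepsilon$-deformation: $t_\delta=1+\varepsilon\lambda_\delta$ and $x_{\delta,i}=\varepsilon b_{\delta,i}$ for $\delta\in\Psi_0$, with all other coordinates unchanged. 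I substitute into the equations $r_\alpha$ of Lemma~\ref{lem:eq} (equivalently Theorem~\ref{thm:eq}(2)) and extract the linear terms $P_\alpha=r_\alpha/\varepsilon \bmod \varepsilon$.

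The key check is that the number of free parameters $\{\lambda_\delta,\mu,b_{\delta,i}\}$, after the normalization $\sum\lambda_\delta=1$, equals $\#\{\alpha\in\Psi_2:\Ht(\alpha)>1\}+1$, so that the resulting system is square. For Configuration~I this count is $4$ against $3+1$. For II, III and IV I would first tally the parameters: each root in $\Psi_0\cap\Delta$ contributes one $\lambda$ and two $b$'s. If the count exceeds the number of equations, I would set the surplus $b$'s to zero, or add further random linear constraints, to make the system square.

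Lemma~\ref{lem:pointwise-check} then reduces both existence of a solution and generic invertibility of the Jacobian to a single specialization of $\{x_*,s_*,t_{1000}\}$ at random values. For each configuration I exhibit one such point. At that point I compute the Gr\"obner basis $G\neq\{1\}$ and check that $\det J\notin (G)$; this is exactly the certificate produced in the Configuration~I example. Openness of the \'etale image then gives a Zariski dense $\cU$. The multivariable Hensel lemma lifts each mod $\varepsilon$ solution to $\bFp[\![\varepsilon]\!]$, producing $\rho$.

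Finally I must show that $\rho[1/\varepsilon]$ does not factor through any proper $\bT$-stable normal subgroup. This follows because the $\delta_\rot$-coordinate $\varepsilon(b_{\delta_\rot,1}c_{\delta_\rot,1}+\dots)$ is a nonzero class. That class is nonzero since $t_{\delta_\rot}=1+\varepsilon\lambda$ with $\lambda\neq0$, which makes the character generic over $\bFp(\!(\varepsilon)\!)$. The remaining simple coordinates stay nonzero on $\cU$, and Lemma~\ref{lem:K-nonzero} then applies. I expect the main obstacle to be the Jacobian check in Configuration~IV. There $\Psi_0$ contains three roots ($0001,0011$, and $1000$ beyond $\delta_\rot$), the system is largest, and the $\mu^2$ terms from length-$3$ strings could make $J$ degenerate. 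If $0010$ fails there, I would try a different specialization or rotate by $1000$ as a fallback.
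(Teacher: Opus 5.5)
Your outline follows the paper's worked Configuration~I example: specialize, compute $G$, check $\det J\notin(G)$, then invoke Lemma~\ref{lem:pointwise-check} and Hensel. But the statement is a finite computational assertion, and its proof \emph{is} the certificate, which you never produce. ``For each configuration I exhibit one such point'' is exactly the missing content. The paper's proof (Appendix~\ref{app:rotation-algorithm}) gives, for each configuration with the prescribed $\delta_\rot$, an explicit rational point at which every divided equation vanishes exactly and the square Jacobian has nonzero determinant ($432$, $704$, $15360$, $895795200$ for I--IV).

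Your contingency plans would not prove the statement as written. It fixes $\delta_\rot=0010$ for IV, and according to the paper rotating by $1000$ or $0001$ fails there (and $0010$ fails for III). Likewise, zeroing out surplus $b$'s or adding random linear constraints changes the test. Those fixes are also unnecessary. In IV the non-simple root $0011\in\Psi_0$ also carries two $b$'s (but no $\lambda$), so your tally ``one $\lambda$ and two $b$'s per root of $\Psi_0\cap\Delta$'' undercounts there. Giving $b$'s to every root of $\Psi_0$, unknowns and equations already match: $4=3+1$, $4=3+1$, $7=6+1$, $12=11+1$.

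The more serious missing idea is uniformity in the characteristic. The test runs over $\bFp$ and must pass for every $p>h_{\bG}=12$. Random values in $\bFp$ and a Gr\"obner computation in one characteristic say nothing about infinitely many other primes. For particular primes the chosen point may leave the stratum (a required nonzero $x_*$ or $s_*$ vanishes), or the Jacobian may become singular. The paper works over $\Q$ instead. It factors the denominators, the coordinates required to be nonzero, and $\det J$, and checks that their prime support lies in $\{2,3,5,11\}$. After inverting these primes the rational point is a smooth section, so its reduction is a smooth solution in every characteristic $p>12$; only then do Lemma~\ref{lem:pointwise-check} and Hensel apply uniformly.

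Finally, your last step is not part of this theorem: the test only checks $G\neq\{1\}$ and $\det J\notin(G)$, and non-factoring belongs to deducing Proposition~\ref{prop:rotate}. Its justification is also wrong. Having $t_{\delta_\rot}=1+\varepsilon\lambda\neq1$ kills the $H^1_{\Herr}$-class of the unramified cocycle $c_{\delta_\rot,2}$ over $\bFp(\!(\varepsilon)\!)$. So the $\delta_\rot$-class is nonzero only if $b_{\delta_\rot,1}\neq0$, which is not automatic: in the paper's own Configuration~I specialization, $b_{0010r}\in G$.
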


\begin{proof}
This follows from the exact computation in
Appendix~\ref{app:rotation-algorithm}.
We remark that for Configuration IV, there are three choices
$\delta_\rot\in\{1000,0010,0001\}$,
but only $0010$ works;
similarly for \MINOREDITED{Configuration} III, there are two choices
$\delta_\rot\in\{1000,0010\}$,
but only $1000$ works.
\end{proof}

\begin{remark}
We don't exclude the possibility that
for more complex groups like $E_6$,
we need to rotate using multiple roots
$\{\delta_{\rot}\}\subset \Psi_0$.
\end{remark}

\begin{cor}
When $\bG=\mathrm{F}_4$,
$\cX_{\bG,\red}^{\EG}$
is equidimensional of dimension
$[F:\Q_p]\#\Phi^+$
and has irreducible components identified
with
the irreducible components
of $\cW_{\bB}$.
\label{cor:F4-equidim}
\end{cor}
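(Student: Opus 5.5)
The plan is to separate the case $F\neq\Q_p$ from the case $F=\Q_p$. For $F\neq\Q_p$ the corollary is exactly Proposition~\ref{prop:TBM}, so there is nothing to add. For $F=\Q_p$, Corollary~\ref{cor:baby-F4} already shows that the top-dimensional irreducible components of $\cX_{\bG,\red}^{\EG}$ are in bijection with the irreducible components of $\cW_{\bB}$. It also shows that any further component must be the scheme-theoretic image in $\cX_{\bG}^{\EG}$ of one of the four strata $\cX_{\bB}[K]\langle\Psi\rangle$ of Table~\ref{table:F4-table}. Ruling out these four candidates gives both equidimensionality of dimension $[F:\Q_p]\#\Phi^+$ and the bijection with components of $\cW_{\bB}$.

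To rule them out, I will show that for each Configuration I--IV the image $Y$ of $\cX_{\bB}[K]\langle\Psi\rangle$ in $\cX_{\bG}^{\EG}$ lies in the closure of the image of $\dot\cX_{\bB}$. I use Proposition~\ref{prop:rotate}, which follows from Theorem~\ref{thm:ht1-codim0} with $\delta_\rot=0010$ for I, II and IV and $\delta_\rot=1000$ for III. It gives a Zariski dense $\cU\subset\cX_{K\rtimes\bT}\langle\Psi\rangle$ such that every $\bFp$-point $\bar\rho$ of $\cU$ admits a deformation $\rho$ over $\bFp[\![\varepsilon]\!]$ whose generic fibre $\rho[1/\varepsilon]$ lies in $\dot\cX_{\bB}$. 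Moreover, the special fibre of $\rho$ is $\bG$-conjugate to $\bar\rho$, via $\exp(\mu e_{-\delta_\rot})$ with $\mu$ reduced mod $\varepsilon$.

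Composing with $\cX_{\bB}\to\cX_{\bG}^{\EG}$, the $\bFp[\![\varepsilon]\!]$-point $\rho$ of $\cX_{\bG}^{\EG}$ has generic point in the image of $\dot\cX_{\bB}$ and closed point equal to the image of $\bar\rho$, because $\bG$-conjugation is invisible in $\cX_{\bG}^{\EG}$. Hence the image of $\bar\rho$ lies in the closure of the image of $\dot\cX_{\bB}$. Since $\cU$ is dense and the closure is closed, all of $Y$ lies in that closure, so $Y$ is not an irreducible component. Finally, Corollary~\ref{cor:XB-dim} and Lemma~\ref{lem:maximal-ns} give that each image of $\dot\cX_{\bB}\langle\Psi_0(C),\Psi_2(C)\rangle$ has dimension $[F:\Q_p]\#\Phi^+$. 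These are therefore exactly the irreducible components, indexed by $C\subset\cW_{\bB}$.

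The main obstacle is the passage from an $\bFp[\![\varepsilon]\!]$-point to closure membership in the stack, together with Proposition~\ref{prop:rotate} itself. Closure membership is a standard valuative-type argument: the image of $\Spec\bFp[\![\varepsilon]\!]$ is irreducible and its generic point lands in the constructible image, so the closed point lies in its closure. Proposition~\ref{prop:rotate} rests on the computer-verified Rotational Deformation Test together with Lemma~\ref{lem:pointwise-check} and Henselian lifting. I take these as given from the earlier results.
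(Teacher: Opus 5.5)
Your proposal is correct and follows the paper's intended route. The paper states this corollary without a separate proof: it comes from Proposition~\ref{prop:TBM} when $F\neq\Q_p$ and, when $F=\Q_p$, from Corollary~\ref{cor:baby-F4} combined with Proposition~\ref{prop:rotate} (via Theorem~\ref{thm:ht1-codim0}), as in the remark following that proposition. One point should be made explicit: that the closure of the image of $\dot\cX_{\bB}$ is exactly the union of the top-dimensional components, which follows from the observation in the proof of Proposition~\ref{prop:TBM} that each $\dot\cX_{\bB}(C)$ is irreducible.
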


\section{The existence of crystalline lifts}

We use the integral fixed-type stacks of \cite{Lin25}.
The crystalline stack is obtained by imposing the closed condition $N=0$
on the stack of Breuil--Kisin--Fargues $\Gal_F$-modules.
Thus the proofs of
\cite[Proposition B.5.10 and Theorem B.6.3]{Lin25}
apply without change.

\begin{lem}
\label{lem:crystalline-proper-image}
Fix a bounded Hodge type and inertial type.  The morphism from the stack of
crystalline Breuil--Kisin--Fargues lattices of that type to the integral
Emerton--Gee stack is proper.  Its scheme-theoretic image is
$\Z_p$-flat, and a geometric point of its special fiber belongs to the image
if and only if the corresponding residual representation has a crystalline
lift of that type.
\end{lem}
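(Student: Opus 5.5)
The plan is to follow the standard Emerton--Gee template for fixed-type stacks, as adapted in \cite[Proposition B.5.10 and Theorem B.6.3]{Lin25}. Write $\mathcal{Y}^{\mathrm{crys}}$ for the stack of crystalline Breuil--Kisin--Fargues lattices of the fixed bounded Hodge type and inertial type. It is the closed substack of the stack of Breuil--Kisin--Fargues $\Gal_F$-modules of that type cut out by the closed condition $N=0$. The argument has three steps: properness, flatness of the image, and identification of the special fibre points.

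\textbf{Properness.} The morphism to the integral Emerton--Gee stack is
\[
\mathcal{Y}^{\mathrm{crys}}\to \cX^{\EG}_{\bG},\qquad \mathfrak{M}\mapsto \mathfrak{M}\otimes \bfA_{F}.
\]
It factors as the closed immersion $\mathcal{Y}^{\mathrm{crys}}\hookrightarrow\mathcal{Y}$, followed by the corresponding morphism from the full stack $\mathcal{Y}$ of lattices of bounded type. The latter morphism is proper by \cite[Proposition B.5.10]{Lin25}. The usual argument is that, for fixed height bounds, the lattices inside a given \'etale $\varphi$-module form a projective (affine Grassmannian type) family, and the $\Gal_F$-descent data are closed conditions. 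A closed immersion composed with a proper morphism is proper. This step should cost nothing beyond citing Lin25 and noting that $N=0$ is closed.

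\textbf{Flatness of the image.} The scheme-theoretic image $\cX^{\mathrm{crys}}$ is $\Z_p$-flat by \cite[Theorem B.6.3]{Lin25}. The argument there, which I expect to transfer verbatim, runs as follows. One first shows that the $\bar\Q_p$-points (equivalently, $\mathcal O_E$-points) of $\mathcal{Y}^{\mathrm{crys}}$ are dense, via the Kisin-type statement that the generic fibre of $\mathcal{Y}^{\mathrm{crys}}$ maps isomorphically onto the locus of crystalline representations of the given type. One then removes $p$-power torsion. Equivalently, one defines $\cX^{\mathrm{crys}}$ as the closure of the crystalline $\mathcal O_{E'}$-points and checks that it coincides with the scheme-theoretic image. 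The main obstacle is this identification: ensuring that the image has no spurious components supported in characteristic $p$. I would handle it as in Lin25, by showing $\mathcal{Y}^{\mathrm{crys}}$ is itself $\Z_p$-flat. In the unipotent/potentially crystalline setting this follows from the local model analysis, where the condition $N=0$ cuts out a flat closed subscheme.

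\textbf{Characterising the special fibre.} If $\bar\rho$ has a crystalline lift $\rho$ of the given type over some $\mathcal O_{E'}$, then $\rho$ admits a Breuil--Kisin--Fargues lattice with $N=0$. This gives an $\mathcal O_{E'}$-point of $\mathcal{Y}^{\mathrm{crys}}$, and its closed point maps to $\bar\rho$, so $\bar\rho$ lies in the image. Conversely, let $\bar x$ be a geometric point of the special fibre of the image. Since the image is flat and proper over $\Spf\Z_p$, and images of proper morphisms are closed, $\bar x$ lifts to a point of $\mathcal{Y}^{\mathrm{crys}}$. Flatness then lets us specialise: there is a finite extension $\mathcal O_{E'}$ and an $\mathcal O_{E'}$-point of the image through $\bar x$ (take a DVR through $\bar x$ meeting the generic fibre). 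By properness this point lifts to an $\mathcal O_{E''}$-point of $\mathcal{Y}^{\mathrm{crys}}$. Its generic fibre is a crystalline representation of the given type that reduces to $\bar\rho$, which is the required crystalline lift.
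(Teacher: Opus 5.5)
Your route is the paper's. Its proof is the one-line observation that the crystalline stack is the closed $N=0$ substack, so \cite[Proposition B.5.10 and Theorem B.6.3]{Lin25} apply, and your properness step (a closed immersion followed by the proper morphism of \cite{Lin25}) is exactly that reduction.

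Where you go beyond the citation, however, the flatness argument you propose would not work. A closed substack of a $\Z_p$-flat stack need not be flat; already $\Spec\Z_p[x]/(px)\subset\Spec\Z_p[x]$ is not. Nothing forces the condition $N=0$ to cut out a flat locus, and local models do not supply this. In the \cite{EG23}-style constructions one instead passes to the $\Z_p$-flat part, and you should do the same: replace $\mathcal{Y}^{\mathrm{crys}}$ by the closed substack obtained by killing $p$-power torsion.
\begin{itemize}
\item The map stays proper, since this is a closed substack.
\item Every $\mathcal{O}_{E'}$-point of $\mathcal{Y}^{\mathrm{crys}}$ factors through the flat part, because $\mathcal{O}_{E'}$ is $p$-torsion free. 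So the characterisation of points is unchanged.
\item The scheme-theoretic image is then flat for formal reasons: (versal-)locally its coordinate ring embeds into the $p$-torsion-free coordinate ring of the source.
\end{itemize}
The lemma's ``stack of crystalline lattices of that type'' has to be read in this flat sense for its flatness assertion to hold. This is also what the paper's ``apply without change'' tacitly uses.

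Second, the image is not proper over $\Spf\Z_p$. Emerton--Gee type stacks have positive-dimensional affine stabilizers (e.g.\ at the trivial representation), so they are not separated. Only properness of the morphism is available. Your sentence ``by properness this point lifts'' also skips a step: the valuative criterion requires the generic point of your $\mathcal{O}_{E'}$-point to lift first.

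The standard way to get that, as in \cite{EG23}, is through versal rings:
\begin{itemize}
\item The versal ring of the flat image at $\bar x$ is a $\Z_p$-flat complete local Noetherian ring, so it has $\mathcal{O}_{E'}$-points through $\bar x$.
\item Pull the source back to this versal ring and algebraize. The set-theoretic image of the resulting proper map is closed and contains the characteristic-zero points of that ring.
\item Hence the generic point lifts after a finite extension.
\item Only then does the valuative criterion give the $\mathcal{O}_{E''}$-point of the source, whose generic fibre is the required crystalline lift.
\end{itemize}
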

\begin{proof}
This is the $N=0$ case of
\cite[Proposition B.5.10 and Theorem B.6.3]{Lin25}.
The only difference is that we use the closed crystalline substack.
\end{proof}

\begin{lem}
\rm
Suppose $\bG=\PGL_2$.
There \MINOREDITED{exist} finitely many
Hodge types $\{\underline{\lambda}_s|s\in S\}$
such that the special fiber
of the crystalline stack
$\bigcup_{s\in S}
\cX_{\PGL_2}^{\underline{\lambda}_s,\tau_{\triv}}$
contains all irreducible components
of $\cX_{\PGL_2,\red}^{\EG}$.
Here, $\tau_{\triv}$ stands for the trivial inertial type.
The lemma also holds after replacing $\bG$ by its Borel.
\label{lem:crys-PGL2}
\end{lem}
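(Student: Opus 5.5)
We need to prove: for PGL_2, finitely many Hodge types such that union of special fibers of crystalline stacks (trivial inertial type) contains all irreducible components of the reduced EG stack; same for Borel.

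Plan: irreducible components of $\cX_{\PGL_2,\red}^{\EG}$ are known from the paper's earlier analysis. Here $\bB=\G_a\rtimes\G_m$, and Theorem \ref{thm:PGL2-eqn} shows $\cX_B\to\cW_B$ is smooth with geometrically irreducible fibers, so components of $\cX_B$ biject with components of $\cW_B$. These are finitely many, indexed by Serre weights of $\GL_1(k_F)$ (a non-Steinberg component for each, plus one Steinberg component for the cyclotomic point). By Lemma \ref{lem:maximal-ns} together with Theorem \ref{thm:reg}-type dimension count (all strata have dimension at most $\dF$; the irreducible ones have image of full dimension), every top-dimensional component of $\cX_{\PGL_2,\red}^{\EG}$ is the closure of the image of some $\dot\cX_B(C)$. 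Non-Borel points (irreducible $\bar\rho$) form a stratum of dimension $\le 0<\dF$, so they contribute no components. Equidimensionality for $\GL_2$ is known from \cite{EG23}, so it suffices to cover each $\overline{\dot\cX_B(C)}$.

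For each component $C$, I would pick a single explicit $\bFp$-point, or better a dense open, and produce crystalline lifts of a fixed Hodge type. Concretely: for the non-Steinberg component attached to a character $\bar\psi=\omega^{k}\cdot\mathrm{unr}$, take $\underline{\lambda}$ with labelled weights $(k_\iota,0)$ so that crystalline characters $\psi$ lift $\bar\psi$, and use that $H^1_f(\Gal_F,\psi)\to H^1(\Gal_F,\bar\psi)$ has image of dimension $\dF$ when the Hodge–Tate weights are chosen so that $H^1_f$ equals all of $H^1$ generically, i.e.\ all labelled weights of $\psi$ positive and $\psi\neq \Z_p(1)$. This requires shifting $k_\iota$ by multiples of $p-1$, which is where the finiteness comes from: one Hodge type per Serre weight, plus one extra for the Steinberg component (weights $(1,0)$ everywhere giving Barsotti–Tate lifts of très ramifié classes). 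Generic points of each $\dot\cX_B(C)$ then lie in the image, and by Lemma \ref{lem:crystalline-proper-image} the image is closed, so it contains the whole component.

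The Borel statement follows from the same construction, since the lifts produced are upper triangular and the crystalline Borel stack maps properly onto its image, now indexed by components of $\cW_B$ via Theorem \ref{thm:PGL2-eqn}.

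The main obstacle is the lifting of extension classes: checking that for the chosen weights the map $H^1_f(\psi)\otimes\bFp\to H^1(\bar\psi)$ is surjective onto a dense set, especially at $\bar\psi$ trivial where the unramified class and $\dkF$ framing directions appear. I would first try taking $\psi$ with all weights $\ge 2$ so that $H^1_f=H^1$ and $H^2(\psi)$ is torsion-free zero, giving surjectivity by Nakayama.
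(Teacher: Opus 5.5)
Your route is genuinely different from the paper's, whose proof is just the citation \cite[Theorem 6.4.4]{EG23}; note that your appeal to \cite{EG23} for equidimensionality already leans on that source. As written, though, your argument fails on the Steinberg component $C_{\st}$. Work with coefficients in $\cO$, with uniformizer $\varpi$ and fraction field $E$.

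Every point of $\cX_B(C_{\st})$ lies over $t=1$, so its diagonal character is exactly the mod $p$ cyclotomic character $\bar\epsilon$. Its generic points are the classes $\sum_i x_ic_i+s\,c_{\std}$ with $s\neq0$, and these are dense in all of $H^1(\Gal_F,\bar\epsilon)$.

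Your extra Hodge type, $(1,0)$ at every embedding, cannot reach them. An upper-triangular crystalline lift of that type has diagonal character $\psi=\epsilon\cdot\mathrm{unr}(a)$. Take $a\neq1$ and $m=v(a-1)$. Local Tate duality identifies the image of $H^1(\Gal_F,\cO(\psi))\to H^1(\Gal_F,\bar\epsilon)$ with the annihilator of the image of the connecting map $H^0(\Gal_F,(E/\cO)(\psi^{-1}\epsilon))\to H^1(\Gal_F,\bFp)$. That image is spanned by the unramified homomorphism $u=(\mathrm{unr}(a^{-1})-1)/\varpi^m \bmod \varpi$. For $a=1$ you get Kummer classes of units, which are also orthogonal to $u$.

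So all these lifts reduce into the single fixed hyperplane $u^\perp$, which is in fact the fiber at $t=1$ of the non-Steinberg component, i.e.\ the peu ramifi\'e classes. Hence the closed crystalline special fiber cannot contain the generic point of $C_{\st}$. The tr\`es ramifi\'e classes lift in weight two only to semistable, non-crystalline representations of Tate-curve type, which is why this component is called Steinberg.

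The Nakayama fallback in your last paragraph fails at the same place, for any choice of weights. Whenever $\bar\psi=\bar\epsilon$, the group $H^2(\Gal_F,\cO(\psi))\cong H^0(\Gal_F,(E/\cO)(\psi^{-1}\epsilon))^\vee$ is nonzero (finite if $\psi\neq\epsilon$). So $H^1(\Gal_F,\cO(\psi))\to H^1(\Gal_F,\bar\psi)$ has nonzero cokernel $H^2(\Gal_F,\cO(\psi))[\varpi]$. Because $C_{\st}$ lies entirely over the cyclotomic point, you cannot pass to a generic point where $H^2$ vanishes and then take closures.

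What is missing is a Hodge type for which these image hyperplanes move with the unramified twist. Take all labelled weights $\geq 1$ but not all equal to $1$, e.g.\ $\psi=\epsilon^{p}\,\mathrm{unr}(a)$, i.e.\ Hodge type $(p,0)$ at every embedding. Then $H^1_f=H^1$ rationally. The image of $H^1(\Gal_F,\cO(\psi))$ is the hyperplane orthogonal to $(\psi^{-1}\epsilon-1)/\varpi^{m}\bmod\varpi$, where $\psi^{-1}\epsilon=\epsilon^{1-p}\mathrm{unr}(a^{-1})$ and $\varpi^m$ is the exact power dividing $\psi^{-1}\epsilon-1$.

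Since $\epsilon^{1-p}$ is ramified, you can choose $a$ so that $\epsilon^{1-p}\mathrm{unr}(a^{-1})$ is trivial on a fixed Frobenius lift, and then perturb $a$ by a factor $1+\tilde\lambda\varpi^{m}$, working over a large enough coefficient ring. This makes the homomorphism equal to $r+\lambda u$ for a fixed non-unramified $r$ and arbitrary $\lambda\in\bFp$. The hyperplanes $(r+\lambda u)^\perp$ together cover every class outside $u^\perp$.

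With this Hodge type for $C_{\st}$, the argument goes through. You keep your Nakayama argument at generic points of the non-Steinberg components (where $\bar\psi\neq\bar\epsilon$), and finish with the closedness from Lemma~\ref{lem:crystalline-proper-image}.
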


\begin{proof}
This is \cite[Theorem 6.4.4]{EG23}.
\end{proof}

\begin{lem}[Borel crystalline assembly]
\label{lem:Borel-crystalline-assembly}
Let $C\subset\cW_{\bB}$ be irreducible.  There is a regular Borel Hodge type
$\underline\lambda_C$ with trivial inertial type such that the crystalline
special fiber contains the component
$\overline{\dot\cX_{\bB}(C)}$.
\end{lem}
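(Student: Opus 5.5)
The plan is to build a crystalline Borel-valued lift over the generic point of $\overline{\dot\cX_{\bB}(C)}$ one simple root at a time, and then pass to the whole component by properness and flatness. Write $C=\prod_{\delta\in\Delta}C_\delta$ with each $C_\delta\subset\cW_{U_{\delta^\vee}\rtimes\G_{m,\delta}}$ an irreducible component. For each $\delta$, Lemma~\ref{lem:crys-PGL2} (Borel version) provides a Hodge type $\underline\lambda_{\delta}$ with trivial inertial type. Its crystalline special fiber contains $\cX_{U_{\delta^\vee}\rtimes\G_{m,\delta}}(C_\delta)$, which is irreducible by Theorem~\ref{thm:PGL2-eqn}. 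I would take $\underline\lambda_C$ to be a cocharacter of $\Res_{F/\Q_p}\bT$ whose pairing with each $\delta$ equals the $\PGL_2$ weight from $\underline\lambda_\delta$, shifted by a large multiple of the fixed cocharacter $\lambda$ (so $\langle\lambda,\delta\rangle=-1$). The shift makes the type regular and gaps of the Hodge--Tate weights on every root group large. The connected-center hypothesis makes this choice of torus cocharacter possible.

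The core step is the \emph{generic} point. By Lemma~\ref{lem:crystalline-proper-image}, the crystalline locus in the special fiber is closed, being the special fiber of a proper image. It therefore suffices to show that a Zariski dense set of $\bFp$-points of $\dot\cX_{\bB}(C)$ have a $\bB$-valued crystalline lift of type $\underline\lambda_C$. I would argue inductively along the height-ordered filtration $U_{\le\alpha}$ from Section~\ref{sec:destack}. On the torus, the characters $\bar\rho^\SS$ lift to crystalline characters of the prescribed weights. On the simple-root layer, the $\PGL_2$ input lifts each $c_\delta$. At a root $\alpha$ of height $\ge2$, the obstruction to extending a crystalline $U_{<\alpha}\rtimes\bT$-lift to $U_{\le\alpha}\rtimes\bT$ lies in $H^2(\Gal_F,U_{\alpha^\vee})$ of a crystalline character. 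Its Hodge--Tate weight is strictly negative and large because of the $\lambda$-shift, so this group vanishes unless the residual character is cyclotomic, i.e.\ $\alpha\in\Psi_2(C)$. Once an extension exists, the set of extensions is a torsor under $H^1$. Since the weight is regular negative, $H^1_f=H^1$ for such characters, so every extension is crystalline. Away from $\Psi_2(C)$ the lift can therefore be continued freely. Along the way I would ensure the reduction hits the prescribed $c_\alpha$ by using that $H^1$ of the integral character surjects onto the residual $H^1$ modulo the $\Tor_1(H^2,-)$ part (Lemma~\ref{lem:PID-Tor}).

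The main obstacle is the roots $\alpha\in\Psi_2(C)$ of height $\ge2$, where the integral $H^2$ is $\bar\Z_p$-torsion and the cup-product obstruction need not vanish. Here I would use the rigid picture. The $p$-adic formal Weil--Deligne stack has the same irreducible components as $\cW_{\bB}$, with the Steinberg parameters $s_\alpha$ lifting to $Y$-coordinates. By the $w=1$ analogue of Theorem~\ref{thm:rigid}, the generic fiber of the crystalline/semistable $\bB$-stack of type $\underline\lambda_C$ has a component over the generic fiber of the formal component $\sC$ lifting $C$. Then I would run the argument of Corollary~\ref{cor:Ch-lower-bd}: take the flat closure $\cY$ of that rigid component. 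Its special fiber is nonempty of dimension $[F:\Q_p]\#\Phi^+$ and maps into $C$ only. By Corollary~\ref{cor:C6}, $\dot\cX_{\bB}(C)$ is irreducible of exactly this dimension, and by Corollary~\ref{cor:reg} its closure is the unique top-dimensional component over $C$. Hence $\cY\otimes\bFp\supset\overline{\dot\cX_{\bB}(C)}$. The delicate point I expect to fight with is that this produces potentially semistable lifts. To get crystalline ones, i.e.\ $N=0$ and trivial inertial type, I would choose $\sC$ to be the non-Steinberg (monodromy-free) formal component whose special fiber still contains $C$; this uses the $T$-versus-$Y$ gluing $\Spf\bar\Z_p\langle T^{\pm1},Y\rangle/(Y(T-1))$. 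I would fall back on the direct lifting induction above if that gluing fails to separate the components.
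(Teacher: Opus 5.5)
The gap is at the step you yourself flag: roots $\alpha\in\Psi_2(C)$ with $\Ht(\alpha)\ge 2$. Your rigid-analytic substitute cannot close it. The rest of your frame is sound and parallels the paper: density plus closedness of the crystalline locus via Lemma~\ref{lem:crystalline-proper-image}, the simple-root layer via Lemma~\ref{lem:crys-PGL2}, and free extension at roots $\alpha\notin\Psi_2(C)$.

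\textbf{The rigid step never applies where you need it.} If $\alpha\in\Psi_2(C)$, then $t_\alpha=\prod t_\delta^{m_\delta}$ is constant on $C$. On a non-Steinberg factor $t_\delta$ varies freely, so $C$ must be Steinberg at every simple root in the support of $\alpha$. Over such a Steinberg factor $C_\delta=\Spec\bFp[s_\delta]$, the rigid route yields no crystalline lifts.
\begin{itemize}
\item Theorem~\ref{thm:rigid} and the argument of Corollary~\ref{cor:Ch-lower-bd} concern the potentially semistable stack of the fixed type $\underline\lambda_w$. The only formal component of $\sW$ whose special fiber contains $C_\delta$ is $\Spf\bar\Z_p\langle Y\rangle$. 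Its generic fiber is generically $Y\neq 0$, i.e.\ nonzero monodromy.
\item The monodromy-free component $\Spf\bar\Z_p\langle T^{\pm1}\rangle$ has special fiber $\Spec\bFp[T^{\pm1}]$, which meets $C_\delta$ in a single point. So your proposed choice of $\sC$ does not exist.
\item Switching to a crystalline type whose simple-root weight is not the cyclotomic one does not help. The rational $H^2$ then vanishes identically, so no formal component with nonempty generic fiber lies over a Steinberg $C$, and there is no analogue of Theorem~\ref{thm:rigid} to invoke.
\item The formal Weil--Deligne stack records only simple roots, so there are no $Y$-coordinates lifting $s_\alpha$ for $\Ht(\alpha)\ge 2$.
\end{itemize}

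\textbf{The fallback is circular.} The direct induction breaks exactly at these roots. The obstruction lies in the nonzero torsion group $H^2(\Gal_F,U_{\alpha^\vee}(\bar\Z_p))$, and you do not control whether it vanishes. Even when it does, the reduction map on $H^1$ has image a hyperplane. Whether the required residual class, with its generic $s_\alpha$-coordinate, is reached then depends on the lower lifts and on the choice of $\chi_\alpha$.

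The paper does not attempt this step by hand. It takes the simple-root crystalline types from Lemma~\ref{lem:crys-PGL2} and assembles a common regular Borel type, which is possible because the center is connected. It then uses \cite[Corollary 5 and the proof of Lemma 19]{Lin25} to propagate Borel crystalline lifts across the higher roots, and only then applies Lemma~\ref{lem:crystalline-proper-image}. Your argument is missing some such input that controls the torsion obstruction and the $s_\alpha$-direction at higher residually cyclotomic roots.

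A smaller point concerns the shift by a large multiple of $\lambda$. It changes every simple-root pairing by $-N$, so the simple-root types are no longer those supplied by Lemma~\ref{lem:crys-PGL2}. At Steinberg simple roots, which residual classes lift depends delicately on the type. Drop the shift: the types from Lemma~\ref{lem:crys-PGL2} are already regular, which is all you need for $H^1_f=H^1$ at non-cyclotomic roots.
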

\begin{proof}
Apply Lemma~\ref{lem:crys-PGL2} to the simple roots
and choose a sufficiently regular common Hodge type.
Then use \cite[Corollary 5 and the proof of Lemma 19]{Lin25}.
Finally apply Lemma~\ref{lem:crystalline-proper-image}.
\end{proof}

\begin{cor}
Suppose $\bG$ is arbitrary.

(1) There \MINOREDITED{exist} finitely many
Hodge types $\{\underline{\lambda}_s|s\in S\}$,
such that the special fiber
of the crystalline stack
$\bigcup_{s\in S}
\cX_{\bB}^{\underline{\lambda}_s,\tau_{\triv}}$
maps surjectively on the Weil-Deligne stack
$\cW_{\bB}$.

(2)
The special fiber
of the crystalline stack
$\bigcup_{s\in S}
\cX_{\bB}^{\underline{\lambda}_s,\tau_{\triv}}$
contains 
$\overline{\dot \cX_{\bB}(C)}$
for all irreducible components $C\subset \cW_{\bB}$.
\label{cor:crys}
\end{cor}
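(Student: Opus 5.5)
Both parts will follow from Lemma~\ref{lem:Borel-crystalline-assembly}, which we apply to each irreducible component of $\cW_{\bB}$ and then take a finite union. First, $\cW_{\bB}$ has only finitely many irreducible components. It is an algebraic stack of finite type over $\bFp$, equidimensional of dimension $0$, because it is the fiber product of $\cX_{\bT}$ (a finite disjoint union of $[\bT/\bT]$ indexed by Serre weights) with the finitely many components of each $\cW_{U_{\delta^\vee}\rtimes\G_{m,\delta}}$. So let $C_1,\dots,C_N$ be its irreducible components. For each $C_j$, Lemma~\ref{lem:Borel-crystalline-assembly} supplies a regular Borel Hodge type $\underline\lambda_{C_j}$ with trivial inertial type such that the special fiber of $\cX_{\bB}^{\underline\lambda_{C_j},\tau_{\triv}}$ contains $\overline{\dot\cX_{\bB}(C_j)}$. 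Setting $S=\{C_1,\dots,C_N\}$ and $\underline\lambda_s=\underline\lambda_{C_j}$ proves part (2) directly.

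For part (1), I would combine (2) with surjectivity of $\WD$ on each dense open stratum. The first step is that $\dot\cX_{\bB}(C)=\dot\cX_{\bB}\langle\Psi_0(C),\Psi_2(C)\rangle\cap\cX_{\bB}(C)$ surjects onto a dense open of $C$. By Corollary~\ref{cor:C6}, $\cX_{\bB}[U]\langle\Psi\rangle\to\cW_{\bB}[U]\langle\Psi\rangle$ is smooth and surjective with affine-space fibers. By Corollary~\ref{cor:reg}, the stratum with $(\Psi_0,\Psi_2)=(\Psi_0(C),\Psi_2(C))$ is top-dimensional inside $\cX_{\bB}(C)$, so its image is dense in $C$. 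The next step is to upgrade ``dense'' to ``all of $C$''. Since the crystalline special fiber is closed in $\cX_{\bB}$ (Lemma~\ref{lem:crystalline-proper-image} gives properness, so the image is closed), and $\WD\colon\cX_{\bB}\to\cW_{\bB}$ restricted to the closed set $\overline{\dot\cX_{\bB}(C)}$ has image containing a dense open of $C$, I still need closedness of the image. Here I would argue as follows. The union over $s\in S$ of the crystalline special fibers is closed and contains every $\overline{\dot\cX_{\bB}(C)}$. Its image in $\cW_{\bB}$ is constructible and contains the generic point of each $C$, but that alone does not give surjectivity.

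To get surjectivity, I would enlarge $S$ by Noetherian induction on $\cW_{\bB}$. Let $Z\subsetneq\cW_{\bB}$ be the closed complement of the image, and take its irreducible components $Z'$. Each $Z'$ is the image of some stratum $\cW_{\bB}[U]\langle\Psi\rangle$ (the non-regular configurations), and by Corollary~\ref{cor:C6} the stratum $\cX_{\bB}[U]\langle\Psi\rangle$ surjects onto it. The main obstacle is to show that points of these smaller strata also admit crystalline lifts for some finite set of additional Hodge types. I would handle this the same way as Lemma~\ref{lem:Borel-crystalline-assembly}. Over each $\bFp$-point of $\cW_{\bB}$ the simple-root data $(\bar r_\delta,\bar N_\delta)$ are $\PGL_2$-Borel points. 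Lemma~\ref{lem:crys-PGL2}, applied to the Borel of $\PGL_2$, gives finitely many Hodge types covering all of $\cW_{U_{\delta^\vee}\rtimes\G_{m,\delta}}$ at once, Steinberg components included. Then \cite[Corollary 5]{Lin25} assembles a common sufficiently regular Borel type, taken from a finite set of choices per simple root, in which every point of $\cW_{\bB}$ has a crystalline Borel lift. Its image under $\WD$ is the given point, because $\WD$ depends only on the $\tr_1$ data. Since there are finitely many simple roots and finitely many types for each, $S$ remains finite, which gives (1).
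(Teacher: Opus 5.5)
Your part (2) is correct, and it is more direct than the paper's. Lemma~\ref{lem:Borel-crystalline-assembly} literally puts $\overline{\dot\cX_{\bB}(C)}$ inside the crystalline special fibre of type $\underline\lambda_C$, so taking the finite union over the components of $\cW_{\bB}$ gives (2) immediately.

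The paper orders things the other way:
\begin{itemize}
\item It proves (1) first by exactly this finite union, and treats domination of each component of $\cW_{\bB}$ as the asserted surjection.
\item It then recovers (2) from (1), using that $\overline{\dot\cX_{\bB}(C)}$ is the unique component of $\cX_{\bB}$ dominating $C$.
\end{itemize}
Only this domination is used later, in the proof of (2) and of Theorem~\ref{thm:maindup}.

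The gap is in your argument for literal surjectivity in (1). You rightly note that domination only gives a constructible image containing the generic points. The step meant to reach the remaining points is unsupported. You assert that \cite[Corollary 5]{Lin25} glues simple-root lifts from Lemma~\ref{lem:crys-PGL2} into a Borel crystalline lift whose reduction lies over an \emph{arbitrary} point of $\cW_{\bB}$. In this paper that reference enters only through Lemma~\ref{lem:Borel-crystalline-assembly}, which is a statement about the closures of the generic strata $\overline{\dot\cX_{\bB}(C)}$.

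Consider a point of $\cW_{\bB}$ where some non-simple root $\alpha$ lies in $\Psi_2$. There the $\tr_1$-data must satisfy $r_\alpha=0$ (Lemma~\ref{lem:eq}, Corollary~\ref{cor:nonab-herr-2}) even to extend mod $p$. So points chosen root by root from Lemma~\ref{lem:crys-PGL2} need not come from any $\bB$-valued representation at all. Even for a compatible choice, the integral obstruction to gluing the simple-root lifts lies in the torsion of $H^2(\Gal_F,U_{\alpha^\vee}(\bar\Z_p))$. Rescaling the lattice by a torus element to kill that obstruction changes the reduction, and hence its $\WD$-image.

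So ``its image under $\WD$ is the given point'' is exactly the unproved claim. If such a pointwise assembly were available, the rest of your argument for (1) would be superfluous.

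The Noetherian-induction scaffolding is also inaccurate:
\begin{itemize}
\item The complement of a constructible image need not be closed.
\item There is no reason its components have the form $\cW_{\bB}[U]\langle\Psi\rangle$.
\item Corollary~\ref{cor:C6} says nothing about whether the non-generic strata $\cX_{\bB}[U]\langle\Psi\rangle$ lie in any crystalline special fibre.
\end{itemize}
As it stands, your (1) proves no more than the paper's domination statement. Pointwise surjectivity needs a separate argument at the special points of $\cW_{\bB}$.
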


\begin{proof}
(1) Apply Lemma~\ref{lem:Borel-crystalline-assembly} to every
irreducible component of the finite-type stack $\cW_{\bB}$ and take the finite
set of resulting Hodge types.  Each component is dominated by the indicated
crystalline special fiber, giving the asserted surjection.

(2)
The reduced special fiber of 
$\bigcup_{s\in S}
\cX_{\bB}^{\underline{\lambda}_s,\tau_{\triv}}$
is a union $\bigcup_{\cC\in S_\cC} \cC$ of (top-dimensional) irreducible components
of \MINOREDITED{the special fiber of the corresponding
$\cX_{\bB}$-stack}
that maps surjectively onto $\cW_{\bB}$.

For each irreducible component $C\subset \cW_{\bB}$,
there exists a unique
irreducible component
$C_{\cX}$ of $\cX_{\bB}$
(necessarily of the form $\overline{\dot \cX_{\bB}(C)}$)
satisfying the condition that
$C_{\cX}\to C$ is scheme-theoretically dominant.
Thus $C_{\cX}\in S_{\cC}$
for all $C$ by part (1).
\end{proof}

\begin{thm}
\label{thm:maindup}
If $\bG=\mathrm{F}_4$, then all $\Gal_F\to \bG(\bFp)$
have a crystalline lift.

If $\bG$ is a reductive group satisfying the standing hypotheses
and $[F:\Q_p]>\frac{\dim \bG}{2}$, then all $\Gal_F\to \bG(\bFp)$
have a crystalline lift.
\end{thm}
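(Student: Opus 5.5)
The plan is to show that the reduced Emerton--Gee stack $\cX_{\bG,\red}^{\EG}$ coincides with the reduced special fiber of a finite union of crystalline stacks. By Lemma~\ref{lem:crystalline-proper-image}, this is exactly the assertion that every $\bFp$-point has a crystalline lift. The argument is a count of irreducible components, combined with a closedness argument.

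First, the hypothesis $[F:\Q_p]>\frac{\dim\bG}{2}$ means $[F:\Q_p]>\#\Phi^++\frac{\dim\bT}{2}$, and I would check that this gives $[F:\Q_p]\ge\#\Phi^++2$ whenever $\dim\bT\ge 2$. The small-rank cases ($\PGL_2$, tori) are handled separately by Lemma~\ref{lem:crys-PGL2}. In this regime, Proposition~\ref{prop:TBM} tells us that $\cX_{\bG,\red}^{\EG}$ is equidimensional of dimension $[F:\Q_p]\#\Phi^+$. It also tells us its irreducible components are exactly the images of $\overline{\dot\cX_{\bB}(C)}$, for $C$ ranging over the irreducible components of $\cW_{\bB}$. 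For $\bG=\mathrm{F}_4$ the same conclusion comes from Corollary~\ref{cor:F4-equidim}, which covers all $F$, including $F=\Q_p$ via the rotation argument of Theorem~\ref{thm:ht1-codim0}.

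Next, take the finite set of Hodge types $\{\underline\lambda_s\}$ provided by Corollary~\ref{cor:crys}. That result shows the crystalline special fiber of $\bigcup_s\cX_{\bB}^{\underline\lambda_s,\tau_{\triv}}$ contains every $\overline{\dot\cX_{\bB}(C)}$. I would push this forward along $\bB\hookrightarrow\bG$, which sends Borel crystalline lifts to $\bG$-valued crystalline lifts. The scheme-theoretic image in $\cX_{\bG}^{\EG}$ of the $\bG$-crystalline stacks of these Hodge types is closed, by the properness in Lemma~\ref{lem:crystalline-proper-image}. That image therefore contains the image of each $\overline{\dot\cX_{\bB}(C)}$, since Lemma~\ref{lem:maximal-ns} guarantees these images are top-dimensional. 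It follows that the image contains every irreducible component of $\cX_{\bG,\red}^{\EG}$, and hence all of $\cX_{\bG,\red}^{\EG}$. Applying Lemma~\ref{lem:crystalline-proper-image} pointwise, every $\bar\rho:\Gal_F\to\bG(\bFp)$ then has a crystalline lift of one of the types $\underline\lambda_s$.

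The main obstacle is the step asserting there are no extra components: any irreducible component of $\cX_{\bG,\red}^{\EG}$ not dominated by a Borel stratum would escape the argument. For general $\bG$ this is handled by Theorem~\ref{thm:reg} together with the twisted count of Theorem~\ref{thm:twist-reg}, fed through Theorem~\ref{thm:lift0}, which covers the non-Borel points via the elliptic Levi decomposition. For $\mathrm{F}_4$ with $F=\Q_p$, the four extra Borel components of Table~\ref{table:F4-table} must be shown to lie in the closure of the regular ones inside $\cX_{\bG}^{\EG}$; this is precisely Proposition~\ref{prop:rotate}. I would first verify that the rotated deformations $\rho[1/\varepsilon]$ land in $\dot\cX_{\bB}$, so that their specializations lie in the closed crystalline image.
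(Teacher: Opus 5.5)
Your proposal is correct and follows essentially the same route as the paper. Proposition~\ref{prop:TBM} (or Corollary~\ref{cor:F4-equidim} for $\mathrm{F}_4$) identifies every irreducible component of $\cX_{\bG,\red}^{\EG}$ with the image of some $\overline{\dot\cX_{\bB}(C)}$. Corollary~\ref{cor:crys} places these components in a crystalline special fiber, and Lemma~\ref{lem:crystalline-proper-image} turns that into a pointwise crystalline lift.

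Your deviations are cosmetic. You make the push-forward along $\bB\hookrightarrow\bG$ explicit. You also split cases by $\dim\bT\ge 2$ rather than by semisimple rank, so Proposition~\ref{prop:TBM} applies directly to groups like $\GL_2$ and you avoid the paper's factor-by-factor Hodge-type construction.
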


\begin{proof}
\MAJOREDIT{First suppose that the semisimple rank is at least two. Since
$\dim\bG=2\#\Phi^++\operatorname{rank}\bG$, the strict inequality
$[F:\Q_p]>\dim\bG/2$ implies
$[F:\Q_p]\ge\#\Phi^++2$. Proposition~\ref{prop:TBM} then shows that every
irreducible component of $\cX_{\bG,\red}^{\EG}$ is one of the components
dominating a component of $\cW_{\bB}$. Corollary~\ref{cor:crys} places each of
these components in the special fiber of a crystalline stack. Every point lies
on an irreducible component and hence in that special fiber; by the pointwise
characterization in Lemma~\ref{lem:crystalline-proper-image}, it has a
crystalline lift.

For a rank-one factor the same assertion is Corollary~\ref{cor:crys}, based on
Lemma~\ref{lem:crys-PGL2}; for a torus it is the corresponding rank-one
character statement. For a general reductive group, choose the Hodge type
factor by factor using fundamental coweights and take their Cartesian product.
This construction is compatible with the central torus and therefore gives a
single Hodge type for the original group.

When $\bG=\mathrm F_4$, use Corollary~\ref{cor:F4-equidim} in place of the
large-degree application of Proposition~\ref{prop:TBM}, and conclude in the
same way from Corollary~\ref{cor:crys}.}
\end{proof}

\newpage
\phantomsection
\addcontentsline{toc}{part}{Appendices}

~

\appendix

\section{Bounded cone and $F_4$ strata algorithms}
\label{app:cone-model-algorithms}

This appendix gives the exhaustive certification.  Root data are generated
from the $F_4$ Cartan matrix.  The reviewed fine-pair list is checked directly
and reconstructed when absent.  Roots are coefficient tuples in Bourbaki
order, and all rank calculations are exact.

\subsection{Field-degree bound}

For each of the $105$ downward-closed complements $K$, the program builds the
complete grouped cone matrix for the largest possible target set.  An integral
nonzero minor certifies its rank.  The resulting dimension estimate is affine
in $d=[F:\Q_p]$ and is automatic for $d\ge2$.  The contents of all witness
minors have prime support contained in $\{2,3\}$, so the estimate is unchanged
in the standing range $p>12$.

\begin{tcolorbox}
\begin{algorithmic}[1]
\Require The $F_4$ Cartan matrix.
\State Generate $\Phi^+$, its root order, and all $105$ complements $K$.
\For{each $K$}
  \State Build the full grouped cone matrix at $\Psi_2=\Phi^+$.
  \State Find an integral maximal nonzero minor and its bad primes.
  \State Solve the resulting affine dimension inequality for $d$.
\EndFor
\State Return the largest exceptional degree range.
\end{algorithmic}
\end{tcolorbox}
\renewcommand{\figurename}{Algorithm}
\captionof{figure}{A priori field-degree bound}
\label{alg:F4-degree-bound}

Thus only $d=1$ requires enumeration.

\subsection{Fine pairs}

At degree one, a candidate level set is a subset $S\subset\Phi^+$ on which a
linear functional can satisfy $f(\alpha)=1$ for every $\alpha\in S$.  Every
such $S$ contains a linearly independent subset with the same affine span.
This gives exhaustive small seeds and avoids scanning all $2^{24}$ subsets.
We keep the resulting list of $4862$ fine pairs.

\begin{tcolorbox}
\begin{algorithmic}[1]
\Require $\Phi^+$, the degree bound $d=1$, and the reviewed pair list $P$.
\If{$P$ is absent}
  \State Generate the independent affine-consistent seeds.
  \State Expand them by BFS, stopping at affine inconsistency or the degree
  bound, and write the resulting canonical list $P$.
\EndIf
\For{$(\Psi_0,\Psi_2)\in P$}
  \State Recompute the difference lattice, $\Psi_0$, and the closure axioms.
  \State Regard the pair as a terminal BFS state; do not generate children.
\EndFor
\For{each independent subset of the required degree}
  \State Check that its fine pair occurs in $P$.
\EndFor
\State Return $P$.
\end{algorithmic}
\end{tcolorbox}
\renewcommand{\figurename}{Algorithm}
\captionof{figure}{Fine-pair certification and fallback BFS}
\label{alg:F4-bounded-bfs}

There are $4862$ distinct fine pairs.  All $3002$ relevant independent subsets
occur in this list.  The cutoff is sound because adjoining a root to $\Psi_2$
strictly increases minimal degree.

\subsection{Exhaustive classification}

We apply the cone calculation to the $105$ possibilities for $K$ and the
$4862$ fine pairs.  Exactly four cases have $\varepsilon=0$; these are
Configurations I--IV.  All integral rank witnesses remain nonzero for
$p>12$.

For the four displayed grouped cone matrices the tuples
$(\#\mathrm{variables},\#\mathrm{relations},\operatorname{rank},
\mathrm{dimension})$ are
\[
(28,17,17,11),\quad(29,17,17,12),\quad
(30,14,14,16),\quad(32,8,7,25).
\]
Relations with the same target are one summed equation, never separate ideal
generators.  The implementation details and reviewed data are available at
\url{https://github.com/mocham/AlgEG/tree/main/code/v4}.

\section{Root-poset cycle algorithms}
\label{app:root-poset-cycles}

We describe separately the absolute and non-Borel computations.  In both, a
source-leaf contraction removes a source of valency one together with its
unique target; isolated sources are discarded.  A surviving Jacobian with $r$
target rows and $c\ge r$ source columns is tested by enumerating the
$\binom cr$ source-column subsets.  Success means that one corresponding
determinant is a nonzero monomial in the simple-root variables.

\subsection{Absolute root posets}

The absolute routine applies the following finite test independently at each
height.

\begin{tcolorbox}
\begin{algorithmic}[1]
\Require An exceptional root system $\Phi$ and its Chevalley constants.
\State Construct the directed root poset and its bipartite height layers
$\fB(\Phi^+,\Delta,h)$.
\For{each height $h$ and each target selection $T$}
  \State Retain the full source layer and only edges ending in $T$.
  \State Repeatedly contract source leaves.
  \If{the graph contracts to the empty graph}
    \State Record a trivial certificate and continue.
  \EndIf
  \State Form the Jacobian
  $J_{\alpha,\beta}=N_{\beta,\alpha-\beta}x_{\alpha-\beta}$.
  \For{each set $C$ of $\#T$ source columns}
    \State Compute $d_C\gets\det J_C$.
    \If{$d_C$ is a nonzero monomial}
      \State Record $C$ and $d_C$ and continue with the next target selection.
    \EndIf
  \EndFor
  \State If no witness was found, report failure.
\EndFor
\end{algorithmic}
\end{tcolorbox}
\renewcommand{\figurename}{Algorithm}
\captionof{figure}{Absolute admissibility test}
\label{alg:absolute-cycles}

For the exceptional types the numerical summary is:
\[
\begin{array}{c|rr}
\toprule
\text{Type}&\text{target selections}&\text{failures}\\
\midrule
F_4&48&0\\
E_6&138&0\\
E_7&384&0\\
E_8&1196&0\\
\bottomrule
\end{array}
\]
The nontrivial determinants, up to orientation-dependent signs, are:
\[
\begin{array}{c|c|l}
\toprule
\text{Type}&\text{height}&\text{Jacobian minor}\\
\midrule
F_4&3&3x_{0001}x_{0010}x_{1000}\\
E_6&2&2x_{000010}x_{001000}x_{010000}\\
E_6&3&3x_{000001}x_{000010}x_{001000}x_{010000}x_{100000}\\
E_7&2&-2x_{0000100}x_{0010000}x_{0100000}\\
E_7&3&-3x_{0000010}x_{0000100}x_{0010000}x_{0100000}x_{1000000}\\
E_7&4&2x_{0000001}x_{0001000}x_{0010000}x_{0100000}\\
E_7&8&-2x_{0000001}x_{0000010}x_{0000100}x_{0010000}\\
E_8&2&-2x_{00001000}x_{00100000}x_{01000000}\\
E_8&3&-3x_{00000100}x_{00001000}x_{00100000}x_{01000000}x_{10000000}\\
E_8&4&2x_{00000010}x_{00010000}x_{00100000}x_{01000000}\\
E_8&5&-5x_{00000001}x_{00000010}x_{00000100}x_{00010000}x_{00100000}x_{01000000}x_{10000000}\\
E_8&8&-2x_{00000010}x_{00000100}x_{00001000}x_{00100000}\\
E_8&9&3x_{00000001}x_{00000100}x_{00001000}x_{00010000}x_{00100000}x_{10000000}\\
E_8&14&2x_{00000010}x_{00001000}x_{00010000}x_{01000000}\\
\bottomrule
\end{array}
\]

For $D_n$, all cycles are isomorphic to $\vec C_6$.  With the standard
ordering $\Delta_{D_n}=\{\delta_1,\ldots,\delta_n\}$, they are given by
\[
\begin{aligned}
\beta_1&=\sum_{i=n-h-1}^{n-2}\delta_i,&
\beta_2&=\sum_{i=n-h}^{n-2}\delta_i+\delta_{n-1},&
\beta_3&=\sum_{i=n-h}^{n-2}\delta_i+\delta_n,
\end{aligned}
\]
and their Jacobian coefficients are $2$ or $-2$.

\subsection{Non-Borel root posets}

Theorem~\ref{thm:twist-C6} already has the correct disjunction: a relative
cycle is either non-obstructing or its associated absolute graph is
admissible.  Thus the non-obstructing condition is applied before any absolute
Jacobian test.

\begin{tcolorbox}
\begin{algorithmic}[1]
\Require A Levi $\bM$, an elliptic Weyl element $w$, its relative root orbits,
and the non-obstructing singleton and pair relation.
\State Construct the $w$-relative root poset and its bipartite height layers.
\State Remove individually non-obstructing targets.
\For{each relative height layer}
  \State Contract source leaves.
  \For{each target subset $T$ containing no non-obstructing pair}
    \State Restrict to $T$ and contract source leaves again.
    \If{the graph contracts to the empty graph}
      \State Record the contraction and continue.
    \EndIf
    \State Replace every relative source and target by all absolute roots in
    its $w$-orbit.
    \State Add every absolute edge whose target-source difference is a
    positive root, and contract source leaves.
    \If{the absolute graph is nonempty}
      \State Apply Algorithm~\ref{alg:absolute-cycles}, testing combinations
      of absolute source columns.
    \EndIf
  \EndFor
\EndFor
\State Report failure if any surviving absolute graph has no monomial maximal
minor.
\end{algorithmic}
\end{tcolorbox}
\renewcommand{\figurename}{Algorithm}
\captionof{figure}{Non-Borel cycle test}
\label{alg:twisted-cycles}

The computation generates all exceptional Levi/Weyl configurations from the
Cartan matrices and covers:
\[
\begin{array}{c|rr}
\toprule
\text{Type}&\text{configurations}&\text{failures}\\
\midrule
F_4&19&0\\
E_6&68&0\\
E_7&145&0\\
E_8&304&0\\
\bottomrule
\end{array}
\]
Consequently all $F_4$ relative cycles are non-obstructing or contractible,
and every surviving E-type associated absolute graph is admissible.  The
reviewed data are available at
\url{https://github.com/mocham/AlgEG/tree/main/code/v4}.

\section{Characteristic-uniform rotation certificates}
\label{app:rotation-algorithm}

The computation is over $\Q$ and records enough integral data to spread each
smooth point to every characteristic $p>12$.

For a rotation root $\delta_{\rot}$ and $\alpha\in\Psi_2$, the code follows
the complete string $\alpha+n\delta_{\rot}$.  At step $n$ it multiplies the
current coefficient by
$N_{-\delta_{\rot},\alpha+n\delta_{\rot}}/n$.  Thus, for Configuration I,
\[
c_{0100}\mapsto c_{0100}+2\mu c_{0110}-\mu^2c_{0120},\qquad
c_{0001}\mapsto c_{0001}-\mu c_{0011}.
\]

\begin{tcolorbox}
\begin{algorithmic}[1]
\Require $(K,\Psi_0,\Psi_2)$ and a rotation root $\delta_{\rot}$.
\State Generate all Chevalley constants and root-string substitutions.
\State Form the divided $\varepsilon$-deformation equations over $\Q$.
\For{the deterministic sequence of small rational specializations and points}
  \State Evaluate every equation exactly.
  \State Evaluate the square Jacobian determinant exactly.
  \If{the equations vanish and the determinant is nonzero}
    \State Factor all denominators, required nonzero coordinates, and the
    determinant; record their prime support and stop.
  \EndIf
\EndFor
\State Fail unless the recorded bad primes are all at most $12$.
\end{algorithmic}
\end{tcolorbox}
\renewcommand{\figurename}{Algorithm}
\captionof{figure}{Rational rotation witness}
\label{alg:rotation-certificate}

The exact output is:
\[
\begin{array}{c|c|r|c}
\toprule
\text{Configuration}&\delta_{\rot}&\det J&\text{bad primes}\\
\midrule
\mathrm{I}&0010&432&2,3\\
\mathrm{II}&0010&704&2,3,5,11\\
\mathrm{III}&1000&15360&2,3,5\\
\mathrm{IV}&0010&895795200&2,3,5\\
\bottomrule
\end{array}
\]
Every equation evaluates to zero at its recorded rational point.  Inverting
the displayed finite set makes that point a smooth section, so reduction gives
a smooth point in every characteristic $p>12$.  The reviewed data are
available at
\url{https://github.com/mocham/AlgEG/tree/main/code/v4}.

\section{Computational source}
\label{app:computational-certification}

The implementation and reviewed data are available at
\begin{center}
\url{https://github.com/mocham/AlgEG/tree/main/code/v4}.
\end{center}
The code generates the root systems, Chevalley constants, order ideals,
elliptic Weyl orbits, and twisted configurations used in the preceding
appendices.  The fine-pair list is checked as in
Algorithm~\ref{alg:F4-bounded-bfs}; it is reconstructed from first principles
only when the reviewed data are absent.

The precise source map, commands, runtimes, certificate formats, checksums,
and artifact inventory are recorded in
\begin{center}
\url{https://github.com/mocham/AlgEG/blob/main/code/README.md}.
\end{center}

\printbibliography

\end{document}